\documentclass[reqno, letterpaper, final]{amsart}
\PassOptionsToPackage{dvipsnames}{xcolor}
\usepackage{rbmwe}
\usepackage[parfill]{parskip}
\usepackage{enumitem}
\usepackage{dsfont}
\usepackage{mathrsfs}
\usetikzlibrary{decorations.pathmorphing}
\usepackage{tikzit}
\usepackage{tikzscale}
\tikzset{
  edge/.style={draw=black, line width=1.2pt},
  vBlack/.style={circle, fill=black, draw=none, minimum size=6pt, inner sep=0pt},
  vRed/.style={circle, fill=red, draw=none, minimum size=7pt, inner sep=0pt},
  vGreen/.style={circle, fill=green!60!black, draw=none, minimum size=7pt, inner sep=0pt},
  vBlue/.style={circle, fill=blue, draw=none, minimum size=7pt, inner sep=0pt},
  vTeal/.style={circle, fill=cyan!60!black, draw=none, minimum size=7pt, inner sep=0pt},
  vPink/.style={circle, fill=magenta!35, draw=none, minimum size=7pt, inner sep=0pt},
  vPurpleX/.style={draw=none, text=magenta, font=\Large},
  greenArrow/.style={->, draw=green!60!black, line width=1.2pt}
}
\input{pictures/tikzpreamble}

\providecommand{\LabelScaleExpr}{min(1.5, 0.3 + 5.5/(\Den+2))}
\newcommand{\SetLabelScaleExpr}[1]{\gdef\LabelScaleExpr{#1}} 
\providecommand{\InfLabelScaleExpr}{1.7}

\makeatletter
\newcommand{\fareyline}{%
  \@ifstar{\fareylineplain{2}}{\fareylineplain{0}}%
 }
\newcommand{\FareyParseFrac}[3]{
  \expandafter\FareyParseFrac@i#1//\relax{#2}{#3}%
}
\def\FareyParseFrac@i#1/#2/#3\relax#4#5{%
  \pgfmathtruncatemacro{#4}{#1}%
  \if\relax\detokenize{#2}\relax
    \pgfmathtruncatemacro{#5}{1}%
  \else
    \pgfmathtruncatemacro{#5}{#2}%
  \fi
}
\makeatother

  \newcommand{\SetFracColor}[3]{%
    \expandafter\def\csname frac@color@#1@#2\endcsname{#3}%
  }
  
  \def\ExtraFracList{} 
  \def\ColouredIntervalList{} 
  
  \newcommand{\MarkFrac}[2]{\expandafter\def\csname frac@seen@#1@#2\endcsname{1}}
  \newcommand{\DrawFrac}[2]{
    \pgfmathsetmacro{\x}{#1/#2}
    \pgfmathtruncatemacro{\out}{ (\x<\LeftVal) || (\x>\RightVal) }
    \ifnum\out=0
      \ifcsname frac@seen@#1@#2\endcsname
      \else
        \edef\Num{#1}\edef\Den{#2}          
        \pgfmathsetmacro{\s}{\LabelScaleExpr}

        \MarkFrac{#1}{#2}

        \pgfmathsetmacro{\h}{\TickBase/#2}

        \pgfmathtruncatemacro{\parity}{mod(#2, 2)}
        \ifnum\parity=0 \def\anch{south}\def\sgn{1}\else \def\anch{north}\def\sgn{-1}\fi
        \pgfmathsetmacro{\y}{\sgn*(\h+\LabelGap)}

        \def\col{black}
        \ifcsname frac@color@#1@#2\endcsname
          \edef\col{\csname frac@color@#1@#2\endcsname}
        \fi
        \draw[thick, draw=\col] (\x,-\h) -- (\x,\h);
       \node[frac label, anchor=\anch, text=\col] at (\x,\y) {\scalebox{\s}{$\frac{#1}{#2}$}};
      \fi
    \fi
  }

\newcommand{\fareylineplain}[5]{
\ifnum#1=-1\relax \showminusinftytrue \else \showminusinftyfalse \showinftyfalse \fi
\ifnum#1=1\relax \showplusinftytrue \else \showplusinftyfalse \showinftyfalse \fi
\ifnum#1=2\relax \showinftytrue \showminusinftytrue \showplusinftytrue\fi
  \def\myaxiswidth{#2}%
  \FareyParseFrac{#3}{\LeftNum}{\LeftDen}%
  \FareyParseFrac{#4}{\RightNum}{\RightDen}%
  \pgfmathsetmacro{\LeftVal}{\LeftNum/\LeftDen}%
  \pgfmathsetmacro{\RightVal}{\RightNum/\RightDen}%
  \pgfmathtruncatemacro{\DenMax}{#5}%
  \def\InfPadFactor{0.1}   

\noindent
\begin{tikzpicture}[axis width=\myaxiswidth, y=1cm, line cap=round]

  \def\TickBase{0.35}
  \def\LabelGap{0.12}
  \def\FracFont{\normalsize}
  \tikzset{frac label/.style={font=\FracFont, inner sep=1pt}}

  \draw[thick] (\LeftVal,0) -- (\RightVal,0);

 \pgfmathsetmacro{\yInfLab}{-(\TickBase+\LabelGap)}
 \pgfmathsetmacro{\s}{\InfLabelScaleExpr}
 \ifshowplusinfty
    \draw[blue, thick,->, dashed] (\RightVal,0) -- (\xRinf,0);
    \draw[thick] (\xRinf,-\TickBase) -- (\xRinf,\TickBase);
    \def\col{black}
    \ifcsname frac@color@1@0\endcsname
          \edef\col{\csname frac@color@1@0\endcsname}
     \fi
    \node[frac label, anchor=north, text=\col] at (\xRinf,\yInfLab) {\scalebox{\s}{$\frac{1}{0}$}};
  \fi
  \ifshowminusinfty
    \draw[blue, thick,<-, dashed] (\xLinf,0) -- (\LeftVal,0);
    \draw[thick] (\xLinf,-\TickBase) -- (\xLinf,\TickBase);
     \def\col{black}
    \ifcsname frac@color@1@0\endcsname
          \edef\col{\csname frac@color@1@0\endcsname}
     \fi
    \node[frac label, anchor=north] at (\xLinf,\yInfLab) {\scalebox{\s}{$\frac{-1}{0}$}};
  \fi

\foreach \d in {1,...,\DenMax} {

  \pgfmathsetmacro{\nminf}{ceil(\LeftVal*\d)}
  \pgfmathsetmacro{\nmaxf}{floor(\RightVal*\d)}
  \pgfmathtruncatemacro{\nmin}{\nminf}
  \pgfmathtruncatemacro{\nmax}{\nmaxf}

  \ifnum\nmin>\nmax\relax
  \else
    \foreach \n in {\nmin,...,\nmax} {
      \pgfmathgcd{\n}{\d}\edef\g{\pgfmathresult}
      \ifnum\g=1\relax
        \DrawFrac{\n}{\d}
      \fi
    }
  \fi
}

  \foreach \p/\q in \ExtraFracList {
    \pgfmathtruncatemacro{\P}{\p}
    \pgfmathtruncatemacro{\Q}{\q}
    \pgfmathgcd{\P}{\Q}\edef\g{\pgfmathresult}
    \pgfmathtruncatemacro{\Pr}{\P/\g}
    \pgfmathtruncatemacro{\Qr}{\Q/\g}
    \DrawFrac{\Pr}{\Qr}
  }
  \foreach \a/\b/\c/\d/\col in \ColouredIntervalList {
   \draw[draw=\col] (\a/\b,0) -- (\c/\d,0);
}

\end{tikzpicture}
}

\usetikzlibrary{arrows.meta,calc,decorations.pathmorphing}

\tikzset{
  axis/.style={black, line width=0.9pt, -{Stealth[length=3mm]}},
  tick/.style={black, line width=0.8pt},
  dot/.style={circle, fill=black, inner sep=1.2pt},
  bcurve/.style={blue!75!black, line width=1.2pt},
  gcurve/.style={green!60!black, line width=1.0pt},
  graydiag/.style={gray!70, line width=0.9pt},
  maparrow/.style={-{Stealth[length=3mm]}, line width=0.9pt},
}

\newcommand{\mypt}[2]{\fill (#1,#2) circle[radius=1.2pt];}

\newcommand{\ExampleAttractor}{%
\begin{tikzpicture}[x=6cm,y=6cm, font=\small]
  \draw[axis] (0,0) -- (2.15,0) node[below right] {};
  \draw[axis] (0,0) -- (0,1.15) node[above left] {};

  \draw[gcurve] (0,1) -- (1,1) -- (1, 0.5) -- (0.5, 0.5) -- (0.5, 0.75) -- (0.75,0.75) -- (0.75,0.625) -- (0.625,0.625) -- (0.625, 0.6875) -- (0.6875,0.6875);

  \draw[graydiag] (-0.15,-0.15) -- (1.25,1.25);

  \draw[bcurve] (-0.1,1.05) -- (2.1,-0.05);

  \foreach \x/\y in {0/1, 0.5/0.75, 0.625/0.6875, 0.6666667/0.6666667, 0.75/0.625, 1/0.5, 2/0, 1/1, 0.5/0.5, 0.75/0.75, 0.625/0.625, 0.6875/0.6875} {
    \mypt{\x}{\y}
  }

  \foreach \y/\lab in {
    1/{1},
    0.625/{\frac58},
    0.5/{\frac12}
  }{
    \draw[tick] (-0.015,\y) -- (0.015,\y);
    \node[left] at (-0.02,\y) {$\lab$};
  }
  \foreach \y/\lab in {
    0.75/{\frac34},
    0.6666667/{\frac23},
  }{
    \draw[tick] (-0.015,\y) -- (0.015,\y);
    \node[right] at (0.02,\y) {$\lab$};
  }
  \foreach \y/\lab in {
    0.6875/{\frac{11}{16}},
  }{
    \draw[tick] (-0.015,\y) -- (0.015,\y);
    \node[left] at (-0.02,\y+0.01) {\scriptsize$\lab$};
  }

  \foreach \x/\lab in {
    0.25/{\frac14},
    0.3333333/{\frac13},
    0.5/{\frac12},
    0.625/{\frac58},
    0.6666667/{\frac23},
    0.75/{\frac34},
    1/{1},
    2/{2}
  }{
    \draw[tick] (\x, -0.015) -- (\x, 0.015);
    \node[below] at (\x,-0.02) {$\lab$};
  }
  \foreach \x/\lab in {
    0.6875/{\frac{11}{16}},
  }{
    \draw[tick] (\x, -0.015) -- (\x, 0.015);
    \node[below] at (\x+0.015,-0.02) {\scriptsize$\lab$};
  }
  
    \draw[tick] (-0.015,0) -- (0.015,0);
    \draw[tick] (0,-0.015) -- (0,0.015);
    \node[below] at (0,-0.02) {$0$};
\end{tikzpicture}%
}

\usetikzlibrary{decorations.pathmorphing}
\tikzstyle{black vertex}=[fill=black, draw=none, shape=circle, tikzit category=nodes]
\tikzstyle{blue vertex}=[fill=blue, draw=none, shape=circle, tikzit category=nodes]
\tikzstyle{red vertex}=[fill=red, draw=black, shape=circle, tikzit category=nodes]
\tikzstyle{G{n+1}}=[fill=red, draw=black, shape=circle, tikzit category=vertex set colour scheme]
\tikzstyle{smooth n}=[fill=blue, draw=black, shape=circle, tikzit category=vertex set colour scheme]
\tikzstyle{G{n}}=[fill={rgb,255: red,0; green,150; blue,0}, draw=black, shape=circle, tikzit category=vertex set colour scheme]
\tikzstyle{empty}=[fill=none, draw=black, shape=circle, tikzit category=vertex set colour scheme, tikzit fill=white]
\tikzstyle{G-bdry}=[fill=blue, draw=gamdomcol, shape=circle, thick, tikzit category=vertex set colour scheme]
\tikzstyle{white}=[fill=white, draw=black, shape=circle]
\tikzstyle{G{n+1}-bdry}=[fill={rgb,255: red,128; green,0; blue,128}, draw=gamdomcol, shape=circle, tikzit category=vertex set colour scheme, thick]
\tikzstyle{1/2 smooth n}=[fill=blue, draw=black, shape=circle, tikzit category=vertex set colour scheme, scale={1/2}]
\tikzstyle{1/3 smooth n}=[fill=blue, draw=black, shape=circle, tikzit category=vertex set colour scheme, scale={1/3}]
\tikzstyle{1/2G{n}q}=[fill={rgb,255: red,0; green,150; blue,0}, draw=black, shape=circle, scale={1/2}]
\tikzstyle{1/4 smooth n}=[fill=blue, draw=black, shape=circle, tikzit category=vertex set colour scheme, scale={1/4}]
\tikzstyle{1/2}=[fill=none, draw=black, shape=circle, tikzit category=vertex set colour scheme, scale=0.5, tikzit fill=white]
\tikzstyle{yellow vertex}=[fill=yellow, draw=black, shape=circle]
\tikzstyle{mag vertex}=[fill={rgb,255: red,178; green,0; blue,178}, draw=black, shape=circle]
\tikzstyle{new style 0}=[fill={rgb,255: red,119; green,119; blue,119}, draw=black, shape=circle, thick]
\tikzstyle{labelnode}=[rectangle, draw=none, fill=none, inner sep=2pt, outer sep=0pt]
\tikzstyle{labelnodesm}=[rectangle, draw=none, fill=none, inner sep=2pt, outer sep=0pt, font={\small}]
\tikzstyle{labelnodeft}=[rectangle, draw=none, fill=none, inner sep=2pt, outer sep=0pt, font={\footnotesize}]
\tikzstyle{bknode}=[fill=black, draw=none, shape=circle, tikzit category=nodes, inner sep=0pt, minimum size=7pt, {every label/.append style}={inner sep=1pt}]
\tikzstyle{bknodesm}=[fill=black, draw=none, shape=circle, tikzit category=nodes, inner sep=0pt, minimum size=5pt, {every label/.append style}={font=\small, inner sep=1pt}]
\tikzstyle{blnodeft}=[fill=blue, draw=none, shape=circle, tikzit category=nodes, inner sep=0pt, minimum size=3pt, {every label/.append style}={font=\footnotesize, inner sep=1pt}]
\tikzstyle{gnnodeft}=[fill=green, draw=none, shape=circle, tikzit category=nodes, inner sep=0pt, minimum size=5pt, {every label/.append style}={font=\footnotesize, inner sep=1pt}]

\tikzstyle{v thick dots}=[-, mydots]
\tikzstyle{mydashed}=[-, dashed]
\tikzstyle{arrow}=[->]
\tikzstyle{grey edge}=[-, draw={rgb,255: red,180; green,180; blue,180}]
\tikzstyle{G-domain}=[-, draw=gamdomcol]
\tikzstyle{thick arrow}=[->, draw=black, very thick]
\tikzstyle{mapst}=[{|->}]
\tikzstyle{mythick}=[-, thick]
\tikzstyle{thickdashed}=[-, dashed, thick]
\tikzstyle{dashed+G-domain}=[-, draw=gamdomcol, dashed]
\tikzstyle{bluedashed}=[-, dashed, thick, draw=blue]
\tikzstyle{yellowedge}=[-, draw={rgb,255: red,255; green,226; blue,0}, thick]
\tikzstyle{rededge}=[-, draw=red, thick]
\tikzstyle{greenedge}=[-, draw={rgb,255: red,0; green,150; blue,0}, thick]
\tikzstyle{magentaedge}=[-, draw={rgb,255: red,178; green,0; blue,178}, thick]
\tikzstyle{bluesolid}=[-, draw=blue, thick]
\tikzstyle{divmapsto}=[draw={rgb,255: red,0; green,150; blue,0}, {|->}]
\tikzstyle{orangeyedge}=[-, draw={rgb,255: red,255; green,128; blue,0}]
\tikzstyle{snakey}=[->, decorate, decoration={snake, segment length=5mm, amplitude=2pt}]
\tikzstyle{label arrow}=[->, draw=red]

\usetikzlibrary{decorations.pathmorphing}

\setlist[enumerate]{before=\setlength{\baselineskip}{20pt}, itemsep=0pt}
\setlist[itemize]{before=\setlength{\baselineskip}{20pt}, itemsep=0pt}

\renewcommand{\arraystretch}{1.5}

\newcommand{\F}{\mathcal F}
\newcommand{\J}{\mathcal J}
\newcommand{\V}{\mathcal V_x}
\newcommand{\T}{\mathcal T}
\newcommand{\TV}{\mathcal W}
\newcommand{\lo}{o}
\newcommand{\hk}{\mathbb{\hat K}}
\renewcommand{\k}{k}
\newcommand{\K}{\mathbb K}
\newcommand{\Kk}[1][\k]{\mathbb K_{#1}}
\newcommand{\hkk}[1][\k]{\hk_{#1}}

\newcommand{\bH}{\mathbb{H}}
\newcommand{\CD}{\overline D}

\newcommand{\an}{\text{an}}
\newcommand{\ma}{\mathfrak a}
\newcommand{\mb}{\mathfrak b}

\newcommand{\mg}{\mathfrak g}
\newcommand{\mm}{\mathfrak m}
\newcommand{\q}{q}
\newcommand{\bvec}[1]{{\bf#1}}
\newcommand{\emp}{\emptyset}
\newcommand{\prim}{\mathcal{P}}
\newcommand{\nin}{\notin}

\newcommand{\dilating}{dilating{}}
\newcommand{\adilating}{a \dilating{}}
\renewcommand{\and}{\text{ and }}
\newcommand{\dashto}{\dashrightarrow}

\newcommand{\Dir}[1]{T_{#1}\P^1_\an}

\newcommand{\fml}{\mathfrak}

\newcommand{\smodel}{\Sigma}

\newcommand\capcirc{\mathbin{\ooalign{$\cap$\cr
  \hidewidth\raise.2ex\hbox{\scalebox{0.4}{\mbox{$\circ$}}}\hidewidth\cr}}}

\DeclareMathOperator{\ord}{ord}

\DeclareMathOperator{\dv}{div}
\DeclareMathOperator{\Div}{Div}

\DeclareMathOperator{\emb}{emb}

\DeclareMathOperator{\PGL}{PGL}

\DeclareMathOperator{\rdeg}{rdeg}
\DeclareMathOperator{\wdeg}{wdeg}

\DeclareMathOperator{\Hull}{Hull}

\DeclareMathOperator{\GCD}{GCD}
\DeclareMathOperator{\diam}{diam}

\DeclareMathOperator{\Frac}{Frac}

\DeclareMathOperator{\redct}{red}

\DeclareMathOperator{\FSpec}{FSpec}

\DeclareMathOperator{\Gal}{Gal}

\DeclareMathOperator{\themultiplier}{m}
\newcommand{\mt}[3][\phi]{\themultiplier_{#1}(#2, #3)}
\newcommand{\mpt}[2][\phi]{\themultiplier_{#1}(#2)}
\newcommand{\mmp}[1]{\themultiplier_{#1}}
\newcommand{\fmult}{dilation}

\newcommand{\mono}{monomial}
\newcommand{\Mono}{Monomial}
\newcommand{\equivar}{dilating{}}
\newcommand{\anequivar}{a \equivar{}}
\newcommand{\annuloid}{annuloid}
\newcommand{\Annuloid}{Annuloid}
\newcommand{\anannuloid}{an \annuloid{}}
\newcommand{\spine}{spine}

\newcommand{\vsmooth}{Farey}
\newcommand{\dsmooth}{Farey}

\newcommand{\shiftbracket}[3]{\raisebox{#1}{$\left(\raisebox{-#1}[#2][0mm]{$#3$}\right)$}}

\makeatletter
\newcommand{\abs}[2][\@nil]{%
  \def\tmp{#1}%
   \ifx\tmp\@nnil
       \left\lvert#2\right\rvert
    \else
         \left\lvert#2\right\rvert_{#1}
    \fi}
\makeatother

\newtheorem*{thm*}{Theorem}

\def\thesisbool{0}
\newcommand{\thesisarticle}[2]{\if\thesisbool1{#1}\else{#2}\fi}

\newcommand{\refproplowerstarfunctorial}{\thesisarticle{\autoref{prop:lowerstarfunctorial}}{\cite[Proposition 3.2]{berkskew}}}
\newcommand{\refchapberk}{\thesisarticle{\autoref{??}}{\cite{berkskew}}}
\newcommand{\refsecskew}{\thesisarticle{\autoref{sec:skew:skew}}{Section 3.2}}
\newcommand{\reffatoujuliabasics}{\thesisarticle{\autoref{prop:dyn:fatoujuliabasics}}{\cite[Proposition 4.19]{berkskew}}}
\newcommand{\refthmskewopencts}{\thesisarticle{\autoref{thm:skew:opencts}}{\cite[Theorem 3.7]{berkskew}}}
\newcommand{\refpropskewcomp}{\thesisarticle{\autoref{prop:skew:comp}}{\cite[Proposition 3.3]{berkskew}}}
\def\thmintervalstretch{\Mono{} \Annuloid{} Theorem}

\newcommand{\restatemonoannuloidthm}{\thmintervalstretch{}, \thesisarticle{\ref{thm:skew:ultimatemonoannuloid}}{\cite[Theorem 3.69]{berkskew}}}
\newcommand{\refthmskewreduction}{\thesisarticle{\autoref{thm:skew:reduction}}{\cite[Theorem 3.60]{berkskew}}}
\newcommand{\reflocalmults}{\thesisarticle{\autoref{sec:localmults}}{\cite[Section 3.8]{berkskew}}}

\begin{document}
  \title{The Calculus of Blowups on a Ruled Surface}
 \author{\mylongname}
  \address{Department of Mathematics\\ Brown University\\ Providence\\ RI 02912\\ USA}
 \email{richard\_birkett@brown.edu}
 
\begin{abstract}
The purposes of this article are threefold. First, to determine numerically when an arbitrary blowup of a smooth surface is smooth. We show the surface is smooth if and only if certain rational parameters involving log discrepancy and multiplicity of the exceptional divisors form a generalised Farey sequence within the dual graph of divisors. 
Second, in doing the above we provide an exposition of the Berkovich projective line $\P^1_\an(\K)$ over the Puiseux series as a universal dual graph for divisors on a ruled surface. 
Third, to explain how non-Archimedean skew products interact with this multiplicity structure of the tree.
\end{abstract}
 
 \maketitle
 

\makeatletter
\let\the@thm\thethm
\let\the@prop\theprop
\let\the@cor\thecor
\let\the@lem\thelem
\makeatother

\renewcommand*{\thethm}{\Alph{thm}}
\renewcommand*{\theprop}{\Alph{prop}}
\renewcommand*{\thecor}{\Alph{cor}}
\renewcommand*{\thelem}{\Alph{lem}}

\section{Introduction}

The resolution of singularities on a two-dimensional variety is quite well understood. The general process involves repeatedly finding singularities and blowing them up, until no more remain. However, there is typically no explicit procedure or descriptive pattern for these blowups, even when plentiful information is given about the desired resolution. In applications with a rational map $f : X \dashto Y$ of varieties, one uses that always possible to resolve its indeterminacy (where $f$ is not continuously defined) or its exceptional subvarieties through blowups. For instance, $f$ on a surface may contract a curve $C \subset X$ to a point $f(C) = q$ and we might seek a modification $\pi : Z \to Y$ which blows up $q$ until the lift $(\pi^{-1}\circ f)(C)$ is a curve. The local behaviour of $f$ near $C$ appears to hint at how many blowups are required, and ought to describe $\pi$ explicitly. A similar situation arises in reverse regarding the images of indeterminate points. See \autoref{ex:mono34} below.

For toric varieties we have a more clear perspective. For instance, the universal dual graph of boundary divisors for toric surfaces forms a circle, parametrised naturally and geometrically by coprime integer pairs (rays of rational slope in $\R^2$). Here, blowing up the intersection of two components $E_{(a, b)} \cap E_{(c, d)}$ yields precisely the divisor labelled $(a+ c, b+d)$. Furthermore, the surface is smooth at this point if and only if $ad-bc = \pm 1$ \cite[\S 1.4]{Oda}\cite[\S 2.2]{Ful}. Overall, this machinery has facilitated various applications in dynamics on toric varieties \cite{HP, Fav, JW, BDJ, BDJK}. 
However, toric varieties have limited scope, only allowing for further blowups at the intersections of other component curves in the toric boundary; what if one begins blowing up the uncountably many `free' points lying elsewhere in this boundary cycle? 

As curves can be viewed as valuations on the function field of a surface, valuative spaces provide a more general approach. Beginning with a fixed surface $X$, consider all divisors on a surface obtained through blow-ups and blow-downs over one-dimensional subset $X_0 \subset X$. We can form a \emph{universal dual graph} $\mathcal V$ with a geometric map $\redct_X : \mathcal V \to X_0$. For any birational map $\rho : X \dashto Y$ the dual graph $\Delta(Y)$ with vertices $\Gamma(Y)$ for the components in $Y_0 = \rho(X_0)$ embeds into $\mathcal V$ such that $\redct_Y = \rho \circ \redct_X$. Such a universal dual graph is an infinitely and densely branched $\R$-graph with a finite type skeleton, in fact, there is a natural isomorphism to a Berkovich curve over the Puiseux series $\K = \overline{\k((x))}$.

In this article, we study the situation where this universal dual graph is isomorphic to the Berkovich projective line, $\P^1_\an(\K)$. In the aforementioned case of toric surfaces, the total universal dual graph for the boundary divisors is a Berkovich curve of genus $1$, which has an $\R$-tree emanating from each rational point on its circular skeleton; this structure allows for free blowups.

 \begin{figure}[htbp]
\begin{center}
\includegraphics[width=\textwidth]{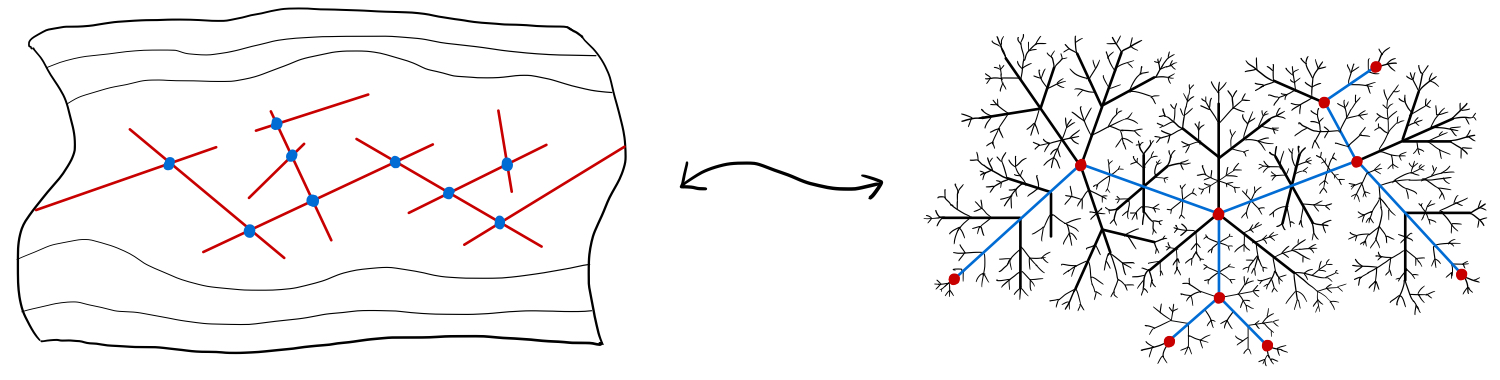}
\caption{A fibre with multiple components related with its dual graph, sitting inside a universal dual graph. Divisors are interpreted by $\Gamma(X)$, the red vertices, and intersections by blue edges.}
\label{default}
\end{center}
\end{figure}

The converse demonstrated by Baker, Payne and Rabinoff \cite{BPR} states that each finite subset of vertices $\Gamma \subset \mathcal V$ in the universal dual graph gives rise to a birational model $Y$ of $X$ such that $\Gamma(Y) = \Gamma$. Note that these model surfaces are possibly (very likely) not smooth. Thus, a natural question to ask is: which of the finite sets yield a smooth model?

Favre and Jonsson required such advanced techniques for their earlier dynamical work on two variable polynomials \cite{FJ07, FJ11}, founded on their multi-faceted exposition on valuative trees \cite{FJ04}. This has been fruitful to study degenerations of one variable maps and also higher dimensional rational maps. In \cite[\S 4, \S6]{FJ11}, they discuss the valuative (Berkovich) approach to dual graphs in detail, defining equivalent \emph{Puiseux} and \emph{Farey} parameters ($\frac ab = A$ therein), and \emph{multiplicities} ($m$ and $b$). See also \cite{berkandapps}.
We observe that the regularity condition for toric surfaces is the same as for the fractions $\frac ab, \frac cd$ to be Farey adjacent and that the blowup yields the \emph{Farey sum} or \emph{mediant} $\frac{a+c}{b+d}$. Although the approach of Favre and Jonsson is essentially restricted to smooth surfaces, they show that the same Farey addition determines parameters of smooth point blowups (see \cite[6.3.1]{FJ04}). This hints strongly that the Farey parameter plays the same r\^ole for smoothness as the integer pairs do in the toric case.

The main contribution of this paper is to better understand these invariants $\mm, \ma, \mb$ and prove a valuative criterion for smoothness in our setting, which turns out to involve a generalised notion of Farey sequences on a tree. These terms are explained further below and in \autoref{defn:smooth:smooth}.

\begin{thm}\label{intro:thm:smooth}
Let $\check X \to B$ be a ruled surface and $X$ be a model of $\check X$ over $t \in B$ and let $\Gamma = \Gamma(X) \subset \P^1_\an$ be its vertex set (a finite set of Type II points), and $U$ a $\Gamma$-domain. Then $U$ is \dsmooth{} if and only if $\redct_X(U) = p$ is a smooth point of $X$.

In particular, any given vertex set (a finite set of Type II points) $\Gamma \subset \P^1_\an$ is \vsmooth{} if and only if there exists a model $X$ of $\check X$ over $\bO$ such that $\Gamma(X) = \Gamma$ and $X$ is smooth at every $p \in X_0$.
\end{thm}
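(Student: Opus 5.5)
We sketch the plan as follows. We work locally at a point $p = \redct_X(U)$ of the special fibre $X_0$. There are two kinds of $\Gamma$-domain $U$: either $p$ lies on a single component $E_v$ (with $v \in \Gamma$ and $U$ a direction at $v$ containing no other vertex), or $p = E_v \cap E_w$ is a node (with $U$ the open annulus between adjacent vertices $v,w$). In both cases we compare smoothness of $X$ at $p$ with the \dsmooth{} condition of \autoref{defn:smooth:smooth}, which is phrased in terms of the parameters $\mm,\ma,\mb$ (multiplicity, log discrepancy and Farey parameter $\frac{\ma}{\mb}$) of the vertices bounding $U$.

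For the forward direction (smooth $\Rightarrow$ \dsmooth{}), we will use that a smooth point of a model over a ruled surface is dominated by a sequence of point blowups starting from the trivial model $\P^1_{\bO}$, whose vertex set is the Gauss point $\zeta_{0,1}$. We will prove by induction on the number of blowups that \dsmooth{}ness is preserved, using the Favre--Jonsson mediant rule \cite[6.3.1]{FJ04}. Blowing up a free point on $E_v$ creates a vertex $v'$ in direction $U$ with $\mb(v') = \mb(v)$ and $\ma(v') = \ma(v)+1$ (suitably normalised by multiplicity). Blowing up a node $E_v \cap E_w$ produces the Farey sum $\frac{\ma(v)+\ma(w)}{\mb(v)+\mb(w)}$, and this keeps the adjacency determinant equal to $\pm1$. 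Since \dsmooth{}ness is a local condition on each $\Gamma$-domain, and each blowup only alters the domains it subdivides, the induction closes. To reach models where $\Gamma$ does not contain the Gauss point, we also need a blow-down step: contracting a $(-1)$-curve $E_u$ that is the mediant of its two neighbours. This merges two Farey-adjacent domains into one that is again \dsmooth{}, by the converse of the mediant identity.

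For the converse (\dsmooth{} $\Rightarrow$ smooth), we will argue by minimality. Choose a smooth model $Y$ dominating $X$ near $p$, with $\Gamma(Y) \supset \Gamma$ and $\Gamma(Y)\cap U$ minimal. If $\Gamma(Y)\cap U$ is nonempty, it is a finite tree (or chain) whose induced Farey sequence refines the \dsmooth{} data on $U$. A generalised Stern--Brocot argument then shows that some vertex $u \in \Gamma(Y)\cap U$ is the mediant of its two neighbours, or is a leaf with $\ma(u)=\ma(\text{parent})+1$ in the free-point case. We will show this $E_u$ is a $(-1)$-curve, computing $E_u^2$ from the parameters via the local multiplicity formulas (\reflocalmults). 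Castelnuovo's criterion then contracts $E_u$ to a smooth model, contradicting minimality. Hence $\Gamma(Y)\cap U=\emptyset$, so $Y \to X$ is an isomorphism over $p$ by \cite{BPR}-uniqueness of the model with a given vertex set, and $X$ is smooth at $p$. The second statement follows by applying the first to every $\Gamma$-domain, using the existence of a model with $\Gamma(X)=\Gamma$ from \cite{BPR}.

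The main obstacle will be the combinatorial step: showing that any proper finite \dsmooth{} refinement inside $U$ contains a contractible vertex, and that the self-intersection computed from $(\mm,\ma,\mb)$ is exactly $-1$. The difficulty is that tree branching and multiplicities $\mm>1$ (coming from Puiseux ramification) complicate the classical Farey-chain argument. We first try to reduce to the case of a chain by passing to a leaf of the subtree. At a leaf, the Farey condition forces the leaf to be a mediant of its parent and the end of $U$, and we then verify $E_u^2=-1$ through the intersection-number formula $E_u^2 = -\sum_{w\sim u} \frac{\mm(w)}{\mm(u)}\,\cdot$ (determinant of the adjacent pair).
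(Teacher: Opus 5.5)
\textbf{The main gap is in the converse.} Your architecture there matches the paper's: take a minimal smooth resolution over $p$, find a $(-1)$-curve among the new vertices in $U$, contract it by Castelnuovo, and contradict minimality. But the step you flag as the main obstacle is the whole content of \autoref{lem:smooth}, and the method you propose for it fails.

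A leaf of $\Gamma(Y)\cap U$ need not be a $(-1)$-curve. On $\P^1\times\P^1$, take $U=D_\an(0,1)$ at the Gauss point. Blow up $(0,0)$ to get $u_1=\zeta(0,\abs x)$, then a free point of $E_{u_1}$ to get $u_2=\zeta(0,\abs x^{2})$, then $E_{u_1}\cap E_{u_2}$ to get $u_3=\zeta(0,\abs x^{3/2})$. Inside $U$ the dual graph is the chain $u_1 - u_3 - u_2$, with self-intersections $-3,-1,-2$. Neither leaf is contractible, and the leaf $u_2$ is not a mediant of anything.

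Your formula for $E_u^2$ also fails on this example: it returns $-2$ for $u_3$ and $-1$ for $u_2$, the reverse of the truth. The local multiplicity formulas you invoke concern dilations and local degrees of skew products, not intersection numbers. The correct tool is $E_u\cdot Y_0=0$ (\autoref{lem:farey:intersections}), i.e.\ $\mb(u)E_u^2=-\sum_{F\cap E_u\neq\emptyset}\mb(F)$, combined with the right choice of $u$:
\begin{itemize}
\item Take $u\in\Gamma(Y)\cap U$ of maximal $\mb$, and among those the $\prec$-minimal one (with the tie-breaks of \autoref{lem:smooth} if $U$ is a disk or $\infty\in U$).
\item A vertex in a generic direction $\bvec v$ at $u$ not containing $\partial U$ would have $\mb\ge\mm(\bvec v)=\mb(u)$, contradicting this choice. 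So $E_u$ meets at most two components, lying in its special directions (or towards $\partial U$).
\item Each neighbour has $\mb\le\mb(u)$. In the satellite case the lower neighbour is strictly smaller, either by the choice or by the Farey inequality if it is a boundary point of $U$.
\item Hence $0<-E_u^2<2$, and since $E_u^2\in\Z$ on the smooth $Y$, $E_u^2=-1$.
\end{itemize}
No exact mediant identity and no reduction to a chain are needed.

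\textbf{The forward direction is a legitimate alternative but incomplete as sketched.} Induction over a blowup sequence can work; the paper remarks that \autoref{lem:farey:freesat} yields such a proof. However, you only track $(\ma,\mb)$ and a determinant, whereas being Farey is a condition on $\mg$ at every point of $U$. You must also carry the multiplicity data:
\begin{itemize}
\item a free blowup in a generic direction gives a free vertex, so its new disks are generic;
\item a satellite blowup gives a vertex whose only special directions are its two edges;
\item $\mm$ is constant on $[E,F)$.
\end{itemize}
The determinant is $\mm(E)$, not $\pm1$ (see $\frac12,\frac34$ in \autoref{ex:farey:multtwo}). By \autoref{rmk:smooth:adjacency}, the condition $\det=\mm$ alone does not characterise Farey annuli.

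Your blow-down step is unnecessary and misdescribed. A smooth model dominates a minimal model whose vertex set is a single integral point, all of whose directions are generic, so start the induction there. Also, the contractions in elementary transformations are of leaves, not mediants.

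The paper avoids induction altogether (\autoref{thm:farey:puiseuxsection}). At a smooth $p$, $x$ vanishes to order exactly $\mm(p)=\mb(E)$ (free case) or $\mb(E)+\mb(F)$ (satellite case). Pulling back along a Puiseux multisection $y=\gamma(x)$ with $\gamma\in U$ gives $\mm(\gamma)\ge\mm(p)$. Hence $\mg(U)\ge\mm(p)$, which is exactly the Farey condition.
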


Let $X \to B$ be a birationally ruled surface. Via the reduction map, we associate to each divisor on the special fibre $X_0$ (over some $t\in B$ with $\k[[x]]$ its completed local ring) with a Type II point $\zeta = \zeta(\gamma, \abs x^\frac\ma\mb)$; the fraction $\frac \ma\mb$ forms its \emph{Farey parameter} and $\mm(\zeta) = \mm(\gamma)$ the multiplicity (for a good choice of $\gamma$). A \emph{$\Gamma$-domain} $U \subset \P^1_\an$ is an open set bounded by some vertices in $\Gamma$, and it is said to be \emph{\dsmooth{}} iff it adheres to a generalised Farey condition (\ref{defn:smooth:smooth}) on their parameters described more precisely by the following theorem. Overall, $\Gamma$ is \emph{\vsmooth{}} iff each of its $\Gamma$-domains are \dsmooth{}. 

\begin{thm}\label{thm:intro:annulustfae}
 Let $\Gamma \subset \P^1_\an$ be a finite vertex set of Type II points, and let $U$ be a $\Gamma(X)$-domain. If $U$ is a disk bounded by $\zeta$, then $U$ is \dsmooth{} if and only if $\mb(\zeta) = \min_{\xi \in U}\mm(\xi)$, in which case $\mb(\zeta) \mid \mm(\xi) \mid \mb(\xi)$ for every $\xi \in U$. 
If $U$ is an annulus bounded by $\alpha$ and $\beta$, then TFAE
\begin{enumerate}
 \item $U$ is \dsmooth{}: $\displaystyle\min_{\xi \in U}\mb(\xi) > \max(\mb(\alpha), \mb(\beta))$.
 \item $\mb(\zeta) > \max(\mb(\alpha), \mb(\beta))$ for every $\zeta \in (\alpha, \beta)$.
 \item $\mg(\zeta) \in \N^+\langle \mb(\alpha), \mb(\beta)\rangle$ for all $\zeta \in U$.
 \item $\alpha \prec \beta$ and $\ma(\alpha)\mb(\beta)-\mb(\alpha)\ma(\beta) = \mm(\alpha) = \HCF(\mb(\alpha), \mb(\beta))$ or vice versa.
\end{enumerate}
 \end{thm}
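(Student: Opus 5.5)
We work along the segment $[\alpha,\beta]$ inside $\P^1_\an$, using the structure of the invariants $\mm,\ma,\mb$ along it. The facts we rely on are standard for the Puiseux/Farey parametrisation (cf. \cite[\S 6]{FJ11}).

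\textbf{Setup.} Choose a good parametrisation $\zeta(\gamma,\abs{x}^{s})$ of the segment, with $s$ ranging over $[s_\alpha,s_\beta]$ and $\gamma$ fixed of multiplicity $\mm$.
\begin{itemize}
\item On the open segment $(\alpha,\beta)$, a point with Farey parameter $s=\frac{\ma}{\mb}$ in lowest terms relative to $\mm$ has $\mb=\mm\cdot q$, where $q$ is the reduced denominator of $s$ over the lattice $\frac1{\mm}\Z$.
\item $\mm$ is nondecreasing as one moves away from the root, and jumps only at branch points.
\item Every $\xi\in U$ off the segment retracts to some $\zeta\in[\alpha,\beta]$ with $\mb(\xi)\ge\mm(\xi)\ge\mb(\zeta)$. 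Equivalently, $\mm$ along a branch at $\zeta$ is a multiple of $\mb(\zeta)$.
\end{itemize}

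\textbf{(1)$\Leftrightarrow$(2).} This follows from the retraction fact. The minimum of $\mb$ over $U$ is attained on $(\alpha,\beta)$, since off-segment points satisfy $\mb(\xi)\ge\mb(r(\xi))$ with $r$ the retraction. Conversely, (1) trivially implies (2).

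\textbf{(2)$\Leftrightarrow$(4).} This is the classical Farey statement, transported to the segment. After rescaling by $\mm$ (the multiplicity of the segment, equal to $\mm(\alpha)$ when $\alpha\prec\beta$), the invariants of $\zeta\in(\alpha,\beta)$ become fractions $\frac{a}{b}$ strictly between $\frac{a_\alpha}{b_\alpha}$ and $\frac{a_\beta}{b_\beta}$, with $\mb=\mm b$.

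The elementary lemma we need is this: every rational strictly between two fractions has denominator greater than both, if and only if the two fractions are Farey neighbours, i.e.\ $|a_\alpha b_\beta-b_\alpha a_\beta|=1$. One direction uses the mediant. If the fractions are not neighbours, the Stern–Brocot descent produces an intermediate fraction with denominator at most $\max$.

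Translating back gives
\[
\ma(\alpha)\mb(\beta)-\mb(\alpha)\ma(\beta)=\mm^2\cdot(\pm1)/\mm=\pm\mm,
\]
depending on the normalisation of $\ma$ in \autoref{defn:smooth:smooth}. The condition that $\mm(\alpha)=\HCF(\mb(\alpha),\mb(\beta))$ is exactly coprimality of the rescaled denominators, which is forced by the neighbour condition. The comparability $\alpha\prec\beta$ comes out of the argument as follows. If neither point dominates the other, the segment passes through their join $\alpha\wedge\beta$. That join has $\mb$ no larger than $\mb$ of at least one endpoint, which violates (2).

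\textbf{(3).} Here $\N^+\langle\cdot\rangle$ denotes positive integer combinations, and $\mg$ is the relevant genus/weight invariant.
\begin{itemize}
\item \emph{(4)$\Rightarrow$(3).} Farey adjacency means every intermediate fraction is $\frac{ia_\alpha+ja_\beta}{ib_\alpha+jb_\beta}$ with $i,j\ge1$. So $\mb(\zeta)=i\mb(\alpha)+j\mb(\beta)$ on the segment. Off the segment we apply the divisibility fact. Note the subtlety: $\mg$ rather than $\mb$ is used here, precisely so that the property propagates along branches.
\item \emph{(3)$\Rightarrow$(2).} Every element of $\N^+\langle\mb(\alpha),\mb(\beta)\rangle$ exceeds $\max(\mb(\alpha),\mb(\beta))$. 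Evaluating (3) on segment points, where $\mg=\mb$, gives (2).
\end{itemize}

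\textbf{Disk case.} Here $U$ retracts to $\zeta$. The condition $\mb(\zeta)=\min_U\mm$, together with the same divisibility fact, yields $\mb(\zeta)\mid\mm(\xi)\mid\mb(\xi)$.

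\textbf{Main obstacle.} We expect the hardest step to be the careful bookkeeping of how $\mm$ can jump at branch points inside $(\alpha,\beta)$. This is why we first need the statement that $\mm$ is constant on the open segment under (2), to justify the rescaling. Our plan is to prove this by contradiction. At a jump point, $\mb$ equals the old $\mm$ times a small denominator, which contradicts (2).
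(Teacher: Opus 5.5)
Your outline for (1)$\Leftrightarrow$(2)$\Leftrightarrow$(3), for (2)$\Rightarrow$(4) and for the disk case is workable. The chain is: comparability via the join, constancy of $\mm$ on $[\alpha,\beta)$ via a jump point, rescaling by $m=\mm(\alpha)$, then \autoref{prop:fareyaux:iff}. However, the direction (4)$\Rightarrow$(2) has a genuine gap, and so does (4)$\Rightarrow$(3), which uses the same rescaling.

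The transport to classical Farey fractions needs $\mm\equiv m$ on $[\alpha,\beta)$. Only then does $\mb(\zeta)=m\cdot\mathrm{den}(m s_\zeta)$ hold for $\zeta\in(\alpha,\beta]$, where $s_\zeta=\log_{\abs{x}}\diam(\zeta)$. You derive this constancy only from (2). Starting from (4), you treat $\HCF(\mb(\alpha),\mb(\beta))=\mm(\alpha)$ merely as a consequence of adjacency. In fact it is exactly the hypothesis that rules out a drop of multiplicity inside $(\alpha,\beta)$.

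This is not cosmetic. In \autoref{rmk:smooth:adjacency}, $\alpha=\zeta(x^{1/3},\abs{x}^{1/2})$ and $\beta=\zeta(0,1)$ satisfy $\ma(\alpha)\mb(\beta)-\mb(\alpha)\ma(\beta)=3=\mm(\alpha)$. Yet $\mm$ drops to $1$ at $\zeta(0,\abs{x}^{1/3})\in(\alpha,\beta)$, where $\mb=3\le 6$, so (2) fails. Your rescaling would give $\mb(\beta)=3$ instead of $1$, and the translation of (4) into Farey adjacency breaks down.

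The missing argument is short. Assume (4) with $\alpha\prec\beta$. Then $\mb(\alpha)\mb(\beta)(s_\alpha-s_\beta)=m$, and $m\mid\mb(\beta)$ gives $s_\alpha-s_\beta\le 1/\mb(\alpha)$. Suppose $\mm$ first drops below $m$ at some $\zeta_0\in(\alpha,\beta)$.
\begin{itemize}
\item Then $\vec v(\alpha)$ is a generic direction at $\zeta_0$. Indeed, the special direction $\vec v(a)$, with $\mm(a)=\mm(\zeta_0)$, carries multiplicity $\mm(\zeta_0)<m$ along $[a,\zeta_0]$.
\item Hence $\mg(\zeta_0)=m$ and $s_{\zeta_0}\in\tfrac1m\Z\subseteq\tfrac1{\mb(\alpha)}\Z$.
\item Since $s_\alpha\in\tfrac1{\mb(\alpha)}\Z$ as well, $s_\alpha-s_\beta>s_\alpha-s_{\zeta_0}\ge 1/\mb(\alpha)$, a contradiction.
\end{itemize}
The same generic-direction argument at $\beta$ gives $\mb(\beta)=m$ when $\mm(\beta)<m$. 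Only after these two steps do your rescaling and Haros apply.

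Two related corrections:
\begin{itemize}
\item Your ``retraction fact'' is false as an unconditional statement. On $[\zeta(x^{1/2},\abs{x}),\zeta(0,1)]$, the off-segment special direction $\vec v(0)$ at $\zeta(0,\abs{x}^{1/2})$ contains $0$, with $\mm=1<2=\mb$. It holds only once constancy of $\mm$ on $[\alpha,\beta)$ has been proved from whichever of (2), (4) you assume.
\item Since $\ma(\zeta)=\mb(\zeta)s_\zeta$ by \autoref{thm:galois:parametercorrespondence}, writing $m s_\bullet=a_\bullet/b_\bullet$ in lowest terms gives $a_\bullet=\ma(\bullet)$ and $\mb(\bullet)=m b_\bullet$. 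So the determinant is $m(a_\alpha b_\beta-a_\beta b_\alpha)=+m$ for $\alpha\prec\beta$. There is no ``$\mm^2/\mm$'' or normalisation ambiguity, and $\mg$ is simply the generic multiplicity, equal to $\mb$.
\end{itemize}
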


Just as Farey adjacency and sequences can be defined in multiple equivalent ways (e.g.\ \ref{prop:fareyaux:haros}), the notions of \dsmooth{} $\Gamma$-domain have these equivalent forms. Our new adjacency rule is complicated by multiplicity. \autoref{ex:farey:multtwo} gives a smooth surface with transversely intersecting divisors, whose Farey parameters, $\frac 12$ and $\frac 34$, are not Farey adjacent in the classical sense, but $3 \cdot 2 - 1\cdot 4 = 2 = \HCF(2, 4) = \mm(\alpha)$. Furthermore, neither of the two equalities in (iv) can be dropped; see \autoref{rmk:smooth:adjacency}. 

In the process of establishing this characterisation, we will present an updated account of the Berkovich projective line over the Puiseux series as a universal dual graph for divisors. We will delve into the action of a \emph{non-Archimedean skew product} on $\P^1_\an$, studied in \cite{berkskew}, specific to the field of Puiseux series. This corresponds to a rational skew product on a ruled surface and provides a means to comprehend the mapping of curves under such a rational map, regardless of birational equivalence. It turns out that the mapping on $\P^1_\an$ is piecewise linear with rational slopes (of a common denominator) with respect to the metric induced by the Farey parameters. An important technical result in its most basic form is the following.

\begin{thm}\label{thm:intro:multiplicitydivides}
 Let $\phi_*$ be a simple rational skew product, and $\zeta \in \P^1_\an$. Then $\mm(\phi_*(\zeta)) \mid \mm(\zeta)$. Hence $\phi_*(\T_m) \subseteq \T_m$ for any $m \in \N_+$.
\end{thm}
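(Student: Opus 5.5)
The plan is to reinterpret the multiplicity $\mm(\zeta)$ Galois-theoretically, and then use the fact that a simple rational skew product is defined over $\k((x))$. Let $G = \Gal(\K/\k((x))) \cong \hat{\Z}$, with $\k$ of characteristic zero, so that $\K = \bigcup_m \k((x^{1/m}))$. Let $G_m = \Gal(\K/\k((x^{1/m}))) = m\hat{\Z}$. The group $G$ acts on $\P^1_\an(\K)$ by isometries of the tree: each $\sigma$ preserves $\abs{\cdot}$, and so sends a disk $D(\gamma, r)$ to $D(\sigma\gamma, r)$.

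\textbf{Step 1: multiplicity as an orbit size.} I will show that $\mm(\zeta) = [G : \mathrm{Stab}_G(\zeta)]$, that is, $\mm(\zeta)$ is the least $m$ with $G_m \subseteq \mathrm{Stab}(\zeta)$. Write $\zeta = \zeta(\gamma, r)$.

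One direction is immediate. If $\gamma \in \k((x^{1/m}))$, then every $\sigma \in G_m$ fixes $\gamma$. Hence $\sigma D(\gamma, r) = D(\gamma, r)$, and so $\sigma\zeta = \zeta$.

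Conversely, suppose $G_m$ fixes $\zeta$. Choose any centre $\gamma$ of the disk. It lies in some finite extension $L = \k((x^{1/N}))$, and $\mathrm{Gal}(L/\k((x^{1/m})))$ is a finite group of order $N/m$. Every conjugate $\sigma\gamma$ lies in $\sigma D = D$. Put
\[ \gamma' = \frac{m}{N}\sum_{\sigma} \sigma\gamma . \]
This is an average of points of $D$ with coefficient $\frac mN$ satisfying $\abs{\frac mN} = 1$ (characteristic zero). By the ultrametric inequality, $\gamma' \in D$. Since $\gamma'$ is fixed by the whole group, $\gamma' \in \k((x^{1/m}))$. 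So $\zeta$ has a centre of level $m$.

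The same averaging argument handles Type I points, where the orbit size is the degree. For Type III and IV points, or for points with infinite orbit, I will adopt the convention that matches the paper's definition ($\mm = \infty$, or the stabiliser description). Those cases are then either trivial or identical.

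The step I expect to be the main obstacle is checking that this matches the paper's definition of $\mm(\zeta)$ as $\mm(\gamma)$ for a \emph{good} choice of $\gamma$. The averaging is exactly the device that shows a minimal-level centre exists inside the disk, so the two definitions should agree.

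\textbf{Step 2: equivariance.} For a simple rational skew product, the induced map $\phi_*$ on $\P^1_\an(\K)$ comes from a rational function $\phi_x(y)$ whose coefficients lie in $\k((x))$, with the base action trivial after the normalisation implicit in ``simple''. Therefore $\sigma \circ \phi_* = \phi_* \circ \sigma$ on Type I points. By continuity, or by functoriality of $\phi_*$ on seminorms, the same identity holds on all of $\P^1_\an$.

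If the base map were $x \mapsto \lambda x^n$, one would instead need to twist $\sigma$ by the induced endomorphism of $\hat{\Z}$. This is precisely why the simplicity hypothesis is used.

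\textbf{Step 3: conclusion.} Equivariance gives $\mathrm{Stab}(\zeta) \subseteq \mathrm{Stab}(\phi_*\zeta)$. Hence $[G : \mathrm{Stab}(\phi_*\zeta)]$ divides $[G : \mathrm{Stab}(\zeta)]$, since both are open subgroups $m\hat{\Z}$ of $\hat{\Z}$. By Step 1, this says $\mm(\phi_*(\zeta)) \mid \mm(\zeta)$.

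For the second claim, suppose $\zeta \in \T_m$, i.e.\ $\mm(\zeta) \mid m$. Then $\mm(\phi_*\zeta) \mid \mm(\zeta) \mid m$, so $\phi_*\zeta \in \T_m$. This gives $\phi_*(\T_m) \subseteq \T_m$.
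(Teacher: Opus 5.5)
Your strategy is the paper's. For scale factor $1$, Galois equivariance $\omega_*\circ\phi_*=\phi_*\circ\omega_*$ (\autoref{cor:commwithG} with $n=1$) forces $\prim_*^{\mm(\zeta)}$, equivalently the stabiliser of $\zeta$, to fix $\phi_*(\zeta)$, and your Step~3 is exactly this. The gap is in Step~2, the only substantive step. The equivariance you need is true, but the reason you give for it is not. Simplicity means only that the scale factor is $1$, i.e.\ $\phi_1(x)=\lambda x+O(x^2)$ with $\lambda\in\k^\times$ and arbitrary higher-order terms. It does not make the base action trivial, and nothing in the statement lets you normalise it away. For instance, $\phi(x,y)=(x+x^2,y)$ is a simple rational skew product, yet $\phi_*=\phi_{1*}$ sends the Type I point $x$ to $x-x^2+2x^3-\cdots$. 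On Type I points $\phi_*$ acts by $(\phi_1^*)^{-1}\circ\phi_2$, so $\phi_2\in\k((x))(y)$ only gives equivariance of the factor $\phi_{2*}$. You must still show that the extension $\phi_1^*:\hk\to\hk$ of $x\mapsto\phi_1(x)$ commutes with every $\omega^*\in G$, and the proposal never does this.

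The missing step is easy to supply. The paper's extension (\autoref{defn:galois:eqauto} with $r=1$) is $\phi_1^*(x^{a/b})=\lambda_b^a x^{a/b}(1+h(x))^{a/b}$, where $(1+h(x))^{a/b}\in\k[[x]]$ is fixed by $G$. Hence $\omega^*\phi_1^*(x^{a/b})=\omega_b^a\lambda_b^a x^{a/b}(1+h(x))^{a/b}=\phi_1^*(\omega^*(x^{a/b}))$. By $\k$-linearity, multiplicativity and continuity, $\omega^*\phi_1^*=\phi_1^*\omega^*$ on $\hk$; this is the last clause of \autoref{prop:primghom}. Therefore $\omega^*\phi^*=\phi^*\omega^*$ on $\hk(y)$, and functoriality of $(\cdot)_*$ gives $\omega_*\circ\phi_*=\phi_*\circ\omega_*$ on all of $\P^1_\an$, with no density argument needed.

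If you prefer to avoid the explicit formula, argue intrinsically. Any dilating extension $\Psi$ of an automorphism of $\k((x))$ maps $\k((x^{1/m}))$ onto a degree-$m$ extension of $\k((x))$ inside $\K$, which must be $\k((x^{1/m}))$ itself. So $\phi^*\sigma^*=\tau^*\phi^*$ with $\tau^*=\phi_1^*\sigma^*(\phi_1^*)^{-1}\in G_m$ whenever $\sigma^*\in G_m$, which is all Step~3 needs.

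Finally, Step~1 is unnecessary here. The paper defines $\mm(\zeta)=|\Orb_\prim(\zeta)|$ and proves it equals $|\Orb_G(\zeta)|$; the good-centre description is \autoref{prop:multiplicity}, proved by truncation. Your averaging argument is a valid alternative proof of that description, provided you use the closed disk $\CD(\gamma,r)$ determined by $\zeta$ and a centre $\gamma\in\K$ with $m\mid N$.
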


When studying the dynamics of rational maps of the surface, resolving or controlling indeterminacy and contracted curves is of interest in applications such as studying invariant currents and measures \cite{BD, Gue, DDG1, DDG2, DDG3}. It is often important to track the orbits of curves as well as of points. Furthermore, it is highly desirable to understand these orbits in a way that transcends the structure of a fixed surface, rather to take a perspective which is invariant of birational transformations.

 In particular, studying the orbits of curves is of interest to show that a rational map $f : X \dashto X$ on a surface is potentially algebraically stable, meaning we can find a choice of birational $\pi : Y \dashto X$ where the lift $g : Y \dashto Y$ of $f : X \dashto X$ is algebraically stable; of course, it is also helpful in proving when this is impossible. In a subsequent work of the author, a characterisation for potential algebraic stability is given for rational skew products: a rational map on a ruled surface which respects its fibration; see \cite{skewstab}. The present article is necessary to prove the main theorem \cite[Theorem A]{skewstab} about finding a \emph{smooth} stabilisation for rational skew products.

Note that although we appear to specialise to ruled surfaces, in reality this is a slight generalisation; the smoothness criterion applies to blowups infinitely near any closed point of any surface, by restricting to the open Berkovich disk. Further, it should also allow for gluing together charts of a larger Berkovich space, involving a cycle of rational curves on a surface. In contrast to the presentation of Favre and Jonsson \cite{FJ04}, we assume \cite{BPR} so that less constructive heavy lifting is required. This means that most of our work on multiplicity lies in the Berkovich projective line, where it is more readily comprehensible. For instance, this avoids reasoning on the dual graph, which typically requires repetitive inductions.

Now we give an overview of the structure of the \thesisarticle{chapter}{article} and its contents. We begin with an exposition of these techniques in \autoref{sec:moremaps} through several examples. This is a practical introduction and builds upon the technical details and certain insights deferred to later. Diagrams abound. This includes brief discussions of skew products, dual graphs and Farey parametrisation with a view to applications. \autoref{ex:secondexample} can be considered an example of the main theorem where we construct the minimal \vsmooth{} vertex set which contains $\zeta(x^{5/7} + x + x^{4/3}, \abs x^{3/2})$; this requires $17$ blowups on $\P^1\times \P^1$. For the reader interested in the essential take-aways, understanding \autoref{sec:moremaps} may be a sufficient goal, taking items from other sections only as needed.

In \autoref{sec:background} we review essential background from a diverse range of topics. It begins with the basics of blowups on surfaces, and some instructive examples. This is followed by a note on skew products, Puiseux series and their Galois absolute group over the Laurent series, the Berkovich projective line, and a full introduction to Farey sequences (which we define slightly differently).

Later, in \autoref{sec:space} we dive into the finer geometric and arithmetic structure of the Berkovich projective line, $\P^1_\an(\K)$ over the field of Puiseux series $\K = \Kk$ (or Levi-Civita series $\hkk$). Here we study the concepts of \emph{multiplicity} $\mm$ and \emph{generic multiplicity} $\mg$, introduced in \cite{FJ04}, which prove fundamental to our geometric understanding of dual graphs. These multiplicities can be read off a Puiseux series and in turn correspond to the Galois group. The space is an $\R$-tree with an interior \emph{hyperbolic} metric and branching compatible with these multiplicities.

Then, in \autoref{sec:maps} we focus on the translation from a skew product on a ruled surface to a non-Archimedean skew product on the Berkovich projective line. According to the most general definition given in \cite{berkskew}, this requires a little care to extend an algebra map from Laurent series to Puiseux series. The non-uniqueness of this extension is explored in terms of the Galois group and the notion of multiplicity. The section finishes by observing how the invariants of divisors change under a rational mapping; we prove a version of \autoref{thm:intro:multiplicitydivides} (c.f.\ \autoref{thm:multiplicitydivides}) in its highest reasonable generality: \autoref{thm:multiplicitydividesadv}.

\autoref{sec:corresp} continues the ideas in \autoref{sec:space} with an exposition of $\P^1_\an(\K)$ as a universal dual graph for fibral divisors and sections of a ruled surface. We explain how a finite set of vertices $\Gamma$ of the tree correspond to divisors on the surface, and the open subsets bounded by them, \emph{$\Gamma$-domains}, are closed points on the surface. The section concludes in \autoref{sec:metrics} by defining the \emph{log discrepancy} $\ma$ and demonstrating equivalence of multiplicity of (Puiseux) Berkovich points with the geometric invariants of the corresponding divisors and sections on the surface ($\mb = \mg$). The Farey parameter (fraction or pair) $\ma/\mb$ turns out induce a metric equal to the hyperbolic metric for $\P^1_\an(\K)$.

Perhaps the best property of the Farey parameter pair is that it also controls blowups on the surface through Farey addition; see \autoref{thm:farey:addition}. \autoref{sec:smooth} expands on this observation to prove the (valuative) criteria for smoothness of birationally ruled surfaces \autoref{intro:thm:smooth} (c.f.\ \autoref{thm:smooth}). This involves the notion of Farey sequences generalised beyond $\Q$ to a tree with rational points. The new Farey adjacency is explained by \autoref{thm:intro:annulustfae} c.f.\ \autoref{thm:smooth:multdesc}, \autoref{thm:farey:annulustfae} etc.

\subsection*{Comparison with Favre \& Jonsson}

To be clear, much of this work extends that of C.\ Favre and M.\ Jonsson. In \cite{FJ04} they studied the invariants $\ma, \mm, \mb=\mg$, including proofs of equivalences between trees, dual graph representations and metrics. Their book focuses on the valuative tree $\mathcal V$ which represents blowups over a single point normalised by $\min(\nu(x), \nu(y)) = 1$, whereas our focus on a `relative valuative tree' normalised by $\nu(x) = 1$. The relative valuative tree $\mathcal V_x$ which Favre \& Jonsson study primarily in \cite[\S4]{FJ04} is subject to the extra condition $\nu(y) > 0$ and turns out to be equivalent to the subset $\overline {D_\an(0, 1)} \subset \P^1_\an(\k((x)))$ of the tree studied in the present article. However, please note that much of their presentation of a universal dual graph \cite[\S6]{FJ04} and blowups is based in the former setting $\mathcal V$ which has a slightly different parametrisation; this difference is ultimately explained in their subsection \cite[\S6.8]{FJ04}. It is possible to view a subset of one in the other with a little work; for instance $\mathcal V$ is the closed disk $\CD_\an(0, 1) \subset \P^1_\an$ with altered parametrisation and \cite[Theorem 6.51]{FJ04} gives another direction. Therefore, the reader may consider \autoref{sec:space} largely as expository work. \autoref{sec:corresp} recalls and extends results found in \cite[\S6]{FJ04}, combined with description of models and dual graphs akin to others \cite{BL85, BL93, BPR, DeF2, BFJ}. Our approach takes a geodesic to an understanding of the universal dual graph grounded in the (entire) Berkovich projective line, as opposed to Favre and Jonsson's standpoint of equivalence between different definitions of trees. Note that  \autoref{sec:corresp} ends with our own important technical result \autoref{thm:farey:puiseuxsection}, in preparation for the valuative criterion for smoothness. \autoref{sec:smooth} builds upon the above to deliver \autoref{thm:smooth}. Favre \& Jonsson practically take \autoref{thm:farey:addition} as definition of $\ma, \mb$ and work through equivalence metrics later. We prefer to define Farey parameters more intrinsically and prove they have the desired properties. Further comparisons to \cite{FJ04} are signposted therein.

\makeatletter
\let\thethm\the@thm
\let\theprop\the@prop
\let\thecor\the@cor
\let\thelem\the@lem
\makeatother

  \tableofcontents
  \clearpage
  

\section{The Calculus of Blowups}\label{sec:skewprodcor}\label{sec:moremaps}

In this section, we show the ideas of this \thesisarticle{chapter}{article} in application by example, especially with an eye to dynamics. The result is a `calculus of blowups' for ruled surfaces, where we can understand how a skew product $\phi : X \dashto X$ maps divisors through the mapping of vertices in a universal dual graph $\P^1_\an$ under $\phi_*$. Gradients of the resulting maps are piecewise linear and rational, and these vertices we study are the rational points on the tree.

\subsection{Skew Products on a Ruled Surface}

A \emph{skew product} $\phi : X \dashto X$ on a ruled surface $h : X \to B$ over $k = \bar k$ can be defined by a commuting diagram of maps over a self-map on the base $\phi_1 : B \to B$. The key idea in our applications is that $\phi$ induces a non-Archimedean skew product $\phi_*$ on the Berkovich projective line $\P^1_\an$ over the Puiseux series $\Kk$. The latter map will represent the action of $\phi$ on divisors and points in (and around) a source and target fibre of $X$. 

\begin{minipage}{0.25\textwidth}
 \[
\begin{tikzcd}
 X \arrow[dashed]{r}{\phi} \arrow[swap]{d}{h} & X \arrow{d}{h} \\
 B \arrow[swap]{r}{\phi_1} & B
\end{tikzcd}
\]
\end{minipage}
$\leadsto$\hfill
\begin{minipage}{0.25\textwidth}
   \[
\begin{tikzcd}
 \K(y) & \K(y) \arrow[swap]{l}{\phi^*} \\
\K \arrow[hook]{u} & \K \arrow[hook]{u} \arrow{l}{\phi_1^*}
\end{tikzcd}
\]
\end{minipage}
$\leadsto$\hfill
\begin{minipage}{0.3\textwidth}
 \begin{align*}
 \phi_* : \P^1_\an(\K) &\longrightarrow \P^1_\an(\K)\\
 \zeta &\longmapsto \phi_*(\zeta)\\
 \text{where } \norm[\phi_*(\zeta)]{f} &= \norm[\zeta]{\phi^*(f)}^\q
\end{align*}
\end{minipage}

Localising to a fibre $b \in B$ (and $\phi_1(b)$ respectively) and taking completion over the local ring $\bO_b$, this is (locally) equivalent to a rational skew endomorphism $\phi^* : \k((x))(y) \to \k((x))(y)$. Deriving this is easy if $B \cong \P^1$, as $\phi$ is naturally given as a two variable function $\phi(x, y)$; computing a completion is not required. See \autoref{ex:firstexample} for a very simple example in practice. In \autoref{sec:maps} we describe in detail how to extend this to a map over the Puiseux series $\K = \Kk$, and whence a non-Archimedean skew product $\phi_* : \P^1_\an(\K) \to \P^1_\an(\K)$. Later, in \autoref{sec:corresp} we examine the arithmetic structure of $\P^1_\an(\K)$, with its Farey parameters. 

For simplicity, we will focus attention to the situation where $\phi_1(b) = b$ is fixed. Recall that each Type II point $\zeta(\gamma, \abs x^r)$ represents a divisor, say $E_r$ in some model $X$ of $\P^1\times \P^1$ via the reduction map, $\redct_X$. In a general model, the reduction of $\zeta(\gamma, \abs x^r)$ can be a divisor or a closed point depending on if $E_r$ is realised in the model. To a model we associate a vertex set $\Gamma = \Gamma(X)$ of those representing divisors in $X_b$. The open components, called $\Gamma$-domains, bounded by these vertices represent closed points.

Before we proceed, let us briefly recall and fix terminology about rational maps on a surface (see \autoref{sec:background}). If the proper transform of a divisor $E \subset X$ is a closed point, we shall say it is \emph{contracted} by $\phi$. An \emph{indeterminate point} $p \in \mathcal I(\phi)$ is one where $\phi$ is not continuously defined, or equivalently $\phi(p)$ is a curve. Recall also our convention that images of curves are taken by proper transform, which effectively ignores indeterminate points.

As such, whether $\phi_*$ maps a vertex to a vertex or within a $\Gamma$-domain tells us if the associated divisor is mapped by $\phi$ to another divisor or contracted to a point. Likewise, the image of a $\Gamma$-domain will contain another vertex if and only if the associated point is indeterminate. The following proposition is a modification of \cite[Lemma 4.7]{DeF2}. The argument follows from the discussion in \autoref{sec:models}, using \autoref{cor:galois:pointcorrespondence} for instance.

\begin{prop}\label{prop:appl:indetcontract}
 Let $\phi : X \dashto X$ be a skew product on a ruled surface $X \to B$ and consider $b \in B$. Let $\Gamma \subset \P^1_\an$ be the vertex set associated to divisors in $X_b$, similarly let $\Gamma'$ be the vertex set associated to $X_{\phi_1(b)}$; finally let $\phi_* : \P^1_\an \to \P^1_\an$ be the associated rational skew product. Then $\redct_{X_{\phi(b)}}(\phi_*(\zeta))$ is the total transform of $\redct_{X_b}(\zeta)$ by $\phi$. 
 More specifically, suppose $\zeta \in \Gamma$ then $\redct_{X_b}(\zeta) = E$ is a divisor in $X$.
\begin{enumerate}
 \item If $\phi_*(\zeta) \in \Gamma'$, then the proper transform by $\phi$ is $\phi(E) = F = \redct_{X_b}(\phi_*(\zeta))$.
 \item Otherwise, if $\phi_*(\zeta) \nin \Gamma'$ then $E$ is contracted to the closed point $q = \redct_{X_{\phi(b)}}(\phi_*(\zeta)) \in X_{\phi_1(b)}$ by $\phi$. Here, $\phi_*(\zeta)$ lies in the $\Gamma'$-domain $V = \emb_{X_{\phi(b)}}(q)$.
\end{enumerate}
Suppose instead that $U$ is a $\Gamma$-domain then $\redct_{X_b}(U) = p$ is a closed point in $X$.
\begin{enumerate}
 \item If $\phi_*(U)$ is contained in a $\Gamma'$-domain $V$, then $\phi$ is continuous at $p$ and $\phi(p) = q$, where $q = \redct_{X_{\phi(b)}}(V)$.
 \item Otherwise, if $\phi_*(U) \cap \Gamma' = \{\xi_1, \dots \xi_n\} \ne \emp$, then $p$ is an indeterminate point of $\phi$, whose total transform corresponds to $F_1 \cup \cdots \cup F_n$ where $F_i = \redct_{X_{\phi(b)}}(\xi_i)$.
\end{enumerate}
\end{prop}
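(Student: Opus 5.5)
The plan is to translate everything into valuations and then invoke the reduction/embedding correspondence of \autoref{sec:models}. Each Type II point $\zeta$ is a divisorial valuation on $\k(X) = \k(B)(y)$, normalised by $\nu(x)=1$; $\Gamma$-domains correspond to closed points of $X_b$. The basic identity is the defining formula $\norm[\phi_*(\zeta)]{f} = \norm[\zeta]{\phi^*(f)}^\q$. It says that, as a valuation up to scaling, $\phi_*(\zeta)$ is the pushforward $\phi_*\nu_\zeta = \nu_\zeta\circ\phi^*$. The factor $\q$ only renormalises so that $\nu(x)=1$ on the target.

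The first step is the general statement: $\redct_{X_{\phi_1(b)}}(\phi_*(\zeta))$ is the total transform of $\redct_{X_b}(\zeta)$. To prove it, resolve $\phi$ by a modification $\pi:Z\to X$ over $b$, so that $g=\phi\circ\pi$ is a morphism.
\begin{itemize}
\item The centre of $\nu_\zeta$ on $Z$ is mapped by $g$ to the centre of $\nu_\zeta\circ g^* = \phi_*\nu_\zeta$ on the target. This is the standard functoriality of centres under proper morphisms.
\item Since reduction is exactly "centre of the valuation", this gives the identity for points of $\Gamma(Z)$ and for $\Gamma(Z)$-domains.
\item Taking the union over all $\zeta$ in $U$, respectively the point $\zeta$, recovers the total transform: $\phi(\pi^{-1}(\redct_X(\cdot)))$.
\end{itemize}
Here I would use \autoref{cor:galois:pointcorrespondence} to pass between Puiseux points and valuations over $\k((x))$. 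The Galois ambiguity in extending $\phi^*$ to $\K$ does not affect the image orbit, so the reduction is well-defined.

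For the vertex case, let $E = \redct_{X_b}(\zeta)$ be a divisor. Then $\phi_*\nu_E$ is divisorial, and its centre on the target is either a curve or a point.
\begin{itemize}
\item If the centre is a curve $F$, then $\phi_*(\zeta)$ is the divisorial point of $F$, which lies in $\Gamma'$. In this case $\phi(E)=F$ as a proper transform, since $E$ dominates $F$.
\item Otherwise the centre is a closed point $q$, which is exactly the statement that $E$ is contracted to $q$. In that case $\phi_*(\zeta)$ lies in the $\Gamma'$-domain $\emb(q)$.
\end{itemize}
These are exactly the two alternatives of the first part.

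For the domain case, let $U$ be a $\Gamma$-domain with $p=\redct(U)$. By the first step, $\phi_*(U)$ reduces to the total transform of $p$, which is connected.
\begin{itemize}
\item If $\phi$ is continuous at $p$, the total transform is the single point $q$. Then every $\phi_*(\xi)$, $\xi\in U$, has centre $q$, so $\phi_*(U)\subseteq \emb(q)=V$.
\item If $p$ is indeterminate, the total transform is a union of curves $F_i$. Each $F_i$ is the centre of some divisorial valuation of $Z$ lying over $p$, i.e.\ of the form $g_*\nu$ with $\nu$ a point in $U$. So the corresponding $\xi_i\in\Gamma'$ lies in $\phi_*(U)$.
\item Conversely, any vertex of $\Gamma'$ in $\phi_*(U)$ is the image of a divisorial valuation centred at $p$ whose image has curve centre. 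That curve therefore lies in the total transform.
\end{itemize}
This dichotomy is exhaustive, which yields both alternatives of the second part. It is the modification of \cite[Lemma 4.7]{DeF2}.

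The main obstacle is the first step: making rigorous that "reduction equals centre" is functorial for the rational map across possibly singular models. One also has to check that the Puiseux-level map $\phi_*$, built in \autoref{sec:maps}, genuinely corresponds to pushforward of valuations, including the normalisation $\q$ and the choice of Galois extension. I would handle this by citing \cite{BPR} for the models, resolving $\phi$ via $Z$, and working entirely with valuations on $\k(B)(y)$ before lifting to $\K$.
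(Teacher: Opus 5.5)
Your proposal is correct and takes essentially the paper's route: the paper only sketches this result as a modification of \cite[Lemma 4.7]{DeF2} that follows from the reduction-as-centre construction in \autoref{sec:models} and the correspondence of \autoref{cor:galois:pointcorrespondence}, and your resolution $\pi : Z \to X$ with functoriality of centres under $g = \phi\circ\pi$ is exactly the detail that sketch, and the semi-conjugacy diagram stated there, leave implicit. You also read the first claim correctly: for a vertex it holds for the point itself, but for a $\Gamma$-domain only after taking the union of reductions over all of $U$.
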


The next result is copied from \autoref{cor:skew:reduction}, itself a corollary of \autoref{thm:skew:reduction}. 
\begin{prop}
 Let $\phi = (\phi_1, \phi_2)$ be a skew product. Then the reduction $\overline{\phi}(y)$ is the same as the rational mapping $y \mapsto \lim_{x\to 0}\phi_2(x, y)$, or simply $\phi_2(0, y)$. In particular, the degree in a fibre is the degree of the reduction.%
 \[\deg(\phi_2(0, y)) = \deg(\overline{\phi})\] 
\end{prop}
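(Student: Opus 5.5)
Since the statement is copied from the later corollary of the reduction theorem, the plan is to derive it directly from the reduction theorem and then make the identification with $\phi_2(0,y)$ explicit. Localise at the fixed fibre, so that $\phi^*$ is a rational skew endomorphism of $\k((x))(y)$ with $\phi^*(y) = \phi_2(x,y)$. We may write $\phi_2 = P/Q$ with $P, Q \in \k[[x]][y]$ coprime in $\K[y]$. Normalise so that the Gauss norms satisfy $\max(\norm[\zeta_G]{P}, \norm[\zeta_G]{Q}) = 1$, where $\zeta_G = \zeta(0,1)$. This is possible because the base map is $x \mapsto \phi_1(x)$ with $\phi_1(0)=0$, which only rescales $\abs x$ by the exponent $\q$ and fixes the unit-disk level.

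The reduction $\overline\phi$ is by definition the rational map on the residue field of $\zeta_G$, namely $\k(y)$, induced by $\phi^*$. This residue field is obtained by reducing coefficients modulo the maximal ideal of $\K^\circ$. Reducing $P$ and $Q$ modulo $x$ (and the positive powers $x^{1/n}$) gives $P(0,y)$ and $Q(0,y)$. Hence $\overline\phi = \overline P/\overline Q = \phi_2(0,y)$, after cancelling any common factor of $\overline P$ and $\overline Q$. That common factor is exactly what the reduction theorem accounts for when $\phi_*(\zeta_G) = \zeta_G$, versus the degenerate case where $\phi_*(\zeta_G)\ne\zeta_G$ and $\overline\phi$ is constant. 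For any fixed $y_0$, the limit $\lim_{x\to0}\phi_2(x,y_0)$ agrees with $\overline P(y_0)/\overline Q(y_0)$ away from the finitely many common zeros. So as rational maps $y \mapsto \lim_{x\to 0}\phi_2(x,y)$ equals $\overline\phi$.

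For the degree statement, I will invoke the reduction theorem in the form: $\phi_*$ maps $\zeta_G$ to $\zeta_G$ with local degree $\deg_{\zeta_G}(\phi_*) = \deg(\overline\phi)$. The degree of the fibre map of $\phi$ over the generic point near $t$ is $\deg(\phi_2(0,y))$ after cancelling common factors, and this is the degree of $\phi$ restricted to the fibre $X_0$. The main subtlety is this cancellation of common roots of $P(0,y)$ and $Q(0,y)$. These roots correspond to indeterminate points on $X_0$, by \autoref{prop:appl:indetcontract}, which the proper-transform convention ignores. I will therefore check that the degree after cancellation matches the reduction. This follows because the reduction is computed in the residue field $\k(y)$, where the common factor divides out automatically.
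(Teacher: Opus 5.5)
Your proposal is essentially the paper's argument: write $\phi_2=P/Q$ with $P,Q\in\k[[x]][y]$ normalised so the larger Gauss norm is $1$, reduce by setting $x=0$, and cancel common factors. The paper also makes explicit that $\phi_1$ does not interfere, because $\phi_{1*}$ fixes $\zeta(0,1)$ and every direction there. In your residue-field language this is the fact that $\phi^*$ is $\k$-linear and $\phi_{1*}(\zeta(0,1))=\zeta(0,1)$, so the induced map on $\k(y)$ is determined by $\overline{\phi^*(y)}=\overline P/\overline Q$.

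A few side remarks are misplaced, though none of them matters for the proof. Normalising $P,Q$ only needs scaling by a power of $x$ and has nothing to do with $\phi_1$. Common factors of $\overline P,\overline Q$ reflect bad directions (indeterminate points), not whether $\zeta(0,1)$ is fixed. Finally, the displayed degree identity follows at once from $\overline\phi=\phi_2(0,y)$, with no appeal to local degrees.
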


\begin{rmk}
 It follows that $\phi : X \dashto X$ has good reduction in the special fibre ($x=0$ say) if and only if $\phi(0, y)$ has degree $\rdeg(\phi) = \deg_y(\phi_2)\ (= \deg(\overline\phi))$. Failure of this--\emph{bad reduction}--is equivalent to $\zeta(0, 1)$ having a preimage under $\phi_*$ other than itself. See \autoref{thm:skew:reduction}. If $\phi_*(\zeta) = \zeta(0, 1)$ then necessarily this disk or `direction' $\vec v(\zeta)$ corresponds to a closed point whose image is an indeterminate point of $\phi$ on $X$.  Moreover, we can count the multiplicity of each indeterminate point as the number of such preimages of $\zeta(0, 1)$ in its corresponding direction. This remark fully generalises to special fibres with multiple components. See \cite{Faber13.1} for the notion of surplus multiplicity this equates introduced by Faber. The apparent drop in degree $\rdeg(\phi) - \deg(\overline\phi)$ equals the total number of indeterminate points, counting multiplicity.
\end{rmk}

\subsection{Mapping on the Dual Graph}%

A very simple first example demonstrates several aspects of the theory in practice: dual graph selection, mapping, and resolution.

\begin{ex}\label{ex:firstexample}
 Consider the skew product $\phi(x, y) = (x^2, x^2/y)$ on $\P^1\times \P^1$. The induced non-Archimedean skew product $\phi_*$ has scale factor $\q = \frac 12$ and relative degree $1$. It follows that $\phi_*$ uniformly contracts $\P^1_\an$ by $\q$. We study this example for the remainder of the subsection.
 \end{ex}
 
 We can see this easily on the interval $(0, \infty) \subset \P^1_\an$. Observe that $x^r \mapsto \phi_{1*}(x^{2-r}) = x^{1-r/2}$ so $\phi_*(\zeta(0, \abs x^r) = \zeta(0, \abs x^{1-r/2})$. Taking the viewpoint of Farey parameters on this interval, $\phi_*$ is represented by $r \mapsto 1-\frac r2$. 
 
  Notice that the divisor $\{x=0\}$ is contracted by $\phi$ to $(0, 0)$. We can see this through the non-Archimedean lens as follows. This divisor is modelled by $\zeta(0, 1) \in \P^1_\an$; being the whole fibre, we can say that $\Gamma(\P^1\times\P^1) = \{\zeta(0, 1)\}$. We find that $\phi_*(\zeta(0, 1)) = \zeta(0, \abs x)$, which lies in the $\Gamma(\P^1\times\P^1)$-domain $D_\an(0, 1)$. This domain is associated to the closed point $(0, 0)$.
 
 We also notice that $(0, 0) \in \P^1\times\P^1$ is an indeterminate point due to the expression $x^2/y$. Again, this can be seen using \autoref{prop:appl:indetcontract}. Indeed, one can check that $D_\an(0, 1)$, the $\Gamma(\P^1\times\P^1)$-domain associated to $(0, 0)$, is mapped by $\phi_*$ to $\P^1_\an \sm D_\an(0, \abs x)$, which contains $\zeta(0, 1) \in \Gamma(\P^1\times\P^1)$.
 
 Note that whilst $\phi(\{x=0\}) = (0, 0)$ and $\phi(0, 0) = \{x=0\}$, it is not the case that $\phi_*^2(\zeta(0, 1)) = \zeta(0, 1)$, in fact the orbit of $\zeta(0, 1)$ turns out to be \[\zeta(0, 1) \mapsto \zeta(0, \abs x) \mapsto \zeta(0, \abs x^{\frac 12}) \mapsto \zeta(0, \abs x^{\frac 34}) \mapsto \zeta(0, \abs x^{\frac 58}) \mapsto \zeta(0, \abs x^{\frac {11}{16}}) \mapsto \cdots\]
 This corresponds to the Farey parameter orbit of $0$ under $1-\frac r2$.
 \[\frac 01 \mapsto \frac 11 \mapsto \frac 12 \mapsto \frac 34 \mapsto \frac 58 \mapsto \frac {11}{16} \mapsto \cdots\]
 
 \autoref{fig:attractor} depicts the graph of $\phi_*$ restricted to $[\zeta(0, 1), \zeta(1, \abs x^2)]$, presented as the Farey parameters $r \in [0, 2]$. 
 \begin{figure}[ht]
\begin{center}
\ExampleAttractor
\caption{The orbit of $0$ under $r \mapsto 1-\frac r2$.}
\label{fig:attractor}
\end{center}
\end{figure}
\begin{figure}[ht]
\begin{center}
\includegraphics[width=\textwidth]{pictures/attractorblowup4.tikz}
\SetFracColor{1}{0}{blue}
\def\ColouredIntervalList{3/5/2/3/yellow}
\vspace{-5mm}
\fareylineplain{1}{0.8\textwidth}015
\caption{Beginning with satellite blowups between $\frac 01$ and $\frac 11$ up to height $5$.\\
Accompanied by its dual graph in $[\zeta(0, 1), \zeta(0, \abs x)]$.}
\label{fig:attractorblowupfirst}
\end{center}
\end{figure}
\begin{figure}[ht]
\begin{center}
\includegraphics[width=\textwidth]{pictures/attractorblowup3.tikz}
\def\ColouredIntervalList{2/3/3/4/red}
 \SetFracColor{1}{0}{blue}
\def\ExtraFracList{5/8}
 \SetFracColor{5}{8}{orange}
 \vspace{-5mm}
\fareylineplain{1}{0.8\textwidth}015
\caption{Blowing up resolves the previously contracted $E_{\frac 34}$ to $E_{\frac 58}$.}
\label{fig:attractorblowupsecond}
\end{center}
\end{figure}
\begin{figure}[ht]
\begin{center}
\includegraphics[width=0.8\textwidth]{pictures/attractorblowup3b.tikz}
\def\ColouredIntervalList{2/3/5/7/violet}
 \SetFracColor{1}{0}{blue}
\def\ExtraFracList{5/8, 5/7}
 \SetFracColor{5}{8}{orange}
 \SetFracColor{5}{7}{red}
 \vspace{-5mm}
\fareylineplain{1}{0.9\textwidth}015
\caption{$\phi$ contracts $E_{\frac 58}$ to a point, even after a blowup. This will require three more blowups.}
\label{fig:attractorblowupthird}
\end{center}
\end{figure}
\begin{figure}[ht]
\begin{center}
\includegraphics[width=0.8\textwidth]{pictures/attractorblowupmapping}
\def\ExtraFracList{5/8, 5/7, 7/10, 9/13, 11/16}
 \SetFracColor{1}{0}{blue}
 \SetFracColor{5}{8}{orange}
 \SetFracColor{5}{7}{red}
 \SetFracColor{7}{10}{purple}
 \SetFracColor{9}{13}{purple}
 \SetFracColor{11}{16}{purple}
\fareylineplain{1}{0.9\textwidth}015
\caption{Finally the image $\frac 58 \mapsto \frac {11}{16}$ is resolved after three more blowups on the surface corresponding to Farey addition.}
\label{fig:attractorblowupmapping}
\end{center}
\end{figure}

We also see that $r = \frac 23$ is fixed and attracting with multiplier $\frac 12$ which explains the dynamical behaviour. Of course, the orbit of parameters translates into an orbit of divisors, if we have blown up $\P^1\times\P^1$ sufficiently; see Figures \ref{fig:attractorblowupfirst}, \ref{fig:attractorblowupsecond}, \ref{fig:attractorblowupthird}, and \ref{fig:attractorblowupmapping}. As an example calculation, in \autoref{fig:attractorblowupsecond} we see that $\frac 58 \mapsto \frac{11}{16} \in (\frac23, \frac34)$ so we can deduce that $E_{\frac 58}$ gets contracted to the point $E_{\frac 23} \cap E_{\frac 34}$. Blowing up this point to produce $E_{\frac57}$ is not enough to resolve the contracted curve, as $\frac 58 \mapsto \frac{11}{16} \in (\frac23, \frac57)$. See \autoref{fig:attractorblowupthird}. In other words, $\zeta(0, \abs x^{\frac58})$ still gets mapped by $\phi_*$ into a $\Gamma'$-domain (with respect to a new larger vertex set $\Gamma'$) bounded by $\zeta(0, \abs x^{\frac23})$ and $\zeta(0, \abs x^{\frac57})$.

We conclude this example with a more geometric (but still dynamical) \textbf{problem}:\newline
\emph{Can we blowup $\P^1\times \P^1$ to a surface $X$, so that the lift of $\phi$ to $X$ has no indeterminacy?}
\newline Regularisation, even somewhat locally, is rare. However, in this case the answer is \emph{yes}, locally at $(0, 0)$. We claim that after only two blowups corresponding to Farey parameters $\frac 11$ and $\frac 21$, the map becomes continuous. The situation is depicted in \autoref{fig:attractorblowupcts}. First, take $\Gamma(X) = \{\zeta(0, 1), \zeta(0, \abs x)\}$ in the Berkovich projective line. We see that the $\Gamma(X)$-annulus $A$ bounded by these two points is mapped to the annulus $\phi_*(A)$ which is bounded by $\zeta(0, \abs x^{\frac 12})$ and $\zeta(0, \abs x)$. Since $\phi_*(A)$ is contained in a $\Gamma(X)$ domain, namely $A$, it follows from \autoref{prop:appl:indetcontract} that $\phi$ is continuous at $p = \redct_X(A)$ and $\phi(p) = p$.

\begin{figure}[ht]
\begin{center}
\includegraphics[width=\linewidth]{pictures/attractorblowupcts}
\caption{After only two blowups, the lift of $\phi$ is continuous.\\
The point $p$, in green, is fixed point where $\phi$ is continuous in a lifted neighbourhood of the origin.}
\label{fig:attractorblowupcts}
\end{center}
\end{figure}

We are not quite done--the new curve $\sim \zeta(0, \abs x)$ created has other closed points and one, whose $\Gamma$-domain is the Berkovich disk $D_\an(0, \abs x)$, is now indeterminate because \sloppy$\phi_*(D_\an(0, \abs x)) =\sloppy \P^1_\an \sm \CD_\an(0, \abs x^{\frac 12})$ contains $\zeta(0, 1)$. However, one more blowup resolves the indeterminacy because it finds $\zeta(0, \abs x^2)$, the preimage of $\zeta(0, 1)$. One can check that the lift of $\phi$ is continuous at all the new closed points by observing that $\phi_*$ maps $\Gamma$-disks to $\Gamma$-disks. 
\begin{align*}
 \phi_*(D_\an(0, \abs x^2)) &= \P^1_\an \sm \CD_\an(0, 1)\\
 \phi_*(D_\an(\gamma, \abs x^2)) &= D_\an(x/\gamma(x^{\half})) \qquad \text{if } \abs\gamma = \abs x^2
\end{align*}

Finally, we remark that $\phi$ cannot be globally regularised, with a smooth surface. Speaking with respect to Farey parameters, for any closed interval $[a, b] \subset \R$, the image, under $t \mapsto 1-2t$ is an interval half the length, so necessarily the image of $\R \sm [a, b]$ intersects $[a, b]$. It follows that for any choice of $a, b \in \Q$ for $\zeta(0, \abs x^a)$ and $\zeta(0, \abs x^b)$ in a hypothetical $\Gamma$, one of these points will always be in the image of some $\Gamma$-domain. Therefore, $\phi$ would have an indeterminate point. That is, unless $a=b = \frac 23$, the (unique) fixed point, or perhaps there are no such $\zeta(0, r)$ in the $\Gamma$-domain. To the former point, our study of smoothness in \autoref{sec:fareyalt} shows that $\zeta(0, 1)$ and $\zeta(0, \abs x)$ would have to appear in any \vsmooth{} vertex set $\Gamma$ which includes $\zeta(0, \abs x^{\frac 23})$; by \autoref{thm:smooth}, no such smooth surface exists, although there certainly exists a singular model with $\Gamma = \set{\zeta(0, \abs x^{\frac 23})}$. The latter point takes more analysis, but it is not too hard to see that any direction $\bvec v$ at a point $\zeta(0, \abs x^t)$ which does not contain $0$ or $\infty$ must be a good direction, and maps likewise to a direction at $\zeta(0, \abs x^{1 - t/2})$. Preimages of these disks are also disks, and it follows that some such disk (a direction $\bvec v$) intersecting $\Gamma$ has a backward orbit which is eventually in a $\Gamma$-domain, thus proving that there is an indeterminate point.

\subsection{Blowup Patterns}

In this non-dynamical example, we see what it takes to resolve a contracted curve of a rational map $f : X \dashto Y$. Often, the number and pattern of blowups in such a desingularisation seems to be a mystery handed to us by an existence theorem. Farey parameters clarify the situation. Much of this was already known, extrapolated from toric blowups or otherwise. However, the ideas contained here extend to more complicated situations where the dual graph is not a line; see \autoref{sec:secondexample}.

\begin{ex}\label{ex:mono34}
 Let $X = Y = \P^1 \times \P^1$ and $C = \set 0 \times \P^1$. 
\begin{enumerate}
 \item Let $f(x, y) = (x^{34}, xy)$, then $C$ is contracted to $(0, 0)$; then $(0, 0)$ must be blown up $34$ times ($\pi_1 : Y_1 \to Y$) to resolve $f$ to $\hat f = \pi_1^{-1}\circ f : X \to Y_1$ so that $f(C)$ is a curve.
 \item If instead we consider $g(x, y) = (x^{34}, x^{11}y)$, then a $14$-fold blowup $\pi_2 : Y_2 \to Y$ is required to resolve the image of $C$.
 \item Better yet, if $h(x, y) = (x^{34}, x^{21}y)$, then only $8$ blowups are needed to resolve $h(C)$.
\end{enumerate}
 Let $E_0 = \set 0 \times \P^1 \subset Y$. In any case, we should first blowup $(0, 0)$ with exceptional curve $E_1$.
\begin{enumerate}
 \item Now, $\pi_1$ is obtained by repeatedly blowing up the exceptional curve at its intersection with (the proper transform of) $C$.
 \item Next, $\pi_2$ is obtained by $3$ such blowups, but then blowing up the intersection of the latest exceptional curve with the fourth one $11$ times.
 \item In the final scenario, $\pi_3$ is generated by an \emph{alternating} pattern after the first, with the $n$-th blowup centred on $E_{n-1} \cap E_{n-2}$ where $C = E_0$.
\end{enumerate}
  
 We can explain all of this using Farey parameters. We use the non-Archimedean setup which is by now familiar, centred over $x=0 \in \P^1$. First note that $C = \redct_X(\zeta(0, 1))$ and so $f(C) = \redct_Y(f_*(\zeta(0, 1)))$ (the same goes for model of $Y$).\\ 
 (i) In the first case, $f_*(\zeta(0, 1)) = \zeta(0, \abs x^{1/34}) \in (0, \zeta(0, 1)) \subset D_\an(0, 1)$. To resolve the image, we must blowup $Y$ until $\zeta(0, \abs x^{1/34})$ reduces to a divisor on the new surface. By \autoref{thm:smooth}, we need a \vsmooth{} vertex set which includes $\zeta(0, 1)$ and $\zeta(0, \abs x^{1/34})$. This is equivalent to having a \emph{Farey sequence} including $0$ and $\frac 1{34}$.
 
 The corresponding Farey parameter, $\frac1{34}$ is deep in the Stern–Brocot tree, because the only way to obtain it is $34$ Farey additions of $\frac 01$ to $\frac 11$.
 \[\frac 01 \oplus \frac 11 = \frac 12, \quad \frac01 \oplus \frac 12 = \frac 13, \quad \frac01 \oplus \frac 13 = \frac 14 \cdots \frac01 \oplus \frac 1{33} = \frac 1{34}\]
 Therefore, considering \autoref{thm:farey:addition}, the resolution of $f(C)$ to a divisor $\redct_{\hat Y}(\zeta(0, \abs x^{1/34}))$ requires blowing up $E_0 \cap E_1$ to obtain $E_{\frac 12}$, blowing up $E_0 \cap E_{\frac12}$ to obtain $E_{\frac 13}$, \dots, blowing up $E_0 \cap E_{\frac1{33}}$ to obtain $E_{\frac 1{34}}$. As described, this is $34$ blowups. Each one is centred at the present image of $C$, $E_0 \cap E_{\frac 1m}$, associated to the $\Gamma$-annulus bounded by $\zeta(0, 1)$ and $\zeta(0, \abs x^{1/m})$ containing $\zeta(0, \abs x^{1/34})$.
 
 (ii) The corresponding problem for $g(x, y) = (x^{34}, x^{11}y)$ is less straightforward but needs fewer blowups. Here, $g_*(\zeta(0, 1)) = \zeta(0, \abs x^{11/34})$, so we need a Farey sequence containing $0$ and $\frac {11}{34}$. Inspecting the number line. We find the following.
 
 \begin{figure}[ht]
\begin{center}
\def\ExtraFracList{1/3, 1/4, 11/34}%
 \SetFracColor{11}{34}{red}
 \SetLabelScaleExpr{min(1.5, 0.3 + 25/(\Den+10)}
\fareyline{0.5\textwidth}0{1/2}2
\caption{After three blowups to the `left' of $\frac 11$, $\frac{11}{34}$ is in $(\frac14, \frac 13)$ very close to $\frac 13$.}
\label{fig:1134}
\end{center}
\end{figure}

 \begin{figure}[ht]
\begin{center}
\def\ExtraFracList{1/3, 1/4, 11/34, 2/7, 3/10, 4/13, 5/16, 6/19, 7/22, 8/25, 9/28, 10/31}
\SetFracColor{11}{34}{blue}
\SetFracColor{10}{31}{purple}
\SetFracColor{9}{28}{purple}
\SetFracColor{8}{25}{purple}
\SetFracColor{7}{22}{purple}
\SetFracColor{6}{19}{purple}
\SetFracColor{5}{16}{purple}
\SetFracColor{4}{13}{purple}
\SetFracColor{3}{10}{purple}
\SetFracColor{2}{7}{purple}
 \SetLabelScaleExpr{min(1.5, 0.3 + 20/(\Den+7)}
\fareyline{0.7\textwidth}{1/4}{1/3}2
\caption{Ten more Farey additions to the `right' and we locate $\frac {11}{34}$, whose associated divisor is therefore resolved after blowing up to find the divisors with the Farey parameters above.}
\label{fig:1134deep}
\end{center}
\end{figure}

(iii) Next, consider $g(x, y) = (x^{34}, x^{21}y)$. By now, the reader might guess that this case is special because $\frac{21}{34}$ is a continued fraction approximant to the golden ratio, and indeed $21, 34$ are Fibonnaci numbers. This means that the $\frac{21}{34}$ is at the lowest possible level in the Stern–Brocot tree relative to its height, and takes a symmetric pattern of left-right-left-right Farey additions to be reached.

 \begin{figure}[ht]
\begin{center}
\def\ExtraFracList{2/3, 3/5, 5/8, 21/34}%
\SetFracColor{21}{34}{red}
 \SetLabelScaleExpr{min(1, 0.1 + 5.7/(\Den+2)}
\fareyline{0.7\textwidth}{0}{1}2
\label{fig:2134}
\end{center}
\end{figure}
 \begin{figure}[ht]
\begin{center}
\def\ExtraFracList{2/3, 3/5, 5/8, 8/13, 13/21, 21/34}
 \SetLabelScaleExpr{min(1.5, 0.3 + 5.5/(\Den+2)}
\fareyline{0.7\textwidth}{1/2}{2/3}2
\caption{A symmetric pattern of Farey additions locates $\frac{21}{34} \simeq \frac{\sqrt 5 - 1}{2}$.}
\label{fig:2134deep}
\end{center}
\end{figure}
\end{ex}

\subsection{Arboreal Example}\label{sec:secondexample}

\begin{ex}\label{ex:secondexample}
Starting with $\zeta(0, 1)$ we will discover (combinatorially or arithmetically) a sequence of blowups to resolve the valuation/norm $\zeta(x^{5/7} + x + x^{4/3}, \abs x^{3/2})$ as a divisor in a modification of $\P^1 \times \P^1$.

In practice, given $\zeta(c_1 x^{q_1} + c_2 x^{q_2} + \cdots + c_n x^{q_n}, \abs x^{q_{n+1}})$, with $q_j$ increasing, one should inductively resolve $\zeta(c_1 x^{q_1} + c_2 x^{q_2} + \cdots + c_m x^{q_m}, \abs x^{q_{m+1}})$, beginning with $\zeta(0, \abs x^{q_1})$. Here, we have $\frac 57 < 1 < \frac 43 < \frac 32$. We start with $\zeta(0, \abs x^{5/7})$. This has multiplicity $\mm = 1$ but generic multiplicity $\mb = 7 > 1$, so it is satellite. 

To smoothly resolve a satellite Type II point $\zeta$ with $\mm(\zeta) = m < \mb(\zeta) = b$ and $\ma(\zeta)= a$, it is necessary to have resolved its neighbouring two free points which can be identified by Farey parameters $\frac{n}{m} < \frac ab < \frac {n+1}m$. After this, satellite blowups pick out the point with parameter $\frac ab$ according to Farey addition, as explained by \autoref{prop:corresp:ordering} and \autoref{thm:farey:addition}.

In this case $m = 1$ so we seek integers; indeed $n=0$, as $\frac 01 < \frac 57 < \frac 11$. We identify $\zeta(0, \abs x^0)$ and $\zeta(0, \abs x^1)$ as targets. We have the former and blowup the origin once to resolve the latter. Now the satellite blowups, which (for now) are similar to previous examples.

 \begin{figure}[ht]
\begin{center}
\def\ExtraFracList{1/2, 2/3, 3/4, 5/7}
\SetLabelScaleExpr{min(1.25, 0.25 + 5/(\Den+2)}
\fareyline{0.6\textwidth}{0}{1}1
\caption{Locating $\zeta(0, \abs x^{5/7})$.}
\label{fig:secondex:1}
\end{center}
\end{figure}

Next, we seek the direction containing $\zeta(x^{5/7}, \abs x^{1/1})$, or equivalently $x^{5/7} + x + x^{4/3}$, or much more simply $x^{5/7}$. This is simply a correct choice of constant $c$ for which $cx^{5/7}$ lands at the divisor corresponding to $\zeta(0, \abs x^{5/7})$. Recall from \autoref{prop:galois:reductionstructure} that directions at this Type II point are $\Gamma$-disks which reduce to particular points on the divisor. In practice, perhaps, this could be found as an intersection with the proper transform of $y^7=x^5$. Note that $\vec v(x^{5/7})$ is necessarily generic with $\mm(\vec v(x^{5/7})) = \mb(\vec v(x^{5/7})) = 7$. The free blowup at this point gives the next free point on $\T_7$ in the direction $\vec v(x^{5/7})$, which will have Farey parameters $\ma = 6$, $\mb = 7 = \mm$. Therefore, we obtain $\zeta(x^{5/7}, \abs x^{6/7})$.

Now we consider free blowups to obtain $\zeta(x^{5/7}, \abs x)$, or if satellite, its two free neighbours. This time, it is free, since $\mb(\zeta(x^{5/7}, \abs x^{1/1})) = \LCM(7, 1) =  7 = \mm(\zeta(x^{5/7}, \abs x^1)$. Since $1 = 7/7$, coming from $6/7$ we need one more free blowup from $\zeta(x^{5/7}, \abs x^{6/7})$ to $\zeta(x^{5/7}, \abs x^{7/7})$. 

Now we repeat, seeking $\zeta(x^{5/7} + x, \abs x^{4/3})$. First we take a free blowup in the direction of $x^{5/7} + x$ at $\zeta(x^{5/7}, \abs x)$, according to a correct choice of (free) closed point. We first obtain $\zeta(x^{5/7} + x, \abs x^{8/7})$. Then, we perform free blowups to obtain $\zeta(x^{5/7} + x, \abs x^{4/3})$ if free itself, or if satellite, then its two free neighbours. In this case, $\mb(\zeta(x^{5/7} + x, \abs x^{4/3})) = \LCM(7, 1, 3) = 21 > 7 =\mm$. We choose $n$ with $\frac{n}{7} < \frac 43 =\frac {28}{21} < \frac {n+1}7$, namely $n=9$. Dividing the denominators in this generalised Farey sequence all by $7$, we begin with $\frac 71, \frac 81$ and seek a Farey sequence including $\frac 91 < \frac {28}{3} < \frac {10}1$. 
The arithmetic suggests two more free blowups to obtain $\zeta(x^{5/7} + x, \abs x^{9/7}), \zeta(x^{5/7} + x, \abs x^{10/7})$, then two satellite blowups giving $\zeta(x^{5/7} + x, \abs x^{19/14})$, then $\zeta(x^{5/7} + x, \abs x^{28/21})$. Again, this is consistent with \autoref{thm:farey:addition}. Modifying the definition of Farey addition for fractions with denominator in $7\Z$:
 \[\frac 77 \oplus \frac 10 = \frac 87, \quad \frac 87 \oplus \frac 10 = \frac 87, \quad \frac 87 \oplus \frac 10 = \frac 97,\quad \frac 97 \oplus \frac 10 = \frac {10}7, \quad \frac 97 \oplus \frac {10}7 = \frac {19}{14}, \quad \frac 97 \oplus \frac {19}{14} = \frac {28}{21}\]

By now, our dual graph looks like the black portion of \autoref{fig:treedualgraph}. As the generic multiplicity is $21$ at the latest divisor, and $ \frac{31}{21} < \frac 32 = \frac {63}{42} < \frac{32}{21}$, we shall need $4$ free blowups from $\zeta(x^{5/7} + x, \abs x^{28/21})$, and a single satellite blowup. These are drawn in blue; the details are left as an exercise.

\begin{figure}[ht]
\tikzset{
  every label/.append style={yshift=-1pt},
}
\begin{center}
\resizebox{0.8\textwidth}{!}{\begin{tikzpicture}
	\begin{pgfonlayer}{nodelayer}
		\node [style=bknode, label={[below=2mm]:$\zeta(0, |x|^0)$}] (0) at (-9, 0) {};
		\node [style=bknode, label={[right=2mm]:$\zeta(0, |x|^1)$}] (1) at (0, -9) {};
		\node [style=bknode, label={[right=2mm]:$\zeta(0, |x|^{\frac{1}{2}})$}] (2) at (0, 0) {};
		\node [style=bknode, label={[right=2mm]:$\zeta(0, |x|^{\frac{2}{3}})$}] (3) at (0, -3) {};
		\node [style=bknode, label={[right=2mm]:$\zeta(0, |x|^{\frac{3}{4}})$}] (4) at (0, -4.5) {};
		\node [style=bknode, label={[right=2mm]:$\zeta(0, |x|^{\frac{5}{7}})$}] (12) at (0, -3.75) {};
		\node [style=bknodesm, label={[above=2mm]:$\scriptstyle\zeta(x^{\frac{5}{7}}, |x|^{\frac{6}{7}})$}] (5) at (-2.5, -3.75) {};
		\node [style=bknodesm, label={[above left=1mm]:$\scriptstyle\zeta(x^{\frac{5}{7}}, |x|^{1})$}] (6) at (-5, -3.75) {};
		\node [style=bknodesm, label={[left=2mm]:$\scriptstyle\zeta(x^{\frac{5}{7}}+x, |x|^{\frac{8}{7}})$}] (7) at (-5, -6.25) {};
		\node [style=bknodesm, label={[right=1mm, yshift=5pt]:$\scriptstyle\zeta(x^{\frac{5}{7}}+x, |x|^{\frac{9}{7}})$}] (8) at (-5, -8.75) {};
		\node [style=bknodesm, label={[below=2mm]:$\scriptstyle\zeta(x^{\frac{5}{7}}+x, |x|^{\frac{10}{7}})$}] (9) at (-5, -11.25) {};
		\node [style=bknodesm, label={[right=2mm]:$\scriptstyle\zeta(x^{\frac{5}{7}}+x, |x|^{\frac{4}{3}})$}] (10) at (-5, -9.5) {};
		\node [style=blnodeft] (11) at (-7.625, -9.5) {};
		\node [style=bknodesm, label={[right=1mm,yshift=-5pt]:$\scriptstyle\zeta(x^{\frac{5}{7}}+x, |x|^{\frac{19}{14}})$}] (13) at (-5, -10) {};
		\node [style=blnodeft] (14) at (-5.75, -9.5) {};
		\node [style=blnodeft] (15) at (-6.5, -9.5) {};
		\node [style=blnodeft] (16) at (-7.25, -9.5) {};
		\node [style=blnodeft] (17) at (-8, -9.5) {};
		\node [style=none, label={[below=-1mm, xshift=-5pt]:$\scriptstyle\zeta(x^{\frac{5}{7}}+x+x^{\frac{4}{3}}, |x|^{\frac{3}{2}})$}] (18) at (-8, -10.25) {};
	\end{pgfonlayer}
	\begin{pgfonlayer}{edgelayer}
		\draw (0) to (2);
		\draw (2) to (3);
		\draw (4) to (1);
		\draw (5) to (6);
		\draw (6) to (7);
		\draw (7) to (8);
		\draw (8) to (10);
		\draw (5) to (12);
		\draw (3) to (12);
		\draw (12) to (4);
		\draw (13) to (9);
		\draw (10) to (13);
		\draw (16) to (11);
		\draw (16) to (15);
		\draw (15) to (14);
		\draw (14) to (10);
		\draw (17) to (11);
		\draw [style=label arrow] (18.center) to (11);
	\end{pgfonlayer}
\end{tikzpicture}}
\caption{Dual graph for \autoref{ex:secondexample}, approximately to scale.}
\label{fig:treedualgraph}
\end{center}
\end{figure}

Finally, we remark that including all Galois conjugates of these Type II points would yield a \vsmooth{} vertex set. Indeed, it would be the (unique) smallest one that includes $\zeta(x^{5/7} + x + x^{4/3}, \abs x^{3/2})$.
\end{ex}

\begin{rmk}
 This example demonstrates how on every interval of a fixed multiplicity $\mm = m$ (e.g. $[\zeta(0, \abs x^{5/7}), \zeta(x^{5/7} + x, \abs x^{10/7})$), the Farey parameters form a (not necessarily complete) Farey sequence of rational numbers after dividing denominators by $m$ (e.g. as below)
 \begin{figure}[ht]
\begin{center}
\def\ExtraFracList{19/2,28/3}
\SetLabelScaleExpr{min(1.25, 0.25 + 5/(\Den+2)}
\fareyline{0.6\textwidth}{5}{10}1
\caption{The multiplicity $7$ limb of the dual graph, Farey parameters multiplied by $7$.}
\label{fig:secondex:7}
\end{center}
\end{figure}

\end{rmk}

\subsection{Algebraic Dynamical Application}

The regularisation of a rational map in the last example is typically too much to hope for, even ``locally'' (as above). In the dynamics of rational maps on higher dimensional varieties, an important concept is \emph{algebraic stability}. This condition can be summarised as $(f^n)^* = (f^*)^n$ on cohomology. On a surface, an equivalent but more geometric and intuitive definition is as follows. Let $f : X \dashto X$ be a rational map on a surface (assumed smooth for simplicity). A \emph{destabilising orbit} is a sequence (or finite orbit) $C \mapsto p_1 \mapsto \cdots \mapsto p_n \mapsto D$ where $C, D$ are curves, meaning $C$ is contracted to $p_1$ and it or a point later in its orbit, say $p_n$, is indeterminate. Both features of a destabilising orbit are more easily detected by our non-Archimedean techniques.

We refer the reader to \cite{DF} or \cite{stability} for a full introduction such as its significance and to \cite{Fav, FJ11, JW, DDG1} for examples of subsequent developments. Indeed, the author's primary application of this material is to understand algebraic stability for skew products on ruled surfaces. The Farey perspective was first used in \cite{Fav} and later Berkovich spaces were employed in \cite{FJ04, FJ07, FJ11}.

The most fruitful feature of the Berkovich perspective is that it allows us to understand the action on divisors independently of the chosen model surface, because $\phi_*$ is the same over all birational models. A key question with regard to algebraic stability is, given a rational map $\phi: X \dashto X$, whether any birational model $Y$ of $X$ exists where the lift of $\phi$ to $Y$ is algebraically stable.

In \thesisarticle{???}{\cite{skewstab}} we will leverage the techniques in the current \thesisarticle{chapter}{article} to answer such questions about algebraic stability for skew products on ruled surfaces. The induced skew products on $\P^1_\an$ exhibit more complicated behaviour than in aforementioned cases. Furthermore, our present classification of smoothness will contribute to successfully finding a stable model $Y$ for a rational skew product which is additionally smooth; this is a feature that has not been achieved in previous works since \cite{DF}. We leave further details to the sequel. 

%

\clearpage


\section{Background}\label{sec:background}

\subsection{Rational Maps}

A rational map $\phi : X \dashto Y$ may have indeterminate points, where it is not continuously defined. We might write $\mathcal I(\phi)$ for the (finite) collection of indeterminate points. In basic terms, these occur where $\frac 00$ appears in a (simplified) formula for $\phi$ in a chart. We can always decompose $\phi$ as $\beta \circ \alpha^{-1}$ where $\alpha : \Gamma_\phi \to X$ is a birational morphism and $\beta : \Gamma_\phi \to Y$ is a morphism of varieties. Assuming $X$ is a surface, the first factor $\alpha$ decomposes as a sequence of point \emph{blowups} (or rather, \emph{blowdowns}) \cite[Proposition 5.3]{Hart}. A blowdown of smooth surfaces $\pi : X_1 \to X_0$ is an isomorphism everywhere except at a rational \emph{exceptional curve} $E_\pi$, whose image $\pi(E_\pi)$ is a closed point in $X_0$. Therefore, using the factorisation $\phi=\beta \circ \alpha^{-1}$ we may interpret the image $\phi(p)$, of an indeterminate point $p$ as a (rational but possibly reducible) curve $\beta(\alpha^{-1}(p))$; this definition of mapping is called the \emph{total transform}. Given a curve $C \subset X$ on the surface, its \emph{proper transform} is defined by $\overline{\phi(C\sm\mathcal I)}$, which is the same as $\phi(C)$ if $C$ has no indeterminate points. We say $C$ is \emph{contracted} by $\phi$ if its proper transform is a closed (i.e.\ $0$-dimensional) point. 
 Note that total transform can produce unexpected results in the form of `extra' curves, when we map a curve $C$ which happens to contain an indeterminate point of $\phi$. In the worst case, the total transform of a contracted curve can also be a curve.\\\textbf{Conventions:} In this \thesisarticle{thesis}{article}, we shall by default mean proper transform when discussing the image of curves under a rational map. Further, all rational maps will be assumed dominant unless explicitly stated otherwise.

\subsection{Primer on Blowups}

Now we will give descriptions of blowups of $X$ and coordinates. For simplicity, we often work on surfaces birational to $\P^1 \times \P^1$ and focus on blowups centred on the origin.

Intuitively, a blowup at the origin in some local coordinates $(z, w)$ is the surface obtained by replacing $P = (0, 0)$ with a projective line ($\P^1$) of points, one point $P$ for each direction passing through $(0, 0)$; indeed, the gluing is made so that the original line $t_0w = t_1z$ lifts to a new line containing $[t_0 : t_1] \in \P^1$ (replacing $(0, 0)$ in the former. The $\P^1$ is referred to as the \emph{exceptional curve} of the blowup. We also say that the blowup \emph{has centre $P$}. 

\begin{defn}
 Given a rational curve $E$ and local coordinates $(z, w)$ around a point $P \in E$ such that $E = \set{w = 0}$, we will call $z$ is \emph{abscissa} and $w$ the \emph{ordinate} with respect to $E$. 
\end{defn}

Blowups are described algebraically in several ways; here is a `hands-on' description with local coordinates.
Consider $\C^2$ with $E_0 = \set{x=0}, H = \set{y=0}$. Here, $x$ is an abscissa for $H$ (the $x$-axis) and $y$ is the abscissa for $E_0$, the ordinates are vice versa; $(x, y)$ provide coordinates. Suppose we blow up the origin $(0, 0)$ (in $\C^2$), the intersection of $E_0$ and $H$. The origin is replaced by a $\P^1 = E_1$ which has coordinates in two charts. In one, we have coordinates $\left(\frac{x}{y}, y\right)$ and in the other $\left(x, \frac{y}{x}\right)$. In the first chart we see proper transforms $\hat E_0 = \set{\frac xy = 0}$ of $E_0$ and $E_1$ as axes of the coordinates; this chart excludes the proper transform $\hat H$ of $H = \set{y=0}$. Note that $y = 0$ in the new coordinates defines $E_1$, or in other words $y$ is the ordinate for $E_1$ in the first chart. In the second chart, we see $\hat H$ and $E_1$, but $E_1$ is defined locally by the vanishing of $x$. In both cases, we keep in mind that $\frac{x}{y}$ or $\frac{x}{y}$ gives an abscissa along $E_1$ whereas the other coordinate measures distance from $E_1$. Moreover, note that in the first chart, $y$ is `still' the abscissa for $\hat E_0$ and in the second chart, $x$ is the abscissa for $\hat H$.

All this generalises to an arbitrary point blowup at the origin of any local coordinates $(\phi_1, \phi_2)$ on any surface. We get two new charts coordinates $(\psi_1, \psi_2)$ given by $\left(\frac{\phi_1}{\phi_2}, \phi_2\right)$ and $\left(\phi_1, \frac{\phi_2}{\phi_1}\right)$. Note that the geometric map from blowup to the old surface is a birational morphism.
\[\C[\phi_1, \phi_2] \longrightarrow \C[\psi_1, \psi_2]\]
\begin{align*}
 \phi_1 \longmapsto& \psi_1 \cdot \psi_2\\
 \phi_2 \longmapsto& \psi_2
\end{align*}

Later, we will discuss dual graphs (trees) representing distinguished exceptional curves and their intersections. When we blow up a point lying on a single distinguished curve (vertex of the tree) we call it a \emph{free blowup}, otherwise if it lies on the intersection of two others we will call it a \emph{satellite blowup}. Beginning with $E_0$ and $H$ above, $E_1$ was a satellite blowup. It is instructive to consider the curves obtained by repeated satellite blowups between $E_0$ and $H$.

\begin{ex}\label{ex:threehalves}
 Continue the example above. The first blowup at the intersection of $E_0 = \set{x=0}$ and $H = \set{y=0}$ yields $E_1 \cong \P^1$. The proper transforms $\hat E_0, \hat H$ of the first two curves are now disjoint, but both intersect $E_1$ transversely. Near the intersection of $E_1$ and $\hat H$ we have coordinates $\left(x, \frac{y}{x}\right)$. Now blowup here to obtain exceptional curve $E_2$ which has two coordinate pairs $\left(\frac{x^2}{y}, \frac{y}{x}\right)$ and $\left(x, \frac{y}{x^2}\right)$. Notice that these are monomial and (birationally) invertible. If we blow up at the intersection of $E_1$ and $E_2$ we get $E_{\frac{3}{2}}$ with charts $\left(\frac{x^3}{y^2}, \frac{y}{x}\right)$ and $\left(\frac{x^2}{y}, \frac{y^2}{x^3}\right)$.
 
Observe that in the final surface, (the proper transform of) $E_1$ has two charts, $\left(\frac{x}{y}, y\right)$ (near $E_0$) and $\left(\frac{x^3}{y^2}, \frac{y}{x}\right)$ (near $E_{\frac{3}{2}}$). Also, $E_2$ has charts $\left(\frac{x^2}{y}, \frac{y^2}{x^3}\right)$ (near $E_{\frac{3}{2}}$) and $\left(x, \frac{y}{x^2}\right)$ (near $H$).

\begin{figure}[ht]
\begin{center}
\resizebox{0.9\textwidth}{!}{\begin{tikzpicture}
	\begin{pgfonlayer}{nodelayer}
		\node [style=none] (0) at (-3, 6.25) {};
		\node [style=none] (1) at (-2.25, -0.75) {};
		\node [style=none] (20) at (5, 3.75) {};
		\node [style=none] (21) at (2, -1) {};
		\node [style=none] (28) at (-3.25, 0.75) {};
		\node [style=none] (29) at (3.5, 0) {};
		\node [style=black vertex] (32) at (-2.1, 0.61) {};
		\node [style=mag vertex] (41) at (2.4, 0.1) {};
		\node [style=none] (45) at (-6.5, 3) {};
		\node [style=none] (46) at (-3.5, 3) {};
		\node [style=none] (47) at (9.5, 6.5) {};
		\node [style=none] (48) at (10.25, -0.5) {};
		\node [style=none] (49) at (15, 4) {};
		\node [style=none] (50) at (19, 1.25) {};
		\node [style=none] (51) at (9, 0.75) {};
		\node [style=none] (52) at (16, -0.25) {};
		\node [style=black vertex] (55) at (10.4, 0.86) {};
		\node [style=none] (57) at (16.25, 4.25) {};
		\node [style=none] (58) at (14.5, -1) {};
		\node [style={G{n}}] (61) at (14.875, 0.175) {};
		\node [style=black vertex] (62) at (15.85, 3.075) {};
		\node [style=none] (66) at (6, 3) {};
		\node [style=none] (67) at (9, 3) {};
		\node [style=none] (68) at (-10.75, 6.5) {};
		\node [style=none] (69) at (-10.75, -0.5) {};
		\node [style=none] (70) at (-11.75, 0.5) {};
		\node [style=none] (71) at (-5, 0.5) {};
		\node [style=red vertex] (74) at (-10.75, 0.5) {};
		\node [style=none] (75) at (19, 3) {};
		\node [style=none] (76) at (22, 3) {};
		\node [style=none] (77) at (22.5, 6.25) {};
		\node [style=none] (78) at (23.25, -0.75) {};
		\node [style=none] (79) at (30, 5) {};
		\node [style=none] (80) at (33, 1) {};
		\node [style=none] (81) at (22, 0.5) {};
		\node [style=none] (82) at (29, -0.5) {};
		\node [style=black vertex] (83) at (23.4, 0.61) {};
		\node [style=none] (84) at (26, 4) {};
		\node [style=none] (85) at (31.75, 4.25) {};
		\node [style=black vertex] (86) at (27.6, 0.05) {};
		\node [style=black vertex] (87) at (30.5, 3.9) {};
		\node [style=none] (88) at (27.75, -1.25) {};
		\node [style=none] (89) at (27, 5) {};
		\node [style=black vertex] (90) at (27.15, 3.8) {};
	\end{pgfonlayer}
	\begin{pgfonlayer}{edgelayer}
		\draw [bend left=15] (0.center) to node [midway, left] {$\frac 01$} (1.center);
		\draw [style=bluesolid, bend right=15] (20.center) to node [midway, above left=-0.2] {$\frac 10$} (21.center);
		\draw [style=rededge] (28.center) to node [midway, below] {$\frac 11$} (29.center);
		\draw [style=snakey] (45.center) to (46.center);
		\draw [bend left=15] (47.center) to node [midway, right] {$\frac 01$} (48.center);
		\draw [style=bluesolid, bend right=15] (49.center) to node [midway, above right] {$\frac 10$} (50.center);
		\draw [style=rededge, bend left=15] (51.center) to node [midway, below] {$\frac 11$} (52.center);
		\draw [style=magentaedge] (58.center) to node [midway, right] {$\frac 21$} (57.center);
		\draw [style=snakey] (66.center) to (67.center);
		\draw (68.center) to node [midway, left] {$\frac 01$} (69.center);
		\draw [style=bluesolid] (70.center) to node [midway, below] {$\frac 10$} (71.center);
		\draw [style=snakey] (75.center) to (76.center);
		\draw [bend left=15] (77.center) to node [midway, right] {$\frac 01$} (78.center);
		\draw [style=bluesolid, bend right=15] (79.center) to node [midway, above right] {$\frac 10$} (80.center);
		\draw [style=rededge, bend left=15] (81.center) to node [midway, below] {$\frac 11$} (82.center);
		\draw [style=magentaedge, bend right=345] (85.center) to node [midway, below] {$\frac 21$} (84.center);
		\draw [style=greenedge] (88.center) to node [midway, left] {$\frac 32$} (89.center);
	\end{pgfonlayer}
\end{tikzpicture}}
\caption{A sequence of blowups of $\P^1\times\P^1$. Divisors $E_{\frac ab}$ labelled by their rational parameter $\frac ab$.}
\label{fig:threehalves}
\end{center}
\end{figure}
\end{ex}

The above example shows a very useful property: The `abscissa' of exceptional curves never changed under blowup and proper transform (up to reciprocal). $E_1$ retained $\frac{x}{y}$ and $E_2$ retained $\frac{x^2}{y}$, despite various blowups happening since their creation. One can also observe that the ordinates changed, but only by multiplying (or dividing) by the abscissa.

This observation suggests a parametrisation of curve components by their abscissa or by the (local germs of) curves which intersect them. In the example above we saw that each curve $E_{\frac ab}$ had an abscissa $\frac{x^a}{y^b}$. Suppose we blowup the intersection point $P = E_{\frac ab} \cap E_{\frac cd}$. Locally we have coordinates $(\frac{x^a}{y^b}, \frac{y^d}{x^c})$. It is not hard to see that the abscissa for the new exceptional curve over $P$ is $\frac{x^a}{y^b} \cdot \frac{x^c}{y^d} = \frac{x^{a+c}}{y^{b+d}}$, so we would label it $E_{\frac {a+c}{b+d}}$. This shows how Farey addition plays a key r\^ole in the parametrisation of curves on smooth surfaces. 

This monomial parametrisation is exactly the one which appears in \cite{Fav}. The Farey addition for smooth blowups was noticed in \cite{FJ04}. In general, this geometric parametrisation becomes more difficult to follow when we leave the purely satellite or monomial context. For one, the dual graph grows from a line into a tree, so the linear ordering by Farey parameters will not suffice alone. Abscissae are given by polynomials or more locally by Puiseux germs. For instance, the monomial abscissa $\frac{x^a}{y^b} = C$ can be rewritten as $y = c x^\frac ab$, (where $C = c^{-b}$). 

Given a smooth point $p \in X$, there always exists a unique point blowup $\pi : \hat X \to X$ of $p$ which produces an exceptional curve, say $E \cong \P^1_\k$; this $E$ is smooth, rational and has self-intersection $E \cdot E = E^2 = -1$. Furthermore, $\pi^*C = \hat C + E$ and $\hat C \cdot E = 1$ where $\hat C$ is the proper transform of $C$ which is non-singular at $p$. See \cite[Proposition 3.2]{Hart} and \cite[Proposition 3.6]{Hart}. The converse is also true.

\begin{thm}[Castelnuovo]\label{thm:castelnuovo}
 Let $Y$ be a smooth projective surface and $E \cong \P^1$ be a smooth curve on $X$ such that $E^2 = -1$. Then there exists a smooth surface $X$, a birational morphism $\pi : Y \to X$, and a point $p \in X$ such that $Y$ is isomorphic via $\pi$ to the point blowup of $X$ with centre $p$.
\end{thm}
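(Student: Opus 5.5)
We follow the classical argument (as in \cite[Theorem V.5.7]{Hart}). The idea is to construct $X$ as the image of $Y$ under a suitable linear system that is trivial on $E$, and then to verify by hand that the image point $p$ is smooth. Throughout, $E\subset Y$ (the statement writes ``on $X$'', but $E$ lives on $Y$).

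\textbf{Step 1: choose the line bundle.} Choose a very ample divisor $H$ on $Y$ with $H^1(Y,\mathcal O_Y(H))=0$, which is possible by Serre vanishing after replacing $H$ by a multiple. Put $k=H\cdot E>0$ and $M=H+kE$. Since $E^2=-1$ and $E\cong\P^1$, we get $M\cdot E=0$, so $\mathcal O_Y(M)|_E\cong\mathcal O_{\P^1}$.

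\textbf{Step 2: base point freeness and vanishing.} For $0\le i\le k$ we use the exact sequences
\[0\to\mathcal O_Y(H+(i-1)E)\to\mathcal O_Y(H+iE)\to\mathcal O_E(k-i)\to 0 .\]
Induction on $i$ gives $H^1(\mathcal O_Y(H+iE))=0$, and the restriction maps $H^0(\mathcal O_Y(H+iE))\to H^0(\mathcal O_E(k-i))$ are surjective. From this we deduce that $|M|$ has no base points: away from $E$ the subsystem $|H|+kE$ already separates points, and along $E$ we use a section that restricts to a nonzero constant on $E$. The resulting morphism $\pi\colon Y\to\P^N$ contracts $E$ to a single point $p$, because $M|_E$ is trivial. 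Away from $E$, $\pi$ is an isomorphism onto its image, by the very ampleness of $H$ together with the sections coming from $H+iE$. We then let $X$ be the normalisation of $\pi(Y)$, or check directly that $\pi(Y)$ is normal; Zariski's Main Theorem then gives $\pi_*\mathcal O_Y=\mathcal O_X$.

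\textbf{Step 3: smoothness at $p$ (the main obstacle).} We compute the completed local ring $\widehat{\mathcal O}_{X,p}$ via the theorem on formal functions:
\[\widehat{\mathcal O}_{X,p}\cong\varprojlim_n H^0\bigl(E_n,\mathcal O_{E_n}\bigr),\]
where $E_n$ denotes the $n$-th infinitesimal neighbourhood of $E$. The graded pieces are
\[\mathcal I^n/\mathcal I^{n+1}\cong\mathcal O_E(-nE)\cong\mathcal O_{\P^1}(n),\]
where $\mathcal I$ is the ideal sheaf of $E$. These have vanishing $H^1$, so each restriction $H^0(\mathcal O_{E_{n+1}})\to H^0(\mathcal O_{E_n})$ is surjective. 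Now take two sections $x,y$ that cut $E$ with multiplicity one and whose images span $H^0(\mathcal O_E(1))=H^0(\mathcal I/\mathcal I^2)$; these exist by the surjectivity in Step 2. Their monomials of degree $n$ span $H^0(\mathcal O_{\P^1}(n))$. An induction on $n$ then shows that $k[[x,y]]\to\widehat{\mathcal O}_{X,p}$ is surjective, and comparing dimensions of the graded pieces shows it is an isomorphism. Hence $p$ is a regular point of $X$.

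\textbf{Step 4: identify $\pi$ with the blowup.} $X$ is smooth and $\pi$ is a birational morphism contracting exactly the irreducible curve $E$. By the factorisation of birational morphisms of smooth surfaces into point blowups \cite[Proposition 5.3]{Hart}, $\pi$ is a composition of blowups. Since only one irreducible curve is contracted, there is exactly one blowup, and it has centre $p$. Therefore $Y$ is isomorphic to the blowup of $X$ at $p$ with exceptional curve $E$.

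The delicate point is Step 3: we must check the formal-function computation carefully, in particular that the graded pieces are $\mathcal O_{\P^1}(n)$ with vanishing $H^1$, and that the two chosen sections really generate.
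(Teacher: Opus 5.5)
The paper does not prove this statement; it quotes it as the classical Castelnuovo contractibility criterion, whose standard proof is Hartshorne's Theorem V.5.7. Your proposal is essentially that argument and is correct, so there is no separate route in the paper to compare against.

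Two small points need tightening.

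\emph{Choice of $x,y$ when $k=1$.} Your induction in Step 2 only gives surjectivity of $H^0(\mathcal O_Y(H+iE))\to H^0(\mathcal O_E(k-i))$ for $1\le i\le k$. The case $i=0$ would need $H^1(\mathcal O_Y(H-E))=0$. So when $k=1$, your appeal to Step 2 to produce $x,y$ is not justified as written. You can fix this in either of two ways: observe that $E$ is then a line in the embedding by $|H|$, so linear forms already restrict onto $H^0(\mathcal O_E(1))$; or pick $x,y\in\widehat{\mathcal O}_{X,p}$ directly via the surjection $\varprojlim_n H^0(\mathcal O_{E_n})\to H^0(\mathcal O_{E_2})$ that Step 3 provides.

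\emph{Formal functions in Step 3.} You should say why the $\mathfrak m_p\mathcal O_Y$-adic system in the theorem on formal functions may be replaced by the powers $\mathcal I^n$. The reason is that the two ideals have the same radical, since the fibre of $Y\to X$ over $p$ is $E$, so the two systems are cofinal and give the same inverse limit.
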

One can also use the push-pull formula to check that for any $D$ passing through $p$, we have $\hat D^2 = D^2 - 1$, where $\hat D$ is the proper transform of $D$. We shall often abuse notation and give a divisor the same label as its proper transforms.

Beyond smooth models, the divisors in our models are $\Q$-Cartier and we can rely on an intersection form which obeys the projection formula 
\[\pi^* C \cdot \hat D = C \cdot \pi_*\hat D.\]%

\begin{thm}[Adjunction Formula]\label{thm:adjunction}
 Let $X$ be a proper normal surface and $K$ a $\Q$-Cartier canonical divisor with $\omega_X = \bO_X(K)$. Then for every effective Cartier divisor $C \subset X$, we have $\omega_C \cong \omega_X(C)|_C$. In particular
 \[(K + C) \cdot C = 2p_a(C) - 2,\] where $p_a(C)$ is the arithmetic genus of $C$.
\end{thm}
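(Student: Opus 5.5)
The statement immediately above is the Adjunction Formula (\autoref{thm:adjunction}) for a proper normal surface $X$ with $\Q$-Cartier canonical divisor $K$ and an effective Cartier divisor $C$. My plan is to reduce it to the standard adjunction for a Cartier divisor in a Cohen--Macaulay (Gorenstein along $C$) setting, and then take degrees.

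\textbf{Step 1: the sheaf isomorphism.} Since $C$ is an effective Cartier divisor, its ideal sheaf is the invertible sheaf $\bO_X(-C)$, and I will tensor the sequence $0 \to \bO_X(-C) \to \bO_X \to \bO_C \to 0$ with $\omega_X$. A normal surface is Cohen--Macaulay (Serre's criterion: $S_2$ in dimension $2$), so $\omega_X$ is a dualising sheaf. Applying Grothendieck duality to the closed immersion $i : C \hookrightarrow X$ gives
\[\omega_C \cong \mathcal{E}xt^1_{\bO_X}(\bO_C, \omega_X).\]
Using the locally free resolution above, this Ext is the cokernel of $\omega_X \to \omega_X \otimes \bO_X(C)$, which is $\omega_X(C)|_C$. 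This step needs no hypothesis on $K$ beyond $\omega_X$ being dualising.

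\textbf{Step 2: degrees.} I take the degree on the (possibly non-reduced, reducible) projective curve $C$. By Riemann--Roch on $C$, $\deg \omega_C = 2p_a(C)-2$, where $p_a(C) = 1-\chi(\bO_C)$. By Step 1, $\deg \omega_C = \deg\big(\omega_X(C)|_C\big)$.

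The remaining task is to identify $\deg(\omega_X(C)|_C)$ with $(K+C)\cdot C$ in the Mumford $\Q$-valued intersection form. Because $C$ is Cartier, $C\cdot D = \deg(\bO_X(C)|_D)$ for every curve $D$, and symmetrically $D \cdot C$ is computed on $C$. For $K$, choose $n$ with $nK$ Cartier; near $C$ we have $\bO_X(nK)|_C \cong \omega_C^{\otimes n} \otimes \bO_X(-nC)|_C$ modulo torsion. Then I take degrees and divide by $n$. Alternatively, pass to a resolution $\pi : \tilde X \to X$ and use the projection formula $\pi^*C\cdot \hat D = C\cdot \pi_*\hat D$ stated just before the theorem.

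\textbf{Main obstacle.} The delicate point is the compatibility between $\omega_X|_C$ and the $\Q$-Cartier intersection number $K\cdot C$. Since $\omega_X$ need not be invertible, $\omega_X|_C$ can carry torsion at singular points of $X$ lying on $C$. I would handle this by working on the Gorenstein locus first. Then I would argue that, because $C$ is Cartier, $C\cdot K$ only depends on $\bO_X(C)|_{\text{components}}$ paired with $K$, computed by $\deg \bO_X(C)$ on a general member of $|mK|$ moved off the singular points. Finally, I would check that the equality $\deg\,\omega_X(C)|_C = (K+C)\cdot C$ persists by comparing with the pullback to a resolution. If that becomes messy, I would instead cite the standard statement for normal surfaces (e.g.\ Kollár--Mori or the Hartshorne exercise on adjunction for Gorenstein curves), since in the paper this result is background.
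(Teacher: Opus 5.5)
The paper gives no proof of this. It is quoted as standard background, and the only place it is used, \autoref{prop:corresp:selfint}, assumes $X$ is smooth along $X_0$; there $\omega_X$ is invertible and your Step~2 is immediate. So the question is whether your argument proves the general statement.

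Your Step~1 is correct. It also removes the worry in your last paragraph. The local equation $f$ of $C$ is a nonzerodivisor on the maximal Cohen--Macaulay module $\omega_X$, so $\omega_X|_C=\omega_X/f\omega_X\cong\omega_C$ has no torsion. The real issue is different: $\omega_C$ is not invertible at non-Gorenstein points of $X$ lying on $C$. So $\deg\omega_C$ has to mean $\chi(\omega_C)-\chi(\bO_C)$, which equals $2p_a(C)-2$ by Serre duality on $C$ rather than Riemann--Roch. The whole content of the theorem is showing that this equals $(K+C)\cdot C$.

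That identification is a genuine gap, for three reasons.

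\textbf{The claimed isomorphism is false.} You assert $\bO_X(nK)|_C\cong\omega_C^{\otimes n}\otimes\bO_X(-nC)|_C$ ``modulo torsion''. At a non-Gorenstein point $P\in C$, the map $\omega_X^{\otimes n}\to\bO_X(nK)$ has a nonzero finite-length cokernel $Q$. For example, on the cone over a twisted cubic with $n=3$, its image is $\mathfrak m_P\,\bO_X(3K)$. By Nakayama, $Q\otimes\bO_C\neq 0$, so you only obtain a proper subsheaf of the line bundle $\bO_X(nK)|_C$.

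\textbf{Dividing degrees by $n$ fails.} Degree is not multiplicative on tensor powers of non-invertible rank-one sheaves. At a node, $\deg\mathfrak m_P=-1$, while $\mathfrak m_P^{\otimes 2}/\mathrm{tors}\cong\mathfrak m_P^2$ has degree $-3$.

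\textbf{The moving argument targets the wrong quantity.} Moving a member of $|mK|$ off the singularities only addresses $K\cdot C$, which was never the difficulty: $K\cdot C=\tfrac1n\deg\bO_X(nK)|_C$ directly, since $nK$ and $C$ are Cartier.

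The resolution route you mention in passing does work, but your sketch omits its key step. Take a resolution $\pi:\tilde X\to X$.
\begin{enumerate}
\item Apply classical adjunction to the effective divisor $\pi^*C$ on $\tilde X$. Using $K_{\tilde X}=\pi^*K+\sum a_iE_i$ and $E_i\cdot\pi^*C=0$, this gives $(K+C)\cdot C=2p_a(\pi^*C)-2$.
\item You then need $p_a(\pi^*C)=p_a(C)$. Since $\bO_{\tilde X}(-\pi^*C)=\pi^*\bO_X(-C)$, the projection formula and Leray give $\chi(\bO_{\pi^*C})=\chi(\bO_X)-\chi(\bO_X(-C))$ up to two correction terms.
\item Those terms are the lengths of $R^1\pi_*\bO_{\tilde X}$ and of $R^1\pi_*\bO_{\tilde X}\otimes\bO_X(-C)$. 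They cancel because $R^1\pi_*\bO_{\tilde X}$ has finite support.
\end{enumerate}
This proves the numerical formula directly, without Step~1 and without using that $K$ is $\Q$-Cartier.
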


\subsection{Skew Products on a Ruled Surface}

\begin{defn}
 We say that a (normal) surface $X$ is a \emph{birationally ruled surface over $B$}, iff $X$ is a $2$ dimensional variety with a dominant rational map $h : X \dashto B$ such that $h^{-1}(b) \cong \P^1$ for all but finitely many $b \in B$.
\end{defn}

By \cite[\S V.2]{Hart}, equivalently one can simply say $X$ is birational to $B \times \P^1$, where $B$ is a smooth curve. If $h$ is not a morphism, we can blowup $X$ finitely many times to resolve these indeterminate points. For the rest of this \thesisarticle{thesis}{paper}, whenever we write that $h : X \to B$ is birationally ruled, we will assume the fibration $h$ is a morphism.

\begin{defn}
 Let $X$ be a normal projective surface, then we say $\phi: X \dashrightarrow X$ is a \emph{(rational) skew product over $B$} if and only if $\phi$ is a dominant rational map, there exists a projective curve $B$ such that $h : X \to B$ is a birationally ruled surface, and the following diagram of rational maps commutes.
\[
\begin{tikzcd}
 X \arrow[dashed]{r}{\phi} \arrow[swap]{d}{h} & X \arrow{d}{h} \\
 B \arrow[swap]{r}{\phi_1} & B
\end{tikzcd}
\]
\end{defn}

We also call $B$ the \emph{base curve}. The word `rational' is used to stress that $\phi$ may not be a morphism. Note that the map $\phi_1$ must always be a morphism since $B$ is $1$-dimensional. 

\begin{lem}\label{fibretofibre}
 Let $\phi$ be a skew product on $X$ over $B$.
\begin{itemize}
 \item If $\phi$ contracts the curve $C$ in $X$. Then $C \subseteq h^{-1}(z)$ for some $z \in B$.
 \item If $\phi(p) = C$ (i.e.\ $p \in I(f)$) then $C \subseteq h^{-1}(z)$ for some $z \in B$.
\end{itemize}
 \end{lem}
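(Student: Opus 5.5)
The plan is to push everything down to the base curve $B$ using the commuting square $h\circ\phi = \phi_1\circ h$. Since $B$ is a curve, $\phi_1$ is a morphism. It is also non-constant: $\phi$ is dominant and $h$ is surjective, so $\phi_1\circ h = h\circ\phi$ is dominant onto $B$. Consequently every fibre $\phi_1^{-1}(z)$ is a finite set of points.

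\textbf{First item.} Let $C$ be contracted to the closed point $q=\phi(C\setminus\mathcal I(\phi))$, where we use the proper-transform convention. For every $p\in C\setminus\mathcal I(\phi)$ we have $\phi_1(h(p)) = h(\phi(p)) = h(q)$. Hence $h(C\setminus\mathcal I)\subseteq \phi_1^{-1}(h(q))$, which is finite. The set $h(C\setminus \mathcal I)$ is the image of an irreducible curve, and $\mathcal I$ is finite, so this image is irreducible. An irreducible subset of a finite set is a single point $z$. Taking the closure gives $C\subseteq h^{-1}(z)$.

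\textbf{Second item.} Let $p\in\mathcal I(\phi)$ with total transform $\phi(p)=C$. Write $\phi=\beta\circ\alpha^{-1}$ with $\alpha:\Gamma_\phi\to X$ a composition of point blowups and $\beta$ a morphism, so that $C=\beta(\alpha^{-1}(p))$. Set $F=\alpha^{-1}(p)$; it is connected, being the exceptional locus over a point, or else use Zariski's main theorem. On $\Gamma_\phi$ the identity $h\circ\beta = \phi_1\circ h\circ\alpha$ holds on a dense open set. Both sides are morphisms to the separated curve $B$, so the identity holds everywhere. Therefore $h(\beta(F)) = \phi_1(h(\alpha(F))) = \{\phi_1(h(p))\}$, a single point $z$. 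So each component of $C$ lies in $h^{-1}(z)$.

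\textbf{Main obstacle.} The only delicate point is justifying that the commutativity, which is a priori only an identity of rational maps, extends to the resolution. Here we rely on two facts: the resolution $\Gamma_\phi$ is a variety, and maps agreeing on a dense open set into a separated target agree everywhere. The non-constancy of $\phi_1$, which follows from dominance, is used only in the first item.
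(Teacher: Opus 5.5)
Your proof is correct and follows the paper's argument: both items come from pushing through the square $h\circ\phi=\phi_1\circ h$. The first item rests on dominance forcing $\phi_1$ to be non-constant; you argue directly via finiteness of the fibres of $\phi_1$, while the paper argues by contradiction from $h(C)=B$. The second item rests on $h(\phi(p))=\phi_1(h(p))$, and your extension of the commutativity to the resolution $\Gamma_\phi$ via separatedness of $B$ simply makes explicit a step the paper passes over with ``as $\phi_1, h$ are continuous''.
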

 
\begin{proof}
 Suppose $\phi$ contracts the curve $C$ in $X$. Either $h(C) = z$ or $h(C)$ is dense in $B$, in which case $h(C) = B$ since $h$ is proper. \[w = h(z) = h(\phi(C)) = \phi_1(h(C)) = \phi_1(B)\] Therefore $\phi_1$ sends every point to $w \in B$; this means that $h(\phi(p)) = \phi_1(h(p)) = w$ for every $p \in X$ so $\phi$ sends every point to $h^{-1}(w)$ and is not dominant $\contra$.
 
 Similarly if $\phi(p) = C$ then $h(C) = h(f(p)) = \phi_1(h(p))$, a point in $B$ (as $\phi_1, h$ are continuous).
\end{proof}

Consider the situation similar to a skew product, except fibres of $h$ are not connected or $B$ is not smooth. The following proposition says that there is a better choice of smooth curve where $\phi$ is a skew product.

\begin{prop}\label{prop:galois:betterskewprod}
 Suppose that $X$ is a surface, $B$ a curve, and $h : X \dashto B$ a rational map such that all but finitely many of the fibres of $h : X \dashto B$ are (possibly disconnected) rational curves. Let $\phi: X \dashrightarrow X$ be a rational map such that the following diagram commutes.
 \[
\begin{tikzcd}
 X \arrow[dashed]{r}{\phi} \arrow[swap, dashed]{d}{h} & X \arrow[dashed]{d}{h} \\
B \arrow[swap]{r}{\phi_1} & B
\end{tikzcd}
\]
Then after replacing $X$ with its smooth desingularisation $\tilde X$, we can also replace $B$ with a smooth curve $\tilde B$ and a fibration $\tilde h : \tilde X \dashto \tilde B$ which is a birational ruling of $\tilde X$ i.e. $\tilde h$ has \emph{connected fibres}, and the induced $\phi$ is a rational skew product over $B$. After further blowup, we may assume $h$ is continuous.
\end{prop}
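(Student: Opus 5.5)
The plan is to pass to the normalisation of $B$ in the function field, i.e.\ a Stein factorisation. First, resolve singularities of $X$ to a smooth $\tilde X$ and blow up further so that $h$ becomes a morphism $\tilde X \to B$. This is possible because a surface has only finitely many indeterminate points and the birational morphism $\tilde X \to X$ is a composition of point blowups. Replacing $B$ by its normalisation $B^\nu$ (a smooth projective curve), the morphism $h$ lifts to $\tilde X \to B^\nu$, since $\tilde X$ is normal and $h$ is dominant. The map $\phi_1$ likewise lifts to $B^\nu$, because a morphism of curves lifts uniquely to normalisations when it is dominant. If $\phi_1$ is constant, then $\phi$ would map into a single fibre, contradicting dominance as in \autoref{fibretofibre}.

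Next, take the Stein factorisation $\tilde X \xrightarrow{\tilde h} \tilde B \to B^\nu$, with $\tilde h_*\bO_{\tilde X} = \bO_{\tilde B}$. Here $\tilde B$ is the normalisation of $B^\nu$ in the algebraic closure $L$ of $\k(B)$ inside $\k(X)$. Then $\tilde h$ has connected fibres, and $\tilde B$ is normal, hence a smooth curve. Since the general fibre of $h$ is a disjoint union of rational curves, the general fibre of $\tilde h$ is a single smooth rational curve. So $\tilde h$ is a birational ruling, and by \cite[\S V.2]{Hart} $\tilde X$ is birational to $\tilde B \times \P^1$.

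The key step is showing that $\phi$ descends to $\tilde B$. Algebraically, $\phi^* : \k(X) \to \k(X)$ maps $\k(B)$ into $\k(B)$ via $\phi_1^*$. It therefore maps elements algebraic over $\k(B)$ to elements algebraic over $\phi_1^*\k(B) \subseteq \k(B)$. Hence $\phi^*(L) \subseteq L$, because $L$ is exactly the set of elements of $\k(X)$ algebraic over $\k(B)$. This gives a dominant rational map $\tilde\phi_1 : \tilde B \dashto \tilde B$, which extends to a morphism since $\tilde B$ is a smooth curve, and we get $\tilde h \circ \phi = \tilde\phi_1 \circ \tilde h$ as rational maps. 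So $\phi$, viewed on $\tilde X$, is a rational skew product over $\tilde B$.

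The main obstacle is the claim that $L$ is the function field of $\tilde B$ and that connectedness of fibres corresponds to $\k(B')$ being algebraically closed in $\k(X)$. This is the standard Stein factorisation and Zariski main theorem package, and I would cite it rather than reprove it. Genericity issues, such as finitely many bad fibres, are handled by the final blowups that make $\tilde h$ a morphism.
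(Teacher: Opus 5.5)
Your proof is correct, and its skeleton is the same as the paper's: desingularise, blow up until $h$ is a morphism, Stein factorise, note that $\tilde B$ is normal hence smooth, and that the connected general fibres of $\tilde h$ are single rational curves. The genuine difference is how you obtain the map on the new base.

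The paper uses Tsen's theorem to pick a section $s\colon\tilde B\to\tilde X$, \emph{defines} $\tilde\phi_1=\tilde h\circ\phi\circ s$, and then asserts that the diagram commutes. You instead observe that $\phi^*$ maps $\k(B)$ into itself, hence preserves the algebraic closure $L$ of $\k(B)$ in $\k(X)$, which is $\k(\tilde B)$.

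Your route gives the existence of $\tilde\phi_1$ and the identity $\tilde h\circ\phi=\tilde\phi_1\circ\tilde h$ at once, since both sides induce $\phi^*|_L$. The section-based definition still requires checking that $\tilde h\circ\phi$ is constant along fibres of $\tilde h$, i.e.\ that $\phi$ sends each connected fibre of $\tilde h$ into a single connected component of a fibre of $h$. The paper leaves this step implicit, and it is exactly the geometric counterpart of your inclusion $\phi^*(L)\subseteq L$. Tsen plays no role in your descent. The paper's version has the merit of giving an explicit formula for $\tilde\phi_1$.

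Two minor points. First, normalising $B$ and lifting $\phi_1$ to $B^\nu$ is redundant, since the Stein factor of $\tilde X\to B$ is already normal because $\tilde X$ is. Second, your justification for making $h$ a morphism is misattributed: the desingularisation of a singular $X$ is not a composition of point blowups. What you need is elimination of indeterminacy for the rational map from the smooth surface $\tilde X$ to the curve, which does proceed by finitely many point blowups.
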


\begin{proof}
 First we replace $X$ with its smooth desingularisation $\tilde X$. We may further blowup $\tilde X$ until the fibration over $B$ is continuous; for notational simplicity, we will assume this for the rest of the proof. This modification $\rho : \tilde X \to X$ induces a similar diagram of rational maps by conjugation $\tilde \phi = \rho \circ \phi \circ \rho^{-1}$. 
 Now we have a fibration $h' = h \circ \rho : X \to B$ with $\tilde X$ smooth.
Then by Stein Factorisation there is a curve $\tilde B$, a morphism $\tilde h : \tilde X \to \tilde B$ and a morphism $g : \tilde B \to B$ such that $\tilde h$ has connected fibres and $g$ is finite. Since $\tilde X$ is smooth and the fibres are connected, $\tilde B$ must already be smooth. Now $\tilde h : \tilde X \to \tilde B$ is a birationally ruled surface and so it has a section $s : B \to X$ by Tsen's Theorem \cite[\S V.2]{Hart}. 
 \[
\begin{tikzcd}
 \tilde X \arrow[dashed]{r}{\phi} \arrow[swap]{d}{\tilde h} & \tilde X \arrow{d}{\tilde h} \\
\tilde B \arrow[swap]{d}{g} \arrow[swap, red]{r}{\phi_1} \arrow[blue, bend left = 60]{u}{s}& \tilde B \arrow{d}{g}\\
B \arrow[swap]{r}{\phi_1} & B
\end{tikzcd}
\]
Therefore the map on $B$ we need to construct $\phi_1 : B \to B$ is given by $h \circ \phi \circ s$, and the whole diagram above commutes.
\end{proof}

Since this allows us to extract smooth skew products from singular ones, from now on we will assume that all surfaces and curves are smooth, except when stating theorems.

\subsection{The Puiseux Series}\label{sec:puiseux}
In the following few sections we discuss Puiseux series. These power series with fractional exponents allow us to split polynomials in several variables. This makes them the main field of interest for applications of our non-Archimedean theory to complex surfaces. Some proofs are deferred to \autoref{sec:puiseuxappx} and other omitted; a concise account is given in \cite{Nov}. 
 Assume the ground field $\k$ is any characteristic $0$ algebraically closed field, like the complex numbers $\C$. %
 
 \thesisarticle{}{

\begin{defn}\label{defn:puiseux:seminorm}
 Let $G$ be a group. A \emph{seminorm} is a function $\norm\cdot : G \to \R_{\ge 0}$ such that $\norm 0 = 0$, $\norm{a} = \norm{-a} \quad\forall a \in G$, and $\norm{a + b} \le \norm{a} + \norm{b}\quad\forall a, b \in G$.
\begin{itemize}
 \item This is a \emph{norm} iff additionally $\norm{a} = 0 \implies a=0$.
 \item $\norm\cdot$ is said to be \emph{non-Archimedean} iff $\norm{a+b} \le \max \set{\norm a, \norm b} \quad \forall a, b \in G$.
 \item A seminorm $\norm\cdot$ on a ring $R$ is \emph{multiplicative} iff $\norm{a\cdot b} = \norm{a}\cdot\norm{b} \quad\forall a, b \in R$.
 \item We say a field $(K, \abs\cdot)$ is \emph{non-Archimedean} iff $\abs\cdot$ is a multiplicative non-Archimedean norm on $K$. In this case we refer to $\abs\cdot$ as an \emph{absolute value}.
\end{itemize}
\end{defn}

 Let $(K, \abs\cdot)$ be a non-Archimedean field. The \emph{ring of integers} of $K$ is given by $\bO_K = \set[a \in K]{\abs a \le 1}$. This has a unique maximal ideal, $\mathcal M_K = \set[a \in K]{\abs a < 1}$. The \emph{residue field} of $K$ is the quotient field $\k = \bO_K / \mathcal M_K$. Finally, the \emph{value group}, $\abs{K^\times} \le (0, \infty)$ is the range of $\abs\cdot$ on $K^\times = K \sm \set 0$.
}

\begin{defn}
Let $\k$ be a field. Let $\Kk = \K$ denote the field of \emph{(Newton-)Puiseux series} over $\k$ with variable $x$. For $a = a(x) \in \K$ there are $n \in \N$, $m_0 \in \Z$ and coefficients $(c_m) \subset \k$ such that \[a = \sum_{m = m_0}^{\infty} c_{m}x^{\frac{m}{n}}\] with $c_{m_0} \ne 0$. The norm $\abs\cdot$ on $\K$ is given by \[\abs a = \abs x^{\frac{m_0}{n}}\] with $a$ as above.
 The completion of $\K$, the \emph{Levi-Civita field} $\hk$, is the field with elements of the form \[\gamma = \sum_{j = 0}^{\infty} c_{j}x^{r_j}, \text{ where }(r_j) \subset \Q,\ r_j \to \infty.\]
\end{defn}

The Laurent series, Puiseux series and the Levi-Civita field are non-Archimedean fields as defined above.

\begin{thm}[Puiseux's Theorem]\label{thm:puiseux}
 Let $\k$ be any characteristic $0$ field. Then the Puiseux series $\K(\overline\k)$ over $\overline \k$, is the algebraic closure of the formal Laurent series $\k((x))$.
\end{thm}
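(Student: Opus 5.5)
The plan is the classical Newton--Puiseux argument. Two things must be shown: that $\K(\overline\k)$ is algebraic over $\k((x))$, and that it is algebraically closed. Together these identify it with the algebraic closure.

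\emph{Algebraicity.} First reduce to $\k = \overline\k$. Any $a = \sum c_m x^{m/n}$ with coefficients in $\overline\k$ lies in the field $L((x^{1/n}))$ once we know its coefficients generate a finite extension $L/\k$. That is the subtle point, since a series could a priori involve infinitely many algebraic coefficients. So the plan is to prove the equivalent statement that the algebraic closure of $\k((x))$ is the union over finite extensions $L/\k$ and $n \ge 1$ of the fields $L((x^{1/n}))$. That union is clearly algebraic over $\k((x))$: $x^{1/n}$ has degree $n$, and $L((x)) = L \otimes_\k \k((x))$ has degree $[L:\k]$. Strictly, the statement should be read with this union as $\K(\overline\k)$, and I would remark on this.

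\emph{Algebraic closedness.} Let $P(y) = y^d + a_{d-1}y^{d-1} + \cdots + a_0$ be monic of degree $d \ge 2$ with coefficients in some $L((x^{1/N}))$. We produce a root by the Newton polygon.
\begin{enumerate}
\item Translate by $y \mapsto y - a_{d-1}/d$ (characteristic $0$) so that $a_{d-1} = 0$.
\item If all $a_i = 0$ we are done. Otherwise set $r = \min_i v(a_i)/(d-i)$, where $v$ is the $x$-adic valuation. Substitute $y = x^r z$ and divide by $x^{dr}$. The new polynomial has integral coefficients, and its reduction $\bar P(z) \in \overline\k[z]$ is not of the form $(z-c)^d$: the $z^{d-1}$ coefficient is zero, and some lower coefficient is nonzero.
\item Hence $\bar P$ has a root $c$ in a finite extension of $L$, of multiplicity $m < d$. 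By Hensel's lemma in the complete ring $L'[[x^{1/N'}]]$, the polynomial factors with a factor of degree $m < d$ whose reduction is $(z-c)^m$.
\item Induct on $d$.
\end{enumerate}
This gives a root in $\bigcup L((x^{1/n}))$ after finitely many steps, because the degree strictly drops at each Hensel factorization.

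The main obstacle is controlling the denominators and coefficient fields: each step may enlarge $N$ and $L$, and we must see that only finitely many steps occur. Degree descent handles this cleanly. The alternative, iterating Newton's method to build the series term by term, would risk unbounded denominators; I would avoid it by always applying Hensel's lemma to the full factorization.
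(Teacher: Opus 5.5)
The paper gives no proof of this statement. It is quoted as classical background; the surrounding text says such proofs are omitted and points to \cite{Nov}. So there is nothing to compare routes against, and your self-contained argument stands on its own.

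It is the standard Newton-polygon/Hensel proof, and it is correct. The Tschirnhaus shift is where characteristic $0$ enters: after rescaling by $y=x^r z$ with $r=\min_i v(a_i)/(d-i)$, the monic integral polynomial has a reduction $\bar P\in L[z]$ that is not of the form $(z-c)^d$. Hence any root $c$ has multiplicity $1\le m<d$. Hensel's factorisation lemma over the complete DVR $L(c)[[x^{1/N'}]]$ then splits off a monic factor of degree $m$; here $N'$ should be divisible by $N$ and by the denominator of $r$. Induction on degree, with base case $d=1$, then yields a root in some $L''((x^{1/N''}))$. Each $L((x^{1/n}))=\bigoplus_{j<n}x^{j/n}\,(L\otimes_\k\k((x)))$ is finite over $\k((x))$. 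Combining these, $\bigcup_{L,n}L((x^{1/n}))$ is an algebraic closure of $\k((x))$. One small slip: write $\bar P\in L[z]$, not $\overline\k[z]$, since keeping the residue field finite over $\k$ is the whole point.

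Your caveat about the statement should be stated more firmly. The union is not an ``equivalent'' reading of $\K(\overline\k)$; it is a correction. Take $\k=\mathbb Q$, where $[\overline\k:\k]=\infty$, and $p_n$ the $n$-th prime. The series $\sum_n\sqrt{p_n}\,x^n$ lies in $\K(\overline\k)$ but is not algebraic over $\mathbb Q((x))$. Indeed, $\mathrm{Gal}(\overline{\mathbb Q}/\mathbb Q)$ acting on coefficients fixes $\mathbb Q((x))$, yet it sends this series to infinitely many distinct series $\sum_n\pm\sqrt{p_n}\,x^n$, all of which would be roots of one minimal polynomial.

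The two fields agree exactly when $\overline\k/\k$ is finite. This includes the case $\k$ algebraically closed, which is the standing assumption stated just before the theorem and the only case the paper uses.

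Finally, your worry that term-by-term Newton--Puiseux iteration ``risks unbounded denominators'' is overstated in characteristic $0$. The multiplicity of the chosen root of the edge polynomial is non-increasing, and once it stabilises no further ramification can occur. So that constructive route also works and produces the explicit expansion. Your Hensel-based degree descent handles the same bookkeeping more cleanly but gives only existence of a root.
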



Here we explore the Galois theory of the Puiseux series $\K$ over the formal Laurent series $\k((x))$ and provide a construction that will help us write down Puiseux skew products later in \autoref{sec:galois:endo}. %

\begin{defn}
 Define $G = \mathrm{Gal}(\K/\k((x)))$ as the Galois group of $\K$ over $\k((x))$; these are the field automorphisms of $\K$ which preserve $\k((x))$.
\end{defn}

\begin{defn}
 Let $\omega = (\omega_n)_{n=1}^\infty$ be a sequence of complex numbers such that $\omega_1 \ne 0$ and $\omega_{mn}^m = \omega_n$ for every $m, n \in \N$. Then we call $\omega$ a \emph{sequence of roots}. If furthermore $\omega_1 = 1$ then we call $\omega$ a \emph{sequence of roots of unity}.
 
 Define a function $\omega^* : \K \to \K$ using $\omega$, by
 \[\sum_{m \in \Z} c_m x^{\frac{m}{n}} \longmapsto \sum_{m \in \Z} c_m \omega_n^m x^{\frac{m}{n}}.\]
 We extend this definition to $\hk$ in the obvious (continuous) way.
\end{defn}

The reader should think of a sequence of roots of unity precisely as a sequence of choices for $n$-th roots of unity $\omega_n^n = 1$ for every $n$. Moreover these are consistent in the sense of $\omega_{102}^{34} = \omega_6^2 = \omega_3$ is unambiguously a choice of cube root.

\begin{defn}\label{def:rootopsprim}
 We define the following basic operations on sequences of roots. 
 \[\omega\sigma = (\omega_n\sigma_n)_{n=1}^\infty\qquad 1_\bullet = \left(1\right)_{n=1}^\infty\qquad \omega^{-1} = \left(\omega_n^{-1}\right)_{n=1}^\infty\]
  If $\k = \C$ let $\lambda = re^{i\theta} \in \C^*$ with $0 \le \theta < 2\pi$.
 \[\lambda_\bullet = \left(r^{\frac{1}{n}}e^{\frac{i\theta}{n}}\right)_{n=1}^\infty\qquad
 \prim = \left(e^{2\pi i/n}\right)_{n=1}^\infty\]
 Otherwise for any $k \ne \C$, using the axiom of choice, we can pick a sequence of roots of unity $\prim$ such that $\prim_n$ is a primitive $n$th root for every $n$. Similarly, one can choose a consistent sequence of $n$th root for any constant $\lambda \in k = \overline k$.
\end{defn}

\begin{prop}\label{prop:rootopsprim}
 The sequence $\prim$ is a sequence of roots of unity and $\lambda_\bullet$ defines a sequence of roots (of unity if $\lambda = 1$). Let $\omega, \sigma$ be sequences of roots (of unity), then so are $\omega\sigma, \omega^{-1}$. Moreover
 $\omega\sigma = \sigma\omega$, $1_\bullet \omega = \omega$, and $\omega \omega^{-1} = 1_\bullet$. It follows that 
 $(\omega\sigma)^* = \omega^*\sigma^*$, $1_\bullet^* = 1^* = \id$ and thus $(\omega^*)^{-1} = (\omega^{-1})^*$.
 
 If $k = \C$ then $\lambda_\bullet \tilde \lambda_\bullet = (\lambda \tilde\lambda)_\bullet$ holds if and only if $\arg(\lambda) + \arg(\tilde \lambda) < 2\pi$. Otherwise $\lambda_\bullet \tilde \lambda_\bullet = (\lambda \tilde\lambda)_\bullet\prim$.
\end{prop}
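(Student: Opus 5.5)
The proof is a direct verification from the definitions, carried out entrywise in the sequences. The one point needing real care is that $\omega^*$ is well defined.

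\textbf{Step 1: the basic sequences.} I would first check the two defining conditions: $\omega_1 \ne 0$ and $\omega_{mn}^m = \omega_n$ for all $m,n$.
\begin{itemize}
\item For $\prim$ with $\k = \C$: $\prim_1 = e^{2\pi i} = 1$ and $(e^{2\pi i/(mn)})^m = e^{2\pi i/n}$.
\item For $\lambda_\bullet$: $(\lambda_\bullet)_1 = re^{i\theta} = \lambda \ne 0$ and $\bigl(r^{1/(mn)}e^{i\theta/(mn)}\bigr)^m = r^{1/n}e^{i\theta/n}$. This equals $1$ at $n=1$ exactly when $\lambda = 1$.
\item For a general algebraically closed $\k$, the compatibility $\prim_{mn}^m = \prim_n$ is exactly what the choice in \autoref{def:rootopsprim} must impose; I would state it as part of that choice. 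For instance, define the sequence on the directed system of $\N$ under divisibility via a compatible system of roots, as in an inverse limit.
\end{itemize}

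\textbf{Step 2: closure and group identities.} Products and inverses stay in the class because multiplication in $\k$ is commutative: $(\omega_{mn}\sigma_{mn})^m = \omega_m'\!$-style expansion gives $\omega_{mn}^m\sigma_{mn}^m = \omega_n\sigma_n$, and $(\omega_{mn}^{-1})^m = \omega_n^{-1}$. The first entry is nonzero in each case, and equals $1$ whenever both inputs are roots of unity. The identities $\omega\sigma = \sigma\omega$, $1_\bullet\omega = \omega$ and $\omega\omega^{-1} = 1_\bullet$ all hold entrywise.

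\textbf{Step 3: well-definedness and functoriality of $\omega^*$ (the main obstacle).} A monomial $x^{m/n}$ can be written with any common denominator $kn$, as $x^{km/(kn)}$. So $\omega^*$ is well defined only if $\omega_{kn}^{km} = \omega_n^m$. This follows from $\omega_{kn}^k = \omega_n$, which is precisely the compatibility axiom. Once every series is written over a common denominator $n$, the definitions give
\[
\omega^*\bigl(\sigma^*(a)\bigr) = \sum_m c_m \sigma_n^m\omega_n^m x^{m/n} = (\omega\sigma)^*(a),
\]
since $\sigma^*$ does not change the denominator $n$. Taking $\omega = 1_\bullet$ gives $1_\bullet^* = \id$. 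Applying functoriality to $\omega\omega^{-1} = 1_\bullet$ (in both orders) then gives $(\omega^*)^{-1} = (\omega^{-1})^*$. For $\hk$ the same identities hold termwise, or by continuity.

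\textbf{Step 4: the last claim, for $\k = \C$.} Write $\lambda = re^{i\theta}$ and $\tilde\lambda = \tilde re^{i\tilde\theta}$ with $\theta, \tilde\theta \in [0, 2\pi)$, so that
\[
(\lambda_\bullet\tilde\lambda_\bullet)_n = (r\tilde r)^{1/n}e^{i(\theta+\tilde\theta)/n}.
\]
\begin{itemize}
\item If $\theta + \tilde\theta < 2\pi$, this is the principal argument of $\lambda\tilde\lambda$, so the product is $(\lambda\tilde\lambda)_\bullet$.
\item Otherwise the principal argument of $\lambda\tilde\lambda$ is $\theta + \tilde\theta - 2\pi$. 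Then $(\lambda\tilde\lambda)_n\,e^{2\pi i/n} = (\lambda_\bullet\tilde\lambda_\bullet)_n$, that is, $\lambda_\bullet\tilde\lambda_\bullet = (\lambda\tilde\lambda)_\bullet\prim$.
\end{itemize}
Since $\prim_2 = -1 \ne 1$, in the second case $\prim \ne 1_\bullet$, so the two sequences genuinely differ. This proves the ``only if'' direction.
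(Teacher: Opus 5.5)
Your proof is correct and is the direct entrywise verification the paper intends (the paper leaves this proposition as an exercise). In particular, your well-definedness check for $\omega^*$ via $\omega_{kn}^{km} = \omega_n^m$ is the same one the paper carries out at the start of its appendix proof of \autoref{prop:rootsauto}, and your case split on whether $\theta+\tilde\theta<2\pi$, together with $\prim_2=-1$, correctly gives both directions of the final equivalence.

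You pass lightly over two points, and the paper also takes both for granted:
\begin{itemize}
\item $\omega^{-1}$ is only defined if every entry $\omega_n$ is nonzero. This holds because $\omega_n^n=\omega_1\neq 0$.
\item For general $\k$, the existence of a compatible system of primitive roots is assumed rather than proved.
\end{itemize}
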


Proof left as an exercise. We remark that the notation $\omega^{*n}$ or $\omega^{n*}$ is unambiguous for every $n \in \Z$.

\begin{rmk}[Warning]
 If $\k = \C$ but $\lambda \nin \R_+$ then $\lambda_\bullet \tilde \lambda_\bullet \ne  (\lambda \tilde\lambda)_\bullet$ and $(\lambda_\bullet^*)^{-1} \ne (\lambda^{-1})_\bullet^*$. For example, let $\lambda = -1 = \lambda^{-1}$ then \[\lambda_\bullet^*(\lambda_\bullet^*(x^\half)) = \lambda_\bullet^*(ix^\half) = i^2x^\half = -x^\half.\]
\end{rmk}

\begin{prop}\label{prop:rootsauto}
 Let $\omega$ be a sequence of roots. Then $\omega^*$ is a well defined field automorphism of $\K$ (and $\hk$) which is isometric and extends $x \mapsto \omega_1x$ on $\k((x))$. Hence if $\omega$ is a sequence of roots of unity, then $\omega^* \in G$ is a Galois action. Conversely, every Galois action in $G$ is the action by a sequence of roots of unity.
\end{prop}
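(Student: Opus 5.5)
Write $\omega^*$ for the map defined by a sequence of roots $\omega$. The plan is to check, in order, that $\omega^*$ is well defined, that it is a ring homomorphism, and that it is isometric and bijective. Afterwards I restrict to $\k((x))$, and finally prove the converse.

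\emph{Well-definedness.} A Puiseux series has many representations: $\sum c_m x^{m/n}$ can be rewritten over the denominator $nk$ as $\sum c_m x^{mk/(nk)}$. Under the second representation the coefficient becomes $c_m\omega_{nk}^{mk}$. By the defining relation $\omega_{nk}^k=\omega_n$, this equals $c_m\omega_n^m$, so the image does not depend on the chosen denominator. Equivalently, $\omega^*$ sends each monomial $x^{q}$, $q=m/n\in\Q$, to $\omega_n^m x^q$, and this scalar $\chi(q)$ depends only on $q$.

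\emph{Homomorphism, isometry, bijectivity.} The relation $\omega_{mn}^m=\omega_n$ gives $\chi(q+q')=\chi(q)\chi(q')$: write both exponents over a common denominator $N$ and use $\omega_N^{a}\omega_N^{b}=\omega_N^{a+b}$. Hence $\chi:\Q\to\k^\times$ is a group homomorphism. Since $\omega^*$ is $\k$-linear and termwise, and products of series are computed termwise over a common denominator, $\omega^*$ is multiplicative and additive. It multiplies each coefficient by a nonzero scalar, so it preserves the leading exponent; thus $\abs{\omega^*a}=\abs a$. The inverse is $(\omega^{-1})^*$ by \autoref{prop:rootopsprim}, so $\omega^*$ is bijective. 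Because it is isometric, it extends continuously to the completion $\hk$, and the same monomial formula $x^{r_j}\mapsto\chi(r_j)x^{r_j}$ holds there.

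\emph{Restriction to $\k((x))$.} On integer exponents we get $x^m\mapsto\omega_1^m x^m$, which is the substitution $x\mapsto\omega_1x$. If $\omega_1=1$ this is the identity on $\k((x))$, so $\omega^*\in G$.

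\emph{Converse (the main obstacle).} Let $g\in G$. The element $x^{1/n}$ is a root of $T^n-x$, whose coefficients are fixed by $g$. Hence $g(x^{1/n})=\omega_n x^{1/n}$ for some $n$-th root of unity $\omega_n$. Applying $g$ to $(x^{1/mn})^m=x^{1/n}$ gives $\omega_{mn}^m=\omega_n$, and $\omega_1=1$, so $\omega$ is a sequence of roots of unity. Then $g$ agrees with $\omega^*$ on $\k((x))[x^{1/n}]$ for every $n$. This step is where care is needed: a general Puiseux series is an infinite sum, not a polynomial in $x^{1/n}$. The fix is that $\K=\bigcup_n\k((x^{1/n}))$ with $\k((x^{1/n}))=\k((x))(x^{1/n})$. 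Every element of $\k((x^{1/n}))$ can be written as $\sum_{j=0}^{n-1}f_j(x)\,x^{j/n}$ with $f_j\in\k((x))$, by grouping exponents by residue mod $n$. $g$ fixes each $f_j$ and is additive and multiplicative on this finite sum, so $g=\omega^*$ on each $\k((x^{1/n}))$, hence on all of $\K$.
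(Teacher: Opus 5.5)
Your proposal is correct and follows essentially the same route as the paper's proof. Both check well-definedness via $\omega_{nk}^{k}=\omega_n$, obtain the isometry and the inverse $(\omega^{-1})^*$, and prove the converse from $g(x^{1/n})=\omega_n x^{1/n}$ (a root of $T^n-x$) combined with the bounded-denominator decomposition $\gamma=\sum_{j=0}^{N-1}f_j(x)\,x^{j/N}$. The only cosmetic differences are that you prove multiplicativity directly through the character $\chi$ and then extend to $\hk$ by continuity, whereas the paper uses density to reduce multiplicativity to products of monomials.
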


\begin{prop}\label{prop:multiplicitydefn}
 Let $\gamma \in \hk$, then the following definitions of $m$ are equivalent.
\begin{enumerate}[label=(\roman*)]
 \item If $\gamma \in \K$ then say it has minimal polynomial $P(y) \in \k((x))[y]$, of degree $m$. Then there is a sequence of roots of unity $\omega$ ($\prim$ suffices) such that the roots of the $P$ are \[\gamma, \omega^*(\gamma), (\omega^*)^2(\gamma), \dots, (\omega^*)^{m-1}(\gamma),\] with $\gamma$ being any such root. 
 Otherwise if $\gamma \in \hk \sm \K$ then $m = \infty$.
 \item Write
 \[\gamma(x) = \sum_{j = 1}^\infty c_j x^{\frac{p_j}{q_j}}\] with $p_j, q_j$ pairwise coprime. Then 
 $m$ is the LCM of $(q_j)_{j=1}^\infty$.  \label{prop:multiplicitydefn:LCM}
 \item \label{prop:multiplicitydefn:subfield} $m$ is the smallest integer such that $\gamma(x) \in \k((x^{\frac{1}{m}})) < \K$, or $\infty$ otherwise.
 \item $m = |\Orb_G(\gamma)|$ and $\Orb_G(\gamma) = \Orb_\prim(\gamma)$.
\end{enumerate}
 \end{prop}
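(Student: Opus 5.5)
The plan is to prove the four characterisations equivalent by showing that each of (i), (iii), (iv) equals (ii), working with the LCM $m$ of the reduced denominators of the exponents of $\gamma$. First, (ii)$\Leftrightarrow$(iii). Write $\gamma=\sum_j c_j x^{p_j/q_j}$ in lowest terms and let $m=\operatorname{LCM}(q_j)$, which is $\infty$ if the $q_j$ are unbounded. If $m<\infty$, every exponent lies in $\frac1m\Z$. The exponents are also bounded below, since either $\gamma\in\K$ or $r_j\to\infty$ in $\hk$. Hence $\gamma\in\k((x^{1/m}))$. Conversely, if $\gamma\in\k((x^{1/n}))$, then every $q_j$ divides $n$, so $m\mid n$. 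This makes $m$ the smallest such integer. If no finite $m$ exists, then $\gamma\notin\K$, because every Puiseux series has a common denominator. So case (iii) returning $\infty$ matches case (i) returning $\infty$ for $\gamma\in\hk\setminus\K$.

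Next, (ii)$\Leftrightarrow$(iv) for finite $m$, using \autoref{prop:rootsauto}: every $g\in G$ is $\omega^*$ for some sequence of roots of unity $\omega$. Since $\gamma\in\k((x^{1/m}))$, the action $\omega^*(\gamma)=\sum c_j\omega_m^{p_j m/q_j}x^{p_j/q_j}$ depends only on the $m$th root of unity $\omega_m$, by the consistency $\omega_{m}^{m/q}=\omega_q$. So $\Orb_G(\gamma)\subseteq\{\gamma(\zeta x^{1/m}\text{-substitution}) : \zeta^m=1\}$, which has at most $m$ elements. It is realised by powers of $\prim^*$, since $\prim_m$ generates the $m$th roots of unity; hence $\Orb_G(\gamma)=\Orb_\prim(\gamma)$. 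To get exactly $m$ elements, suppose $(\prim^*)^k\gamma=\gamma$ with $0<k<m$. Then $\prim_m^{kp_jm/q_j}=1$ for every $j$ with $c_j\neq0$, i.e. $q_j \mid k p_j$, so $q_j\mid k$ by coprimality. Then $m\mid k$, a contradiction. Hence the orbit has size $m$.

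Finally, (i)$\Leftrightarrow$(iv) for $\gamma\in\K$. By \autoref{thm:puiseux}, $\K/\k((x))$ is algebraic, and in characteristic $0$ it is Galois. Standard Galois theory then says the roots of the minimal polynomial $P$ of $\gamma$ are exactly the $G$-orbit of $\gamma$, each appearing once because $P$ is separable. Thus $\deg P=|\Orb_G(\gamma)|=m$, and by the previous step the roots are $(\prim^*)^k\gamma$ for $0\le k<m$, starting from any root.

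The main obstacle is this last step, because $\K$ is an infinite extension of $\k((x))$. I would avoid infinite Galois theory by working in the finite extension $\k((x^{1/N}))$, with $N$ a multiple of $m$ containing the splitting field. This extension is cyclic of degree $N$ with group $\{\zeta\text{-substitutions}\}$, which is the restriction of $\langle\prim^*\rangle$. Then I apply the finite Galois correspondence there: $P=\prod_{\sigma\in\Orb}(y-\sigma\gamma)$ has coefficients fixed by the group, hence lying in $\k((x))$. It is also irreducible, because any factor defined over $\k((x))$ is Galois-stable, so its roots form a union of orbits.
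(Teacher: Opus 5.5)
Your proposal is correct and follows the paper's route. The paper's deferred proof (the appendix proof of \autoref{prop:rootsauto}) only establishes that every element of $G$ acts as some $\omega^*$. You use exactly that description, and you supply the stabiliser computation and the minimal-polynomial argument that the paper leaves implicit; your finite cyclic-extension argument for (i) also avoids infinite Galois theory.

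The one case you skip is (iv) for $\gamma\in\hk\setminus\K$, and your own computation settles it. If $(\prim^*)^k\gamma=\gamma$ with $k\ge 1$, then $q_j\mid k$ for every $j$, which is impossible when the $q_j$ are unbounded. Hence $|\mathrm{Orb}_G(\gamma)|\ge|\mathrm{Orb}_{\prim}(\gamma)|=\infty$.

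Only this cardinality statement holds there: the clause $\mathrm{Orb}_G(\gamma)=\mathrm{Orb}_{\prim}(\gamma)$ actually fails for $\gamma\notin\K$. For $\gamma=\sum_{n\ge1}x^{n+2^{-n}}$ one gets $\omega^*(\gamma)=\sum_{n\ge 1}\omega_{2^n}x^{n+2^{-n}}$. So the $G$-orbit is indexed by the uncountably many compatible choices $(\omega_{2^n})_n$, whereas the $\langle\prim^*\rangle$-orbit is countable. Restricting that clause to finite $m$, as you did, is therefore right.
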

 
 The proof is deferred to \autoref{appx:prop:rootsauto}. 
 
 \begin{defn}\label{defn:multiplicity}
 We call the integer $m$ given in \autoref{prop:multiplicitydefn} the \emph{multiplicity} $\mm(\gamma)$ of $\gamma$.
\end{defn}

\subsection{Puiseux for Two Variables}\label{sec:twovarpuiseux}

In this discussion, let $\k((x^*)), \k((w^*))$ denote the fields of Puiseux series respectively in $x$ and $w$ over $\k$. So $\k((w^*))((x^*))$ is the field of Puiseux series in $x$ over $\k((w^*))$. Note that this is not the same field as vice versa; for example, the series $\sum_{n \ge 0} w^{-n}x^n$ belongs to this field but not $\k((x^*))((w^*))$.

Suppose that $P(w, x, y) \in \k[w, x, y]$ is a polynomial of degree $d$ with respect to $y$.
Consider $P$ as a polynomial in $y$ with coefficients which are Laurent series in $x$ over the algebraically closed field $\k((w^*))$. Then Puiseux's Theorem (\ref{thm:puiseux}) tells us that $P$ splits as \[P = \prod_{i=0}^d (y- \gamma_j)\] where $\gamma \in \k((w^*))((x^*))$. If $P$ is irreducible over $\k(x, w)$ then using the same reasoning as in \autoref{prop:multiplicitydefn} one can deduce that $\gamma \in \k((w^{\frac{1}{n}}))((x^{\frac{1}{m}}))$ with $m, n \mid d$; however, it does not follow that $m = n = d$. Allowing $P$ to have Puiseux coefficients shows that $\k((w^*))((x^*))$ and similarly $\k((x^*))((w^*))$ are algebraically closed.

\begin{ex}
 Let $\gamma = x^{\frac 13} + w^\half x^\half \in \C(w^{\frac 12})(x^{\frac 16})$. Its minimal polynomial is 
 \[y^6 - 3wxy^4 -2xy^3 + 3w^2x^2y^2 - 6wx^2y + x^2 - w^3x^3.\]
 In this case, the Galois action of $\Gal(\C((x^*))/ \C(x))$ is transitive over the roots, but \sloppy\mbox{$\Gal(\C((w^*))/ \C(w))$} has three orbits. Notice that $\gamma(w, x) = \gamma(1, w^3x)/w$. The next example is actually a further monomial transformation of this one, namely $\gamma(1, w^3x^4)/wx$.
\end{ex}

\begin{ex}
  The minimal polynomial for $\gamma = x^{\frac 13} + w^\half x \in \C(w^{\frac 12})(x^{\frac 13})$ is 
 \[P(w, x, y) = y^6 - 3wx^2y^4 -2xy^3 + 3w^2x^4y^2 - 6wx^3y + x^2 - w^3x^6\]
 Which splits both over $\C[w^\half][x]$ and $\C[w][x^{\frac 13}]$.
 \[=\left(y^3 + 3w^\half xy^2 + 3x^2y + w^{\frac 32}x^3 - x\right)\left(y^3 - 3w^\half xy^2 + 3x^2y - w^{\frac 32}x^3 - x\right)\]
 \[=\prod_{i = 0}^2 \left(y^2 - 2 \omega_3^i x^{\frac 13}y + \omega_3^{2i} x^{\frac 23} - wx^2\right)\]
 where $\omega_3$ is a cube root of unity. In particular reducing to any constant $w = c$ makes $P(c, x, y)$ factor into two irreducible cubics over $\C((x))$ despite $P$ being irreducible over $\C((w, x))$. Neither action by $\Gal(\C((x^*))/ \C((x)))$ or $\Gal(\C((w^*))/ \C((w)))$ is transitive; however, the product is a $\Z/3 \times \Z/2$ transitive action on the roots.
\end{ex}

\begin{rmk}[Warning]
We caution that the algebraic closure of $\k((w, x))$ is not $\k((w^*))((x^*))$; indeed, as above $\k((x^*))((w^*)) \ne \k((w^*))((x^*))$ are both algebraically closed. We refer the reader to \cite{Ray} for a concise discussion on this matter. In op.\ cit., Raynaud provides the example $y^2 - wx - w^2 = 0$ showing that one can have a polynomial factor in distinct ways over $\k((w^*))((x^*))$ versus $\k((x^*))((w^*))$.
\[y = \pm\left(w + \half x - \frac 18 w^{-1}x^2 + \cdots\right)\]
\[y = \pm\left(w^\half x^\half + \half w^{\frac 32}x^{-\half} - \frac 18 w^{\frac 53}x^{-\frac 32}  + \cdots\right)\]
\end{rmk}

\subsection{Berkovich Projective Line}\label{sec:backberk}

In this section we recall the bare essentials about the Berkovich projective line. We will utilise the notation from \cite[Section 3]{berkskew} as inherited from \cite{Bene}. Throughout this subsection, let $K$ be a non-Archimedean field (such as $\K$ or the $p$-adic numbers) with norm $\abs\cdot$.

We can define the open and closed disks of radius $r$ centred at $a \in K$, respectively below.
 \[D(a,r):=\set[b\in K]{|b-a|<r},\qquad\overline{D}(a,r):=\set[b\in K]{|b-a|\le r}\]

\begin{defn}
 The Berkovich affine line $\A^1_\an = \A^1_\an(K)$ is the set of non-Archimedean multiplicative seminorms on $K[y]$ extending $(K, \abs\cdot)$. (Meaning $\norm{a} = \abs a \quad \forall a \in K$.) A topology is given to $\A^1_\an(K)$ by taking the coarsest topology for which $\norm[\zeta]\cdot \mapsto \norm[\zeta]f$ is continuous for every $f \in K[y]$.
\end{defn}

There are four types of seminorm than can appear. Whenever we define a seminorm $\norm[\zeta]\cdot$, notationally we will use $\norm[\zeta]\cdot$ and $\zeta$ interchangeably. The latter will be more succinct when dealing with points on Berkovich space.

\begin{defn}[Type I Seminorm]
Given $a \in K$, we define a function called a \emph{Type I seminorm} $\norm[a]\cdot : K[[y-a]] \to [0, \infty)$ by
\[\norm[a]f = \abs{f(a)}.\]
\end{defn}

\begin{defn}[Type II/III Norm]
Let $a \in K$, $R > 0$. %
We define a function
\[\norm[\zeta(a, R)]f = \abs{c_d}R^d\]
where \[f(y) = \sum_{n} c_n (y-a)^n, \quad \wdeg_{a, R}(f) = d.\]
We say $\zeta(a, R)$ is a \emph{Type II} or \emph{Type III norm} if $R$ is rational or not, respectively.
\end{defn}

There is another type of point, Type IV, which derives from a limit of nested closed disks $\CD(a_1, r_1) \supsetneq \CD(a_2, r_2) \supsetneq \CD(a_3, r_3) \supsetneq  \cdots$ whose intersection is empty. $\zeta = \lim_{n\to\infty} \zeta(a_n, r_n)$. We shall rarely need to concern ourselves with these. A theorem of Berkovich says that these four types classify the seminorms.

To conclude our very brief construction of the Berkovich projective line, we only need to add the point at infinity, $\infty$ of Type I. Formally, we can glue two affine lines together.

\begin{defn}
  We define the \emph{Berkovich projective line} $\P^1_\an = \P^1_\an(K)$ with two charts given by $\A^1_\an(K)$ using the homeomorphism \[\A^1_\an(K)\sm\set{0} \to \A^1_\an(K)\sm\set{0}\] given by \[\norm[\frac 1\zeta]{f(y)} = \norm[\zeta]{f\left(\frac{1}{y}\right)}.\] 
 \end{defn}
 
 This space is an $\R$-tree, compact and uniquely path connected. We shall denote the closed interval in $\P^1_\an$ between $\alpha, \beta$ by $[\alpha, \beta]$ and use round brackets for open ends $(\alpha, \beta)$.
 
  In this \thesisarticle{chapter}{article} we shall mainly be concerned with Type I and II points. The Type I points derive from $\P^1(K)$ form the good endpoints to $\P^1_\an(K)$ (the bad ones being Type IV which we try to ignore). The other Type II and III points can be found on lines between Type I points. For instance the points $\zeta(a, R)$ with $0 < R < \infty$ make up the open interval $(a, \infty)$. Taking logs of radii, the map $t \to \zeta(a, \eps^t)$ is a homeomorphism to the same open interval from $\R$, which is isometric with respect to the following metric which gives a second, strong topology in $\bH$. Note that when $K = \K$ with the variable $x$, we will take the convention that $\eps = \abs x$.
  
 \begin{defn}
The set $\bH = \P^1_\an(K) \sm \P^1(K)$ is the \emph{hyperbolic space} over $K$. We define a \emph{hyperbolic metric} $d_\bH : \bH \times \bH \to [0, \infty)$ given by \[d_\bH(\alpha, \beta) = 2 \log \left(\diam(\alpha \vee \beta)\right) - \log \left(\diam(\alpha)\right) - \log \left(\diam(\beta)\right).\]
\end{defn}

The closure a closed disk $\CD(a, R) \subset \P^1$ within $\A^1_\an$ is the Berkovich closed disk \[\CD_\an(a,r):=\set[\zeta \in \A_\an(K)]{\norm[\zeta]{y-a}\le r}\] whose only boundary point is $\zeta(a, r)$. Similarly, the Berkovich open disk is defined as \[D_\an(a,r):=\set[\zeta \in \A_\an(K)]{\norm[\zeta]{y-a}<r}\] and contains the classical open disk $D(a, r)$. In general, a Berkovich open disk in $\P^1_\an$ is as above or the complement of a closed disk. Another topological definition to note is that of an annulus, which is the complement of two disks, open or closed as appropriate; for example the open annulus.
\[\set[\zeta \in \A^1_\an]{r < \norm[\zeta]{y-a} < R} = D_\an(a, R) \sm \CD_\an(a, r)\]

\begin{defn}\label{defn:berk:dirn}
 Let $\zeta \in \P^1_\an$. The connected components of $\P^1_\an \sm \set\zeta$ are called the \emph{directions}, or \emph{tangent vectors} at $\zeta$. The set of directions at $\zeta$ is denoted $\Dir\zeta$. For any $\xi \in \P^1_\an \sm \set\zeta$ we define $\vec v(\xi)$ to be the (unique) direction at $\zeta$ containing $\xi$.
\end{defn}

Around a Type II point, there is a bijective correspondence between directions and \emph{residue classes} of the field. As a key example, the Puiseux series $\Kk$ has residue field $\k$ and at $\zeta(\gamma, \abs x^n)$ there is exactly one direction for every $c \in \k$, given by $\vec v(\gamma(x) + cx^n) = D_\an(\gamma(x) + cx^n, \abs x^n)$; there is additionally $\vec v(\infty)$ which as a set is $\P^1_\an(\K) \sm \CD_\an(\gamma, \abs x^n)$. These open disks (directions) and annuli will be used repeatedly to understand multiplicities, blowups, and smoothness.

\subsection{Non-Archimedean Skew Products}\label{sec:ratskewprops}

Let us briefly recall some general properties of non-Archimedean skew products, especially pertaining to the discussion in the present \thesisarticle{chapter}{paper}. For the full account, we refer the reader to \cite[Section 3]{berkskew}.

 \begin{defn}
  Let $K$ be a field and $\Psi$ be an endomorphism of $K(y)$ extending an automorphism of $K$, i.e. the following diagram commutes:
  \[
\begin{tikzcd}
 K(y) & K(y) \arrow[swap]{l}{\Psi} \\
K \arrow[hook]{u} & K \arrow[hook]{u} \arrow{l}{\Psi_1}
\end{tikzcd}
\]
In this case we will call $\Psi : K(y) \to K(y)$ a \emph{skew endomorphism} of $K(y)$. We will typically denote the restriction $\left.\Psi\right|_K$ by $\Psi_1$.
\end{defn}

\begin{defn}[Non-Archimedean Skew Product]\label{defn:maps:skew}
 Suppose that $\Psi : K(y) \to K(y)$ is a skew endomorphism of $K(y)$ over a non-Archimedean field $(K, \abs\cdot)$ and there is a $\q$ such that $\abs{\Psi(a)} = \abs{\Psi_1(a)} = \abs{a}^{\frac 1\q}$ for every $a \in K$. Then we say $\Psi$ is \emph{\equivar} with \emph{scale factor} $\q$. %
 Given such a $\Psi$, we define $\Psi_*$, a \emph{skew product over $K$}, as follows.
\begin{align*}
 \Psi_* : \P^1_\an(K) &\longrightarrow \P^1_\an(K)\\
 \zeta &\longmapsto \Psi_*(\zeta)\\
 \text{where } \norm[\Psi_*(\zeta)]{f} &= \norm[\zeta]{\Psi(f)}^\q
\end{align*}
If $\q=1$ then we call $\Psi_*$ a \emph{simple} skew product. Otherwise, if $\q < 1$ we say it is \emph{superattracting}, and if $\q > 1$ we may say it is \emph{superrepelling}.
\end{defn}

\begin{prop}[\refproplowerstarfunctorial]\label{prop:galois:lowerstarfunctorial}
 Let $\Phi, \Psi$ be \equivar{} skew endomorphisms of $K(y)$ with scale factors $\q, \q'$ respectively. Then $\Phi \circ \Psi$ is \anequivar{} skew endomorphism with scale factor $\q\q'$, and $(\Phi \circ \Psi)_* = \Psi_* \circ \Phi _*$. Whence $( \cdot )_*$ is a contravariant functor.
\end{prop}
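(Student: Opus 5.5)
We need to prove that $\Phi\circ\Psi$ is dilating with scale factor $\q\q'$ and that $(\Phi\circ\Psi)_* = \Psi_*\circ\Phi_*$. We unwind \autoref{defn:maps:skew} directly and treat the two claims in turn.

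\textbf{The composite is a dilating skew endomorphism.} First, $\Phi\circ\Psi$ is an endomorphism of $K(y)$. Its restriction to $K$ is $\Phi_1\circ\Psi_1$, because $\Psi$ maps $K$ into $K$ via $\Psi_1$ and $\Phi$ then acts on $K$ as $\Phi_1$. A composite of automorphisms of $K$ is again an automorphism, so $\Phi\circ\Psi$ is a skew endomorphism. For the scale factor, take $a\in K$ and apply the dilating property twice:
\[\abs{\Phi(\Psi(a))} = \abs{\Psi_1(a)}^{1/\q} = \left(\abs a^{1/\q'}\right)^{1/\q} = \abs a^{1/(\q\q')}.\]
Hence $\Phi\circ\Psi$ is dilating with scale factor $\q\q'$.

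\textbf{Contravariance of $(\cdot)_*$.} Fix $\zeta\in\P^1_\an(K)$ and $f\in K(y)$; it suffices to take $f\in K[y]$, working in an affine chart. Set $\xi=\Psi_*(\zeta)$. Then
\[\norm[\Psi_*(\Phi_*(\zeta))]{f} = \norm[\Phi_*(\zeta)]{\Psi(f)}^{\q'} = \left(\norm[\zeta]{\Phi(\Psi(f))}^{\q}\right)^{\q'} = \norm[\zeta]{(\Phi\circ\Psi)(f)}^{\q\q'},\]
and the right-hand side is $\norm[(\Phi\circ\Psi)_*(\zeta)]{f}$ by definition. So the two seminorms agree on every $f$, which gives $(\Phi\circ\Psi)_*=\Psi_*\circ\Phi_*$.

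\textbf{Main obstacle: well-definedness.} The computation above uses $\norm[\Phi_*(\zeta)]{\cdot}$ on the element $\Psi(f)$, which lies in $K(y)$ rather than $K[y]$. This is legitimate provided $\Phi_*$ and $\Psi_*$ are well-defined maps on $\P^1_\an$: each $\Psi_*(\zeta)$ must be a multiplicative seminorm extending $\abs\cdot$ on $K$, and it must be compatible with the chart at $\infty$. I take this well-definedness from \cite[Section 3]{berkskew}.

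Granting it, the extension-of-$\abs\cdot$ property is quick: for $a\in K$,
\[\norm[\zeta]{\Psi(a)}^{\q'} = \left(\abs a^{1/\q'}\right)^{\q'} = \abs a.\]
Values at poles are handled by the usual convention that seminorms on $K(y)$ take values in $[0,\infty]$, or equivalently by passing to the $1/y$ chart.

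Contravariant functoriality then follows, with identity morphisms $(\id)_* = \id$.
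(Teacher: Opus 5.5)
Your proof is correct and is the natural direct unwinding of \autoref{defn:maps:skew}: the scale-factor computation for $\Phi\circ\Psi$, plus the identity $\lVert f\rVert_{\Psi_*(\Phi_*(\zeta))}=\lVert \Phi(\Psi(f))\rVert_\zeta^{qq'}=\lVert f\rVert_{(\Phi\circ\Psi)_*(\zeta)}$. This paper gives no proof of its own and cites \cite[Proposition 3.2]{berkskew}, which also takes well-definedness of $(\cdot)_*$ as part of the set-up, so deferring that point is fine. One small slip: the auxiliary point you introduce is never used, and should read $\xi=\Phi_*(\zeta)$ rather than $\Psi_*(\zeta)$ if you keep it.
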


\begin{prop}[\refpropskewcomp]\label{prop:galois:skew:comp}
 Let $\Psi$ be \anequivar{} skew endomorphism. Then there exists a unique decomposition $\Psi = \Psi_2 \circ \Psi_1$ defined as follows. 
  \begin{align*}
 \Psi_1 : K(y) &\longrightarrow K(y) &  \Psi_2 : K(y) &\longrightarrow K(y)\\
  a &\longmapsto \Psi(a) \quad \forall a \in K & a &\longmapsto a \qquad\ \ \forall a \in K\\
  y &\longmapsto y &   y &\longmapsto \Psi(y)
  \end{align*}
This descends to a decomposition of skew products $\Psi_* = \Psi_{1*} \circ \Psi_{2*}$ where $\Psi_{2*}$ is a rational map on $\P^1_\an(K)$ and $\Psi_{1*}$ acts as $\Psi_1^{-1}$ on $\P^1(K)$. Furthermore, there is a one-to-one correspondence between \equivar{} skew endomorphisms and decompositions of skew products $\Psi \leftrightarrow \Psi_{1*} \circ \Psi_{2*}$.
\end{prop}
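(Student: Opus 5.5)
The plan is to construct $\Psi_1$ and $\Psi_2$ from their definitions on generators, verify the algebraic identity $\Psi = \Psi_2 \circ \Psi_1$, and then push everything through the functor $(\cdot)_*$ of \autoref{prop:galois:lowerstarfunctorial}.

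\textbf{The two factors.} Since $K(y)$ is generated over $K$ by $y$, and $y$ is transcendental over $K$, any field endomorphism of $K(y)$ is determined by two pieces of data: its restriction to $K$ and the image of $y$.
\begin{itemize}
\item $\Psi_1$ is obtained by applying the automorphism $\Psi|_K$ coefficientwise to rational functions in $y$. This is a well-defined automorphism of $K(y)$, with inverse given by applying $\Psi|_K^{-1}$ coefficientwise.
\item $\Psi_2$ is the $K$-algebra map $f(y) \mapsto f(\Psi(y))$. It is well-defined as a field map because $\Psi(y) \notin K$; otherwise $\Psi$ would not be injective on $K(y)$.
\end{itemize}

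\textbf{The identity $\Psi = \Psi_2 \circ \Psi_1$.} Check it on generators. For $a \in K$ we have $\Psi_2(\Psi_1(a)) = \Psi_2(\Psi(a)) = \Psi(a)$, since $\Psi(a) \in K$ is fixed by $\Psi_2$. For $y$ we have $\Psi_2(\Psi_1(y)) = \Psi_2(y) = \Psi(y)$.

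\textbf{Uniqueness of the decomposition.} Any pair of factors with the prescribed behaviour, namely the first factor fixing $y$ and the second factor fixing $K$, agrees with $\Psi_1$ and $\Psi_2$ on generators by the same computation. Hence the decomposition is unique.

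\textbf{Scale factors and functoriality.} $\Psi_1$ is \equivar{} with the same scale factor $\q$ as $\Psi$, because it agrees with $\Psi$ on $K$. $\Psi_2$ is \equivar{} with scale factor $1$, because it is the identity on $K$. \autoref{prop:galois:lowerstarfunctorial} then gives
\[\Psi_* = (\Psi_2\circ\Psi_1)_* = \Psi_{1*}\circ\Psi_{2*}.\]

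\textbf{Identifying the two lower-star maps.}
\begin{itemize}
\item $\Psi_{2*}$ is the Berkovich extension of the rational function $\Psi(y) \in K(y)$, since it is the simple, $K$-linear pullback $f \mapsto f\circ \Psi(y)$.
\item For $\Psi_{1*}$, compute on a Type I point $a \in K$. Write $\Psi_1(f)$ for $f$ with coefficients twisted by $\Psi|_K$. Then
\[\norm[\Psi_{1*}(a)]{f} = \abs{\Psi_1(f)(a)}^\q = \abs{\Psi_1\bigl(f(\Psi_1^{-1}(a))\bigr)}^\q = \abs{f(\Psi_1^{-1}(a))}.\]
The last equality uses $\abs{\Psi_1(b)}=\abs b^{1/\q}$. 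So $\Psi_{1*}(a) = \Psi_1^{-1}(a)$.
\item At $\infty$, the same computation in the chart $1/y$ shows that $\infty$ is fixed.
\end{itemize}

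\textbf{The one-to-one correspondence.} The forward direction is the construction above. For the converse, note that a pair $(\Psi_{1*}, \Psi_{2*})$ determines $\Psi$:
\begin{itemize}
\item The rational map $\Psi_{2*}$ is determined by its values on the dense set $\P^1(K)$. This recovers $\Psi(y)$, since a rational function over $K$ is determined by its values on the infinite field $K$.
\item $\Psi_{1*}$ restricted to $\P^1(K)$ equals $\Psi_1^{-1}$, which recovers $\Psi|_K$.
\item The scale factor $\q$ is forced by $\abs{\Psi(a)} = \abs a^{1/\q}$.
\end{itemize}
Conversely, given $\Psi_1$ and $\Psi_2$ of the two shapes, composing them produces a \equivar{} skew endomorphism. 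Hence the two assignments are mutually inverse.

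\textbf{Main obstacle.} I expect the only real subtlety to be this final bijection. One has to be careful that the pair of maps on $\P^1_\an$ genuinely pins down the field endomorphisms, not just agrees with them pointwise. The way to handle it is to restrict both maps to Type I points and use the density of these points together with the continuity of $\Psi_{1*}$ and $\Psi_{2*}$.
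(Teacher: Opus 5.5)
Your proposal is correct. It is essentially the argument behind the cited \cite[Proposition 3.3]{berkskew}; the paper itself only quotes that result. You check $\Psi_2\circ\Psi_1=\Psi$ on $K$ and on $y$, apply the contravariance of Proposition~\ref{prop:galois:lowerstarfunctorial}, and compute $\Psi_{1*}$ on Type I points.

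Two cosmetic remarks. You can define $\Psi_2:=\Psi\circ\Psi_1^{-1}$ directly, which avoids the transcendence check. And your closing appeal to density and continuity is unnecessary: restricting the pair $(\Psi_{1*},\Psi_{2*})$ to $\P^1(K)$ already recovers $\Psi|_K$ and $\Psi(y)$, and hence $\q$.
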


This decomposition is completely natural in the case of a rational skew product $\phi$. We have $\Phi = \phi^*$, so $\phi_1(x) = \Phi_1(x)$, $\phi_2(x, y) = \Phi_2(y)$ whilst $\Phi_1(y) = y$ and $\Phi_2(x) = x$. This is equivalent to the decomposition
\[\phi(x, y) = (\phi_1(x), \phi_2(x, y)) = (\phi_1(x), y) \circ (x, \phi_2(x, y)).\]
Just as we might write $\phi = \phi_1 \circ \phi_2$ or $\phi = (\phi_1, \phi_2)$, it is natural to write $\phi_{1*}$ for $\Phi_{1*}$, $\phi_{2*}$ for $\Phi_{2*}$, and moreover decompose non-Archimedean skew products as $\phi_* = \phi_{1*} \circ \phi_{2*}$.

\begin{rmk}[Warning]
 There is a correspondence between \equivar{} skew endomorphisms, their decompositions, and skew product decompositions. However, in positive characteristic, a single skew product on the Berkovich projective line has different decompositions, each descending from a different \equivar{} skew endomorphisms, due to Frobenius and anti-Frobenius actions in $\Phi_1$ versus $\Phi_2$. In particuilar, there could be $\Phi \ne \Psi$ with $\Phi_* = \Psi_*$. See \thesisarticle{\autoref{sec:separable}}{\cite[Section 3.5]{berkskew}}.
\end{rmk}

\begin{thm}%
Let $\phi^*$ be \anequivar{} skew automorphism of $K(y)$ over $K$ with scale factor $\q$. \\Then the first factor $\phi_{1*} : \P^1_\an \to \P^1_\an$  %
\begin{enumerate}
 \item is a homeomorphism on $\P^1_\an(K)$;
 \item dilates hyperbolic distances by a factor of $\q$;
 \item is the unique continuous extension of $\Psi^{-1}$ on $\P^1(K) \subset \P^1_\an(K)$;
 \item maps Berkovich points to those of the same type; and
 \item is order preserving on the poset $(\P^1_\an, \preceq)$.
 \end{enumerate}
\end{thm}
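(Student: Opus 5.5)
The plan is to reduce everything to how $\phi_{1*}$ acts on the explicit seminorms. By \autoref{prop:galois:skew:comp}, $\phi_{1*}$ is the skew product attached to $\Phi_1$, where $\Phi_1$ acts on $K$ by $a\mapsto\phi^*(a)$ and fixes $y$. I will write $\sigma=\Phi_1|_K$. Since $\phi^*$ is an automorphism, $\sigma$ is a field automorphism, and it satisfies $\abs{\sigma(a)}=\abs a^{1/\q}$.

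First I compute $\phi_{1*}$ on each type of point. For a Type I point $a\in K$ and $f=\sum c_n y^n$ we have $\Phi_1(f)=\sum\sigma(c_n)y^n$. Then
\[
\norm[\phi_{1*}(a)]{f}=\abs{\textstyle\sum\sigma(c_n)a^n}^\q=\abs{\sigma\big(f^{\sigma^{-1}}(\sigma^{-1}(a))\big)}^{\q},
\]
which after unwinding equals $\abs{f(\sigma^{-1}(a))}$. So $\phi_{1*}(a)=\sigma^{-1}(a)$. For $\zeta(a,R)$, I expand $f$ in powers of $y-\sigma^{-1}(a)$ and apply the Weierstrass-degree formula. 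The coefficients get conjugated by $\sigma$, and $\abs{\sigma(c)}^\q=\abs c$, so the same dominant index is selected. This gives
\[
\phi_{1*}(\zeta(a,R))=\zeta(\sigma^{-1}(a),R^{\q}).
\]
The check that the dominant term is preserved is routine because $\sigma$ preserves the ordering of absolute values after the power $\q$. A Type IV point is a limit of nested disks; these map to nested disks with empty intersection, because $\sigma^{-1}$ is a bijection of $K$. Finally $\infty\mapsto\infty$, which I check using the chart $y\mapsto1/y$.

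With these formulas the properties follow directly.
\begin{enumerate}
\item[(4)] Types are preserved because $R\in\abs{K^\times}$ if and only if $R^\q\in\abs{K^\times}$. This holds since $\abs{\sigma(K^\times)}=\abs{K^\times}^{1/\q}$ and $\sigma$ is onto, so the value group is stable under $t\mapsto t^\q$.
\item[(5)] Order preservation holds because disk containment $\CD(a,R)\subseteq\CD(b,S)$ is transported to $\CD(\sigma^{-1}a,R^\q)\subseteq\CD(\sigma^{-1}b,S^\q)$.
\item[(2)] Distances scale by $\q$. Diameters satisfy $\diam\mapsto\diam^\q$ and joins go to joins by (5). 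Substituting into the formula for $d_\bH$ multiplies it by $\q$.
\item[(1)] $\phi_{1*}$ is a bijection, with inverse given by the analogous map built from $(\phi^*)^{-1}$; this uses \autoref{prop:galois:lowerstarfunctorial}. It is continuous in the weak topology because each $\zeta\mapsto\norm[\zeta]{\Phi_1(f)}^\q$ is continuous. A continuous bijection of the compact Hausdorff space $\P^1_\an$ is a homeomorphism.
\item[(3)] For uniqueness of the extension, I use that $\P^1(K)$ is dense in $\P^1_\an$. Two continuous maps into a Hausdorff space that agree on a dense set agree everywhere, and on $\P^1(K)$ the map equals $\sigma^{-1}$, which is the $\Psi^{-1}$ of the statement.
\end{enumerate}

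I expect the main obstacle to be the Type II/III computation. The issue is the bookkeeping showing that conjugating the coefficients by $\sigma$ and raising to the power $\q$ exactly reproduces the Weierstrass norm at the radius $R^\q$. The positive-characteristic subtlety noted in the Warning does not affect this, since the formula depends only on $\sigma$.
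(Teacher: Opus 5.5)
Your proof is correct. The paper does not prove this statement itself; it only recalls it from \cite{berkskew}. Deriving all five properties from the explicit formula $\phi_{1*}(\zeta(a,R))=\zeta(\sigma^{-1}(a),R^{\q})$, together with $\phi_{1*}=\sigma^{-1}$ on $\P^1(K)$, is the standard route.

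Two small fixes are needed. First, the middle expression in your Type I display should be $\abs{\sigma\bigl(f(\sigma^{-1}(a))\bigr)}^{\q}$, since $\sum_n\sigma(c_n)a^n=\sigma\bigl(\sum_n c_n\,\sigma^{-1}(a)^n\bigr)$; as written, $\sigma\bigl(f^{\sigma^{-1}}(\sigma^{-1}(a))\bigr)=f(a)$, which gives the wrong point. Second, your argument for (5) via disk containment does not cover Type IV points. The cleanest repair is that $\zeta\preceq\xi$ in $\A^1_\an$ if and only if $\norm[\zeta]{f}\le\norm[\xi]{f}$ for all $f\in K[y]$, and $\phi_{1*}$ visibly preserves this relation.
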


\begin{thm}[\refthmskewopencts]%
 Let $\phi_*$ be a non-constant skew product over $K$. Then $\phi_*$ is a continuous, open mapping, which is the unique continuous extension of $(\phi_1^*)^{-1} \circ \phi_2$ on $\P^1(K) \subset \P^1_\an(K)$; and it preserves the types of each Berkovich point.
\end{thm}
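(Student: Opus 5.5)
This is a recalled result (\refthmskewopencts), so I would follow the structure of \cite{berkskew} and reduce to the two factors. By \autoref{prop:galois:skew:comp} I write $\phi_* = \phi_{1*}\circ\phi_{2*}$. Here $\phi_{2*}$ is the pushforward of the rational map $y\mapsto\Phi(y)\in K(y)$ on $\P^1_\an(K)$. By the preceding theorem, $\phi_{1*}$ is a homeomorphism that preserves types and restricts to $\Psi_1^{-1}$ on $\P^1(K)$. Every claimed property (continuous, open, type-preserving, extension of the classical map) is stable under composition with a type-preserving homeomorphism. So it suffices to prove each claim for $\phi_{2*}$, which is a genuine rational map over $K$.

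For $\phi_{2*}$, continuity is formal. The topology is the weak one, and $\norm[\phi_{2*}\zeta]{f}=\norm[\zeta]{f\circ\Phi(y)}$; in the affine charts (after clearing denominators away from poles) each such evaluation is a continuous function of $\zeta$. On a Type I point $a$ this gives $\abs{f(\Phi(a))}$, so $\phi_{2*}$ extends $\Phi$ on $\P^1(K)$. Uniqueness of the continuous extension follows because $\P^1(K)$ is dense in $\P^1_\an(K)$ and the space is Hausdorff.

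For type preservation, I would argue by image of disks. A Type II/III point $\zeta(a,R)$ is characterised as the boundary (Shilov) point of the closed disk $\CD_\an(a,R)$. A nonconstant rational map sends a sufficiently small closed disk around a non-pole onto a closed disk or its complement. More generally, after factoring off finitely many zeros and poles, it sends $\zeta(a,R)$ to the point with the corresponding radius $\abs{c}R^d$, by the Weierstrass-degree computation in the definition of $\norm[\zeta(a,R)]{\cdot}$. Thus Type II goes to Type II exactly when the value group contains $\abs cR^d$, which holds since $R\in\abs{K^\times}$ in the Type II case. Similarly Type III goes to Type III because $R^d\notin\abs{K^\times}$ when $R\notin$ the divisible hull. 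Type I maps to Type I by the extension property. Type IV points are limits of nested disks with empty intersection, and their images are again nested with empty intersection, since preimages of classical points are finite. Hence Type IV goes to Type IV.

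I expect openness to be the main obstacle. I would use the tree structure: the basic open sets are finite intersections of open disks, i.e.\ connected open affinoids. It then suffices that $\phi_{2*}$ maps each open disk onto an open set. For that I would show the image of an open disk is a connected union of directions, using the fact that at each point $\zeta$ the map $\phi_{2*}$ induces a surjection $\Dir\zeta\to\Dir{\phi_{2*}\zeta}$, by the reduction map / local degree. Combined with continuity and compactness, this shows that the image of a connected open set has no boundary point inside its interior directions. The surjectivity on tangent directions at Type II points is the delicate step. I would establish it via the reduction $\overline{\phi}$ being a nonconstant rational map over the residue field. Because of these subtleties I would in the end cite \cite[Theorem 3.7]{berkskew} and Benedetto \cite{Bene} for this step.
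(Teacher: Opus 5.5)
Your proposal is correct and follows the route the paper relies on. The paper gives no proof beyond citing \cite[Theorem 3.7]{berkskew}, but it states the result immediately after \autoref{prop:galois:skew:comp} and the theorem on $\phi_{1*}$, i.e.\ exactly your reduction of $\phi_* = \phi_{1*}\circ\phi_{2*}$ to the type-preserving homeomorphism $\phi_{1*}$ and the Berkovich rational map $\phi_{2*}$, whose properties come from \cite{Bene}.

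Two cautions. After that reduction you should cite only the rational-map results of \cite{Bene}, namely openness, and type preservation when $\CD(a,R)$ contains poles; in the latter case your $\abs{c}R^d$ formula does not directly apply. Invoking \cite[Theorem 3.7]{berkskew} itself for openness would be circular.
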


\thesisarticle{\autoref{sec:skew:props}}{\cite[Section 3.4]{berkskew}} goes on to detail topological, algebraic and geometric facts one might expect on the Berkovich projective line. The subsection thereafter unravels the issue of separability and Frobenius morphisms.

Next, we give a very brief overview of local degrees. However, we omit a full discussion on monomial annuli and annuloids, deferring to \thesisarticle{\autoref{sec:monoannuloids}}{\cite[Section 3.6]{berkskew}}. 
An \emph{annuloid} is like a Berkovich annulus, except we allow for (one or both of) the boundary points to be of arbitrary Type (I or IV, not just II or III). Whence, any two points $\alpha, \beta$ define a unique annuloid $A$ with this pair as a boundary; we call $(\alpha, \beta)$ the \emph{\spine{}} of $A$. We might refer to $d_\bH(\alpha, \beta)$ as the \emph{modulus} of $A$.

Next, we say a skew product $\phi_*$ is \emph{\mono} on an \annuloid{} $A$ if $\phi_*(A)$ is also an \annuloid{}; equivalently, $\phi_*$ maps $A$ properly, $\deg_A(\phi)$-to-$1$ on all points. A special property of \anannuloid{} $A$ on which $\phi_*$ is \mono{} is that it stretches $A$ by a \emph{\fmult{}} factor $\mpt A = \q d$, where $\q\ (= \frac 1n)$ is the scale factor and $d = \deg_A(\phi)$. Moreover, the stretch is uniform along the spine (and $\q$ elsewhere on $A$ if separable). 

\Mono{} \annuloid{}s can be used to define the \emph{\fmult{}}, $\mt\zeta{\bvec v}$, of $\phi_*$ in a direction. Given a point $\zeta$ and a direction $\bvec v$ at $\zeta$, there is always \anannuloid{} $A$ bounded by $\zeta$ and some $\xi \in \bvec v$ on which $\phi_*$ is \mono{}. It follows that $\phi_*$ perfectly dilates an interval emanating from $\zeta$ extending out in the direction of $\bvec v$ by some factor, namely $\mt\zeta{\bvec v} = \mpt A$. This \fmult{} matches the local degree in the direction of $\bvec v$, in the following sense \[\mt\zeta{\bvec v} = \q\deg_{\zeta, \bvec v}(\phi) = \q\deg_{\zeta, \bvec v}(\phi_2).\]
The following theorem summarises the most important \mono{} \annuloid{}s (abridged).

\begin{thm}[\restatemonoannuloidthm{}]\thesisarticle{}{\label{thm:galois:skew:ultimatemonoannuloid}}~\newline
 Let $\phi_*$ be a skew product and suppose $\phi_*$ is \mono{} on the open \annuloid{} $A$ bounded by $\alpha, \beta \in \P^1_\an$. Let $m = \mpt A$ be the \fmult{} of $\phi_*$ on $A$. The following hold:
 \begin{enumerate}
 \item $\displaystyle \forall \zeta \in [\alpha, \beta)\ \mt\zeta{\vec v(\beta)} = m$
 \item $\displaystyle \forall \zeta \in (\alpha, \beta]\ \mt\zeta{\vec v(\alpha)} = m$
 \item $\displaystyle \forall \zeta \in (\alpha, \beta)\ \mpt\zeta = m$
 \item For every $\zeta$ in the \spine{} of $A$, every direction $\bvec u \in \Dir\zeta \sm \set{\vec v(\alpha), \vec v(\beta)}$ is a good direction with image in $\phi_*(A)$ disjoint from $\vec v(\phi_*(\alpha)), \vec v(\phi_*(\beta))$. %
 \item Moreover, $\overline A \cap \phi_*^{-1}[\phi_*(\alpha) , \phi_*(\beta)] = [\alpha , \beta]$ and $\phi_* : [\alpha , \beta] \to [\phi_*(\alpha) , \phi_*(\beta)]$, is a homeomorphism between \spine{}s scaling distances by a factor of $m$, i.e. $\forall\ \zeta, \xi \in \bH \cap [\alpha, \beta]$ \[d_\bH(\phi_*(\zeta), \phi_*(\xi)) = md_\bH(\zeta, \xi).\]
\end{enumerate}
\end{thm}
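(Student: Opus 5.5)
The plan is to reduce to the case of a rational function over $K$ by means of the decomposition of \autoref{prop:galois:skew:comp}, $\phi_* = \phi_{1*}\circ\phi_{2*}$. The first factor $\phi_{1*}$ is a homeomorphism that preserves types and the order $\preceq$. It scales hyperbolic distances uniformly by $\q$, and it sends disks to disks and annuloids to annuloids (these are its images of open connected sets with one or two boundary points). Consequently $\phi_*$ is \mono{} on $A$ if and only if $\phi_{2*}$ is. Every statement (i)--(v) for $\phi_*$ with $m=\q d$ then follows from the corresponding statement for $\phi_{2*}$ with dilation $d=\deg_A(\phi_2)$.

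So assume $\phi_* = \phi_{2*}$ is a rational map $f\in K(y)$, and first let $A$ be a genuine open annulus. After pre- and post-composing with Möbius transformations in $\PGL_2(K)$ (isometries of $d_\bH$ that preserve directions), I would arrange
\[A=\set[\zeta]{r<\norm[\zeta]{y}<R},\qquad f(A)=\set[\zeta]{r'<\norm[\zeta]{y}<R'}.\]
Then $f$ has no zeros or poles in $A$. Expanding $f$ as a convergent Laurent series on $A$, the absence of zeros on the classical annulus, combined with the Newton polygon / Weierstrass preparation argument, forces a single dominant term. This gives
\[f(y)=c\,y^{d}\bigl(1+h(y)\bigr),\qquad \norm[\zeta(0,s)]{h}<1 \text{ for all } r<s<R,\]
with $d=\pm\deg_A(f)$. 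A direct computation with this normal form gives $f_*(\zeta(0,s))=\zeta(0,\abs c s^{\abs d})$, or its reciprocal version when $d<0$. Taking logarithms of radii, the spine is then mapped homeomorphically with $d_\bH$ scaled by $\abs d$, which is (v) apart from the preimage clause, and (iii).

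For any other direction at $\zeta(0,s)$, that is a disk $D_\an(a,s)$ with $\abs a=s$, the normal form shows that $f$ maps it onto $D_\an(f(a),\abs c s^{\abs d})$. This is a disk avoiding $0$ and $\infty$, so the direction is good and its image lies in $f(A)$ but off the two spine directions, giving (iv). The preimage clause of (v) follows: any $\xi\in\overline A$ not on $[\alpha,\beta]$ lies in such a side direction, whose image misses the image spine. Statements (i) and (ii) follow because, for $\zeta\in[\alpha,\beta)$, a small subannulus along $\vec v(\beta)$ is \mono{} with the same degree; the dilation along the spine is therefore $m$, and $\mt\zeta{\vec v(\beta)}=\q\deg_{\zeta,\vec v(\beta)}$ by the remark preceding the theorem.

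Finally, for an annuloid whose boundary point is of Type I or IV, I would exhaust $A$ by an increasing union of open annuli $A_n$ with Type II boundaries on the spine. Each $A_n$ is \mono{} with the same degree, since properness and the degree are inherited on subannuli. Every assertion is local along the spine, so it passes to the union. The main obstacle I expect is the normal-form step $f=cy^d(1+h)$ uniformly on $A$: it needs the fact that ``\mono{}'' really implies that there are no zeros or poles of $f - b$ inside $A$ for the relevant $b$. In positive characteristic, separately, $\q$ and the Frobenius factors must be tracked, so that the dilation remains $\q d$ rather than $d$ on the side directions. I would first try to prove the absence of zeros and poles via the properness/degree characterization (equal counts on every fibre) combined with Newton polygons.
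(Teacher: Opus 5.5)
The paper does not reprove this result; it quotes it from \cite[Theorem 3.69]{berkskew}. So your argument has to stand on its own.

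\textbf{What works.} For annuloids whose boundary points are of Types I--III, your argument is sound. The reduction through $\phi_*=\phi_{1*}\circ\phi_{2*}$ is legitimate: directional and local degrees of $\phi_*$ are by definition those of $\phi_2$, so positive characteristic needs no extra bookkeeping. The M\"obius normalisation plus the Newton-polygon normal form $f=c\,y^{d}(1+h)$ then gives (i)--(v).

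The step you single out as the main obstacle is in fact immediate. Since $\partial f(A)\subseteq f(\partial A)$ and types are preserved, $f(A)$ can be normalised to $\{r'<\abs{y}<R'\}$. Then $0,\infty\notin f(A)$, so $f$ has no zeros or poles on $A$, by hypothesis. The same normalisation works with $r=0$ or $R=\infty$, so a Type I boundary point needs no exhaustion either.

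A minor correction: $f_*(\zeta(0,s))=\zeta(0,\abs{c}s^{d})$, and a side disk maps onto $D_\an(f(a),\abs{c}s^{d})$, for either sign of $d$. There is no separate ``reciprocal version'' and no $\abs{d}$ in the exponent.

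\textbf{The gap: Type IV boundary points.} You assert that each $A_n$ is monomial ``since properness and the degree are inherited on subannuli''. This is exactly the nontrivial point, and as stated it is circular. Without a Laurent expansion on all of $A$, you only know that $\phi_*(A_n)$ is a connected open set inside $\phi_*(A)$ with boundary contained in $\phi_*(\partial A_n)$. To conclude that it is the annulus between $\phi_*(\alpha_n)$ and $\phi_*(\beta)$, you need two facts:
\begin{itemize}
\item $\phi_*$ sends the spine of $A$ into the spine of $\phi_*(A)$;
\item $\phi_*$ sends side branches off that spine.
\end{itemize}
These are (iv)--(v) for $A$ itself. Likewise, (i)--(ii) at a Type IV endpoint are not statements about $\bigcup_n A_n$, so ``local along the spine'' does not cover them.

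\textbf{One way to close the gap.} Say $\alpha$ is of Type IV and $\beta$ of Type II or III. Then $D=A\cup\{\alpha\}$ is the direction at $\beta$ containing $\alpha$, and likewise $D'=\phi_{2*}(A)\cup\{\phi_{2*}(\alpha)\}$ is a disk. The map $\phi_{2*}\colon D\to D'$ is proper of degree $d$, and $\alpha$ is the only preimage of $\phi_{2*}(\alpha)$, because points of $A$ land in $\phi_{2*}(A)$. Hence $\deg_\alpha(\phi_2)=d$, which already gives (i) at $\alpha$.

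Now take a Type II point $\eta'$ on $(\phi_{2*}(\alpha),\phi_{2*}(\beta))$, and let $\overline{E'}$ be the closed disk bounded by $\eta'$ that contains $\phi_{2*}(\alpha)$.
\begin{itemize}
\item The preimage of $\overline{E'}$ in $D$ is a disjoint union of closed disks, each mapping properly onto $\overline{E'}$, with degrees summing to $d$.
\item The component containing $\alpha$ already has degree $d$, so it is the only one. It is bounded by a point $\eta\in(\alpha,\beta)$ with $\phi_{2*}(\eta)=\eta'$.
\item Hence the subannulus of $A$ between $\eta$ and $\beta$ equals $D\cap\phi_{2*}^{-1}(D'\setminus\overline{E'})$. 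It maps onto the annulus between $\eta'$ and $\phi_{2*}(\beta)$, so it is genuinely monomial of degree $d$.
\end{itemize}

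Choose your $A_n$ this way, with $\eta'_n\to\phi_{2*}(\alpha)$. They exhaust $A$, again because $\alpha$ is the only preimage of $\phi_{2*}(\alpha)$. Your genuine-annulus argument then applies to each $A_n$. Finally, (v) extends to the endpoint by continuity of $d_\bH$ along the segment. Two Type IV endpoints are handled the same way, using $\phi_{2*}$ on all of $\P^1_\an$.
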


From our decomposition $\phi_* = \phi_{1*} \circ \phi_{2*}$, the \fmult{} breaks down, with the (scale) factor $\q$ contributed by $\phi_{1*}$ (it scales \emph{all} distances this much) and the local degree from $\phi_{2*}$. Likewise, we further define the \emph{local \fmult{}}, of $\phi$ at $\zeta$ as $\mpt\zeta =\q\deg_\zeta(\phi)$m, where $\deg_\zeta(\phi) = \deg_\zeta(\phi_2)$ is the local degree for the Berkovich rational map $\phi_2$. The expected versions of chain rule and degree counting `multiplicity' apply. See \reflocalmults{} for details.

\begin{align*}
 \sum_{\phi_\#(\bvec v) = \bvec w} \mt\zeta{\bvec v} &= \mpt\zeta &
 \sum_{\phi_\#(\bvec v) = \bvec w} \deg_{\zeta, \bvec v}(\phi) &=\deg_\zeta(\phi) \\
 \sum_{\xi \in \phi^{-1}(\zeta)} \mpt\xi &= \mmp\phi&
 \sum_{\xi \in \phi^{-1}(\zeta)} \deg_\xi(\phi) &= \rdeg(\phi)\\
 \mpt\psi{\phi_*(\zeta)} \cdot \mpt\zeta&= \mpt[\psi \circ \phi]\zeta &
 \deg_{\phi_*(\zeta)}(\psi) \cdot \deg_\zeta(\phi) &= \deg_\zeta(\psi \circ \phi)
\end{align*}

These degrees and \fmult{}s can be computed locally using power series and reduction maps. Along \annuloid{}s, one computes a convergent Laurent series for $\phi_2$. Note that for a Type I, IV (resp.) or III point $\zeta$, we have $\mt\zeta{\bvec v} =\mpt\zeta$ for the (resp.\ either) direction $\bvec v$ at $\zeta$. At a Type II point, all the information can be inferred from reduction of $\phi_2$ after an appropriate change of coordinates.

\begin{thm}[{\cite[Theorem 7.34]{Bene}, \refthmskewreduction}]\label{thm:skew:reduction}
 Let $\phi_*= \phi_{1*} \circ \phi_{2*}$ be a skew product of relative degree $d$.
 Then $\overline \phi_2$ is nonconstant if and only if $\phi_*(\zeta(0, 1)) = \zeta(0, 1)$. In that case,
\[\deg_{\zeta(0, 1)}(\phi) = \deg(\overline \phi_2).\]
 and define $T$ to be the set of bad directions. Then
\begin{enumerate}[label=(\alph*), ref=\theenumi]
 \item $\bvec v \in T$ if and only if $\bvec v$ contains both a zero and pole of $\phi_*$,
 \item $T$ is a finite set,
\item $\phi_*(\bvec v) = \P^1_\an$ for each direction $\bvec v \in T$, and
\item $\phi_\#(\bvec v) = \phi_*(\bvec v) = \vec v(\phi_*(\xi))$ for all directions $\bvec v$ at $\zeta(0, 1)$ not in $T$ and all $\xi \in \bvec v$. For such directions $\bvec v$, we have
\[\deg_{\zeta(0, 1), \bvec v}(\phi) = \wdeg_{\bvec v}(\phi_2)\]%
\end{enumerate}
Finally, $\phi_*$ has explicit good reduction, i.e.,\ $\deg(\overline \phi_2) = d$, if and only if $\overline \phi_2$ is
nonconstant and $T = \emp$.
\end{thm}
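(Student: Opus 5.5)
The plan is to use the decomposition $\phi_* = \phi_{1*}\circ\phi_{2*}$ of \autoref{prop:galois:skew:comp}. This reduces every clause to the corresponding clause for the honest rational map $\phi_{2*}$ on $\P^1_\an(K)$, where \cite[Theorem 7.34]{Bene} applies. The first step is to analyse $\phi_{1*}$ at the Gauss point. Since $\Psi_1$ is a field automorphism with $\abs{\Psi_1(a)} = \abs a^{1/\q}$, it preserves $\bO_K$ and $\mathcal M_K$, so it induces an automorphism $\bar\Psi_1$ of the residue field. From $\norm[\phi_{1*}(\zeta)]{f} = \norm[\zeta]{\Psi_1(f)}^\q$ one computes directly that $\phi_{1*}(\zeta(a,r)) = \zeta(\Psi_1^{-1}(a), r^\q)$. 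In particular $\phi_{1*}(\zeta(0,1)) = \zeta(0,1)$. Also $\phi_{1*}$ fixes the Type I points $0$ and $\infty$, because $\Psi_1(y)=y$.

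By the theorem on $\phi_{1*}$ above, $\phi_{1*}$ is a homeomorphism. Hence it permutes the directions at $\zeta(0,1)$, sending $\vec v(c)$ to $\vec v(\bar\Psi_1^{-1}(c))$ and $\vec v(\infty)$ to itself, and it preserves local degrees trivially. It follows that $\phi_*(\zeta(0,1)) = \zeta(0,1)$ if and only if $\phi_{2*}(\zeta(0,1)) = \zeta(0,1)$. Moreover, $\phi_*$ maps a direction $\bvec v$ onto $\P^1_\an$, or onto a direction, exactly when $\phi_{2*}$ does.

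Zeros and poles also agree: $\phi_*^{-1}(0) = \phi_{2*}^{-1}(\phi_{1*}^{-1}(0)) = \phi_2^{-1}(0)$, and similarly for $\infty$. So the set $T$ of bad directions is the same for $\phi$ and for $\phi_2$. By definition $\deg_\zeta(\phi) = \deg_\zeta(\phi_2)$ and $\deg_{\zeta,\bvec v}(\phi) = \deg_{\zeta,\bvec v}(\phi_2)$. Finally, the reduction $\overline\phi_2$ depends only on $\phi_2$, which is the map $\Psi_2$ restricted to $y$.

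With this transfer in place, the second step proves the rational-map statement for $\phi_2 = f/g$. We normalise $f,g\in\bO_K[y]$ so that the maximum of the Gauss norms of $f$ and $g$ is $1$, and reduce to get $\bar f,\bar g\in\k[y]$. Write $\bar f = h\tilde f$ and $\bar g = h\tilde g$ with $h=\gcd(\bar f,\bar g)$, so that $\overline\phi_2 = \tilde f/\tilde g$. A Gauss-norm computation of $\norm[\phi_{2*}\zeta(0,1)]{y-c} = \norm[\zeta(0,1)]{f-cg}$ shows that $\phi_{2*}(\zeta(0,1)) = \zeta(0,1)$ precisely when $\bar f - c\bar g \neq 0$ for every $c$ and $\bar f,\bar g$ are not both constant multiples of one another, i.e.\ when $\overline\phi_2$ is nonconstant. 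For the degree equality, count preimages: for generic $c$ the direction $\vec v(c)$ has exactly $\deg(\overline\phi_2)$ preimage directions outside $T$. Summing the directional degrees then gives $\deg_{\zeta(0,1)}(\phi_2) = \deg\overline\phi_2$.

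The third step identifies $T$ with the directions $\vec v(a)$ for $a$ a root of $h$, together with $\infty$ if the degree drops there. On any other direction, a change of coordinates and the Weierstrass preparation theorem show that $f$ and $g$ have a dominant term on the disk. Hence $\phi_2$ maps the disk onto the direction $\vec v(\overline\phi_2(a))$ with degree $\wdeg_{\bvec v}(\phi_2)$, which gives (d). Finiteness (b) is then immediate, and (a) follows because a root of $h$ means both $f$ and $g$ have zeros in that residue disk, again by Weierstrass preparation. The final clause follows because $\deg\overline\phi_2 = d$ exactly when $h$ is constant and no degree is lost at infinity.

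I expect the main obstacle to be (c), namely that each bad direction maps onto all of $\P^1_\an$. The plan there is to show that the image of $\bvec v$ is open and contains both $0$ and $\infty$, and that it also contains $\zeta(0,1)$. The latter holds because $\bvec v$ is an open disk and its image is connected, while $\zeta(0,1)$ has only finitely many preimages. An open connected set containing $\zeta(0,1)$ must also meet every direction at $\zeta(0,1)$, and the image of $\bvec v$ would then be forced to be everything. To make this rigorous I would invoke the mapping-of-disks dichotomy from \refchapberk{}: the image of an open disk is either a disk or all of $\P^1_\an$. Since the image contains both a zero and a pole together with the fixed point $\zeta(0,1)$, it cannot be a disk.
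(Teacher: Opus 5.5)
Your transfer step is sound. $\phi_{1*}$ is a homeomorphism fixing $\zeta(0,1)$, $0$ and $\infty$, so fixed points, zeros, poles, good/bad directions and local degrees all pass between $\phi_*$ and $\phi_{2*}$. Over $\K$ it even fixes every direction at $\zeta(0,1)$, as the paper records in \autoref{sec:maps}. The paper gives no proof of its own: it imports the rational case from \cite[Theorem 7.34]{Bene} and the skew version from \cite[Theorem 3.60]{berkskew}. Reducing to $\phi_{2*}$ is therefore the natural route.

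\emph{The gap is in your argument for (c).} The sentence saying an open connected set containing $\zeta(0,1)$ meets every direction and is ``then forced to be everything'' is a non sequitur. Your decisive claim, that a Berkovich open disk cannot contain a zero, a pole and $\zeta(0,1)$, is false. For example, $\P^1_\an\sm\CD_\an(1,\abs x)$ is an open disk containing $0$, $\infty$ and $\zeta(0,1)$, and it meets every direction at $\zeta(0,1)$. So the dichotomy ``disk or all of $\P^1_\an$'' does not by itself finish (c). Also, $\zeta(0,1)\in\phi_*(\bvec v)$ holds for a simpler reason: $\phi_*(\bvec v)$ is connected and contains $0$ and $\infty$, hence contains the segment $[0,\infty]$. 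Finiteness of preimages plays no role.

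What is missing is that the image of the boundary point of $\bvec v$ controls the boundary of $\phi_*(\bvec v)$. Since $\overline{\bvec v}=\bvec v\cup\{\zeta(0,1)\}$ is compact and $\phi_*$ is continuous, you get $\overline{\phi_*(\bvec v)}\subseteq\phi_*(\bvec v)\cup\{\phi_*(\zeta(0,1))\}=\phi_*(\bvec v)\cup\{\zeta(0,1)\}$. Once $\zeta(0,1)\in\phi_*(\bvec v)$, the image is therefore closed. It is also open, because $\phi_*$ is an open map (\autoref{sec:ratskewprops}). By connectedness it is all of $\P^1_\an$. Equivalently, if the image were a disk, its unique boundary point would have to be $\zeta(0,1)$, which could then not lie in it. With this, you do not need the disk dichotomy at all.

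Two smaller repairs:
\begin{enumerate}
\item In step two the identity should read $\norm[\phi_{2*}\zeta(0,1)]{y-c}=\norm[\zeta(0,1)]{f-cg}/\norm[\zeta(0,1)]{g}$. You must also handle $\abs c>1$, e.g.\ via $1/\phi_2$.
\item Your degree count depends on three things: (d); the identification of $\wdeg_{\bvec v}(\phi_2)$ with the order of vanishing of $\overline\phi_2-\overline\phi_2(\bar a)$ at $\bar a$; and the remark that for generic $c$ no bad direction has tangent image $\vec v(c)$. So step three should come before it.
\end{enumerate}
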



\newcommand{\pmat}[4]{\begingroup 
\renewcommand{\arraystretch}{1}
\begin{pmatrix}
 #1 & #3 \\
 #2 & #4
\end{pmatrix}
\endgroup}
\newcommand{\vmat}[4]{\begingroup 
\renewcommand{\arraystretch}{1}
\begin{vmatrix}
 #1 & #3 \\
 #2 & #4
\end{vmatrix}
\endgroup}
\newcommand{\pvec}[2]{\begingroup 
\renewcommand{\arraystretch}{1}
\begin{pmatrix}
 #1 \\
 #2
\end{pmatrix}
\endgroup}

\subsection{Farey Arithmetic}\label{sec:fareyarithmetic}

For brevity we shall denote matrices with round brackets and their determinants with square brackets. Meaning 
\[\det \pmat abcd = \vmat abcd = ad-bc\]

\begin{defn}
 We say that $\frac ab, \frac cd$ are a \emph{Farey adjacent} or \emph{neighbours} iff $\vmat abcd = \pm 1$, assuming they were written in lowest terms.
 
  We will say that \[\frac {a_1}{b_1} < \frac {a_2}{b_2} < \cdots < \frac {a_n}{b_n}\] form a \emph{Farey sequence} iff $\frac {a_i}{b_i}$ and $\frac {a_{i+1}}{b_{i+1}}$ are Farey adjacent for each $i = 1, \dots, n-1$. We will say a sequence of fractions is a \emph{complete Farey sequence of order $B$ between $\frac {a_1}{b_1}$ and $\frac {a_n}{b_n}$} iff these fractions forms the whole collection of rational numbers in $[\frac {a_1}{b_1}, \frac {a_n}{b_n}]$ with denominator at most $B$.%
\end{defn}

\begin{ex}
This is an example of a Farey sequence illustrated on a number line.
 \begin{figure}[H]
\def\ExtraFracList{1/2, -1/2, 3/5, 2/3, 5/2, 7/3, 9/4, 1/3}
\fareyline{0.9\linewidth}{-2}{3}{1}
\stepcounter{figure}
\end{figure}
\end{ex}

It is not immediate that a complete Farey sequence of order $B$ (without adjacency assumption) between two fractions is also a Farey sequence (with adjacency). However, this follows by a nice inductive exercise or directly from \autoref{cor:fareyaux:complete} below. This phenomenon was the original interest in `Farey series'.

\begin{ex}
  One can check that every consecutive pair of fractions are Farey adjacent.
  \begin{figure}[H]
\fareyline{0.9\linewidth}{0}{3}{5}
\caption{The complete Farey sequence between $0$ and $3$ of order $5$.}
\end{figure}
\end{ex}

These definitions may be extended to allow for zero denominators, by considering coprime pairs of integers i.e.\ $\frac ab \sim (a, b) \in \Z \times \N$. Often authors will define `Farey sequence' as we have a complete Farey sequence (of some order) between $\frac 01$ and $\frac 11$; however, many of the well-known facts hold in this generality.
It is easy to see that if $\frac ab < \frac cd$ are Farey adjacent, then $\vmat abcd = -1$ and $\frac cd - \frac ab = \frac 1{bd}$. 

Central to the subject of Farey sequences is the \emph{mediant}, or \emph{Farey sum}, defined as follows. Given two fractions $\frac ab < \frac cd$ with no other assumptions, write \[\frac ab \oplus \frac cd = \frac{a+c}{b+d}.\]
It is a simple exercise to show the following. (Hint: $\vmat ab{a+c}{b+d} = \vmat {a+c}{b+d}cd = \vmat abcd < 0$.)

\begin{prop}[classical]\label{prop:fareyaux:mediant}
Suppose $\frac ab < \frac cd$ be rational numbers in lowest terms; then
 \[\frac ab < \frac ab \oplus \frac cd < \frac cd.\] If additionally $\frac ab, \frac cd$ are Farey adjacent, then they are both also Farey adjacent to $\frac ab \oplus \frac cd$.%
\end{prop}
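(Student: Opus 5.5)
The statement to prove is \autoref{prop:fareyaux:mediant}. The plan is a direct determinant computation, using the hint: it shows both that the mediant lies strictly between $\frac ab$ and $\frac cd$, and that adjacency passes to it. Throughout, take $b, d > 0$ (denominators positive; $\frac 10$ can be treated as the degenerate case with $b$ or $d$ equal to $0$, where the same determinants apply). The first step is to record the elementary fact that for positive denominators
\[\frac pq < \frac rs \iff \vmat pqrs = ps - qr < 0.\]
So every comparison of fractions reduces to the sign of a $2\times 2$ determinant.

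Next comes bilinearity of the determinant in its columns. It gives
\[\vmat ab{a+c}{b+d} = \vmat abab + \vmat abcd = \vmat abcd,\]
and similarly
\[\vmat{a+c}{b+d}cd = \vmat abcd + \vmat cdcd = \vmat abcd.\]
The hypothesis $\frac ab < \frac cd$ says $\vmat abcd < 0$. Since $b+d > 0$, both new determinants are negative, and the first step then yields $\frac ab < \frac{a+c}{b+d} < \frac cd$.

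For the adjacency claim, assume $\vmat abcd = \pm 1$, which here forces $-1$. The two computed determinants then also equal $-1$. The one subtlety is the definition's requirement that fractions be written in lowest terms. So I must check that $a+c$ and $b+d$ are coprime. This follows from the same identity: any common divisor $g$ of $a+c$ and $b+d$ divides $a(b+d) - b(a+c) = \vmat ab{a+c}{b+d} = -1$, hence $g = 1$. The mediant is therefore already in lowest terms, and both determinants equal $-1$. This gives Farey adjacency of $\frac{a+c}{b+d}$ with each of $\frac ab$ and $\frac cd$.

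No step is a genuine obstacle. The only point needing care is this lowest-terms check, which the determinant identity handles. Without adjacency, the mediant need not be reduced, but the strict inequalities do not depend on reduction, since they hold for the rational number $\frac{a+c}{b+d}$ however it is represented.
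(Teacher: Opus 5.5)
Your proof is correct and is the argument the paper intends. The paper only gives the hint that $a(b+d)-b(a+c) = (a+c)d-(b+d)c = ad-bc < 0$ and leaves the rest as an exercise, which you carry out in full. Your extra check that the mediant is already in lowest terms (any common divisor of $a+c$ and $b+d$ divides $-1$) is a step the paper leaves implicit, but its definition of adjacency does require it.
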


\begin{ex}
\begin{align*}
  \frac 12 &= \frac 01 \oplus \frac 11 & \frac 13 &= \frac 01 \oplus \frac 12\\[2mm]
  \frac 25 &= \frac 13 \oplus \frac 12 & \frac 37 &= \frac 25 \oplus \frac 12\\[2mm]
  \frac 5{12} &= \frac 25 \oplus \frac 37 & \frac 7{17} &= \frac 25 \oplus \frac 5{12}
\end{align*}
\begin{figure}[h]
\def\ExtraFracList{0/1,1/1,1/2,1/3,2/5,3/7,5/12, 7/17}
  \SetFracColor{0}{1}{blue}
  \SetFracColor{1}{1}{blue}
  \SetFracColor{1}{2}{red}
  \SetFracColor{1}{3}{red}
  \SetFracColor{2}{5}{red}
  \SetFracColor{3}{7}{red}
  \SetFracColor{5}{12}{red}
  \SetFracColor{7}{17}{red}
\noindent\fareyline{0.9\linewidth}017
\caption{We can find $\frac 7{17}$ by repeated Farey addition between $\frac 01$ and $\frac 11$.}
\end{figure}
\end{ex}

The study of the mediant and Farey sequences is practically ancient, with the above observation apparently being made by Plato in the \emph{Parmenides} dialogue, mid fourth century BCE. Names of mathematicians who have worked on these sequences this abound. However, they take their name from John Farey, a British geologist who essentially conjectured that the mediant is the fraction with smallest denominator between two adjacent fractions, and hence that one can recursively produce any complete Farey sequence (and indeed \emph{all} fractions) by taking successive mediants. These latter properties were actually proven by Charles Haros 14 years earlier. For more history and background, see \cite{Dickson, Fowler, Guthery}.

\begin{prop}[Haros]\label{prop:fareyaux:haros}
 Suppose that $\frac ab < \frac cd$ are Farey adjacent fractions, and $\frac pq \in (\frac ab, \frac cd) \cap \Q$ is any other (each in lowest terms). Then
 \[\pvec pq = \vmat cdpq \pvec ab + \vmat pqab \pvec cd.\]
 In particular, $q \in \N^+\langle b, d\rangle$, so $q \ge b+d > \max(b, d)$.
\end{prop}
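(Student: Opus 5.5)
The argument is linear algebra over $\Z$ using the unimodular matrix formed by the two neighbours. Write $M = \pmat abcd$, whose columns are $\pvec ab$ and $\pvec cd$. Since $\frac ab < \frac cd$ are Farey adjacent, we have $\det M = \vmat abcd = ad - bc = -1$, as noted just after the definition of Farey adjacency. In particular $M$ is invertible over $\Z$, so there are unique integers $\lambda, \mu$ with
\[\pvec pq = \lambda \pvec ab + \mu \pvec cd.\]

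To identify the coefficients, apply Cramer's rule, or simply take determinants. Taking the determinant of the matrix with columns $\pvec cd, \pvec pq$ kills the $\mu$ term and leaves $\vmat cdpq = \lambda \vmat cdab = \lambda(cb - da) = \lambda$. Similarly $\vmat pqab = \mu \vmat cdab = \mu$. Substituting these back gives exactly the displayed identity.

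For the consequence, observe that $\frac ab < \frac pq$ with $b, q > 0$ gives $aq - bp < 0$, i.e.\ $\vmat pqab = bp - aq > 0$. Likewise $\frac pq < \frac cd$ gives $\vmat cdpq = cq - dp > 0$. Both coefficients are therefore positive integers, so they are at least $1$. Reading off the second coordinate gives $q = \lambda b + \mu d \in \N^+\langle b, d\rangle$, and hence $q \ge b + d > \max(b, d)$.

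If zero denominators $\frac 10$ are allowed, the strict inequality needs $b, d$ not both zero, which adjacency guarantees. The sign computations then use the convention on coprime pairs $(a,b) \in \Z \times \N$, with $\frac 10$ treated as $+\infty$. There is no real obstacle here. The only care needed is getting the sign conventions of the determinants right, so that both coefficients come out positive rather than negative.
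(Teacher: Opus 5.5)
Your proof is correct and is essentially the paper's argument: the paper gets the identity from the three-term determinant relation (i.e.\ Cramer's rule) combined with $ad-bc=-1$, which is how you identify $\lambda,\mu$, and your sign check supplies the positivity of the coefficients that the paper leaves implicit. One correction to your closing aside, which lies outside the proposition anyway: $b+d>\max(b,d)$ needs both $b,d\ge 1$, not merely ``not both zero''. With a zero denominator it genuinely fails (e.g.\ $\frac01<\frac11<\frac10$ gives $q=1=b+d=\max(b,d)$), so in that extended setting you keep $q\ge b+d$ but lose $q>\max(b,d)$.
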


\begin{cor}
 Suppose that $\frac ab < \frac cd$ are Farey adjacent fractions in lowest terms. Amongst the fractions in $(\frac ab, \frac cd) \cap \Q$, $b+d$ is the smallest possible denomenator, and this is attained uniquely by $\frac ab \oplus \frac cd = \frac{a+c}{b+d}$.
\end{cor}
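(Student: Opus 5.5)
The plan is to read the corollary off from \autoref{prop:fareyaux:haros}, so almost no new work is needed. Fix any $\frac pq \in (\frac ab, \frac cd) \cap \Q$ in lowest terms with $q > 0$. Haros gives
\[\pvec pq = \lambda \pvec ab + \mu \pvec cd, \qquad \lambda = \vmat cdpq = cq - dp, \quad \mu = \vmat pqab = pb - qa.\]
Both coefficients are integers. I then check their signs directly from the ordering, using that the denominators $b, d, q$ are nonnegative with $b, q > 0$.

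For $\mu$: the inequality $\frac ab < \frac pq$ gives $pb - qa > 0$, so $\mu \ge 1$.

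For $\lambda$: the inequality $\frac pq < \frac cd$ gives $cq - dp > 0$, so $\lambda \ge 1$. If $d = 0$, i.e.\ $\frac cd = \frac 10$, then $\lambda = cq = q > 0$ directly, so the extended convention causes no trouble.

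Comparing second coordinates, $q = \lambda b + \mu d \ge b + d$. Equality forces $\lambda b + \mu d = b + d$ with $\lambda, \mu \ge 1$. Since $b \ge 1$, this forces $\lambda = 1$. The step I expect to need the most care is $\mu$: when $d \ge 1$ we also get $\mu = 1$ immediately, but when $d = 0$ the second coordinate does not determine $\mu$. In that case the interval is $(\frac ab, \infty)$ and $b + d = b$. For $b = 1$ the minimal denominator is $1$, and I would read the statement as concerning $d \ge 1$, treating the degenerate case separately by this remark. Assuming $d \ge 1$, we get $\lambda = \mu = 1$, hence $(p, q) = (a + c, b + d)$, which gives uniqueness.

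It remains to show the bound is attained, i.e.\ that $\frac{a+c}{b+d}$ actually lies in the open interval and is already in lowest terms. The first claim is \autoref{prop:fareyaux:mediant}. For the second, the identity $\vmat ab{a+c}{b+d} = \vmat abcd = \pm 1$ shows that any common divisor of $a + c$ and $b + d$ divides $1$. Hence the mediant has denominator exactly $b + d$, which completes the argument.
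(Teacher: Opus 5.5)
Your argument is correct and is essentially the paper's own route: the paper states this corollary without separate proof as an immediate consequence of Haros's formula (the coefficients $cq-dp$ and $pb-qa$ are positive integers, so $q \in \N^+\langle b, d\rangle$, and $q = b+d$ forces both to equal $1$), with the mediant proposition supplying attainment. Your caveat about $d=0$ is also apt. There adjacency forces $b=1$, and every integer above $a$ has denominator $1 = b+d$, so uniqueness genuinely requires $\frac cd$ to be an honest rational number.
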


We will indulge in the lemmas and proofs as far as they aid the present article. Certain techniqiues will extend to the tree setting, whereas this is merely a numberline. The following algebraic lemma is an exercise in linear algebra and will hasten our progress. In fact, the formula in \autoref{prop:fareyaux:haros} is obtained simply by combining it with the fact $\vmat abcd = -1$. 

\begin{lem}
 Let $R$ be a ring and suppose $a, b, c, d, e, f \in R$. Then
 \[\vmat abcd \pvec ef + \vmat cdef \pvec ab + \vmat efab \pvec cd = \pvec 00.\]
\end{lem}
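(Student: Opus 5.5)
The identity is linear in the column vector $\pvec ef$ once $a,b,c,d$ are fixed, so I would verify it one coordinate at a time. The claim is that for each coordinate,
\[
\vmat abcd\, e + \vmat cdef\, a + \vmat efab\, c = 0,
\]
and the analogous equation holds with $e,a,c$ replaced by $f,b,d$.

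\textbf{First coordinate.} Expanding the determinants with the convention $\vmat abcd = ad-bc$ gives
\[
(ad-bc)e + (cf-de)a + (eb-fa)c = ade - bce + acf - ade + bce - acf = 0,
\]
where the six monomials cancel in pairs.

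\textbf{Second coordinate.} Here we need
\[
(ad-bc)f + (cf-de)b + (eb-fa)d = adf - bcf + bcf - bde + bde - adf = 0.
\]
Only the commutativity of $R$ is used.

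\textbf{A conceptual alternative.} One can instead consider the $3\times 3$ matrices with rows $(a,c,e)$, $(b,d,f)$, $(a,c,e)$ and with rows $(a,c,e)$, $(b,d,f)$, $(b,d,f)$. Each has two equal rows, so each has determinant zero. Expanding either along its repeated row yields exactly the two coordinate identities, since the $2\times 2$ minors are the cyclic determinants $\vmat abcd$, $\vmat cdef$, $\vmat efab$.

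\textbf{Where care is needed.} There is no real obstacle. The only point to watch is keeping the sign and orientation conventions straight: columns are the pairs $\pvec ab, \pvec cd, \pvec ef$, and the determinants run over cyclic consecutive pairs, so no stray minus signs appear. The direct expansion above sidesteps this bookkeeping entirely, and I would present that as the proof, with the cofactor remark as motivation.
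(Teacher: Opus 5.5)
Your proof is correct: both coordinate expansions cancel exactly as you write, and the repeated-row cofactor expansion is a valid alternative. The paper gives no argument of its own and calls the lemma an exercise in linear algebra, so your direct expansion is the intended verification; as you note, it relies on $R$ being commutative, which the paper leaves implicit.
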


In practice, we are more likely to be given a Farey sequence $\frac ab < \frac pq < \frac cd$ with $q > \max(b, d)$ and wish to know, conversely to \autoref{prop:fareyaux:mediant} that $\frac ab$ and $\frac cd$ are Farey adjacent. %

\begin{prop}\label{prop:fareyaux:adjtriple}
 Suppose that $\frac ab < \frac pq < \frac cd$ is a Farey sequence written in lowest terms, meaning both successive pairs of fractions are Farey adjacent. Then $\frac pq = \frac ab \oplus \frac cd$ and TFAE
\begin{enumerate}[label = (\roman*)]
 \item $p = a + c$\label{item:fareyaux:adjtriple:p}
 \item $q = b + d$\label{item:fareyaux:adjtriple:q}
 \item $\displaystyle \frac ab$ and $\displaystyle \frac cd$ are Farey adjacent.\label{item:fareyaux:adjtriple:adj}
 \item $q > \max(b, d)$.\label{item:fareyaux:adjtriple:gr}
\end{enumerate}
\end{prop}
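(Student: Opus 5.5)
We must prove \autoref{prop:fareyaux:adjtriple}: given $\frac ab < \frac pq < \frac cd$ in lowest terms with $\vmat abpq = \vmat pqcd = -1$, show $\frac pq = \frac ab \oplus \frac cd$ (as a value) and that (i)--(iv) are equivalent. The plan is to apply the algebraic lemma with $(e,f) = (p,q)$. Write $D = \vmat abcd$, and note $\vmat cdpq = \vmat pqcd = -1$ and $\vmat pqab = -\vmat abpq = 1$. The lemma then gives
\[D\pvec pq - \pvec ab + \pvec cd \cdot (-1) = \pvec 00,\]
so $D\pvec pq = \pvec{a+c}{b+d}$; the signs will be double-checked against the identity rather than trusted. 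Since $\frac ab < \frac cd$ with positive denominators, $D = ad-bc < 0$. Taking absolute values, $(a+c, b+d) = |D|(p,q)$ up to the sign convention, so $\frac{a+c}{b+d} = \frac pq$ as rationals. This gives the first claim, with $p = (a+c)/|D|$ and $q = (b+d)/|D|$.

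For the equivalences, set $k = |D| \ge 1$, so that $a+c = kp$ and $b+d = kq$.

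\textbf{(i)$\Leftrightarrow$(ii)$\Leftrightarrow$(iii).} Each of (i) and (ii) is equivalent to $k=1$. For (ii) this is immediate because $q \ne 0$. For (i), if $p \neq 0$ it is likewise immediate. If $p = 0$, then (i) holds trivially, but $a+c = 0$, and I expect this case to need special handling. In that case $\frac 01$ is adjacent to both neighbours, which forces $b = d = 1$ and $\frac ab = -1/1$, $\frac cd = 1/1$, so $k = 2$; this is a counterexample to (i)$\Rightarrow$(iii). I would therefore state the case $p=0$ separately, or assume $p \ne 0$ as the paper implicitly does. By definition, Farey adjacency in (iii) means $|D| = 1$, i.e.\ $k = 1$.

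\textbf{(iii)$\Rightarrow$(iv).} If $k = 1$ then $q = b+d > \max(b,d)$ provided $b,d \ge 1$.

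\textbf{(iv)$\Rightarrow$(iii).} If $k \ge 2$ then $q = (b+d)/k \le (b+d)/2 \le \max(b,d)$, contradicting (iv).

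The main obstacle is not algebraic but bookkeeping: getting the signs of the determinants right, and treating degenerate denominators ($b$ or $d$ equal to $0$ for $\frac 10$). If $d=0$ then $\frac cd = \frac 10$, and $q > \max(b,0)$ still works when $k=1$. When $k \ge 2$, we have $q \le (b+0)/2 < b$, which is still fine. The edge case $p=0$ in (i) should be flagged explicitly.
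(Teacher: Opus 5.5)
Your route is the paper's. The three-term determinant identity with $(e,f)=(p,q)$ gives $n(p,q)=(a+c,b+d)$ with $n=bc-ad\ge 1$. Then (ii) and (iii) each amount to $n=1$, and (iv) follows from $q=(b+d)/n\le (b+d)/2$ when $n\ge 2$.

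Your signs need fixing, though. The determinant $cq-dp=-(pd-qc)$ equals $+1$, not $-1$, and $pb-qa=+1$. So the identity reads $(ad-bc)(p,q)+(a,b)+(c,d)=(0,0)$, i.e.\ $(a+c,b+d)=(bc-ad)(p,q)$ directly. Your displayed $D(p,q)=(a+c,b+d)$ is false, since it contradicts $D<0<q$. ``Taking absolute values'' does not legitimately turn it into the correct relation, and with the right signs no such step is needed.

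Your $p=0$ observation is correct and exposes a real flaw in the statement. The paper's proof asserts (i) $\iff n=1$, but $np=a+c$ forces $n=1$ only when $p\ne 0$. Your analysis of that case is inaccurate, however. Adjacency to $\frac01$ forces only $a=-1$ and $c=1$; $b,d\ge 1$ remain arbitrary, so $n=b+d$, not necessarily $2$. Hence for every such triple (i) holds while (ii)--(iv) all fail, e.g.\ $\frac{-1}{3}<\frac01<\frac15$ with $n=8$. What is true is (ii) $\iff$ (iii) $\iff$ (iv) $\Rightarrow$ (i), with (i) equivalent to the rest exactly when $p\ne 0$.

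Your closing remark on zero denominators is wrong. If $d=0$, adjacency of $\frac pq$ and $\frac10$ forces $q=1$ and $n=b$. Then $n=1$ gives $q=b=\max(b,d)$, so (iv) fails while (iii) holds, e.g.\ $\frac01<\frac11<\frac10$; the case $b=0$ is symmetric. Keep your proviso $b,d\ge 1$, which the paper's proof also uses implicitly.
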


\begin{proof}
From the algebraic lemma, 
  \[\vmat cdab \pvec pq = \vmat cdpq \pvec ab + \vmat pqab \pvec cd.\]
  Since the fractions form a Farey sequence $\vmat cdpq = \vmat pqab = 1$; define $n = \vmat cdab$. Whence,
  \[n \pvec pq = \pvec ab + \pvec cd,\]
 $np = a+c$, $nq = b+d$, and so \[\frac{a+c}{b+d} = \frac{np}{nq} = \frac pq.\] Now it is clear that \ref{item:fareyaux:adjtriple:adj} $\iff$ $n=1$ $\iff$ \ref{item:fareyaux:adjtriple:p} $\iff$ \ref{item:fareyaux:adjtriple:q}. In the affirmative case, $q = b+d > \max(b, d)$. Otherwise, if $n \ge 2$ then $q = \frac{b+d}n \le \frac{b+d}2 \le \max(b, d)$. Thus, \ref{item:fareyaux:adjtriple:q} is equivalent.
\end{proof}

\begin{prop}\label{prop:fareyaux:parents}%
 Suppose $p, q \in \Z$ are coprime with $q \ge 1$. On each side of $\frac pq$ there exist unique \emph{`parents'} $\frac ab < \frac pq < \frac cd$ which form a Farey sequence and are such that either $q = b = d = 1$ or $q > \max(b, d)$, the latter implying that $\frac ab$ and $\frac cd$ are themselves Farey neighbours.
\end{prop}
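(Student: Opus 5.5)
The plan is to reduce everything to solving a linear congruence modulo $q$ and then read off the required inequalities from the sizes of the solutions. Two cases are handled separately.

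\textbf{Case $q=1$.} Here $\frac pq=\frac p1$. I would take $\frac ab=\frac{p-1}{1}$ and $\frac cd=\frac{p+1}{1}$. Both are Farey adjacent to $\frac p1$, since the determinants are $\pm1$, so $\frac{p-1}1<\frac p1<\frac{p+1}1$ is a Farey sequence with $q=b=d=1$. For uniqueness within this alternative, note that $b=d=1$ and adjacency force $\abs{a-p}=\abs{c-p}=1$. The ordering then pins down $a=p-1$ and $c=p+1$.

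\textbf{Case $q\ge 2$: existence.} Since $\gcd(p,q)=1$, the congruence $pd\equiv -1 \pmod q$ has a unique solution $d$ with $0\le d\le q-1$. We have $d\neq 0$ because $q\ge2$, so $1\le d\le q-1$. Set $c=(1+pd)/q\in\Z$. Then $cq-pd=1$, which says that $\frac cd$ is in lowest terms, Farey adjacent to $\frac pq$, and satisfies $\frac cd>\frac pq$.

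Symmetrically, take the unique $b\in[1,q-1]$ with $pb\equiv 1\pmod q$ and put $a=(pb-1)/q$. Then $pb-aq=1$, so $\frac ab<\frac pq$ is adjacent to $\frac pq$. Thus $\frac ab<\frac pq<\frac cd$ is a Farey sequence with $\max(b,d)\le q-1<q$.

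\textbf{Case $q\ge2$: the extra claims.} Adding the two congruences gives $b+d\equiv 0\pmod q$. Since $2\le b+d\le 2q-2$, this forces $b+d=q$. This is exactly condition \ref{item:fareyaux:adjtriple:q} of \autoref{prop:fareyaux:adjtriple}. That proposition then gives that $\frac ab$ and $\frac cd$ are themselves Farey neighbours and that $\frac pq=\frac ab\oplus\frac cd$.

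\textbf{Case $q\ge2$: uniqueness.} Suppose $\frac{c'}{d'}>\frac pq$ is any Farey neighbour in lowest terms with $1\le d'<q$. Then $c'q-pd'=1$, so $pd'\equiv-1\pmod q$, hence $d'\equiv d$. Since both lie in $[1,q-1]$, we get $d'=d$, and then $c'=c$. The same argument applies on the left. The alternative $b=d=1$ cannot occur here, because it would require $q=1$.

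The only delicate point I expect is bookkeeping at the boundary. If $\frac10$ is admitted as a fraction, a zero denominator would need $c'q=1$, which is impossible for $q\ge2$. One should also check that "parents" in the $q=1$ case are understood with the convention $b=d=1$. Apart from this, the argument is the congruence computation above.
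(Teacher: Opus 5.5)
Your proof is correct and is essentially the paper's argument. The paper takes a Bézout solution of $bp-aq=1$ and shifts $(a,b)$ by multiples of $(p,q)$ to force $1\le b\le q$, noting that $b=q$ happens only when $q=1$; this is exactly your choice of residue of $b$ modulo $q$, and uniqueness comes from the same congruence. Your extra step $b+d=q$ (where "adding" the congruences needs $\gcd(p,q)=1$ to cancel $p$), combined with \autoref{prop:fareyaux:adjtriple}, makes explicit the adjacency of the two parents, which the paper leaves to the reader.
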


\begin{proof}
 B\'ezout's Theorem provides integers $a, b$ to the equation $bp - aq = 1$, i.e.\ $\vmat abpq = -1$. Now, by replacing $\pvec ab$ with $\pvec ab + n \pvec pq$ if and as necessary, we can ensure also that $1 \le b \le q$. One can check that this pair $(a, b)$ is unique. Note that if $b = q$, then $q \mid bp - aq = 1$; therefore $q > 1 \iff q > b$. Similarly, there exists a unique integer pair $(c, d)$ with $\vmat pqcd = -1$ and $q = d = 1$ or $1 \le d < q$. 
\end{proof}

\begin{thm}\label{prop:fareyaux:iff}
 Suppose that $\frac ab < \frac cd$ are fractions in lowest terms. Then they are adjacent if and only if every fraction $\frac pq \in \left(\frac ab, \frac cd\right)$ has a larger denominator, i.e.\ $q > \max(b, d)$.
\end{thm}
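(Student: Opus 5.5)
The forward direction is immediate from \autoref{prop:fareyaux:haros}. Suppose $\frac ab < \frac cd$ are Farey adjacent and $\frac pq \in \left(\frac ab, \frac cd\right)$ is in lowest terms. Haros gives
\[\pvec pq = \vmat cdpq \pvec ab + \vmat pqab \pvec cd.\]
Both coefficients are positive integers, because $\frac ab < \frac pq < \frac cd$ fixes the signs of the two determinants. Hence $q \in \N^+\langle b, d\rangle$, and so $q \ge b + d > \max(b, d)$.

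For the converse, assume every fraction strictly between $\frac ab$ and $\frac cd$ has denominator exceeding $\max(b,d)$. I would first dispose of the degenerate case of no denominator $0$ (the statement concerns rational numbers with $b, d \ge 1$).

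The plan is to consider the mediant-type structure via parents from \autoref{prop:fareyaux:parents}. Suppose WLOG $d \ge b$, and let $\frac{c'}{d'} < \frac cd$ be the left parent of $\frac cd$. Then $\vmat {c'}{d'}cd = -1$, with either $d' < d$ or $d = d' = 1$.

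\textbf{Case $d = 1$.} Then $b = 1$ as well, and $\frac ab, \frac cd$ are integers $a < c$. The hypothesis forbids any integer strictly between them, so $c = a + 1$ and they are adjacent.

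\textbf{Case $d > 1$.} Here $d' < d$, and the left parent $\frac{c'}{d'}$ is the fraction with denominator less than $d$ that lies nearest to $\frac cd$ on the left. Indeed, by the forward direction applied to the adjacent pair $\left(\frac{c'}{d'}, \frac cd\right)$, every fraction strictly between them has denominator at least $d' + d > d$.

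Now compare positions. If $\frac{c'}{d'} > \frac ab$, then $\frac{c'}{d'}$ lies in $\left(\frac ab, \frac cd\right)$ with denominator $d' < d = \max(b,d)$, contradicting the hypothesis. So $\frac{c'}{d'} \le \frac ab$. If the inequality were strict, then $\frac ab$ would lie strictly between the adjacent pair $\frac{c'}{d'}$ and $\frac cd$. By the forward direction this forces $b \ge d' + d > d$, contradicting $d \ge b$. Therefore $\frac ab = \frac{c'}{d'}$, and in lowest terms $\vmat abcd = -1$, which is adjacency.

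The step needing the most care is this final comparison, and in particular checking that the lowest-terms normalisation makes $\frac ab = \frac{c'}{d'}$ an equality of pairs. That holds because both pairs are coprime with positive denominators. The symmetric case $b > d$ follows by using the right parent of $\frac ab$ instead.
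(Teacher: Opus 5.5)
Your proof is correct and is essentially the paper's argument in mirror image. The paper assumes $b \ge d$, takes the right parent $\frac pq$ of $\frac ab$ from \autoref{prop:fareyaux:parents} (with $q \le b$), rules out $\frac pq < \frac cd$ by the hypothesis, and rules out $\frac pq > \frac cd$ by \autoref{prop:fareyaux:haros} — exactly what you do with the left parent of $\frac cd$. Your separate treatment of $d = 1$ is harmless but unnecessary: the weaker bound $d' \le d$ already makes both comparisons go through.
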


\begin{proof}
The forward direction is given by \autoref{prop:fareyaux:haros}. Assume that $b \ge d$; the other case is a similar exercise. 
For $\frac ab$, \autoref{prop:fareyaux:parents} provides a Farey neighbour and parent $\frac ab < \frac pq$ with $1 \le q \le b = \max(b, d)$. 
 We also want to show $\frac pq < \frac cd$. A priori, these are not equal because $\vmat abcd \ne -1$. Suppose, otherwise, that $\frac ab < \frac cd < \frac pq$. Then by \autoref{prop:fareyaux:haros} we have $d \ge b+q > b$, contradicting our initial assumption. 
\end{proof}

\begin{cor}\label{cor:fareyaux:complete}
 Any complete Farey sequence is a Farey sequence. 
\end{cor}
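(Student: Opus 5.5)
The plan is to reduce the corollary to \autoref{prop:fareyaux:iff}, applied to each consecutive pair. Let
\[\frac{a_1}{b_1} < \frac{a_2}{b_2} < \cdots < \frac{a_n}{b_n}\]
be a complete Farey sequence of order $B$ between $\frac{a_1}{b_1}$ and $\frac{a_n}{b_n}$. By definition it lists every rational in $[\frac{a_1}{b_1}, \frac{a_n}{b_n}]$ with denominator at most $B$, all written in lowest terms. In particular every $b_i \le B$. If the endpoints might have denominators exceeding $B$, I would note that the definition as stated lists the endpoints themselves, so I take $b_1, b_n \le B$ as implicit.

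Fix $1 \le i < n$ and consider the consecutive pair $\frac{a_i}{b_i} < \frac{a_{i+1}}{b_{i+1}}$. I claim that every fraction $\frac pq$ in lowest terms lying strictly inside $(\frac{a_i}{b_i}, \frac{a_{i+1}}{b_{i+1}})$ has $q > B$. Such a $\frac pq$ lies in the big interval $[\frac{a_1}{b_1}, \frac{a_n}{b_n}]$. If $q \le B$, then completeness forces $\frac pq$ to appear in the list, strictly between two consecutive terms, which is a contradiction. Hence $q > B \ge \max(b_i, b_{i+1})$.

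By the reverse direction of \autoref{prop:fareyaux:iff}, the fractions $\frac{a_i}{b_i}$ and $\frac{a_{i+1}}{b_{i+1}}$ are then Farey adjacent. Since this holds for every $i$, the sequence is a Farey sequence.

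There is no real obstacle. The only point needing care is that \autoref{prop:fareyaux:iff} is stated for fractions in lowest terms with positive denominators, and the terms of a complete Farey sequence satisfy this by construction. The substance of the argument already lives in \autoref{prop:fareyaux:iff}, through \autoref{prop:fareyaux:parents} and \autoref{prop:fareyaux:haros}.
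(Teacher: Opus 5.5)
Your proof is correct and is essentially the paper's own argument. The paper states the corollary as an immediate consequence of \autoref{prop:fareyaux:iff}, and at the end of \S\ref{sec:fareyarithmetic} it spells out exactly your reasoning: any rational strictly between two consecutive terms has denominator $q > B \ge \max(b,d)$, so the pair is Farey adjacent. Your remark that the endpoints have denominator at most $B$ also agrees with the definition, since the endpoints belong to the listed collection.
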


\begin{defn}
 We say that a Farey sequence $S = \left(\frac {a_i}{b_i}\right)_{i=1}^N$ is \emph{generated by Farey addition between} a Farey subsequence $T = \left(\frac {c_i}{d_i}\right)_{i=1}^M$ iff there is a succession 
 $T = S_M \subset S_{M+1} \subset \cdots \subset S_N = S$ such that for each $M \le i < N$, $S_{i+1} = S_i \cup \set{\frac ab \oplus \frac cd}$ for some $\frac ab, \frac cd \in S_i$. In other words, $S$ can be generated by repeated Farey additions between existing fractions, beginning with $T$. Naturally, we always require mediants to be taken between fractions in lowest terms.
\end{defn}

This is the most general form of the Haros result for Farey sequences which need not be complete.

\begin{prop}
 Let \[\frac {a_1}{1} < \frac {a_2}{b_2} < \cdots <  \frac {a_{n-1}}{b_{n-1}} < \frac {a_n}{1}\] be a Farey sequence (note the end fractions are integers $a_1 < a_n$). Then this is generated by Farey addition between the subsequence $a_1, a_1+1, \dots, a_n - 1, a_n$. 

 Moreover, every Farey sequence of the form \[\frac{-1}0 < \frac {a_1}{b_1} < \frac {a_2}{b_2} < \cdots <  \frac {a_{n-1}}{b_{n-1}} < \frac {a_n}{b_n} < \frac10\] is generated by Farey addition between $\frac{-1}0, \frac a1, \frac 10$ for some $a \in \Z$.
\end{prop}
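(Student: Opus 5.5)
Both parts go by induction on the number of fractions in the sequence. The key tool is \autoref{prop:fareyaux:adjtriple} together with \autoref{prop:fareyaux:parents}, used to peel off the fraction of largest denominator.

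For the first part, let $S$ be the given Farey sequence with integer endpoints $a_1 < a_n$, and let $T$ be the set of integers in $[a_1, a_n]$. First I check that $T \subseteq S$. If some integer $k$ with $a_1 < k < a_n$ were missing, it would lie strictly between two consecutive fractions $\frac{a_i}{b_i} < k < \frac{a_{i+1}}{b_{i+1}}$ of $S$. These are Farey adjacent, so by \autoref{prop:fareyaux:haros} we would have $1 \ge b_i + b_{i+1} \ge 2$, a contradiction.

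Now induct on $|S \setminus T|$. If this is zero, $S = T$ and there is nothing to prove. Otherwise pick $\frac pq \in S$ of maximal denominator; then $q \ge 2$, so $\frac pq$ is not an endpoint. Its neighbours $\frac ab < \frac pq < \frac cd$ in $S$ form a Farey sequence and satisfy $b, d \le q$. In fact $b, d < q$: if, say, $b = q$, then adjacency gives $q \mid bp - aq = \pm 1$, forcing $q = 1$. Hence $q > \max(b,d)$, and \autoref{prop:fareyaux:adjtriple} shows that $\frac ab$ and $\frac cd$ are Farey adjacent with $\frac pq = \frac ab \oplus \frac cd$. Removing $\frac pq$ therefore leaves a Farey sequence with the same integer endpoints. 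By induction that smaller sequence is generated from $T$, and $S$ is obtained from it by one further mediant.

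For the second part, work with the coprime pairs $(a,b) \in \Z \times \N$ and allow $b = 0$ for $\frac{\mp1}{0}$, as the paper permits. The determinant computations in \autoref{prop:fareyaux:adjtriple} and the lemma preceding it are purely algebraic, so they still apply. I run the same peeling induction: remove the element of maximal denominator among the interior fractions whenever its denominator is at least $2$. The bound $b, d < q$ and the conclusion of \autoref{prop:fareyaux:adjtriple} go through unchanged, because $\frac{\mp1}{0}$ has denominator $0 < q$.

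This reduces to the case where every interior fraction is an integer, so the sequence is $\frac{-1}{0} < k_1 < \dots < k_r < \frac10$ with consecutive integers adjacent. Such integers are adjacent only if they are consecutive, since $\vmat{k}{1}{l}{1} = k - l$. The adjacencies to $\frac{\mp1}{0}$ are automatic. The integers are then produced from a single one: starting with $\frac{-1}{0}, \frac a1, \frac10$ for $a = k_1$, the mediant $\frac a1 \oplus \frac10 = \frac{a+1}{1}$, then $\frac{a+1}{1} \oplus \frac 10 = \frac{a+2}{1}$, and so on, generates $k_2, \dots, k_r$. If $r = 0$, the sequence $\frac{-1}{0} < \frac10$ has no interior fractions at all. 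I would treat this degenerate case separately by allowing any $a$ and noting the subsequence condition is vacuous, or else note that the statement presumes $n \ge 1$.

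The main obstacle is keeping \autoref{prop:fareyaux:adjtriple} honest at the ends. Its "Farey sequence" hypothesis uses the sign convention $\vmat cdpq = \vmat pqab = 1$ for increasing order, and with $\frac{-1}{0}$ I must check that the ordering of pairs still yields determinant $-1$ for increasing neighbours. Explicitly, $\vmat{-1}{0}{k}{1} = -1$ and $\vmat{k}{1}{1}{0} = -1$, which is consistent. A further point to confirm is that maximal-denominator elements with $q \ge 2$ always exist until only integers remain.
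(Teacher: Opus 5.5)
Your proposal is correct and is essentially the paper's argument: peel off a fraction $\frac pq$ of maximal denominator, note that it cannot share its denominator with a neighbour (else $q \mid \pm 1$), invoke \autoref{prop:fareyaux:adjtriple} to see that the neighbours are adjacent with $\frac pq$ as their mediant, and for the addendum reduce to consecutive integers and use $\frac a1 \oplus \frac 10 = \frac{a+1}1$. Your upfront check via \autoref{prop:fareyaux:haros} that every integer in $[a_1, a_n]$ lies in $S$, and your induction on $|S \setminus T|$ rather than on the maximum denominator, make explicit the bookkeeping that the paper leaves terse.
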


\begin{proof}
 Induction on maximum denominator. If the maximum is $1$, it is not hard to see that neighbouring terms in the sequence must be consecutive integers. Otherwise, let $\frac pq$ be a fraction in $S$ with maximal denominator and suppose $\frac ab < \frac pq < \frac cd$ are its two neighbours in $S$. Then $q \ge \max(b, d)$ but $q$ cannot equal $b$ (or $d$), because then $q > 1$ divides $\vmat abpq = 1$ (or $\vmat pqcd$ respectively). So $q > \max(b, d)$, and thus $\frac ab$ and $\frac cd$ are adjacent by \autoref{prop:fareyaux:adjtriple}. Therefore $S \sm \set{\frac pq}$ is a Farey sequence, which by induction is generated by Farey addition over $a_1, a_1+1, \dots, a_n - 1, a_n$. We leave details of the addendum as an exercise; note that necessarily $b_1 = b_n = 1$ and for instance $\frac a1 \oplus \frac 10 = \frac {a+1}1$.
\end{proof}

Finally, we observe that complete Farey sequences are very easy to produce and characterise. Indeed, let $M < N$ be integers and fix $B \in \N^+$. Then the set $\set[\frac ab]{1 \le b \le B, M \le \frac ab \le N}$ always forms a complete Farey sequence of order $B$ between $M$ and $N$. To see this, suppose that $\frac ab < \frac cd$ are consecutive fractions in the above set with respect to $<$; then for any rational number $\frac pq$ between them, we must have $q > B \ge \max(b, d)$; now by \autoref{prop:fareyaux:iff}, $\frac ab < \frac cd$ must be Farey adjacent.

\section{Space}\label{sec:space}


In this section, we focus on the Berkovich projective line $\P^1_\an(\K)$ over the Puiseux (Levi-Civita) series. This space will function as a universal dual graph for divisors in a subset of a surface. Essential definitions were given \autoref{sec:backberk}, with significantly greater detail in \cite{Bene}. The main goal is to understand the Puiseux parametrisation of Type II points. Note the if $\zeta(\gamma, r)$ is a Type II point of $\P^1_\an(\K)$ then $r$ is `rational', meaning $r \in \abs\K = \set[\abs x^t]{t \in \Q}$. Therefore we can always write Type II points as $\zeta(\gamma, \abs x^{\frac ab})$ with $a \in \Z$, $b\in \N$. These numbers $a, b$ along with the multiplicity of $\gamma$ will play a crucial r\^ole later. Many details of this subsection can be found in Favre \& Jonsson \cite{FJ04}, such as their Section 3.4.

\subsection{Multiplicity}\label{sec:galois:multiplicity}

Previously in \autoref{defn:multiplicity} we defined a number $\mm(\gamma)$ for $\gamma \in \hk$ and in \autoref{prop:multiplicitydefn} we saw it was the maximum length of an orbit under $G$, conveniently given by $|\Orb_\prim(\zeta)|$.
Now however we shall want $G$ to act as in \autoref{prop:GaloisactiononP1}. On $\gamma \in \hk$ this changes little since $\omega_*(\gamma) =  (\omega^*)^{-1}(\gamma)$, so $\Orb_G(\gamma) = \Orb_\prim(\gamma)$ is the same. Note that $\omega_*(\infty) = \infty$ for any $\omega^* \in G$.

\begin{defn}%
 Let $\zeta \in \P^1_\an(\hk)$. Define its \emph{multiplicity} $\mm(\zeta)$ to be $|\Orb_\prim(\zeta)|$. We also define the \emph{multiplicity} of a subset $U \subset \P^1_\an$, $\mm(U) = \min_{\zeta \in U} \mm(\zeta)$ as the minimum multiplicity in $U$.
\end{defn}

\begin{prop}\label{prop:multiplicity}
Let $\zeta \in \bH \subset \P^1_\an$.
\begin{enumerate}[label = (\roman*)]
 \item \label{prop:multiplicity:II} If $\zeta$ is Type II or III, then we can write $\zeta = \zeta(a, r)$ such that $\mm(a) = \mm(\zeta)$. Moreover $a$ can be obtained by truncating any Puiseux series $b \in \CD(a, r)$ to order $r$, therefore $\mm(a) = \min_{b \in \CD(a, r)} \mm(b)$ and $a \in k\left(x^\frac{1}{\mm(a)}\right)$ has a finite expansion.
 \item \label{prop:multiplicity:IV} If $\zeta$ is Type IV, then $\mm(\zeta) = \infty$.
\end{enumerate}
\end{prop}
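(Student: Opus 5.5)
The plan is to reduce everything to the Galois action of $G$, realised through $\prim$, on closed disks. The basic observation is that each $\omega_*$ is an isometric field automorphism of $\hk$ (\autoref{prop:rootsauto}). Hence it maps $\CD(a,r)$ onto $\CD(\omega_*(a),r)$, so the action on $\P^1_\an$ satisfies $\omega_*(\zeta(a,r)) = \zeta(\omega_*(a), r)$. Consequently $\omega_*$ fixes $\zeta(a,r)$ if and only if $\abs{\omega_*(a)-a}\le r$. Since $\mm(\cdot)$ is an orbit size, it is the index of a stabiliser in the cyclic-type action generated by $\prim$. So it suffices to compare stabilisers.

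\textbf{Part \ref{prop:multiplicity:II}.} Write $\zeta=\zeta(b,r)$ with $b\in\hk$ and $r=\abs x^t$, where $t\in\Q$ for Type II and $t\in\R\sm\Q$ for Type III. I will let $a$ be the truncation of $b$ consisting of all terms $c_jx^{r_j}$ with $r_j<t$.

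First, $a$ has a finite expansion. For $b\in\hk$ the exponents tend to $\infty$, so only finitely many lie below $t$. Therefore $a\in\k(x^{1/\mm(a)})$ by the LCM description \ref{prop:multiplicitydefn:LCM} of \autoref{prop:multiplicitydefn}.

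Second, every $b'\in\CD(b,r)$ has the same truncation, because $\abs{b-b'}\le r$ forces $b-b'$ to have only exponents $\ge t$. Hence $\zeta=\zeta(a,r)$, and $a$ is ``the truncation of any $b'\in\CD(a,r)$''. The LCM description then gives $\mm(a)\mid\mm(b')$, so $\mm(a)=\min_{b'\in\CD(a,r)}\mm(b')$.

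Third, $\mm(\zeta)=\mm(a)$. For $\omega$ a power of $\prim$, the difference $\omega_*(a)-a$ has terms $c_j(\omega_n^{m_j}-1)x^{m_j/n}$, all of exponent $<t$. So $\abs{\omega_*(a)-a}\le r$ holds iff every such term vanishes, i.e.\ iff $\omega_*(a)=a$. Thus $\operatorname{Stab}(\zeta)=\operatorname{Stab}(a)$, and the orbit sizes agree, with $\abs{\Orb_\prim(a)}=\mm(a)$ by \autoref{prop:multiplicitydefn}(iv).

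\textbf{Part \ref{prop:multiplicity:IV}.} Let $\zeta=\lim\zeta(a_n,r_n)$ for nested disks $\CD(a_n,r_n)$ with empty intersection. Necessarily $r_n\downarrow r_\infty>0$, since otherwise the limit is Type I. Suppose for contradiction that $\mm(\zeta)=M<\infty$.

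If $\omega_*$ fixes $\zeta$, then it maps each Berkovich disk $\CD_\an(a_n,r_n)\ni\zeta$ to a disk of the same radius that still contains $\zeta$. Two closed disks of equal radius are either equal or disjoint, so $\omega_*$ fixes each $\zeta_n=\zeta(a_n,r_n)$. Hence $\operatorname{Stab}(\zeta)\subseteq\operatorname{Stab}(\zeta_n)$ and $\mm(\zeta_n)\le M$.

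By Part \ref{prop:multiplicity:II}, I may take each $a_n$ to be the canonical truncation, with $\mm(a_n)\le M$. Nestedness makes these truncations compatible: $a_{n+1}$ extends $a_n$ by terms of exponent in $[t_{n+1},t_n)$. Their denominators are bounded by $\operatorname{lcm}(1,\dots,M)$, so the exponents are discrete. Hence they assemble into a single Puiseux series $a\in\K$, and $a$ satisfies $\abs{a-a_n}\le r_n$ for all $n$. Then $a$ lies in every disk, contradicting the empty intersection. Therefore $\mm(\zeta)=\infty$.

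I expect the main obstacle to be this Type IV step. The delicate points are justifying that the stabiliser of the limit point is contained in the stabilisers of the approximating disks, and that bounded multiplicity makes the compatible truncations converge within $\K$ (or $\hk$). The Type II/III case is essentially bookkeeping with the LCM characterisation.
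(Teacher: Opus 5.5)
Your proof is correct and follows essentially the same route as the paper. In part (i), your truncation $a$ and the observation that $\abs{\omega_*(a)-a}\le r$ forces $\omega_*(a)=a$ are exactly the paper's argument. In part (ii), you use the same equal-radius overlapping-disk argument to show that every Galois element fixing $\zeta$ fixes each $\zeta_n$, which bounds the denominators of the centres. The only difference is the final step: the paper uses discreteness of $\abs{a_n-a_{n+1}}\in\abs x^{\frac1m\Z}$ to get $r_n\to 0$, contradicting Type IV, whereas you assemble the compatible canonical truncations into a single element of $\k((x^{1/L}))$ lying in every disk, which is a slightly more direct contradiction. Note that your exponent window should read $[t_n,t_{n+1})$, since $t_n$ increases as $r_n$ decreases.
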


\begin{proof}
 \ref{prop:multiplicity:II} We need to determine the period of $\zeta$ under the action of $\prim_*$. If $\zeta = \zeta(b, r)$ then $\prim_*^k(\zeta) = \zeta(\prim_*^k(b), r)$. So we want the minimal $k$ such that $|\prim_*^k(b) - b| \le r$. Write $b = a + c$ where $a$ is the $\bO(r)$ truncation of $b$ and $c$ is the remainder, then \[\prim_*^k(b) - b = (\prim_*^k(a) - a) + (\prim_*^k(c) - c).\] $|\prim_*^k(c)|, |c| \le r$ for any $k$, and $|\prim_*^k(a) - a| \le r$ iff $\prim_*^k(a) = a$; therefore $|\prim_*^k(b) - b| \le r$ iff $\prim_*^k(a) = a$. This proves $\mm(a) = \mm(\zeta)$.
 
 \ref{prop:multiplicity:IV} Suppose $m = \mm(\zeta) \in \N$.
 Describe $\zeta$ as a limit of Type II points $\zeta_n = \zeta(a_n, r_n)$ with $\mm(\zeta_n) = \mm(a_n)$. WLOG shrink $r_n$ to $|a_n - a_{n+1}|$ to minimally have $\bar D_\an(a_{n+1}, r_{n+1}) \subset \bar D_\an(a_n, r_n)$, and remove terms in the sequence to ensure that $r_n$ is strictly decreasing.
 
We have $\zeta \in \bar D_\an(a_n, r_n)$ and $\prim_*^m(\bar D_\an(a_n, r_n))$ is a closed disk $\bar D_\an(\prim_*^m(a_n), r_n)$ containing $\prim_*^m(\zeta) = \zeta$. Since these disks overlap and have the same radius they are equal, proving that $\prim_*^m(\zeta_n) = \zeta_n$. Therefore $\mm(\zeta_n) = \mm(a_n) \mid m$. %

By \autoref*{prop:multiplicitydefn} \ref{prop:multiplicitydefn:subfield}, $a_n \in k((x^\frac 1m))$ for every $n$. Since $r_n = |a_n - a_{n+1}|$ we must have $r_n = |x|^\frac {p_n}{m}$ for some $p_n \in \Z$. But if $r_n$ is strictly decreasing then (for large $n$) we also have $p_n$ strictly increasing. Hence $p_n \to \infty$ and $r_n \to 0$.
\end{proof}

\begin{rmk}
 The best way to think about \autoref{prop:multiplicity} is that a point $\zeta$ has the minimum multiplicity among Type I points $a \preceq \zeta$ below it in the canonical order (with $\min \emp = \infty$). These are the simplest Type I points describing $\zeta$ geometrically, in fact we see in the method of proof for Type IV points, that one could write down a series (not in $\hk$) \[\sum_j c_j x^\frac{p_j}{q_j}\] with $\frac{p_j}{q_j}$ strictly increasing, but not to infinity. This forces the $q_n$ to be unbounded in $\N$, and hence the multiplicity ought to be infinite.
\end{rmk}

\begin{rmk}
 This proposition shows the equivalence between the multiplicity $\mm$ in this \thesisarticle{thesis}{paper} and the multiplicity defined in Favre \& Jonsson \cite[\S 1.4, \S 3.4, \S 6.3.2]{FJ04}. %
\end{rmk}

\begin{prop}
 $\mm(\zeta) = |\Orb_G(\zeta)|$
\end{prop}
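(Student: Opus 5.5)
Since $\mm(\zeta)$ is defined as $|\Orb_\prim(\zeta)|$, and $\prim_*$ lies in $G$ by \autoref{prop:rootsauto}, the inequality $|\Orb_\prim(\zeta)|\le|\Orb_G(\zeta)|$ is immediate. The plan is therefore to prove the reverse inclusion $\Orb_G(\zeta)\subseteq\Orb_\prim(\zeta)$. By \autoref{prop:rootsauto} every $g\in G$ is $\omega^*$ for some sequence of roots of unity $\omega$. Hence it suffices to show that for each such $\omega$ there is an integer $k$ with $\omega_*(\zeta)=\prim_*^k(\zeta)$.

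I would split into cases by type, following \autoref{prop:multiplicity}. If $\mm(\zeta)=\infty$ there is nothing to prove, since then $|\Orb_G(\zeta)|\ge|\Orb_\prim(\zeta)|=\infty$. This covers Type IV points, and also Type I points $\gamma\in\hk\sm\K$. For Type I points $\gamma$ of finite multiplicity (and $\infty$, which is fixed) the claim is \autoref{prop:multiplicitydefn}(iv). Indeed $\omega_*(\gamma)=(\omega^*)^{-1}(\gamma)$ is again in $\Orb_G(\gamma)=\Orb_\prim(\gamma)$.

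The main case is Type II/III. Here \autoref{prop:multiplicity}\ref{prop:multiplicity:II} lets me write $\zeta=\zeta(a,r)$ with $m=\mm(a)=\mm(\zeta)$ and $a\in\k(x^{1/m})$ a finite expansion. Since $g=\omega^*$ is an isometric field automorphism, it sends disks to disks of the same radius. So, as in the proof of \autoref{prop:multiplicity}, $\omega_*(\zeta(a,r))=\zeta(\omega_*(a),r)$. Now $\omega_*(a)\in\Orb_G(a)=\Orb_\prim(a)$ by \autoref{prop:multiplicitydefn}(iv), so $\omega_*(a)=\prim_*^k(a)$ for some $k$. Hence $\omega_*(\zeta)=\zeta(\prim_*^k(a),r)=\prim_*^k(\zeta)$. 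This gives the reverse inclusion, and therefore the equality of cardinalities.

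The only delicate point is the claim that Galois actions commute with the disk description, $\omega_*(\zeta(a,r))=\zeta(\omega_*(a),r)$. I expect this to be the main obstacle. To check it, I would verify directly on the norm: for $f\in\K[y]$ with $\omega_*$ acting on coefficients, $\norm[\zeta(a,r)]{(\omega^*)^{\pm1}f}$ equals $\norm[\zeta(\omega_*(a),r)]{f}$. The reason is that the expansion of $f$ about $\omega_*(a)$ corresponds coefficientwise to the expansion of the transformed polynomial about $a$. The Weierstrass degree and the absolute values of coefficients are preserved, since $\omega^*$ is isometric by \autoref{prop:rootsauto}.
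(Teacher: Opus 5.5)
Your proposal is correct and follows essentially the same route as the paper: the infinite-multiplicity case is trivial, Type I points are handled by \autoref{prop:multiplicitydefn}, and for Type II/III points you write $\zeta=\zeta(a,r)$ with $\mm(a)=\mm(\zeta)$ and use $\omega_*(\zeta(a,r))=\zeta(\omega_*(a),r)=\zeta(\prim_*^k(a),r)=\prim_*^k(\zeta)$. The paper uses the identity $\omega_*(\zeta(a,r))=\zeta(\omega_*(a),r)$ without comment, and your coefficientwise check of it via the isometry of $\omega^*$ is a sound supplement.
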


This means $\prim_*$ always generates $\Orb_G(\zeta)$. This is why we like $\prim$ - it's a universal generator for $G$ acting on $\P^1_\an$.

\begin{proof}
The result is obvious if $\mm(\zeta) = \infty$. By \autoref{prop:multiplicity} this deals with Type IV points. In the case of Type I points this was proved in \autoref{prop:multiplicitydefn}. Finally we consider $\zeta = \zeta(a, r)$ Type II/III with $\mm(a) = \mm(\zeta)$. Let $\omega^* \in G$ be arbitrary. Since the result holds for $a$, $\omega_*(a) = \prim_*^k(a)$ for some $k \in \N$. \[\omega_*(\zeta(a, r)) = \zeta(\omega_*(a), r) = \zeta(\prim_*^k(a), r) = \prim_*^k(\zeta(a, r))\] This proves that $\Orb_G(\zeta) \subseteq \Orb_\prim(\zeta)$ with the reverse inclusion obvious.
\end{proof}

Recall that $\zeta \prec \xi$ iff $\xi \in (\zeta, \infty]$, and for two Type II/III (or I) points this is most easily understood by $\zeta(a, r) \prec \zeta(a, R)$ where $r < R$.

\begin{prop}\label{prop:multiplicityorder}
 Let $\zeta \prec \xi$ be points in $\P^1_\an$, then $\mm(\xi) \mid \mm(\zeta)$, or $\mm(\zeta) = \infty$.
\end{prop}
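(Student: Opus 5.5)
Here $\mm(\zeta)$ is the size of the $G$-orbit of $\zeta$, equivalently the period of $\zeta$ under $\prim_*$. The plan is to show that every Galois element fixing $\zeta$ also fixes $\xi$. Then the stabiliser of $\zeta$ in the cyclic group generated by $\prim_*$ is contained in the stabiliser of $\xi$, and the divisibility follows from the orbit–stabiliser relation for the cyclic action.

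\textbf{Step 1: the Galois action preserves the order.} Each $\omega_* \in G$ acts on $\P^1_\an$ as a homeomorphism: it is the skew-product action induced by the isometric field automorphism $(\omega^*)^{-1}$ of \autoref{prop:rootsauto}, with scale factor $1$. It fixes $\infty$, as noted above. A homeomorphism of the uniquely path-connected tree that fixes $\infty$ sends the segment $(\zeta, \infty]$ onto $(\omega_*(\zeta), \infty]$. Hence $\zeta \prec \xi$ implies $\omega_*(\zeta) \prec \omega_*(\xi)$.

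For Type II/III points this can be checked by hand instead. If $\zeta = \zeta(a, r)$ and $\zeta \prec \xi$, then $\xi = \zeta(a, R)$ with $R > r$, or $\xi = \infty$. In either case $\omega_*$ sends these points to $\zeta(\omega_*(a), r)$ and $\zeta(\omega_*(a), R)$ respectively, as in the proof of \autoref{prop:multiplicity}. The order is visibly preserved.

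\textbf{Step 2: the main case.} Suppose $\mm(\zeta) = m < \infty$. Then $\prim_*^m(\zeta) = \zeta$. By Step 1, $\prim_*^m(\xi)$ lies on $(\zeta, \infty]$. Since $\prim_*^m$ is an isometry for $d_\bH$ and fixes both $\zeta$ and $\infty$, it maps the segment $[\zeta, \infty]$ into itself preserving distance from $\zeta$. So it fixes that segment pointwise, and in particular $\prim_*^m(\xi) = \xi$.

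This step can also be done concretely. Write $\xi = \zeta(a, R)$ with $R > r$. Then $\prim_*^m(\xi) = \zeta(\prim_*^m(a), R)$. Since $\zeta(\prim_*^m(a), r) = \zeta(a, r)$, we have $|\prim_*^m(a) - a| \le r < R$, so the two disks of radius $R$ coincide.

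\textbf{Step 3: conclusion.} The period $\mm(\xi)$ of $\xi$ under $\prim_*$ divides every $k$ with $\prim_*^k(\xi) = \xi$, so $\mm(\xi) \mid m$. If $\mm(\zeta) = \infty$ there is nothing to prove. If $\xi = \infty$, then $\mm(\xi) = 1$ and the claim is trivial.

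\textbf{Expected obstacle.} The only delicate point is the case where $\xi$ is a Type IV point, since such a point has no centre $a$ to work with. However, a Type IV point has no points above it, only below. So if $\zeta \prec \xi$, then $\xi$ is not Type IV, and any Type IV point can occur only as $\zeta$, where $\mm(\zeta) = \infty$ by \autoref{prop:multiplicity}. For $\zeta$ of Type I, the same disk argument applies with $r = 0$.
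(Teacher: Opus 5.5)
Your argument is correct and is essentially the paper's proof: both write $\zeta=\zeta(a,r)$, observe that $\xi$ is either $\infty$ or $\zeta(a,R)$ with $R>r$, and check that $\prim_*^{m}(\zeta(a,R))=\zeta(\prim_*^m(a),R)=\zeta(a,R)$ for $m=\mm(\zeta)$. The only difference is that the paper first chooses a centre with $\mm(a)=\mm(\zeta)$ (\autoref{prop:multiplicity}), so that $\prim_*^m(a)=a$ exactly, whereas you allow any centre and use $|\prim_*^m(a)-a|\le r<R$; with that concrete route, your Step 1 and the isometry variant are unnecessary.

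One wording slip in your last paragraph: under the convention $\zeta\prec\xi \iff \xi\in(\zeta,\infty]$, a Type IV point has nothing \emph{below} it (it is $\prec$-minimal), not nothing above it. That is what actually gives your correct conclusion that a Type IV point can only occur as $\zeta$.
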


\begin{proof}
 Assume $\mm(\zeta) \in \N$. So $\zeta$ is Type I, II, or III, so write $\zeta = \zeta(a, r)$ where $r \ge 0$ and $\mm(a) = \mm(\zeta)$ (using \autoref*{prop:multiplicity} \ref{prop:multiplicity:II}). If $\xi \succ \zeta$ then $\xi$ must be either $\infty$ or $\zeta(a, R)$ with $R > r$. If $\xi = \infty$ then $\mm(\xi) = 1$ which divides anything. If $\xi = \zeta(a, R)$ then $\prim_*^{\mm(a)}(\zeta(a, R)) = \zeta(\prim_*^{\mm(a)}(a), R) = \zeta(a, R)$ which proves that $\mm(\xi) \mid \mm(a) = \mm(\zeta)$.
\end{proof}

\begin{rmk}
 Equivalently, if we consider the poset $(\N_+ \cup \{\infty\}, <_m)$ where $a \le_m b \iff a \mid b$ or $b = \infty$, then the proposition shows that \[m : (\P^1_\an, \prec) \longrightarrow (\N \cup \{\infty\}, <_m)\] is an order reversing map of posets (but highly non-injective).
\end{rmk}

Recall that $\Hull(S)$ denotes the convex hull of a subset $S \subseteq \P^1_\an$.
 
\begin{defn}
 Define the \emph{multiplicity $n$ subtree} by \[\T_n = \set[\zeta \in \P^1_\an(\hk)]{\mm(\zeta) \mid n}.\]
\end{defn}

It is clear from the definition that $\T_m \subseteq \T_n \iff m \mid n$.

\begin{prop}\label{prop:multsubtreehull}
 The subset $\T_n$ is a closed connected subtree of $\P^1_\an(\hk)$. Moreover, \[\T_n = \Hull \left(\set[a \in \P^1(\K)]{\mm(a) \mid n}\right).\]
\end{prop}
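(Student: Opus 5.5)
We prove the two inclusions directly; the hull description will give connectedness at once, and closedness will be handled separately. Write $S_n = \set[a \in \P^1(\K)]{\mm(a) \mid n}$, which includes $\infty$ since $\mm(\infty) = 1$.

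\emph{Step 1: $\Hull(S_n) \subseteq \T_n$.} In the $\R$-tree $\P^1_\an$, the convex hull of a set is the union of the segments $[a, b]$ with $a, b$ in the set. Each segment $[a,b]$ lies in $[a, a\vee b] \cup [b, a \vee b]$, where $a \vee b$ denotes the join in the order $\prec$. Every point $\xi \in [a, a\vee b]$ satisfies $a \preceq \xi$. By \autoref{prop:multiplicityorder} this gives $\mm(\xi) \mid \mm(a) \mid n$, so $\xi \in \T_n$. The same holds for $[b, a\vee b]$, and segments to $\infty$ are covered in the same way.

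\emph{Step 2: $\T_n \subseteq \Hull(S_n)$.} Take $\zeta \in \T_n$, so $\mm(\zeta)$ is finite. By \autoref{prop:multiplicity}\ref{prop:multiplicity:IV}, $\zeta$ is then of Type I, II or III. If $\zeta$ is Type I, it lies in $\hk$ with finite multiplicity, so \autoref{prop:multiplicitydefn}\ref{prop:multiplicitydefn:subfield} places it in $\K$; hence $\zeta \in S_n$. If $\zeta$ is Type II or III, \autoref{prop:multiplicity}\ref{prop:multiplicity:II} lets us write $\zeta = \zeta(a, r)$ with $\mm(a) = \mm(\zeta) \mid n$ and $a \in \K$. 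Then $\zeta \in [a, \infty]$ with both endpoints in $S_n$, so $\zeta \in \Hull(S_n)$. This proves equality, and connectedness follows because a convex hull in a tree is connected.

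\emph{Step 3: closedness.} This is the step I expect to be the main obstacle, because the Berkovich topology is weak and the hull of infinitely many points need not be closed. I would show the complement is open. Take $\xi \notin \T_n$.

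If $\xi$ is Type IV, then the proof of \autoref{prop:multiplicity}\ref{prop:multiplicity:IV} shows that some Type II point $\zeta_k = \zeta(a_k, r_k)$ with $\zeta_k \succ \xi$ and $\mm(\zeta_k) \nmid n$ exists. Otherwise every such point would have multiplicity dividing $n$, and the same argument would force $r_k \to 0$ along denominators dividing $n$, a contradiction. The open disk $D_\an(a_k, r_k)$, which is the direction at $\zeta_k$ containing $\xi$, is then an open neighbourhood of $\xi$. Every point $\eta$ in it satisfies $\eta \prec \zeta_k$, so \autoref{prop:multiplicityorder} gives $\mm(\zeta_k) \mid \mm(\eta)$, and therefore $\mm(\eta) \nmid n$.

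If $\xi = \zeta(a, r)$ is Type I, II or III with $\mm(\xi) \nmid n$, I would pick a point $\zeta' \succ \xi$ very close to $\xi$, still with $\mm(\zeta') \nmid n$. This is possible because the truncation of $a$ only changes at the finitely many exponents of $a$ below any given level. I would then use the open set consisting of the direction at $\zeta'$ containing $\xi$, or an annulus if $\xi$ is Type II, because points in the other directions at $\xi$ still lie below $\zeta'$. The same divisibility argument shows that this open set misses $\T_n$.

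Equivalently, one can say that $\T_n$ is the fixed-point set of the continuous map $\prim_*^n$, since $\mm(\zeta) \mid n$ exactly when $\prim_*^n \zeta = \zeta$. This set is closed because $\P^1_\an$ is Hausdorff and $\prim_*$ is a homeomorphism. I would in fact present this argument first, since it is cleaner than the case analysis above.
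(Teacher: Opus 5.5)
Your proposal is correct and follows essentially the same route as the paper. Both arguments get $\Hull(S_n) \subseteq \T_n$ from \autoref{prop:multiplicityorder} along rays $[a,\infty]$, get the reverse inclusion (left implicit in the paper) by choosing $a \preceq \zeta$ with $\mm(a)=\mm(\zeta)$ via \autoref{prop:multiplicity}, and get closedness by identifying $\T_n$ with the fixed-point set of $\prim_*^n$. Your closedness step uses the fact that the fixed-point set of a continuous self-map of a Hausdorff space is closed, which is slightly cleaner than the paper's sequence argument because it does not need closures in the weak topology to be detected by sequences.
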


\begin{proof}
 First we remark that $[0, \infty] \subset \T_1$. Indeed by \autoref*{prop:multiplicity} \ref{prop:multiplicity:II} $\mm(\zeta(0, r)) = \mm(0) = 1$, and $\mm(\infty) = 1$. $\infty \in \T_n$ for every $n$, so to show $\T_n$ is a convex hull we first show that $[a, \infty] \subset \T_n$ for any $a \in K$ with $\mm(a) \mid n$, then use the fact that a path between any two elements of $\T_n$ travels along such closed intervals.
 
 Suppose $a \in K$ and $\mm(a) \mid n$. If $\zeta \in [a, \infty]$ then $\zeta > a$; by \autoref{prop:multiplicityorder} $\mm(\zeta) \mid \mm(a)$, hence also $\mm(\zeta) \mid n$, so $\zeta \in \T_n$. We have shown $[a, \infty] \subset \T_n$.
 
 Let $\zeta, \xi \in \T_n$, $a \preceq \zeta$ be a Type I point with $\mm(a) = \zeta$, and similarly $b \preceq \xi$ where $\mm(b) = \mm(\xi)$. Then either (WLOG) $\zeta \preceq \xi$ and $[\zeta, \xi]$ is the unique path connecting them, otherwise $[\zeta, \zeta \vee \xi] \cup [\xi, \zeta \vee \xi]$ is the unique path, where $\zeta \vee \xi$ is the least upper bound to $\zeta$ and $\xi$. In either case, the path is a subset of $[a, \infty] \cup [b, \infty]$. By the first part of the proof, this is entirely contained in $\T_n$.
 
 It remains to show that $\T_n$ is closed. $\prim_*^n(\zeta) = \zeta$ for any $\zeta \in \T_n$. Suppose that $\zeta_k \in \T_n$ is a sequence converging to $\zeta$. By continuity of $\prim_*$, \[\prim_*^n(\zeta) = \prim_*^n(\lim_{k \to \infty}\zeta_k) = \lim_{k \to \infty} \prim_*^n(\zeta_k) = \lim_{k \to \infty} \zeta_k = \zeta.\] Therefore $\mm(\zeta) \mid n$.
\end{proof}

\begin{cor}\label{cor:multiplicitylowersemicont}
 The function $\mm : \P^1_\an(\hk) \longrightarrow \N_+\cup\{\infty\}$ is lower semicontinuous, both in the usual order on $\N$, and with respect to the multiplicative order $(\N_+\cup\{\infty\}, <_m)$.
\end{cor}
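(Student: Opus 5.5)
The corollary should follow directly from \autoref{prop:multsubtreehull}, which says each $\T_n$ is closed. I will first prove lower semicontinuity for the multiplicative order $<_m$, and then deduce the statement for the usual order.

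For the multiplicative order, lower semicontinuity at $\zeta$ should mean the following. For every $n \in \N_+$ with $\mm(\zeta) \nmid n$, i.e.\ $\zeta \notin \T_n$, there is a neighbourhood of $\zeta$ on which $\mm(\xi) \nmid n$. Equivalently, each sublevel set $\set[\xi]{\mm(\xi) \le_m n} = \T_n$ is closed, and this is exactly \autoref{prop:multsubtreehull}. The case $\mm(\zeta) = \infty$ is included: for every finite $n$ we have $\zeta \notin \T_n$, and $\P^1_\an \sm \T_n$ is open. If one prefers the ``$\liminf$'' formulation, take $\zeta_k \to \zeta$ and any $n$ such that $\mm(\zeta_k) \mid n$ for infinitely many $k$. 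Passing to that subsequence, closedness of $\T_n$ gives $\mm(\zeta) \mid n$, so $\mm(\zeta) \le_m \liminf_m \mm(\zeta_k)$.

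For the usual order, I need each set $\set[\xi]{\mm(\xi) \le N}$ to be closed for $N \in \N$. This set equals $\bigcup_{n=1}^N \set[\xi]{\mm(\xi) = n}$. Each piece is contained in $\T_n$, and every $\xi$ with $\mm(\xi) \le N$ lies in $\T_{N!}$. The cleanest description is
\[\set[\xi]{\mm(\xi) \le N} = \bigcup_{n=1}^{N} \T_n,\]
which holds because $\mm(\xi) \mid n$ forces $\mm(\xi) \le n \le N$, and conversely $\mm(\xi) \le N$ gives $\xi \in \T_{\mm(\xi)}$. This is a finite union of closed sets, so it is closed, and lower semicontinuity in $\le$ follows, again including the value $\infty$.

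I expect no real obstacle. The only point needing care is fixing the correct notion of lower semicontinuity for the partial order $<_m$, with $\infty$ maximal, and checking that it reduces to closedness of the $\T_n$.
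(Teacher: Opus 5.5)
Your proof is correct. It is exactly the argument the paper intends: the corollary is stated without proof directly after \autoref{prop:multsubtreehull}, and follows from the closedness of $\T_n = \{\xi : \mm(\xi) \mid n\}$ (the sublevel sets for $<_m$), together with $\{\xi : \mm(\xi) \le N\} = \bigcup_{n=1}^{N} \T_n$ for the usual order.
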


\begin{prop}\label{prop:multsubtreevalency}
The subtree $\T_n$ is an infinite tree with discrete branching in the following sense: every (non-endpoint) vertex $\zeta \in \T_n$ of valency at least $3$ is of Type II and in every direction at $\zeta$ there is an edge of length $1/n$ (in the sense that $\T_n$ has no further branching).
 
\begin{itemize}
 \item The set of non-endpoint vertices is of the form 
 \[\set[\zeta \in \bH]{\zeta = \zeta(a, |x|^\frac pq),\ \mm(a), q \mid n},\]
 hence $d_\bH(\zeta_1, \zeta_2) \in \tfrac 1n \N$ for any two $\zeta_1, \zeta_2$ in the set.
 \item  Let $\zeta = \zeta(a, |x|^\frac pq)$ with $\mm(a) = \mm(\zeta) = m$, $\GCD(p, q) = 1$, and set $\mg = \LCM(\mm(\zeta), q)$. Then
\begin{enumerate}[label = (\roman*)]
 \item There is a $\zeta' \in (\zeta, \infty]$ such that $\mm(\xi) = m$ for every $\xi \in [a, \zeta')$.
 \item Let $c \in \k^*$. Then $\mm(\xi) \ge \mg$ for every $\xi \in \vec v(a + cx^\frac pq)$, and $\mm(\xi) = \mg$ for every $\xi \in [a + cx^\frac pq, \zeta)$.
\item In particular, $\zeta$ has two directions with possibly lower multiplicities, $\mm(\vec v(\infty)) = 1$, $\mm(\vec v(a)) = m$, and $\mm(\bvec v) = \mg$ for every other direction $\bvec v \in \Dir \zeta$.
\end{enumerate}
\end{itemize}
\end{prop}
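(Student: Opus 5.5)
The plan is to compute $\mm$ on the pieces of the tree near $\zeta=\zeta(a,\abs x^{p/q})$ directly from truncations, via \autoref{prop:multiplicity}\ref{prop:multiplicity:II}. For any Type II/III point $\zeta(b,r)$, its multiplicity equals $\mm$ of the truncation of $b$ to order $r$.

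\textbf{Step (i).} Take $a$ with finite expansion in $\k(x^{1/m})$ and $\mm(a)=m$. A point $\xi=\zeta(a,\abs x^t)$ has multiplicity $\mm$ of the truncation of $a$ to order $t$. Choose $\zeta'=\zeta(a,\abs x^{t_0})$ with $t_0<p/q$ strictly greater than the second-largest exponent of $a$ appearing with nonzero coefficient, but below the largest such exponent. Then every $\xi\in[a,\zeta')$ has truncation $a$ itself, hence multiplicity $m$.

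\textbf{Step (ii).} Points of $\vec v(a+cx^{p/q})$ are seminorms lying in $D_\an(a+cx^{p/q},\abs x^{p/q})$. Any Type I point $b$ there has the form $b=a+cx^{p/q}+(\text{higher terms})$, with leading terms unaffected by cancellation. By the LCM description \autoref{prop:multiplicitydefn}\ref{prop:multiplicitydefn:LCM}, $\mm(b)$ is divisible by $\LCM(m,q)=\mg$, so $\mm(b)\ge \mg$.

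For a Type II/III point $\xi=\zeta(b,r)$ in that direction with $r<\abs x^{p/q}$, the truncation of $b$ to order $r$ contains $a+cx^{p/q}$, so $\mm(\xi)$ is again a multiple of $\mg$. For Type IV points $\mm=\infty$. Hence $\mm\ge\mg$, and indeed $\mg$ divides it, on the whole direction.

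On the segment $[a+cx^{p/q},\zeta)$ the truncations are exactly $a+cx^{p/q}$, which has multiplicity $\mg$. So $\mm=\mg$ there.

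\textbf{Step (iii).} The direction $\vec v(\infty)$ contains $\infty$, so $\mm(\vec v(\infty))=1$. The direction $\vec v(a)$ contains $a$, and by \autoref{prop:multiplicityorder} all its points satisfy $m\mid \mm$; hence its minimum is $m$. Every other direction is $\vec v(a+cx^{p/q})$ with $c\ne0$, by the residue-class description in \autoref{sec:backberk}, and step (ii) gives minimum $\mg$.

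\textbf{Branching statements.} Let $\zeta\in\T_n$ have valency $\ge3$ in $\T_n$. Then three directions at $\zeta$ meet $\T_n$, so some $c\ne0$ direction contains a point of multiplicity dividing $n$. By (ii) this forces $\mg\mid n$, hence $q\mid n$ and $m\mid n$. In particular $\zeta$ is Type II, since $q<\infty$. This gives the description of the vertex set.

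Conversely, if $\mm(a),q\mid n$, then $\T_n$ enters every direction, along the segments $[a+cx^{p/q},\zeta)$, so $\zeta$ is a vertex. Between two such vertices along a common segment $\zeta(a,\abs x^t)$, the radii exponents lie in $\frac1n\Z$, which gives $d_\bH\in\frac1n\N$.

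For the edge-length claim, move from $\zeta$ along $[a+cx^{p/q},\zeta)$ or toward $a$. The next point where new branching in $\T_n$ can occur has exponent differing by $1/n$, since intermediate radii have denominators not dividing $n$. The direction towards $\infty$ is treated by the same argument applied at the point $\zeta(a',\abs x^{p/q})$, where $a'$ is the truncation below.

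\textbf{Main obstacle.} I expect the main obstacle to be step (ii) for points not of the form $\zeta(a+cx^{p/q},r)$. The fix is to note that every Type II/III point in the direction is $\zeta(b,r)$ with $b$ in the open disk, so that "truncation contains $a+cx^{p/q}$" is uniform. Type IV points are handled via \autoref{prop:multiplicity}\ref{prop:multiplicity:IV}.
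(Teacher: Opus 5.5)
Your step (i) fails as written, because $t_0$ sits on the wrong side of the largest exponent $e_{\max}$ of $a$. The interval $[a,\zeta')$ consists of $a$ together with the points $\zeta(a,\abs x^t)$ for $t>t_0$. For $t_0<t\le e_{\max}$, the truncation of $a$ to order $\abs x^t$ discards the top term, so its multiplicity may be a proper divisor of $m$.

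For instance, take $a=x^{1/3}+x^{1/2}$ (so $m=6$) and $\zeta=\zeta(a,\abs x^{1})$. Your recipe permits $t_0=2/5$. Yet $\xi=\zeta(a,\abs x^{9/20})\in[a,\zeta')$ equals $\zeta(x^{1/3},\abs x^{9/20})$ and has $\mm(\xi)=3\neq 6$.

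The repair is immediate. Since $a$ is the truncation of order $\abs x^{p/q}$, every exponent of $a$ is strictly less than $p/q$. So take $e_{\max}\le t_0<p/q$ (any $t_0<p/q$ if $a=0$); then for every $t>t_0$ the truncation is $a$ itself. In fact $[a,\zeta(a,\abs x^{e_{\max}}))$ is exactly the maximal interval on which $\mm=m$.

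With that fixed, your argument is sound, and it differs usefully from the paper's.
\begin{itemize}
\item The paper gets (i) topologically: the set where $\mm=m$ is open in $[a,\infty]$ because each $\T_d$ is closed (\autoref{prop:multsubtreehull}). This avoids choosing a cut-off at all. Your computation is explicit and identifies the maximal interval.
\item For (ii), the paper only computes $\mm=\mg$ on the segment $[a+cx^{p/q},\zeta)$. Your truncation argument shows $\mg\mid\mm(\xi)$ on the whole generic direction. That supplies the lower bound the paper leaves implicit.
\item This divisibility is also what lets your valency-$\geq 3$ argument conclude $\mg\mid n$ in one line. The paper instead picks Type I points $a_1,a_2\in\k((x^{1/n}))$ under $\T_n$-points in two distinct finite directions, and reads off $r=\abs{a_1-a_2}\in\abs x^{\frac1n\mathbb Z}$.
\end{itemize}

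Two smaller points:
\begin{itemize}
\item Valency $\geq 3$ forces Type II because points of Type I, III or IV have at most two directions. The justification ``since $q<\infty$'' presupposes the Type II form.
\item For $d_\bH\in\frac1n\mathbb N$ between incomparable vertices you need the join. Its radius is the largest of the two radii and $\abs{a_1-a_2}$, all of which lie in $\abs x^{\frac1n\mathbb Z}$ when $a_1,a_2\in\k((x^{1/n}))$. This is essentially the paper's converse computation; your ``along a common segment'' covers only comparable pairs.
\end{itemize}
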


\begin{proof}
 First we will show that $\zeta$ is a vertex of $\T_n$ if it is of the form $\zeta(a, r)$ with $a \in k((x^\frac 1n))$ and $r = |x|^\frac pq$ for some $p, q \in \Z$ and $q \mid n$, moreover we will prove (i) and (ii). Second we will show the converse that every non-endpoint vertex is of this form. The rest is left as an exercise. It is immediate that we only need to deal with Type II points.
 
 Using \autoref{prop:multiplicity}, let $\zeta = \zeta(a, r)$ be a Type II point with multiplicity $m$, $a \in k((x^\frac 1m))$, $r = |x|^\frac pq$ for some $p, q \in \Z$ coprime, and $m, q \mid n$. Set $\mg = \LCM(\mm(\zeta), q)$, thus $\mg \mid n$. The directions of $\zeta$ can be written $\vec v(\infty)$ or $\vec v(a + cx^\frac pq)$ where $c \in k$. Observe that $a + cx^\frac pq$ has multiplicity $\mg$ whenever $c \ne 0$; furthermore this finite Puiseux series is the $\bO(s)$ truncation of itself for any $s < r$, so by \autoref{prop:multiplicity}, $\zeta(a + cx^\frac pq, s)$ has multiplicity $\mg$ for any $s < r$ and $c \in k^*$. This concludes the proof for (ii). In the case of $c = 0$, we have simply $a$, which is multiplicity $m$, as is $\zeta$. 
\autoref{prop:multsubtreehull} shows that the set of points where $\mm = m$ is open within $[a, \infty] \subset \T_m$. This both proves (i) and finalises the proof that $\zeta$ is a vertex of $\T_n$ with an edge in every possible direction. 

 Conversely, suppose that $\zeta = \zeta(a, r)$ is a non-endpoint vertex in $\T_n$. %
 There must be at least two more intervals $[\xi_1, \zeta]$ and $[\xi_2, \zeta]$ within $\T_n$ emanating from $\zeta$ in distinct finite directions. By \autoref*{prop:multiplicity} \ref{prop:multiplicity:IV}, the $\xi_j$ are Type I/II/III. Using \autoref*{prop:multiplicity} \ref{prop:multiplicity:II} we can pick Type I points $a_1, a_2 \in k((x^\frac 1n))$ so that $a_j \preceq \xi_j$, then we must have $r = \abs{a_1 - a_2} = \abs x^\frac kn$ for some $k \in \Z$.
\end{proof}

\begin{defn}
 Let $\overline \TV_n$ be the set of vertices of $\T_n$, and $\TV_n = \overline \TV_n \sm \P^1$ be the Type II (non-endpoint) vertices.
\end{defn}

If $\bvec v$ is not a (or the only) direction at $\xi$ with multiplicity $1$, then using \autoref{prop:multsubtreehull} one can show there is an interval $(\xi, \zeta) \subset \bvec v$ consisting purely of multiplicity $\mm(\bvec v)$ points.

\autoref{prop:multsubtreevalency} can be rephrased in terms of multiplicities in directions, as follows in the next result. This corollary is an alteration of \cite[Proposition 3.39]{FJ04}. Note that they choose a different definition for $\mm(\vec v(\infty))$ (which they would denote by $\vec v(\nu_{\mathfrak m})$), that is at least $\mm(\zeta)$; we always have $\mm(\vec v(\infty)) = 1$. Regardless, Favre and Jonsson definition of generic multiplicity $\mg$ below is the same.

\begin{cor}\label{cor:multsubtreevalency}
 Let $\zeta \in \P^1_\an$ be a Type II point. Write $\zeta = \zeta(a, |x|^\frac pq)$ with $\mm(a) = \mm(\zeta)$, $\GCD(p, q) = 1$, and set $\mg = \LCM(\mm(\zeta), q)$. 
 Then $\mg$ is the smallest integer such that $\zeta$ is a vertex in $\T_\mg$. Moreover, $\mm(\vec v(\infty)) = 1$ and exactly one of the following holds
\begin{enumerate}[label = (\roman*)]
 \item $\mm(\bvec v) = \mg$ for every direction $\bvec v \ne \vec v(\infty)$ at $\zeta$; in this case $\mg = \mm(\zeta)$.
 \item $\mm(\bvec v) = \mg$ for every direction $\bvec v$ at $\zeta$ except two: $\vec v(\infty)$ and $\bvec u$, say (distinct).\\ In this case, $\mm(\bvec u) = \mm(\zeta) < \mg$, moreover $\mm(\zeta) \mid \mg$.
\end{enumerate}
\end{cor}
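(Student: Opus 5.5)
The plan is to read this corollary off \autoref{prop:multsubtreevalency}, applied at $n=\mg$ and at divisors of it. Fix $\zeta = \zeta(a, |x|^{p/q})$ with $\mm(a)=\mm(\zeta)=m$ and $\mg=\LCM(m,q)$.

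\emph{Minimality of $\mg$.} By the description of non-endpoint vertices in \autoref{prop:multsubtreevalency}, $\zeta$ is a vertex of $\T_n$ exactly when it can be written $\zeta(a', |x|^{p/q})$ with $\mm(a'), q \mid n$. For the representative $a$ this is $m, q \mid n$, i.e.\ $\mg \mid n$. I need to make sure the condition does not depend on the choice of centre. For another admissible centre $a'$ with $\mm(a') \mid n$, the point $\zeta = \zeta(a',r)$ lies in $[a',\infty] \subset \T_n$. By \autoref{prop:multiplicityorder} this gives $m \mid n$, and $q \mid n$ is intrinsic to the radius. Hence the least such $n$ is $\mg$, which shows $\zeta \in \TV_\mg$ and that $\mg$ is minimal.

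\emph{Multiplicities of directions.} Item (iii) of \autoref{prop:multsubtreevalency} gives the directions at $\zeta$ as $\vec v(\infty)$ and $\vec v(a+cx^{p/q})$ for $c \in \k$. Their multiplicities are:
\begin{itemize}
\item $\mm(\vec v(\infty))=1$, since $\infty$ lies in this direction and multiplicity is taken as a minimum;
\item $\mm(\vec v(a))=m$, by (i) and \autoref{prop:multiplicity}, since $a \in \vec v(a)$ and every point of $\vec v(a)$ lies below $a$'s branch with multiplicity divisible by $m$;
\item $\mm(\bvec v)=\mg$ for every other $c\neq 0$, by (ii).
\end{itemize}

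\emph{The two cases.} I split according to whether $q \mid m$.
\begin{itemize}
\item If $q \mid m$, then $\mg=m$. The direction $\vec v(a)$ then also has multiplicity $\mg$, so every direction except $\vec v(\infty)$ has multiplicity $\mg$. This is case (i).
\item Otherwise $m \mid \mg$ with $m<\mg$. Then $\bvec u=\vec v(a)$ is the unique exceptional finite direction, with $\mm(\bvec u)=m<\mg$. This is case (ii).
\end{itemize}
The two cases are exclusive because case (i) forces $\mg=m$, while case (ii) requires $m<\mg$.

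The only delicate point is the independence from the choice of $a$ in the vertex criterion, and \autoref{prop:multiplicityorder} handles it as above. The rest is bookkeeping from \autoref{prop:multsubtreevalency}.
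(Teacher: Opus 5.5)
Your proposal is correct and matches the paper, which gives no separate proof and presents the corollary as a direct rephrasing of \autoref{prop:multsubtreevalency}; your case split on whether $q \mid \mm(\zeta)$, together with the centre-independence check via \autoref{prop:multiplicityorder}, is exactly the bookkeeping that rephrasing needs. One small tightening: the bound $\mm(\bvec v) \ge \mm(\zeta)$ for any finite direction $\bvec v$ follows at once from \autoref{prop:multiplicityorder}, because every point of such a direction lies below $\zeta$, and this is cleaner than the appeal to ``$a$'s branch''.
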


\begin{defn}
 Let $\zeta \in \P^1_\an$. Then define \emph{generic multiplicity}, $\mg(\zeta)$ to be
\begin{enumerate}
 \item $\mm(\zeta)$ if $\zeta$ is Type I,
 \item $\mg$ as in \autoref{prop:multsubtreevalency} and \autoref{cor:multsubtreevalency} if $\zeta$ is Type II,
 \item $\infty$ if $\zeta$ is Type III or IV.
\end{enumerate}
\end{defn}

This is a rather disjointed definition; the next proposition provides some synergy.

\begin{prop}\label{prop:genericmultiplicity}
 The following are equivalent to $\mg(\zeta)$.
\begin{enumerate}[label = (\roman*)]
 \item \label{prop:genericmultiplicity:two} $\inf \set[n]{\zeta \in \overline{\T_n \cap \P^1(\hk)}}$,
 \item \label{prop:genericmultiplicity:three} $\displaystyle\liminf_{a \to \zeta,\ a \in \P^1(\hk)} \mm(a)\quad = \quad \sup_{U \supset \zeta}\quad \inf_{a \in \P^1(\hk) \cap U} \mm(a)$,
 \item \label{prop:genericmultiplicity:four} The smallest $n$ such that $\zeta \in\overline\TV_n$, or $\infty$ otherwise.
\end{enumerate}
\end{prop}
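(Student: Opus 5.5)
The plan is to treat the three types of point separately, using \autoref{prop:multsubtreevalency} and \autoref{cor:multsubtreevalency} as the main input, and to prove the cycle of equalities (i) $=$ (ii) $=$ (iii) $=\mg(\zeta)$ in each case. First, for a Type I point $a$ every neighbourhood contains $a$ itself, so the $\liminf$ in (ii) is at most $\mm(a)$. By lower semicontinuity (\autoref{cor:multiplicitylowersemicont}) it is at least $\mm(a)$, and it is attained. For (i), $a\in\T_n\cap\P^1$ exactly when $\mm(a)\mid n$. The closure can only add Type I points $a$ with $\mm(a)\mid n$, since $\T_n$ is closed by \autoref{prop:multsubtreehull}. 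So the infimum is $\mm(a)$. Since $\overline\TV_n$ contains the endpoints of $\T_n$, (iii) also gives $\mm(a)$.

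Next take $\zeta=\zeta(a,\abs x^{p/q})$ of Type II with $\mg=\LCM(\mm(\zeta),q)$. For (iii), \autoref{cor:multsubtreevalency} already states that $\mg$ is the smallest $n$ with $\zeta$ a vertex of $\T_n$. For (ii), note that every neighbourhood $U$ of $\zeta$ contains all but finitely many directions at $\zeta$ entirely. By \autoref{prop:multsubtreevalency}(ii), each such direction $\vec v(a+cx^{p/q})$ has every point of multiplicity $\ge\mg$, and contains the Type I point $a+cx^{p/q}$ of multiplicity exactly $\mg$.

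The point needing care is that $U$ also meets the exceptional directions $\vec v(a)$ and $\vec v(\infty)$. Type I points there have smaller multiplicity, for example $a$ itself, or Type I points of multiplicity $1$ near $\infty$. To handle this, I would use weak-topology basic neighbourhoods: these are finite intersections of open disks and complements of closed disks. Shrinking $U$ to such a set avoids any fixed Type I point. By choosing the radii so that $U$ meets $\vec v(a)$ only in a thin annulus around $\zeta$, I would show that the Type I points of $U$ in the exceptional directions have large multiplicity. Indeed, a Type I point $b\in\vec v(a)$ lying in a small annulus $\abs x^{p/q+\eps}<\abs{b-a}<\abs x^{p/q}$ has a leading term of $b-a$ with exponent strictly between $p/q$ and $p/q+\eps$. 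Letting $\eps\to0$ forces the denominators to grow, so $\mm(b)\to\infty$; the same argument works in $\vec v(\infty)$. Hence the $\liminf$ is exactly $\mg$. This localisation in the exceptional directions is the step I expect to be the main obstacle, because the definition of the $\liminf$ must exclude $\zeta$'s own low-multiplicity branches.

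For (i), first use \autoref{prop:multsubtreehull}: $\T_n\cap\P^1$ is exactly the set of Type I points with $\mm\mid n$. Its closure contains $\zeta$ if and only if, in every basic neighbourhood, there are Type I points with $\mm\mid n$. By the argument for (ii), this forces $\mg\mid n$ or at least $n\ge\mg$. Conversely, if $n=\mg$, the points $a+cx^{p/q}$ for $c\in\k^*$ accumulate at $\zeta$, since infinitely many directions converge to $\zeta$ in the weak topology. So the infimum is $\mg$.

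Finally, for Type III and IV points, (iii) is $\infty$: the vertices of $\T_n$ are Type I or II by \autoref{prop:multsubtreevalency}. For (ii), the annulus argument above shows that nearby Type I points have unbounded denominators in their exponents; for Type IV, $\mm(\zeta)=\infty$ together with lower semicontinuity suffices. For (i), $\zeta$ lying in the closure would place it in the closed tree $\T_n$ as a non-vertex, non-endpoint point. Such a point lies on an edge, and an edge of $\T_n$ is approached by Type I points of $\T_n$ only at its endpoints. So the infimum is $\infty$.
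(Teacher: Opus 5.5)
Your proof is correct, but it is organised differently from the paper's.

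\textbf{The paper's route.} It treats (i) and (ii) as two phrasings of the same $\liminf$. It reads (iii) off \autoref{cor:multsubtreevalency}, with Types I, III and IV handled immediately. It then reduces (i) to the identity $\overline\TV_n=\overline{\T_n\cap\P^1(\hk)}$. Of that identity it argues only one inclusion: a Type II vertex is a limit of Type I points of multiplicity $\mg(\zeta)$, taken from its infinitely many generic directions.

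\textbf{Your route.} You compute each quantity type by type. Your thin-annulus estimate is the key addition: for $b\in D_\an(a,\abs x^{p/q-\eps})\sm\CD_\an(a,\abs x^{p/q+\eps})$ lying in $\vec v(a)$ or $\vec v(\infty)$, or for $b$ near a Type III point, the leading exponent of $b-a$ has denominator tending to infinity as $\eps\to0$. This supplies exactly the reverse inclusion that the paper leaves implicit. Type I points of small multiplicity cannot accumulate at $\zeta$ through the special directions, and Type III points are not limits of $\T_n\cap\P^1(\hk)$. So the paper's route is shorter and more structural, while yours actually proves the lower bound $\mathrm{(i)},\mathrm{(ii)}\ge\mg(\zeta)$.

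\textbf{Two remarks.}
\begin{enumerate}
\item You should say why the leading term of $b-a$ survives in $b$. The truncation $a$ is a finite Puiseux series with exponents $\le p/q$, so for small $\eps$ none of its exponents lies in $(p/q-\eps,p/q+\eps)\sm\set{p/q}$.
\item As the paper notes, (i)$=$(ii) is purely formal: $\inf_U\mm$ is integer-valued and non-decreasing as $U$ shrinks, so the supremum is attained. You can therefore drop the separate closure arguments for (i). In particular, your asserted-but-unproved claim that an edge of $\T_n$ is approached by Type I points only at its endpoints becomes unnecessary, and the (ii) estimate does all the work.
\end{enumerate}
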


\begin{proof}
 \ref{prop:genericmultiplicity:two} and \ref{prop:genericmultiplicity:three} are different ways to phrase the same topological idea (limit infimum).

 \ref{prop:genericmultiplicity:four}: A Type III point is never a vertex in $\T_n$ and a Type IV point is never in a $\T_n$, therefore we get a value of $\inf \emp = \infty$, matching the generic multiplicity. For Type II, the result is given in \autoref{cor:multsubtreevalency}. For Type I points, the result is immediate because $\mg(\zeta) = \mm(\zeta)$.%

\ref{prop:genericmultiplicity:two}: It is enough to show that $\overline\TV_n = \overline{\T_n \cap \P^1(\hk)}$. If $\zeta$ is a vertex, then it is Type II or I. For a Type I point we immediately have $\zeta \in \T_n \cap \P^1(\hk)$. For a Type II point, let $U$ be any open neighbourhood of $\zeta$. Then $U$ contains all but finitely many directions of $\zeta$. By \autoref{cor:multsubtreevalency} all but two directions have multiplicity $\mg(\zeta)$. Note that the residue field of $\hk$ must be infinite so there are infinitely many directions containing a Type I point with multiplicity $\mg(\zeta)$. This shows that $U \cap \T_n \cap \P^1(\hk) \ne \emp$ for any neighbourhood $U$ of $\zeta$, and therefore $\zeta \in \overline{\T_n \cap \P^1(\hk)}$.
\end{proof}

\begin{cor}\label{cor:verticesisolated}
 $\overline\TV_n = \overline{\T_n \cap \P^1(\hk)} = \overline{\set[a \in \P^1(\K)]{\mm(a) \mid  n}}$ is a perfect set of all vertices in $\T_n$, however the Type II points $\TV_n$ are all isolated in the hyperbolic metric with distances in $\tfrac 1n \Z$.
\end{cor}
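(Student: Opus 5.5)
The corollary bundles three claims: the chain of equalities of sets, perfectness of $\overline\TV_n$, and hyperbolic isolation of the Type II vertices with distances in $\tfrac 1n\Z$. I would establish them in that order.

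\textbf{Equalities of sets.} The equality $\overline\TV_n = \overline{\T_n \cap \P^1(\hk)}$ was already obtained in the proof of \autoref{prop:genericmultiplicity}; I take both inclusions from there. For the Type II direction, \autoref{cor:multsubtreevalency} shows that all but two directions at a vertex have multiplicity $\mg(\zeta)\mid n$. Since the residue field $\k$ is infinite, these directions contain Type I points of $\T_n$ accumulating at $\zeta$. Conversely, a limit of Type I points in $\T_n$ lies in $\T_n$ because $\T_n$ is closed (\autoref{prop:multsubtreehull}). It is then a vertex, since it is either Type I or a branch point of $\T_n$.

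For the second equality, note that every $a\in\hk$ with $\mm(a)\mid n$ is finite. By \autoref{prop:multiplicitydefn}, such an $a$ lies in $\k((x^{1/n}))\subset\K$. Hence $\T_n\cap\P^1(\hk)=\set[a\in\P^1(\K)]{\mm(a)\mid n}$, and taking closures gives the claim.

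\textbf{Perfectness.} Every point of $\overline\TV_n$ must be shown to be a limit of other points of the set. For a Type II vertex this is exactly the accumulation of Type I points in its infinitely many generic directions, as above. For a Type I point $a$ with $\mm(a)\mid n$, I use the interval $[a,\infty]\subset\T_n$. The points $\zeta(a,\abs x^{k/n})$ with $k\to\infty$ lie on it, and by \autoref{prop:multsubtreevalency} they are Type II vertices of $\T_n$, since $\mm(a)\mid n$ and the radius denominator divides $n$. They converge to $a$ in the weak topology, because their radii tend to $0$.

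\textbf{Isolation in $d_\bH$.} Take two points of $\TV_n$, $\zeta_i=\zeta(a_i,\abs x^{p_i/q_i})$ with $a_i\in\k((x^{1/n}))$ and $q_i\mid n$. The join $\zeta_1\vee\zeta_2$ has radius $\max(r_1,r_2,\abs{a_1-a_2})$, and each of these lies in $\abs x^{\frac1n\Z}$. Applying the formula for $d_\bH$ with $\eps=\abs x$, every log-diameter is in $\tfrac1n\Z$, so the distance is in $\tfrac1n\Z$. Nonzero distances are therefore at least $\tfrac1n$, so each Type II vertex is isolated from the others in the hyperbolic metric.

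\textbf{Main obstacle.} The delicate point is the join-radius computation. It requires choosing centres $a_i$ with $\mm(a_i)\mid n$, which \autoref{prop:multiplicity} provides, so that $\abs{a_1-a_2}$ is visibly in $\abs x^{\frac1n\Z}$.
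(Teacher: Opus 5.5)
Your route matches how the paper obtains this corollary. It is read off from \autoref{prop:multsubtreevalency} (the non-endpoint vertices are $\zeta(a,\abs x^{p/q})$ with $\mm(a),q\mid n$, whence $d_\bH\in\tfrac1n\N$) together with the neighbourhood argument in the proof of \autoref{prop:genericmultiplicity}. Your join-radius computation, the identification $\T_n\cap\P^1(\hk)=\{a\in\P^1(\K):\mm(a)\mid n\}$, and the perfectness argument are all fine; for $a=\infty$ use $\zeta(0,\abs x^{-k/n})$.

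There is one step you only assert: the inclusion $\overline{\T_n\cap\P^1(\hk)}\subseteq\overline\TV_n$. The proof of \autoref{prop:genericmultiplicity} establishes only the opposite inclusion (every vertex is a limit of Type I points of $\T_n$), so you cannot take both inclusions from there. Your sentence ``it is then a vertex, since it is either Type I or a branch point of $\T_n$'' presupposes the conclusion. Closedness of $\T_n$ gives only $\zeta\in\T_n$; it does not exclude a Type III point, or a non-branching Type II point, of $\T_n$ from being a limit of Type I points. This is not a formality, because without a bound on denominators it fails. For example, the Type I points $x^{s_k}$ with rational $s_k\downarrow\sqrt2$ converge weakly to the Type III point $\zeta(0,\abs x^{\sqrt2})$. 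This inclusion, not the join computation, is the genuinely delicate point.

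The missing ingredient is the same discreteness you use later for the distances. Let $\zeta\in\T_n$ be of Type II or III and not a vertex.
\begin{enumerate}
\item By \autoref{prop:multiplicity}, write $\zeta=\zeta(a,\abs x^t)$ with $\mm(a)=\mm(\zeta)\mid n$, so $a\in\k((x^{1/n}))$.
\item By \autoref{prop:multsubtreevalency}, $t\notin\tfrac1n\Z$. Hence $\tfrac kn<t<\tfrac{k+1}n$ for some $k\in\Z$.
\item The open annulus $A=D_\an(a,\abs x^{k/n})\setminus\CD_\an(a,\abs x^{(k+1)/n})$ is a weak neighbourhood of $\zeta$.
\item A Type I point $c\in A$ with $\mm(c)\mid n$ would satisfy $\abs x^{(k+1)/n}<\abs{c-a}<\abs x^{k/n}$. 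This is impossible, since $c-a\in\k((x^{1/n}))$ forces $\abs{c-a}\in\abs x^{\frac1n\Z}\cup\{0\}$.
\end{enumerate}
Hence $\zeta\notin\overline{\T_n\cap\P^1(\hk)}$, which gives the missing inclusion (compare the lemma preceding \autoref{cor:treevertconv}). With this added, your argument is complete.
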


The multiplicity of a direction can also be understood in a way similar to points. We clearly have an action of $G$ on directions by $(\prim^*, \bvec v) \longmapsto \prim_\#(\bvec v)$. The following is an exercise.

\begin{prop}
 Let $\zeta \in \P^1_\an$ and $\bvec v$ be a direction at $\zeta$. Then $\mm(\bvec v) = |\Orb_\prim(\bvec v)|$.
 \end{prop}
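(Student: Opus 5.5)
The plan is to compare both quantities with the smallest power of $\prim_*$ fixing the direction as a set. Throughout, $\mm(\bvec v)$ means the multiplicity of $\bvec v$ viewed as a subset of $\P^1_\an$, i.e.\ $\min_{\xi\in\bvec v}\mm(\xi)$. The Galois action sends $\bvec v$, a direction at $\zeta$, to $\prim_\#(\bvec v)$, a direction at $\prim_*(\zeta)$; as a set this is simply $\prim_*(\bvec v)$, since $\prim_*$ is a homeomorphism of $\P^1_\an$.

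The first step is a reduction to the case where $\bvec v$ is a disk $D_\an(b,r)$ with $\zeta=\zeta(b,r)$ its boundary point, so that $\bvec v\ne\vec v(\infty)$. This reduction is forced. With the paper's convention $\mm(\vec v(\infty))=1$, the orbit of $\vec v(\infty)$, taken as a pair (point, direction), has size $\mm(\zeta)$. The statement should therefore be read either for $\bvec v\ne\vec v(\infty)$, or with $\vec v(\infty)$ counted as a direction at $\infty$ relative to the fixed point $\infty$. I would state this caveat explicitly. In the disk case, $\prim_*^k(\bvec v)=\bvec v$ as a set if and only if $\prim_\#^k$ fixes the pair $(\zeta,\bvec v)$, because $\zeta$ is the unique boundary point of the disk. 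Hence $|\Orb_\prim(\bvec v)|$ is the least $k\ge1$ with $\prim_*^k(D_\an(b,r))=D_\an(b,r)$. Since $\prim_*^k(D_\an(b,r)) = D_\an(\prim_*^k(b),r)$, this condition is equivalent to $\abs{\prim_*^k(b)-b}<r$.

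For the inequality $|\Orb_\prim(\bvec v)|\le\mm(\bvec v)$, pick $\xi\in\bvec v$ attaining $m=\mm(\bvec v)$; such $\xi$ has finite multiplicity and so is Type I, II or III. Then $\prim_*^m(\xi)=\xi$. Moreover $\xi\prec\zeta$, so by \autoref{prop:multiplicityorder} we have $\mm(\zeta)\mid m$, and hence $\prim_*^m(\zeta)=\zeta$. Thus $\prim_\#^m(\bvec v)$ is a direction at $\zeta$ containing $\xi$, and so it equals $\bvec v$. Therefore the orbit size divides $m$, and in particular is at most $m$.

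For the reverse inequality, let $k=|\Orb_\prim(\bvec v)|$, so that $\abs{\prim_*^k(b)-b}<r$. I mimic the truncation argument of \autoref{prop:multiplicity}. Let $a$ be the truncation of $b$ keeping only the terms whose exponents are $<\log_{\abs x} r$, i.e.\ those of absolute value $>r$. Then $a$ is a finite Puiseux sum and $\abs{b-a}<r$, so $a\in\bvec v$. Write $b=a+c$. Every term of $\prim_*^k(a)-a$ has absolute value $>r$, whereas $\abs{\prim_*^k(c)-c}<r$. Comparing leading terms in $\prim_*^k(b)-b = (\prim_*^k(a)-a) + (\prim_*^k(c)-c)$ shows that $\prim_*^k(a)=a$. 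Hence $\mm(a)\mid k$, and $\mm(\bvec v)\le\mm(a)\le k$.

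The main obstacle I expect is bookkeeping rather than depth. The truncation must use the strict inequality $<r$, in contrast with the $\le r$ used for closed disks in \autoref{prop:multiplicity}, and the $\vec v(\infty)$ convention has to be handled honestly. The case of a Type I or Type IV point $\zeta$ has only one direction and follows trivially from $\mm(\zeta)=|\Orb_\prim(\zeta)|$.
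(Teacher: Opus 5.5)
The paper states this proposition as an exercise and gives no proof, so I am judging your argument on its own terms. Your caveat is correct and worth keeping. With the convention $\mm(\vec v(\infty))=1$, the orbit of $\vec v_\zeta(\infty)$ has size $\mm(\zeta)$, so the statement needs $\bvec v\ne\vec v(\infty)$. Your last sentence then contradicts this caveat. At a Type I point $a\ne\infty$, or at a Type IV point, the only direction \emph{is} $\vec v(\infty)$. It has multiplicity $1$, while its orbit has size $\mm(\zeta)$ (infinite in the Type IV case). These are therefore precisely the exceptional cases, not trivial ones; only $\zeta=\infty$ genuinely works. Your upper bound $|\Orb_\prim(\bvec v)|\mid\mm(\bvec v)$ for a disk direction is fine.

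The genuine error is the truncation in the reverse inequality. Keeping only the terms of $b$ of absolute value $>r$ leaves a remainder with $\abs{c}\le r$, not $\abs{c}<r$. So when $b$ has a nonzero term of absolute value exactly $r$, you get $\abs{b-a}=r$ and $a\notin\bvec v$.

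The deduction $\prim_*^k(a)=a$ survives this, but $\mm(\bvec v)\le\mm(a)$ does not, and it can fail. Take $\bvec v=D_\an(x^{1/2},\abs x^{1/2})$, a direction at the satellite point $\zeta(0,\abs x^{1/2})$. Your recipe gives $a=0$ with $\mm(a)=1$. But by \autoref{cor:multsubtreevalency}, $\bvec v$ is a generic direction there, so $\mm(\bvec v)=\mg(\zeta)=2$. This is not an edge case: at a Type II point it happens in every direction except $\vec v(\infty)$ and one other.

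Your remark about needing a strict inequality points at exactly this, but the truncation you actually wrote is the closed-disk one from \autoref{prop:multiplicity}. Instead, truncate at exponents $\le\log_{\abs x}r$, i.e.\ keep the terms of absolute value $\ge r$. Then:
\begin{itemize}
\item $\abs{b-a}<r$, so $a\in\bvec v$;
\item $\abs{\prim_*^k(c)-c}<r$, since $\prim_*$ is an isometry;
\item every nonzero term of $\prim_*^k(a)-a$ has absolute value $\ge r$.
\end{itemize}
Hence $\abs{\prim_*^k(b)-b}<r$ forces $\prim_*^k(a)=a$, which gives $\mm(\bvec v)\le\mm(a)\le k$.

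With this change your argument is complete. It treats Type II and Type III points uniformly, without the case-by-case description of directions from \autoref{prop:multsubtreevalency}.
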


 \begin{defn}\label{defn:specgendirn}
Let $\zeta$ be a Type II point.
\begin{itemize}
 \item When $\mg(\zeta) = 1$ we say $\zeta$ is \emph{integral}.
 \item We say $\zeta$ is \emph{free} iff $\mg(\zeta) = \mm(\zeta)$, and \emph{satellite} otherwise.
 \item We will say $\bvec v \in \Dir \zeta$ is a \emph{generic direction} iff $\mm(\bvec v) = \mg(\zeta)$, and say it is \emph{special} otherwise.
 \end{itemize}
\end{defn}
 
\begin{rmk}\label{rmk:specgendirn}
  \autoref{prop:multsubtreevalency} says that a Type II point $\zeta$ has at most two special directions, namely $\vec v(a), \vec v(\infty)$ where $a \in \K$ is described in the proposition. Further, there are exactly two special directions if and only if $\zeta$ is satellite. Otherwise, $\zeta$ is free and $\mm(\zeta) = \mg(\zeta) = \mm(a) = \mm(\vec v(a))$. The direction $\vec v(\infty)$ is always special unless $\zeta$ is integral, in which case every direction is generic. 
\end{rmk}

\begin{prop}\label{prop:galois:multindirn}
 Let $\zeta$ be a Type II point and $\bvec v$ one of its directions. For every $\xi \in \bvec v$ we have $\mm(\bvec v) \mid \mm(\xi) \mid \mg(\xi)$.
\end{prop}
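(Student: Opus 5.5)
We need to show two divisibilities: $\mm(\bvec v) \mid \mm(\xi)$ and $\mm(\xi) \mid \mg(\xi)$. The second is independent of the direction. If $\xi$ is Type III or IV then $\mg(\xi) = \infty$, and with the convention that everything divides $\infty$ there is nothing to prove. If $\xi$ is Type I then $\mg(\xi) = \mm(\xi)$ by definition. If $\xi$ is Type II, \autoref{cor:multsubtreevalency} states that $\mm(\xi) \mid \mg$ in the satellite case and $\mm(\xi) = \mg$ in the free case. This half is just a citation.

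For the first divisibility, I would use the orbit characterisation $\mm(\bvec v) = |\Orb_\prim(\bvec v)|$ from the preceding proposition. If $\mm(\xi) = \infty$ there is nothing to show, so assume $n := \mm(\xi) < \infty$, so that $\prim_*^n(\xi) = \xi$. I claim $\prim_*$ fixes $\zeta$. This is because every Galois action is isometric and fixes $\infty$ and $\zeta(0, \abs x^r)$, so $\omega_*(\zeta(a, r)) = \zeta(\omega_*(a), r)$. Hence $\prim_*(\zeta) = \zeta$ would follow from $\mm(\zeta) = 1$, but that is not true in general. So the fixing of $\zeta$ has to be obtained differently.

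The plan is to argue from the tree structure instead. Since $\prim_*^n$ is a homeomorphism of $\P^1_\an$, it maps the direction $\bvec v$ at $\zeta$ to the direction at $\prim_*^n(\zeta)$ containing $\prim_*^n(\xi) = \xi$. To identify this with $\bvec v$ itself, it suffices that $\prim_*^n$ fixes $\zeta$. I would split into two cases.

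If $\zeta \succ \xi$, i.e.\ $\bvec v \ne \vec v(\infty)$, then \autoref{prop:multiplicityorder} gives $\mm(\zeta) \mid \mm(\xi) = n$, so $\prim_*^n(\zeta) = \zeta$. Therefore $\prim_{\#}^n(\bvec v) = \bvec v$ and the orbit size $\mm(\bvec v)$ divides $n$, since the orbit of a single generator of a cyclic-type action has size equal to the minimal period.

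If $\bvec v = \vec v(\infty)$, then $\mm(\bvec v) = 1$ by \autoref{cor:multsubtreevalency}, which divides everything.

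The main subtlety is justifying that the orbit length divides any period, and that $\prim_*^n$ maps $\bvec v$ to a direction at $\zeta$. The latter is clear because $\prim_*^n$ fixes $\zeta$ and is a homeomorphism, so it permutes the components of $\P^1_\an \sm \set\zeta$, and $\xi$ pins down which component $\bvec v$ goes to. The former is the standard fact that the orbit length of a point under a single map is its minimal period, and every period is a multiple of it.
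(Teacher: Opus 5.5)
Your proof is correct, but it takes a different route from the paper's.

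\textbf{The paper's route.} The paper uses only the definition of $\mm(\bvec v)$ as the minimum multiplicity occurring in $\bvec v$. For $\bvec v\neq\vec v(\infty)$ it picks a Type I point $\gamma\in\bvec v$ with $\mm(\gamma)=\mm(\bvec v)$, which \autoref{prop:multsubtreevalency} supplies. It then forms the join $\xi'=\xi\vee\gamma$. This point lies in $\bvec v$ above $\gamma$, so $\mm(\xi')\mid\mm(\gamma)$ and $\mm(\xi')\ge\mm(\bvec v)$, which together force $\mm(\xi')=\mm(\bvec v)$. Finally, $\xi\preceq\xi'$ and \autoref{prop:multiplicityorder} give $\mm(\xi')\mid\mm(\xi)$.

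\textbf{Your route.} You run an orbit--stabiliser argument instead. The Galois period $n$ of $\xi$ is also a period of $\zeta$, by order-reversal. Hence $\prim_*^n$ fixes $\zeta$ and $\xi$, and therefore fixes $\bvec v$, so the orbit length of $\bvec v$ divides $n$.

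\textbf{What each buys.} Your version explains conceptually why one gets a divisibility rather than just an inequality, and it avoids choosing a minimising centre and forming joins. However, it moves all the content into the characterisation $\mm(\bvec v)=|\mathrm{Orb}_{\prim}(\bvec v)|$, which the paper only states as an exercise. Verifying that characterisation amounts to computing the stabilisers of the directions $\vec v(a+cx^{p/q})$ and comparing them with the direction multiplicities in \autoref{prop:multsubtreevalency}. That is essentially the same input the paper's proof uses, so your route is not shorter overall.

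\textbf{Caveats.}
\begin{enumerate}
\item The orbit characterisation fails as stated for $\vec v(\infty)$ at a point with $\mm(\zeta)>1$. There $\prim_\#$ moves the base point, so the orbit of $\vec v_\zeta(\infty)$ has exactly $\mm(\zeta)$ elements, whereas $\mm(\vec v(\infty))=1$. Your separate treatment of $\vec v(\infty)$ is therefore essential, not cosmetic. You should invoke the characterisation only for $\bvec v\neq\vec v(\infty)$.
\item Delete the retracted claim that $\prim_*$ itself fixes $\zeta$. Only $\prim_*^n$ does, and that is all you use.
\item Your explicit handling of $\mm(\xi)\mid\mg(\xi)$ fills in a step the paper's proof leaves implicit. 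For Type II points it is immediate from $\mg=\mathrm{LCM}(\mm,q)$.
\end{enumerate}
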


\begin{proof}
 If $\bvec v = \vec v(\infty)$ then $\mm(\bvec v) = 1$ so there is nothing to prove. Otherwise $\xi \prec \zeta = \zeta(\gamma, \abs x^\frac ab)$ (WLOG $b = \mg(\zeta)$) and using \autoref{prop:multsubtreevalency} we can choose $\mm(\gamma) = \mm(\bvec v)$, so that $\bvec v = D_\an(\gamma, \abs x^\frac ab)$. Now, $\xi' = \zeta(\gamma, \norm[\xi]{y-\gamma})$ is the join of $\xi$ and $\gamma$. One can check that $\mm(\xi') = \mm(\bvec v)$ and by \autoref{prop:multiplicityorder}, $\mm(\xi') \mid \mm(\xi)$.
\end{proof}

\begin{defn}
 We define the \emph{generic multiplicity} of a subset $U \subset \P^1_\an(\K)$ as $\mg(U) = \min_{\zeta \in U} \mg(\zeta)$.
\end{defn}

\begin{prop}\label{prop:galois:multiplicitysubset}
 The minimum generic multiplicity $\mg$ in $U$ is attained by Type I points. For a direction $\bvec v$ at a point, we have $\mm(\bvec v) = \mg(\bvec v)$. More generally, if $U$ is a connected open set then $U$ has a Type II point $\zeta$ with $\mm(\zeta) = \mm(U) \mid \mg(\zeta) = \mg(U)$.
\end{prop}

\begin{proof}
 The first part is an exercise applying \autoref{cor:multsubtreevalency}. Let $g = \mg(U)$ and $m = \mm(U)$. The endpoints of $\T_m$ are Type I, and any Type II point $\zeta \in U\cap \T_m$ has a generic direction contained in $U$. 
 By \autoref{cor:multiplicitylowersemicont}, if $U$ is an open set of points of multiplicity at least $m$ (attained by some point) then $T = U \cap \T_m$ must be an open subtree of $\T_m$, purely of multiplicity $m$. If $T$ has a vertex $\zeta$ of $\T_m$ then it is free of multiplicity $m$ in $U$, so we are done $\mm(\zeta) = \mm(U) = \mg(\zeta) = \mg(U)$.
 Otherwise, assume $T$ is an open interval shorter than $\frac 1m$ without any points of generic multiplicity $m$. 
 Let $[\zeta, \xi]$ be an interval connecting a point $\xi \in U$ with $\mg(\xi) = \mg(U)$ to $\zeta \in T$ with $\vec v(\xi) \cap \T_m = \emp$. Then $\vec v(\xi)$ is generic and by \autoref{prop:galois:multindirn}
 \[\mg(\zeta) = \mm(\vec v(\xi)) \mid \mm(\xi) \mid \mg(\xi).\]
 In particular, $\mg(\zeta) \le \mg(\xi)$ which was meant to be minimal in $U$, so $\mg(U) = \mg(\zeta)$ and we are done.
\end{proof}

 \subsection{Topological Results over Puiseux Series}\label{sec:galois:top}

 Given $\zeta \in \P^1_\an$, let $\Dir\zeta$ denote the set of directions at $\zeta$. We may also write $\vec v_\zeta(\phi)$ to specify the direction of $\phi$ at $\zeta$.

\begin{lem}\label{lem:infdirconv}
 Let $\zeta \in \P^1_\an$ and $(\zeta_n)_{n=1}^\infty$ be a sequence such that the sequence $\bvec v_n = \vec v_\zeta(\zeta_n) \in \Dir\zeta$ has no constant subsequences (ignoring $n$ for which $\zeta_n = \zeta$). Then $\lim_{n \to \infty} \zeta_n = \zeta$.
\end{lem}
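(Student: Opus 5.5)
The plan is to use the basic description of neighbourhoods in the weak (Berkovich) topology on $\P^1_\an$: a basis of open neighbourhoods of $\zeta$ is given by finite intersections of Berkovich open disks and complements of closed disks. Equivalently, every open neighbourhood $U$ of $\zeta$ contains a set of the form
\[\set\zeta \cup \bigcup_{\bvec v \in \Dir\zeta \sm F} \bvec v \cup \bigcup_{\bvec w \in F} (\bvec w \cap U_{\bvec w}),\]
where $F \subset \Dir\zeta$ is finite and $U_{\bvec w}$ is some neighbourhood of $\zeta$. In other words, $U$ contains every direction at $\zeta$ except finitely many. This fact was already used, in the form ``$U$ contains all but finitely many directions of $\zeta$'', in the proof of \autoref{prop:genericmultiplicity}. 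I would recall or justify it first.

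The justification goes as follows. The topology is the coarsest one making $\zeta \mapsto \norm[\zeta]f$ continuous. Factoring $f$ into linear factors over the algebraically closed field (for $\K$; for $\hk$ one approximates), one reduces to the case of a sub-basic open set $\set[\xi]{s < \norm[\xi]{y-a} < t}$ containing $\zeta$. Such a set is either a disk or an annulus. Any disk or annulus containing $\zeta$ contains all directions at $\zeta$ except at most the one or two directions in which its boundary points lie. A finite intersection of such sets therefore omits only finitely many whole directions at $\zeta$. The point $\infty$ is handled by the chart $y\mapsto 1/y$.

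With this fact, the lemma follows directly. Let $U$ be an open neighbourhood of $\zeta$, and let $F$ be the finite set of directions not wholly contained in $U$. Because $(\bvec v_n)$ has no constant subsequence, each direction $\bvec w \in F$ is equal to $\bvec v_n$ for only finitely many $n$; otherwise those indices would give a constant subsequence. Since $F$ is finite, there is an $N$ such that for all $n \ge N$ either $\zeta_n = \zeta$ or $\bvec v_n \nin F$. In both cases $\zeta_n \in U$, because $\zeta_n \in \bvec v_n \subset U$ in the second case. Hence $\zeta_n \to \zeta$.

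The main obstacle is the neighbourhood-basis fact, and specifically how to handle the Type of $\zeta$. If $\zeta$ is Type I, III or IV, then $\Dir\zeta$ has at most two elements, so the hypothesis forces $\zeta_n = \zeta$ eventually and the lemma is trivial. The real content is therefore only at Type II points. There I would verify directly that the sets $\set[\xi]{s<\norm[\xi]{f}<t}$ for a single linear factor have the claimed form, and then take finite intersections.
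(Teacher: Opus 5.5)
Your proof is correct and is essentially the paper's argument. The paper likewise takes an open connected affinoid $U \ni \zeta$, notes that $U$ contains every direction at $\zeta$ except finitely many $\bvec w_1,\dots,\bvec w_k$, and uses the no-constant-subsequence hypothesis to conclude that each $\bvec w_i$ equals $\bvec v_n$ for only finitely many $n$. The only difference is that you also justify the ``all but finitely many directions'' fact from the definition of the weak topology, and dispose of Types I, III and IV separately, whereas the paper takes both points for granted.
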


\begin{proof}
 WLOG $\zeta_n \ne \zeta\ \forall n$. It is enough to show that for any open connected affinoid $U \ni \zeta$ there exists an $N \in \N$ such that for every $n \ge N$ we have $\zeta_n \in U$. $\bvec v \subset U$ for all but finitely many directions $\bvec v$ at $\zeta$, namely $\bvec w_1 \dots \bvec w_k$. By hypothesis $(\bvec v_n)$ contains only finitely many terms equal to $\bvec w_i$, therefore we can find a $N_i \in \N$ such that $\bvec v_n \ne \bvec w_i$ for every $n \ge N_i$. Let $N = \max_i N_i$, then for every $n \ge N$, we find $\zeta_n \nin \bvec w_i$ for any $1 \le i \le k$, and hence $\zeta_n \in U$.
\end{proof}

Under the same hypotheses as in the Lemma above and using the definition of the weak topology on $\P^1_\an$ we immediately see that for any $\phi \in \hk(y)$, \[\lim_{n\to \infty} \norm[\zeta_n]{\phi} = \norm[\zeta]{\phi}.\]

\begin{ex}
 For any $\phi \in \hk(y)$, let $(c_n)$ be any sequence of distinct complex numbers. \[\lim_{n\to \infty} \norm[c_n x^k]{\phi} = \norm[\zeta(0, |x|^k)]{\phi}\]
\end{ex}

This can be drastically improved however.

\begin{prop}\label{prop:galois:evcnstreps}
  Let $\zeta \in \P^1_\an$ and $(\zeta_n)_{n=1}^\infty$ be a sequence as in \autoref{lem:infdirconv}. Then for any $\phi \in \hk(y)$, the sequence $\left(\norm[\zeta_n]{\phi}\right)_{n = 1}^\infty$ is eventually constant (equalling $\norm[\zeta]{\phi}$).
  
  In particular, if $\zeta = \zeta(a, r)$ is Type II, and $b_n \in \CD(a, r)$ satisfy $|b_m - b_n| = r\ \forall m \ne n$, then the sequence $\left(\norm[b_n]{\phi}\right)_{n = 1}^\infty$ eventually equals $\norm[\zeta]{\phi}$.
  \end{prop}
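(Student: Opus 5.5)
The approach is to reduce to polynomials, since $\phi = f/g$ and $\norm[\zeta_n]{\phi} = \norm[\zeta_n]{f}/\norm[\zeta_n]{g}$. If both numerator and denominator norms are eventually constant, then so is the quotient. Factor $f$ over the algebraically closed field, or over $\hk$ after passing to its algebraic closure, as
\[f(y) = c\prod_{j=1}^d (y - \alpha_j).\]
Multiplicativity then gives $\norm[\zeta_n]{f} = \abs c\prod_j \norm[\zeta_n]{y-\alpha_j}$. It therefore suffices to treat a single linear factor $y - \alpha$, together with the factor $1/y$ if one works in the chart at $\infty$; the case $\zeta=\infty$ or a point at infinity is handled by the coordinate change $y \mapsto 1/y$. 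Only finitely many roots $\alpha_j$ occur, so it is enough that each linear factor is eventually constant along the sequence.

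For a single $\alpha$, the key observation is that $\norm[\xi]{y-\alpha}$ is determined by the position of $\xi$ relative to the path $[\alpha,\infty]$. Concretely, for $\xi$ in a direction $\bvec v$ at $\zeta$ with $\alpha \nin \bvec v$, we have $\norm[\xi]{y-\alpha} = \norm[\zeta]{y-\alpha}$. To verify this in the Type II/III case, write $\zeta=\zeta(a,r)$ and $\xi$ via a disk $\CD_\an(b,s)$ with $b\in \CD(a,r)$. If $\alpha$ lies outside $\CD(a,r)$, then
\[\abs{b-\alpha} = \abs{a-\alpha} > r \ge s,\]
so the norm is $\abs{a-\alpha}$ on the whole closed disk. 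If instead $\alpha \in \CD(a,r)$ but $\alpha$ is in a different residue class from $b$, then $\abs{b-\alpha}=r$ and $s<r$, so $\norm[\xi]{y-\alpha} = r = \norm[\zeta]{y-\alpha}$. The direction $\vec v(\infty)$ is handled similarly, using the closed-disk description of the complement. For Type I, III and IV points $\zeta$ there are at most two directions, so the hypothesis of \autoref{lem:infdirconv}, that no direction occurs infinitely often, forces $\zeta_n=\zeta$ eventually, and the claim is vacuous. Hence only Type II points matter.

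Now each $\alpha_j$ (and $\infty$) lies in exactly one direction $\bvec w_j \in \Dir\zeta$, which gives finitely many exceptional directions. By hypothesis each exceptional direction contains only finitely many of the $\zeta_n$. So for $n$ large enough, $\zeta_n$ avoids all the $\bvec w_j$ (or equals $\zeta$), and every factor satisfies $\norm[\zeta_n]{y-\alpha_j} = \norm[\zeta]{y-\alpha_j}$. This gives $\norm[\zeta_n]{\phi}=\norm[\zeta]{\phi}$ eventually. The particular case follows immediately: if $b_n\in\CD(a,r)$ with $\abs{b_m-b_n}=r$ for $m\ne n$, then the $b_n$ lie in pairwise distinct directions $\vec v(b_n)=D_\an(b_n,r)$ at $\zeta(a,r)$. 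So no direction repeats, and the main statement applies.

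The step I expect to be the main obstacle is justifying the factorisation when $\phi\in\hk(y)$, since $\hk$ itself is not algebraically closed. I would first try to pass to the completed algebraic closure, noting that the seminorms of Type II points extend canonically to $\zeta(a,r)$ over the larger field, with the same formula. If that is awkward, the fallback is to avoid roots entirely. Expand $f=\sum c_k(y-a)^k$, so that $\norm[\zeta]{f}=\max_k\abs{c_k}r^k$, and reduce: the normalized reduction $\bar f$ is a nonzero polynomial over $\k$ with finitely many roots. For $b$ in a residue class that is not a root of $\bar f$, the dominant term is attained and $\abs{f(b)}$ equals $\norm[\zeta]{f}$; the same reduction argument applies to points $\xi$ in such a direction. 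This second route is cleaner, and it identifies the finitely many bad directions directly as the zeros of $\bar f$, together with $\vec v(\infty)$.
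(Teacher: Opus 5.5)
Your proposal is correct and is essentially the paper's proof: split $\phi$ into linear factors $y-c$, note that $\zeta$ must be Type II (otherwise the sequence eventually equals $\zeta$), and observe that $\norm[\xi]{y-c}=\norm[\zeta]{y-c}$ for every $\xi$ outside the finitely many directions $\vec v(c)$ and $\vec v(\infty)$, which the sequence eventually avoids. Two small corrections: first, $\hk$ is in fact algebraically closed, being the completion of the algebraically closed field $\K$, so the factorisation needs no workaround; second, your remark that $\vec v(\infty)$ is \emph{handled similarly} is false when $c\in\CD(a,r)$ (e.g.\ $\norm[\zeta(a,R)]{y-c}=R>r$), though this is harmless since you then discard $\vec v(\infty)$ as an exceptional direction anyway.
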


\begin{proof}
By taking $f$ to be a product and quotient of linear terms, we may assume WLOG that $f = y - c$, for some $c \in K$. We may also dispense of the finitely many terms of the sequence with $\bvec v_n = \vec v(\infty)$. In any case satisfying the hypotheses $\zeta$ must be Type II; let $\zeta = \zeta(a, r)$.%
 
 Suppose $c \nin \bar D(a, r)$, then for any $b \in \bar D(a, r)$ we have $|b - c| = |c|$. $\zeta_n \in \bar D_\an(a, r)$, therefore $\norm[\zeta]{y-c} = \norm[\zeta_n]{y-c} = |c|$ for every $n$.
 
 Suppose $c \in \bar D(a, r)$, then $\zeta_n \in \bar D_\an(c, r)$ and $\norm[\zeta_n]{y - c} \le r$; on the other hand for $n$ large enough, we have $\zeta_n \nin \vec v(c) = D_\an(c, r) \iff \norm[\zeta_n]{y - c} \ge r$.
\end{proof}

\begin{lem}
  Let $\zeta \in \P^1_\an$ be a point and $(\zeta_n)_{n=1}^\infty \subset \T_m$ be a sequence of points distinct from but converging to $\zeta$. Then either $\set{\vec v(\zeta_n)}$ is infinite, or $\zeta_n \to \zeta$ in the hyperbolic metric.
\end{lem}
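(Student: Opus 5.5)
Throughout, $\zeta\in\P^1_\an$ and $(\zeta_n)\subset\T_m$ with $\zeta_n\ne\zeta$ and $\zeta_n\to\zeta$ weakly. The proof is a dichotomy on the set of directions $\set{\vec v(\zeta_n)}\subset\Dir\zeta$. If it is infinite we are done, so assume it is finite. Then the sequence splits into finitely many subsequences, each lying in a single direction $\bvec v$ at $\zeta$, and it suffices to show that each such infinite subsequence converges to $\zeta$ in $d_\bH$. So fix a direction $\bvec v$ and assume every $\zeta_n\in\bvec v$.

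First I use closedness. By \autoref{prop:multsubtreehull}, $\T_m$ is closed, so $\zeta\in\T_m$. Combined with $\zeta_n\in\T_m$, connectedness gives $[\zeta,\zeta_n]\subset\T_m$. Let $\xi_n$ be the point where the path from $\zeta$ to $\zeta_n$ leaves the structure relevant near $\zeta$. More concretely, the key tool is \autoref{prop:multsubtreevalency} and \autoref{cor:verticesisolated}: inside the direction $\bvec v$, the tree $\T_m$ has an initial segment $(\zeta,\eta)$ with no branching. If $\zeta$ is Type II, this is the edge of length $1/m$, or shorter, up to the next vertex. The same holds for Type III points, and for Type I points by \autoref{prop:multsubtreevalency}(i) applied to the small segment $[a,\zeta')$. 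Type IV points do not lie in $\T_m$ by \autoref{prop:multiplicity}, so that case cannot occur.

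The main step is to show that eventually $\zeta_n\in(\zeta,\eta)$, the unbranched segment. Suppose instead that infinitely many $\zeta_n$ lie beyond $\eta$, that is, in $\T_m\cap\bvec v$ but not on $(\zeta,\eta)$. Then all of these $\zeta_n$ lie in the set $\bvec v\sm\vec v_\zeta(\eta)$, where I take $\eta\in\bvec v$, and I take the direction at $\eta$ pointing away from $\zeta$. That set is closed and misses $\zeta$, and its complement is a weak neighbourhood of $\zeta$ (an open annulus/disk in the direction containing $\zeta$). This contradicts $\zeta_n\to\zeta$ weakly. Care is needed: I choose $\eta$ to be a Type II point strictly inside the first edge, so that the component of $\P^1_\an\sm\set\eta$ containing $\zeta$ is an open set containing $\zeta$ and disjoint from the far part of $\T_m\cap\bvec v$. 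In that case every $\zeta_n$ lying in this component and in $\T_m\cap\bvec v$ must lie on $(\zeta,\eta)$, because there is no branching there.

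This reduces the problem to a sequence on a segment $(\zeta,\eta)$. On such a segment the weak topology and the $d_\bH$-topology agree, via the isometric parametrisation $t\mapsto\zeta(a,\eps^t)$ from \autoref{sec:backberk}. Hence weak convergence to $\zeta$ gives $d_\bH(\zeta_n,\zeta)\to0$ when $\zeta\in\bH$. If $\zeta$ is Type I, the statement should be read as convergence along the interval, or in the extended sense. The main obstacle is making the \emph{no-branching neighbourhood} step rigorous at every Type of $\zeta$, in particular checking that $\T_m\cap\bvec v$ near $\zeta$ really is a single segment. I would do this with \autoref{prop:multsubtreevalency} and \autoref{cor:verticesisolated}, which show that vertices of $\T_m$ are Type II points isolated at distance at least $1/m$.
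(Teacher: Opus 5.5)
Your argument is correct for $\zeta\in\bH$ and is essentially the paper's proof. With only finitely many directions, weak convergence pushes the tail of the sequence into a thin neighbourhood of $\zeta$ inside each direction: the paper's annuli $D_\an(\gamma_j,r)\sm\CD_\an(\gamma_j,r-\eps)$, your component of $\P^1_\an\sm\set\eta$ containing $\zeta$. There $\T_m$ is a single unbranched segment by the discreteness of its vertices, so the tail lies in a small hyperbolic ball; your version also handles $\vec v(\infty)$ explicitly, which the paper's write-up omits.

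One correction to your Type I aside. Near a Type I point $a$ with $\mm(a)\mid m$, the points $\zeta(a,\abs x^{k/m})$ for all large $k$ are branch vertices of $\T_m$ accumulating at $a$, so there is no unbranched initial segment there. For Type I $\zeta$ the conclusion is meaningless anyway: there is only one direction and $d_\bH(\zeta_n,\zeta)=\infty$. So simply restrict to $\zeta\in\bH$, as the paper implicitly does by writing $\zeta=\zeta(\gamma,r)$.
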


\begin{proof}
 Assume that $\set{\vec v(\zeta_n)}$ is finite, WLOG $= \set{\vec v(\gamma_1), \dots, \vec v(\gamma_k)}$ with $\gamma_j \in \T_m$. Let $\zeta = \zeta(\gamma, r)$. Then for every $\eps > 0$ there is an $N \in \N$ such that $\forall n \ge N$,
 \[\zeta_n \in \P^1_\an \sm \bigcup_{j=1}^k \bar D_\an (\gamma_j, r-\eps)\]
 Then since $(\zeta_n) \subset \vec v(\gamma_1)\cup \cdots \cup \vec v(\gamma_k)$, for $n \ge N$
 \[\zeta_n \in \bigcup_{j=1}^k D_\an (\gamma_j, r) \sm \bar D_\an (\gamma_j, r-\eps)\]
 
 By \autoref{prop:multsubtreevalency} and \autoref{cor:verticesisolated}, for $\eps < \frac 1m$, we have that
 \[\T_m \cap \bigcup_{j=1}^k D_\an (\gamma_j, r) \sm \bar D_\an (\gamma_j, r-\eps) = \bigcup_{j=1}^k (\zeta(\gamma, r), \zeta(\gamma_j, r-\eps)] \subset B_\bH(\zeta, \log_{|x|}(1 - \eps/r))\]
\end{proof}

\begin{cor}\label{cor:treevertconv}
 Let $\zeta \in \P^1_\an$ be a point and $(\zeta_n)_{n=1}^\infty \subset \TV_m$ be a sequence of points distinct from but converging to $\zeta$. Then $\set{\vec v(\zeta_n)}$ is infinite.
\end{cor}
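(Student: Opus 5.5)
Since $\TV_m$ is a subset of $\T_m$, the preceding lemma applies to the sequence $(\zeta_n)$. Either the set $\set{\vec v(\zeta_n)}$ is infinite, in which case we are done, or $\zeta_n \to \zeta$ in the hyperbolic metric $d_\bH$. I would therefore argue by contradiction: suppose $\set{\vec v(\zeta_n)}$ is finite, so that $d_\bH(\zeta_n, \zeta) \to 0$. The task is then to show that hyperbolic convergence is impossible for a sequence of vertices of $\T_m$ that are distinct from their limit.

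The key input is \autoref{cor:verticesisolated}: the Type II vertices $\TV_m$ are isolated in the hyperbolic metric, with pairwise distances lying in $\tfrac 1m \Z$. We may assume the $\zeta_n$ are pairwise distinct; if some vertex occurred infinitely often then, being distinct from $\zeta$, it could not converge to $\zeta$. Since $(\zeta_n)$ is Cauchy in $d_\bH$, there is an $N$ with $d_\bH(\zeta_n, \zeta_{n'}) < \tfrac 1m$ for all $n, n' \ge N$. Because these distances lie in $\tfrac 1m\Z$, we get $d_\bH(\zeta_n, \zeta_{n'}) = 0$, so $\zeta_n = \zeta_{n'}$ for all $n, n' \ge N$. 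This contradicts distinctness. Equivalently, without passing to a distinct subsequence, the sequence is eventually constant equal to some $\zeta_N$, which converges to $\zeta_N \neq \zeta$; this contradicts convergence to $\zeta$, since the weak topology is Hausdorff.

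The only subtle point, and the step I expect to need care, is applying the $\tfrac1m\Z$ distance statement. First, $\TV_m$ consists of Type II points lying in $\bH$, so $d_\bH$ is finite on it. Second, the distance fact from \autoref{prop:multsubtreevalency} covers any two non-endpoint vertices of $\T_m$, which is exactly what elements of $\TV_m$ are. No further case analysis on the Type of the limit $\zeta$ is needed: if $\zeta$ were not in $\bH$, the hyperbolic alternative could not occur at all, and the lemma's dichotomy already forces an infinite set of directions.
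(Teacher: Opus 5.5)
Your argument is the deduction the paper intends: the corollary is stated without proof right after the lemma, to be combined with the $\tfrac 1m\Z$-spacing of $\TV_m$ in \autoref{cor:verticesisolated}. Your Cauchy step is correct whenever $\zeta\in\bH$.

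\textbf{The gap is your last paragraph.} You claim that for $\zeta\notin\bH$ ``the lemma's dichotomy already forces an infinite set of directions''. That cannot be right: a Type I point is an endpoint of the tree and has exactly one direction. In fact the lemma, and with it the corollary, is false at Type I points.

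Take $\zeta=0$ and $\zeta_n=\zeta(0,\abs x^n)$.
\begin{itemize}
\item By \autoref{prop:multsubtreevalency}, each $\zeta_n$ is a Type II vertex of $\T_1\subseteq\T_m$, so $\zeta_n\in\TV_m$, and clearly $\zeta_n\neq 0$.
\item $\zeta_n\to 0$ weakly, since $\norm[\zeta_n]{f}\to\abs{f(0)}$ for every $f\in\K[y]$; equivalently, $\zeta_n$ eventually lies in every disk $D_\an(0,s)$.
\item Yet $\set{\vec v(\zeta_n)}$ is a single direction and $d_\bH(\zeta_n,0)=\infty$, so neither alternative of the lemma holds.
\end{itemize}
The lemma's proof silently writes $\zeta=\zeta(\gamma,r)$ with $r>0$. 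The same failure occurs at $\infty$ with $\zeta(0,\abs x^{-n})$.

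So a case analysis on the type of $\zeta$ is exactly what is needed.
\begin{itemize}
\item Type II or III: your proof works as written. For Type III it shows the hypotheses can never be met.
\item Type IV: nothing to prove. $\T_m$ is closed (\autoref{prop:multsubtreehull}) and Type IV points have infinite multiplicity (\autoref{prop:multiplicity}), so no sequence in $\TV_m$ converges to one.
\item Type I: the conclusion is false, so these limits must be excluded.
\end{itemize}
The correct statement, and what your argument actually proves, assumes $\zeta\in\bH$; effectively this means $\zeta$ is of Type II.
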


\subsection{Endomorphisms of Puiseux Series}\label{sec:galois:endo}

In \autoref{def:rootopsprim} and \autoref{prop:rootsauto} we defined an isomorphism which extended $x \mapsto \lambda x$ from $\k((x))$ to $\hk$. Of course this is not unique. Now, for $0 < \abs{g(x)} < 1$, we wish to carefully extend the map $x \mapsto g(x)$ to a homomorphism $g^* : \hk \to \hk$. This will be particularly useful when $g(x) \in \k[[x]]$ is the first component of a skew product. We shall also treat the uniqueness question.
Caution: A field-theoretic extension using the axiom of choice is not enough; one can show there exist extensions of $g^*$ from $\C((x))$ to $\hk(\C)$ which are not continuous with respect to the non-Archimedean norm.

\begin{lem}[Binomial Theorem for $\hk$]\label{lem:binom}
Given $\abs{\gamma(x)} < 1$ and $q \in \Q$, there exists a unique $q$th power of\ $1 + \gamma(x)$ with constant coefficient $1$, namely
\[(1 + \gamma(x))^{q} = \sum_{j=0}^\infty \frac{q(q-1)\cdots(q-k+1)}{k!} (\gamma(x))^j.\]
This agrees with the usual definition when $q = n \in \N$ and has the property that $(1 + \gamma(x))^{pq} = (1 + \gamma(x))^q)^p$ for another rational $p$. For any complete subfield $L < \hk$, $\gamma(x) \in L \implies (1 + \gamma(x))^q \in L$. Finally, the algebraic $n$th roots of $1 + \gamma(x)$ are \[(1 + \gamma(x))^{\frac{1}{n}},\, \xi(1 + \gamma(x))^{\frac{1}{n}},\, \dots,\, \xi^{n-1}(1 + \gamma(x))^{\frac{1}{n}}\] where $\xi \in \k$ is a primitive $n$th root of unity.
\end{lem}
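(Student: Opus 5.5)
The plan is to reduce everything to the ordinary binomial series in $\k[[t]]$, using that the series in $\gamma$ converges in $\hk$ because $\abs\gamma<1$. Write $B_q(t)=\sum_{j\ge0}\binom qj t^j\in\Q[[t]]\subset\k[[t]]$, with $\binom qj=q(q-1)\cdots(q-j+1)/j!$. The first step is convergence. Since $\abs{\gamma^j}=\abs\gamma^j\to0$ and $\hk$ is complete with respect to a non-Archimedean norm, the series $B_q(\gamma)$ converges. We also need that the coefficients $\binom qj\in\Q\subset\k$ have absolute value at most $1$, which holds in characteristic $0$ since $\k$ carries the trivial norm. The substitution map $\k[[t]]\to\hk$, $t\mapsto\gamma$, is a continuous ring homomorphism. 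Concretely, each monomial $x^r$ appears in only finitely many $\gamma^j$, because $\abs{\gamma^j}\le\abs x^{j\epsilon}$ where $\abs\gamma=\abs x^\epsilon$. So it preserves products of formal power series.

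The second step transfers the formal identities. In $\Q[[t]]$ one has $B_n(t)=(1+t)^n$ for $n\in\N$, and $B_p(t)B_q(t)=B_{p+q}(t)$ by Vandermonde's identity. Iterating this for $p\in\N$ gives $B_q(t)^p=B_{pq}(t)$. For negative $p$ it follows from $B_{-s}B_s=B_0=1$, and for rational $p=r/s$ we reduce to the integer cases by checking that both sides are $s$th roots of the same series with constant term $1$. Applying the substitution homomorphism turns these into the stated properties of $(1+\gamma)^q$. Closure under complete subfields $L$ is immediate, since the partial sums lie in $L$ and $L$ is closed.

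The third step handles uniqueness and the roots. Any $w$ with $w^n=(1+\gamma)^{m}$ (taking $q=m/n$) and constant coefficient $1$ satisfies $w=(1+\gamma)^q u$ with $u^n=1$, so $u\in\k$ is a root of unity. Here we use that $\abs{u-1}<1$ forces the $n$ roots of $y^n-1$ to be distinct constants, as $\k$ is algebraically closed of characteristic $0$. Comparing constant coefficients gives $u=1$, and this yields uniqueness. Since $y^n-(1+\gamma)$ has at most $n$ roots in the field $\hk$, the $n$ distinct elements $\xi^i(1+\gamma)^{1/n}$ are exactly all of its roots.

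The main obstacle is the first step. One must justify that substitution into a formal power series is a well-defined continuous ring homomorphism into $\hk$, given that $\gamma$ may have infinitely many terms with unbounded denominators. The approach is to bound $\abs{\gamma^j}$ and use completeness, so that the finitely-many-contributions argument for each exponent goes through.
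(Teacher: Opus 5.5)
Your proposal is correct and follows the route the paper intends. The paper only remarks that the proof is the same as for the complex binomial theorem; your steps fill in those details: convergence from $|\gamma|<1$ and completeness of $\hk$, Vandermonde-type identities in $\Q[[t]]$ transferred through the substitution homomorphism, and uniqueness because the $n$th roots of unity lie in $\k$. One point to tidy is the rational-$p$ case of $((1+\gamma)^q)^p=(1+\gamma)^{pq}$: prove it directly in $\hk$ from your Step 3 uniqueness (both sides have constant coefficient $1$ and the same $s$th power), since substitution being a ring homomorphism does not by itself handle the composite series $B_p(B_q(t)-1)$.
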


The proof is essentially the same as for the complex binomial theorem.

\begin{defn}\label{defn:galois:eqauto}
 Let $r \in \Q$, $\lambda \in \k^\times$, and $g(x) \in \hk$ be such that $g(x) = \lambda x^r + \lo(x^r) = \lambda \tilde g(x) = \lambda x^r(1+h(x))$. Let $\lambda_\bullet$ be a sequence of roots with $\lambda_1 = \lambda$. Then we define $g^*$ by %
 \[x^{\frac ab} \mapsto \lambda_b^a x^{r\frac ab}(1+h(x))^{\frac ab}.\]
\end{defn}

\begin{prop}\label{prop:primghom}\label{cor:galois:fieldisometry}
 Let $q \in \Q_+$, $\lambda \in \k^\times$, and $g(x) \in \hk$ be such that $g(x) = \lambda x^{\frac1q} + \lo(x^{\frac1q})$. The map $g^* : \hk \longrightarrow \hk$ above is a well defined ring homomorphism with scale factor $q$. If $g(x) \in \k((x))$, then $g^*$ restricts to $x \mapsto g(x)$ on $\k((x))$. The set of $m$th roots of $g(x)$ in $\hk$ are
 \[g^*(x^{\frac 1m}),\, \xi g^*(x^{\frac 1m}),\, \dots,\, \xi^{m-1} g^*(x^{\frac 1m})\] where $\xi \in \k$ is a primitive $m$th root of unity. When $\frac 1q = n \in \N_+$, then $g^*\omega^{*n} = \omega^*g^*$ for any $\omega^* \in G$. Finally, if $q = 1$ then $g^*$ is an isometric isomorphism on $(\hk, \abs\cdot)$ which commutes with Galois actions.
\end{prop}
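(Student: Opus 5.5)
The plan is to define $g^*$ on monomials $x^{a/b}$ by \autoref{defn:galois:eqauto}, show the definition is independent of the representation of the exponent, and then extend additively and continuously to all of $\hk$. Write $\tilde g(x) = x^{1/q}(1+h(x))$ with $\abs{h} < 1$, so that $g = \lambda \tilde g$. For well-definedness, if $\frac ab = \frac{ka}{kb}$ then $\lambda_{kb}^{ka} = (\lambda_{kb}^k)^a = \lambda_b^a$ by the sequence-of-roots axiom. The factor $x^{r a/b}(1+h)^{a/b}$ is likewise unambiguous, by the uniqueness clause of \autoref{lem:binom}.

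Multiplicativity on monomials is next: $x^{s}x^{t}\mapsto x^{s+t}$. For this I need $\lambda_b^a\lambda_d^c = \lambda_{bd}^{ad+bc}$, which follows after passing to the common denominator $bd$. I also need $(1+h)^s(1+h)^t = (1+h)^{s+t}$, the formal binomial identity, which holds because both sides are power series in $h$ with rational coefficients satisfying the classical identity. The image of $x^{a/b}$ has absolute value $\abs x^{a/(bq)}$, so $\abs{g^*(f)} = \abs f^{1/q}$ on monomials.

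To extend, send $\sum c_j x^{r_j}$ to $\sum c_j g^*(x^{r_j})$. This converges in $\hk$ because $r_j\to\infty$ forces $\abs{g^*(x^{r_j})}\to 0$; here I use that $\hk$ is complete and that its elements have well-ordered exponents, so only finitely many terms contribute to any given coefficient. Additivity is immediate. For multiplicativity, I multiply two series termwise; the double sum converges unconditionally in the non-Archimedean sense, so rearrangement is justified. For the scale factor on a general element, the leading term has strictly largest absolute value, since the images of the remaining terms are $\lo$ of it. It follows that $\abs{g^*(f)} = \abs{f}^{1/q}$, giving scale factor $q$ in the sense of \autoref{defn:maps:skew}.

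If $g\in\k((x))$ then $g^*(x) = \lambda x^{1/q}(1+h)$ with $\lambda_1 = \lambda$, so $g^*(x) = g$. By continuity together with multiplicativity, $g^*$ restricts to $x\mapsto g(x)$ on $\k((x))$. For the $m$th roots, $(g^*(x^{1/m}))^m = g^*(x) = g$ by multiplicativity. There are at most $m$ roots of $y^m - g$ in a field, and the $\xi^i g^*(x^{1/m})$ are $m$ distinct ones, so they are all of them.

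For $\frac1q = n$, I check the identity $g^*\omega^{*n} = \omega^* g^*$ on $x^{a/b}$. The right side gives $\omega^*\bigl(\lambda_b^a x^{na/b}(1+h)^{a/b}\bigr)$. Here $\omega^*$ multiplies $x^{na/b}$ by $\omega_b^{na}$, and acts on $(1+h)^{a/b}$ as $(1+\omega^*h)^{a/b}$ by the uniqueness in \autoref{lem:binom}. The left side gives $\omega_{nb}^{a}\,g^*(x^{a/b})$.

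\textbf{This is the step I expect to be delicate.} The Galois action twists $h$, whereas the left side does not, so the identity is plausible only when $h\in\k((x))$. That case is fixed by $\omega^*$, and then $\omega_{nb}^a$ must be compared with $\omega_b^{na}$; I would need to match them via $\omega_{nb}^n = \omega_b$. I would first verify this on exponents exactly, and if it fails in general I would restrict to $g\in\k((x))$, which is the case used for skew products.

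Finally, when $q=1$ the map is isometric, and hence injective. For surjectivity I would construct an inverse $(g^{-1})^*$, using the compositional inverse series of $g$ (which has leading term $\lambda^{-1}x$), and check that the two composites agree with the identity on $x^{1/b}$ up to continuity. The statement that it commutes with Galois actions is then the case $n=1$ above.
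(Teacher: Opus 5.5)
Outside the Galois clause, your route is the paper's. You define $g^*$ on monomials, get independence of the exponent's representation from the root-sequence axiom and \autoref{lem:binom}, check multiplicativity on two monomials, extend by continuity, read the scale factor off leading terms, and build the $q=1$ inverse from a compositional inverse.

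\textbf{The gap is the Galois clause.} You evaluate the left side as $g^*\omega^{*n}(x^{a/b})=\omega_{nb}^{a}\,g^*(x^{a/b})$. But $\omega^{*n}$ is the $n$-fold iterate $(\omega^*)^n=(\omega^n)^*$, so $\omega^{*n}(x^{a/b})=\omega_b^{na}x^{a/b}$, and the left side is $\omega_b^{na}\lambda_b^a x^{na/b}(1+h)^{a/b}$.

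With your scalar the identity genuinely fails once $n\ge 2$. Take $n=2$, $a=b=1$, $\omega_2=-1$: you would be comparing $-1$ with $\omega_1^2=1$. Restricting to $g\in\k((x))$ cannot repair a discrepancy that comes from $\omega$ alone. So as written the clause is unproved for $n\ge2$, which is exactly the superattracting case used in \autoref{prop:commwithG}. For $n=1$ your scalars happen to coincide.

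With the correct reading the two scalars agree on the nose, and no relation such as $\omega_{nb}^n=\omega_b$ is needed. All that remains is that $\omega^*$ fixes $(1+h)^{a/b}$. The paper gets this by taking $h\in\k[[x]]$ at this step: then $(1+h)^{a/b}\in\k((x))$ by the complete-subfield clause of \autoref{lem:binom}, so every $\omega^*\in G$ fixes it.

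Your suspicion that this extra hypothesis is needed is correct. For $g=x+x^{3/2}$ (with $\lambda_\bullet=1_\bullet$) and $\omega_2=-1$, one gets $\omega^*g^*(x^{1/2})=-x^{1/2}(1-x^{1/2})^{1/2}$ but $g^*\omega^*(x^{1/2})=-x^{1/2}(1+x^{1/2})^{1/2}$. So this clause, and the Galois part of the $q=1$ statement, should be read with $g\in\k((x))$.

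\textbf{A smaller point in the $q=1$ inverse.} Suppose you keep $\lambda\neq 1$, take $g\in\k[[x]]$, and use the map attached to the compositional inverse $\tilde g=\lambda^{-1}x+\cdots$. Then both composites send $x^{1/m}$ to $\lambda_m(\lambda^{-1})_m\,x^{1/m}$. They are the identity only if you choose the root sequence of $\lambda^{-1}$ to be $\lambda_\bullet^{-1}$. With the standard choices over $\C$ and $\lambda\notin\R_{>0}$, they equal $\prim^*$ by \autoref{prop:rootopsprim}.

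You can either fix that choice, or note that each composite is a continuous endomorphism of $\hk$ fixing $\k((x))$. By \autoref{prop:galois:extensionnearlyunique} it is then some $\omega^*\in G$, which still gives bijectivity. The paper avoids the issue by proving the case $\lambda=1$ and writing $g^*=\tilde g^*\circ\lambda_\bullet^*$, where $\lambda_\bullet^*$ is an isometric automorphism by \autoref{prop:rootsauto}.
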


The proof is given in \autoref{sec:puiseuxappx}, page \pageref{appx:prop:primghom}. The choice to write $\frac 1q$ as an exponent is a convention carried from \cite{berkskew} so that the scale factor is simply written as $q$.
The next proposition says that our choices didn't matter too much.

\begin{prop}\label{prop:galois:extensionnearlyunique}
Let $\Psi$ be any continuous extension of $g^*$ from $\k((x))$ to $\hk$. Then $\Psi = g^*\circ \omega^*$ for some $\omega^* \in G$.
\end{prop}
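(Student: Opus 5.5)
The plan is to reduce to the case where $\Psi$ and $g^*$ agree on $x^{1/n}$ up to a root of unity for every $n$, and then let continuity do the rest. Put $\Theta = (g^*)^{-1}\circ\Psi$, provided $g^*$ is invertible; if it is not, I would instead compare $\Psi$ and $g^*$ directly on the generators $x^{1/n}$. Concretely, $\Psi$ and $g^*$ agree on $\k((x))$, so for each $n$ the element $\Psi(x^{1/n})$ satisfies $\Psi(x^{1/n})^n = \Psi(x) = g(x)$. By \autoref{prop:primghom}, the $n$th roots of $g(x)$ in $\hk$ are exactly $\xi^j g^*(x^{1/n})$ for $j = 0,\dots,n-1$. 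Hence there is a unique $n$th root of unity $\omega_n$ with
\[\Psi(x^{1/n}) = \omega_n\, g^*(x^{1/n}).\]

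Next I would check that $\omega = (\omega_n)$ is a sequence of roots of unity, i.e.\ $\omega_{mn}^m = \omega_n$. Raise $\Psi(x^{1/mn}) = \omega_{mn} g^*(x^{1/mn})$ to the $m$th power. Since $\Psi$ and $g^*$ are ring homomorphisms and $(x^{1/mn})^m = x^{1/n}$, this gives $\Psi(x^{1/n}) = \omega_{mn}^m g^*(x^{1/n})$. Because $g^*(x^{1/n}) \neq 0$, comparing with the defining relation yields $\omega_{mn}^m = \omega_n$, and clearly $\omega_1 = 1$. So by \autoref{prop:rootsauto}, $\omega^*$ is a Galois element of $G$.

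Then I would show $\Psi = g^*\circ\omega^*$ on $\K$. Both are ring homomorphisms, $\k$-linear on constants, and continuous. On the monomial $x^{a/b}$ we have $g^*(\omega^*(x^{a/b})) = \omega_b^a g^*(x^{1/b})^a = \Psi(x^{1/b})^a = \Psi(x^{a/b})$, and the same holds for negative $a$ by inverting. Any $\gamma \in \k((x^{1/b}))$ is a convergent sum $\sum c_m x^{m/b}$ of such monomials, so continuity of both maps and additivity on the partial sums give agreement on $\k((x^{1/b}))$. Taking the union over $b$ gives agreement on $\K$, and density of $\K$ in $\hk$ with continuity extends it to $\hk$.

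The step I expect to be the main obstacle is the constant-field behaviour. A field homomorphism extending $g^*$ from $\k((x))$ restricts to $\k$ as the identity only if we assume this, and $\k \subset \k((x))$ already, so here it is automatic. The real subtlety is therefore the continuity used to pass from finite sums to series. I would justify this by noting that any continuous ring homomorphism is additive on convergent series in the topology of $\hk$; the precise scale factor is irrelevant here, since continuity alone suffices.
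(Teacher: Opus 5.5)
Your proof is correct and follows the paper's proof: you identify $\Psi(x^{1/n})$ as $\omega_n\, g^*(x^{1/n})$ using the description of the $n$th roots of $g(x)$ in \autoref{prop:primghom}, check $\omega_{mn}^m=\omega_n$ by raising to the $m$th power, and conclude by continuity. Your opening detour through $(g^*)^{-1}\circ\Psi$ and the closing remarks on the constant field are unnecessary, since $\Psi$ fixes $\k\subset\k((x))$ automatically.
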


\begin{proof}
Since $\Psi$ is continuous, $\Psi\left(\sum_{j = 1}^\infty c_j x^{\frac{p_j}{q_j}}\right) = \sum_{j = 1}^\infty c_j \Psi\left(x^{\frac{1}{q_j}}\right)^{p_j}$. Therefore, it is enough to show on roots of $x$.
It must be that $\Psi(x^{\frac 1n})^n = \Psi(x) = g(x)$ for every $m$. So $\Psi(x^{\frac 1n})$ is \emph{some} $n$th root of $g(x)$ in $\hk$ and so by \autoref{prop:primghom} $\Psi(x^{\frac 1n}) = \omega_n g^*(x^{\frac 1n}) = g^*(\omega_n x^{\frac 1n})$ where $\omega_n$ is some $n$th root of unity. Now we show that $\omega$ is a sequence of roots of unity, and then by definition of $g^*\circ \omega^*$ we are done.
\[\omega_{mn}^m g^*(x^{\frac 1n}) = \left(\omega_{mn} g^*(x^{\frac 1{mn}})\right)^m = \Psi((x^{\frac 1{mn}}))^m = \Psi((x^{\frac 1n})) = \omega_n g^*(x^{\frac 1n})\]
Thus $\omega_{mn}^m = \omega_n$ as required.
\end{proof}

\begin{prop}\label{prop:nearlyfunctorial}
 Let $g(x), h(x) \in \C[[x]]$, such that 
\begin{align*}
 g_1(x) = \lambda x^m + \bO(x^{m+1}) &= \lambda \tilde g_1(x) = \lambda x^m(1+h_1(x))\\
 g_2(x) = \mu x^n + \bO(x^{n+1}) &= \mu \tilde g_2(x) = \mu x^n(1+h_2(x))
\end{align*}
($m, n \ge 1$). Then \[g_2^*\circ g_1^* = 
  \prim^{k*}(g_1\circ g_2)^*\]
 where $2k\pi \le \arg(\lambda) + m\arg(\mu) < 2(k+1)\pi$.
\end{prop}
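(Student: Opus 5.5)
The plan is to compare the two sides directly on the topological generators $x^{\frac 1b}$ of $\hk$, using the explicit formula of \autoref{defn:galois:eqauto} (with $\k=\C$ and the principal sequences $\lambda_\bullet,\mu_\bullet$ of \autoref{def:rootopsprim}). First, $g_1^*, g_2^*$ and $(g_1\circ g_2)^*$ are continuous ring homomorphisms by \autoref{prop:primghom}. They fix $\k$, because $x^0 \mapsto 1$. On $\k((x))$ we have $g_2^*\circ g_1^*(x) = g_2^*(g_1(x)) = g_1(g_2(x))$. So both sides are continuous extensions of $x \mapsto g_1\circ g_2(x)$. By \autoref{prop:galois:extensionnearlyunique}, $g_2^*\circ g_1^* = (g_1\circ g_2)^*\circ\omega^*$ for some $\omega^*\in G$. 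It remains to identify $\omega$ as $\prim^k$, since $\prim^{k*}$ commutes with $(g_1\circ g_2)^*$ on monomials up to the same factor. Equivalently, we compute both sides on $x^{\frac 1b}$ and read off the ratio.

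\textbf{Left side.} By definition,
\[g_1^*(x^{\frac 1b}) = \lambda_b\, x^{\frac mb}(1+h_1(x))^{\frac 1b}.\]
Applying $g_2^*$ and using multiplicativity and continuity gives
\[\lambda_b\,\big(g_2^*(x^{\frac 1b})\big)^m\, g_2^*\big((1+h_1)^{\frac 1b}\big) = \lambda_b\,\mu_b^m\, x^{\frac{mn}b}(1+h_2)^{\frac mb}\; g_2^*\big((1+h_1)^{\frac 1b}\big).\]
The element $g_2^*((1+h_1)^{\frac 1b})$ has constant coefficient $1$, and its $b$-th power is $1+h_1(g_2(x))$. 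By the uniqueness in \autoref{lem:binom}, it equals $(1+h_1(g_2(x)))^{\frac 1b}$.

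\textbf{Right side.} Write
\[g_1\circ g_2 = \lambda\mu^m x^{mn}(1+H),\qquad 1+H = (1+h_2)^m\,(1+h_1(g_2)).\]
Then
\[(g_1\circ g_2)^*(x^{\frac 1b}) = (\lambda\mu^m)_b\, x^{\frac{mn}b}(1+H)^{\frac 1b}.\]
Again by the uniqueness in \autoref{lem:binom}, together with its multiplicativity of rational powers, the tail factor satisfies
\[(1+H)^{\frac 1b} = (1+h_2)^{\frac mb}(1+h_1(g_2))^{\frac 1b}.\]
Both products have constant term $1$ and the same $b$-th power.

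\textbf{Comparing constants.} The two images of $x^{\frac 1b}$ differ only by the constants $\lambda_b\mu_b^m$ versus $(\lambda\mu^m)_b$. Write $\lambda = re^{i\theta}$ and $\mu = se^{i\psi}$ with $\theta,\psi\in[0,2\pi)$. Then
\[\lambda_b\mu_b^m = (rs^m)^{\frac 1b}e^{i(\theta+m\psi)/b}.\]
On the other hand, $\arg(\lambda\mu^m) = \theta+m\psi-2k\pi \in [0,2\pi)$ with $k$ as in the statement, so
\[(\lambda\mu^m)_b = (rs^m)^{\frac 1b}e^{i(\theta+m\psi-2k\pi)/b}.\]
Hence
\[\lambda_b\mu_b^m = e^{2\pi i k/b}\,(\lambda\mu^m)_b = \prim_b^k\,(\lambda\mu^m)_b.\]
This exactly matches the action of $\prim^{k*}$ composed with $(g_1\circ g_2)^*$ on $x^{\frac 1b}$. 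That composite sends $x^{\frac ab}$ to $\prim_b^{ka}$ times the image, consistently in $a$, by \autoref{prop:rootopsprim}. Extending by multiplicativity and continuity to all of $\hk$ gives the identity.

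\textbf{Main obstacle.} The delicate step is the bookkeeping of the principal-branch convention. In particular, $\mu_b^m$ must be compared with the principal root $(\mu^m)_b$ inside the single combined argument $\theta+m\psi$, which can exceed $2\pi$ several times; this is exactly why the statement uses the general integer $k$ rather than $k\in\{0,1\}$. A secondary point is justifying that $g_2^*$ commutes with the binomial root. I would handle this by the uniqueness clause of \autoref{lem:binom}, rather than expanding the series term by term.
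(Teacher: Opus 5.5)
Your proof is correct and is essentially the paper's. Both arguments evaluate the two sides on $x^{1/b}$, match the tail factors, and compare $\lambda_b\mu_b^m$ with $(\lambda\mu^m)_b$ to extract $e^{2\pi i k/b}=\prim_b^k$. You justify $g_2^*\bigl((1+h_1)^{1/b}\bigr)=(1+h_1(g_2))^{1/b}$ and $(1+H)^{1/b}=(1+h_2)^{m/b}(1+h_1(g_2))^{1/b}$ through the uniqueness clause of \autoref{lem:binom}, which the paper leaves implicit.

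One correction concerns the order of composition. What your computation actually proves, as does the paper's, is $g_2^*\circ g_1^*=(g_1\circ g_2)^*\circ\prim^{k*}$, with the Galois factor acting first. Your aside that $\prim^{k*}$ commutes with $(g_1\circ g_2)^*$ is false: \autoref{prop:primghom} gives $\prim^{k*}\circ(g_1\circ g_2)^*=(g_1\circ g_2)^*\circ\prim^{kmn*}$. For example, $g_1=-x^2$, $g_2=-x$ (so $k=1$) already distinguishes the two orders. Drop that remark and state the composition order explicitly.
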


The proof is given on page \pageref{appx:prop:nearlyfunctorial} of the \autoref{sec:puiseuxappx}.

\subsection{Galois Actions on the Berkovich Projective Line}\label{sec:galois:galoisonberk}

In this subsection we discuss a version of the Berkovich projective line over the Puiseux series which can only be described by Laurent series. The effect of this is to quotient out by every Galois orbit, see \cite[Corollary 1.3.6]{Berk}. For an excellent account of the below facts, including proofs, see Favre \& Jonsson \cite{FJ04}. The Berkovich projective line $\P^1_\an(\k((x))$ over the formal Laurent series $\k((x))$ is the set of seminorms which act on $\k((x))[y]$ and restrict to $\abs\cdot$ on $\k((x))$. In \cite{FJ04} this is called the (relative) \emph{Valuative Tree} $\V$, the set of semivaluations extending the valuation $\ord_0$. %
We shall view the elements of $\V$ as seminorms to keep notation simple, but the reader should understand that if $\zeta \in \V$ is a seminorm, then the associated semivaluation $v_\zeta$ is defined by $\log_{|x|}(\norm[\zeta]{\phi}) = v_\zeta(\phi)$.

Seminorms in $\P^1_\an(\K)$ are defined on $\hk[y] \supset \k((x))[y]$, therefore for any $\zeta \in \P^1_\an(\hk)$, the restriction $\left.\zeta\right|_{\C((x))[y]}$ is a well defined member of $\V$. In this sense, $\V$ is similar but not identical to $\P^1_\an(\K)$. The extension of valuation $v \in \V$ to $\K$ is not unique, for example %
the values $v(y - x^\half)$ and $v(y + x^\half)$, must sum to $v(y^2 - x)$, but are otherwise unconstrained. 

Another way to make elements of $\P^1_\an(\K)$ agnostic to the extension $\hk[y]$ of $\C((x))[y]$ is to consider %
Galois actions. Indeed let $\omega^* \in G$ and $\phi \in \C((x))[y]$, then
\[\omega_*(\zeta)(\phi) = \zeta(\omega^*(\phi)) = \zeta(\phi)\] since $\phi$ was arbitrary we have \[\left.\omega_*(\zeta)\right|_{\C((x))[y]} = \left.\zeta\right|_{\C((x))[y]}.\]

\begin{prop}\label{prop:GaloisactiononP1}
\begin{align*}
 G\times \P^1_\an(\hk) &\longrightarrow \P^1_\an(\hk)\\
 (\omega^* , \zeta) &\longmapsto \omega_*(\zeta)
\end{align*}
is a group action (technically a right group action by an Abelian group). Elements of the same Galois orbit have the same restriction to $\C((x))[y]$.
\end{prop}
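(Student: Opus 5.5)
The plan is to verify the axioms directly from the definition $\norm[\omega_*(\zeta)]{\phi} = \norm[\zeta]{\omega^*(\phi)}$. Here $\omega^*$ acts on $\hk[y]$ by applying the field automorphism $\omega^*$ of \autoref{prop:rootsauto} to the coefficients and fixing $y$.

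\emph{Well-definedness.} First I would check that $\omega_*(\zeta) \in \P^1_\an(\hk)$ for each $\zeta$. By \autoref{prop:rootsauto}, $\omega^*$ extends to a ring automorphism of $\hk[y]$ fixing $y$. Hence $\phi \mapsto \norm[\zeta]{\omega^*(\phi)}$ is again a multiplicative, non-Archimedean seminorm on $\hk[y]$, since these properties are preserved under precomposition with a ring automorphism. It restricts to $\abs\cdot$ on $\hk$ because $\omega^*$ is isometric: $\norm[\zeta]{\omega^*(a)} = \abs{\omega^*(a)} = \abs a$. To pass from $\A^1_\an$ to $\P^1_\an$, I would observe that $\omega^*$ commutes with the substitution $y \mapsto \frac 1y$, because it acts only on coefficients. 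So the definition is compatible with the gluing chart. I would also set $\omega_*(\infty) = \infty$, as already noted before the statement.

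\emph{Action axioms.} Next I would verify the action axioms using \autoref{prop:rootopsprim}. Since $1_\bullet^* = \id$, the identity acts trivially. For composition, take $\omega^*, \sigma^* \in G$. Then
\[\norm[\sigma_*(\omega_*(\zeta))]{\phi} = \norm[\omega_*(\zeta)]{\sigma^*(\phi)} = \norm[\zeta]{\omega^*\sigma^*(\phi)} = \norm[(\omega\sigma)_*(\zeta)]{\phi},\]
using $(\omega\sigma)^* = \omega^*\sigma^*$. Thus $\sigma_* \circ \omega_* = (\omega\sigma)_*$, which is the composition law of a right action. The step to watch is this order reversal coming from contravariance; I regard it as the only subtle point. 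Because $\omega\sigma = \sigma\omega$, by \autoref{prop:rootopsprim} and \autoref{prop:rootsauto}, the group $G$ is abelian. So the right action is equally a left action. Invertibility then follows from $(\omega^{-1})_* \circ \omega_* = 1_{\bullet *} = \id$.

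\emph{Restriction to $\k((x))[y]$.} Finally, for $\phi \in \k((x))[y]$ we have $\omega^*(\phi) = \phi$, since elements of $G$ fix $\k((x))$. Therefore $\norm[\omega_*(\zeta)]{\phi} = \norm[\zeta]{\phi}$, exactly as computed in the paragraph preceding the statement. It follows that any two points in the same $G$-orbit have the same restriction to $\k((x))[y]$, which completes the proof.
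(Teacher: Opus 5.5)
Your proposal is correct and takes essentially the paper's route. The orbit statement is exactly the paper's one-line computation $\norm[\omega_*(\zeta)]{\phi} = \norm[\zeta]{\omega^*(\phi)} = \norm[\zeta]{\phi}$ for $\phi \in \k((x))[y]$, and the action axioms, which the paper leaves implicit as the contravariance $(\Phi\circ\Psi)_* = \Psi_*\circ\Phi_*$ of \autoref{prop:galois:lowerstarfunctorial} together with $(\omega\sigma)^* = \omega^*\sigma^*$ and commutativity of $G$, are what you verify directly, along with the well-definedness checks.
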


\begin{defn}
 Let $\mathfrak p : \P^1_\an(\hk) \to \V$ be the map $\zeta \mapsto \left.\zeta\right|_{\C((x))[y]}$.\\
 Let $\P^1_\an(\hk)/G$ denote the orbit space associated with the above group action, with the quotient denoted $ \pi : \P^1_\an(\hk) \to \P^1_\an(\hk)/G$ with $\pi (\zeta) = [\zeta]_G$.
\end{defn}

By the above discussion and proposition, we know that if $\xi \in G(\zeta)$ then $\mathfrak p(\zeta) = \mathfrak p(\xi)$, the converse is also true.%

\begin{thm}[Favre \& Jonsson {\cite[Theorem 4.17]{FJ04}}]
  \[
\begin{tikzcd}
 \P^1_\an(\hk) \arrow[two heads]{rd}{\mathfrak p} \arrow[swap, two heads]{d}{\pi}&  \\
\P^1_\an(\hk)/G \arrow[swap]{r}{\Theta} & \V
\end{tikzcd}
\]
where $\Theta$ is an isomorphism.
\end{thm}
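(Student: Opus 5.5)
The plan is to check that $\Theta$ is well defined, surjective and injective, and then upgrade the resulting bijection to a homeomorphism by compactness.

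\emph{Well-definedness.} By \autoref{prop:GaloisactiononP1}, $\mathfrak p$ is constant on $G$-orbits. So $\Theta([\zeta]_G) = \mathfrak p(\zeta)$ is well defined, and it is continuous for the quotient topology. Indeed, for each $\phi \in \k((x))[y]$ the map $\zeta \mapsto \norm[\zeta]{\phi}$ is continuous on $\P^1_\an(\hk)$, and these maps generate the topology of $\V$.

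\emph{Surjectivity.} Given a seminorm $\nu \in \V$, I would extend it to a multiplicative seminorm on $\hk[y]$ restricting to $\abs\cdot$ on $\hk$. This is the standard fact that the base-change map $\P^1_\an(L') \to \P^1_\an(L)$ is surjective for a complete extension $L'/L$. Concretely, let $\mathcal H(\nu)$ be the completed residue field of $\nu$. The completed tensor product $\mathcal H(\nu)\hat\otimes_{\k((x))}\hk$ is a nonzero Banach ring, so it has nonempty Berkovich spectrum. Any point of that spectrum restricts to a point of $\A^1_\an(\hk)$ lying over $\nu$, and the point $\infty$ is handled separately. This is the content of \cite[Corollary 1.3.6]{Berk}, which I would quote rather than reprove.

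\emph{Injectivity.} This is the heart of the matter and the step I expect to be the main obstacle: if $\mathfrak p(\zeta) = \mathfrak p(\xi)$, I must show $\xi \in \Orb_\prim(\zeta)$. I would first handle Type I points, then Type II and III, and Type IV last, as limits.
\begin{itemize}
\item \emph{Type I.} If $\zeta = a \in \K$, then $\mathfrak p(\zeta)$ has kernel generated by the minimal polynomial $P_a$. Hence $\xi$ kills $P_a = \prod_k (y - \prim_*^k a)$, so $\xi$ is one of these conjugates. Points of $\hk \sm \K$ have trivial kernel and are treated together with Type IV by approximation.
\item \emph{Types II and III.} Write $\zeta = \zeta(a,r)$ with $\mm(a) = \mm(\zeta) = m$, using \autoref{prop:multiplicity}. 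For any $b \in \K$, the value
\[\norm[\zeta]{P_b} = \prod_{k} \max\bigl(\abs{\prim_*^k b - a},\, r\bigr)\]
is a symmetric function of the conjugates of $b$, and it is recorded by $\mathfrak p(\zeta)$.
\item \emph{Locating $\xi$.} Take $b = a$. The function $\eta \mapsto \log\norm[\eta]{P_a}$ on the tree is piecewise linear, and its slope drops at the joins with the conjugates $\prim_*^k a$. Comparing $\norm[\xi]{P_a}$ with $\norm[\xi]{P_{a'}}$ for all truncations $a'$ of $a$ should force $\xi \preceq$ some conjugate's point $\zeta(\prim_*^k a, r)$. Here the truncations matter because their minimal polynomials have degrees $\mm(a') \mid m$, by \autoref{prop:multiplicityorder}.
\item \emph{Ruling out points strictly below.} Next test against $P_c$ for $c = a + \lambda x^{s}$ with $s$ just past $r$ and generic $\lambda \in \k$. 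This rules out $\xi$ lying strictly below $\zeta(\prim_*^k a, r)$: such a $\xi$ sits in a single direction, and generic test polynomials detect the drop in norm there. Since $\prim_*$ permutes these points, we get $\xi \in \Orb_\prim(\zeta)$.
\item \emph{Type IV.} These points have infinite multiplicity and are limits of nested disks. A compactness or nesting argument along the sequence $\zeta(a_n,r_n)$ of \autoref{prop:multiplicity}, choosing compatible conjugates at each stage, gives the same conclusion.
\end{itemize}

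\emph{Homeomorphism.} $\P^1_\an(\hk)$ is compact, hence so is the quotient by $G$, and $\V$ is Hausdorff. So the continuous bijection $\Theta$ is a homeomorphism. Since $\prim_*$ is an isometry of the tree, $\Theta$ also respects the tree structure and partial order, giving the asserted isomorphism.
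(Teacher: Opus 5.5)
Your outer architecture (well-definedness, surjectivity from Berkovich's base-change result, injectivity, then compact-to-Hausdorff) is sound. It differs from the paper, which only sketches a proof: $\mathfrak p$ is order preserving, $\Theta$ is bijective on Type I points, and the rest follows by convexity of the tree.

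\textbf{The gap.} It lies in the Type II/III injectivity, which you rightly call the heart of the matter. The ``locating'' step is only asserted (``should force''), and comparing against truncations $a'$ is not what makes it work. More seriously, the ``ruling out'' step fails as described. Suppose $\xi\neq\zeta(a,r)$ lies in a residue direction $D_\an(a+\lambda_0x^s,r)$ at $\zeta(a,r)$, where $r=\abs x^s$. Each conjugate of $c=a+\lambda x^s$ is either $a+\lambda\epsilon x^s$, with $\epsilon$ in a finite set of roots of unity, or lies outside $\CD(a,r)$, because $a$ is the order-$r$ truncation. So for generic $\lambda$ no conjugate $c_j$ lies in $\xi$'s direction. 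One then checks $\norm[\xi]{y-c_j}=\norm[\zeta]{y-c_j}$ for every $j$, hence $\norm[\xi]{P_c}=\norm[\zeta]{P_c}$. Moving the exponent off $s$ only pushes the relevant conjugates into $\vec v(a)$ or out of $\CD(a,r)$, with the same outcome. Generic test polynomials cannot detect a single direction.

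\textbf{The repair.} Both steps can be closed as follows.
\begin{enumerate}
\item Replace $\xi$ by a Galois conjugate, which does not change $\mathfrak p(\xi)$, so that $a$ is a conjugate of $a$ nearest to $\xi$. That is, $t:=\norm[\xi]{y-a}\le\norm[\xi]{y-c_k}$ for all conjugates $c_k$ of $a$.
\item The ultrametric inequality then gives $\norm[\xi]{y-c_k}=\max(t,\abs{a-c_k})$. Hence $\norm[\xi]{P_a}=t\prod_{c_k\neq a}\max(t,\abs{a-c_k})$ is strictly increasing in $t$, while $\norm[\zeta]{P_a}$ is the same expression evaluated at $t=r$.
\item So $t=r$, or $t\le r$ if one only knows $\norm[\xi]{P_a}\le\norm[\zeta]{P_a}$. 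Thus $\xi\in\CD_\an(a,r)\setminus D_\an(a,r)$, which is just $\{\zeta\}$ in the Type III case.
\item In the Type II case, if $\xi\neq\zeta$, take $c=a+\lambda_0x^s$ with $\lambda_0$ the residue of $\xi$'s \emph{own} direction. Since $\abs{a-c}=r$ and $\norm[\xi]{y-c}<r$, every conjugate $c_j$ of $c$ satisfies $\norm[\xi]{y-c_j}\le\max(r,\abs{c-c_j})=\norm[\zeta]{y-c_j}$.
\item The inequality is strict at $c_j=c$, so $\norm[\xi]{P_c}<\norm[\zeta]{P_c}$, a contradiction.
\end{enumerate}

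The ``$\le$'' form of step 3 is exactly what your Type IV argument needs. The sets $\{\omega\in G:\omega_*\xi\in\CD_\an(a_n,r_n)\}$ are nonempty, clopen and nested in the compact group $G$, so they have a common element $\omega$, and $\omega_*\xi=\zeta$.

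Alternatively, you already invoke Berkovich's Corollary~1.3.6 for surjectivity. The paper cites that corollary precisely for the fact that restricting to $\k((x))$ quotients out exactly by Galois orbits. Quoting it in full, on affinoid pieces, would give injectivity without this computation.
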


We take the obvious definition $\Theta([\zeta]_G) = \mathfrak p(\zeta)$. The map has been shown to be well defined and commute already by \autoref{prop:GaloisactiononP1}. %
The idea of the proof is that $\mathfrak p$ is order preserving, and we can use ideas from the next sections to show $\Theta$ is bijective on Type I points; the rest follows by convexity.

\section{Maps}\label{sec:maps}


\subsection{Motivation}

The key idea in our applications is that a skew product $\phi : X \dashto X$ on a ruled surface over $k = \bar k$ induces a non-Archimedean skew product $\phi_*$ on the Berkovich projective line $\P^1_\an$ over the Puiseux series $\K_\k$. 
Moreover, we seek such a correspondence 
which captures the geometry of sections and fibral divisors, where $\phi_*$ represents the action of $\phi$ on divisors and points in (and around) a source and target fibre of $X$.
In \refchapberk{} (specifically in \refsecskew{}) such skew products were discussed but only very briefly.

\begin{minipage}{0.25\textwidth}
 \[
\begin{tikzcd}
 X \arrow[dashed]{r}{\phi} \arrow[swap]{d}{h} & X \arrow{d}{h} \\
 B \arrow[swap]{r}{\phi_1} & B
\end{tikzcd}
\]
\end{minipage}
$\leadsto$\hfill
\begin{minipage}{0.25\textwidth}
   \[
\begin{tikzcd}
 \K(y) & \K(y) \arrow[swap]{l}{\phi^*} \\
\K \arrow[hook]{u} & \K \arrow[hook]{u} \arrow{l}{\phi_1^*}
\end{tikzcd}
\]
\end{minipage}
$\leadsto$\hfill
\begin{minipage}{0.3\textwidth}
 \begin{align*}
 \phi_* : \P^1_\an(\K) &\longrightarrow \P^1_\an(\K)\\
 \zeta &\longmapsto \phi_*(\zeta)\\
 \text{where } \norm[\phi_*(\zeta)]{f} &= \norm[\zeta]{\phi^*(f)}^\q
\end{align*}
\end{minipage}

The advantage being that the Berkovich space carries information about all birational models of $X$ at once (\autoref{sec:space} \& \autoref{sec:corresp}). In \autoref{sec:moremaps} we see by example how to combine the knowledge of mappings in this section with dual graphs to recover geometric information about the rational map $\phi : X \dashto X$ from the non-Archimedean version $\phi_* : \P^1_\an \to \P^1_\an$.

Recall the general notion of a \emph{non-Archimedean skew product} $\Psi_*$ over $K$ from \autoref{sec:ratskewprops}. This requires a good notion of \emph{\equivar{} skew endomorphism} $\Psi$ of $K(y)$, meaning for some fixed \emph{scale factor} $\q$ it extends an automorphism $\Psi_1$ of $K$ such that $\abs{\Psi(a)} = \abs{\Psi_1(a)} = \abs{a}^{\frac 1\q}$ for every $a \in K$. Then we can define the non-Archimedean skew product $\Psi_*$.\\
\begin{minipage}{0.45\textwidth}
   \[
\begin{tikzcd}
 K(y) & K(y) \arrow[swap]{l}{\Psi} \\
K \arrow[hook]{u} & K \arrow[hook]{u} \arrow{l}{\Psi_1}
\end{tikzcd}
\]
\end{minipage}
$\leadsto$\hfill
\begin{minipage}{0.45\textwidth}
 \begin{align*}
 \Psi_* : \P^1_\an(K) &\longrightarrow \P^1_\an(K)\\
 \zeta &\longmapsto \Psi_*(\zeta)\\
 \text{where } \norm[\Psi_*(\zeta)]{f} &= \norm[\zeta]{\Psi(f)}^\q
\end{align*}
\end{minipage}

We want to derive the $\k$-algebra map $\Psi = \phi^*$ from a rational map of ruled surfaces. Classically, a skew product is one of the form \[\phi(x, y) = (\phi_1(x), \phi_2(x, y))\] defined on some product space $B \times C$ such as $\R^2$, $\C^2$ or $\P^1 \times \P^1$.  In further generality, we consider a ruled surface $h : X \to B$ and a skew product to be the following commuting diagram of rational maps and the equivalent algebra maps.
\[
\begin{tikzcd}
 X \arrow[dashed]{r}{\phi} \arrow[swap]{d}{h} & X \arrow{d}{h} \\
 B \arrow[swap]{r}{\phi_1} & B
\end{tikzcd}
\]

Where $\phi(x, y) = (\phi_1(x), \phi_2(x, y))$ on $\P^1 \times \P^1$, $h(x, y) = x$ is simply the first projection map. It is helpful to consider this leading example throughout this \thesisarticle{thesis}{paper}. For instance, $\phi(x, y) = (\phi_1(x), \phi_2(x, y))$ induces a $\k$-algebra homomorphism of function fields.
\begin{align*}
 \phi^* : \k(x, y) & \longrightarrow \k(x, y)\\
x & \longmapsto \phi_1(x)\\
y & \longmapsto \phi_2(x, y)
\end{align*}
After changing coordinates we may assume that $\phi_1(0) = 0$ and look in a neighbourhood of $x=0$ then we obtain a $\k$-algebra map $\Psi = \phi_1^* : \k[[x]] \to \k[[x]]$ which extends to one of the local function field $\phi^* : \k((x))(y) \to \k((x))(y)$. In more algebraic terminology, we took the completion of the local ring $\k[x]_{(x)}$.

  \[
\begin{tikzcd}
 \k((x))(y) & \k((x))(y) \arrow[swap]{l}{\phi^*} \\
\k((x)) \arrow[hook]{u}{h^*} & \k((x)) \arrow[hook, swap]{u}{h^*} \arrow{l}{\phi_1^*}
\end{tikzcd}
\]
This process works in general for an arbitrary base curve $B$ over $\k$. Given $b \in B$, with local ring $\bO_{B, b}$ then its completion is always isomorphic to $\k[[x]]$ by the Cohen Structure Theorem. For background on such completions, we refer to \cite[I.5, II.9]{Hart}. If $\phi_1(b) = c$ then $\phi_1^* : \bO_{B, c} \to \bO_{B, b}$ induces a homomorphism $\phi_1^* : \k[[x']] \to \k[[x]]$. By extension, $\phi$ induces a map $\phi^* : \k((x'))(y) \to \k((x))(y)$ which is a skew endomorphism over $\phi_1^*$.

After taking the algebraic closure of $\k((x))$ to obtain the Puiseux series $\Kk$, this map can be extended to a $\k$-algebra endomorphism $\phi^* : \hk(y) \to \hk(y)$. Doing this carefully is the objective of the following subsections. We can write $\phi_1^*(x) = \phi_1(x) \in \k[[x]]$ with $\phi_1(x) = \lambda x^n + \cdots$ (higher order terms) and $\lambda \in \k^\times$ then this extends to an `\equivar{}' skew endomorphism over $\K$. With $\phi_2 \in \k((x))(y)$ we call such a map a \emph{$\k$-rational skew endomorphism}. Non-Archimedean skew products were defined in larger generality from arbitrary dilating skew endomorphisms in \cite{berkskew}. Here, we take a closer look at the particular case over the Puiseux series.

In the case deriving from a skew product on a ruled surface (after completion of local rings), say $\phi(x, y) = (\phi_1(x), \phi_2(x, y))$, with $\phi_1(x) = \lambda x^n + \cdots$, this non-Archimedean skew product has scale factor $\q = \frac 1n$ and its formula becomes \[\norm[\phi_*(\zeta)]{f(x, y)} = \norm[\zeta]{f(\phi_1(x), \phi_2(x, y))}^{\frac 1n}.\]

\subsection{Rational and Puiseux Skew Products}

\begin{defn}
 Let $\phi^* : \k((x))(y) \to \k((x))(y)$ be a $\k$-algebra endomorphism. Suppose that 
\begin{enumerate}
 \item $\phi^*(x) = \lambda x^n + \bO(x^{n+1}) \in \k((x))$ for some $n \in \N$, $\lambda \in \k^\times$; and
 \item $\phi^*(y) \in \k((x))(y)$
\end{enumerate}
Then we say that $\phi^* : \hk(y) \to \hk(y)$ is a \emph{$\k$-rational skew endomorphism} and that the induced $\phi_* : \P^1_\an(\hk) \to \P^1_\an(\hk)$ is a \emph{$\k$-rational skew product}. We define $\phi_1(x) = \phi^*(x)$ and $\phi_2(y) = \phi^*(y)$. We may denote $\phi$ by $(\phi_1, \phi_2)$ to emphasise this splitting. The \emph{relative degree} of $\phi$ is $\rdeg(\phi) = \deg_y(\phi_2)$.
\end{defn}

Occasionally we may simply refer to $\phi_*$ as a \emph{rational skew product} if the ground (residue) field $\k$ is clear from the context or irrelevant.

\begin{prop}\label{prop:galois:rational}
 Let $\phi^* : \k((x))(y) \to \k((x))(y)$ be a $\k$-rational skew endomorphism. Then indeed $\phi^* : \k((x)) \to \k((x))(y)$ is \adilating{} skew endomorphism which can be extended to $\K(y)$ or $\hk(y)$, and the induced $\phi_* : \P^1_\an \to \P^1_\an$ is a skew product of scale factor $1/n$.
\end{prop}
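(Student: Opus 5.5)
The plan is to construct the extension explicitly, using the endomorphism $g^*$ of \autoref{defn:galois:eqauto} with $g = \phi_1$, and then to verify that it satisfies the definition of a \dilating{} skew endomorphism (\autoref{defn:maps:skew}). First I would write $\phi_1(x) = \lambda x^n + \bO(x^{n+1}) = \lambda x^n(1+h(x))$ with $h \in \k[[x]]$ and $\abs{h}<1$, and fix a sequence of roots $\lambda_\bullet$ with $\lambda_1 = \lambda$. \autoref{prop:primghom}, applied with $\frac1q = n$, then gives a well-defined ring homomorphism $\phi_1^* := g^* : \hk \to \hk$. This map has scale factor $q = \frac 1n$, so that $\abs{\phi_1^*(a)} = \abs a^{n}$ for all $a$. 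It also restricts to $x \mapsto \phi_1(x)$ on $\k((x))$. Because $\phi_1^*$ is an isometry up to the power $n$, and the binomial series of \autoref{lem:binom} preserves complete subfields, it maps $\K$ into $\K$ as well as $\hk$ into $\hk$. For the $\K$ case I would check that $x^{a/b} \mapsto \lambda_b^a x^{na/b}(1+h)^{a/b}$ lands in $\k((x^{1/b}))$, using that $(1+h)^{a/b}$ is a power series in $x$.

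Next, I need $\phi_1^*$ to be an automorphism of $K$ onto itself, since the definition of a skew endomorphism requires $\Psi_1$ to be an automorphism. Injectivity is automatic for a field homomorphism. For surjectivity, I would build an inverse series: since $\phi_1(x) = \lambda x^n(1+h)$, the element $u = \lambda^{1/n}x(1+h)^{1/n}$ satisfies $\phi_1 = u^n$, and $u$ has a compositional inverse in $\k[[x]]$. The inverse of $\phi_1^*$ should then be the analogous map sending $x^{1/n} \mapsto$ a root of the inverse, i.e.\ $x^{a/b} \mapsto (u^{-1}(x^{1/n}))^{a/b}$ expanded binomially. I expect the consistency of root choices in this step to be the main obstacle. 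My first attempt would be to compose the two maps and appeal to \autoref{prop:galois:extensionnearlyunique}: the composite is a continuous extension of the identity on $\k((x))$, so it equals some $\omega^* \in G$, which is bijective. Hence $\phi_1^*$ is surjective.

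Finally, $\phi^*$ is extended to $K(y)$ by sending $y \mapsto \phi_2(y) \in \k((x))(y) \subset K(y)$ and acting by $\phi_1^*$ on coefficients. This is a ring homomorphism from $K(y)$ to $K(y)$ provided $\phi_2$ is nonconstant in $y$ (dominance), so that denominators do not vanish. It agrees with the original $\phi^*$ on $\k((x))(y)$, because $\phi_1^*$ restricts correctly on $\k((x))$. The scale condition $\abs{\phi^*(a)} = \abs a^{n}$ for $a \in K$ holds with scale factor $q = 1/n$. Therefore \autoref{defn:maps:skew} applies, and $\phi_*$ is a skew product of scale factor $1/n$. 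Its extension to $\hk$ follows by continuity in the same way.
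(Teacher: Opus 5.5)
Your proposal is correct and follows the paper's route: extend $\phi_1^*$ by the map $g^*$ of \autoref{prop:primghom}, then keep $y\mapsto\phi_2$ on the generator. Your extra bijectivity step is worthwhile. The paper only asserts that $\phi^*$ restricts to an automorphism of $\k((x))$, which fails for $n>1$ (its image there is $\k((\phi_1(x)))$), whereas bijectivity genuinely holds, and is genuinely needed, on $\K$ and $\hk$. Also, composing as $\psi^*\circ\phi_1^*$ removes your root-choice worry: $\psi^*(\phi_1(x)) = \phi_1(v(x^{1/n})) = x$ for any $n$th root $u$ of $\phi_1$ with $v=u^{-1}$, and injectivity of field homomorphisms then makes both $\psi^*$ and $\phi_1^*$ bijective.
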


\begin{proof}%
 It is easy to see that $\phi^*$ restricts to an automorphism of $\k((x))$ because $\phi^*$ is a $\k$-algebra endomorphism and $\phi^*(x) \in \k((x))$. Hence, recalling the definition from \refsecskew{}, $\phi^*$ is a skew endomorphism of $\k((x))$. By \autoref{cor:galois:fieldisometry}, $\phi_1^*$ can be extended to $\hk$ and it remains true that $\abs{\phi^*(x)} = \abs{x}^n$. It is natural to further extend $\phi^*$ to $\hk(y)$ by retaining the original mapping $y \mapsto \phi^*(y)$. This makes $\phi^* : \hk(y) \to \hk(y)$ \adilating{} skew endomorphism and $\phi_* : \P^1_\an(\hk) \to \P^1_\an(\hk)$ a skew product over $\hk$ with scale factor $1/n$.
\end{proof}

\begin{rmk}
 It is not easy to write down $\k$-rational skew products not deriving from rational maps in $x$ and $y$ and it is even harder to determine which may or may not come from a skew product on surface due to the complexity of any ring isomorphism which turns a completed local ring of a curve over $\k$ into $\k[[x]]$. %
\end{rmk}

We wish to have a less strict definition for certain cases where we may have changed coordinates using Puiseux series. It will turn out that these \emph{Puiseux skew products} have all the same analytical properties of $\k$-rational ones, but do not coherently carry geometric data from a skew product on a surface over $\k$. Similar to \autoref{prop:galois:rational} it is not hard to see that the $\k$-algebra endomorphism described below is in fact \adilating{} skew endomorphism of $\K(y)$ or $\hk(y)$.

\begin{defn}
 Let $\phi^* : \K(y) \to \K(y)$ be a $\k$-algebra endomorphism. Suppose that 
\begin{enumerate}
 \item $\phi^*(x) = \lambda x^n + \bO(x^{n+1}) \in \k[[x]]$ for some $n \in \N$, $\lambda \in \k^\times$; and
 \item $\phi^*(y) \in \K(y)$.
\end{enumerate}
Then we say that $\phi^* : \hk(y) \to \hk(y)$ is a \emph{Puiseux skew endomorphism} and that the induced $\phi_* : \P^1_\an(\hk) \to \P^1_\an(\hk)$ as above is a \emph{Puiseux skew product}. We define $\phi_1(x) = \phi^*(x)$ and $\phi_2(y) = \phi^*(y)$.
\end{defn}

Recall that a non-Archimedean skew product is (almost always) neither a homomorphism nor a Berkovich rational map, but a composition of the two. Worse, the translation of a skew product on a ruled surface to its non-Archimedean counterpart fails to be functorial. However, this issue with functoriality is controlled by the Galois group as laid out in \autoref{thm:nearlyfunctorial}.

\begin{rmk}[Warning]
 If $\phi_*$ is rational, then its inverse acts through $(\phi_1^*)^{-1}$ on $\P^1(\K)$, but beware that if $\phi_1^{-1}(x)$ is well defined over $\k((x))$, then this need not be the same as $(\phi_1^{-1})^*$. In the complex case $\k = \C$ they will agree if $\phi_1'(0) \in \R_{>0}$ and $\phi^*$ is chosen as in \autoref{sec:puiseux}. See the example below and also \autoref{rmk:complexnearlyfunctorial}.
\end{rmk}

\begin{ex}
 Let $g(x) = h(x) = -x$. Then $g^* \circ h^* = \prim^*\circ (h \circ g)^* = \prim^* \ne \id = (h \circ g)^*$ by \autoref{prop:nearlyfunctorial}. Therefore $h_* \circ g_* = \prim_* \ne \id = (h \circ g)_*$. In particular if $\zeta = x^{\frac 1m} \in \P^1(\hk)$ then $\prim_*(\zeta) = (\prim^*)^{-1}(x^{\frac 1m}) = e^{-\frac{2\pi i}{m}} x^{\frac 1m} \ne x^{\frac 1m}$.
\end{ex}

\begin{prop}\label{prop:commwithG}
 Let $\omega^* \in G$ and $\phi_*$ be a $\k$-rational skew product with scale factor $1/n$. Then $\omega^* \circ \phi^* = \phi^* \circ \omega^{*n}$.
\end{prop}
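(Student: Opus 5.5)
Both sides of $\omega^* \circ \phi^* = \phi^* \circ \omega^{*n}$ are $\k$-algebra homomorphisms $\hk(y) \to \hk(y)$. So the plan is to check agreement on generators: on $y$, and on $\hk$. Continuity then reduces the $\hk$ part to the roots $x^{1/m}$.

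\textbf{The generator $y$.} By construction $\phi^*(y) = \phi_2 \in \k((x))(y)$. Galois actions fix $\k((x))$ and fix $y$, since $\omega^*$ acts on $\hk(y)$ coefficientwise. Hence $\omega^*(\phi^*(y)) = \phi_2 = \phi^*(y) = \phi^*(\omega^{*n}(y))$.

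\textbf{The field $\hk$.} The restriction of $\phi^*$ to $\hk$ is $\phi_1^* = g^*$, where $g = \phi_1 = \lambda x^n + \bO(x^{n+1})$, built as in \autoref{defn:galois:eqauto}. This is exactly the setting of \autoref{prop:primghom} with $\frac 1q = n$, whose conclusion is $g^*\omega^{*n} = \omega^* g^*$. So the restriction to $\hk$ is already supplied. I will still recheck it on $x^{1/m}$ to be sure the indexing of roots is right.

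Write $g = \lambda x^n(1+h)$ with $h \in x\k[[x]]$. Then
\[g^*(x^{1/m}) = \lambda_m x^{n/m}(1+h)^{1/m}.\]
Applying $\omega^*$ gives
\[\omega^*(g^*(x^{1/m})) = \lambda_m \omega_m^n x^{n/m}\,\omega^*\!\left((1+h)^{1/m}\right).\]
The factor $(1+h)^{1/m}$ is a binomial series in $h$ with rational coefficients, so it lies in $\k[[x]]$ by \autoref{lem:binom} and is fixed by $\omega^*$. On the other side,
\[g^*(\omega^{*n}(x^{1/m})) = g^*(\omega_m^n x^{1/m}) = \omega_m^n g^*(x^{1/m}),\]
because $(\omega^*)^n$ multiplies $x^{1/m}$ by $\omega_m^n$. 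The two sides agree. Both maps are continuous ring homomorphisms (isometric up to the scale factor), so they agree on all of $\hk$, as in the proof of \autoref{prop:galois:extensionnearlyunique}.

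\textbf{Conclusion.} The two homomorphisms agree on $\hk$ and on $y$, hence on $\hk[y]$, and therefore on the fraction field $\hk(y)$.

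The only genuine point, and the expected obstacle, is the claim that $\omega^*$ fixes $(1+h)^{1/m}$. This depends on $\phi_1 \in \k[[x]]$ having integral exponents, which is exactly the $\k$-rational hypothesis. For a Puiseux skew product, where $\phi_2$ may have fractional exponents, the argument on $y$ would already fail.
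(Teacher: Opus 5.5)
Your proof is correct and is essentially the paper's argument. Both check agreement on $y$, using that $\phi_2\in\k((x))(y)$ is fixed by Galois actions, and on $\hk$, where the needed identity $\omega^*\circ\phi_1^*=\phi_1^*\circ\omega^{*n}$ is exactly the last clause of \autoref{prop:primghom}, and then extend to $\hk(y)$. The only difference is cosmetic: the paper uses the factorisation $\phi^*=\phi_2^*\circ\phi_1^*$ and shows that $\phi_2^*$ commutes with $\omega^*$, whereas you compare the two composites directly on generators; your recheck on $x^{1/m}$ is redundant but correct.
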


\begin{proof}
 $\phi^* = \phi^*_2 \circ \phi_1^*$ and so we only need to prove that $\omega^*$ and $\phi_2^*$ commute since $\omega^* \circ \phi_1^* = \phi_1^* \circ \omega^{*n}$ by \autoref{cor:galois:fieldisometry}. Because $\phi_2 \in \k((x))(y)$, we can write $\phi_2^*(\omega^*(y)) = \phi_2^*(y) = \phi_2(x, y)$ and $\omega^*(\phi_2^*(y)) = \omega^*(\phi_2(x, y)) = \phi_2(\omega^*(x), y) = \phi_2(x, y)$. On the other hand $\phi_2^*|_{\hk} = \id$ and $\omega^*$ is an isomorphism of $\hk$, so they commute. We have shown commutativity on both $\hk$ and the generator $y$, hence also for $\hk(y)$.
\end{proof}

\begin{cor}\label{cor:commwithG}
  Let $\omega^* \in G$ and $\phi_*$ be a $\k$-rational skew product with scale factor $1/n$. Then $\omega_*^n \circ \phi_* = \phi_* \circ \omega_*$
\end{cor}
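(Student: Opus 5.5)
The plan is to dualise \autoref{prop:commwithG} via the contravariant functor $(\cdot)_*$ of \autoref{prop:galois:lowerstarfunctorial}. By \autoref{prop:commwithG} we have the identity of dilating skew endomorphisms $\omega^* \circ \phi^* = \phi^* \circ \omega^{*n}$ on $\hk(y)$. Before applying $(\cdot)_*$, I will check that each factor is dilating with the expected scale factor. The Galois action $\omega^*$ is isometric, so it has scale factor $1$ (\autoref{prop:rootsauto}). The map $\phi^*$ has scale factor $1/n$ (\autoref{prop:galois:rational}). Hence both composites have scale factor $1/n$, and \autoref{prop:galois:lowerstarfunctorial} applies to each side.

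Applying $(\cdot)_*$, the left side gives $(\omega^*\circ\phi^*)_* = \phi_* \circ \omega_*$. The right side gives $(\phi^*\circ\omega^{*n})_* = (\omega^{*n})_*\circ\phi_*$. It then remains to identify $(\omega^{*n})_*$ with $\omega_*^n$. For this I apply functoriality once more to the $n$-fold composite $\omega^*\circ\cdots\circ\omega^*$; all its factors coincide, so reversing their order changes nothing, and we get $(\omega^{*n})_* = (\omega_*)^n$.

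One point needs care: the identification of $\omega_*$ as the skew product induced by $\omega^*$, as opposed to the inverse action convention used in \autoref{sec:galois:multiplicity}. There the notation $\omega_*$ acts on Type I points by $(\omega^*)^{-1}$, which is consistent with \autoref{prop:galois:skew:comp}, where $\Psi_{1*}$ acts as $\Psi_1^{-1}$ on $\P^1(K)$. So $\omega_*$ is exactly the skew product of $\omega^*$, viewed as a skew endomorphism with $\omega^*(y)=y$ and scale factor $1$. The same convention is used in \autoref{prop:GaloisactiononP1}, where $\norm[\omega_*(\zeta)]{f} = \norm[\zeta]{\omega^*(f)}$. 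With this identification the statement follows immediately: $\omega_*^n\circ\phi_* = \phi_*\circ\omega_*$.

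The only potential obstacle is bookkeeping of this convention. As a sanity check, I will verify the identity directly on seminorms. For $f\in\hk(y)$,
\[\norm[\phi_*(\omega_*\zeta)]{f} = \norm[\omega_*\zeta]{\phi^*f}^{1/n} = \norm[\zeta]{\omega^*\phi^* f}^{1/n},\]
while
\[\norm[\omega_*^n\phi_*\zeta]{f} = \norm[\phi_*\zeta]{\omega^{*n}f} = \norm[\zeta]{\phi^*\omega^{*n}f}^{1/n}.\]
These agree by \autoref{prop:commwithG}. This direct computation is short enough that I would present it as the proof itself.
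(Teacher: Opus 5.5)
Your proposal is correct and follows the paper's intended argument. The paper gives no separate proof, and the corollary is exactly \autoref{prop:commwithG}, $\omega^*\circ\phi^* = \phi^*\circ\omega^{*n}$, pushed through the contravariant functor $(\cdot)_*$ of \autoref{prop:galois:lowerstarfunctorial}, with $\omega_*$ the scale-factor-one skew product of $\omega^*$ as in \autoref{prop:GaloisactiononP1}. Your seminorm computation is just that functoriality written out. Citing \autoref{prop:galois:lowerstarfunctorial} is slightly cleaner, because it also covers Type I points such as poles of $\phi_2$, where the raw formula takes the value $\infty$ and needs extra bookkeeping.
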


\begin{thm}\label{thm:galois:extensionnearlyunique}
 Let $\phi^*, \psi^*$ be \dilating{} skew endomorphisms over $\hk$ which are defined and equal over $\k((x))$. Then $\psi^* = \phi^* \circ \omega^*$ and $\psi_* = \omega_* \circ \phi_*$ for some $\omega^* \in G$.
\end{thm}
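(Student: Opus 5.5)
The plan is to split $\phi^*$ and $\psi^*$ into their restrictions to $\hk$ and their values on $y$, compare each part separately, and then pass to the induced maps on $\P^1_\an$ by functoriality.

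\emph{Step 1: the restrictions to $\hk$.} The restrictions $\phi_1^* = \phi^*|_{\hk}$ and $\psi_1^* = \psi^*|_{\hk}$ are continuous field homomorphisms, since they are dilating with some scale factor $\q$. They agree on $\k((x))$, so both are continuous extensions of $g^*$ from $\k((x))$ to $\hk$, where $g = \phi^*(x) = \psi^*(x)$. By \autoref{prop:galois:extensionnearlyunique} we get $\psi_1^* = \phi_1^*\circ\omega^*$ for some $\omega^* \in G$. The proof of that proposition only uses continuity and the fact that $\Psi(x^{1/n})$ is an $n$th root of $g(x)$, so it applies to any continuous extension, in particular to $\psi_1^*$.

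\emph{Step 2: the value on $y$.} Since $\omega^*$ fixes $y$, we have $(\phi^*\circ\omega^*)(y) = \phi^*(y)$. The two maps agree over $\k((x))(y)$, which contains $y$, so $\phi^*(y) = \psi^*(y)$. I read the hypothesis ``defined and equal over $\k((x))$'' as meaning equality on $\k((x))(y)$, i.e.\ that $\phi_2(y) = \psi_2(y)$ lies in $\k((x))(y)$ as in the $\k$-rational setting. Both $\psi^*$ and $\phi^*\circ\omega^*$ are ring homomorphisms of $\hk(y)$. They agree on $\hk$ by Step 1 and on the generator $y$. Hence they agree on $\hk[y]$, and therefore on its fraction field $\hk(y)$. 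This gives $\psi^* = \phi^*\circ\omega^*$.

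\emph{Step 3: the induced maps.} The map $\omega^*$ is dilating with scale factor $1$, because it is isometric by \autoref{prop:rootsauto}. So \autoref{prop:galois:lowerstarfunctorial} gives $(\phi^*\circ\omega^*)_* = \omega_*\circ\phi_*$, and the scale factor is unchanged. Therefore $\psi_* = \omega_*\circ\phi_*$.

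\emph{Main obstacle.} I expect the main difficulty to be in Step 1, namely confirming that $\psi_1^*$ is genuinely continuous. I would deduce this from the dilating property: $\abs{\psi^*(a)} = \abs a^{1/\q}$ implies that $\psi_1^*$ is uniformly continuous for the non-Archimedean norm. A second concern is that the $n$th roots of $g(x)$ in $\hk$ are exactly $\xi^k g^*(x^{1/n})$. This holds by \autoref{prop:primghom}, which already holds for general dilating extensions. Finally, the hypothesis must be read as including agreement on $y$; otherwise the statement fails, since $\phi_2$ could differ from $\psi_2$.
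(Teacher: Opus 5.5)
Your proposal is correct and is essentially the paper's proof. The paper likewise treats $\hk$ via \autoref{prop:galois:extensionnearlyunique}, handles $y$ by noting that $\psi^*(y)=\phi^*(y)\in\k((x))(y)$ is unchanged by any $\omega^*\in G$, and finishes with the contravariant functoriality of $(\cdot)_*$ from \autoref{prop:galois:lowerstarfunctorial}. The one step you compress is harmless: the proposition compares each extension with the fixed $g^*$, giving $\psi_1^*=g^*\circ\omega_1^*$ and $\phi_1^*=g^*\circ\omega_2^*$, and since $G$ is a group you can take $\omega^*=(\omega_2^*)^{-1}\circ\omega_1^*$.
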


\begin{proof}
 We do this separately over $\hk$ and then with $y$. In the latter case, $\psi^*(y) = \phi^*(y) \in \k((x))(y)$, therefore $\psi^*(y) = (\phi^* \circ \omega^*)(y)$ for any $\omega^* \in G$. For the former case, by \autoref{prop:galois:extensionnearlyunique} it suffices to show that $\psi^* = \phi^*$ over $\k((x))$, but this is by hypothesis. So we can find an $\omega^* \in G$ such that $\psi_* = \omega_* \circ \phi_*$ on $\hk$. \refproplowerstarfunctorial{}
 finishes the proof.
\end{proof}

\begin{thm}\label{thm:nearlyfunctorial}
 Let $\phi, \psi$ be $\k$-rational skew products. Then \[\psi_* \circ \phi_* = \omega_* \circ (\psi \circ \phi)_*\]
 for some $\omega^* \in G$.
\end{thm}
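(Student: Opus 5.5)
The plan is to reduce \autoref{thm:nearlyfunctorial} to \autoref{thm:galois:extensionnearlyunique}, using the contravariant functoriality of $(\cdot)_*$ from \autoref{prop:galois:lowerstarfunctorial}. Write $\phi^*, \psi^*$ for the chosen extensions of the $\k$-rational skew endomorphisms to $\hk(y)$, built as in \autoref{prop:galois:rational} via \autoref{defn:galois:eqauto}. Since $\psi\circ\phi = (\psi_1\circ\phi_1, \ldots)$ is again $\k$-rational, it has its own chosen extension $(\psi\circ\phi)^*$, whose first coordinate is $(\psi_1\circ\phi_1)^*$.

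First, by \autoref{prop:galois:lowerstarfunctorial}, the composite $\phi^*\circ\psi^*$ is a dilating skew endomorphism of $\hk(y)$ with scale factor the product of the two scale factors. Moreover $(\phi^*\circ\psi^*)_* = \psi_*\circ\phi_*$. Next, check that $\phi^*\circ\psi^*$ and $(\psi\circ\phi)^*$ agree on $\k((x))(y)$. On $\k((x))$ both restrict to the honest substitution $x \mapsto \psi_1(\phi_1(x))$, since by \autoref{prop:primghom} each $g^*$ restricts to $x\mapsto g(x)$ on $\k((x))$. On $y$, both send $y$ to $\psi_2(\phi_1(x),\phi_2(x,y))$. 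The reason is that $\psi^*(y)=\psi_2(x,y)\in\k((x))(y)$ has Laurent coefficients, on which $\phi^*$ acts by genuine substitution. Both maps are continuous, so this agreement holds on all of $\k((x))(y)$.

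Now apply \autoref{thm:galois:extensionnearlyunique} with the two endomorphisms $\phi^*\circ\psi^*$ and $(\psi\circ\phi)^*$. It gives some $\omega^*\in G$ with $\phi^*\circ\psi^* = (\psi\circ\phi)^*\circ\omega^*$, and hence, by \autoref{prop:galois:lowerstarfunctorial} again,
\[\psi_*\circ\phi_* = (\phi^*\circ\psi^*)_* = \omega_*\circ(\psi\circ\phi)_*,\]
which is the claim.

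The main obstacle is confirming that the hypotheses of \autoref{thm:galois:extensionnearlyunique} are genuinely met: both maps must be dilating and continuous with respect to $\abs\cdot$, and not merely field-theoretic extensions. For the composite this follows from each factor being continuous, as guaranteed by \autoref{prop:primghom}. I would also check the agreement on $\k((x))(y)$ carefully, including that $\phi_1^*$ applied to a Laurent series coefficient of $\psi_2$ really is substitution. If needed, the explicit form of $\omega$ can be read off from \autoref{prop:nearlyfunctorial}, which computes $g_2^*\circ g_1^*$ as a power of $\prim^*$ times $(g_1\circ g_2)^*$; that gives a concrete alternative proof on the $\hk$ part.
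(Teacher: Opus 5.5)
Your proposal is correct and follows essentially the same route as the paper: reduce to \autoref{thm:galois:extensionnearlyunique}, and verify that $\phi^*\circ\psi^*$ and $(\psi\circ\phi)^*$ agree on $\k((x))(y)$ by checking the generators $x$ and $y$. Two further points in your write-up are things the paper's proof leaves implicit: you appeal explicitly to \autoref{prop:galois:lowerstarfunctorial} to turn $\phi^*\circ\psi^* = (\psi\circ\phi)^*\circ\omega^*$ into $\psi_*\circ\phi_* = \omega_*\circ(\psi\circ\phi)_*$, and you note that continuity is what carries agreement on the generators over to arbitrary Laurent coefficients.
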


\begin{proof}
By \autoref{thm:galois:extensionnearlyunique} it suffices to show that $\phi^* \circ \psi^* =  (\psi \circ \phi)^*$ over $\k((x))(y)$, and since these are $\k$-algebra homomorphisms over $\k[[x]]$, we only need to check $x$ and $y$. Indeed
 \[\phi^* \circ \psi^*(x) = \phi^* (\psi_1(x)) = \psi_1(\phi_1(x)) = (\psi \circ \phi)^*(x).\]
 In the case of $y$, we have on one hand that $(\psi \circ \phi)^*(y)$ is the second component of
 \[\psi \circ \phi(x, y) = \psi(\phi_1(x), \phi_2(x, y)) = (\psi_1(\phi_1(x)), \psi_2(\phi_1(x), \phi_2(x, y)))\]
 and on the other
  \[\phi^* \circ \psi^*(y) = \phi^*(\psi_2(x, y)) = \psi_2(\phi^*(x), \phi^*(y)) = \psi_2(\phi_1(x), \phi_2(x, y)).\]
\end{proof}

\begin{rmk}\label{rmk:complexnearlyfunctorial}
 One can be more specific for $\C$-rational skew products using \autoref{prop:nearlyfunctorial}.
\end{rmk}

\begin{prop}
 Let $\phi = (\phi_1, \phi_2)$ be a Puiseux skew product. Then $\phi_{1*} : \P^1_\an \to \P^1_\an$ fixes all directions at $\zeta(0, 1)$. Whence, if $\phi_*(\zeta(0, 1)) = \zeta(0, 1)$, then for every direction $\bvec v$ at $\zeta(0, 1)$, $\phi_\#(\bvec v) = (\phi_2)_\#(\bvec v)$.
\end{prop}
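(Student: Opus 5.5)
We work with the decomposition $\phi_* = \phi_{1*}\circ\phi_{2*}$ from \autoref{prop:galois:skew:comp}. The proposal has two parts: first, $\phi_{1*}$ fixes every direction at $\zeta(0,1)$; second, the claim about $\phi_\#$ then follows by composing tangent maps.

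\textbf{Step 1: $\phi_{1*}$ fixes $\zeta(0,1)$ and all its directions.} Here $\phi_1^*$ acts on $\hk$ by $x^{a/b}\mapsto \lambda_b^a x^{na/b}(1+h(x))^{a/b}$, as in \autoref{defn:galois:eqauto}, and it fixes $y$. Since $\phi_{1*}$ is the unique continuous extension of $(\phi_1^*)^{-1}$ on $\P^1(\K)$, it suffices to check its action on Type I points and disks.

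For $c\in\k$, the element $\phi_1^*(c)=c$ is constant, because $\phi_1^*$ is a $\k$-algebra map. Moreover $\phi_1^*$ maps $\mathcal M_K=\set[a]{\abs a<1}$ into itself, since $\abs{\phi_1^*(a)}=\abs a^n$. Hence $(\phi_1^*)^{-1}$ sends $c+\mathcal M_K$ onto $c+\mathcal M_K$.

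Directions at $\zeta(0,1)$ other than $\vec v(\infty)$ are the open disks $D_\an(c,1)$ with $c\in\k$, and $D_\an(c,1)\cap\P^1(\K)=c+\mathcal M_K$. Thus $\phi_{1*}$ maps the Type I points of each $D_\an(c,1)$ onto themselves. It also preserves the point at infinity and the set $\set{\abs a>1}$, since $\abs{(\phi_1^*)^{-1}(a)}=\abs a^{1/n}$.

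Since $\phi_{1*}$ is a homeomorphism that is order preserving for $\preceq$, it maps convex hulls to convex hulls. Hence it maps $\CD_\an(0,1)$, the hull of $\mathcal O_K$, onto itself, so $\phi_{1*}(\zeta(0,1))=\zeta(0,1)$. By the same hull argument, applied within each disk, it preserves each $D_\an(c,1)$ and $\vec v(\infty)$. Equivalently, we could check directly that $\norm[\phi_{1*}\zeta(0,1)]{y-c}=\norm[\zeta(0,1)]{y-c}=1$ for all $c\in\K$ with $\abs c\le1$, using that $\phi_1^*$ fixes $y$ and the scale factor is $1/n$.

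\textbf{Step 2: the tangent map.} Suppose $\phi_*(\zeta(0,1))=\zeta(0,1)$. By Step 1, $\phi_{1*}$ is a bijection fixing $\zeta(0,1)$, so $\phi_{2*}(\zeta(0,1))=\phi_{1*}^{-1}(\zeta(0,1))=\zeta(0,1)$. The tangent maps then compose as
\[\phi_\#=(\phi_{1*})_\#\circ(\phi_{2})_\#\]
at $\zeta(0,1)$. Indeed, for any direction $\bvec v$ and $\xi\in\bvec v$ close to $\zeta(0,1)$, the point $\phi_{2*}(\xi)$ lies in $(\phi_2)_\#(\bvec v)$, by continuity and local monomiality of $\phi_{2*}$ in a small annulus. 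Then $\phi_{1*}$ keeps it in the same direction, by Step 1. Since $(\phi_{1*})_\#$ is the identity on $\Dir{\zeta(0,1)}$, we conclude $\phi_\#(\bvec v)=(\phi_2)_\#(\bvec v)$.

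\textbf{Main obstacle.} The main obstacle is Step 1 for Puiseux points that are not in $\k((x))$, because $\phi_1^*$ twists roots of $x$ by the chosen sequence of roots $\lambda_\bullet$. The key observation that resolves it is that only the constant term matters for which residue class a point lies in. The twisting by $\lambda_b^a$ and $(1+h)^{a/b}$ affects only terms $x^{na/b}$ of positive order. It therefore never changes the constant coefficient $c$, so the residue class is preserved.
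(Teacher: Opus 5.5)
Your proposal is correct and follows the paper's own argument. The paper's proof only observes that $\phi_{1*}(c)=c$ for every $c\in\P^1(\k)$, and hence $\phi_{1*}(D_\an(c,1))=D_\an(c,1)=\vec v(c)$. You supply the details it leaves implicit: that $\zeta(0,1)$ is itself fixed, and that the tangent maps compose as $\phi_\#=(\phi_{1*})_\#\circ(\phi_2)_\#$.

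One small imprecision: the convex hull of $\mathcal O_K$ omits the Type IV points of $\CD_\an(0,1)$, so it is not literally that disk. This does not matter, because your direct check already shows $\zeta(0,1)$ is fixed: $\phi_{1*}(\zeta(0,1))$ assigns norm $1$ to every $y-c$ with $\abs{c}\le 1$. Alternatively, a homeomorphism fixing $0,1,\infty$ must fix the branch point $\zeta(0,1)$ of the segments joining them.
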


\begin{proof}
 $(\phi_1)_*(c) = c \ \forall c \in \P^1(k)$ and thus $\phi_{1*}(\vec v(c)) = \phi_{1*}(D_\an(c, 1)) = D_\an(c, 1) = \vec v(c)$.
\end{proof}

Hence we also find that $\phi_1$ does not interfere with reduction of $\phi = (\phi_1, \phi_2)$. In fact, if we are careful to write $\phi_2(x, y) = f(y)/g(y) \in \K(y)$ as a fraction of $f, g \in \bO_\K[y]$, two polynomials with respect to $y$ with no common factors, then the reduction of $\phi_*$ to $\overline \phi = \phi_2(0, y)$ is obtained by setting $x = 0$. \[\overline{\phi}(y) = \overline{\phi(x, y)} = \overline{\phi(\phi_1^{-1}(x), y)} = \phi_2(0, y)\]

\begin{cor}[of \autoref{thm:skew:reduction}]\label{cor:skew:reduction}
 Let $\phi = (\phi_1, \phi_2)$ be a $\k$-Puiseux skew product. Then the reduction $\overline{\phi}(y)$ is the same as the rational mapping $y \mapsto \lim_{x\to 0}\phi_2(x, y)$, or simply $\phi_2(0, y)$. In particular, the degree in a fibre is the degree of the reduction.%
 \[\deg(\phi_2(0, y)) = \deg(\overline{\phi})\] 
\end{cor}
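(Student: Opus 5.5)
The plan is to reduce everything to the rational map $\phi_{2*}$ at the Gauss point $\zeta(0,1)$, using the preceding proposition to discard $\phi_{1*}$. Then I would compute the reduction of $\phi_2$ by the standard recipe: normalise coefficients and set $x=0$.

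\textbf{Normalising $\phi_2$.} First write $\phi_2(x,y)=f(y)/g(y)$ with $f,g\in\K[y]$ coprime. Multiply numerator and denominator by a suitable power $cx^{r}$ so that $f,g\in\bO_\K[y]$ and at least one coefficient of $f$ or $g$ has absolute value $1$. Equivalently, $\max(\norm[\zeta(0,1)]{f},\norm[\zeta(0,1)]{g})=1$. This is exactly the normalisation under which the reduction $\overline{\phi_2}=\bar f/\bar g$ is defined, where bars denote coefficientwise reduction modulo $\mathcal M_\K$. Since every coefficient is a Puiseux series in some $\k((x^{1/m}))$, reducing modulo $\mathcal M_\K$ is literally evaluating the coefficients at $x=0$. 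Hence $\bar f(y)=f(0,y)$ and $\bar g(y)=g(0,y)$.

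\textbf{Removing $\phi_1$.} Next I would invoke the proposition just proved: $\phi_{1*}$ fixes $\zeta(0,1)$ and every direction there. So the action of $\phi_*=\phi_{1*}\circ\phi_{2*}$ on $\zeta(0,1)$ and on its directions agrees with that of $\phi_{2*}$. Therefore $\overline\phi=\overline{\phi_2}$, and by \autoref{thm:skew:reduction} the relevant degree is $\deg_{\zeta(0,1)}(\phi)=\deg(\overline{\phi_2})$. Replacing $x$ by $\phi_1^{-1}(x)$ in the coefficients does not change their values at $x=0$, since $\phi_1^{*}$ fixes constants and sends $\mathcal M_\K$ into itself. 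This justifies $\overline{\phi(x,y)}=\phi_2(0,y)$.

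\textbf{Identifying with the limit.} Finally I would show that $y\mapsto\lim_{x\to0}\phi_2(x,y)$ is the rational function $f(0,y)/g(0,y)$. For all but finitely many $y\in\k$, the values $f(0,y)$ and $g(0,y)$ are not both zero, so the limit exists pointwise and equals $\bar f(y)/\bar g(y)$. The degree statement then follows because $\deg(\overline\phi)$ is by definition the degree of $\bar f/\bar g$ after cancelling common factors, which is the degree of the rational map $\phi_2(0,y)$.

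\textbf{Main obstacle.} The delicate point is this normalisation and cancellation. Even though $f$ and $g$ are coprime over $\K$, the reductions $\bar f,\bar g$ may share factors (these are the bad directions $T$), and either may vanish identically, in which case the reduction is constant. I would handle this by defining both sides as the reduced fraction $\bar f/\bar g$ after cancellation, including the degenerate constant case, and noting that both constructions perform the same cancellation. The degree identity $\deg(\phi_2(0,y))=\deg(\overline\phi)$ then holds by construction. The drop $\rdeg(\phi)-\deg(\overline\phi)$ is accounted for by the common roots, consistent with \autoref{thm:skew:reduction}.
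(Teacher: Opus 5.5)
Your proposal is correct and follows essentially the paper's route. You normalise $\phi_2=f/g$ with $f,g\in\bO_\K[y]$ coprime so that reduction amounts to setting $x=0$. You then use the preceding proposition ($\phi_{1*}$ fixes $\zeta(0,1)$ and all its directions, equivalently $(\phi_1^*)^{-1}$ leaves constant terms unchanged) to get $\overline{\phi(\phi_1^{-1}(x),y)}=\phi_2(0,y)$. One harmless slip: the identity $\deg_{\zeta(0,1)}(\phi)=\deg(\overline{\phi_2})$ from \autoref{thm:skew:reduction} requires $\overline{\phi_2}$ to be nonconstant (equivalently $\phi_*(\zeta(0,1))=\zeta(0,1)$), but the corollary never actually uses it.
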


\begin{defn}
 Let $\phi = (\phi_1, \phi_2)$ be a rational skew product such that $\phi_1$ fixes $x=0$ (or `over $0$'). Then \[\phi_* : \V \longrightarrow \V\] acts by \[\phi_*(v)(\phi) = v(\phi^*(\phi)).\]
\end{defn}

$\V$ has the positive quality that the functor $\phi \mapsto \phi_*$ taking a skew product on a ruled surface (in the neighbourhood of a fibre) to a $\k$-rational skew product on $\V$ is faithful. We shall usually extend this to $\P^1_\an(\hk)$ and this is unique up to Galois actions. From now on we consider $\P^1_\an(\K)=\V$ and $\P^1_\an(\hk)/G$ to be the same space, so for example we understand that $[\zeta]_G \in \V$ and $\mathfrak p = \pi$.

\begin{prop}~
\begin{enumerate}
 \item Let $\phi_*$ be a $\k$-rational skew product. Then the following diagram commutes
   \[
\begin{tikzcd}
 \P^1_\an(\hk) \arrow[two heads, swap]{d}{\mathfrak p} \arrow{r}{\phi_*}& \P^1_\an(\hk)\arrow[two heads]{d}{\mathfrak p} \\
 \V \arrow[swap]{r}{\phi_*}& \V
\end{tikzcd}
\]
 meaning that $\phi_*([\zeta]_G) = [\phi_*(\zeta)]_G$, and this representation is well defined.
\item Let $\phi, \psi$ be rational skew products over $0$. Then as maps on $\V$, $(\phi \circ \psi)_* = \phi_* \circ \psi_*$.
\end{enumerate}
 \end{prop}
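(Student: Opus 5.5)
The plan is to work directly with restrictions of seminorms to $\k((x))[y]$, using two facts already available. First, a $\k$-rational skew endomorphism $\phi^*$ maps $\k((x))(y)$ into itself. Second, \autoref{cor:commwithG} controls the interaction with the Galois group. Throughout, the action on $\V$ is read with the same normalisation as on $\P^1_\an(\hk)$, namely $\norm[\phi_*(v)]{f}=\norm[v]{\phi^*(f)}^{\q}$, so that the image of $v$ again restricts to $\abs\cdot$ on $\k((x))$. In valuative language this is the scaling $v\mapsto \tfrac1n v(\phi^*(\cdot))$.

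\textbf{Part (1).} I would first show commutativity of the diagram.
\begin{enumerate}
\item Take $\zeta\in\P^1_\an(\hk)$ and $f\in\k((x))[y]$. By definition, $\norm[\phi_*(\zeta)]{f}=\norm[\zeta]{\phi^*(f)}^{\q}$.
\item Since $\phi_1\in\k[[x]]$ and $\phi_2\in\k((x))(y)$, the element $\phi^*(f)$ lies in $\k((x))(y)$. Write it as $P/Q$ with $P,Q\in\k((x))[y]$.
\item By multiplicativity, $\norm[\zeta]{\phi^*(f)}=\norm[\zeta]{P}/\norm[\zeta]{Q}$. This depends only on $\mathfrak p(\zeta)$, so $\mathfrak p(\phi_*(\zeta))=\phi_*(\mathfrak p(\zeta))$.
\end{enumerate}

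Well-definedness of $[\zeta]_G\mapsto[\phi_*(\zeta)]_G$ then follows in either of two ways. One is the diagram together with the bijection $\Theta$ of Favre--Jonsson. The other is direct: by \autoref{cor:commwithG}, $\phi_*(\omega_*(\zeta))=\omega_*^n(\phi_*(\zeta))$, and this lies in the $G$-orbit of $\phi_*(\zeta)$.

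\textbf{Part (2).} On $\k((x))(y)$ the pullbacks compose honestly: $(\phi\circ\psi)^*=\psi^*\circ\phi^*$. This is exactly the computation on the generators $x,y$ in the proof of \autoref{thm:nearlyfunctorial}, which needs no Puiseux extension. Hence
\[
\norm[(\phi\circ\psi)_*(v)]{f}=\norm[v]{\psi^*(\phi^*(f))}^{\q\q'}=\norm[\psi_*(v)]{\phi^*(f)}^{\q}=\norm[\phi_*(\psi_*(v))]{f}.
\]
The middle step uses that $\phi^*(f)\in\k((x))(y)$, so $\psi_*(v)$ may legitimately be evaluated on it. The scale factors multiply as in \autoref{prop:galois:lowerstarfunctorial}.

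As a cross-check, the same identity follows from \autoref{thm:nearlyfunctorial}. On $\P^1_\an(\hk)$ the two sides differ by a Galois element $\omega_*$. By \autoref{prop:GaloisactiononP1}, $\omega_*$ is invisible after applying $\mathfrak p$, and part (1) lets us push the identity down to $\V$.

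\textbf{Main obstacle.} The step needing the most care is evaluating seminorms on rational functions rather than polynomials. When $\zeta$ is a Type I point at which $Q$ vanishes, $\norm[\zeta]{Q}=0$. I would handle this in one of two ways:
\begin{itemize}
\item For Type I points, use the projective chart, that is, evaluate a $\P^1$-valued value, so that such poles are harmless.
\item Alternatively, argue for $\zeta\in\bH$, where $\norm[\zeta]{Q}>0$, and extend to the remaining points by density of $\bH$ and continuity of $\phi_*$ and $\mathfrak p$ (\autoref{thm:galois:skew:ultimatemonoannuloid}'s ambient continuity result \refthmskewopencts{}).
\end{itemize}
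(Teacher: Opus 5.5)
Your proof is correct and is essentially the paper's argument: part (1) is the definition of $\phi_*$ plus \autoref{prop:commwithG} (via \autoref{cor:commwithG}), and part (2) rests on the composition identity on $\k((x))(y)$ that underlies \autoref{thm:nearlyfunctorial}, where working directly on $\V$ lets you skip the Galois correction $\omega_*$ that the paper's route via \autoref{thm:nearlyfunctorial} must then remove with $\mathfrak p$. You are also right to include the exponent $\q$ in the action on $\V$, since the paper's displayed formula $\phi_*(v)(f)=v(\phi^*(f))$ omits this normalisation, and without it the diagram cannot commute once $\q\neq 1$.
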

 
\begin{proof}
 Apply the definition of $\phi_*$ with \autoref{prop:commwithG} and \autoref{thm:nearlyfunctorial}.
\end{proof}

On $\V$ we can put the same weak topology as $\P^1_\an$; $v_j \to v$ iff $v_j(f) \to v(f)\ \forall f$. In fact by defining the open sets to be images of open sets $U$ under $\mathfrak p$, we give it the weakest topology making $\mathfrak p$ continuous. Furthermore, we define an (open/closed) affinoid to be the image of an (open/closed) affinoid in $\P^1_\an$. Hence for any (topologically) connected affinoid $U \subseteq \V$, we have that $\mathfrak p^{-1}(U)$ is the union of a Galois orbit of some connected affinoid $W \subseteq \P^1_\an$.

\subsection{Mappings and multiplicity}\label{sec:galois:mappingsandmult}

We've talked about multiplicity, its subtrees, and mappings by skew products, but now it's time to put the three together. The following proposition is useful in applications, but we intend to prove a generalisation of it. The proof is an exercise using \autoref{cor:commwithG}.

\begin{thm}\label{thm:multiplicitydivides}
 Let $\phi_*$ be a simple rational skew product, and $\zeta \in \P^1_\an$. Then $\mm(\phi_*(\zeta)) \mid \mm(\zeta)$. Hence $\phi_*(\T_m) \subseteq \T_m$ for any $m \in \N_+$.
\end{thm}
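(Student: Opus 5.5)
Since $\phi_*$ is simple, the scale factor is $\q = 1$, i.e.\ $n = 1$ in \autoref{cor:commwithG}. That corollary then says that $\phi_*$ commutes with every Galois action:
\[\omega_* \circ \phi_* = \phi_* \circ \omega_* \qquad \text{for every } \omega^* \in G.\]
The plan is to use only this equivariance together with the definition of multiplicity as an orbit length, $\mm(\zeta) = |\Orb_\prim(\zeta)|$. The argument is the standard one: an equivariant map sends orbits onto orbits, so image orbits cannot be longer.

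First I would dispose of the case $\mm(\zeta) = \infty$, which is vacuous under the convention of \autoref{prop:multiplicityorder} that everything divides $\infty$. So assume $m = \mm(\zeta) \in \N_+$, so that $\prim_*^m(\zeta) = \zeta$. Applying the equivariance with $\omega = \prim$ repeatedly gives
\[\prim_*^m(\phi_*(\zeta)) = \phi_*(\prim_*^m(\zeta)) = \phi_*(\zeta).\]
The orbit of $\phi_*(\zeta)$ under the cyclic action generated by $\prim_*$ has length equal to the least $k \ge 1$ with $\prim_*^k(\phi_*(\zeta)) = \phi_*(\zeta)$. The set of such $k$ (together with their negatives and $0$) is a subgroup of $\Z$, so this least $k$ divides $m$. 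Hence $\mm(\phi_*(\zeta)) \mid \mm(\zeta)$. The one point to check is that $\prim_*$ generates the whole $G$-orbit, so that the orbit length really is $\mm$; this is provided by the proposition $\mm(\zeta) = |\Orb_G(\zeta)|$ stated after \autoref{prop:multiplicity}.

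For the second claim, take $\zeta \in \T_m$, so $\mm(\zeta) \mid m$. By transitivity of divisibility, $\mm(\phi_*(\zeta)) \mid m$, and therefore $\phi_*(\zeta) \in \T_m$.

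The only genuine subtlety is verifying that \autoref{cor:commwithG} applies with exponent $n = 1$. That requires the extension $\phi_1^*$ to be the one built in \autoref{prop:primghom}, for which $g^*\omega^{*n} = \omega^* g^*$. If $\phi_*$ were obtained from a different extension, \autoref{thm:galois:extensionnearlyunique} says it differs from this one only by post-composition with some $\omega_*$. Since $\omega_*$ is a Galois action, it preserves $\mm$, because orbit lengths are invariant under the abelian group action. So the conclusion is unaffected.
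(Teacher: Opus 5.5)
Your proof is correct and is the argument the paper intends. The paper leaves this theorem as an exercise using \autoref{cor:commwithG} with $n=1$, and its proof of the generalisation \autoref{thm:multiplicitydividesadv} runs exactly as yours does: show that a power of $\prim_*$ fixes $\phi_*(\zeta)$, then conclude that the orbit length divides it. Your check that $\prim_*$ generates the whole $G$-orbit is unnecessary, since $\mm$ is defined directly as the length of the $\prim_*$-orbit, and your closing remark about other extensions is correct but also not needed, because the paper's $\k$-rational skew products are built from the extension in \autoref{prop:primghom}.
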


 If we improve \autoref{cor:commwithG} then we can also drastically improve this result and expand it to non-rational simple skew products. First we make some definitions which will be useful throughout this document, then we will rewrite \ref*{cor:commwithG}.%

In the previous sections we avoided analysing how $G$ acts on $\hk(y)$, but there is only a little more to it. A Galois automorphism $\omega^* \in G$ acts trivially on $y$, so we can easily compute multiplicity by studying coefficients. %
We go further by copying most of \autoref{prop:multiplicitydefn}.

\begin{defn}
 Let $\phi \in \hk(y)$ then we define the \emph{multiplicity of $\phi$}, $\mm(\phi) = |\Orb_\prim(\phi)|$. Suppose that $\phi_* = \phi_{1*} \circ \phi_{2*}$ is a skew product over $\hk$. %
 We define the \emph{multiplicity of $\phi_*$}, $\mm(\phi_*)$ or less formally $\mm(\phi)$, to be $\mm(\phi_2)$ where $\phi^*(y) = \phi_2 \in \hk(y)$.
\end{defn}

\begin{prop}\label{prop:multiplicitywithy}
 Let $\phi \in \hk(y)$ and $m = \mm(\phi)$.
 \begin{enumerate}[label=(\roman*)]
  \item Write
 \[\phi(y) = \left(\sum_{k = 1}^M a_k y^k\right) / \left(\sum_{k = 1}^N b_k y^k\right).\] Then 
 $m = \LCM(\mm(a_k)_{k=1}^M \cup \mm(b_k)_{k=1}^N)$.
 \item $m$ is finite if and only if $\phi \in \K(y)$.
 \item $m$ is the smallest integer such that $\phi \in \k((x^{\frac{1}{m}}))(y) < K(y)$, or $\infty$ otherwise.
 \item $m = |\Orb_G(\phi)|$ and $\Orb_G(\phi) = \Orb_\prim(\phi)$.
\end{enumerate}
In particular, a skew product $\phi_*$ over $\hk$ is Puiseux iff it has finite multiplicity, $\mm(\phi) < \infty$.
\end{prop}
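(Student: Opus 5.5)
The approach is to transport each part of \autoref{prop:multiplicitydefn} from coefficients in $\hk$ to rational functions in $\hk(y)$. This works because every $\omega^* \in G$ fixes $y$ and acts on $\hk(y)$ coefficientwise.

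The first step is to fix a normal form for $\phi$. I would write $\phi = f/g$ with $f, g \in \hk[y]$ coprime and normalised so that the leading coefficient of $g$ is $1$. This normalisation makes the representation unique, which is needed for a statement about the coefficients $a_k, b_k$ to be well defined; I will read part (i) as referring to this normal form. Since $\omega^*$ is a field automorphism fixing $y$, we have $\omega^*(\phi) = \omega^*(f)/\omega^*(g)$, and this is again coprime with monic denominator. By uniqueness of the normal form, $\omega^*(\phi) = \phi$ holds if and only if $\omega^*(a_k) = a_k$ and $\omega^*(b_k) = b_k$ for every $k$.

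For part (i), apply this with $\omega^* = (\prim^*)^j$. Then $\phi$ is fixed by $(\prim^*)^j$ exactly when every coefficient is. By \autoref{prop:multiplicitydefn}(iv), a coefficient $c$ is fixed by $(\prim^*)^j$ exactly when $\mm(c) \mid j$. Hence the orbit of $\phi$ under $\prim^*$ has size $\LCM$ of the coefficient multiplicities, which is $\infty$ if any coefficient has infinite multiplicity. This gives (i).

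Parts (ii) and (iii) then follow from (i) together with \autoref{prop:multiplicitydefn}(iii). If every coefficient lies in $\k((x^{1/m_k}))$, then all of them lie in $\k((x^{1/m}))$ with $m = \LCM(m_k)$, so $\phi \in \k((x^{1/m}))(y)$. Conversely, if $\phi \in \k((x^{1/n}))(y)$, then its normal form can be computed inside that field: the gcd and the monic normalisation do not leave the field. So each $\mm(a_k), \mm(b_k)$ divides $n$, and hence $m \mid n$. For (ii), finite multiplicity therefore means $\phi$ lies in some $\k((x^{1/m}))(y) \subset \K(y)$. Conversely, an element of $\K(y)$ has finitely many coefficients, each with finite multiplicity.

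For part (iv), I would use \autoref{prop:rootsauto}: every element of $G$ is $\omega^*$ for some sequence of roots of unity $\omega$. By the coefficientwise description, $\omega^*(\phi)$ is determined by $\omega_n$ for $n \mid m$ only. Any consistent choice of these is a power of $\prim_m$, since $\omega_m$ is an $m$-th root of unity and $\omega_n = \omega_m^{m/n}$. So $\omega^*(\phi) = (\prim^*)^j(\phi)$ for some $j$, which gives $\Orb_G(\phi) = \Orb_\prim(\phi)$. The final claim about Puiseux skew products is then (ii) applied to $\phi_2$.

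The main obstacle is the normal-form uniqueness step: the coefficients of a rational function are only defined up to common scaling, so the claim has to be stated with the monic normalisation. Once that is fixed, the rest is routine.
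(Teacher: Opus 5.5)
Your proposal is correct and follows the route the paper intends. The paper gives no separate proof; it only remarks that $G$ fixes $y$, so multiplicity can be read off coefficientwise from \autoref{prop:multiplicitydefn}. Your coprime/monic normalisation is a genuine repair, not pedantry: as literally written, (i) fails for non-reduced representations such as $1 = x^{1/2}/x^{1/2}$, where the LCM is $2$ but $\mm(1)=1$.

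One caveat concerns (iv). Your argument (``$\omega^*(\phi)$ is determined by $\omega_n$ for $n\mid m$'', ``a power of $\prim_m$'') only makes sense when $m<\infty$. For $m=\infty$, the cardinality claim $|\Orb_G(\phi)|=\infty$ is immediate from $\Orb_\prim(\phi)\subseteq\Orb_G(\phi)$. However, the set equality $\Orb_G(\phi)=\Orb_\prim(\phi)$ is actually false in that case. Take $\phi=\gamma=\sum_{n\ge 1}x^{n+2^{-n}}\in\hk$. Then $\omega^*(\gamma)=\sum_n \omega_{2^n}x^{n+2^{-n}}$, so the $G$-orbit is indexed by all compatible sequences of $2$-power roots of unity, of which there are uncountably many. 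The $\prim$-orbit, by contrast, is countable. So you should state the second clause of (iv) only for finite $m$; the same caveat applies to \autoref{prop:multiplicitydefn}(iv).

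Relatedly, the closing equivalence via (ii) applied to $\phi_2$ only detects the condition $\phi^*(y)\in\K(y)$. The requirement on $\phi_1$ in the definition of a Puiseux skew product is invisible to $\mm(\phi)$, so it has to be assumed.
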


\begin{prop}
 Let $\phi = (\phi_1, \phi_2)$ be a skew product of multiplicity $\mm(\phi) < \infty$ and scale factor $1/n$. If $\omega^* \in G$, then $\phi^* \circ \omega^{n*} = \omega^* \circ \phi^*$\ if and only if\ $\omega_{\mm(\phi)} = 1$. In particular, $\prim^{*M} \circ \phi^* = \phi^* \circ \prim^{*nM}$\ if and only if\ $\mm(\phi) \mid M$.
\end{prop}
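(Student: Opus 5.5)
We reduce the commutation question for $\phi^* = \phi_2^* \circ \phi_1^*$ to two pieces and handle each separately. By \autoref{cor:galois:fieldisometry} (i.e.\ \autoref{prop:primghom}) we already know $\omega^* \circ \phi_1^* = \phi_1^* \circ \omega^{*n}$ on $\hk$ for every $\omega^* \in G$. Both sides of the desired identity $\phi^* \circ \omega^{n*} = \omega^* \circ \phi^*$ are ring homomorphisms of $\hk(y)$, so it suffices to compare them on $\hk$ and on the generator $y$.

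On $\hk$ they always agree. Here $\phi^*|_{\hk} = \phi_1^*$, since $\phi_2^*$ is the identity on $\hk$. Hence the condition is exactly the one from \autoref{prop:primghom}.

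On $y$ the two sides give
\[\phi^*(\omega^{n*}(y)) = \phi^*(y) = \phi_2(y), \qquad \omega^*(\phi^*(y)) = \omega^*(\phi_2(y)),\]
where $\omega^*$ acts only on the coefficients of $\phi_2$. So the identity holds if and only if $\omega^*$ fixes $\phi_2 \in \K(y)$.

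The main step is to show that $\omega^*$ fixes $\phi_2$ if and only if $\omega_{\mm(\phi)} = 1$. We use \autoref{prop:multiplicitywithy}(iii): $\phi_2 \in \k((x^{1/m}))(y)$ with $m = \mm(\phi)$ minimal.
\begin{itemize}
\item If $\omega_m = 1$, then for each $j$ the root $\omega_{mj}$ of unity raised to the power $j$ equals $1$. So $\omega^*$ acts trivially on every $x^{k/m}$, hence on all coefficients, and $\phi_2$ is fixed.
\item Conversely, suppose $\omega^*$ fixes $\phi_2$. Normalise $\phi_2$ as a reduced fraction with a monic leading coefficient, so that its coefficients are uniquely determined; then $\omega^*$ fixes each coefficient. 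By \autoref{prop:multiplicitywithy}(i), $m = \LCM$ of the coefficient multiplicities, and by \autoref{prop:multiplicitydefn}(ii) each coefficient multiplicity is the $\LCM$ of the denominators of the exponents appearing in it.
\item Fix one coefficient. Since it is fixed by $\omega^*$, each exponent $p/q$ in it (in lowest terms) satisfies $\omega_q^p = 1$. Since $\gcd(p,q)=1$ and $\omega_q$ is an $q$th root of unity, this gives $\omega_q = 1$.
\item Combining over all exponents gives $\omega_{\mathrm{lcm}} = 1$ for the $\LCM$ of all these denominators. We use the compatibility $\omega_{ab}^a = \omega_b$ to pass between indices. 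The cleanest route is to note that the subgroup $\{\omega : \omega_q = 1\}$ corresponds to fixing $\k((x^{1/q}))$, and intersecting these fixed-field conditions over the relevant $q$ yields fixing of the compositum $\k((x^{1/m}))$, i.e.\ $\omega_m = 1$.
\end{itemize}

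For the final claim, take $\omega = \prim^M$, whose $m$th entry is $\prim_m^M$. This equals $1$ exactly when $m \mid M$, since $\prim_m$ is primitive.

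The expected obstacle is the careful normalisation making $\omega^*$-invariance of $\phi_2$ equivalent to invariance of each coefficient. The difficulty is that rational functions have non-unique representations, so we must fix the monic reduced form before comparing coefficients.
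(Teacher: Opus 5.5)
Your proof is correct and follows the paper's route: split $\phi^* = \phi_2^* \circ \phi_1^*$, get agreement on $\hk$ from \autoref{prop:primghom}, reduce to whether $\omega^*$ fixes $\phi_2 \in \k((x^{1/m}))(y)$, and prove the direction $\omega_m = 1 \Rightarrow$ commutation exactly as the paper does. The only difference is in the converse: the paper writes $\omega^*(\phi_2) = \prim^{*k}(\phi_2)$ with $0 \le k < m$ (using that $G$-orbits equal $\prim^*$-orbits) and appeals to minimality of the orbit size $m$, whereas you fix the reduced representation with monic denominator, read off $\omega_q = 1$ for every exponent denominator $q$, and conclude $\omega_m = 1$ via the LCM, which is equally valid and somewhat more explicit.
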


\begin{proof}
  Take a decomposition $\phi^* = \phi^*_2 \circ \phi_1^*$ so that $m = \mm(\phi) = \mm(\phi_2)$ and $\phi_1$ has scale factor $1/n$. Then $\omega^* \circ \phi_1^* = \phi_1^* \circ \omega^{n*}$ by \autoref{cor:galois:fieldisometry} and $\phi_2^*(\omega^*(y)) = \phi_2^*(y) = \phi_2 \in \k((x^\frac 1m))(y)$ by \autoref{prop:multiplicitywithy}. First we show that $\omega^*$ and $\phi_2^*$ commute by looking at the action on $\hk$ and then on $y$, hence determining $\hk(y)$ altogether. This is trivial on $\hk$ since $\phi_2^*|_{\hk} = \id$. Next, consider that $\omega^*(x^\frac km) = \omega_m^k x^\frac km = x^\frac km$, therefore $\omega^*$ restricted to $\C((x^\frac 1m))$ is the identity; whence $\omega^*(\phi_2) = \phi_2$. Now\[\omega^*(\phi_2^*(y)) = \omega^*(\phi_2) = \phi_2 = \phi_2^*(y) = \phi_2^*(\omega^*(y))\] shows the commutativity on $y$. Now, \[\omega^* \circ \phi^* = \omega^* \circ \phi^*_2 \circ \phi_1^* = \phi^*_2 \circ \omega^* \circ \phi_1^* = \phi^*_2 \circ \phi_1^* \circ \omega^{n*} = \phi^* \circ \omega^{n*}.\] 
  
 Conversely, consider what happens if $\omega_m \ne 1$. By \autoref{prop:multiplicitywithy}, $\omega^*(\phi_2) = \prim^{*k}(\phi_2)$ for some $0 \le k < m$, and so by minimality of $m$, we must have $\omega^*(\phi_2) \ne \phi_2$. Therefore
 \[\omega^*(\phi_2^*(y)) = \omega^*(\phi_2) \ne \phi_2 = \phi_2^*(y) = \phi_2^*(\omega^*(y)),\] and so $\phi_2^*$ and $\omega^*$ do not commute as witnessed by $y$. Because $\phi_1^*$ is an isomorphism, this implies that $\phi^* \circ \omega^{n*} \ne \omega^* \circ \phi^*$ by the following chain of relations.
 \[\omega^* \circ \phi^*= \omega^* \circ \phi^*_2 \circ \phi_1^* \ne \phi^*_2 \circ \omega^* \circ \phi_1^* = \phi^*_2 \circ \phi_1^* \circ \omega^{n*} = \phi^* \circ \omega^{n*}\]
  
  Finally, $(\prim^k)_m = e^{k\frac{2\pi i}{m}}$ so $\prim^{*k}$ commutes with $\phi^*$ if and only if $m \mid k$.
\end{proof}

\begin{cor}\label{cor:puiseuxcommwithG}
 Let $\phi_*$ be a Puiseux skew product of multiplicity $m = \mm(\phi)$ and scale factor $1/n$. If $\omega^* \in G$, then $\phi_* \circ \omega_* = \omega_*^n \circ \phi_*$\ if and only if\ $\omega_m = 1$. In particular, $\phi_* \circ \prim_*^k = \prim_*^{nk} \circ \phi_*$\ if and only if\ $m \mid k$.
\end{cor}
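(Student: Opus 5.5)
The statement to prove is \autoref{cor:puiseuxcommwithG}. It is the pushforward version of the preceding proposition, which says that $\phi^* \circ \omega^{n*} = \omega^* \circ \phi^*$ if and only if $\omega_m = 1$. My plan is to transfer that identity between pullbacks and pushforwards using contravariant functoriality, \autoref{prop:galois:lowerstarfunctorial}.

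\textbf{Converting the condition.} Every $\omega^* \in G$ is a dilating skew endomorphism with scale factor $1$, and it is an isometric isomorphism (\autoref{prop:rootsauto}). Write $\omega_*$ for its induced map, which is a simple skew product. Because $(\cdot)_*$ is contravariant, $(\phi^*\circ\omega^{n*})_* = \omega^n_* \circ \phi_*$ and $(\omega^*\circ\phi^*)_* = \phi_*\circ\omega_*$. Here we use $(\omega^{n*})_* = (\omega_*)^n$, which follows from functoriality together with $\omega^{*n} = (\omega^n)^*$ from \autoref{prop:rootopsprim}. Hence equality of the endomorphisms gives $\phi_*\circ\omega_* = \omega_*^n\circ\phi_*$, which proves the ``if'' direction.

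\textbf{Converse.} For ``only if'' I need injectivity of $\Psi \mapsto \Psi_*$ on these particular maps. \autoref{prop:galois:skew:comp} gives a one-to-one correspondence between dilating skew endomorphisms and decompositions $\Psi_{1*}\circ\Psi_{2*}$. In characteristic $0$ I expect the skew product to determine $\Psi$ outright; the warning remark attributes failure of uniqueness only to Frobenius in positive characteristic.

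If I want to avoid relying on that, I would argue concretely. Both sides are skew products of the same scale factor $1/n$. On $\hk$ they agree on Type I points exactly when the first factors agree, since they act as $(\Psi_1)^{-1}$ there. So suppose $\omega_m \ne 1$. The proposition's proof gives $\omega^*(\phi_2)\ne\phi_2$ while the first factors coincide. The two pushforwards then act on $\P^1(\hk)$ as $(\phi_1^*)^{-1}$ followed by the rational maps $\phi_2$ and $\omega^*(\phi_2)$ respectively, after conjugating by $\omega$. Two distinct rational functions over $\hk$ differ at some Type I point, since they agree at only finitely many points. Therefore the pushforwards differ, which contradicts the assumed equality.

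\textbf{Particular case.} For the ``in particular'' statement, apply the result to $\omega = \prim^k$. This gives $(\prim^k)_m = \prim_m^k$, which equals $1$ exactly when $m \mid k$, and $(\prim^k)_* = \prim_*^k$ by functoriality.

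The main obstacle is the converse, namely that unequal pullbacks yield unequal pushforwards. I would handle it with the Type I point evaluation above, using \refthmskewopencts{} to identify the action on $\P^1(\hk)$.
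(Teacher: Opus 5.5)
Your proposal is correct and follows the paper's route. The paper states this corollary without proof, intending it to follow from the preceding proposition by pushing the identity $\phi^*\circ\omega^{n*}=\omega^*\circ\phi^*$ through the contravariant functor $(\cdot)_*$, using $(\omega^{n*})_*=\omega_*^n$.

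Your explicit argument for the ``only if'' direction supplies the injectivity of $\Psi\mapsto\Psi_*$ that the paper leaves unstated. Both endomorphisms share the first factor $\omega^*\circ\phi_1^*=\phi_1^*\circ\omega^{n*}$, so their pushforwards on Type I points differ exactly when the rational functions $\phi_2$ and $\omega^*(\phi_2)$ differ.
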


\begin{thm}\label{thm:multiplicitydividesadv}
 Let $\phi_*$ be a Puiseux skew product with scale factor $1/n$, and multiplicity $m = \mm(\phi)$. If $\zeta \in \P^1_\an$, then $\mm(\phi_*(\zeta))$ divides $n \cdot \LCM(\mm(\zeta), \mm(\phi))$. Hence $\phi_*(\T_N) \subseteq \T_{nM}$ where $M = \LCM(\mm(\phi), N)$.
\end{thm}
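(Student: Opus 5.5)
The plan is to use the commutation relation of \autoref{cor:puiseuxcommwithG} with $k = \LCM(\mm(\zeta), \mm(\phi))$ and read off a period of $\phi_*(\zeta)$ under $\prim_*$. Recall that $\mm(\xi) = |\Orb_\prim(\xi)|$, so $\mm(\xi)$ divides an integer $N$ exactly when $\prim_*^N(\xi) = \xi$. If $\mm(\zeta) = \infty$ there is nothing to prove (everything divides $\infty$ in the order $<_m$), so we assume $\mm(\zeta) < \infty$. Since $\phi$ is Puiseux, $m = \mm(\phi) < \infty$ by \autoref{prop:multiplicitywithy}.

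Put $L = \LCM(\mm(\zeta), m)$. Because $m \mid L$, \autoref{cor:puiseuxcommwithG} gives
\[\phi_* \circ \prim_*^L = \prim_*^{nL} \circ \phi_*.\]
Because $\mm(\zeta) \mid L$, we have $\prim_*^L(\zeta) = \zeta$. Applying the displayed identity to $\zeta$ yields $\prim_*^{nL}(\phi_*(\zeta)) = \phi_*(\zeta)$. Hence $\mm(\phi_*(\zeta))$ divides $nL = n \cdot \LCM(\mm(\zeta), \mm(\phi))$, which is the first claim.

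For the subtree inclusion, take $\zeta \in \T_N$, so that $\mm(\zeta) \mid N$. Then $\LCM(\mm(\zeta), m)$ divides $\LCM(N, m) = M$. By the first part, $\mm(\phi_*(\zeta))$ divides $nM$, so $\phi_*(\zeta) \in \T_{nM}$.

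The only point needing care is that \autoref{cor:puiseuxcommwithG} is an \emph{if and only if} statement whose forward direction we use exactly when $m \mid k$; we apply it with $k = L$. The corollary is itself obtained by applying the lower-star functor (\autoref{prop:galois:lowerstarfunctorial}) to the proposition preceding it. There, the scale factor $1/n$ is what turns $\prim^{*L}$ on one side into $\prim^{*nL}$ on the other, via \autoref{cor:galois:fieldisometry}, and this is where the factor $n$ in the bound comes from. Beyond checking that this exponent bookkeeping goes the right way, which is the only real obstacle, the argument is immediate.
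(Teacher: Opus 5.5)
Your proposal is correct and is essentially the paper's proof. Take $L$ to be the least common multiple of $\mm(\zeta)$ and $\mm(\phi)$, use \autoref{cor:puiseuxcommwithG} to get $\prim_*^{nL}\circ\phi_* = \phi_*\circ\prim_*^{L}$, and evaluate at the $\prim_*^{L}$-fixed point $\zeta$. Your version is also slightly cleaner than the printed proof, which writes $\mm(\phi_*(\zeta)) \mid M$ where $nM$ is meant and leaves the case $\mm(\zeta)=\infty$ implicit.
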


\begin{proof}
 Let $\zeta \in \P^1_\an$ and $M = \LCM(\mm(\zeta), \mm(\phi))$; hence $\prim_*^M(\zeta) = \zeta$. 
 One has $\prim_*^{nM} \circ \phi_* = \phi_* \circ \prim_*^M$
from \autoref{cor:puiseuxcommwithG} and therefore 
\[\prim_*^{nM}(\phi_*(\zeta)) = \phi_*(\prim_*^M(\zeta)) = \phi_*(\zeta).\]
 This shows that $\mm(\phi_*(\zeta)) \mid M$. The statement about subtrees follows.
\end{proof}

\begin{cor}\label{cor:genmultiplicitydivides}
 Let $\phi_*$ be a simple Puiseux skew product, and $\zeta \in \P^1_\an$. Then \[\mg(\phi_*(\zeta)) \mid \LCM(\mg(\zeta), \mm(\phi)).\]
\end{cor}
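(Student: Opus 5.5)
The plan is to combine the limit characterisation of generic multiplicity in \autoref{prop:genericmultiplicity} with \autoref{thm:multiplicitydividesadv} (for the scale factor $1/n$ with $n=1$) and the continuity of $\phi_*$. Write $N = \LCM(\mg(\zeta), \mm(\phi))$. If $\mg(\zeta) = \infty$, which happens exactly when $\zeta$ is of Type III or IV, the statement is vacuous under the convention that everything divides $\infty$. So assume $\zeta$ is of Type I or II and $N < \infty$.

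\textbf{Step 1: approximate $\zeta$ by classical points.} By \autoref{prop:genericmultiplicity} \ref{prop:genericmultiplicity:two} and \autoref{cor:verticesisolated}, we have
\[\zeta \in \overline\TV_{\mg(\zeta)} = \overline{\set[a \in \P^1(\K)]{\mm(a) \mid \mg(\zeta)}}.\]
If $\zeta$ is of Type I, we simply take $a = \zeta$. If $\zeta$ is of Type II, we can choose explicitly a sequence of Type I points $a_k$ lying in distinct generic directions at $\zeta$, each with $\mm(a_k) = \mg(\zeta)$ (by \autoref{cor:multsubtreevalency}). These converge to $\zeta$ by \autoref{lem:infdirconv}.

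\textbf{Step 2: push forward.} \autoref{thm:multiplicitydividesadv} with $n = 1$ gives
\[\mm(\phi_*(a_k)) \mid \LCM(\mm(a_k), \mm(\phi)),\]
and the right-hand side divides $N$. Since $\phi_*$ preserves types, each $\phi_*(a_k)$ is a Type I point of $\T_N$. Since $\phi_*$ is continuous (\refthmskewopencts), $\phi_*(a_k) \to \phi_*(\zeta)$. Therefore
\[\phi_*(\zeta) \in \overline{\T_N \cap \P^1(\hk)} = \overline\TV_N,\]
again by \autoref{cor:verticesisolated}. Note that $\T_N$ is closed by \autoref{prop:multsubtreehull}, so no Type IV point can appear in this closure.

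\textbf{Step 3: convert ``vertex of $\T_N$'' into divisibility.} This is the step needing care, because \autoref{prop:genericmultiplicity} \ref{prop:genericmultiplicity:four} only says that $\mg$ is the \emph{smallest} $n$ with $\zeta \in \overline\TV_n$. That gives $\mg(\phi_*(\zeta)) \le N$, not divisibility, so we argue by type.
\begin{itemize}
\item If $\phi_*(\zeta)$ is of Type I, then $\mg = \mm \mid N$ directly.
\item If $\phi_*(\zeta)$ is of Type II, \autoref{prop:multsubtreevalency} describes the non-endpoint vertices of $\T_N$ as the points $\zeta(b, \abs x^{p/q})$ with $\mm(b) \mid N$ and $q \mid N$, where we take $p/q$ in lowest terms. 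Choose $b$ with $\mm(b) = \mm(\phi_*(\zeta))$, which is possible by \autoref{prop:multiplicity}. Then $\mg(\phi_*(\zeta)) = \LCM(\mm(b), q)$ divides $N$.
\end{itemize}
The one point to double-check is that the vertex description in \autoref{prop:multsubtreevalency} is stated with this normalised representative, so that $q$ there is the reduced denominator. I expect this bookkeeping to be the only real obstacle; the rest is formal.
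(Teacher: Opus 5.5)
Your argument is correct. The paper records this corollary without a separate proof, as a consequence of \autoref{thm:multiplicitydividesadv}, and transporting the closure characterisation of \autoref{prop:genericmultiplicity} and \autoref{cor:verticesisolated} through the continuous, type-preserving map $\phi_*$ is the natural way to supply one.

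The normalisation you flag in Step 3 is harmless. The proof of \autoref{prop:multsubtreevalency} writes a Type II vertex of $\T_N$ as $\zeta(b', \abs x^{k/N})$ with $\mm(b') \mid N$. Since $b' \preceq \phi_*(\zeta)$, \autoref{prop:multiplicityorder} gives $\mm(\phi_*(\zeta)) \mid \mm(b') \mid N$. The exponent $k/N = \log_{\abs x}\diam(\phi_*(\zeta))$ is intrinsic to the point, so its reduced denominator $q$ divides $N$. Hence $\mg(\phi_*(\zeta)) = \LCM(\mm(\phi_*(\zeta)), q) \mid N$.

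One small slip: $\mg = \infty$ also occurs for Type I points of $\hk \setminus \K$, not only for Types III and IV. Your standing assumption $N < \infty$ already covers that case, so the argument is unaffected.
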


\begin{prop}\label{prop:mappingmult}
Let $\phi_*$ be a simple Puiseux skew product.
\begin{enumerate}[label=(\alph*)]
 \item $\mm(\phi_* \circ \psi_*) \mid \LCM(\mm(\phi_*), \mm(\psi_*))$ where $\mm(\phi_*), \mm(\psi_*) \ne \infty$.
 \item Any Type II point $\zeta$ can be moved to the Gauss point $\zeta(0, 1)$ with a change of coordinates $\eta \in \PGL(2, K)$, where $\mm(\eta) < \infty$. In particular, for a Puiseux $\phi_*$, the lift by $\eta_*$ to $\tilde \phi_* = \eta_*\circ \phi_* \circ \eta_*^{-1}$ is also Puiseux: $\mm(\tilde f) \mid \LCM(\mm(\phi), \mm(\eta))$. Moreover, $\mm(\eta) = \mg(\zeta)$ is optimal and we can pick $\eta$ to be an affine transformation of $\C\left(x^\frac 1{\mg(\zeta)}\right)$. %
\end{enumerate}
\end{prop}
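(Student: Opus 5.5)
Part (a) is a direct application of the Galois commutation machinery from \autoref{cor:puiseuxcommwithG}. I read $\phi_* \circ \psi_*$ as the skew product of the composite endomorphism $\psi^* \circ \phi^*$ (by \autoref{prop:galois:lowerstarfunctorial}). Its second component is $\psi^*(\phi_2)$. Here $\phi_2 \in \k((x^{1/\mm(\phi)}))(y)$ and $\psi^*$ sends $y \mapsto \psi_2 \in \k((x^{1/\mm(\psi)}))(y)$.

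Since both maps are simple, $\psi_1^*$ is an isometric isomorphism commuting with $G$ by \autoref{cor:galois:fieldisometry}. The plan is then as follows. Set $L = \LCM(\mm(\phi), \mm(\psi))$ and take $\omega^* = \prim^{*L}$, so that $\omega_{\mm(\phi)} = \omega_{\mm(\psi)} = 1$. By the proposition preceding \autoref{cor:puiseuxcommwithG} with $n=1$, $\omega^*$ commutes with both $\phi^*$ and $\psi^*$, and hence with the composite. Applying this to $y$ gives that $\omega^*$ fixes the second component $\psi^*(\phi^*(y))$. Consequently the $\prim$-orbit of that component has size dividing $L$, i.e.\ $\mm(\phi_*\circ\psi_*) \mid L$ by \autoref{prop:multiplicitywithy}(iv).

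For (b), write $\zeta = \zeta(a, \abs x^{p/q})$ with $\mm(a) = \mm(\zeta)$ and $\gcd(p,q)=1$, using \autoref{prop:multiplicity}. Here $a$ is a finite Puiseux polynomial in $\k(x^{1/\mm(\zeta)})$. Set $g = \mg(\zeta) = \LCM(\mm(\zeta), q)$ and take the affine map $\eta(y) = (y - a)/x^{p/q}$. Its coefficients lie in $\k(x^{1/g})$, so $\mm(\eta) \mid g$, and a direct check of weighted degrees shows $\eta_*(\zeta) = \zeta(0,1)$. The claimed bound $\mm(\tilde\phi) \mid \LCM(\mm(\phi), \mm(\eta))$ then follows by applying (a) twice. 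The only extra input is $\mm(\eta^{-1}) = \mm(\eta)$, which is clear since the inverse affine map has coefficients in the same field. Strictly, $\eta$ is a rational map rather than a skew product, but it is a simple Puiseux skew product with $\eta_1 = \id$, so (a) applies.

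Optimality is the main obstacle. Suppose $\eta \in \PGL(2,\K)$ has multiplicity $M$ and $\eta_*(\zeta) = \zeta(0,1)$. By the commutation from (a), $\prim_*^M$ commutes with $\eta_*$, and $\prim_*$ fixes $\zeta(0,1)$ together with all of its directions, since these are the residue classes $c \in \k$. Hence $\prim_*^M$ fixes $\zeta$ and every direction at $\zeta$. In particular $\prim^M_\#$ fixes a generic direction at $\zeta$, whose multiplicity is $g$ by \autoref{cor:multsubtreevalency}. Orbits of directions have size $\mm(\bvec v)$ by the proposition on directions, so $g \mid M$. This gives $\mm(\eta) \ge g$, and the affine $\eta$ above attains it.
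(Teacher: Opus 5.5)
Your proof is correct.

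\textbf{Part (a).} This is the paper's argument: both commute $\prim^{*L}$, with $L$ the lcm of the two multiplicities, past each factor using the Galois-commutation results. The paper works with the induced maps. It writes $\phi_*\circ\psi_*\circ\prim_*^M=\prim_*^M\circ\phi_*\circ\psi_*$ and appeals to the ``only if'' direction of \autoref{cor:puiseuxcommwithG} for the composite. You instead work with the endomorphisms and evaluate at $y$, so that $\prim^{*L}$ visibly fixes the second component $\psi^*(\phi_2)$. This is slightly more direct.

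\textbf{Part (b).} You use the same affine $\eta$, but the paper goes no further than that. It asserts that ``clearly'' $\mm(\eta)=\mg(\zeta)$, and gives no argument for the conjugation bound or for the word ``optimal''.

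Your two applications of (a), together with $\mm(\eta^{-1})=\mm(\eta)$, supply the conjugation bound.

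Your optimality argument is a genuinely new ingredient. You transport the $\prim_*^M$-action through $\eta_*$ to the Gauss point, where it fixes every direction. You then read off $\mg(\zeta)\mid M$ from the orbit of a generic direction at $\zeta$. This proves minimality over all $\eta\in\PGL(2,\K)$ sending $\zeta$ to $\zeta(0,1)$.

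Applied to your particular $\eta$, the same argument also gives the lower bound $\mm(\eta)\ge\mg(\zeta)$. The paper's ``clearly'' leaves this implicit, and your coefficient-field observation alone gives only $\mm(\eta)\mid\mg(\zeta)$.
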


\begin{proof}
 For (a), let $M = \LCM(\mm(\phi_*), \mm(\psi_*))$, then by \autoref{cor:puiseuxcommwithG} it is clear that $\mm(\phi_* \circ \psi_*) \mid M$ since 
 \[\phi_* \circ \psi_* \circ \prim_*^M = \phi_*\circ \prim_*^M \circ \psi_* =\prim_*^M \circ \phi_* \circ \psi_*.\]
 Consider $\zeta = \zeta(a, |x|^q)$ as in (b), with $a \in \C(x^\frac 1m)$ chosen as in \autoref{prop:multiplicity}. Then define
 \[\eta(x, y) = \left(x, \frac{y - a}{x^q}\right)\]
 Clearly $\mm(\eta) = \LCM(\mm(a), q) = \mg(\zeta)$, per \autoref{cor:multsubtreevalency}, moreover $\eta$ to be an affine transformation of $\C\left(x^\frac 1m\right)$. %
 Finally we check that $\eta(\zeta) = \zeta(0, 1)$; indeed $\eta$ has no finite poles, so $D = D(a, |x|^q)$ is mapped to $D(\eta(a), |x|^q \cdot |x|^{-q}) = D(0, 1)$ by the usual power series argument.
\end{proof}

\begin{prop}\label{prop:fatoujuliaginvt}
Let $\phi_*$ be a simple rational skew product. Then $\F_{f, \an}$ and $\J_{f, \an}$ are invariant under the action of the Galois group $G$.
\end{prop}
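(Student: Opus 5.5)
The statement to prove is that the Fatou and Julia sets of a simple rational skew product are $G$-invariant. The plan is to reduce it to the commutation relation of \autoref{cor:commwithG}. With scale factor $\q = 1$ (so $n=1$), that corollary gives $\omega_* \circ \phi_* = \phi_* \circ \omega_*$ for every $\omega^* \in G$. Iterating yields $\omega_* \circ \phi_*^k = \phi_*^k \circ \omega_*$ for all $k \ge 1$. Moreover, by \autoref{prop:rootsauto} and the action of \autoref{prop:GaloisactiononP1}, each $\omega_*$ is a homeomorphism of $\P^1_\an$, with inverse $(\omega^{-1})_*$ by \autoref{prop:rootopsprim}.

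Next, I will take the definition of the Berkovich Fatou set $\F_{\phi, \an}$ from \cite{berkskew}. By \reffatoujuliabasics{}, it is the set of points $\zeta$ having an open neighbourhood $U$ such that the iterates $\phi_*^k(U)$ omit a nonempty open set, or more generally satisfy whatever local ``equicontinuity/omission'' criterion is used there. The Julia set $\J_{\phi, \an}$ is its complement.

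Given $\zeta \in \F_{\phi, \an}$ with such a neighbourhood $U$, I set $U' = \omega_*(U)$, which is an open neighbourhood of $\omega_*(\zeta)$. By the commutation relation,
\[\phi_*^k(U') = \omega_*(\phi_*^k(U)).\]
Any property of the family $\{\phi_*^k(U)\}$ that is phrased topologically is transported by the homeomorphism $\omega_*$ to the family $\{\phi_*^k(U')\}$. Such properties include omitting a set, or every direction eventually landing in a fixed finite set of disks. Since $\omega_*$ maps disks to disks, it also preserves the affinoid notions. Hence $\omega_*(\zeta) \in \F_{\phi, \an}$, so $\omega_*(\F_{\phi, \an}) \subseteq \F_{\phi, \an}$.

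Applying the same argument to $\omega^{-1}$ gives equality, and $\J_{\phi, \an} = \P^1_\an \sm \F_{\phi, \an}$ is then invariant as well.

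The main obstacle is matching the exact definition of Fatou set used in \cite{berkskew} for skew products. If it is phrased via equicontinuity in some uniform structure, or via the hyperbolic metric, I need $\omega_*$ to preserve that structure. This follows because $\omega^*$ is an isometric field automorphism (\autoref{prop:rootsauto}), so $\omega_*(\zeta(a,r)) = \zeta(\omega_*(a), r)$ preserves diameters and hence $d_\bH$. I would check this first and then run the conjugation argument above.
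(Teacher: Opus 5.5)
Your proposal is correct and is essentially the paper's argument. The paper takes the Fatou condition to be that $\bigcup_{k\ge 0}\phi_*^k(U)$ omits infinitely many points, and uses \autoref{cor:commwithG} with $n=1$ to get $\bigcup_k \phi_*^k(\omega_*(U)) = \omega_*\bigl(\bigcup_k \phi_*^k(U)\bigr)$; this still omits infinitely many points because $\omega_*$ is a bijection, and the Julia set is then handled by complementation. With that definition, your extra check that $\omega_*$ preserves $d_\bH$ is unnecessary, though harmless.
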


\begin{proof}
 Suppose $\zeta \in \F_{f, \an}$. Then there is an open set $U \ni \zeta$ such that $\bigcup_{n=0}^\infty f_*^n(U)$ omits infinitely many points.
 Let $\omega^* \in G$. Then by \autoref{cor:commwithG}
 \[\bigcup_{n=0}^\infty f_*^n(\omega_*(U)) = \bigcup_{n=0}^\infty \omega_*(f_*^n(U)) = \omega_*\left(\bigcup_{n=0}^\infty f_*^n(U)\right)\] omits infinitely many points, and so we have shown that $\omega_*(\zeta)$ has a dynamically stable neighbourhood $\omega_*(U)$. Therefore $\zeta \in \F_{f, \an} \implies \omega_*(\zeta) \in \F_{f, \an}$. The rest of the proposition is now obvious by \reffatoujuliabasics.
\end{proof}

This result may be extended to simple Puiseux skew products if we restrict to certain Galois actions. It can also be extended to non-simple skew products given dynamical input.

This result shows that the concepts of Fatou and Julia points descend perfectly well to the valuative tree $\V$ when we have a simple rational skew product $\phi_*$.


\section{Models and Dual Graphs}\label{sec:corresp}

In the next two subsections we explain how to transfer the geometric information from the geometric skew product to a skew product on the Berkovich projective line. It will turn out that Type II points naturally correspond to divisors on our ruled surface, and their dynamics corresponds too. The reader focused on a practical understanding may skip this first subsection.

\subsection{Models}\label{sec:models}

We wish to compare different birational models of some original ruled surface, perhaps allowing only modifications to a fixed fibre. The presentation below is influenced by those of Bosch \& L\"uktebohmert \cite{BL85, BL93}, of Baker, Payne \& Rabinoff \cite{BPR}, of Boucksom, Favre \& Jonsson \cite[\S 1]{BFJ}, and of DeMarco \& Faber \cite[\S 4.1]{DeF2}. See also \cite[\S8, \S10]{Liu}.

Let $h : X \to B$ be a birationally ruled surface. Let $b \in B$ and consider its local ring $\bO_{B, b}$ on $B$, this is a discrete valuation ring, and let $m_b = (x)$ be its maximal ideal. The fraction field is $\ell = \Frac(\bO_{B, b}) = \k(B)$ and the residue field is $\k = \bO_{B, b}/m_b = \k(b)$ (in the complex case, $\k = \C$). The order of vanishing norm $\abs\cdot$ with respect to $x$ equivalently measures the order of vanishing of functions at $b$; this field $(\ell, \abs\cdot)$ is non-Archimedean with ring of integers $\ell^\circ = \bO_{B, b}$. For brevity, let us simply write $\bO$ for $\bO_{B, b}$ where $b$ is understood from context.

\begin{defn}
 Let $\ell$ and $\bO = \ell^\circ$ be as above. A \emph{model of $\P^1_\ell$ over $\bO$} is a pair $(Y_\bO, \iota)$ where $Y_\bO$ is a normal (possibly not smooth) flat projective $\bO$-scheme, whose generic fibre $Y_\eta = Y_\bO \times_{\Spec(\bO)} \Spec(\ell)$ is isomorphic to $\P^1_\ell$ via $\iota : Y_\eta \to \P^1_\ell$. A birational map of models $\rho : (Y_\bO, \iota) \dashto (Y'_\bO, \iota')$ is a birational map $\rho : Y \dashto Y'$ over $\Spec(\bO)$ such that $\iota' \circ \rho_\ell \circ \iota^{-1}$ is the identity on $\P^1_\ell$. If $\rho$ is a birational morphism $\rho : (Y_\bO, \iota) \to (Y'_\bO, \iota')$ then we say $(Y_\bO, \iota)$ \emph{dominates} $(Y'_\bO, \iota')$.
\end{defn}

A model $(Y_\bO, \iota)$ of $\P^1_\ell$ over $\bO$ has a natural pair of coordinates variables $(x, y)$ determined by a uniformiser $x \in \bO$ and $y = \iota^*(y')$ where $\Spec \ell[y'] = \A^1_\ell$ is our favourite affine line within $\P^1_\ell$. 

The most natural way to generate models is as localisations of (or infinitesimal neighbourhoods of) fibres of a birationally ruled surface $h : X \to B$. Let $b \in B$ and $X_b$ denote the fibre $h^{-1}(b)$. We will be interested in performing blowups (and perhaps blowdowns) over this fibre alone. If we do so, the geometry of $X$ away from $X_b$ will remain the same. This means we may focus on the localisation $X_\bO = X \times_B \Spec(\bO)$, which is a scheme over $\Spec(\bO)$. The $\bO$-scheme $X_\bO$ has two fibres over the points of $|\Spec(\bO)| = \set{m_b, \eta}$ where $\eta$ is the generic point $(0) \triangleleft \bO$. The \emph{closed} or \emph{special fibre} above $\Spec(\k(b)) = \Spec(\k) \hookrightarrow \Spec(\bO)$ is $X_\k$ is $X_\bO \times_{\Spec(\bO)} \Spec(\k(b)) \cong X_b$. The generic fibre $X_\eta$ is naturally isomorphic to $\P^1_\ell$. To see this in detail, pick a closed point $b' \in U \subset B$ such that $U \times_B X \cong U \times_\k \P^1_\k$ since the general closed fibre of $X$ is $\P^1_\k$. Then the generic fibre of $X$ is the fibre above \[\Spec(\ell) \cong \Spec(\k(\bO_{B, b'})) = \Spec(\k(B)) \hookrightarrow \Spec(\bO) \hookrightarrow U \hookrightarrow B\] independent of $b'$. We can use associativity of fibre products to see that \[\Spec(\ell) \times_{\Spec(\bO)} X_\bO \cong \Spec(\ell) \times_{\Spec(\bO)} \Spec(\bO) \times_B X = \Spec(\ell) \times_B X\]
\[=\Spec(\ell) \times_U U \times_B X \cong \Spec(\ell) \times_U U \times_\k \P^1_\k \cong \Spec(\ell) \times_\k \P^1_\k = \P^1_\ell.\] 

\begin{defn}
 Let $h : X \to B$ be a birationally ruled surface, $b \in B$.%
  A \emph{global model of $X$ over $b \in B$} is a birationally ruled normal (but possibly singular) surface $g : Y \to B$ for which there is a $\iota : Y \dashto X$ birational such that the $B$-morphism $\iota : Y \sm Y_b \to X \sm X_b$ is an isomorphism. 
 A birational map $\rho : Y \dashto Y'$ of models over $b$ is one such that $\iota' \circ \rho_\ell \circ \iota^{-1}$ restricts to the identity on $X\sm X_b$. 
 Further, given finitely many closed points $b_1, \dots, b_n \in B$, we make a similar definition for a \emph{global model of $X$ over $(b_j)_{j=1}^n$} where the map $\iota$ is an isomorphism away from each $X_{b_j}$. Define $\Div_b(X)$ the subgroup of divisors supported on $X_b$.
\end{defn}

 By the discussion above, a global model of $X$ over $b$ naturally gives rise to a model of $\P^1_\ell$ over $\bO_{B, b}$. %
For brevity we may simply refer to $X$ as a model, with the local over $\bO_{B, b}$, $b$, or perhaps several fibres as understood from context.
We do not assert that the closed fibre $Y_\k$ of $Y_\bO$ is reduced.

A global model $Y$ (over $b$) is always dominated by a global model $Y'$ which is both \emph{smooth} (along $Y'_b$) and where the closed fibre $Y'_b$ has \emph{simple normal crossing support}; this is by resolving singularities of the surface and the fibre through blowups. A curve has simple normal crossings if it is reduced, and the intersections are ordinary double points (i.e.\ locally like $\Spec[z, w]/(zw)$). This implies that $Z_1 \cdot Z_2 = 1$ for any intersecting irreducible components $Z_1 \ne Z_2$. A curve has simple normal crossing support if its induced reduced subscheme is a simple normal crossing curve.

\begin{defn}
 Let $Y$ be a (global) model of $\P^1_\ell$ (or $X$) over $\bO$ (over $b$) with reduced closed fibre $Y_\k$ ($=Y_b$). Then we will say $Y$ is \emph{SC} iff
\begin{itemize}
 \item each closed point of $Y_\k$ is contained in at most two irreducible components, and
 \item for each pair $Z_1, Z_2$ of irreducible components in $Y_\k$, the intersection $Z_1 \cap Z_2$ is at most one point. %
\end{itemize}
Further, if each crossing of components $Z_1, Z_2$ above is normal we say $Y$ is \emph{SNC} (has simple normal crossings).
\end{defn}

\begin{rmk}
 In various other papers on analytic curves \cite{BL85, BPR} (see also \cite{DeF2}), SNC is equivalent to saying $Y$ is a \emph{semistable} model of $\P^1_\ell$. Additionally we would say that $Y$ is \emph{strongly semistable} if the the irreducible components of $Y_\k$ are smooth. The author has chosen to use the SNC terminology of \cite{BFJ} primarily to avoid confusion with other notions of `stability' we are interested in. Smoothness of components is natural for any smooth model. The second condition will manifest when we recognise the dual graph of an SNC model is a tree.
\end{rmk}

It is easy to have an SNC model which is not smooth -- a quotient singularity results from the blowdown of a $\P^1$ divisor with self-intersection $-2$. 
Every smooth model is SNC, but an SNC model may not be smooth.

To connect these models with the Berkovich projective line over a complete non-Archimedean field, we will want to work on the completions of the $\bO$-curves. Regardless of the genus of $B$, the completion of $(\bO_{B, b}, \abs\cdot)$ is isomorphic to $(\k[[x]],\abs\cdot)$ by the Cohen Structure Theorem \cite[p33]{Hart}, and this is the same as the $m_b$-adic completion $\hat\bO = \varprojlim \bO_{B, b}/m_b^n$. Hence also the completion of $\ell$ is isomorphic to $\k((x)) = \Frac \k[[x]]$ and we write $\K$ for its algebraic closure, the field of Puiseux series, and $\hk$ for the subsequent completion, the Levi-Civita series. Hence, the formal completion of $\Spec(\bO)$ at $m_b$ and equivalently the formal completion of $B$ at $b$ is the affine formal scheme $\FSpec(\hat \bO) = (\fml B, \hat\bO)$ whose special/closed fibre and underlying topological space is a point.%

 Let $h : Y \to \bO$ be a model of $\P^1_\ell$. The accompanying \emph{formal model} is $(\fml X, \bO_{\fml X})$ the admissible formal scheme obtained from completing $Y$ along its closed fibre $Y_\k$. The structure sheaf of $\fml X$ is $\bO_{\fml X} = \varprojlim \bO_Y/h^*\bO^n \cong \bO_Y \otimes_{\bO} h^*\hat\bO$. We denote by $\fml X_0$ the closed fibre $\fml X_b = (\fml X, \bO_{\fml X}/\hat m_b) \cong (Y_b, \bO_{Y_b})$. 
 Note that $\fml X$ is constructed by taking the affine subschemes $\Spec A_j$ of $Y$ and gluing together the affine formal scheme $\FSpec(\hat A_j)$ where $\hat A_j = \varprojlim A_j/I_j$, and $Y\cap \Spec(A_j) = \Spec(A_j/I_j)$.
 It is common to not take the special fibre to be a formal scheme but use the \emph{Raynaud generic fiber functor} $\fml X \mapsto \fml X_\an$. This involves gluing together the Berkovich spectra $\mathcal M \left(\hat A_j \otimes_{\k[[x]]} \k((x))\right)$. If we glued instead the algebraic spectra $\Spec\left(\hat A_j \otimes_{\k[[x]]} \k((x))\right)$ we would obtain the fibre product \[Y \times_{\Spec(\bO)} \Spec(\hat\bO) \times_{\Spec(\hat\bO)} \Spec(\k((x))) = Y \times_{\Spec(\bO)} \Spec(\k((x))) \cong Y_\eta \times_{\Spec(\ell)} \Spec(\k((x)))\] which is isomorphic to $\P^1_{\k((x))}$ via the lift of $\iota : Y_\eta \to \P^1_\ell$. Therefore, there is an induced isomorphism between $\fml X_\an$ and $\P^1_{\k((x)), \an}$, which we previously denoted by $\P^1_\an(\k((x)))$. Using similar constructions we can take base changes to either $\K$ or $\hk$, and the closed fibre $\fml X_0$ will remain isomorphic to $Y_\k$ as $\k$-schemes. Herein we will write $\P^1_\an$ in place of $\fml X_\an$ and return to the notation $\P^1_\an(\k((x)))$ to remind the reader of the base field\footnote{Commonly, $\P^1_\an(K)$ would be interpreted as the $K$-points of the analytic space $\P^1_\an$.}.
 
 For each model $Y$ of $\P^1_\ell$, let \[\redct_Y : \P^1_\an(\k((x))) \to Y_\k\] be the `reduction map' described by Berkovich in \cite[\S 2.4]{Berk}. Note that in his book, the spectrum is (an affinoid piece of) $\P^1_\an(\k((x)))$ and the codomain is a particular affine piece $\Spec(\mathcal A) \subset \tilde Y$ of the reduction of the variety $Y$ which is the same as $Y_b = Y_\k$. %
In the context of models, it is easier to characterise the reduction map on an affine piece $\Spec(A_j/I_j) \subset Y_\k$. In the ambient model $Y$, a generic point of $Y_\k$ is represented by some dimension $1$ prime ideal of $A_j$. Assuming $\redct_Y(\zeta) \in \Spec(A_j/I_j)$, the reduction map is given by 
\[\redct_Y : \zeta \longmapsto \set[a + I_j\in A_j/I_j]{\norm[\zeta]{a} < 1}.\] 

We can define the map globally and more precisely as follows. Let $\zeta \in \P^1_\an$ and first define the \emph{home} point $p = \ker(\norm[\zeta]\cdot) \in \P^1_\ell \cong Y_\eta \hookrightarrow Y$; the point we want is either $p$ or a specialisation of it. Now let $(\ell(\zeta), \norm[\zeta]\cdot)$ be the residue field $\bO_{Y, p}/m_p$ equipped with $\zeta$ and let $R_\zeta = \ell(\zeta)^\circ$ be its valuation ring. The former will be isomorphic to either $\ell(Y)$ or $\ell$. Therefore there is an inclusion $\ell \hookrightarrow \ell(\zeta)$ inducing another map $\bO \hookrightarrow R_\zeta$ since $\zeta$ extends the norm on $\ell$; this gives a map $\Spec(R_\zeta) \to \Spec(\bO)$.
\[
 \begin{tikzcd}
 \Spec(\ell(\zeta)) \arrow{r} \arrow{d} & Y \arrow{d}\\
\Spec(R_\zeta) \arrow[swap]{r} \arrow[dashed]{ru} & \Spec(\bO)
\end{tikzcd}
\]
By the valuative criterion for properness, there is a unique lift in the diagram above which maps the generic point $\eta_\zeta$ to $p$ and the closed point $m_\zeta = \set[f \in R_\zeta]{\norm[\zeta]{f} < 1} \in \Spec(R_\zeta)$ to the \emph{centre} $\redct_Y(\zeta)$. Since the image of $m_\zeta$ in $\Spec(\bO)$ is the closed point, we see that $\redct_Y(\zeta)$ lies in $Y_\k$. We remark that this construction did not depend so much on the base field of $\P^1_\an$, and so provides a map $\redct'_Y : \P^1_\an(\K)$ too, however the next result uses the fact that the completion of $\ell$ is $\k((x))$.

Reduction gives a semi conjugacy from a $\k$-rational skew product $\phi_* : \P^1_\an \to \P^1_\an$ on the Berkovich projective line to the restriction of $\phi$ to fibres $Y_b$ for any global model $Y$ of $X$ over $b \in B$ with codomain over $\phi_1(b) \in B$. Denote by $\redct'_X : \P^1_\an(\hk)=\P^1_\an(\K) \to Y_b$ the extension of $\redct_Y$ through base change. We have that $\redct'_Y = \mathfrak p \circ \redct_Y$.

 \[
\begin{tikzcd}[column sep = 2em, row sep = 3em]
 \P^1_\an(\K) \arrow[two heads, swap, bend right=75]{dd}{\redct'_Y} \arrow[two heads]{d}{\mathfrak p} \arrow{r}{\phi_*}& \P^1_\an(\K) \arrow[two heads, bend left=75]{dd}{\redct'_Y} \arrow[swap, two heads]{d}{\mathfrak p} \\
  \P^1_\an(\k((x))) \arrow[two heads]{d}{\redct_Y} \arrow{r}{\phi_*}& \P^1_\an(\k((x))) \arrow[two heads, swap]{d}{\redct_Y} \\
 Y_b \arrow[swap]{r}{\phi}& Y_{\phi_1(b)}
\end{tikzcd}
\]

\begin{prop}[Berkovich {\cite[2.4.2]{Berk}}]
 The reduction maps $\redct_X : \V \to X_b$ and $\redct'_X : \P^1_\an \to X_b$ are anti-continuous with respect to the weak and Zariski topologies. That is, for any closed set $Y \subset X_b$, $(\redct'_X)^{-1}(Y)$ and $\redct_X^{-1}(Y)$ are open.
 \end{prop}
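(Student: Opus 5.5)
The plan is to use the explicit description of the centre $\redct_X(\zeta)$ on an affine chart (the valuative-criterion construction above) together with the finiteness of an affine cover of $X_b$. Openness is then checked point by point.

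\textbf{Step 1 (the primed map).} Since $\redct'_X = \redct_X \circ \mathfrak p$ and $\mathfrak p$ is continuous for the weak topologies (by the definition of the topology on $\V$), it suffices to treat $\redct_X$. Then $(\redct'_X)^{-1}(Z) = \mathfrak p^{-1}(\redct_X^{-1}(Z))$ is open whenever $\redct_X^{-1}(Z)$ is.

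\textbf{Step 2 (affine charts are closed affinoids).} Cover a neighbourhood of $X_b$ in the model by finitely many affine opens $V_j = \Spec A_j$, with $A_j$ a finitely generated $\bO$-algebra generated by $a_{j1},\dots,a_{jk_j}$. From the construction of the centre via the lift $\Spec(R_\zeta) \to Y$, the centre of $\zeta$ lies in $V_j$ if and only if $A_j \subset R_\zeta$, that is, if and only if $\norm[\zeta]{a_{jk}} \le 1$ for every $k$. This gives
\[W_j := \redct_X^{-1}(V_j \cap X_b) = \set[\zeta]{\norm[\zeta]{a_{jk}} \le 1 \ \forall k}.\]
Each $W_j$ is closed in the weak topology, being cut out by finitely many non-strict inequalities on continuous evaluation maps. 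The $W_j$ cover the space, because every $\zeta$ has a centre. I expect this equivalence (centre in $V_j$ iff $A_j \subseteq R_\zeta$) to be the main point needing care. I would derive it from uniqueness in the valuative criterion: if $A_j \subseteq R_\zeta$, then $\Spec R_\zeta \to V_j$ is a lift, so it is the lift. Conversely, if the closed point of $\Spec R_\zeta$ maps into $V_j$, then the whole lift factors through the affine open $V_j$, so $A_j \to R_\zeta$.

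\textbf{Step 3 (closed sets in a chart).} Let $Z \subseteq X_b$ be Zariski closed, and write $Z \cap V_j = V(g_{j1},\dots,g_{jr_j})$ with the ideal taken in $A_j$ (and $x$ among the generators). For $\zeta \in W_j$, the centre $c$ lies in $Z$ if and only if every $g_{ji}$ lies in the maximal ideal of $R_\zeta$, i.e.\ $\norm[\zeta]{g_{ji}} < 1$ for all $i$. This uses that $\redct_X(\zeta)$ is the image of $m_\zeta$, so $g \in \mathfrak p_c$ iff $g \in m_\zeta$.

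\textbf{Step 4 (openness near a point).} Fix $\zeta$ with $\redct_X(\zeta) \in Z$. Define
\[N = \bigcap_{j : \zeta \notin W_j} (\P^1_\an \sm W_j) \ \cap \bigcap_{j : \zeta \in W_j} \set[\xi]{\max_i \norm[\xi]{g_{ji}} < 1}.\]
This is a finite intersection of open sets containing $\zeta$; for $j$ with $\zeta \in W_j$, Step 3 gives $\max_i \norm[\zeta]{g_{ji}}<1$. Let $\xi \in N$. Since the $W_j$ cover, $\xi$ lies in some $W_j$, and this $j$ must satisfy $\zeta \in W_j$. Hence $\norm[\xi]{g_{ji}} < 1$ for all $i$, and by Step 3 we get $\redct_X(\xi) \in Z$. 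So $N \subseteq \redct_X^{-1}(Z)$, which proves that $\redct_X^{-1}(Z)$ is open.
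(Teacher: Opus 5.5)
Your proof is correct and follows the standard route. The paper gives no argument beyond citing Berkovich, whose affinoid-level proof is exactly your Step 3: the preimage of a closed set $V(g_1,\dots,g_r)$ of the reduction is $\{\xi : \norm[\xi]{g_i} < 1 \ \forall i\}$. Your Steps 2 and 4 supply the gluing over a finite affine cover of $X_b$, which the paper leaves implicit. The one point to phrase with care is that elements of $A_j$ are rational functions and can have poles at Type I points, so $\norm[\zeta]{a}$ should be read as a continuous $[0,\infty]$-valued function of $\zeta$ (e.g.\ $\norm[\zeta]{f}/\norm[\zeta]{h}$ for $a=f/h$ in lowest terms); this is what makes $W_j$ closed and each set defining $N$ open.
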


Below we recall another important result due to Berkovich, translated and specialised to our context. It says that $\Gamma = \redct^{-1}(Y_{\text{gen}})$ is a finite set where $Y_{\text{gen}}$ are the generic points of $Y_b$. This motivates the study of vertex sets in the next subsection.
 
\begin{prop}[Berkovich {\cite[2.4.4]{Berk}}]\label{prop:galois:berkred}
 Let $Y$ be a model of $\P^1_\ell$ and $Y_\k$ be its closed fibre. Then
\begin{enumerate}
 \item The reduction map $\redct_Y : \P^1_\an(\k((x))) \to Y_\k$ is surjective.
 \item For any generic point $\eta$ in $Y_\k$, there exists a unique point $\zeta \in  \P^1_\an(\k((x)))$ such that $\redct_Y(\zeta) = \eta$.
\end{enumerate}
\end{prop}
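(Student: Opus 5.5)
The plan is to handle part (2) first, by identifying points of $\P^1_\an(\k((x)))$ with centre a generic point with the divisorial valuation of that component. Part (1) then follows by passing to a dominating model.

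\textbf{Existence in (2).} Let $\eta$ be the generic point of an irreducible component $Z \subset Y_\k$. Since $Y$ is normal and $\eta$ has codimension one in $Y$, the local ring $\bO_{Y,\eta}$ is a discrete valuation ring with fraction field $\ell(y)$. Let $v_Z$ be its valuation and set $b_Z = v_Z(x) \in \N_+$. Then $\norm[Z]{f} = \abs x^{v_Z(f)/b_Z}$ is a multiplicative norm on $\ell[y]$ extending $\abs\cdot$ on $\ell$. Next I extend $\norm[Z]\cdot$ to $\k((x))[y]$ using the estimate
\[\norm[Z]{\textstyle\sum c_iy^i} \le \max_i \abs{c_i}\,\norm[Z]{y}^i.\]
This estimate makes $\norm[Z]\cdot$ uniformly continuous on polynomials of bounded degree with respect to the coefficient norm. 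Since $\ell$ is dense in $\k((x))$, it extends uniquely by continuity to a multiplicative non-Archimedean seminorm $\zeta_Z$ on $\k((x))[y]$, giving a point of $\P^1_\an(\k((x)))$. (If $y$ is not regular at $\eta$, work in the other chart $1/y$ instead.) By construction $\norm[\zeta_Z]\cdot$ is $\le 1$ on $\bO_{Y,\eta}$ and $<1$ on its maximal ideal. The unique lift in the valuative-criterion diagram defining $\redct_Y$ therefore sends the closed point of $\Spec R_{\zeta}$ to $\eta$, that is, $\redct_Y(\zeta_Z) = \eta$.

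\textbf{Uniqueness in (2).} Suppose $\redct_Y(\zeta) = \eta$.
\begin{enumerate}
\item First, $\zeta$ cannot be of Type I, i.e.\ have nontrivial kernel. Otherwise its home point $p$ is a closed point of $Y_\eta \cong \P^1_\ell$. The closure of $p$ in $Y$ is finite over $\Spec \bO$, so it meets $Y_\k$ only in closed points, and the centre could not be the generic point $\eta$.
\item Hence $\norm[\zeta]\cdot$ restricts to a norm on $\ell(y)$ whose valuation ring dominates $\bO_{Y,\eta}$. A rank one valuation dominating a DVR with the same fraction field equals it up to scaling, and the scaling is fixed by the requirement $\norm[\zeta]{x} = \abs x$. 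So $\zeta$ agrees with $\zeta_Z$ on $\ell[y]$.
\item By the same density and continuity estimate, $\zeta$ agrees with $\zeta_Z$ on all of $\k((x))[y]$.
\end{enumerate}

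\textbf{Surjectivity (1).} Generic points are covered by (2), so let $q \in Y_\k$ be a closed point. Blow up $q$ and normalise to obtain a model $Y'$ with a birational morphism $\rho : Y' \to Y$ of models, and an irreducible component $E \subset Y'_\k$ with $\rho(E) = q$. By (2) there is $\zeta$ with $\redct_{Y'}(\zeta)$ equal to the generic point of $E$. By uniqueness of the lift in the valuative criterion, the reduction maps are compatible, $\redct_Y = \rho \circ \redct_{Y'}$, since composing the lift into $Y'$ with $\rho$ gives a lift into $Y$. Hence $\redct_Y(\zeta) = \rho(\eta_E) = q$.

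I expect the main obstacle to be the existence step: checking carefully that the divisorial norm on $\ell[y]$ extends to the completion $\k((x))[y]$ as a genuine multiplicative seminorm, and that the point so obtained has the right centre. The uniqueness argument relies on the same continuity estimate, so both directions rest on getting it right.
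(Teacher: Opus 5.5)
Your proof is correct, up to one small repair in the uniqueness step described below. It also takes a genuinely different route from the paper, which gives no argument of its own. The paper imports Berkovich's result for strictly affinoid spaces: the reduction $\mathcal M(\mathcal A)\to\Spec\tilde{\mathcal A}$ is surjective, and each generic point of $\Spec\tilde{\mathcal A}$ has a unique preimage, namely a Shilov boundary point. This is applied chart by chart to the formal completion of $Y$ along $Y_\k$, after identifying its analytic generic fibre with $\P^1_\an(\k((x)))$. Berkovich's proof runs through affinoid machinery: the spectral seminorm, finite maps to polydisks, and the Gauss norm as the unique preimage of the generic point of the reduced polydisk.

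Your argument is valuation-theoretic and works directly with the valuative-criterion definition of $\redct_Y$ that the paper actually adopts:
\begin{itemize}
\item normality of $Y$ makes $\bO_{Y,\eta}$ a DVR;
\item maximality of valuation rings under domination gives uniqueness on $\ell[y]$;
\item density of $\ell$ in $\k((x))$ transports both existence and uniqueness to $\k((x))[y]$;
\item closed points are reached by blowing up, normalising, and using $\redct_Y=\rho\circ\redct_{Y'}$.
\end{itemize}
This buys three things. First, you never need to reconcile Berkovich's affinoid reduction with the valuative-criterion version. Second, you produce the preimage explicitly as $\abs x^{\ord_Z(\cdot)/\mb(Z)}$, which is exactly the normalisation the paper later uses in \autoref{sec:metrics}. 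Third, the two hypotheses doing the work become visible. Normality of the model is what makes $\bO_{Y,\eta}$ a valuation ring. Working over $\k((x))$ rather than $\K$ is what the density step needs; over $\K$ the fibre of $\redct'_Y$ above $\eta$ is a whole Galois orbit. In exchange, Berkovich's version applies to arbitrary strictly affinoid spaces in any dimension.

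The repair: in the uniqueness step, split cases on whether the restriction of $\zeta$ to $\ell[y]$ has nontrivial kernel, not on whether $\zeta$ is of Type~I. Consider the Type~I point $y=\gamma$ with $\gamma\in\k[[x]]$ transcendental over $\ell$. Its restriction to $\ell[y]$ is a norm, so its home point in $\P^1_\ell$ is the generic point, and your claim that the home point is closed fails. The conclusion survives: such points fall under your second and third steps. Those steps show that any point whose restriction to $\ell[y]$ is a norm and which reduces to $\eta$ must equal $\zeta_Z$. So uniqueness holds, but your first step alone does not exclude all Type~I points. Separately, your remark about passing to the chart $1/y$ is unnecessary, since the continuity estimate works for any value of $\|y\|_Z$.
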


\subsection{Vertex Sets, and \texorpdfstring{$\Gamma$-Domains}{Gamma-Domains}}

Now we study the relationship between models and sets of Type II points. DeMarco and Faber \cite{DeF1} provide the next definitions.

\begin{defn}
 Let $\Gamma \subset \P^1_\an$ be a finite set of Type II points -- a \emph{vertex set}. Then $\P^1_\an\sm \Gamma$ is the union of a collection $\mathcal{S}(\Gamma)$ of disjoint open connected affinoids, each of which we call a $\Gamma$\emph{-domain}. If a $\Gamma$-domain has one boundary point, we call it a $\Gamma$\emph{-disk}, and if it has two we call it a $\Gamma$\emph{-annulus}. 
 If $\Gamma \subset \P^1_\an(\K)$ is $G$-invariant, then projecting to $\P^1_\an(\k((x)))$ we have a vertex set $\Gamma_G$ and $\Gamma_G$-domains $\mathcal{S}_G(\Gamma)$. 
 We let $\mathcal{S}^+(\Gamma) = \mathcal{S}(\Gamma) \cup \Gamma$ be the set of $\Gamma$-domains and the points in $\Gamma$ itself. %
 Given a model $Y$ of $\P^1_\ell$ over $\bO$, we define $\Gamma(Y)$ to be the vertex set $\redct^{-1}(Y_{\text{gen}})$.
\end{defn}

\begin{cor}\label{cor:galois:pointcorrespondence}
 The reduction map $\redct_X$ induces a bijection $\redct^+ : \mathcal{S}^+(\Gamma(X)) \to X_b$. The image of a $\Gamma$-domain $U$ is a closed point and the image of $v \in \Gamma$ is the generic point $\eta_j$ of a curve $E_j \subset X_b$.
\end{cor}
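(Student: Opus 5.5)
The plan is to build the bijection $\redct^+$ directly from the two results of Berkovich quoted above: surjectivity and uniqueness over generic points (\autoref{prop:galois:berkred}), and anti-continuity of $\redct_X$. I will work on $\P^1_\an(\k((x)))$ with the vertex set $\Gamma = \Gamma(X) = \redct_X^{-1}(X_{\text{gen}})$. The statement over $\P^1_\an(\K)$ then follows by composing with $\mathfrak p$, since $\redct'_X = \redct_X\circ\mathfrak p$ and $\Gamma(X)$ is $G$-invariant.

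First, the vertices. By \autoref{prop:galois:berkred}(2), each generic point $\eta_j$ of a component $E_j\subset X_b$ has exactly one preimage $v_j$. Hence $\Gamma$ is finite, $\redct_X$ restricts to a bijection $\Gamma\to X_{\text{gen}}$, and every point of $\P^1_\an\sm\Gamma$ reduces to a closed point. Since the fibre has finitely many components, I also need each $v_j$ to be of Type II, so that $\Gamma$ is a genuine vertex set. I would check this by noting that the residue field of $\zeta$ at a generic point has transcendence degree one over $\k$, which forces Type II.

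Second, each $\Gamma$-domain lands at a single closed point. For each closed point $p\in X_b$, the set $\{p\}$ is closed, so anti-continuity makes $W_p := \redct_X^{-1}(p)$ open. The sets $W_p$ are pairwise disjoint and cover $\P^1_\an\sm\Gamma$, so they form an open partition of it. A $\Gamma$-domain $U$ is connected and contained in $\P^1_\an\sm\Gamma$, so it meets exactly one $W_p$ and lies inside it. This defines $\redct^+(U) = p$. Surjectivity onto closed points is the surjectivity part of \autoref{prop:galois:berkred}, because $W_p\neq\emptyset$ and $W_p$ is a union of $\Gamma$-domains.

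Third, injectivity, which is the main obstacle: I must show each $W_p$ is connected, so that it is exactly one $\Gamma$-domain. My first attempt would use the normality of $X$. The formal fibre $W_p$ is the Berkovich generic fibre of $\FSpec$ of the completed local ring $\hat\bO_{X,p}$. Since $X$ is normal and excellent, $\hat\bO_{X,p}$ is a normal domain. A standard result (Bosch--Lütkebohmert, or \cite[2.4]{Berk}) then gives that the generic fibre of the formal completion at a normal point is connected. If that citation proves awkward, the fallback is purely tree-theoretic. Suppose $W_p$ contained two $\Gamma$-domains $U_1, U_2$. Then $\overline{U_1}\cap\overline{U_2}$ would contain some $v_j\in\Gamma$, and I would derive a contradiction by showing that $p$ would lie on $E_j$ in two analytically distinct branches. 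Two branches at $p$ would make $X$ non-unibranch at $p$, contradicting normality.
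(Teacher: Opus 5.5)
Your proof is correct, and its skeleton is the paper's. The paper gives no written argument: it presents the statement as immediate from \autoref{prop:galois:berkred} together with anti-continuity of $\redct_X$, and your first two steps are exactly that. Your third step is a genuine addition rather than a detour. Injectivity on $\Gamma$-domains, i.e.\ connectedness of $W_p=\redct_X^{-1}(p)$, does not follow from the quoted facts. For example, pinching two closed points of the special fibre of a normal model gives a non-normal $\bO$-scheme with the same generic fibre. For it, surjectivity, anti-continuity and uniqueness over generic points all persist, yet the fibre over the pinched point is two disjoint disks. So normality of $X$ has to be used exactly where you use it.

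Three corrections. (1) What you need is the theorem that the tube of a point whose complete local ring is normal is connected (Bosch's connectedness theorem for formal fibres; see also de Jong's treatment of tubes in formal and rigid geometry). It is not supplied by the parts of \cite{BL85} used in this paper, which treat smooth points and ordinary double points of a reduced fibre. Nor is it supplied by the results of \cite[\S 2.4]{Berk} quoted in \autoref{prop:galois:berkred}. If you want it self-contained, argue as follows. A splitting $W_p=W'\sqcup W''$ into non-empty open sets gives an idempotent analytic function $e$ on $W_p$ with $|e|\le 1$. The ring $\hat{\bO}_{X,p}$ is normal (as $X$ is normal and excellent) and flat over $\k[[x]]$. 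For such a ring, the analytic functions bounded by $1$ on its tube are exactly the elements of $\hat{\bO}_{X,p}$, which is a local domain, so $e\in\{0,1\}$.

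(2) Discard the fallback. The assertion that $\overline{U_1}\cap\overline{U_2}$ contains a vertex is unjustified. Nothing in your first two steps rules out a $\Gamma$-disk at $v_1$ and a $\Gamma$-disk at $v_2$ lying over the same point of $E_1\cap E_2$; this is the pinched situation above, with one point on $E_1$ and one on $E_2$. The appeal to unibranchness is the connectedness theorem again, not a tree-theoretic argument.

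(3) Your aside about $\P^1_\an(\K)$ is false as a literal bijection. The preimage $(\redct'_X)^{-1}(\eta_j)=\mathfrak p^{-1}(v_j)$ is a Galois orbit of size $\mm(E_j)$. Likewise $(\redct'_X)^{-1}(p)$ is a union of Galois-conjugate domains (compare \autoref{cor:corresp:domains}). Over $\K$ you only get a bijection between $G$-orbits in $\mathcal{S}^+$ and $X_b$.
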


There is a correspondence between global models of $X$ over $b$ and vertex sets, by the second part of the previous proposition. This relationship is quite well understood; the author recommends reading both the accounts by Baker, Payne \& Rabinoff \cite{BPR} and by Boucksom, Favre \& Jonsson \cite{BFJ}. This includes the following fundamental result.

\begin{prop}[{\cite[Theorem 4.11]{BPR}}, {\cite[Proposition 3.6]{BFJ}}]\label{prop:galois:vertexextension}\label{thm:corresp:modelexistence}
 The association $Y \mapsto \Gamma(Y)$ is a bijection between the collection of models of $\P^1_\ell$ over $\bO$ (up to isomorphism) and vertex sets in $\P^1_\an(\k((x)))$. Furthermore, $Y'$ dominates $Y$ if and only if $\Gamma(Y') \supset \Gamma(Y)$.%
\end{prop}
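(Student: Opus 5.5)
The plan is to pass through smooth SNC models, which are easy to build by blowups, and to reach arbitrary normal models by contracting fibre components. Since the result is classical (\cite[Theorem 4.11]{BPR}, \cite[Proposition 3.6]{BFJ}), what follows is the route I would use to reconstruct it in the present setting. The three parts are injectivity, surjectivity onto vertex sets, and the domination criterion.

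\emph{Injectivity.} A normal model $Y$ is determined, as an $\bO$-scheme with fixed generic fibre $\P^1_\ell$, by the rings of its affine opens. By normality, these rings are intersections of valuation rings at the codimension-one points of $Y$. There are two kinds of such points:
\begin{itemize}
\item horizontal ones, which are the closed points of $Y_\eta = \P^1_\ell$ and are common to all models;
\item vertical ones, which are the generic points of components of $Y_\k$, whose valuations are exactly the points of $\Gamma(Y)$ by \autoref{prop:galois:berkred}.
\end{itemize}
Hence $\bO_Y(V) = \set[f \in \ell(y)]{\norm[\zeta]{f} \le 1 \ \forall \zeta \in \Gamma(Y) \text{ centred in } V}$, with the usual regularity condition on $V \cap Y_\eta$. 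It follows that $Y$ is recovered from $\Gamma(Y)$: it is the normalisation of $\Spec(\bO)$ in $\ell(y)$ relative to these finitely many divisorial valuations.

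\emph{Surjectivity.} Given a vertex set $\Gamma$, I would first produce a smooth SNC model $Y'$ with $\Gamma(Y') \supseteq \Gamma$. Each Type II point $\zeta(a, \abs x^{r})$ with $a$ a finite Puiseux polynomial is realised after finitely many point blowups of $\P^1_\bO$. This follows from the Farey addition description in \autoref{sec:moremaps}, or from induction on the number of terms of $a$ together with $\mg$, and blowing up the union handles all of $\Gamma$ at once. Next, I would contract every component of $Y'_\k$ whose vertex lies outside $\Gamma$. The intersection form on $\Div_b(Y')$ is negative semidefinite with kernel spanned by the full fibre (Zariski's lemma). So any proper subset of components has a negative definite intersection matrix, and Artin's contractibility criterion (equivalently Lipman's, for arithmetic surfaces) gives a normal contraction $Y' \to Y$ with $\Gamma(Y) = \Gamma$. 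The main obstacle is that this contraction may a priori only be an algebraic space, and one must show $Y$ is projective over $\bO$. I would first try to write down an ample divisor explicitly: take a sufficiently positive combination of a horizontal section with fibre components in $\Gamma$, chosen to be trivial on each contracted component. Failing that, I would argue formally as Bosch--Lütkebohmert do: the $\Gamma$-domains together with the points of $\Gamma$ give an admissible affinoid covering, which yields a formal model, and this algebraises because $Y'$ is projective.

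\emph{Domination.} If $Y'$ dominates $Y$ via $\rho$, then each component of $Y_\k$ has a strict transform in $Y'_\k$ with the same valuation, so $\Gamma(Y) \subseteq \Gamma(Y')$. Conversely, suppose $\Gamma(Y) \subseteq \Gamma(Y')$. Resolve the birational map $Y' \dashto Y$ by a smooth model $Y''$ dominating both. The components contracted by $Y'' \to Y'$ are exactly those with vertices outside $\Gamma(Y')$, and these are therefore also outside $\Gamma(Y)$, so they are contracted by $Y'' \to Y$ as well. Since $Y'$ is normal, Zariski's main theorem (or the rigidity lemma) shows that $Y'' \to Y$ factors through $Y'' \to Y'$, which gives the morphism $Y' \to Y$.
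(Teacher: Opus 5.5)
The paper gives no proof of this statement. It imports the correspondence from \cite{BPR} and \cite{BFJ}, which obtain it through formal/analytic models and the reduction map over a complete base. Your route is genuinely different: you realise $\Gamma$ on an iterated point blowup $Y'$ of $\P^1_\bO$, contract the remaining fibre components, and deduce domination from the rigidity lemma. This is the classical arithmetic-surface argument. It works directly over the non-complete ring $\bO$ and for arbitrary normal models, which is the generality the statement asserts, but it puts all the weight on a contraction theorem. Your domination step is correct: curves contracted by $Y''\to Y'$ lie outside $\Gamma(Y')\supseteq\Gamma(Y)$, so the connected fibres of $Y''\to Y'$ are crushed by $Y''\to Y$, and normality of $Y'$ gives the factorisation.

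Two steps need repair.

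\emph{Injectivity.} Your injectivity paragraph does not recover $Y$ from $\Gamma(Y)$. The formula for $\bO_Y(V)$ presupposes the affine opens $V$ of $Y$ and which vertices are centred in them, and that is exactly the data in question. Deduce injectivity from your domination criterion instead: if $\Gamma(Y)=\Gamma(Y')$ you get morphisms $Y\to Y'$ and $Y'\to Y$ restricting to the identity on $\P^1_\ell$, and separatedness makes them mutually inverse.

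\emph{Surjectivity.} This is the more serious gap. The existence of $Y$ as a projective $\bO$-scheme is the whole content, and you leave it open. Your explicit recipe also does not work as stated: adding positive multiples of components in $\Gamma$ to a horizontal divisor makes its degree positive on any adjacent contracted curve.
\begin{itemize}
\item What you need is a divisor $H$ on $Y'$, ample on the generic fibre, with $H\cdot E=0$ for every contracted $E$ and $H\cdot C>0$ for every $C$ with vertex in $\Gamma$. For example, take a horizontal divisor meeting exactly the components in $\Gamma$, or correct one by an effective cycle supported on the contracted curves, which exists by negative definiteness.
\item You must then show $|nH|$ is base-point free along the contracted curves. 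This is not a numerical consequence of $H\cdot E=0$: it needs $H^1(\bO_Z)=0$ for every effective cycle $Z$ supported on those curves.
\item That vanishing does hold here. Since $Y'$ is an iterated point blowup of $\P^1_\bO$, you have $H^1(Y',\bO_{Y'})=0$, and $H^2$ of coherent sheaves on $Y'$ vanishes. So the contracted configurations are rational, and Artin--Lipman contraction gives a normal projective $Y$ (cf.\ \cite{Liu}).
\end{itemize}

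Your formal fallback also does not work as stated, for three reasons.
\begin{itemize}
\item The $\Gamma$-domains together with the points of $\Gamma$ partition $\P^1_\an$ but are not an admissible affinoid covering. The preimage of any affine open containing $\redct(U)$ contains every boundary vertex of $U$.
\item Algebraisation there would come from relative dimension one, not from projectivity of $Y'$.
\item You would still need to descend from $\hat\bO$ to $\bO$.
\end{itemize}

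Lastly, to realise each Type II point by finitely many blowups, cite the finiteness of the sequence of point blowups along a divisorial valuation (Zariski, Abhyankar). Do not cite \autoref{sec:moremaps}: its examples invoke \autoref{thm:smooth}, which in turn uses this very proposition.
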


There is a relationship between SC models $Y$ and those vertex sets which include all the (topological) vertices for the finite (sub)graph $\Hull(\Gamma(Y)) \subset \P^1_\an(\k((x)))$. By the latter kind of vertex, we mean points of $\Hull(\Gamma(Y))$ which have valency at least $3$ i.e. where the set is not locally an interval. In the special case where the closed fibre $Y_b$ is already reduced, Bosch and L\"utkebohmert \cite[2.2, 2.3]{BL85} showed that a smooth closed point $p \in Y_b$ corresponds to an open Berkovich disk $\redct^{-1}(p)$ and an ordinary double point correspond to an open Berkovich annulus.
Therefore, any such model $Y$ has a vertex set $\Gamma = \Gamma(Y) \subset \P^1_\an$ with the property that every $\Gamma$-domain is either a disk or an annulus. Conversely, if the closed point $p = \redct_Y(U)$ associated to a $\Gamma$-domain $U$ which is an annulus, then $p$ is the intersection of exactly two prime divisors in $X_0$, which are the images under $\redct_Y$ of $\partial U$; if $U$ is a disk then $p$ must be a non-singular point on a unique irreducible $E \subset X_0$.

\begin{prop}\label{prop:galois:reductionstructure}
 Let $Y$ be an SC (resp.\ SNC) model of $X$ over $\bO$, $\Gamma(Y) \subset \P^1_\an$ be the associated vertex set, and let $p \in Y_b$.
\begin{enumerate}
\item $p$ is the generic point of a curve if and only if $\redct^{-1}(p)$ is a single Type II point $\zeta \in \Gamma(Y)$.
 \item $p$ is a non-singular closed point if and only if $\redct^{-1}(p)$ is a $\Gamma(Y)$-disk.
\item $p$ is a singular closed point (resp.\ ordinary double point) if and only if $\redct^{-1}(p)$ is a $\Gamma(Y)$-annulus.
\end{enumerate}
\end{prop}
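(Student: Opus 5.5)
The plan is to combine the bijection of \autoref{cor:galois:pointcorrespondence} with Berkovich's anti-continuity of $\redct_Y$ and the Bosch--L\"utkebohmert description of residue classes. Part (1) is essentially formal. By definition $\Gamma(Y) = \redct^{-1}(Y_{\text{gen}})$, and by \autoref{prop:galois:berkred}(2) each generic point has exactly one preimage, which is a Type II point. Conversely, if $p$ is a closed point, then $\{p\}$ is Zariski closed, so by anti-continuity $\redct^{-1}(p)$ is a nonempty open set. An open subset of $\P^1_\an$ is never a single point, so the preimage is not a single Type II point. For closed points, \autoref{cor:galois:pointcorrespondence} then says $\redct^{-1}(p)$ is exactly one $\Gamma(Y)$-domain $U_p$.

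For (2) and (3), I would first locate the boundary of $U_p$, aiming to show
\[\partial U_p = \set[\zeta_E \in \Gamma(Y)]{E \text{ an irreducible component of } Y_\k,\ p \in E},\]
where $\zeta_E$ is the vertex reducing to the generic point of $E$. The inclusion $\subseteq$ comes from anti-continuity. For $\zeta \in \partial U_p$ we have $\zeta \in \Gamma(Y)$, so $\zeta = \zeta_E$ for some $E$. If $p \notin E$, then $Y_\k \sm E$ is an open set containing $p$ but not the generic point of $E$. Its complement $E$ is closed, so $\redct^{-1}(E)$ is an open neighbourhood of $\zeta_E$. Since $\zeta_E \in \partial U_p$, this neighbourhood meets $U_p$, which forces $p \in E$, a contradiction. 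For $\supseteq$: if $p \in E$, then $\redct^{-1}(E)$ is open and contains both $U_p$ and $\zeta_E$. I would argue that the $\Gamma$-domains reducing into $E$, together with $\zeta_E$, form a connected star around $\zeta_E$, namely the directions at $\zeta_E$ minus the tails. This is via the identification of directions at $\zeta_E$ with points of $E$ (residue classes), so $U_p$ is one such direction and $\zeta_E$ lies in its closure.

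Given this boundary description, the SC hypothesis bounds the number of components through $p$ by two. So $U_p$ is an open connected affinoid of the tree $\P^1_\an$ with one or two boundary points, that is, a $\Gamma$-disk or a $\Gamma$-annulus respectively. It remains to match these two cases with smoothness. A point on two components of the reduced fibre is a singular point of $Y_\k$, and under SNC it is an ordinary double point. A point on one component, with reduced fibre, is a smooth point of $Y_\k$, provided the component itself is smooth there. Here I would cite Bosch--L\"utkebohmert \cite[2.2, 2.3]{BL85} for the converse statements: the residue class of a smooth point is an open disk, and that of an ordinary double point is an open annulus of positive modulus.

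The main obstacle is excluding a point $p$ lying on a single component $E$ at which $E$ itself is singular, such as a cusp or node. Such a point would have a one-boundary residue class, yet must not be counted as nonsingular. I would handle this using the genus-zero structure. Because the generic fibre is $\P^1_\ell$, every component is rational and the direction set at $\zeta_E$ is in bijection with $\P^1(\k)$, the normalisation of $E$. A unibranch singular point would then still give one direction, so I would invoke Bosch--L\"utkebohmert's criterion that a residue class is a disk exactly when the point is smooth on $Y_\k$. A multibranch self-crossing is ruled out by the tree structure of $\P^1_\an$ together with the SC condition. This is the step I expect to need the most care, possibly an appeal to \cite[Theorem 4.11]{BPR} relating $\Gamma$-domains to semistable vertex sets.
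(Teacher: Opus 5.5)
Your proposal is correct and follows essentially the same route as the paper. The paper gives no separate proof; the paragraph preceding the statement rests on Bosch--L\"utkebohmert \cite[2.2, 2.3]{BL85} together with the identification of $\partial U$ with the components through $p$. Your anti-continuity argument proves that identification, which the paper only asserts.

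In fact only two ingredients are needed: your inclusion $\partial U_p \subseteq \{\zeta_E : p \in E\}$, and the half of \cite{BL85} stating that a disk residue class forces a smooth point of the reduced fibre. So you can drop the looser reverse inclusion and the genus-zero and self-crossing discussion.
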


\begin{defn}
 Let $X$ be a model and $p \in X_0$. We say that a closed point $q$ or curve $E$ is \emph{infinitely near} $p$ if there is a birational morphism $\pi : Y \to X$ such that $q$ or $E$ is in $\pi^{-1}(p)$.
\end{defn}

Suppose that $\zeta \in \P^1_\an$ reduces to a curve $\redct_Y(\zeta) = E \subset Y$ which is infinitely near $p \in X$. It follows that $\redct_X(\zeta) = p$ because $\redct_X = \pi \circ \redct_Y$. The following is an immediate as a corollary of \autoref{prop:galois:vertexextension}.

\begin{cor}
 Let $X$ be a model and $p \in X_0$. The collection of irreducible curves infinitely near $p$ corresponds to the Type II points in the $\Gamma(X)$-domain $\redct_X^{-1}(p)$.
\end{cor}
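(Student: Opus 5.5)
The statement is the corollary: for a model $X$ and a closed point $p \in X_0$, the irreducible curves infinitely near $p$ correspond to the Type II points of the $\Gamma(X)$-domain $U = \redct_X^{-1}(p)$. The plan is to prove the two inclusions separately, using \autoref{prop:galois:vertexextension} to pass between models and vertex sets, \autoref{prop:galois:berkred} for uniqueness of generic points, and \autoref{cor:galois:pointcorrespondence} to identify $\redct_X^{-1}(p)$ with a single $\Gamma(X)$-domain.

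For the forward inclusion, let $E$ be an irreducible curve infinitely near $p$, witnessed by a birational morphism $\pi : Y \to X$ with $E \subset \pi^{-1}(p)$. Then $E$ lies in the closed fibre $Y_b$, and by \autoref{prop:galois:berkred}(2) there is a unique $\zeta \in \Gamma(Y)$ with $\redct_Y(\zeta) = \eta_E$, the generic point of $E$. In particular $\zeta$ is Type II. Reduction is compatible with domination, $\redct_X = \pi \circ \redct_Y$; this follows from uniqueness of the lift in the valuative criterion construction of $\redct$. Hence $\redct_X(\zeta) = \pi(\eta_E) = p$, which is a closed point. So $\zeta \notin \Gamma(X)$, and by \autoref{cor:galois:pointcorrespondence} $\zeta$ lies in the $\Gamma(X)$-domain $U$ corresponding to $p$.

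For the reverse inclusion, let $\zeta \in U$ be a Type II point and set $\Gamma' = \Gamma(X) \cup \{\zeta\}$, a finite set of Type II points. By \autoref{prop:galois:vertexextension} there is a model $Y$ with $\Gamma(Y) = \Gamma'$, and since $\Gamma' \supset \Gamma(X)$, $Y$ dominates $X$ via a birational morphism $\pi$. Then $\redct_Y(\zeta)$ is the generic point of a curve $E \subset Y_b$, and $\pi(\eta_E) = \redct_X(\zeta) = p$. Since $\pi$ is proper, $\pi(E)$ is closed and irreducible with generic point $p$, so $\pi(E) = \{p\}$ and $E \subset \pi^{-1}(p)$; that is, $E$ is infinitely near $p$.

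It remains to check that the assignment is well defined and bijective. Identify curves appearing on different models when they are proper transforms of each other, equivalently when they have the same divisorial valuation. Then \autoref{prop:galois:berkred}(2) makes $E \mapsto \zeta$ well defined and injective, and the two constructions above are mutually inverse.

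The main obstacle is making this identification precise: showing that the Type II point attached to $E$ does not depend on the choice of dominating model $Y$. The approach is to pass to a common dominating model with vertex set $\Gamma(Y) \cup \Gamma(Y')$, again using \autoref{prop:galois:vertexextension}. On that model the unique preimage of the generic point of the proper transform of $E$ is the same Type II point, by compatibility of reduction with domination. The remaining steps are routine.
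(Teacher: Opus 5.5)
Your proposal is correct and follows the paper's route. The paper treats this as immediate from two facts: the compatibility $\redct_X = \pi \circ \redct_Y$, which puts the Type II point of any curve infinitely near $p$ inside $\redct_X^{-1}(p)$, and \autoref{prop:galois:vertexextension}, which adjoins $\zeta$ to $\Gamma(X)$ to give a dominating model where $\zeta$ reduces to a curve lying over $p$. Your common-dominating-model check that the attached Type II point does not depend on the model only makes explicit what the paper leaves implicit; it can also be read off from \eqref{eq:normval}, since $\zeta$ is the normalised divisorial valuation $\ord_E/\mb(E)$.
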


\subsection{Intersections and Minimal Models}

Now we develop and recall some standard facts about intersections of divisors in this context of a ruled surface.
Any non-zero fibral divisor (sum of fibril components) has negative self intersection unless it is a multiple of a fibre. Moreover, the intersection form is negative semi-definite on fibral divisors. This is another way to see that any proper subset of components of $X_0$ can be blown down to a (probably singular) surface; see \cite{Art}, \cite{Mum}. The next proposition can be found e.g.\ in \cite[Proposition III.8.2, IV.7.3]{ATAEC}.

\begin{prop}\label{lem:farey:intersections}\label{prop:farey:negativity}
 Let $X$ be a global model over $0$ with $X_0 = \sum_{j=1}^n \mb_j E_j$. Let $F = \sum_{i=1}^n m_i$ be a fibral divisor. Then $F^2 \le 0$ and the following are equivalent:
\begin{enumerate}
 \item $F^2 = 0$
 \item $F \cdot F' = 0$ for any other fibral divisor $F'$
 \item $F = aX_0 = \sum_i a\mb_i E_i$ for some integer $a$.
\end{enumerate}
\end{prop}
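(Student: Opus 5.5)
The plan is to work in the real vector space $\Div_0(X)\otimes\R$ of fibral divisors with the intersection pairing. The whole argument rests on two facts. First, the fibre $X_0$ is linearly equivalent to the pullback of a point of $B$, so $X_0\cdot E_i=0$ for every component $E_i$. Second, distinct components meet non-negatively, $E_i\cdot E_j\ge 0$ for $i\ne j$, and the dual graph is connected because the fibre is connected (Zariski's connectedness, since $h$ has connected fibres).

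Write $F=\sum_i m_iE_i=\sum_i t_i\,\mb_iE_i$ with $t_i=m_i/\mb_i$. The key step is the standard ``Zariski lemma'' computation. Using $\sum_j \mb_j\,E_i\cdot E_j=E_i\cdot X_0=0$, expand
\[F^2=\sum_{i,j}t_it_j\,\mb_i\mb_j\,E_i\cdot E_j=-\tfrac12\sum_{i\ne j}(t_i-t_j)^2\,\mb_i\mb_j\,E_i\cdot E_j .\]
The identity is obtained by subtracting $\tfrac12\sum_{i,j}(t_i^2+t_j^2)\mb_i\mb_j E_i\cdot E_j=0$. Since each $\mb_i\mb_jE_i\cdot E_j\ge 0$ for $i\ne j$, this gives $F^2\le 0$. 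This algebraic rewriting is the one computation to check carefully; on a possibly singular normal model I would justify $\Q$-valued pairings via Mumford's intersection theory, or pass to a resolution and use the projection formula.

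For the equivalences, (iii)$\Rightarrow$(ii) follows immediately from $X_0\cdot F'=0$, and (ii)$\Rightarrow$(i) is trivial.

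The remaining implication (i)$\Rightarrow$(iii) is the main point. If $F^2=0$, every term in the sum vanishes, so $t_i=t_j$ whenever $E_i\cdot E_j>0$. Connectedness of the dual graph then forces all $t_i$ to equal a common value $a$, so $F=aX_0$. Integrality of $a$ is not automatic: $a=m_i/\mb_i$ is rational in general. So I would read (iii) as $a\in\Q$, or as an integer when $\HCF(\mb_i)=1$, which holds for a ruled surface because the fibre has a reduced component meeting a section. I expect the connectedness input and this integrality caveat to be the only delicate points. Negative semi-definiteness then follows from the $F^2\le 0$ computation, with kernel spanned by $X_0$.
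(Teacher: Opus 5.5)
Your argument is correct, and it is the standard Zariski-lemma computation from the Silverman reference that the paper cites in place of a proof (there (iii) reads ``$F=rX_0$ with $r\in\mathbb{Q}$''), so reading (iii) with $a$ rational is right. One correction to your side remark: $\gcd(\mb_1,\dots,\mb_n)=1$ can fail on a singular normal model, e.g.\ the model with $\Gamma(X)=\{\zeta(0,|x|^{1/2})\}$ has $X_0=2E$, so $F=E$ has $F^2=0$ with $a=\tfrac12$; hence $a\in\mathbb{Z}$ holds on smooth models, where a section meets $X_0$ in a reduced component by \autoref{lem:jhssec}, but not in general.
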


\begin{lem}[{\cite[Proposition IV.4.3]{ATAEC}}]\label{lem:jhssec}
 Let $\pi : X \to \Spec(R)$ be a regular arithmetic surface over a Dedekind domain $R$, and let $\mathfrak p \in \Spec(R)$.
\begin{enumerate}
 \item Let $P_0 \in X_{\mathfrak p} \subset X$ be a closed point on the fiber of $X$ over $\mathfrak p$. Then
 \[X_{\mathfrak p}\ \text{is non-singular at\ } x \iff \pi^*(\mathfrak p) \nsubset \mathcal M^2_{X, x}.\]
  Here $\pi^*$ is the natural map $\pi^* : R \to \bO_{X, x}$ induced by $\pi$, and $\mathcal M_{X, x}$ is the maximal ideal of the local ring $\bO_{X, x}$ of $X$ at $x$.
 \item Let $P \in X(R)$. Then $X_{\mathfrak p}$ is non-singular at $P(\mathfrak p)$.
\end{enumerate}
\end{lem}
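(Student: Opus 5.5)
The plan is to reduce both parts to a statement about the regular two-dimensional local ring $\bO_{X,x}$ modulo one element. First localise: replace $R$ by the discrete valuation ring $R_{\mathfrak p}$ and fix a uniformiser $t$, so that $\mathfrak p R_{\mathfrak p} = (t)$. Then $\pi^*(\mathfrak p)$ generates the ideal $(\pi^* t)\,\bO_{X,x}$. The local ring of the fibre at $x$ is
\[\bO_{X_{\mathfrak p}, x} \cong \bO_{X,x}/(\pi^* t).\]
So the condition $\pi^*(\mathfrak p) \nsubset \mathcal M^2_{X,x}$ is just $\pi^* t \notin \mathcal M^2_{X,x}$, and the fibre is non-singular at $x$ exactly when $\bO_{X,x}/(\pi^* t)$ is a regular local ring.

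For part (1), I would use the standard fact from commutative algebra: if $(A,\mathcal M)$ is a regular local ring and $0 \ne f \in \mathcal M$, then $A/(f)$ is regular if and only if $f \notin \mathcal M^2$. Here $A = \bO_{X,x}$ is regular of dimension $2$, since $X$ is a regular surface and $x$ is a closed point. We also need $\pi^* t \ne 0$. This follows from flatness of $\pi$, or simply from dominance of $\pi$: $t$ is not a zero divisor, and $X_{\mathfrak p}$ is a curve. Hence $\dim A/(\pi^* t) = 1$ by Krull's principal ideal theorem together with the fact that $A$ is a domain.

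The embedding dimension of the quotient is computed as
\[\dim_{\k} \frac{\mathcal M/(f)}{(\mathcal M/(f))^2} = \dim_{\k} \frac{\mathcal M}{\mathcal M^2 + (f)}.\]
This equals $2-1=1$ if $f \notin \mathcal M^2$, and equals $2$ otherwise. Comparing with $\dim A/(f) = 1$ gives the equivalence. The only point needing care is this dimension count, which I expect to be the main (mild) obstacle; everything else is bookkeeping about which ring is which.

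For part (2), a section $P \in X(R)$ gives a morphism $P : \Spec R \to X$ with $\pi \circ P = \id$. Set $x = P(\mathfrak p)$. Then $P$ induces a local homomorphism $P^* : \bO_{X,x} \to R_{\mathfrak p}$ with $P^* \circ \pi^* = \id$, and locality gives $P^*(\mathcal M_{X,x}) \subseteq \mathfrak p R_{\mathfrak p}$. Suppose $\pi^* t \in \mathcal M_{X,x}^2$. Applying $P^*$ gives
\[t = P^*(\pi^* t) \in (\mathfrak p R_{\mathfrak p})^2 = (t^2),\]
which is impossible for a uniformiser. Hence $\pi^* t \notin \mathcal M^2_{X,x}$, and part (1) shows that $X_{\mathfrak p}$ is non-singular at $P(\mathfrak p)$.
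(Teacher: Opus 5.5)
Your proof is correct and is essentially the standard argument of Silverman's Proposition~IV.4.3, which the paper cites rather than reproves: compare the cotangent spaces of the two-dimensional regular ring $\bO_{X,x}$ and of $\bO_{X,x}/(\pi^*t)$, then use the retraction $P^*\circ\pi^*=\mathrm{id}$ to force $t\in(t^2)$. Two points are left implicit: $\mathfrak p$ must be a nonzero prime (you use this when choosing the uniformiser $t$, and the statement fails for the generic fibre), and $\dim\bO_{X,x}=2$ is best justified by the flat dimension formula $\dim\bO_{X,x}=\dim R_{\mathfrak p}+\dim\bO_{X_{\mathfrak p},x}=1+1$ rather than by regularity alone.
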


In other words, if $P : B \to X$ is a section (defined with local uniformizer $x$), then its intersection point $P_0 = P(\mathfrak p)$ is a non-singular point of $X_0$, and thus also the special fibre is locally integral at $P_0$, meaning $P_0$ is contained in a single, reduced component of $X_0$. 
Our initial setup began with a minimal model whose vertex set is $\set{\zeta(0, 1)}$. In fact, the minimal smooth birational models are exactly those represented by a singleton integral Type II point.

\begin{prop}\label{prop:corresp:selfint}
 Let $X$ be a model over $\bO$. Suppose that $X$ is smooth over its special fibre $X_0 = \sum_{i=1}^n \mb_i E_j$ where each $E_i$ is integral. Then at least one of the $\mb_i$ is equal to $1$ and either 
\begin{enumerate}
 \item $n = 1$, $E_1 \cong \P^1$, and $E_1^2 = p_a(E_1) = 0$, or
 \item $n > 1$, $E_i^2 < 0$ for every $i$, and there exists an $i$ such that $E_i^2 = -1$ and $p_a(E_i) = 0$.
\end{enumerate}
In particular, any smooth minimal model of $X$ has an integral rational special fibre.
\end{prop}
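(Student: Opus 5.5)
The plan is to combine negativity of the intersection form on fibral divisors (\autoref{prop:farey:negativity}) with the adjunction formula (\autoref{thm:adjunction}) and Castelnuovo's criterion (\autoref{thm:castelnuovo}).

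\textbf{Some $\mb_i = 1$.} The fibration $h : X \to B$ has a section by Tsen's theorem, as in the proof of \autoref{prop:galois:betterskewprod}. Localise it to a section $P$ over $\bO$ meeting $X_0$ at a point $P_0$. Then \autoref{lem:jhssec} says $X_0$ is non-singular at $P_0$. So $P_0$ lies on a single component $E_i$, and that component occurs with multiplicity $\mb_i = 1$. An alternative, which I will use as a cross-check, is the intersection number $P \cdot X_0 = 1 = \sum_i \mb_i (P\cdot E_i)$. This forces a unique $i$ with $P \cdot E_i = 1$ and $\mb_i = 1$.

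\textbf{Case $n = 1$.} Here $X_0 = \mb_1 E_1$ with $\mb_1 = 1$, so $E_1 = X_0$ is numerically equivalent to a general fibre. Hence $E_1^2 = 0$. For the genus, use adjunction on a general fibre $F \cong \P^1$: this gives $K\cdot F = -2$. Since $E_1 \equiv F$ numerically, $(K + E_1)\cdot E_1 = -2$, so $p_a(E_1) = 0$. An integral curve of arithmetic genus $0$ over algebraically closed $\k$ is isomorphic to $\P^1$. (I read the statement's ``$E_1^2 = p_a(E_1) = 0$'' this way.)

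\textbf{Case $n > 1$.} No single $E_i$ is a multiple of $X_0$, so \autoref{prop:farey:negativity} gives $E_i^2 < 0$ for every $i$. For the existence of a $(-1)$-curve, I start from $K \cdot X_0 = K\cdot F = -2$. Writing this as $\sum_i \mb_i (K\cdot E_i) = -2 < 0$, some $E_i$ has $K \cdot E_i < 0$. Adjunction then gives $2p_a(E_i) - 2 = K\cdot E_i + E_i^2$, and both terms on the right are negative integers. So $2p_a(E_i) - 2 \le -2$, which forces $p_a(E_i) = 0$ and $K\cdot E_i = E_i^2 = -1$. Since $X$ is smooth, the divisors are Cartier and all these intersection numbers are integers, so the argument is rigorous.

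\textbf{The ``in particular''.} Suppose $X$ is a smooth minimal model with $n > 1$. The curve $E_i$ found above is a smooth rational $(-1)$-curve, so Castelnuovo lets us contract it to a smooth model, contradicting minimality. Hence $n = 1$, and the special fibre is $E_1 \cong \P^1$ with multiplicity $1$, i.e.\ integral and rational.

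I expect the main care to be in justifying that $K\cdot X_0 = -2$ and $X_0 \equiv F$ in this local/global model setting. This relies on the fibres of the flat morphism $h$ being algebraically equivalent.
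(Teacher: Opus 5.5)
Your proposal is correct and follows essentially the same route as the paper: \autoref{prop:farey:negativity} gives $E_i^2<0$ when $n>1$, and $K\cdot X_0=-2$ produces an $E_i$ with $K\cdot E_i<0$, after which adjunction together with integrality of the intersection numbers forces $p_a(E_i)=0$ and $E_i^2=-1$. Your extra steps make explicit what the paper's proof leaves to the surrounding discussion. These are the section meeting a multiplicity-one component (via \autoref{lem:jhssec} or $P\cdot X_0=1$), the adjunction computation giving $E_1\cong\P^1$ when $n=1$, and the Castelnuovo contraction for the final claim.
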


\begin{proof}
First, \autoref{prop:farey:negativity} says have that $E_i^2 \le 0$ with equality if and only if $n = 1$ and hence $X_0 = E_1 \cong \P^1$ is the whole fibre. Otherwise we may assume $E_i^2 < 0$ for every $i$.

 Let $K$ be a canonical divisor of $X$, then $K \cdot X_0 = -2$. Now 
 \[-2 = K \cdot X_0 = K \cdot \sum_{i=1}^n \mb_i E_i = \sum_{i=1}^n \mb_i K \cdot E_i\]
 Clearly, one of these terms must be negative, so fix an $i$ with $K \cdot E_i < 0$.
 By the adjunction formula, \autoref{thm:adjunction}, we get $K \cdot E_i + E_i^2 = 2 p_a(E_i) -2$; therefore, 
 \[-E_i^2 + 2 p_a(E_i) < 2.\]
 On the other hand, $ p_a(E_i) \in \N$ and $-E_i^2$ is a positive integer, so the only solution is $p_a(E_i) = 0$ and $E_i^2 = -1$.
\end{proof}

\subsection{Dual Graphs}

\begin{defn}
Let $\Gamma \subset \P^1_\an$ be a vertex set. If $\Gamma$ has the property that every $\Gamma$-domain is a disk or an annulus, we will say $\Gamma$ is \emph{SC}. Then we construct a finite graph $\Delta(\Gamma)$ called the \emph{dual graph} of $\Gamma$ as follows. Let the vertices of $\Delta(\Gamma)$ be the points of $\Gamma$. For every $\Gamma$-annulus $U$ with boundary points $\zeta, \xi \in \Gamma$, we let $\zeta\xi$ be an edge. 
\end{defn}

If $Y$ is an SC model then $\Gamma = \Gamma(Y)$ is SC and we may write $\Delta(\Gamma) = \Delta(Y)$. 
This dual graph embeds in $\P^1_\an$ naturally since $\Gamma \subset \P^1_\an$ and an edge is included as the interval $(\zeta, \xi)$. Hence  $\Delta(\Gamma)$ is always a tree because it is the subgraph of a tree.

By \autoref{prop:galois:berkred} each vertex in $\Gamma(Y)$ represents a unique irreducible component $E$ of $Y_\k$, and each edge $\zeta\xi$ with $\redct_Y(\zeta) = E, \redct_Y(\xi) = F$ represents the (transverse if SNC) intersection of $E$ with $F$. Hence $\Delta(Y)$ is the \emph{dual graph} of the irreducible components in the closed fibre $Y_\k$. We may instead consider the vertices of $\Delta(Y)$ to be irreducible components in $Y_0$ and think of edges corresponding to their intersections; we still require an SC model so that no three components meet at a single closed point. This was explained in the case of SNC models by Boucksom, Favre, and Jonsson \cite{BFJ}, expanding on the work of several others. They further show how the dual graph $\Delta(Y)$ of an SNC model $Y$ can be canonically embedded in $\P^1_\an$ with an isometry including the metric structures of the two. In \cite[\S 3]{BFJ}, this map is denoted by $\emb_Y : \Delta(Y) \to \P^1_\an$. In \cite{FJ04} it is shown that the injective limit of (images of) all such embeddings over all models $Y$ is $\P^1_\an$ itself. This essentially says that the collection of dual graphs sweep out the whole Berkovich line (or at least Type II/III points).

\begin{defn}
 Let $E$ be a vertex in $\Delta(\Gamma)$. Then we can define a \emph{direction} $\bvec u$ at $E$ to be a connected component of the subgraph $\Delta(\Gamma) - E$, equivalently it corresponds to the connected component of $\overline{X_0 \sm E}$. If $F \in \Gamma \sm \set E$ we let $\vec v_E(F)$ denote the unique direction at $E$ containing $F$. We call the set of directions at $E$ for all possible models containing $E$, the \emph{tangent space} at $E$ in $\P^1_\an$ (viewing each dual graph $\Delta(\Gamma)$ in $\P^1_\an$). 
\end{defn}

There is a natural correspondence between the tangent space $\Dir{\zeta}$ of directions at a Type II point $\zeta$ and the (potential) directions at $E = \redct(\zeta)$ within various dual graphs which contain $E$. More importantly, each direction at $E$ is associated to a closed point on $E$.

\begin{prop}[{\cite[Proposition 6.3]{FJ04}}]\label{prop:galois:directions}
 Let $X$ be a local model of $\P^1_\ell$ over $\bO$. Let $E \in \Gamma_G(X)$ and pick $p \in E$. Let $Y$ be the model obtained by blowing up $p$ and let $E_p \in \Gamma_G(Y)$ be the exceptional component. Denote by $\vec v_E(p) = \vec v_E(E_p)$ the tangent vector represented by $E_p$ at $E$.
Then the map $p \mapsto \vec v_E(p)$ is a bijection between the set of closed points in $E$ and the tangent space at $E$ in $\V$.%
\end{prop}

\begin{cor}\label{cor:corresp:domains}
 Let $X$ be a local model of $\P^1_\ell$ over $\bO$. Let $p \in X_0$ and $U$ be a $\Gamma(X)$-domain associated to $p$ ($\redct^+(U) = p$). Then the vertices $\zeta_1, \dots, \zeta_n$ bounding $U$ correspond to the divisors $E_1 = \redct^+(\zeta_1), \dots, E_n = \redct^+(\zeta_n) \in \Div_0(X)$ which contain $p$. In particular, if $p$ is contained in only one component $E$ of $X_0$, then $(\redct'_X)^{-1}(p)$ is a disjoint union of disks, each of which is a direction at a Type II point $\zeta \in (\redct'_X)^{-1}(E)$; further, these disks are Galois conjugates and lifts of $\vec v_E(p)$. Similarly, if $p = E_1 \cap E_2$ is satellite, then each connected component of $(\redct'_X)^{-1}(p)$ is an annulus.
\end{cor}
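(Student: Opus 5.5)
The first claim is essentially \autoref{cor:galois:pointcorrespondence} combined with the Bosch--L\"utkebohmert description \autoref{prop:galois:reductionstructure}. The plan is to work one fibre at a time and pass between $\P^1_\an(\K)$ and $\V = \P^1_\an(\k((x)))$ via $\mathfrak p$, as in the commutative diagram with $\redct'_X = \redct_X\circ\mathfrak p$. The statement about vertices bounding $U$ will follow from anti-continuity of $\redct_X$, together with the fact that $\redct^+$ is a bijection $\mathcal S^+(\Gamma(X))\to X_b$ preserving specialisation. If $p \in E_j$, then $E_j$ specialises to $p$. Hence the unique preimage $\zeta_j$ of the generic point of $E_j$ lies in the closure of $\redct^{-1}(p)$, but not in it, so $\zeta_j$ is a boundary point of $U$. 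Conversely, a boundary vertex $\zeta$ of $U$ lies in the closure of $U$. By anti-continuity, $\redct^{-1}(\overline{\{p\}})$ is open, so its complement, the preimage of the closed set $X_b\sm\{\text{points not specialising to } p\}$, is closed. I would use this to argue that $\redct(\zeta)$ is a generic point whose closure contains $p$, i.e.\ $p\in E$.

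Next I would treat the case where $p$ lies on a single component $E$. Here $p$ is a non-singular point of the reduced support, and \autoref{prop:galois:directions} identifies $p$ with a tangent vector $\vec v_E(p)$ at $E$ in $\V$, i.e.\ a direction at $\mathfrak p(\zeta)$ for $\zeta\in(\redct'_X)^{-1}(E)$. I would then lift this to $\P^1_\an(\K)$. The preimage $\mathfrak p^{-1}$ of a $\Gamma_G$-domain is a union of a Galois orbit of connected open affinoids, by the remark at the end of \autoref{sec:galois:mappingsandmult}. Each component is a $\Gamma(X)$-domain whose unique boundary point lies over $E$, since $\Gamma(X)=\mathfrak p^{-1}(\Gamma_G)$ is $G$-invariant. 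So each component is a direction at such a $\zeta$, hence an open Berkovich disk. The components are permuted transitively by $G$ because $\mathfrak p$ identifies $\P^1_\an/G$ with $\V$ (Favre--Jonsson), so they are Galois conjugates, each mapping onto $\vec v_E(p)$.

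For the satellite case $p = E_1\cap E_2$, each component $U$ of $(\redct'_X)^{-1}(p)$ has boundary consisting of lifts of $E_1$ and $E_2$ by the first paragraph. I must show it has exactly two boundary points, one over each $E_i$. This is the main obstacle: a priori a component could have two lifts of $E_1$ in its boundary. I would try to rule this out using \autoref{prop:galois:reductionstructure}(iii), which says the $\Gamma_G$-domain in $\V$ is an annulus. Since $\mathfrak p$ restricted to a component $U$ is the quotient by its stabiliser in $G$, and the quotient is an annulus with two ends, I would argue that the stabiliser fixes the boundary points of $U$. A tree-convexity argument then shows that $U$ is the open set between two points $\alpha,\beta$ with nothing else removed, i.e.\ an annulus. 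If the stabiliser swapped or merged boundary points, the image would have valency at least three or be a disk, contradicting (iii).
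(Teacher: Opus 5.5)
The essential gap is that your argument never uses that $X$ is non-singular at $p$. Without that hypothesis the disk and annulus conclusions are false, so the steps that produce them cannot be correct.

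Take $\Gamma=\{\alpha,\alpha'\}$ with $\alpha=\zeta(x^{1/2},\abs{x})$ and $\alpha'=\zeta(-x^{1/2},\abs{x})$, a single Galois orbit. By \autoref{thm:corresp:modelexistence} there is a model $X$ whose special fibre has one component $E$, with $\mb(E)=2$. The point $p=\redct'_X(\infty)$ lies only on $E$. Yet $(\redct'_X)^{-1}(p)=\P^1_\an\setminus\bigl(\CD_\an(x^{1/2},\abs{x})\cup\CD_\an(-x^{1/2},\abs{x})\bigr)$ is connected with two boundary points $\alpha,\alpha'$, so it is not a direction at $\alpha$. This is exactly where your assertion that each component of $\mathfrak p^{-1}(U)$ has a \emph{unique} boundary point over $E$ breaks. $G$-invariance of $\Gamma$ does not stop one component from carrying several conjugate lifts of $E$ on its boundary; you raise this worry only in the satellite case, but it already bites here.

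Similarly, for $\Gamma=\{\zeta(0,1),\alpha,\alpha'\}$ the point $p=E_1\cap E_2$ is satellite, but $(\redct'_X)^{-1}(p)=D_\an(0,1)\setminus\bigl(\CD_\an(x^{1/2},\abs{x})\cup\CD_\an(-x^{1/2},\abs{x})\bigr)$ has three boundary points. Its stabiliser is all of $G$, which permutes $\alpha,\alpha'$, and its image in $\V$ has exactly two boundary points. So your dichotomy ``valency at least three or a disk'' is false. Moreover, \autoref{prop:galois:reductionstructure}(iii) read in $\V$ gives you nothing beyond two boundary points, which you already had.

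In both examples $p$ is singular. The multiplicity-one Type I point $\infty$ (resp.\ $0$) reduces to $p$. At a smooth point, however, the chain of ideals in the proof of \autoref{thm:farey:puiseuxsection} forces $\mm(\gamma)\ge\mm(p)$ for every Puiseux $\gamma$ reducing to $p$, where $\mm(p)=\mb(E)=2$ (resp.\ $\mb(E_1)+\mb(E_2)=3$).

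Non-singularity of $p$ is the setting in which the paper actually uses this corollary (e.g.\ in the proof of \autoref{thm:farey:puiseuxsection}), and it has to enter your argument. For instance, in the free case:
\begin{itemize}
\item Suppose a component $W$ had two boundary points. They are distinct conjugate lifts of $E$, hence incomparable, so the join $\hat\xi$ of all boundary points of $W$ lies in $W$.
\item No vertex lies in $\vec v_{\hat\xi}(\infty)$, since the first one met would be a boundary point of $W$ not below $\hat\xi$. Hence $\infty\in W$.
\item Then $1=\mm(\infty)\ge\mm(p)=\mb(E)\ge\mm(\zeta)\ge 2$, a contradiction.
\end{itemize}
The satellite case needs a comparable argument using $\mm(p)=\mb(E_1)+\mb(E_2)$.

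There is also a smaller gap in your first paragraph. Anti-continuity gives only one inclusion. For every Zariski-open $V\ni p$, the set $\redct^{-1}(V)$ is closed and contains $U$, so every point of $\overline U$ reduces to a generisation of $p$. This shows that boundary vertices of $U$ reduce to components through $p$. It does not show that every component $E_j\ni p$ contributes a boundary vertex. Also, the set of generisations of $p$, which you treat as closed, is not closed: its closure contains all of $E_j$. That converse direction needs the local structure at $\zeta_j$, namely that every closed point of $E_j$ is the reduction of points arbitrarily close to $\zeta_j$. This is what \autoref{prop:galois:directions} supplies; calling $\redct^+$ ``specialisation-preserving'' simply assumes it.
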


In the remainder of this article, we may abuse notation effectively identifying a valuation/norm $\zeta_E \in \V$ with its fibral divisor $E = \redct_X(v_E)$. We can also identify the directions at $\zeta_E$ (which contain other vertices of $\Gamma(X)$) with the directions at $E \in \Delta(X)$. Hence, we might refer to $\vec v_E(p)$ as a direction at a Type II valuation $\zeta$.

\begin{defn}\label{defn:corresp:freesat}
 Let $X$ be an model over $\bO$ with $X_0 = \sum_{j=1}^n \mb_j E_j$. We say a closed point $p \in X_0$ is \emph{free} iff $p \in E_i$ for some $i$ but $p \nin E_j\ \forall j \ne i$. We say $p$ is \emph{satellite} iff $p = E_i\cap E_j$ (but $p \nin E_m\ \forall m \ne i, j$). 
  If a curve $E$ is obtained (only) by blowing up a satellite point $p$ then say $E$ is \emph{satellite}, otherwise call $E$ \emph{free}.
\end{defn}

\begin{prop}\label{prop:corresp:smoothsnc}
 Let $X$ be a model over $\bO$. If $X$ is smooth then it is SNC and its dual graph is a tree. Furthermore, let $p \in X_0$ be a smooth closed point and $\pi : \hat X \to X$ be the point blowup at $p$, with exceptional curve $E$.
\begin{itemize}
 \item If $p$ is a free point on component $F$, then the dual graph $\Delta(\hat X)$ is obtained from $\Delta(X)$ by adding a leaf $E$ to $F \in \Delta(X)$.
  \item If $p = F_1 \cap F_2$ is a satellite point, then the dual graph $\Delta(\hat X)$ is $\Delta(X)$ with the edge $F_1F_2$ subdivided into $\hat F_1E$ and $E \hat F_2$ ($\hat F_1$ and $\hat F_2$ no longer intersect).
\end{itemize}
\end{prop}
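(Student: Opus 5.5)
We prove the three assertions in order: SNC and tree structure first, then the free-point blowup, then the satellite one.

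\emph{Smooth implies SNC with tree dual graph.} Since $X$ is smooth along $X_0$, every component $E_i$ of $X_0$ is Cartier. The local picture at a closed point is controlled by \autoref{lem:jhssec} together with the vertex-set description of \autoref{prop:galois:reductionstructure} and \autoref{cor:corresp:domains}.
\begin{itemize}
\item By \autoref{prop:galois:reductionstructure}, SC is equivalent to every $\Gamma(X)$-domain being a disk or an annulus.
\item Suppose some domain $U$ had $n\ge 3$ boundary vertices. Then the point $p=\redct^+(U)$ would lie on $n\ge3$ components. Blowing up $p$ shows this cannot happen on a smooth surface where the local equation of the fibre has normal crossings; this step needs an argument.
\item I would instead derive it from the Bosch–Lütkebohmert / Berkovich description cited before \autoref{prop:galois:reductionstructure}. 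Smooth points of the (reduced-support) fibre correspond to disks, and ordinary double points correspond to annuli.
\item Smoothness of $X$ gives that two components through $p$ meet transversally with multiplicity one. This follows via the negativity statement \autoref{prop:farey:negativity} applied to $(E_1+E_2)$, together with the fact that $E_1\cdot E_2=\dim \bO_{X,p}/(f_1,f_2)$ with $f_1,f_2$ part of a regular system of parameters.
\end{itemize}
This yields SNC. The tree property is then immediate, since $\Delta(\Gamma)$ embeds in the $\R$-tree $\P^1_\an$ as noted after the definition of dual graph.

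\emph{Free blowup.} Let $p$ be free on $F$. By \autoref{prop:galois:vertexextension}, the model $\hat X$ has $\Gamma(\hat X)=\Gamma(X)\cup\{\zeta_E\}$. By \autoref{prop:galois:directions}, $\zeta_E$ lies in the direction $\vec v_F(p)$. That direction is the $\Gamma(X)$-disk $U=\redct^{-1}(p)$ bounded only by $\zeta_F$.
\begin{itemize}
\item Removing $\zeta_E$ from $U$ leaves an annulus between $\zeta_F$ and $\zeta_E$, together with disks hanging off $\zeta_E$.
\item Hence the only new edge is $FE$, and $E$ is a leaf.
\item Geometrically, $\hat F$ meets $E$ in one point transversally, since $\hat F\cdot E=1$ by the standard blowup facts recalled before \autoref{thm:castelnuovo}. $E$ meets no other component because $p$ lies only on $F$.
\end{itemize}

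\emph{Satellite blowup.} Let $p=F_1\cap F_2$. Now $U$ is the open annulus with spine $(\zeta_{F_1},\zeta_{F_2})$, and $\zeta_E\in U$. We need $\zeta_E$ to lie on the spine rather than in a side disk.
\begin{itemize}
\item Since $\hat F_i\cdot E=1$ for $i=1,2$ and $\hat F_1\cap\hat F_2=\emptyset$ after blowup, the point $\zeta_E$ must be adjacent in $\Delta(\hat X)$ to both $\zeta_{F_1}$ and $\zeta_{F_2}$.
\item In a tree this forces $\zeta_E\in(\zeta_{F_1},\zeta_{F_2})$.
\item The edge $F_1F_2$ is therefore subdivided into $\hat F_1E$ and $E\hat F_2$, with the remaining domains in $U$ being disks at $\zeta_E$.
\end{itemize}

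The main obstacle is the first step, namely turning smoothness into SC plus transversality from the cited reduction-map results. I would first try to rely on \autoref{lem:jhssec} applied to local sections through each closed point, and use smoothness of components to exclude higher contact.
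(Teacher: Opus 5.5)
Your first step has a genuine gap, and it is the substance of the proposition. None of the routes you list shows that smoothness of $X$ forces the support of $X_0$ to be SNC, and no purely local argument can. On a smooth surface two smooth curves may be tangent (e.g.\ $w=0$ and $w=z^2$), three curves may pass through one point, and a curve may be singular. Kodaira's fibres of types II, III and IV show this happens even for fibres of fibrations on smooth surfaces.

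Concretely, each of your routes fails:
\begin{itemize}
\item ``Blowing up $p$'' presupposes that the fibre already has normal crossings at $p$.
\item The Bosch--L\"utkebohmert correspondence takes as input that $p$ is a smooth point or an ordinary double point of a reduced fibre, and then describes its domain. It cannot tell you that a smooth point of the surface is of that kind.
\item \autoref{lem:jhssec} only characterises the points where the fibre itself is non-singular (and says sections pass through them). It is therefore silent at every point lying on two or more components.
\item \autoref{prop:farey:negativity} applied to $E_1+E_2$ only yields $2E_1\cdot E_2<-E_1^2-E_2^2$. Assuming that local equations $f_1,f_2$ of $E_1,E_2$ form a regular system of parameters is exactly the transversality you are trying to prove.
\end{itemize}
You also cannot fall back on \autoref{thm:farey:puiseuxsection} or \autoref{thm:smooth:smimpvsm}, because their proofs use this proposition.

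The missing ingredient is global: the generic fibre is $\P^1$. The paper uses it through the minimal model program. A smooth model is obtained from a $\P^1$-bundle, whose special fibre is a single smooth $\P^1$, by a finite sequence of point blowups, and one inducts along that sequence. Each step is a free or satellite blowup of a point on an SNC fibre. The elementary geometric analysis of these two cases shows that SNC and the tree structure persist. So the two bullet points are not corollaries of the first assertion; they are the inductive step that proves it.

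A numerical alternative also works. Since $K\cdot X_0=-2$ and $X_0^2=0$, the fibre has arithmetic genus $0$. The standard bound $p_a(D)\le 0$ for subcurves $0<D\le X_0$ then rules out singular components, $E_i\cdot E_j\ge 2$, three components through a point, and cycles.

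Your blowup arguments via $\Gamma(\hat X)=\Gamma(X)\cup\{\zeta_E\}$ are fine once SNC is known for both $X$ and $\hat X$. Note, though, that they depend on it: $\hat F_i\cdot E=1$ and $\hat F_1\cap\hat F_2=\emptyset$ already use that $F_1,F_2$ are smooth and transverse at $p$, and your ``in a tree'' step uses that $\hat X$ is SC. So they should serve as the inductive step of such an argument rather than as a separate part.
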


\begin{proof}[Proof sketch]
Let $X$ be a smooth model. By proving the dual graph $\Delta(X)$ is a tree (without loops) it follows that $X$ has simple crossings.
 A smooth global model $X$ dominates a $\P^1$ bundle $S$ over the base curve (e.g.\ a Hirzebruch surface when the base is rational) by a sequence of point blowups--this is a result of the minimal model program for surfaces. We continue by induction on the number of blowups in this sequence i.e.\ the size of dual graph. Firstly, if $X=S$ then the special fibre is a single smooth irreducible $\P^1$; vacuously $\Gamma(S)$ is SNC. Now suppose that $\pi : X \to Y$ is a point blowup at $p \in Y$ of a smooth model $Y$, with exceptional curve $E \subset X$. Assume by induction that $Y$ is smooth and $\Gamma(Y)$ is SNC; in particular $p$ is a smooth point of $Y$. If $p$ is free, residing on the irreducible component $F \subset Y$, then $E$ is a smooth curve intersecting the proper transform, $\hat F$, of $F$ and has no other intersections with $X_0$. Hence, the dual graph $\Delta(X)$ is $\Delta(Y)$ with the addition of a leaf $E$ attached by an edge to $\hat F$. If $p$ is satellite between $F_1, F_2 \subset Y$, then $F_1$ and $F_2$ intersect normally at $p$ and nowhere else. In the blowup, the proper transforms of these $\hat F_1$ and $\hat F_2$ are separated (disjoint in $Y$). So similarly, $\Delta(X)$ is obtained from $\Delta(Y)$ by deleting the edge $F_1F_2$, inserting two edges $\hat F_1E$ and $E \hat F_2$. One can see that in either case $\Delta(X)$ is also a tree and that new intersections are normal by nature of smooth point blowups.
\end{proof}

Any SC model is dominated by a smooth (hence SNC) model; so in the reverse direction, by contracting irreducible components in an SNC model which only intersect two others we can reach all SC models.

Relaxing the SC condition is reasonable; this allows for arbitrary vertex sets related to arbitrary models. The dual graph of intersections will not be a tree anymore, and cannot be injectively mapped into $\P^1_\an(\k((x)))$. However, this is only because each $\Gamma(X)$-domain with $n > 1$ boundary points--corresponding to a closed point which is the intersection of $n$ fibral components--induces a complete subgraph of order $n$ in $\Delta(X)$; indeed, contracting all these subgraphs would result in a tree.

In the most general circumstances it is better to rely on the perspective of vertex sets and their complimentary domains, as in \autoref{cor:galois:pointcorrespondence}. One should not try to find a correspondence edges and intersections, but focus on the correspondence between $\Gamma$-domains with $n$ boundary points $\zeta_1, \dots, \zeta_n$ and closed points $p \in Y_\k$ which are the intersection of $n$ irreducible components $E_1, \dots, E_n$, with $\redct_Y(\zeta_j) = E_j$.

\begin{lem}\label{lem:corresp:simplesection}
 Let $X$ be a local model and $S \subset X$ a section defined by $s(x) = (x, \gamma(x))$, where $\gamma \in \k((x))$. The intersection with the special fibre $X_0 \cap S$ is a single point $p$. If $p$ is a smooth point 
 then $p$ lies in a single component $E \subset X_0$ and ${\redct'_X}^{-1}(p) = \vec v_E(p) = \vec v(\gamma)$ is the $\Gamma(X)$-domain representing $p$. 
 Furthermore, if $\pi : \hat X \to X$ is a blowup of this $p$, then its exceptional curve $F$ is the component intersecting (the proper transform of) $S$ on $\hat X$ at a free point.
\end{lem}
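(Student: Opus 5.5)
The lemma has three claims. First, $S\cap X_0$ is a single point. Second, at a smooth point the section meets exactly one component and its preimage under reduction is $\vec v(\gamma)$. Third, after blowing up that point the section meets the exceptional curve at a free point.

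\textbf{The single point and its component.} By the valuative criterion for properness, the section $s$ is the unique extension of the generic point $\gamma\in\P^1_\ell$ over $\Spec(\bO)$. Hence $S\cap X_0=\set{p}$ with $p=\redct_X(\gamma)$, reading $\gamma$ as a Type I point. Since $s$ is a section defined by the uniformiser $x$, \autoref{lem:jhssec}(ii) applies at the smooth point $p$. So $X_0$ is non-singular at $p$, $p$ lies on a single component $E$, and $E$ has multiplicity $1$ in $X_0$.

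\textbf{The preimage of $p$.} Let $\zeta=\redct^{-1}(E)\in\Gamma(X)$. By \autoref{cor:galois:pointcorrespondence} and \autoref{prop:galois:reductionstructure}(2), $p=\redct^+(U)$ for a unique $\Gamma(X)$-disk $U$, whose only boundary point is $\zeta$ by \autoref{cor:corresp:domains}. So $U$ is a direction at $\zeta$. It contains the Type I point $\gamma$, because $\redct_X(\gamma)=p$. Hence $U=\vec v_\zeta(\gamma)=\vec v(\gamma)$, which is $\vec v_E(p)$ under \autoref{prop:galois:directions}.

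There is a subtlety in passing to $\P^1_\an(\K)$. By \autoref{cor:corresp:domains}, ${\redct'_X}^{-1}(p)$ is a union of Galois conjugate disks. Since $\gamma\in\k((x))$ is $G$-fixed, the disk containing $\gamma$ is the relevant one. I expect that $\zeta$ can be chosen $G$-fixed here, since $E$ is reduced, though I would only need the component containing $\gamma$. This step, identifying the Galois lift, is the one I expect to be most delicate. To settle it I would check that $\mm(\vec v(\gamma))=1$ via \autoref{prop:galois:multindirn}.

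\textbf{The blowup.} Let $\pi:\hat X\to X$ blow up $p$. By \autoref{prop:galois:directions}, the exceptional curve $F$ corresponds to a new vertex $\zeta_F$ in the direction $\vec v_E(p)=\vec v(\gamma)$. The proper transform of $S$ is again the section through $\gamma$. By the first paragraph, it meets $\hat X_0$ at $\redct_{\hat X}(\gamma)$, and this point maps to $p$ because $\redct_X=\pi\circ\redct_{\hat X}$. So it lies on $\pi^{-1}(p)=F$.

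Freeness follows by applying the first two paragraphs to $\hat X$. The new intersection point $q$ is a point of $X_0$ lying on the section, so \autoref{lem:jhssec}(ii) shows $\hat X_0$ is non-singular at $q$. Therefore $q$ lies on $F$ alone, which means it is a free point.
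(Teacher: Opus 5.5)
Your proposal is correct and follows essentially the paper's route. It uses \autoref{lem:jhssec}(ii) to place $p$ on a single reduced component, the reduction correspondence to identify ${\redct'_X}^{-1}(p)$ with the $\Gamma(X)$-disk $\vec v(\gamma)$, and the same argument on $\hat X$ for freeness; note that $\hat X$ is smooth along $F$ because $p$ is smooth. The only cosmetic difference is that you conclude directly that $q\in F$ lies on no other component, whereas the paper says $\gamma$ lies in a $\Gamma(\hat X)$-disk rather than the annulus $\redct^{-1}(\hat E\cap F)$. The Galois point you flag settles at once: $\gamma$ is $G$-fixed, and \autoref{cor:corresp:domains} says the components of ${\redct'_X}^{-1}(p)$ form a single Galois orbit, so there is only one of them.
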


\begin{proof}
 It is clear that the intersection is a single point $p$. Suppose $p$ is smooth; since the problem is local, we may desingularise outside of an open neighbourhood of $p$ to ensure $X$ is smooth for the following argument. By \autoref{lem:jhssec}, $S$ is a well defined section on a smooth model, so it must intersect the special fibre (transversely) at a single reduced component of $X_0$, say $E$. Whence $p$ is a free point. Now blowup $p$ to obtain $\pi : \hat X \to X$ with exceptional curve $F$; let $\hat E$ be the proper transform of $E$ and $\hat p$ be the intersection of $S$ with $X_0$.  As stated in \autoref{prop:galois:reductionstructure}, the direction $\redct_{X}^{-1}(p)$ corresponding to $D = \vec v(p) = \vec v(\gamma)$ is a $\Gamma(X)$-disk bounded by $E$. 
 By definition, $\redct_{\hat X}(F)$ lies in the corresponding direction $\vec v(\gamma)$ at $\redct_{\hat X}(E)$. Now, the disk $D$ splits into an annulus $\redct^{-1}(\hat E \cap F)$ (for the new satellite point) and many disks reducing to the free points on $F$. By the above applied to $\hat X$, the point $\hat p$ is free, so $\vec v(\hat p)$ lies in a $\Gamma(\hat X)$-disk, not an annulus. Therefore $\redct(\gamma)$ is a free point of $F$, not $\hat E \cap F$.
\end{proof}

\begin{defn}
 Given $X$ a local model of $\P^1_\ell$ over $\bO$, let $S_\infty$ denote the section defined by $y = \infty$ and $p_{X, \infty} = S_\infty \cap X_0$ denote its unique point of intersection with $X_0$.
\end{defn}

Note that $p_{X, \infty} = \redct_X(\infty)$ is a closed point regardless of the model we pick by \autoref{cor:galois:pointcorrespondence}, because $\infty$ is Type I. 
The embedding $\emb_X : \Delta(X) \to \P^1_\an(\k((x)))$ induces a partial order.
If $p_{X, \infty}$ is a smooth point on $Y$, then it lies on a single component $E_\infty$, and this is the maximum in $\Delta(Y)$, just as $\infty$ is the maximum in $\P^1_\an$. It follows that $F_1 \prec F_2$ if and only if $F_2 \in (F_1, E_\infty]$ in $\Delta(Y)$ as a simplicial tree. Another way to define this ordering is $F_1 \prec F_2$ iff $\vec v(E_\infty) \ne \vec v(F_1)$. In the alternative dual graph $\Delta^+(X)$ in the proof above, $S_\infty$ is the maximum, a leaf attached to $E_\infty$.

\begin{defn}
 Let $X$ be a local model of $\P^1_\ell$ over $\bO$. Define a partial order on its dual graph $\Delta(X)$ as the smallest such that for every $\zeta, \xi \in \P^1_\an$, $\zeta \preceq \xi \implies \redct'_X(\zeta) \preceq \redct'_X(\xi)$. 
\end{defn}

\begin{prop}\label{prop:corresp:ordering}
 Let $X$ be a local model of $\P^1_\ell$ over $\bO$. If $p_{X, \infty} = \redct'_X(\infty)$ is smooth then the unique component $E_{X, \infty}$ containing $p_{X, \infty}$ is the maximum point of $\Delta(X)$. If $E \cap F$ is a smooth point then either $E \prec F$ or $E \succ F$.
 Let $p \in X_0$ be a non-singular closed point of $X$ and blowup $p$ to obtain the model $Y$ with exceptional curve $E$. Let $\zeta = \redct_Y^{-1}(E)$.
 \begin{enumerate}
 \item If $p = p_\infty \in E_1$ is free and $\xi = \redct_Y^{-1}(E_1)$, then $E \succ E_1$, i.e.\ $\zeta \succ \xi$
 \item If $p \ne p_\infty$ is free in component $E_1$ and $\xi = \redct_Y^{-1}(E_1)$, then $E \prec E_1$, i.e.\ $\zeta \prec \xi$.
 \item Otherwise, suppose $p = E_1 \cap E_2$ is a satellite point, $\xi_1 = \redct_Y^{-1}(E_1)$, and $\xi_2 = \redct_Y^{-1}(E_2)$, and $\xi_1 \prec \xi_2$. Then $\zeta \in [\xi_1, \xi_2]$, and $\xi_1 \prec \zeta \prec \xi_2$.
\end{enumerate}
\end{prop}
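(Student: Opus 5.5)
The plan is to work entirely in the Berkovich tree via the bijection $\redct^+$ of \autoref{cor:galois:pointcorrespondence} and the domination criterion of \autoref{prop:galois:vertexextension}. Blowing up $p$ gives $Y$ with $\Gamma(Y) = \Gamma(X)\cup\{\zeta\}$, where $\zeta$ lies in the $\Gamma(X)$-domain $U = \redct_X^{-1}(p)$. The new divisor $E$ is infinitely near $p$, and $\redct_X = \pi\circ\redct_Y$, so $\zeta\in U$.

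I would begin with the preliminary claims. The partial order on $\Delta(X)$ is the one pushed forward from $\preceq$ on $\P^1_\an$. The component $E_{X,\infty}$ contains $\redct'_X(\infty)$, so its vertex $\xi_\infty$ satisfies $\infty\in\vec v(\xi_\infty)$. Hence every other vertex $\eta\in\Gamma(X)$ lies in a direction at $\xi_\infty$ not containing $\infty$. Such an $\eta$ is dominated by $\xi_\infty$ precisely when the path $[\eta,\infty]$ passes through $\xi_\infty$. This holds whenever $p_{X,\infty}$ is smooth, because then the $\Gamma(X)$-domain containing $\infty$ is a disk bounded by $\xi_\infty$ alone (\autoref{prop:galois:reductionstructure}). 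If $E\cap F$ is a smooth point, the corresponding $\Gamma$-domain is an annulus with boundary $\{\zeta_E,\zeta_F\}$. The annulus is the region between two points of a tree, and $\infty$ lies outside it. Therefore one endpoint lies on the segment from the other to $\infty$, which gives comparability.

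For case (i), $p = p_\infty$ is free on $E_1$, so $U$ is the $\Gamma(X)$-disk $\vec v_{\xi}(\infty)$, and it contains $\infty$. The new point $\zeta$ is the vertex adjacent to $\xi$ in this direction, by \autoref{prop:galois:directions}. Since $\zeta\in(\xi,\infty)$, we get $\zeta\succ\xi$.

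For case (ii), $U$ is a disk bounded by $\xi$ that does not contain $\infty$. Hence $U\subset\{\eta:\eta\prec\xi\}$, and so $\zeta\prec\xi$. Both cases need only that $U$ is a direction at $\xi$ and whether or not that direction contains $\infty$.

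For case (iii), $U$ is the $\Gamma(X)$-annulus bounded by $\xi_1,\xi_2$. Write $\xi_1\prec\xi_2$, which is available by the comparability above. The delicate step is showing that $\zeta$ lies on the spine $(\xi_1,\xi_2)$ and not in some side branch of $U$. I would argue as follows. Blowing up the satellite point separates $E_1$ and $E_2$, so by \autoref{prop:corresp:smoothsnc} the dual graph $\Delta(Y)$ subdivides the edge $\xi_1\xi_2$. The edges of $\Delta(Y)$ are the $\Gamma(Y)$-annuli, so $\{\xi_1,\zeta\}$ and $\{\zeta,\xi_2\}$ must each bound a $\Gamma(Y)$-annulus. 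Suppose instead that $\zeta$ were off the spine. Then $U\sm\{\zeta\}$ would contain a component with three boundary points $\xi_1,\xi_2,\zeta$, which is not an annulus. That would contradict SNC of the smooth model $Y$ via \autoref{prop:galois:reductionstructure}. Hence $\zeta\in(\xi_1,\xi_2)$. Since $[\xi_1,\xi_2]\subset[\xi_1,\infty]$, this gives $\xi_1\prec\zeta\prec\xi_2$.

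I expect this spine argument to be the main obstacle, since it is where the geometry of the blowup genuinely enters.
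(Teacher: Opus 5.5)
Your direct argument that $E_{X,\infty}$ is the maximum is a clean alternative to the paper's induction over blowups: the $\Gamma(X)$-domain containing $\infty$ is a disk bounded by $\xi_\infty$ alone. Case (ii) and your spine argument in case (iii) are also fine.

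The gap is in case (i). You pass from $\zeta\in U=\vec v_\xi(\infty)$ to $\zeta\in(\xi,\infty)$, and you say that (i) and (ii) only need to know whether the direction $U$ at $\xi$ contains $\infty$. That holds for (ii), because a direction at $\xi$ not containing $\infty$ lies entirely below $\xi$. It fails for (i), because the direction containing $\infty$ is full of points incomparable with $\xi$. For instance, $\zeta(x^{-1},1)$ lies in $\P^1_\an\sm\CD_\an(0,1)=\vec v_{\zeta(0,1)}(\infty)$, yet $\CD(x^{-1},1)\cap\CD(0,1)=\emptyset$, so it is not $\succ\zeta(0,1)$. \autoref{prop:galois:directions} only gives $\zeta\in\vec v_\xi(p)$, which you already had from $\redct_X=\pi\circ\redct_Y$. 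Adjacency of $\xi$ and $\zeta$ in $\Delta(Y)$ only says the annulus between them is a $\Gamma(Y)$-domain. Neither tells you on which side of $\zeta$ the point $\infty$ sits.

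The missing input is the location of the new point at infinity.
\begin{itemize}
\item Since $\infty\in U$, we have $p_{Y,\infty}=\redct'_Y(\infty)\in\pi^{-1}(p)=E$, and $Y$ is smooth along $E$.
\item By \autoref{lem:corresp:simplesection}, the exceptional curve meets the proper transform of $S_\infty$ at a free point, so $p_{Y,\infty}\neq E\cap E_1$.
\item Hence $\infty$ is not in the $\Gamma(Y)$-annulus $\vec v_\xi(\zeta)\cap\vec v_\zeta(\xi)$. Since $\infty\in\vec v_\xi(\zeta)=U$, this forces $\infty\notin\vec v_\zeta(\xi)$.
\item So $\xi$ and $\infty$ lie in different directions at $\zeta$, which is exactly $\zeta\in(\xi,\infty)$.
\end{itemize}
This is the step the paper makes in its final paragraph. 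Equivalently, you can apply your own first claim to $Y$, where $E_{Y,\infty}=E$.

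The same fact is used silently in your comparability claim, where you assert that $\infty$ lies outside the annulus $\redct^{-1}(E\cap F)$. The justification is that a smooth point of $X$ on $S_\infty$ lies on a single component (\autoref{lem:jhssec}, \autoref{lem:corresp:simplesection}). Therefore a smooth satellite point is never $p_{X,\infty}$, which is also how the paper rules out $p=p_{X,\infty}$ in case (iii).
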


\begin{proof}
 We follow the induction on $\Delta(X)$ in the above proofs.
 The base case is vacuous. If we perform a free blowup $p \ne p_{X, \infty}$ on $F$, then the new exceptional curve lies in a direction $\vec v_F(p) \ne \vec v_F(E_\infty)$; it follows immediately that $E \prec F$ because $\zeta \prec \xi$ for any $\zeta \in \bvec v$ at $\xi$ so long as $\bvec v \ne \vec v(\infty)$. In the case of a satellite blowup at $p = F_1 \cap F_2$ then $p \ne p_{X, \infty}$ by \autoref{lem:corresp:simplesection} and there is an interval $[F_1, F_2] \subset \Delta(X)$ with say $F_1 \prec F_2$ by inductive hypothesis. Subdividing this with $E$ yields $F_1 \prec E \prec F_2$ because $[\xi_1, \xi_2] \ni \zeta$ is a totally ordered subinterval of $\P^1_\an$. Finally, suppose that $\pi : \hat X \to X$ is the point blowup of $p_{X, \infty}$, which must be a free point on the component $E_{X, \infty} = \redct_X(\zeta)$ by \autoref{lem:corresp:simplesection}; this yields exceptional curve $E_{\hat X, \infty} = \redct_{\hat X}(\xi)$. Abusing notation, write $E_{X, \infty} = \redct_{\hat X}(\zeta)$. Now the new point at infinity $p_{\hat X, \infty}$ is free on $E_{\hat X, \infty}$ and distinct from the point $E_{X, \infty} \cap E_{\hat X, \infty}$, thus $\vec v_{E_{\hat X, \infty}}(E_{X, \infty}) \ne \vec v_{E_{\hat X, \infty}}(p_{\hat X, \infty})$. Therefore $\xi \in (\zeta, \infty)$, i.e.\ $\zeta \prec \xi \prec \infty$, which implies moreover that $E_{X, \infty} \prec E_{\hat X, \infty}$ as required.
\end{proof}

\begin{rmk}
  There is a correspondence between vertex sets in $\P^1_\an(\k((x))$ and Galois invariant vertex sets in $\P^1_\an(\K)$, because $\redct'_Y = \mathfrak p \circ \redct_Y$. One can easily check with this proposition that a model $Y$ is SC if and only if the corresponding vertex set $\Gamma(Y)$ has only disks and annuli in $\mathcal S(\Gamma(Y))$. Whilst remembering the importance of Galois invariance, we will continue without constantly specifying the base field of $\P^1_\an$.
\end{rmk}

\subsection{Correspondence of Metrics and Invariants}\label{sec:metrics}

Now we explore correspondence between geometric information of an irreducible curve $E \subseteq X_0$ to the multiplicity and parametrisation of the corresponding point $\redct^{-1}(E) = \zeta \in \P^1_\an$. 
For the rest of the chapter, we will typically forget the type of model or identity of the point $b$ on our base curve $B$ and refer only to $x=0$, which refers to the maximal ideal in our completed local ring $\k[[x]]$. 

A birationally ruled surface $X$ is birationally equivalent to $B \times \P^1$, so the closed fibre $X_0$ contains finitely many rational curves $E_1, \dots, E_n$ and they are represented by the generic points $\eta_1, \dots, \eta_n \in X_0$. Moreover the fibre is possibly not reduced meaning the principal divisor $(x)$ can be written as $\sum_{j=1}^n \mb_j E_j$ where $\mb_j = \ord_{E_j}(x)$ are positive integers, and $\ord_E$ is the order of vanishing valuation at $E$. This valuation is one on the DVR $\bO_{X, E}$ such that in local coordinates $z, w$ we have $E = (z)$,\ $\ord_E(z) = 1$, and $\ord_E(w) = 0$. In general we can define $\ord_E(x) = \mb(E)$. Then $\mb(E_j) = \mb_j$ and
\begin{equation}\label{eq:normval}
\frac{\ord_E(\cdot)}{\mb(E)} = \log_{\abs x}(\norm[E]\cdot)
\end{equation}
defines a unique Type II point in $\P^1_\an(\k((x))$. The integer $\mb(E)$ forms one half of the \emph{Farey parametrisation} for $E \in \Gamma$. Following \cite[\S 6.3]{FJ04}, we define another, $\ma(E)$. This requires an initial choice of minimal smooth model $h : \smodel \to B$ with $\Gamma(\smodel) = \set{\zeta(0, 1)}$. We say that $X$ is \emph{minimal} if it is smooth and $n=1$ so that $\Gamma(X) = \set \zeta$. One can show  equivalent to $X$ being locally trivial near $0$, i.e. $X_\bO \cong \Spec(\bO) \times_\k \P^1_\k$. Let $U \subset B$ be an affine neighbourhood of $0 \in B$, and assume that the variable $x$ uniformises both the completed and original local ring at $0$. Using Tsen's theorem or otherwise, we can trivialise $h^{-1}(U) \cong U \times \P^1$ with local coordinates $(x, y)$ on $U \times \A^1$. There is a section $y=\infty$ which extends to a divisor $S_\infty \subset \smodel$. These coordinates determine by extension the reduction map for any other model via birational transformation, for instance the Gauss point is chosen by $\redct'_\smodel(\zeta(0, 1)) = E_0$ and this constitutes the whole special fibre $\smodel_0$ on $\smodel$. Then on any model $X$ there exists a rational $2$-form $\frac{\dd x \wedge \dd y}{x}$, referring to the coordinates on $\smodel$.
\begin{defn}
Define $\mb(E) = \ord_E(x)$ and
 \[\ma(E) = \ord_E\left(\frac{\dd x \wedge \dd y}{x}\right) + 1 = \ord_E\left(\dd x \wedge \dd y\right) - \mb(E) + 1.\]
 We will call the pair $(\ma(E), \mb(E))$, the \emph{Farey parameter} of $E$. We will see soon that $\ma(E)$ is an integer, and henceforth we might alternatively refer to the \emph{Farey fraction} $\ma(E)/\mb(E)$.
\end{defn}

Let $X$ be any global model of $\smodel$ with irreducible components $E_0, \dots, E_r$ in the special fibre, and $S_\infty$ be the proper transform of the one defined above on $\smodel$. Then
\[\dv(\dd x \wedge \dd y) = F' - 2S_\infty + \sum_E (\ma(E) +\mb(E) - 1) E,\] where $F'$ is a fibral divisor with $0$ support in $X_0$.

\begin{rmk}
 In \cite{FJ04} $\ma$ is denoted $a$ and $\frac\ma\mb$ is denoted $A$ (at least in the relative valuative tree), however in \cite{berkandapps} $A_E$ is used for our $\ma(E)$ and $A_\pi(E)$ for $\frac{\ma(E)}{\mb(E)}$. They reserve fraktur font for ideals. The parameter $\ma$ is often referred to as the \emph{log discrepancy} and sometimes as \emph{thinness}.

\end{rmk}

Any irreducible curve $E \subseteq X_0$ is transversely swept out by curves in $X$ -- these are called curvettes in \cite{FJ04}. In particular, for any point $P \in E$, we can find a section $C \subset X$ such that $C \cap X_0 = P$ and $C \cdot E = 1$. In the completion, $\hat C$ is given locally by some rational function $P \in \k((x))(y)$. In fact, given some local coordinates $z, w \in \k((x))(y)$ at $E$ such that $E = \set{z = 0}$, we have that a family of transverse curves is defined by $\set[\dv(w = c)]{c \in \k}$. 

Supposing $g$ vanished on $E$ to order $n$, we could write $g(z, w) = z^n u(z, w)$ in local coordinates, where $z \nmid u$. For each $c$ where $u(0, c) \ne 0$, the intersection multiplicity of $\dv(w - c)$ and $\dv(g)$ on $E$ is $n$. Therefore this intersection multiplicity with $\set{w = c}$ defines a valuation that is non-trivial since $E \subset \dv(x)$. From the geometric and algebraic structure of $\P^1_\an$ at a Type II point $\zeta$ which reduces to $E$, we know that these curvettes described by $w=c$ correspond or factor into germs of the form $y = \gamma(x)$ where $\gamma \prec \zeta$ in $\P^1_\an$.

\begin{lem}\label{lem:galois:coordinates}
 Let $X$ be a model and suppose that $E \subset X_0$ is a smooth prime divisor on $X$. Then there are coordinates $(z, w)$ containing a neighbourhood of $E \sm \set P \cong \A^1(\k)$, such that
 $z, w \in \k(x, y)$ and $x, y \in \k(z, w)$ and $z$ is the uniformiser of the discretely valued field $(\k(z, w), \ord_E)$, with $\ord_E(z) = 1$.
\end{lem}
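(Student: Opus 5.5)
The plan is to construct the coordinates inductively along a sequence of point blowups starting from the minimal model $\smodel$. This makes the birationality conditions $z,w\in\k(x,y)$ and $x,y\in\k(z,w)$ automatic. A purely valuation-theoretic choice of $z,w$ would not guarantee this, as explained at the end.

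First, reduce to the smooth case. By the discussion after \autoref{prop:galois:vertexextension}, $X$ is dominated by a smooth model $Y$, obtained from $\smodel$ by finitely many point blowups over $0$. The proper transform $\hat E$ of $E$ is a component of $Y_0$, and the morphism $Y\to X$ is an isomorphism away from the finitely many components it contracts. If $X$ is not smooth along $E$, I would read the hypothesis \emph{$E$ smooth prime divisor} as requiring the chart to avoid the image of the contracted locus. So the main work is on smooth $Y$, where $\hat E\cong\P^1$ by \autoref{prop:corresp:selfint} and \autoref{prop:corresp:smoothsnc}.

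Next, on a smooth model, induct on the number of blowups, carrying the following hypothesis. For every component $F$ of the special fibre there is a chart $(z_F,w_F)$ on an open set $V_F$ such that:
\begin{itemize}
\item $V_F$ contains $F\sm\set{P_F}$ for a single point $P_F$;
\item $F=\set{z_F=0}$ in this chart;
\item $z_F,w_F$ are obtained from $(x,y)$ by monomial or affine birational substitutions, and conversely.
\end{itemize}
The base case is $E_0\subset\smodel$ with $(z,w)=(x,y)$ and $P=\redct_\smodel(\infty)$. For the inductive step, blow up a smooth point $p$. By translating inside the existing chart (subtracting a constant from the abscissa, which is still birational), assume $p$ is the origin of local coordinates $(\phi_1,\phi_2)$, where $\phi_1$ is the ordinate for the component through $p$. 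As in the primer on blowups, the exceptional curve $E_p$ is covered away from one point by the chart $(\phi_1,\phi_2/\phi_1)$. Here $\phi_1$ cuts out $E_p$ with order $1$, and this substitution is invertible over $\k(\phi_1,\phi_2)$.

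For the old components $F$ through $p$, I would replace $(z_F,w_F)$ by the corresponding blowup chart. The primer's observation is that the abscissa is unchanged and the ordinate is multiplied or divided by the abscissa, so the chart still covers $F$ minus one point. This needs a little care when $p$ is the excluded point $P_F$ versus a point inside $V_F$. In the latter case, I would shrink $V_F$ only by removing a closed set disjoint from $F\sm\set{P_F}$.

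Finally, $\ord_E(z)=1$ holds because $z$ is a local equation of $E$ in a smooth chart. The birationality of each substitution gives $z,w\in\k(x,y)$ and $x,y\in\k(z,w)$.

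The main obstacle is the bookkeeping that exactly one point of $E$ is ever excluded. After a later blowup at a point of $E$ inside the chart, the old chart no longer contains all of $E\sm\set{P}$. My first attempt would be to always take, for $E$, the blowup chart $(\phi_1\text{ or }\phi_1/\phi_2,\ \ldots)$ containing $E$ minus the new intersection point. If that point differs from $P$, I would patch with the fact that $E\cong\P^1$. A coordinate $w|_E$ with a single pole can be re-centred by a Möbius change $w\mapsto 1/(w-c)$ when needed. This is still birational, at the cost of possibly shrinking the neighbourhood away from $E$.

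A fallback is to choose $z$ as any uniformiser of $\ord_E$ and $w$ as a lift of an affine coordinate on $E$. In that case, birationality must be argued from the extension $\k(z,w)\subset\k(X)$ having $e=f=1$ at $E$. This fails to give degree $1$ in general, because other places may lie over the valuation of $\k(z,w)$ defined by $z$, which is why the inductive construction is preferred.
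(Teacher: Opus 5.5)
The paper states this lemma without proof. Its only support is the blowup primer's remark that abscissae survive later blowups while ordinates are multiplied or divided by abscissae, and your induction is the natural way to make that precise. As written, though, the bookkeeping has a genuine gap. With one chart per component, nothing guarantees that the next centre $p$ lies in a chart. Your repair, re-centring $w\mapsto 1/(w-c)$ with $z$ unchanged, only works where $z$ still cuts out the component alone. That holds for $E_0\subset\smodel$ with $z=x$, but typically fails at the excluded point. For instance, keep $(x,y)$ for $E_0$ and blow up $p_\infty$ via $(x,1/y)$, so that $E_1$ gets the chart $(x,1/(xy))$. Then the crossing point $\hat E_0\cap E_1$ lies in neither component chart. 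In its local coordinates $(s,t)=(xy,1/y)$, the two re-centred pairs $(x,1/y)$ and $(x,xy)$ read $(st,t)$ and $(st,s)$. Their Jacobian determinants are $t$ and $-s$, both zero there, because $x$ vanishes on both curves. If you instead replace $E_0$'s chart by $(x,1/y)$, the problem just moves to $(0,0)$, where $(xy,y)$ degenerates. The fix is to carry two invariants:
\begin{itemize}
\item a finite atlas of open immersions $V\hookrightarrow\mathbb{A}^2$ with coordinates birational to $(x,y)$, starting from $(x,y)$ and $(x,1/y)$, where blowing up $p\in V$ replaces $V$ by the two standard charts of $\mathrm{Bl}_pV$ and shrinks every other chart by $\{p\}$, so that every centre lies in some chart;
\item separately, one chart per component, updated by your abscissa rule.
\end{itemize}
Note also that the ``main obstacle'' you describe does not arise. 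For $q=(0,c)\in F\setminus\{P_F\}$, the chart $(z/(w-c),\,w-c)$ on the preimage of $V_F$ already covers all of $\hat F\setminus\{P_F\}$, including the new crossing point.

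Your reduction step also loses the conclusion. Restricting $Y$'s chart to the complement of the contracted locus gives a chart on $X$ that misses $P_{\hat E}$ and also the image of every contracted curve meeting $\hat E$. Nothing in your construction makes these points coincide. A smooth $X$ need not dominate $\smodel$ (e.g.\ $\Gamma(X)=\{\zeta(0,|x|)\}$), so you cannot simply take $Y=X$. For smooth $X$, start the induction instead at a minimal model dominated by $X$, reached by contracting $(-1)$-curves. Its vertex is integral, $\zeta(\gamma,|x|^a)$ with $a\in\mathbb{Z}$ and $\gamma$ a Laurent polynomial, so $(x,(y-\gamma)/x^a)$ and $(x,x^a/(y-\gamma))$ give the starting atlas. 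For singular $X$ the statement is false as written. Take $\Gamma(X)=\{\zeta(0,|x|^{1/2})\}$: both special directions $\vec v(0)$ and $\vec v(\infty)$ are $\Gamma$-disks of multiplicity $1$ rather than the generic multiplicity $2$. By the paper's smoothness criterion, $E$ then passes through two singular points, and no chart into $\mathbb{A}^2$ can contain $E$ minus one point. So rather than reinterpret the conclusion, add the hypothesis that $X$ is smooth along $E$. That is all the paper's applications use: they invoke the lemma only at general points of $E$ or at smooth points.
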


We will call the pair $(z, w)$ \emph{local (rational) coordinates} for $E$. 
We shall refer to $w$ as the \emph{abscissa} of $E$ (the coordinate transverse to the line), and to $z$ as the \emph{ordinate} (a measure of parallel distance). %

\begin{prop}[{\cite[Theorem 7.2]{spiv}}, extended]\label{prop:galois:intersection}%
 Let $E \subseteq X_b$ be a curve, and $z, w \in \k(x, y)$ be local rational coordinates on $E$. Suppose $f \in \k((x))(y)$ can be written as \[f(z, w) = a_n(w)z^n + a_{n+1}(w)z^{n+1} + \cdots,\] with $a_n(w) \ne 0$, then $\ord_E(f) = n =\ord_z(f(z, c)) = n$ for all but finitely many $c \in \k$.
Furthermore \[\ord_E(f) = \min_{f(z) \in \k[z]} \ord_{z}(f(z, f(z)) = \min_{\substack{C\cdot E \ge 1\\P \in C \cap E}} i(C, V(f); P) = \min_{C\cdot E = 1} C \cdot V(f).\]
\end{prop}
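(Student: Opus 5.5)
The statement splits into the local claim $\ord_E(f) = n = \ord_z(f(z,c))$ for all but finitely many $c$, and the chain of three minima. I will treat them in that order.

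For the first claim, I will use \autoref{lem:galois:coordinates}. It gives local rational coordinates $(z,w)$ on a neighbourhood of $E\sm\set P$, with $z$ a uniformiser of $\ord_E$ and $w$ restricting to a coordinate on $E\cong\A^1$. Since $\ord_E(z)=1$ and $\ord_E(a(w))=0$ for any nonzero $a(w)$, which is a unit in the DVR $\bO_{X,E}$, the ultrametric property applied to the expansion $f=a_n(w)z^n+\cdots$ gives $\ord_E(f)=n$.

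Next I restrict to the curvette $C_c=\set{w=c}$, which meets $E$ transversely at the point $w=c$ and is parametrised by $z$. Then $f(z,c)=a_n(c)z^n+a_{n+1}(c)z^{n+1}+\cdots$. Hence $\ord_z f(z,c)=n$ precisely when $a_n(c)\neq0$. Because $a_n$ is a nonzero rational function of $w$, this excludes only finitely many $c$, together with the finitely many poles of the higher coefficients and the point $P$.

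To extend from $\k[[x]]$ to $\k((x))(y)$, I will work with germs of $f$ at $E$. One subtlety is that $f$ need not be an algebraic function on $X$. I will handle this by clearing denominators: write $f=g/h$ with $g,h$ formal, apply the argument to $g$ and $h$ separately, and use additivity of both $\ord_E$ and $\ord_z$.

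For the minima, the inequality $\ord_z f(z,\psi(z))\ge n$ holds for every polynomial (or power-series) arc $w=\psi(z)$. Substituting, each term $a_k(\psi(z))z^k$ has order at least $k\ge n$ as long as $\psi(0)$ avoids the poles of the $a_k$. Equality is attained by the constant arcs from the first part, so the first minimum equals $n$.

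Intersection multiplicity is computed in the same way. For a curve $C$ through $P\in E$, take its normalisation branch $t\mapsto(z(t),w(t))$ with $z(t)=t^e u(t)$, where $e=i(C,E;P)\ge1$. Then $i(C,V(f);P)=\ord_t f(z(t),w(t))\ge ne\ge n$, so every term in the middle minimum is at least $n$. When $C\cdot E=1$, the curve meets $E$ at a single point with $e=1$. The global intersection number $C\cdot V(f)$ is then at least the local contribution at $P$, which is at least $n$, and $C_c$ attains $n$ for generic $c$.

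The main obstacle is the global term $C\cdot V(f)$. The divisor $V(f)$ may contain other fibral components or horizontal parts, so $C\cdot V(f)$ can pick up contributions away from $E$, including negative ones from poles. To avoid this I will interpret the final minimum locally near $E$, that is, I will count only $i(C,V(f);P)$ on the germ. This matches how the statement is meant in the completed setting over $\k((x))$. Alternatively, I will take a generic transverse curvette, which avoids every other component of $\dv(f)$ because that divisor has finite support. For such a curve, $C\cdot V(f)=i(C_c,V(f);P_c)=n$, which closes the chain of equalities.
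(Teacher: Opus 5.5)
The paper gives no proof of its own here. It defers to Spivakovsky's local theorem, in which $f$ lies in a two-dimensional regular local ring, so your argument has to carry the statement by itself. Your derivation of $\ord_E(f)=n$ from the expansion in $\k(w)((z))$ is the standard one and is fine, and so is your computation along the curvettes $\{w=c\}$.

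The chain of minima, however, does not follow as you have written it. Your lower bounds $\ord_z f(z,\psi(z))\ge n$ and $i(C,V(f);P)\ge n\, i(C,E;P)$ need $u=fz^{-n}$ to be regular at the point where the arc or curve meets $E$. You record this caveat (``as long as $\psi(0)$ avoids the poles''), but you then conclude that the minimum over \emph{all} $\psi\in\k[z]$ equals $n$. With coefficients $a_k$ allowed to have poles, as your caveat assumes, that conclusion is false:
\begin{itemize}
\item On $\P^1\times\P^1$ with $E=\{x=0\}$ and $(z,w)=(x,y)$, the function $f=x/y$ has $n=1$, yet $\ord_z f(z,z^m)=1-m$.
\item The section $y=x^m$ has $C\cdot E=1$, yet it meets $\dv(f)$ at the origin with local multiplicity $1-m$.
\item The point $P$ excluded from the chart is another place where the expansion gives no control, and curves in the last two minima may pass through it.
\item The condition $C\cdot E=1$ does not keep $C$ away from a satellite point $E\cap F$ with $\ord_F(f)<0$.
\end{itemize}
So you must do one of two things, and say so explicitly rather than impose it silently. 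Either assume $f$ is regular along $E$, which is the cited local setting, or restrict all three minima to arcs and curves meeting $E$ outside a finite exceptional set.

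Two further steps need repair.

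First, ``the finitely many poles of the higher coefficients'' is asserted, not proved. There are infinitely many $a_k$, and for a general element of $\k(w)[[z]]$, such as $\sum_k z^k/(w-k)$, the union of their poles is infinite. Finiteness has to come from the shape of $f$.
\begin{itemize}
\item For $f\in\k(x,y)$, write $f=z^n u$ with $u$ a unit of $\bO_{X,E}$. Then $u$ is regular and invertible off a finite subset of $E$. This also yields the lower bound above without looking at the individual $a_k$.
\item For $f$ genuinely formal in $x$, truncate the $x$-series of numerator and denominator at a high order $N$. Then $x^N$ has order $N\mb(E)$ along $E$. It also has that order along every curvette through a point of $E$ off the finitely many zeros and poles of $xz^{-\mb(E)}$ and $yz^{-\ord_E(y)}$. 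This makes the truncation error negligible uniformly in $c$.
\end{itemize}
Your ``clearing denominators'' step does not achieve this, since $g$ and $h$ remain formal.

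Second, your alternative argument for the last minimum is wrong globally.
\begin{itemize}
\item A moving curvette avoids finitely many points, not the other components of $\dv(f)$: members of the pencil $\{w=c\}$ meet every curve on which $w$ is non-constant.
\item For a complete curve on a projective surface one even has $C\cdot\dv(f)=0$.
\item For $f\in\k((x))(y)$ there is no global divisor at all.
\end{itemize}
The last equality holds only in the local reading over $\Spec\k[[x]]$, and you should commit to it. There $C$ is the branch of $\{w=c\}$ through $(0,c)$. Each horizontal component of $\dv(f)$ meets $X_0$ in a single point, and the other fibral components meet $E$ in finitely many points.
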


Translating \autoref{prop:galois:intersection} from valuative language into a statement about seminorms we obtain the following corollary. Note that every $y = \gamma \in \K$ solving $w(x, y) = c$ will intersect the lift of $E$, after a base change. It turns out that for all but one value of $c \in \k$ we have that $w(x, \gamma(x)) = c \implies \gamma \in \CD(a, r)$ where $\zeta_E = \zeta(a, r)$. So naturally, $\gamma \prec \zeta_E$ and $\norm[\zeta_E]f \ge \norm[\gamma]f$. If $\gamma$ does not lie in the direction of a root $y= \gamma'(x)$ of $f(x, y)$ then equality holds, although this detail is easier to see after the lemma (\autoref{lem:galois:coordinates}).%

\begin{cor}
 Let $E \subseteq X_0$ be a curve, let $\zeta_E \in (\redct'_X)^{-1}(E)$ and let $z, w \in \k(x, y)$ be local rational coordinates such that $\ord_E(z) = 1$. Then $\norm[\zeta_E]{z} = \abs x^{1/\mb(E)}$, and
 \[\norm[\zeta_E]f = \max_{w(x, \gamma(x)) = c} \norm[\gamma]f.\] %
 For all but finitely $c \in \k$, $\norm[\zeta_E]f = \norm[\gamma]f$.
\end{cor}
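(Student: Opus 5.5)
The corollary has two parts: the normalisation $\norm[\zeta_E]{z} = \abs x^{1/\mb(E)}$, and the formula for $\norm[\zeta_E]f$ as a maximum over curvettes $w = c$, with equality for all but finitely many $c$. The plan is to translate \autoref{prop:galois:intersection} into seminorm language using the normalisation \eqref{eq:normval}.

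\textbf{Step 1: the normalisation.} By definition of $\zeta_E$ through \eqref{eq:normval},
\[\log_{\abs x}\norm[\zeta_E]{g} = \ord_E(g)/\mb(E)\]
for $g \in \k((x))(y)$. Taking $g = z$ and using $\ord_E(z) = 1$ from \autoref{lem:galois:coordinates} gives $\norm[\zeta_E]{z} = \abs x^{1/\mb(E)}$ at once. Passing to a lift $\zeta_E \in (\redct'_X)^{-1}(E)$ does not change values on $\k((x))(y)$, since all Galois conjugates restrict to the same point of $\V$ (\autoref{prop:GaloisactiononP1}).

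\textbf{Step 2: the curvette values.} Fix $c \in \k$ and let $\gamma \in \K$ solve $w(x,\gamma(x)) = c$. The curve $C_c = \set{w = c}$ meets $E$ transversely ($C_c\cdot E = 1$) at the point $P_c$ with abscissa $c$. Let $t$ be a local parameter on the branch $y = \gamma(x)$. Then $z|_{C_c}$ is a uniformiser of the branch up to a unit, and $x|_{C_c}$ has order $\mb(E)$ in $t$, because $x = z^{\mb(E)}\cdot(\text{unit})$ near a generic point of $E$. Therefore, for $f$ written as $f = \sum_{k\ge n} a_k(w) z^k$ with $a_n(w) \ne 0$, we get
\[\log_{\abs x}\norm[\gamma]{f} = \frac{\ord_t f(x,\gamma)}{\mb(E)} = \frac{\ord_z f(z,c)}{\mb(E)},\]
with $\ord_z f(z,c) \ge n$. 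Equality holds whenever $a_n(c) \ne 0$ and the coordinate chart contains $P_c$. By \autoref{prop:galois:intersection}, $n = \ord_E(f)$. So $\norm[\gamma]f \le \norm[\zeta_E]f$ for every such $\gamma$, with equality for all $c$ outside the finitely many zeros of $a_n$ and the excluded point $P$. This gives both the maximum formula and the "all but finitely many" statement.

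\textbf{Main obstacle.} The difficulty lies in Step 2: justifying that evaluating along the Puiseux root $\gamma$ computes exactly the intersection multiplicity divided by $\mb(E)$. This is a base-change issue: $C_c$ may have several Galois-conjugate branches in $\K$. I would handle it by noting that each conjugate gives the same value on $\k((x))(y)$, so any root $\gamma$ may be used. A second point is the points of $E$ outside the chart; there the chart can be changed, or the finitely many such $c$ can simply be discarded.
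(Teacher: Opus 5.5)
\textbf{There is a genuine gap in the maximum formula.} Step 1 is fine (it is just \eqref{eq:normval} applied to $z$), and for generic $c$ your curvette computation is a legitimate alternative to the paper's route to $\norm[\zeta_E]{f}=\norm[\gamma]{f}$.

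The identity $\log_{\abs{x}}\norm[\gamma]{f}=\ord_z f(z,c)/\mb(E)$ and the bound $\ord_z f(z,c)\ge n$ need two things: $u(0,c)\neq 0$ (where $x=z^{\mb(E)}u$), and all coefficients $a_k$ regular at $c$. Each fails at finitely many $c$, and those are exactly the values the maximum is about.

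\emph{First failure.} On the blowup of $\P^1\times\P^1$ at the origin, take $E=\hat E_0$ with $(z,w)=(x/y,\,y-x/y)$, so $x=z(w+z)$ and $y=w+z$. The curvette $w=0$, i.e.\ $y^2=x$, passes through $P_0=\hat E_0\cap E_1$, and $x$ has order $2$ on it. For $f=y$ your formula gives $\norm[\gamma]{y}=\abs{x}$, whereas in fact $\norm[x^{1/2}]{y}=\abs{x}^{1/2}$.

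\emph{Second failure.} On $\P^1\times\P^1$, take $E=E_0$ with $(z,w)=(x,\,1/y-x)$ and $f=y=\sum_{k\ge0}(-1)^kz^kw^{-k-1}$. At $c=0$ one has $\ord_z f(z,0)=-1<0=n$, and the root $\gamma=1/x$ gives $\norm[\gamma]{y}=\abs{x}^{-1}>1=\norm[\zeta_E]{y}$. So the inequality you assert ``for every such $\gamma$'' is false at the value of $c$ whose direction at $\zeta_E$ is $\vec v(\infty)$. The maximum has to exclude that value; the paper's remark before the corollary says ``for all but one value of $c$''. Also, $f$ has to be a polynomial in $y$, since for rational $f$ the inequality also fails in directions containing poles.

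The missing idea is the tree argument the paper uses, which needs no local computation at the special points. A root $\gamma$ whose branch meets $X_0$ at $P_c$ (the reading your Step 2 implicitly uses) reduces to $P_c$. Hence it lies in the direction $\vec v_E(P_c)$ at $\zeta_E=\zeta(a,r)$. For every $c$ except the at most one value with $\vec v_E(P_c)=\vec v(\infty)$, this direction is an open disk inside $\CD(a,r)$, i.e.\ $\gamma\prec\zeta_E$.

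For $f\in\K[y]$, factor into linear terms and use $\abs{\gamma-\beta}\le\max(r,\abs{a-\beta})=\norm[\zeta_E]{y-\beta}$, with equality unless $\gamma$ and $\beta$ lie in the same direction at $\zeta_E$. This gives $\norm[\gamma]{f}\le\norm[\zeta_E]{f}$ uniformly in $c$, including at the points where $E$ meets other components of $X_0$. It also gives equality whenever $\vec v_E(P_c)$ contains no root of $f$, which excludes only finitely many $c$.

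Your stated main obstacle (Galois conjugates, the excluded point $P$) is not where the difficulty lies. Discarding finitely many $c$ is harmless for the generic equality, but not for the maximum.
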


Following \cite[\S 6]{FJ04} we show that $\mb(E)$ is in fact the generic multiplicity of the associated Type II point $\zeta$. Hence we may write $\mg(\zeta) = \mg(E) = \mb(E)$.

\begin{thm}\label{thm:galois:parametercorrespondence}\label{thm:galois:diameter}%
 Let $X$ be a model, suppose that $E \subset X_0$ is an irreducible curve and $\redct(\zeta) = E$. %
Then $\mb(E) = \mg(\zeta)$ and $\displaystyle\frac{\ma(E)}{\mb(E)} = \log_{\abs x}(\diam(\zeta))$; hence $\displaystyle \ma(E) = \max_{\gamma \in K}\ord_E(y-\gamma)$ is an integer.
 Furthermore, one can write $\zeta = \zeta\shiftbracket{2pt}{14pt}{\gamma_0, {\abs x}^{\tfrac{\ma(E)}{\mb(E)}}}$ where $\mm(\gamma_0) = \mm(\zeta)\mid\mb(E)$.
\end{thm}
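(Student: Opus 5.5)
We prove the theorem in three steps: first the identity $\mb(E)=\mg(\zeta)$, then the diameter formula via a curvette computation, and finally the integrality statement and the choice of centre $\gamma_0$.

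\textbf{Step 1: reduce to the Gauss point.} By \autoref{prop:mappingmult}(b) there is an affine change of coordinates $\eta(x,y) = (x, (y-a)/x^{q})$, with $a \in \k(x^{1/m})$ and $\mm(\eta) = \mg(\zeta)$, sending $\zeta$ to $\zeta(0,1)$. Geometrically this corresponds to a base change $x = t^{g}$, where $g = \mg(\zeta)$, followed by a monomial-type change of fibre coordinate.

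Over $\k((t))$ the point $\zeta(0,1)$ is integral, so it reduces to a reduced component. In other words, after base change by $t^g$ the pulled-back component has multiplicity $1$ in $\dv(t)$.

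Now compare $\dv(x)$ with $\dv(t^g)$. In the \autoref{prop:galois:intersection} normalisation we have $\norm[\zeta_E]{z} = \abs x^{1/\mb(E)}$, where $z$ is a local equation of $E$. The base change ramifies along $E$ with index $g/\mb(E)$, so $\ord$ along the reduced component is $1$ exactly when $\mb(E) = g$.

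A cleaner route to the same equality is to work with the value group. The value group of $\norm[\zeta]\cdot$ restricted to $\k(x,y)^\times$ is $\abs{z}^{\Z} = \abs x^{\frac{1}{\mb(E)}\Z}$, because $\ord_E$ is a discrete valuation with uniformiser $z$. On the other hand, \autoref{cor:multsubtreevalency} shows that the value group of $\zeta$ on $\k((x))(y)$ is $\abs x^{\frac{1}{\mg}\Z}$. This last claim follows by evaluating on the minimal polynomials over $\k((x))$ of truncations $\gamma$ of $\gamma_0$ and of $\gamma_0 + cx^{p/q}$: their norms at $\zeta$ have exponents with denominators dividing $\LCM(\mm, q)$, and these denominators are attained. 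The two value groups agree by \eqref{eq:normval}, so $\mb(E) = \mg(\zeta)$.

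\textbf{Step 2: the diameter formula.} By \autoref{prop:multiplicity}, write $\zeta = \zeta(\gamma_0, r)$ with $\mm(\gamma_0) = \mm(\zeta)$, and put $\rho = \log_{\abs x} r$. We compute $\ma(E)$ from $\dv(\dd x \wedge \dd y)$. Choose local coordinates $(z,w)$ for $E$ as in \autoref{lem:galois:coordinates}, and take a curvette $w = c$ for generic $c$; it is parametrised by a Puiseux germ $y = \gamma(x)$ with $\gamma \prec \zeta$ and $\abs{\gamma - \gamma_0} = r$.

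Parametrise the curvette by $x = s^{g}$, $y = \gamma(s^{g})$. Then $\ord_E(\dd x\wedge \dd y)$ is read off as the vanishing order of the pulled-back 2-form in the coordinates $(s, u)$, where $y = \gamma(s^g) + s^{g\rho}u$; this is a toric-type chart in which $E = \{s = 0\}$. This gives
\[
\dd x \wedge \dd y = g\, s^{g-1} s^{g\rho}\, \dd s \wedge \dd u,
\]
so $\ord_E(\dd x\wedge \dd y) = g - 1 + g\rho$. Hence
\[
\ma(E) = g - 1 + g\rho - g + 1 = g\rho,
\]
that is, $\ma/\mb = \rho = \log_{\abs x}\diam(\zeta)$.

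The main obstacle is justifying that $(s,u)$ really are coordinates near a generic point of $E$ on some model dominating $X$, and that $\ord_E$ is insensitive to the choice of model. We handle this using \autoref{prop:galois:vertexextension} and the divisorial valuation interpretation \eqref{eq:normval}: $\ord_E$ depends only on $\zeta$, so it can be computed on any model where $\zeta$ is a divisor, in particular one adapted to the chart above.

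\textbf{Step 3: integrality and the centre.} Since $\ord_E(y-\gamma) = \mb\log_{\abs x}\norm[\zeta]{y-\gamma}$, this quantity is maximised when $\gamma \in \CD(\gamma_0, r)$, where it equals $\mb\rho = \ma$. Because $\ord_E$ takes integer values, $\ma(E)$ is an integer. The final claim $\mm(\gamma_0) = \mm(\zeta) \mid \mb(E)$ follows from \autoref{cor:multsubtreevalency} together with Step 1.
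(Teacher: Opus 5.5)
\textbf{Step 1 proves only the easy half of $\mb(E)=\mg(\zeta)$.} Your value-group comparison does give one inclusion. By \eqref{eq:normval}, the values of $\zeta$ on $\k(x,y)^\times$ form $\abs x^{\frac{1}{\mb(E)}\Z}$. Over $L=\k((x^{1/g}))$, with $g=\mg(\zeta)$, the point $\zeta(\gamma_0,\abs x^{p/q})$ is a Gauss norm in $(y-\gamma_0)/x^{p/q}$, so its values lie in $\abs x^{\frac1g\Z}$. Hence $\mb(E)\mid g$, which is the first paragraph of the paper's proof.

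The reverse inclusion is the real content, and \autoref{cor:multsubtreevalency} says nothing about value groups. Your sketch does not supply it. Write $v_\zeta=\log_{\abs x}\norm[\zeta]{\cdot}$ and let $Q$ be the minimal polynomial of $\gamma_0$. Then the minimal polynomial of $\gamma_0+cx^{p/q}$ has value $\frac{g}{\mm(\zeta)}v_\zeta(Q)$, so it contributes no new denominator. The remaining assertion, that the values of $Q$ and of the minimal polynomials of the truncations of $\gamma_0$ have denominators with least common multiple $\LCM(\mm(\zeta),q)$, is exactly the nontrivial characteristic-exponent induction, and you do not carry it out.

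Your first route fails outright. After the base change $x=t^g$, the normalised valuation above $E$ is $g\,v_\zeta$. It gives $t$ order $1$ whatever $\mb(E)$ is (the ramification index is $g/\mb(E)$), so reducedness cannot detect $\mb(E)=g$.

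The paper obtains the missing inequality $\mg(\zeta)\le\mb(E)$ geometrically. Solving a generic curvette $w=c$ by Puiseux in $x^{1/\mb(E)}$ gives a root $\beta_c$ with $\mm(\beta_c)\le\mb(E)$ lying in a generic direction at $\zeta$. Every point in a generic direction has multiplicity at least $\mg(\zeta)$ by \autoref{prop:multsubtreevalency}.

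\textbf{Step 2 rests on a false justification.} The pair $(s,u)$ is not a coordinate system on any model of $X$. Indeed $s=x^{1/g}\notin\k(x,y)$ and $\gamma(s^g)$ is a power series, so your chart lives on a degree-$g$ base change, and \autoref{prop:galois:vertexextension} cannot produce a model adapted to it. That $\ord_E$ depends only on $\zeta$ is true but beside the point. What is needed is how the order of a $2$-form changes under the cover: $\ord_{E'}(\pi^*\omega)=e\,\ord_E(\omega)+e-1$, where $e=N/\mb(E)$ is the ramification index along $E$ of the base change $x=s^N$.

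Some such correction is unavoidable. Rerunning your computation with $x=s^{kg}$ would yield order $kg-1+kg\rho$, contradicting the value obtained with $x=s^g$. With the correction term, your computation does give $\ord_E(\dd x\wedge\dd y)=\mb(E)(1+\rho)-1$, hence $\ma(E)=\mb(E)\rho$. This holds for any $N$ divisible by $\mm(\gamma_0)$ and $q$, and uses only the easy half of Step 1. The paper avoids the issue by rewriting $\dd x\wedge\dd y$ directly in terms of $\dd z\wedge\dd w$ for genuine coordinates $(z,w)$ at $E$.

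Finally, integrality in Step 3 should come from $\ma(E)=\mb(E)\,p/q$ with $q\mid\mg(\zeta)=\mb(E)$. It cannot come from $\ord_E$ being integer-valued, since $y-\gamma_0\notin\k((x))(y)$ in general.
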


\begin{rmk}
 The above maximum is attained for any Puiseux series $\gamma$ in the disk $D = \CD(\gamma_0, {\abs x}^{\frac{\ma(E)}{\mb(E)}})$ and $\gamma_0$ can be chosen to be the unique order $\frac{\ma(E)}{\mb(E)}$ truncation of any Puiseux series in $D$.
\end{rmk}

\begin{proof}
 Firstly, by \autoref{lem:galois:coordinates}, we have (rational) coordinates $z, w$ at $E$ and rational functions $P, Q \in \k(z, w)$ such that $x = P(z, w),\ y = Q(z, w)$, and $\ord_E(z) = 1$. By definition, $\mb(E) = b$ is the integer such that $P(z, w) = z^b \cdot u(z, w)$ where $\norm[\zeta_E]{u(z, w)} = 1$, i.e. $u$ is a unit with respect to $\zeta_E$ and $z$. Whence $\norm[\zeta_E]{x} = \norm[\zeta_E]{z^b}$, so $\norm[\zeta_E]{z} = \abs x^{1/b}$.
Let $g = \mg(E)$; there exists $\gamma \in \k(x^{1/g})$ and $a \in \Z$ such that $\zeta_E$ is the Galois equivalence class of $\zeta\left(\gamma, \abs x^{\frac a g}\right)$, which is a norm on $\k(x, y)$ taking values in $\set[\abs x^{\frac {mg + na}g}]{m, n \in \Z}$. Therefore $\mg(E) = N\mb(E)$ for some positive integer $N$.

 For clarity write $w_b = w^{1/b}$ and $x_b = x^{1/b}$. By Puiseux's Theorem (see \autoref{sec:twovarpuiseux}), there exists a solution $z = \alpha(w_b, x_b) \in \k((w_b))((x_b))$ to the equation $x = z^b \cdot u(z, w)$. So $\mm(\alpha) \le b$, and since $\abs{\alpha} = \abs x^{1/b}$ it necessarily has multiplicity $\mm(\alpha) = b$. Whence we obtain the corresponding solution $y = \beta(w_b, x_b) = Q(\alpha(w_b, x_b), w_b^b)$, which has multiplicity $\mm(\beta) \le \mm(\alpha) = b$. More simply, this can be carried out for fixed general value of $w=c$ with root $y = \beta_c(x_b)$. Note that the direction $\vec v(\beta_c)$ is a direction at $\zeta_E$ which is associated to the point with coordinates $(z, w-c)$ (see \autoref{prop:galois:directions}), and we may assume it is \emph{generic} in the sense of \autoref{defn:specgendirn}. Now \autoref{prop:multsubtreevalency} says that for any generic direction $\bvec v \in \Dir{\zeta_E}$ we have $\mg(\zeta) = \mm(\bvec v) \le \mm(\gamma)\ \forall \gamma \in \bvec v$; hence $\mg(\zeta) \le \mm(\beta_c) \le b$. Therefore $\mg(\zeta) = \mb(E) = \mb(\beta)$.

Now we consider $\ma(E)$, first by rewriting $\dd x \wedge \dd y$ with respect to local coordinates $(z, w)$. By definition of $\ma$, we necessarily have
\[\dd x \wedge \dd y = z^{\ma(E) + \mb(E) -1} \tilde u(z, w) \dd z \wedge \dd w,\] where $\ord_E(\tilde u(z, w)) = 1$. Consider $y = \beta(w_b, x_b) \in \k((w_b))((x_b))$ as above. %
 Now we can write $\zeta = \zeta\left(\gamma_0, \abs x^{\frac ab}\right)$ and 
 \[y = \beta(w_b, x_b) = \gamma_0(x_b) + \delta(w_b) x_b^a + \eps(w_b, x_b)x_b^{a+1}\]
  where $\delta(w_b)$ depends only on our choice of root for $w = c$, and $\abs\eps \le 1$.
  Whence $\sup_{\gamma \in K}\ord_E(y-\gamma) = a$ and this maximum is attained at $\gamma = \gamma_0$. Note for the final result that $a = \mb(E)\log_{\abs x}(\diam(\zeta))$ by \autoref{eq:normval}.  We aim to show that $a = \ma(E)$. 
  
 Observe that $\dd x = \dd{(x_b^b)} = bx_b^{b-1}\dd x_b$ so $\dd x \wedge \dd y = bx_b^{b-1}\dd x_b \wedge \dd y$. Using $y = \beta(w_b, x_b)$, we have 
 \[\dd y = \pd \beta {x_b} \dd x_b + \pd \beta {w_b} \dd w_b = \pd \beta {x_b} \dd x_b + \pd \beta {w_b} \dd w_b = \pd \beta {x_b} \dd x_b + \left(\td \delta {w_b} + x_b \pd\eps {w_b}\right)x_b^a\dd w_b,\]
 where $\abs{\pd \eps {w_b}} \le 1$. Since $\delta$ does depend on our choice of $w$, and hence $w_b$, it follows that $\td \delta {w_b} + x_b \pd\eps {w_b} = u'(w_b, x_b)$ is a unit with respect to $x_b$.
 Thus $\dd x_b \wedge \dd y = u'(w_b, x_b)x_b^a\dd x_b \wedge \dd w_b$ and so
 $\dd x \wedge \dd y = bu'(w_b, x_b) x_b^{a + b-1}\dd x_b \wedge \dd w_b$. 
 Since $x = z^b u(z, w)$, $x_b = z u(z, w_b^b)^{\frac 1b}$, and we have
  \[\dd{x_b} = \left(u(z, w)^{\frac 1b} + \frac zb u^{\frac {1-b}b} \pd uz \right)\dd z + \frac zb u^{\frac {1-b}b}\pd u{w_b}\dd w_b.\] 
  It follows that
 \[\dd x_b \wedge \dd w_b = u''(z, w_b)\dd z \wedge \dd w_b,\]
 where $u''(z, w_b) = u(z, w_b^b)^{\frac 1b} + \frac zb u^{\frac {1-b}b} \pd uz$ is a unit with respect to $z$, and so
  \[\dd x \wedge \dd y = bu'(w_b, x_b) x_b^{a + b-1} \dd x_b \wedge \dd w_b = bu'(w_b, x_b)u''(z, w_b) x_b^{a + b-1}\dd z \wedge \dd w_b.\] %
  Finally, write $\dd w = b w_b^{b-1}\dd {w_b}$ and let $\tilde u(z, w_b) = w_b^{1-b}u(z, w_b^b)^{\frac{a+b-1}b}u'\left(w_b, z u(z, w_b^b)^{\frac 1b}\right)u''(z, w_b)$, which is a unit with respect to $z$; now one can check that $\dd x \wedge \dd y = \tilde u z^{a+b-1}\dd z \wedge \dd w$.
  Therefore $a = \ma(E)$ because $\ma(E)+\mb(E)-1 = \ord_E(\dd x \wedge \dd y) = a+\mb(E)-1$.
\end{proof}

 Define the metric on $\Delta(\Gamma)$ to be the one induced by the hyperbolic metric $d_\bH$ through embedding. By \autoref{thm:galois:diameter}, this is the same metric induced by declaring that whenever $E \succeq F$
 \[d(E, F) = \frac{\ma(E)}{\mb(E)} - \frac{\ma(F)}{\mb(F)}.\]

\begin{prop}
 Let $X$ be a local model. If $E, F \in \Delta(X)$ and $E \prec F$, then $\frac{\ma(E)}{\mb(E)} > \frac{\ma(F)}{\mb(F)}$.
\end{prop}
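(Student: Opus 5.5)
We reduce the statement to the diameter formula of \autoref{thm:galois:diameter} together with the shape of the order $\prec$ on $\P^1_\an$. Let $\zeta_E = \redct^{-1}(E)$ and $\zeta_F = \redct^{-1}(F)$ be the Type II points in $\Gamma(X)$. The ordering on $\Delta(X)$ is the one induced from $(\P^1_\an,\preceq)$. So the hypothesis $E \prec F$ should give $\zeta_E \prec \zeta_F$ in $\P^1_\an$, meaning $\zeta_F \in (\zeta_E, \infty]$. This is a small point: we would appeal to the definition of the order on $\Delta(X)$ and to \autoref{prop:corresp:ordering}, where every generating relation comes from a relation $\zeta\preceq\xi$ in $\P^1_\an$. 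Since $\zeta_F$ is Type II, in fact $\zeta_F \in (\zeta_E,\infty)$.

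Next, write $\zeta_E = \zeta(\gamma_0, r)$ with $r = \diam(\zeta_E)$. Every point of $(\zeta_E, \infty)$ has the form $\zeta(\gamma_0, R)$ with $R > r$, as recalled just before \autoref{prop:multiplicityorder}. Hence $\zeta_F = \zeta(\gamma_0, R)$ and $\diam(\zeta_F) = R > r = \diam(\zeta_E)$.

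Now apply \autoref{thm:galois:diameter}, which gives $\frac{\ma(E)}{\mb(E)} = \log_{\abs x}\diam(\zeta_E)$ and likewise for $F$. Because $0<\abs x<1$, the function $\log_{\abs x}$ is strictly decreasing on $(0,\infty)$. Therefore $R>r$ yields $\frac{\ma(F)}{\mb(F)} < \frac{\ma(E)}{\mb(E)}$, which is the claim.

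The main obstacle is the first step, transferring $E\prec F$ in the dual graph to $\zeta_E\prec\zeta_F$ in $\P^1_\an$. The order on $\Delta(X)$ is defined as the smallest order compatible with $\preceq$ under reduction, so we have to check that it does not add relations beyond those coming from $\P^1_\an$. Our plan is to use the embedding $\emb_X$ of the tree $\Delta(X)$ into $\P^1_\an$, under which edges map to intervals, together with the characterisation $F_1\prec F_2 \iff F_2\in(F_1,E_\infty]$ given before \autoref{prop:corresp:ordering}. The image of $E_\infty$ lies on $[\zeta,\infty]$ for every vertex $\zeta$, so the interval $(\zeta_E,\zeta_{E_\infty}]$ sits inside $(\zeta_E,\infty]$, and this gives $\zeta_E\prec\zeta_F$.
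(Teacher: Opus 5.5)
Your argument is correct and is essentially the paper's proof. You write the two points as $\zeta(\gamma,r)\prec\zeta(\gamma,R)$ with $r<R$, apply \autoref{thm:galois:diameter}, and use that $\log_{\abs x}$ is decreasing. The paper simply reads $E\prec F$ as $\zeta_E\prec\zeta_F$ in $\P^1_\an$, so your final paragraph is extra care rather than a needed step; note also that the characterisation it relies on, given just before \autoref{prop:corresp:ordering}, is stated there only when $p_{X,\infty}$ is smooth.
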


\begin{proof}
  If $\zeta \prec \xi$ are Type II points then we can write $\zeta = \zeta(\gamma, r)$ and $\xi = \zeta(\gamma, R)$, where $r < R$. Recall that $\frac{\ma(\zeta)}{\mb(\zeta)} = \log_{\abs x}(\diam(\zeta))$. Therefore $\frac{\ma(E)}{\mb(E)} = \log_{\abs x}(r)$ and $\frac{\ma(F)}{\mb(F)} = \log_{\abs x}(R)$. The inequality follows because $\abs x <1$ and so $\log_{\abs x}$ is decreasing.
\end{proof}

We consider $\P^1_\an$ as the \emph{universal dual graph}, meaning that it contains the dual graph for all models, and we shall move toward only doing calculations in $\P^1_\an$, rather than always concerning ourselves with choice of model. In this larger setting, one ought to think of Type I points as \emph{sections} of the surface defined by Puiseux series. Recall that \autoref{prop:galois:vertexextension} (Baker-Payne-Rabinoff, Favre-Jonsson) says that any finite collection $\Gamma$ of Type II points yields a model $X$ with $\Gamma(X) = \Gamma$. Next, \autoref{lem:corresp:simplesection} will generalise to any multisection below in \autoref{thm:farey:puiseuxsection}. 
This theorem lays the foundation in our study of smoothness.%

Let $h : X \to \Spec \bO$ be a local model with special fibre $X_0$ over the maximal ideal $\mathcal M \triangleleft \bO$, and $p \in X_0$ a closed point with maximal ideal $\mathcal M_{X, p}$. Define the \emph{multiplicity} of $p$, $\mm(p)$ as the greatest power of $\mathcal M_{X_0, p}$, containing $h^*(\mathcal M)$. More simply put on a model over $\k[[x]]$, $\mm(p)$ is the multiplicity of the function $x$ on the surface at $p$.

\begin{thm}\label{thm:farey:puiseuxsection}
 Let $X$ be a local model and $S \subset X$ a multi-section parametrised by $s(t) = (t^m, Q(t))$, where $Q \in \k((t))$, or equivalently $y = \gamma(x) = Q(x^{1/m})$. The intersection $X_0 \cap S$ with the special fibre is a closed point $p$ and $\gamma$ lies in a $\Gamma(X)$-domain $U = {\redct_X}^{-1}(p)$ associated to $p$.
 If $p$ is a smooth point then one of the following holds.
\begin{enumerate}
 \item $p$ lies in a one component $E \subset X_0$, $U = \vec v_E(p) = \vec v(\gamma)$ is a $\Gamma(X)$-disk, and $\mm(p) = \mb(E) = \mm(\vec v_E(p)) = \mg(\vec v_E(p)) \le \mm(\gamma)$. Hence, $\vec v_E(p)$ is a generic direction.
 \item $p$ is the intersection of two components $E, F \subset X_0$, $U = \vec v_E(p) \cap \vec v_F(p)$ is a $\Gamma(X)$-annulus, and $\mm(p) = \mb(E) + \mb(F) = \mg(U) \le \mm(\gamma)$. 
\end{enumerate}
 Furthermore, if $\pi : \hat X \to X$ is a blowup of this $p$ and $\mm(\gamma) = \mm(p)$, then its exceptional curve $F$ is the component intersecting (the proper transform of) $S$ on $\hat X$ at a free point.
\end{thm}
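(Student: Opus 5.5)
The plan is to reduce everything to local intersection theory at the smooth point $p$. We then translate back to $\P^1_\an$ using \autoref{prop:galois:reductionstructure}, \autoref{thm:galois:parametercorrespondence} and \autoref{prop:galois:multiplicitysubset}.

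\textbf{Setup.} First, $S$ is the closure of an irreducible germ that is finite over $\Spec\bO$, so $S\cap X_0$ is a single closed point $p$. Moreover $\redct'_X(\gamma)=p$ by the valuative-criterion description of $\redct$, since the Type I point $\gamma$ is the generic point of $S$. Hence $\gamma$ lies in the $\Gamma(X)$-domain $U=\redct_X^{-1}(p)$ given by \autoref{cor:galois:pointcorrespondence}. As in the proof of \autoref{lem:corresp:simplesection}, we may desingularise away from $p$ and assume $X$ is smooth. Then \autoref{prop:corresp:smoothsnc} shows that locally at $p$ the fibre is SNC, so by \autoref{prop:galois:reductionstructure} $U$ is either a disk or an annulus.

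\textbf{Computing $\mm(p)$ and comparing with $\mm(\gamma)$.} Choose local coordinates $(z,w)$ at $p$. In the disk case $E=\{z=0\}$ and $x=z^{\mb(E)}u$. In the annulus case $E=\{z=0\}$, $F=\{w=0\}$ and $x=z^{\mb(E)}w^{\mb(F)}u$, where $u$ is a unit. In either case $\mm(p)$ is the order of $x$ at $p$: it equals $\mb(E)$ or $\mb(E)+\mb(F)$ respectively.

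Next, $S$ has degree $\mm(\gamma)$ over the base, since its Galois orbit of Puiseux roots has that many elements. Thus
\[\mm(\gamma)=S\cdot X_0=i_p(S,\dv x)\ \ge\ \mathrm{mult}_p(S)\,\mathrm{mult}_p(x)\ \ge\ \mm(p),\]
with equality iff $S$ is smooth at $p$ and its tangent line is not in the tangent cone of $x$, i.e.\ not along $E$ or $F$.

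The inequality holds for every multisection through $p$, hence for every Type I point of $U$. We also exhibit equality: take a transverse curvette ($w=c$ in the disk case, $z=cw$ in the annulus case). It has a Puiseux root in $U$ of multiplicity exactly $\mm(p)$. By \autoref{prop:galois:multiplicitysubset} the minimum of $\mg$ over $U$ is attained at Type I points, so $\mg(U)=\mm(p)$.

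In the disk case, $U=\vec v_E(p)$ and \autoref{prop:galois:multiplicitysubset} gives $\mm(\vec v)=\mg(\vec v)=\mm(p)=\mb(E)$. By \autoref{thm:galois:parametercorrespondence}, $\mb(E)=\mg(\zeta_E)$, so the direction is generic in the sense of \autoref{defn:specgendirn}.

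\textbf{The blowup.} If $\mm(\gamma)=\mm(p)$, equality in the displayed inequality shows that $S$ is smooth at $p$ with tangent direction different from those of the components through $p$. After blowing up $p$, the strict transform of $S$ therefore meets the exceptional curve $F$ at a point that does not lie on the strict transforms of $E$ (or of $E,F$). That point is free on the new exceptional curve.

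\textbf{Main obstacle.} I expect the hardest step to be justifying rigorously that $S\cdot X_0=\mm(\gamma)$ and the tangent-cone equality criterion for $i_p$ on a possibly non-reduced fibre. I would handle this by pulling back along $t\mapsto(t^m,Q(t))$ and computing $\ord_t(x\circ s)=m$ directly.
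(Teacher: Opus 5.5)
Your argument is correct and has the same skeleton as the paper's proof. You reduce to $X$ smooth near $p$, and choose coordinates with $x=z^{\mb(E)}u$ or $x=z^{\mb(E)}w^{\mb(F)}u$, so that $\mm(p)$ is the order of $x$ at $p$. You then bound $\mm(\gamma)$ from below by pulling $x$ back along the multisection, and pass to $\mg(U)$ via \autoref{prop:galois:multiplicitysubset}.

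Where you differ is in the refinements.
\begin{itemize}
\item \textbf{Lower bound.} The paper uses the chain $(t)^m=s^*h^*(x)\subseteq(s^*\mathcal M_{X,p})^{\mm(p)}$, which yields only $m\ge\mm(p)$. Strictly, $s^*\mathcal M_{X,p}$ equals $(t)$ only when $S$ is smooth at $p$, though the inclusion is all that is used. Your inequality $i_p(S,\dv x)\ge\mathrm{mult}_p(S)\,\mathrm{mult}_p(x)$ also tracks $\mathrm{mult}_p(S)$ and the tangent cone, so it comes with an equality criterion.
\item \textbf{Blowup clause.} That equality criterion proves the final blowup clause outright. The paper's proof does not argue this clause separately.
\item \textbf{Sharpness.} In the free case the paper uses $\mm(\vec v_E(p))\le\mg(\zeta_E)=\mb(E)$ from \autoref{cor:multsubtreevalency}. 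In the satellite case it uses the Type II point of the exceptional divisor of the blowup of $p$ (the valuation with $\nu(z)=\nu(w)=1$), which ties the statement to the Farey addition of \autoref{thm:farey:addition}. Your transverse curvette instead produces a Type I point of $U$ of multiplicity exactly $\mm(p)$, uniformly in both cases.
\end{itemize}

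One slip to fix: with $p$ at the origin of $(z,w)$, the curve $w=c$ with $c\neq 0$ does not pass through $p$. In the disk case take $w=0$ (or $w=cz$) instead. This curve is transverse to $E$ and has $\ord(x|_C)=\mb(E)$, as required.
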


In fact, in \autoref{cor:smooth:multdesc} we see that $\mm(\gamma) \in \N^+\langle\mm(p)\rangle$ for a smooth free $p$ and $\mm(\gamma) \in \N^+\langle\mb(E) + \mb(F)\rangle$ for smooth satellite $p$. 

\begin{proof}
 Let $h : X \to \Spec \bO$ be a local model, with $(x)$ the maximal ideal for $\bO$ as usual, and $p \in X_0$ a closed point with maximal ideal $\mathcal M_{X, p}$. Take a base extension $\iota : \bO \hookrightarrow R = \bO[t]/(t^m - x)$. Then $R$ is also a local ring with maximal ideal $(t)$ and $S$ is the map $\Spec R \to X$ given by $x = t^m, y = Q(t)$. Note that a parametrised multisection of the form $s(t) = (t^m, Q(t))$ splits into $m$ sections of the form $(t, Q(\rho t))$ i.e.\ $y = \gamma_i(x) = Q(\rho^i x^{1/m})$ for $i = 0, \dots, m-1$, where $\rho$ is any $m$-th root of unity.
 
 The first part of the result is restating results of the previous section such as reduction; see for instance \autoref{cor:galois:pointcorrespondence}. Now assume $p$ is a smooth closed point, and since the problem is local, one may assume $X$ is smooth. By \autoref{prop:corresp:smoothsnc} $p$ lies on either one or two components.
 
If $p$ is smooth free point on the special fibre of $X$ lying (uniquely) on $E \subset X_0$, then $U$ is a $\Gamma(X)$ disk bounded by $\alpha = \emb(E)$. Otherwise, $p$ is a smooth satellite point at the intersection of components $E = \redct(\alpha), F = \redct(\beta) \subset X_0$; then $U$ is an annulus bounded by $\alpha$ and $\beta$, or equivalently, $U = \vec v_E(p) \cap \vec v_F(p)$. See \autoref{prop:galois:reductionstructure}, \autoref{cor:corresp:domains}. By \autoref{lem:galois:coordinates}, we may assume $p$ has local coordinates $(z, w)$ such that $\mathcal M_{X, p} = \langle z, w\rangle$, $\ord_E(z) = 1$ and $x = x(z, w) = z^{\mb(E)}u(z, w)$, where $\ord_E(u) = 0$. In the satellite picture, we can also arrange for $\ord_F(w) = 1$, $\ord_E(w) = \ord_F(z) = 0$ and $x = x(z, w) = z^{\mb(E)}w^{\mb(F)}u(z, w)$ with $u$ a unit. In the free case, $x \in \mathcal M_{X, p}^{\mb(E)} \sm \mathcal M_{X, p}^{\mb(E)+1}$; whence $\mm(p) = \mb(E)$. In the satellite case, $x \in \mathcal M_{X, p}^{\mb(E) + \mb(F)} \sm \mathcal M_{X, p}^{\mb(E) + \mb(F)+1}$; whence $\mm(p) = \mb(E) + \mb(F)$.
 
 Consider the multisection $s(t)$ as above, one has the commuting diagram.
 \[
\begin{tikzcd}
 & X \arrow{d}{h} \\
 \Spec R \arrow[swap]{r}{\iota} \arrow{ru}{s} & \Spec \bO
\end{tikzcd}
\]
Note that $\iota^*(x) = (t^m)$ and $h^*(x) \subseteq \mathcal M_{X, p}^{\mm(p)}$, and observe the chain of ideals
\[(t)^m = \iota^*(x) = (h\circ s)^*(x) = s^*h^*(x) \subseteq s^*(\mathcal M_{X, p}^{\mm(p)}) = (s^*\mathcal M_{X, p})^{\mm(p)} = (t)^{\mm(p)}.\]
Therefore $m \ge \mm(p)$. Moreover, since this holds for any such $\gamma \in U$, we have \[\mg(U) = \min_{\gamma \in U} \mg(\gamma) \ge \mm(p).\] 
Here, we also use that the minimum is attained by Type I points; see \autoref{prop:galois:multiplicitysubset}. Recall from \autoref{cor:multsubtreevalency} that $\mm(\vec v(p)) \in \{1, \mm(\alpha), \mg(\alpha)\}$ where $\mm(\alpha) \mid \mg(\alpha) = \mb(E)$. Hence, when $p$ is free, \[\mm(p) = \mb(E) \ge \mm(\vec v(p)) =\mg(\vec v(p)) \ge \mm(p)\] so equality holds throughout. 
When $p$ is satellite we have \[\mg(U) \ge \mm(p) = \mb(E) + \mb(F).\]
To obtain equality, we can exhibit a $\zeta \in U$ with $\mb(\zeta) = \mb(E) + \mb(F)$ where the minimum $\mm(U) = \min_{\zeta \in U} \mb(\zeta)$ is attained. For this, we can take the divisorial valuation $\nu$ for which $\nu(z)=\nu(w) = 1$, so that $\nu(x) = \nu(z^{\mb(E)}w^{\mb(F)}u(z, w)) = \mb(E) + \mb(F) + 0$. Some details of this are deferred to the next theorem because $\nu$ is precisely the order of vanishing along the exceptional divisor obtained by blowing up at $p$. 
\end{proof}

\newpage


\section{Farey Parameters and Smoothness}\label{sec:smooth}

So far we have setup the Berkovich projective line over the Puiseux series as a universal dual graph and given it a parametrisation with multiplicity $\mm$, generic multiplicity $\mg=\mb$ and the log discrepancy $\ma$. Now we look at how these parameters behave under smooth point blowup and study the patterns of Farey parameters that can appear with a smooth surface. More specifically, the primary goal of this section is to show that a vertex set $\Gamma$ represents the divisors of a smooth model $X$ if and only if it has a generalised Farey sequence property. This is \autoref{thm:smooth}. 
As stated in the introduction, much of this section is inspired by and continues a discussion from \cite[\S 6]{FJ04}.

\subsection{Parameters and Blowups}\label{sec:paramblowup}
The next few results are a reframing some of Favre and Jonsson such as \cite[Proposition 6.36, Proposition 6.37]{FJ04}. Their approach differs from ours in that they take these properties somewhat axiomatically on the dual graph and afterwards prove an isomorphism with the valuative tree.

\begin{thm}\label{thm:farey:addition}
 Let $X$ be a global model over $0$ and let 
  $S_\infty$ be the section at infinity as defined previously and $p_\infty$ be the unique point $S_\infty \cap X_0$. Let $p \in X_0$ be a non-singular closed point of $X$ and blowup $p$ to obtain the model $Y$ with exceptional curve $E$.%
\begin{enumerate}
 \item If $p = p_\infty \in E_1$ is free, then $\ma(E) = \ma(E_1) - 1$ and $\mb(E) = \mb(E_1) = 1$.%
 \item If $p \ne p_\infty$ is free in component $E_1$, then $E$ has Farey parameter \[(\ma(E), \mb(E)) = (\ma(E_1) + 1, \mb(E_1)).\]%
 \item If $p = E_1 \cap E_2$ is a satellite point, then $E$ has Farey parameter \[(\ma(E), \mb(E)) = (\ma(E_1) + \ma(E_2), \mb(E_1) + \mb(E_2)).\]%
\end{enumerate}
\end{thm}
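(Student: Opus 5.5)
The plan is to compute both parameters directly from their definitions, $\mb(E)=\ord_E(x)$ and $\ma(E)+\mb(E)-1=\ord_E(\dd x\wedge\dd y)$. We work in local coordinates $(z,w)$ at $p$ with $\mathcal M_{X,p}=\langle z,w\rangle$, and use the standard behaviour of a point blowup of a smooth point: the exceptional divisor valuation $\ord_E$ is monomial with $\ord_E(z)=\ord_E(w)=1$, and $\ord_E(f)$ equals the order of $f$ in $\mathcal M_{X,p}$. The key identities will be
\[
\ord_E(\dd z\wedge\dd w)=1, \qquad \ord_E(x)=\mm(p).
\]
The first holds in the chart $z=z,\ w=z w'$, where $\dd z\wedge\dd w=z\,\dd z\wedge\dd w'$. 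The second is exactly the computation in the proof of \autoref{thm:farey:puiseuxsection}.

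\textbf{Satellite case (iii).} Choose coordinates as in the proof of \autoref{thm:farey:puiseuxsection}, with $E_1=\set{z=0}$, $E_2=\set{w=0}$ and $x=z^{\mb(E_1)}w^{\mb(E_2)}u$, $u$ a unit. Then $\mb(E)=\mb(E_1)+\mb(E_2)$ immediately. For $\ma$, note that $\dv(\dd x\wedge\dd y)$ near $p$ is supported only on $E_1,E_2$: away from $p_\infty$ the section $S_\infty$ does not pass through $p$, since by \autoref{lem:corresp:simplesection} the point $p_\infty$ is free. Hence
\[
\dd x\wedge\dd y=z^{\ma(E_1)+\mb(E_1)-1}w^{\ma(E_2)+\mb(E_2)-1}\tilde u\,\dd z\wedge\dd w
\]
with $\tilde u$ a unit. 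Applying $\ord_E$ gives
\[
\ma(E)+\mb(E)-1=(\ma(E_1)+\mb(E_1)-1)+(\ma(E_2)+\mb(E_2)-1)+1,
\]
and substituting $\mb(E)=\mb(E_1)+\mb(E_2)$ yields $\ma(E)=\ma(E_1)+\ma(E_2)$.

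\textbf{Free case (ii).} Take $E_1=\set{z=0}$ with $w$ a transverse coordinate at $p$. Then $x=z^{\mb(E_1)}u$ and
\[
\dd x\wedge\dd y=z^{\ma(E_1)+\mb(E_1)-1}\tilde u\,\dd z\wedge\dd w,
\]
with no other fibral component through $p$ and $S_\infty$ absent. This gives $\mb(E)=\mb(E_1)$ and $\ma(E)+\mb(E_1)-1=\ma(E_1)+\mb(E_1)-1+1$, so $\ma(E)=\ma(E_1)+1$.

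\textbf{Free case at infinity (i).} The difference is the pole $-2S_\infty$ in $\dv(\dd x\wedge\dd y)$. By \autoref{lem:jhssec}, $p_\infty$ lies on a reduced component, so $\mb(E_1)=1$, and we may take $z$ to be a local equation of $E_1$ and $w=1/y$ a local equation of $S_\infty$ (they are transverse at $p_\infty$). Then
\[
\dd x\wedge\dd y=z^{\ma(E_1)}w^{-2}\tilde u\,\dd z\wedge\dd w,
\]
so $\ord_E=\ma(E_1)-2+1$. This gives $\ma(E)+\mb(E)-1=\ma(E_1)-1$, and since $\mb(E)=\mb(E_1)=1$ we get $\ma(E)=\ma(E_1)-1$.

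\textbf{Main obstacle.} The delicate step is justifying that the coordinate functions can be chosen so that $x$, and the multiplicity-free part of $\dd x\wedge\dd y$, factor exactly as monomials times units at $p$. This is where smoothness of $p$ is needed: it ensures SNC at $p$ via \autoref{prop:corresp:smoothsnc}, and it ensures $E_1$ (and $E_2$) are smooth there. The other point needing care is identifying $\ord_E$ with the monomial valuation. I would treat this through the explicit blowup charts from the primer on blowups, where the exceptional divisor has equation $z$ in the chart $(z,w/z)$.
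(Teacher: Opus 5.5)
Your proposal is correct and takes essentially the same route as the paper: choose local coordinates at $p$ in which $x$ and the $2$-form are monomials times units, read off $\ord_E$ in one blowup chart as the sum of the exponents plus one, and treat the same three cases, with $S_\infty$ playing the r\^ole of the second component (order $-2$) at $p_\infty$. The differences are cosmetic. The paper normalises with $\frac{1}{x}\,dx\wedge dy$, so that $\ma(E)=\ord_E+1$. You also explicitly justify $\mb(E_1)=1$ at $p_\infty$ via \autoref{lem:jhssec}, and that $S_\infty$ avoids a smooth satellite point; the paper leaves both implicit.
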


\begin{proof}
Recall from \autoref{sec:metrics} that we fix a smooth minimal model $\smodel$ with local coordinates $(x, y)$ near the special fibre, further there is a meromorphic $2$-form $\alpha = \frac 1x \dd x \wedge \dd y$. This $\alpha$ yields a choice of canonical divisor $K_X$ in every other birational model $X$. As discussed above,  $K_X = -2S_\infty + \sum_E (\ma(E) -1) E$ modulo the fibral divisors disjoint from $X_0$, where the sum ranges over irreducible components of $X_0$; so $\ma(E) = \ord_E(\alpha) + 1$.
 Let $(z_1, z_2)$ be local coordinates at a smooth point $p \in X$ on the surface corresponding to $z_1 = z_2 = 0$. Further let $E_1 = \{z_2 = 0\}$ and $E_2= \{z_1 = 0\}$ be prime divisors--these have abscissae $z_1$ and $z_2$ respectively. Suppose that $\ord_{E_1}(x) = b_1$, $\ord_{E_2}(x) = b_2$, with $\ord_{E_1}(\alpha) = n_1$ and $\ord_{E_2}(\alpha) = n_2$. One can write $x = z_1^{b_2}z_2^{b_1}u(z_1, z_2)$ and $\alpha = z_1^{n_1} z_2^{n_2} \tilde u(z_1, z_2) \dd z_1 \wedge \dd z_2$ where $u, \tilde u$ are units with respect to both $z_1$ and $z_2$, assuming $E_1 \cup E_2$ are the only components of $K_X$ near $p$. 
 We consider the point blowup $\pi : Y \to X$ of $p$. 
 This blowup $\pi$ has an exceptional curve, say $E$, which is covered by two charts on $Y$; one such chart has coordinates $(z_1, w)$ where $z_1w = z_2$. Here, $w$ is the abscissa for $E = \{z_1 = 0\}$, and $E_1 = \{w = 0\}$. 
 In these new coordinates, \[x = z_1^{b_1+b_2}w^{b_1}u(z_1, z_1w)\text{, and}\]
 \[\alpha = z_1^{n_1+n_2} w^{n_2} \tilde u(z_1, z_1w) \dd z_1 \wedge (w\dd z_1 + z_1\dd w) = z_1^{n_1+n_2+1} w^{n_2} \tilde u(z_1, z_1w) \dd z_1 \wedge \dd w,\]
  so $\ord_{E}(x) = b_1 + b_2$ and $\ord_E(\alpha) = n_1 + n_2 + 1$.

 Now we take cases. Note that, by \autoref{lem:corresp:simplesection}, $p_\infty$ (if smooth on $X$) is necessarily a free point.
 
 1) Suppose that $p$ is free on the component $E_1$. Choose local coordinates $(z_1, z_2)$ at $p$ such that (locally) $E_1$ is the vanishing locus of $z_2$ (ordinate), and $z_1$ is any transverse abscissa; let $E_2$ be the prime divisor defined by $\{z_1 = 0\}$. Then in the above setup, $b_1 = \mb(E_1)$ and $b_2 = 0$, so $\mb(E) = \mb(E_1) + 0 = \mb(E_1)$.\\
 i) If $p \ne p_\infty$, then $E_1$ is an isolated zero of $\alpha$ on $X$, so $n_1 = \ma(E_1) -1$ and $n_2 = 0$. Therefore $\ord_E(\alpha) = \ma(E_1) -1 + 0 +1$, and so $\ma(E) = \ma(E_1) + 1$.\\
 ii) Otherwise, if $p = p_\infty$, then we ought to choose $z_1 = 1/y$ as the ordinate of $S_\infty$ so that $\tilde u$ is a unit; now $S_\infty$ takes the r\^ole of $E_2$. As explained above, $\ord_{S_\infty}(\alpha) = n_2= -2$ and again $n_1 = \ma(E_1) -1$. Thus $\ord_E(\alpha) = \ma(E_1) - 1 - 2 + 1 \implies \ma(E) = \ma(E_1) -1$.\\
 2) Suppose that $p$ is satellite on components $E_1$ and $E_2$. The choices of coordinates, $z_1$ and $z_2$, are naturally given as the ordinates of $E_2$ and $E_1$, respectively. Now in the above setup, $b_1 = \mb(E_1)$, $b_2 = \mb(E_2)$, $n_1 = \ma(E_1) -1$, and $n_2 = \ma(E_2) -1$. Therefore so $\mb(E) = \mb(E_1) + \mb(E_2)$ and $\ord_E(\alpha) = \ma(E_1) -1 + \ma(E_2) -1 + 1$, so $\ma(E) = \ma(E_1) + \ma(E_2)$.
\end{proof}

The arithmetical rules outlined above motivate us to call $\ma$ and $\mb$ \emph{Farey parameters}, meaning they obey the rules of Farey addition in the context of blowups. A goal of this section is to understand that Type II points form a generalised version of a `Farey sequence' on the tree precisely when they come from a smooth model. Let us briefly recall from \autoref{sec:fareyarithmetic} some of these properties of Farey sequences in $\Q$. Two adjacent Farey fractions $\frac {a_1}{b_1}$ and $\frac {a_2}{b_2}$ are \emph{adjacent} if their `determinant', $a_1b_2-a_2b_1 = \pm 1$, which also implies $\frac {a_1}{b_1} > \frac {a_2}{b_2}$ in the positive case. We will say that a sequence of rational numbers 
\[\frac {a_1}{b_1} < \frac {a_2}{b_2} < \cdots < \frac {a_n}{b_n}\]
is a \emph{Farey sequence} if each pair of successive fractions are Farey adjacent. %
\emph{Farey addition} is defined as follows and always gives the simplest fraction between two adjacent rational numbers.
\[\frac {a_1}{b_1} \oplus \frac {a_2}{b_2} = \frac{a_1+a_2}{b_1+b_2}\]

%
%
%
%
%
%
%
%
%
%
%
%
%
%
%
%
%
%
%
%
%
%

In general, we can view smooth models as a series of blowups from a minimal model, with various free and satellite blowups. The Farey addition above helps us understand and visualise any pattern of satellite blowups between two components. 

Keep in mind the dictionary below, true for components $E \sim \zeta \in \T_1$ with multiplicity $\mm = 1$, see \autoref{defn:corresp:freesat} and later also \autoref{lem:farey:freesat}.
\[
\begin{tikzcd}
 \text{Free component } E \arrow[r, leftrightsquigarrow]{} & \frac{\ma(E)}{\mb(E)} \in \Z\\
  \text{Satellite component } E \arrow[r, leftrightsquigarrow]{} & \frac{\ma(E)}{\mb(E)} \nin \Z
\end{tikzcd}
\]
\begin{figure}[ht]
\def\ExtraFracList{1/2, -1/2, 2/3, 5/2}
\fareyline*{0.9\linewidth}{-2}{3}{1}
\caption{Blowups by Farey addition, with parameters at infinity for sections.}
\end{figure}

Furthermore, Farey addition can generate integer points, whence they simulate \emph{free} blowups. Suppose we introduce two other Farey fractions at infinity on the number line, $\frac{-1}0$ and $\frac 10$. If the lowest finite fraction is $\frac ab$ then it is adjacent to $\frac {-1}0$, so $b = 1$ and adding yields $\frac {-1}0 \oplus \frac a1 = \frac {a-1}1$; similarly, if $\frac a1$ the highest finite fraction then $\frac a1 \oplus \frac 10 = \frac {a+1}1$. %
In the theorem above we see $\ma \leadsto \ma \pm1$ (fixing $\mb$) after a free blowup, where $\frac 10$ parametrises a Type I point in an extended dual graph.

Of course, our universal dual graph is more than a real/rational line of points, it is an infinitely branched tree. At every vertex, there are not just two directions, but $\P^1(k)$-many directions, which generically give us choices of another integer point down the line. Most of the tree has multiplicity $\mm > 1$, but fortunately, the logic above extends if we use $\mb(E)/\mm(E)$ as the denominator of the Farey parameter with an important caveat: after a free blowup $F$ on a (e.g.\ satellite) component $E$, all further satellite blowups $E_i$ between $E$ and $F$ have multiplicity $m = \mm(E_i) = \mb(F) = \mm(\vec v_E(F))$. So in this new interval, we must pretend $E$ is an `integer point' despite $\mm(E) < m$. The next theorem clarifies the adjacency behaviour of  $\ma, \mb, \mm$ at smoothly intersecting divisors. Compare with \cite[Lemma 6.16]{FJ04}.

\begin{thm}\label{thm:farey:coprime}
 Let $X$ be a local model. Suppose that $E$ and $F$ are two components in $X_0$ which intersect at a smooth point $p = E \cap F$ of $X$; WLOG assume $E \prec F$. The following hold.
\begin{enumerate}[label=(\alph*)]
 \item $\HCF(\ma(E), \ma(F)) = 1$
 \item $\HCF(\mb(E), \mb(F)) = \mm(E)$ \label{item:farey:coprime:gen}
 \item \label{item:farey:coprime:matrix}Moreover, \[\det
\begin{pmatrix}
\ma(E) & \ma(F)\\
\mb(E) & \mb(F)
\end{pmatrix}
= \ma(E)\mb(F) - \ma(F)\mb(E) = \mm(E)\]
\item Hence, %
\[ d(E, F) = \frac{\mm(E)}{\mb(E) \mb(F)}.\]%
\item The multiplicity $\mm$ is constant on the interval $[E, F)$. \label{item:farey:coprime:constmult}
\end{enumerate}
\end{thm}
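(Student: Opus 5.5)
The natural route is induction on the number of point blowups needed to build $X$ near $p$ from a minimal model. Since the statement is local at the smooth point $p$, we may desingularise away from $p$. Then, as in the proof of \autoref{prop:corresp:smoothsnc}, we may assume $X$ is smooth and dominates a minimal model $\smodel$ through a sequence of point blowups. An intersecting pair $E\prec F$ in $X_0$ arises in one of two ways. Either it is created by a free blowup, in which case the exceptional curve meets the proper transform of the component carrying the blown-up point. Or it is one of the two new adjacencies created by a satellite blowup of an existing smooth intersection $E_1\cap E_2$. Pairs that are not touched by the blowup keep their parameters, because $\ma$ and $\mb$ are intrinsic to the valuation. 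So we prove (c) and (e) for each of these two situations, using \autoref{thm:farey:addition} for the parameters and \autoref{prop:corresp:ordering} for the ordering. Parts (a), (b) and (d) will then follow from (c), from the explicit form of (b), and from the metric formula $d(E,F)=\frac{\ma(E)}{\mb(E)}-\frac{\ma(F)}{\mb(F)}$.

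\emph{Free case.} Blow up a free point $q\neq p_\infty$ on a component $F$, and let $E$ be the exceptional curve, so $E\prec F$. By \autoref{thm:farey:addition}, $(\ma(E),\mb(E))=(\ma(F)+1,\mb(F))$, hence
\[\ma(E)\mb(F)-\ma(F)\mb(E)=\mb(F).\]
We must check that $\mm(E)=\mb(F)$. The point $E$ lies in the direction $\vec v_F(q)$. By \autoref{thm:farey:puiseuxsection}(i), the direction at a smooth free point is generic with $\mm(\vec v_F(q))=\mb(F)=\mg(F)$. \autoref{prop:multsubtreevalency}(ii) then says that every point on $[\gamma,F)$ has multiplicity exactly $\mg(F)$ for suitable $\gamma\in\vec v_F(q)$. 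This gives $\mm(E)=\mb(F)$ and also (e) on $[E,F)$.

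The case $q=p_\infty$ is handled the same way, with the order reversed. Here $F\prec E$, and all parameters involved are integral with $\mb=1$, so the determinant is $1=\mm(F)$.

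\emph{Satellite case.} Blow up $p=E_1\cap E_2$ with $E_1\prec E_2$, and assume inductively that $\ma(E_1)\mb(E_2)-\ma(E_2)\mb(E_1)=\mm(E_1)$ and that $\mm$ is constant on $[E_1,E_2)$. The new divisor $E$ has parameters $(\ma(E_1)+\ma(E_2),\mb(E_1)+\mb(E_2))$. Bilinearity gives
\[\det\begin{pmatrix}\ma(E_1)&\ma(E)\\ \mb(E_1)&\mb(E)\end{pmatrix}=\det\begin{pmatrix}\ma(E)&\ma(E_2)\\ \mb(E)&\mb(E_2)\end{pmatrix}=\mm(E_1).\]
By \autoref{prop:corresp:ordering}, $E\in(E_1,E_2)$, so constancy of $\mm$ on $[E_1,E_2)$ yields $\mm(E)=\mm(E_1)$. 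This gives (c) for both new pairs and (e) for both subintervals.

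The base of the induction is the first blowup on $\smodel$. This is a free blowup on $\zeta(0,1)$, which has parameters $(1,1)$ (the free case) and gives determinant $1$.

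\emph{Consequences.} For (b), note that $\mm(E)$ divides $\mb(E)$ and, by \autoref{prop:multiplicityorder}, $\mm(E)\mid\mb(F)$, so $\mm(E)$ divides $\HCF(\mb(E),\mb(F))$. Conversely, $\HCF(\mb(E),\mb(F))$ divides the determinant, which equals $\mm(E)$. For (a), $\HCF(\ma(E),\ma(F))$ also divides the determinant $\mm(E)$. To conclude it is $1$, we additionally carry $\HCF(\ma(E),\ma(F))=1$ through the induction. In the satellite step this follows from $\HCF(\ma_1,\ma_1+\ma_2)=\HCF(\ma_1,\ma_2)$. In the free step it follows from $\HCF(\ma+1,\ma)=1$. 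Part (d) is (c) divided by $\mb(E)\mb(F)$.

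The main obstacle is the multiplicity bookkeeping in the free case: proving $\mm(E)=\mb(F)$ and constancy on $[E,F)$, even though $\mm(F)$ may be strictly smaller than $\mb(F)$. This is exactly the caveat noted before the theorem. I would lean on \autoref{thm:farey:puiseuxsection} to get genericity of the blown-up direction, and then on \autoref{prop:multsubtreevalency}(ii).
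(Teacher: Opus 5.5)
Your induction follows the paper's proof. It has the same three cases ($p_\infty$, free, satellite), uses \autoref{thm:farey:addition} and column operations on the determinant, and uses constancy of $\mm$ on $[E_1,E_2)$ to get $\mm(E)=\mm(E_1)$. The one difference in the free step is harmless. You take genericity of $\vec v_F(q)$ from \autoref{thm:farey:puiseuxsection}(i). The paper proves it inside the induction (\autoref{lem:farey:freesat}): the two special directions at a satellite $F$ are occupied by its neighbours. To finish that step, the sandwich $\mb(F)=\mm(\vec v_F(q))\le\mm(E)\le\mg(E)=\mb(E)=\mb(F)$ from \autoref{prop:galois:multindirn} is simpler than placing $E$ on a segment $[\gamma,F)$.

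There is a wrong step in your derivation of (b). For $E\prec F$, \autoref{prop:multiplicityorder} gives $\mm(F)\mid\mm(E)$, not $\mm(E)\mid\mb(F)$. The latter is false for non-adjacent pairs: $E=\zeta(x^{1/2},\abs x)\prec F=\zeta(0,1)$ has $\mm(E)=2$ and $\mb(F)=1$. So this divisibility depends on adjacency and must come out of the induction. The easiest repair is to carry (b) along, as the paper does:
\begin{itemize}
\item in the $p_\infty$ step all the numbers are $1$;
\item in the free step $\HCF(\mb(E),\mb(F))=\mb(F)=\mm(E)$;
\item in the satellite step both $\HCF(\mb(E_1),\mb(E_1)+\mb(E_2))$ and $\HCF(\mb(E_1)+\mb(E_2),\mb(E_2))$ equal $\HCF(\mb(E_1),\mb(E_2))=\mm(E_1)=\mm(E)$.
\end{itemize}
Alternatively, deduce $\mm(E)\mid\mb(F)$ from (e). Near $F$, the points of $[E,F)$ have multiplicity $\mm(\vec v_F(E))$. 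By \autoref{cor:multsubtreevalency} this is one of $1$, $\mm(F)$, $\mg(F)$, and each of these divides $\mg(F)=\mb(F)$.

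A smaller point concerns the base case, which should not be the fixed $\smodel$. A smooth $X$ dominates $\smodel$ only when $\zeta(0,1)\in\Gamma(X)$, by \autoref{prop:galois:vertexextension}. Start instead from any minimal smooth model dominated by $X$. Its single component is integral with $\mb=\mm=1$, so the base case is vacuous. The first blowup, which may be at $p_\infty$, is then just an instance of the inductive step. Also, $\zeta(0,1)$ itself has Farey parameter $(0,1)$; it is the first exceptional curve that has $(1,1)$.
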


In \autoref{sec:galois:multiplicity} we defined the multiplicity $\mm(\bvec v)$ of a direction $\bvec v$ as the minimum multiplicity that occurs in direction $\bvec v$. Also recall \autoref{cor:multsubtreevalency} and \autoref{defn:specgendirn} that a Type II point $\zeta$ has up to two \emph{special} directions with multiplicity $\mm(\bvec v)$ less than the generic multiplicity $\mg(\zeta)$; all other directions are called \emph{generic} having multiplicity equal to $\mg(\zeta)$.

In \autoref{defn:specgendirn} we defined $\zeta$ to be free iff $\mm(\zeta) = \mg(\zeta)$, otherwise satellite. However, in \autoref{defn:corresp:freesat} we defined a divisor $E$ to be free or satellite depending on its blowup behaviour. Now we reconcile the two definitions and allay any concerns that the former was ill-defined. The following important lemma is interesting in its own right.

\begin{lem}\label{lem:farey:freesat}
 Let $X$ be a local model and let $E$ be a component of the special fibre. The following hold.
\begin{itemize}
   \item If $E$ is free then $\mm(E) = \mb(E)$.
   \item If $E$ is satellite, then $\mm(E) < \mb(E)$.
   \item Suppose that $E$ is obtained by blowing up a model $Y$ at the smooth intersection of components $F_1 \prec F_2$, then $\mm(E) = \mm(F_1)$. Furthermore, $\vec v(F_1)$ and $\vec v(F_2) = \vec v(\infty)$ are the two special directions at $E$.
   \item Suppose that $E$ is obtained by blowing up a model $Y$ at a smooth free point on the component $F$; then $\mm(E) = \mb(F)$ and $\vec v(E)$ is a generic direction at $F$.
\end{itemize}
\end{lem}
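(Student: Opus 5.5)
I would prove \autoref{lem:farey:freesat} by induction on the number of point blowups needed to reach $X$ from the minimal model $\smodel$ (with $\Gamma(\smodel)=\set{\zeta(0,1)}$), proving the last two bullets first. The first two bullets then follow, since every component other than $E_0$ is created either by a free or a satellite blowup, and $E_0=\zeta(0,1)$ is integral with $\mm=\mb=1$. Throughout, I would use \autoref{thm:farey:addition} for the parameters of the new curve, \autoref{thm:galois:parametercorrespondence} for $\mb=\mg$, and \autoref{prop:corresp:ordering} for the position of the new vertex $\zeta=\redct^{-1}(E)$ in the tree. Since blowing down elsewhere does not affect $E$ or its neighbours, I may assume the model is smooth near the relevant point.

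\textbf{Free blowup.} Let $p$ be a smooth free point on $F$ with $\xi=\redct^{-1}(F)$. By \autoref{thm:farey:puiseuxsection}(1), the $\Gamma$-disk $U=\vec v_\xi(p)$ is a generic direction at $\xi$ and $\mm(U)=\mb(F)$. The new vertex $\zeta$ lies in $U$, so $\mm(\zeta)\ge \mm(U)$. Moreover, $\mm(U)\mid \mm(\zeta)$ by \autoref{prop:galois:multindirn}.

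For the reverse inequality I would pick a Type I point $\gamma\in U$ with $\mm(\gamma)=\mb(F)$, which exists by \autoref{prop:galois:multiplicitysubset}. The section it defines passes through $p$, so by the final clause of \autoref{thm:farey:puiseuxsection} its proper transform meets $E$ at a free point. Hence $\gamma\prec\zeta$ with $\gamma$ in a generic direction at $\zeta$, and \autoref{prop:multiplicityorder} gives $\mm(\zeta)\mid\mm(\gamma)=\mb(F)$. Therefore $\mm(E)=\mb(F)$.

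If $p\ne p_\infty$, \autoref{thm:farey:addition} gives $\mb(E)=\mb(F)=\mm(E)$, so $E$ is free in the sense of \autoref{defn:specgendirn}. If $p=p_\infty$, then $\mb=1$ throughout and the same conclusion holds. Finally, $\vec v_\xi(\zeta)=U$ is generic, as required.

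\textbf{Satellite blowup.} Let $p=F_1\cap F_2$ with $F_1\prec F_2$, and $\zeta\in(\xi_1,\xi_2)$. By the induction hypothesis, or by \autoref{thm:farey:coprime}\ref{item:farey:coprime:constmult} if that is available non-circularly, $\mm$ is constant on $[\xi_1,\xi_2)$. Hence $\mm(\zeta)=\mm(\xi_1)=\mm(F_1)$. Meanwhile $\mb(E)=\mb(F_1)+\mb(F_2)>\mb(F_1)\ge\mm(F_1)$, so $E$ is satellite with $\mm(E)<\mb(E)$.

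By \autoref{cor:multsubtreevalency}, a satellite point has exactly two special directions, $\vec v(\infty)$ and the direction towards the truncation $a$ with $\mm(a)=\mm(\zeta)$. Because $F_1\prec E\prec F_2$, we have $\vec v(F_2)=\vec v(\infty)$. The direction $\vec v(F_1)$ contains points of multiplicity $\mm(F_1)=\mm(\zeta)<\mg(\zeta)$, so it is the other special direction.

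\textbf{Main obstacle.} The hard step is the constancy of $\mm$ along $[\xi_1,\xi_2)$, since it is itself part of \autoref{thm:farey:coprime} and risks circularity. I would fold it into the same induction. When $F_2$ arises from a free blowup on $F_1$, constancy follows from the free case via \autoref{prop:multsubtreevalency}(ii): points in a generic direction on the segment have multiplicity exactly $\mg$. When $F_2$ arises from a satellite blowup, the interval $[\xi_1,\xi_2)$ is a subinterval of an earlier segment on which $\mm$ is already constant. A further subtle point is justifying that $\gamma$ reaches $E$ in a generic direction rather than a special one. For this I would appeal to \autoref{thm:farey:puiseuxsection} and \autoref{lem:jhssec}: a free intersection point corresponds to a $\Gamma$-disk that does not contain $\vec v(\infty)$ or $\vec v(F)$.
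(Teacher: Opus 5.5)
Your argument is correct and has the same skeleton as the paper's proof. Both induct over point blowups from a smooth minimal model, split into a free blowup (at $p_\infty$ or away from it) and a satellite blowup, and fold the constancy of $\mm$ on $[F_1,F_2)$ from \autoref{thm:farey:coprime}\ref{item:farey:coprime:constmult} into the same induction.

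The genuine difference is in the free case. The paper shows inductively that $\vec v_F(E)$ is generic: if $F$ is satellite, its two special directions contain its neighbours $F_1,F_2$ (the third bullet applied to $F$), and a free point cannot lie in them. You instead read genericity off the local-algebra statement \autoref{thm:farey:puiseuxsection}(1). That is legitimate, since that theorem's proof does not use the lemma. The cost is that you lose the paper's remark that the lemma gives an independent inductive proof of \autoref{thm:farey:puiseuxsection}.

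Your upper bound, via a Type I point $\gamma$ and the final clause of \autoref{thm:farey:puiseuxsection} (a clause the paper states without separate argument), is an unnecessary detour. The squeeze
\[\mb(F)=\mm(\vec v_F(E))\le\mm(E)\le\mg(E)=\mb(E)=\mb(F)\]
needs only \autoref{cor:multsubtreevalency}, \autoref{thm:galois:parametercorrespondence} and \autoref{thm:farey:addition}. Similarly, for constancy on a new free pair there is no need for \autoref{prop:multsubtreevalency}(ii). It suffices to note that $D\in[E,F)$ gives $\mm(D)\mid\mm(E)$ and $\mm(D)\ge\mm(\vec v_F(E))=\mb(E)$.

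Two small slips:
\begin{enumerate}
\item Away from $p_\infty$, the new free curve is the lower endpoint of its pair. Your phrase ``$F_2$ arises from a free blowup on $F_1$'' only describes the $p_\infty$ case.
\item The induction should start from an arbitrary smooth minimal model dominated by $X$, since $X$ need not dominate the fixed $\smodel$.
\end{enumerate}
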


\begin{proof}
 We may assume WLOG that $X$ is smooth, since modifications away from $p$ do not affect the hypotheses or any numerical invariants. We induct on the components of the special fibre, starting with some smooth minimal model (any will do). We shall also prove \autoref{thm:farey:coprime} \ref{item:farey:coprime:constmult} here, as it is most appropriate.
 If $Y$ is a smooth model with special fibre $F$ then $\mb(F) = 1$ necessarily, for example by Tsen and \autoref{lem:corresp:simplesection}. Thus $\mm(F) = 1$ because $\mm(\zeta) \mid \mg(\zeta)$. So $F$ is free and $\mm(F) = \mb(F)$. Now assume that $Y$ is any smooth model and the results hold. Suppose that $p \in Y_0$ is a point and $\pi : \hat Y \to Y$ is a point blowup producing an exceptional curve $E \subset \hat Y$; we take cases.
 
 Suppose that $p = p_{Y, \infty}$ lies on $F$, then $p$ is free and $F \prec E$ by \autoref{prop:corresp:ordering}, and both $E$ and $F$ must be integral \autoref{thm:farey:addition}. It follows that $E$ is free, $\mm(E) = \mb(E) = 1$, and trivially $\vec v(E) = \vec v(\infty)$ is a generic direction of multiplicity $1$. Using \autoref{prop:multiplicityorder}, we see that $\mm \cong 1$ on $[F, E]$.
 
 Suppose that $p \ne p_{Y, \infty}$ is a free point lying on the component $F$, then $E \prec F$ by \autoref{prop:corresp:ordering}, and $\mb(E) = \mb(F)$ by \autoref{thm:farey:addition}. 
  If $F$ is free, then every direction other than $\vec v(\infty)$ is generic, including $\vec v(E)$ because $E \prec F$. 
 If $F$ is satellite, then by induction it is intersected by two other components $F_1, F_2$ of $Y$ lying in its two special directions $\vec v(F_1), \vec v(F_2)$. Clearly $F \cap F_1$ and $F \cap F_2$ are satellite points in $Y$ and $p$ is free so cannot be one of them. Hence, $\vec v_F(p) = \vec v(E)$ is distinct from $\vec v(F_1)$ and $\vec v(F_2)$, meaning $\vec v(E)$ is a generic direction. 
 Provided that $\vec v(E)$ is generic, \[\mb(E) = \mb(F) = \mm(\vec v(E)) \le \mm(E) \le \mb(E),\] using \autoref{cor:multsubtreevalency}; therefore $\mm(E) = \mb(E) = \mm(\vec v(E))$. 
 If $D \in [E, F)$ then $E \prec D$ so $\mm(D) \le \mm(E)$ by \autoref{prop:multiplicityorder}; on the other hand, $D \in \vec v(E)$ at $F$, so $\mm(D) \ge \mb(E)$; thus $\mm(D) = \mm(E)$, as required.
 
 Suppose that $p = F_1 \cap F_2$ is satellite; WLOG $F_1 \cap F_2$ by \autoref{prop:corresp:ordering}. Then by induction, $\mm(E) = \mm(F_1)$ because $E \in [F_1, F_2)$. By \autoref{thm:farey:addition}, \[\mb(E) = \mb(F_1) + \mb(F_2) \ge \mm(F_1) + \mm(F_2) > \max (\mm(F_1), \mm(F_2)).\]
 Therefore $E$ has generic multiplicity greater than that of either direction $\vec v(F_1)$ or $\vec v(F_2) = \vec v(\infty)$; indeed,  $\mb(E) > \mm(F_1) \ge \mm(\vec v(F_1))$ and $\mb(E) > \mm(F_2) \ge \mm(\vec v(F_2))$. Therefore $E$ has two special directions, $\vec v(F_1)$ and $\vec v(F_2)$, and so $\mm(E) < \mb(E)$ by \autoref{cor:multsubtreevalency}.
\end{proof}

\begin{proof}[Proof of \autoref{thm:farey:coprime}]
 Part \ref{item:farey:coprime:constmult} was done in the proof of the lemma. Again, we may desingularise $X$, or assume it is smooth. We will induct on the components of the special fibre, beginning with any minimal smooth model dominated by $X$. There is nothing to prove in the base case, where there is a single component of generic multiplicity $1$. Now assume that $Y$ is any smooth model and the results hold. Suppose that $p \in Y_0$ is a point and $\pi : \hat Y \to Y$ is a point blowup producing an exceptional curve $E \subset \hat Y$; we take cases.
  
 Suppose that $p = p_{Y, \infty}$ lies on $F$, then $p$ is free and $F \prec E$ (note the contrary notational ordering in this case) by \autoref{prop:corresp:ordering}. By \autoref{thm:farey:addition}, $\mb(E) = \mb(F) = 1$ and $\ma(E) = \ma(F) - 1$. Clearly, $\HCF(\ma(E), \ma(F)) = 1$, $\HCF(\mb(E), \mb(F)) = 1$, and \[\det
\begin{pmatrix}
\ma(F) & \ma(E)\\
\mb(F) & \mb(E)
\end{pmatrix}
  = \det
  \begin{pmatrix}
\ma(F) & \ma(F) -1\\
1 & 1
\end{pmatrix}
  =1\]
  Since $\mm(F) = 1$, this case is done.

Suppose that $p \ne p_{Y, \infty}$ is a free point lying on the component $F$, so $E \prec F$. Then $\ma(E) = \ma(E) + 1$ and $\mb(E) = \mb(F)$ by \autoref{thm:farey:addition}. Also, \autoref{lem:farey:freesat} states that $\mm(E) = \mb(E) = \mb(F)$ since $E$ is free. Similarly, $\HCF(\ma(E), \ma(F)) = \HCF(\ma(E), \ma(E) + 1) = 1$, $\HCF(\mb(E), \mb(F)) = \mb(F) = \mm(E)$, and \[\det
\begin{pmatrix}
\ma(E) & \ma(F)\\
\mb(E) & \mb(F)
\end{pmatrix}
  = \det
  \begin{pmatrix}
\ma(F) + 1 & \ma(F)\\
\mb(F) & \mb(F)
\end{pmatrix}
  = (\ma(F) + 1)\mb(F) - \ma(F)\mb(F) = \mb(F) = \mm(E).\]

Finally, suppose that $p = F_1 \cap F_2$ is satellite with $F_1 \prec F_2$. By \autoref{thm:farey:addition}, \[\ma(E) = \ma(F_1) + \ma(F_2)\quad \and \quad \mb(E) = \mb(F_1) + \mb(F_2).\] It remains to verify the theorem for both new intersecting pairs of divisors $F_1 \prec E$ and $E \prec F_2$. We prove the former and leave the latter as a partial exercise. The first items fall to elementary number theory, using the inductive hypothesis. \[\HCF(\ma(F_1), \ma(E)) = \HCF(\ma(F_1), \ma(F_1) + \ma(F_2)) = \HCF(\ma(F_1), \ma(F_2)) = 1\] \[\HCF(\mb(F_1), \mb(E)) = \HCF(\mb(F_1), \mb(F_1) + \mb(F_2)) = \HCF(\mb(F_1), \mb(F_2)) = \mm(F_1)\] 
Next, one can use column operations (or otherwise) to show that \[\det
\begin{pmatrix}
\ma(F_1) & \ma(E)\\
\mb(F_1) & \mb(E)
\end{pmatrix}
  = \det
  \begin{pmatrix}
\ma(F_1) & \ma(F_1) + \ma(F_2)\\
\mb(F_1) & \mb(F_1) + \mb(F_2)
\end{pmatrix}
= \det
  \begin{pmatrix}
\ma(F_1) & \ma(F_2)\\
\mb(F_1) & \mb(F_2)
\end{pmatrix}
  = \mm(F_1).\]
  For the pair $E$ and $F_2$, we similarly obtain $\HCF(\ma(E), \ma(F_2)) = 1$, $\HCF(\mb(E), \mb(F_2))) = \mm(F_1)$, and the determinant calculation results in $\mm(F_1)$ by expanding and simplifying the left column instead. In this instance, note that $\mm(F_1) = \mm(E)$ by part \ref{item:farey:coprime:constmult}, as proved in the lemma. This concludes the induction. For the last part, we simply apply our new formula.
   \[d(E, F) = \frac{\ma(E)}{\mb(E)} - \frac{\ma(F)}{\mb(F)} = \frac{\ma(E)\mb(F) - \ma(F)\mb(E)}{\mb(E)\mb(F)} = \frac{\mm(E)}{\mb(E) \mb(F)}\]
\end{proof}

\begin{cor}
 Let $X$ be a local model and $E$ a component of the special fibre. Then $\mm(E) \mid \mb(E)$ and \[\HCF\left(\ma(E), \frac{\mb(E)}{\mm(E)}\right) = 1.\] In particular, if $\frac {\mm(E)\ma(E)}{\mb(E)}$ is written as $\frac {a'}c$ in lowest terms, then $a' = \ma(E)$ and $\mb(E) = c \cdot \mm(E)$. 
\end{cor}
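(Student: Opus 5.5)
The plan is to derive the corollary directly from \autoref{thm:farey:coprime}, using the fact that every component of a smooth model meets at least one other component (or else is the whole fibre). Divisibility $\mm(E) \mid \mb(E)$ is immediate: $\mb(E) = \mg(\zeta_E)$ by \autoref{thm:galois:parametercorrespondence}, and $\mm(\zeta) \mid \mg(\zeta)$ by \autoref{cor:multsubtreevalency}. For the coprimality statement, we may first pass to a smooth model dominating $X$. This changes neither $\ma(E)$, $\mb(E)$ nor $\mm(E)$, since these are intrinsic to the point $\zeta_E \in \P^1_\an$.

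If $E$ is the only component, then $\mb(E) = \mm(E) = 1$ and there is nothing to prove. Otherwise, by \autoref{prop:corresp:smoothsnc} the dual graph is a connected tree, so $E$ meets some component $F$ at a smooth point. Suppose first that $E \prec F$. Part \ref{item:farey:coprime:matrix} of \autoref{thm:farey:coprime} gives
\[\ma(E)\mb(F) - \ma(F)\mb(E) = \mm(E).\]
Write $\mb(E) = \mm(E) c$. By part \ref{item:farey:coprime:gen}, $\mm(E) = \HCF(\mb(E), \mb(F))$, so $\mb(F) = \mm(E) d$. Dividing by $\mm(E)$ yields $\ma(E) d - \ma(F) c = 1$, and hence $\HCF(\ma(E), c) = 1$.

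Suppose instead that $F \prec E$. Then the theorem gives $\ma(F)\mb(E) - \ma(E)\mb(F) = \mm(F)$ and $\HCF(\mb(F), \mb(E)) = \mm(F)$. The main obstacle is that $\mm(F)$ need not equal $\mm(E)$. I would handle this with part \ref{item:farey:coprime:constmult}, which says $\mm$ is constant on $[F, E)$. That controls $\mm(F)$ but not $\mm(E)$, which only satisfies $\mm(E) \mid \mm(F)$ by \autoref{prop:multiplicityorder}.

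I would therefore try first to choose $F$ better. If $E$ is not the maximum of $\Delta(X)$, I will pick the neighbour $F$ in $\vec v(\infty)$; otherwise $E$ is the top component containing $p_\infty$. In that top case, the lemma for blowups at $p_\infty$ shows $E$ is integral, so $\mb(E) = \mm(E) = 1$ and the claim is trivial. In the remaining case, if every neighbour of $E$ lies above it, I will use that the tree structure forces $E$ to have a neighbour below it or to be a leaf. A leaf comes from a free blowup on its unique neighbour $F$, with $E \prec F$ (\autoref{prop:corresp:ordering}), which is the first case again. Failing that, I would induct along the blowup sequence using \autoref{lem:farey:freesat}. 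A satellite $E$ created between $F_1 \prec F_2$ has $F_1 \prec E$ adjacent, so the first case applies with the pair $(E, F_2)$ after relabelling. A free $E$ has $\mm(E) = \mb(E)$, so the claim reduces to $\HCF(\ma(E), 1) = 1$.

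Finally, the last statement follows. Since $\mm(E)\ma(E)/\mb(E) = \ma(E)/c$ with $\HCF(\ma(E), c) = 1$, the lowest-terms form is $a' = \ma(E)$ with denominator $c$, so $\mb(E) = c \cdot \mm(E)$.
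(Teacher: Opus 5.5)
Your argument is correct and is essentially the paper's proof: pass to a smooth model, take the neighbour $F \succ E$ of $E$ in the direction $\vec v(\infty)$, divide the determinant identity of part (c) of \autoref{thm:farey:coprime} by $\mm(E) = \HCF(\mb(E),\mb(F))$ to get a relation with determinant one, and treat the maximal component separately since it is integral. Everything after that choice of $F$ is redundant, because the cases ``$E$ not maximal'' and ``$E$ maximal'' already cover everything. That fallback paragraph also contains inaccuracies: a vertex all of whose neighbours lie above it cannot also have one below, and a leaf need not arise from a free blowup on its neighbour (e.g.\ the original minimal-model component).
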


This means one can determine $\ma$ and $\mb$, only knowing $\diam(\emb_X(E)) = \abs x ^{\ma(E)/\mb(E)}$ and $\mm(E)$. We leave it as an exercise to give an alternative proof more directly from Puiseux series representations of Type II points.

\begin{proof}
 Let $\hat X$ be a smooth model dominating $X$. First assume $E$ is not maximal so there exists a curve $F \succ E$ intersecting $E$ in $\hat X$. Now, by the theorem part (\ref{item:farey:coprime:matrix}), 
 \[\det
\begin{pmatrix}
\ma(F) & \ma(E)\\
\mb(F) & \mb(E)
\end{pmatrix}
  = \mm(E)\]
 and by part (\ref{item:farey:coprime:gen}), the bottom row of the matrix is divisible by $\mm(E)$. Thus
 \[\det
\begin{pmatrix}
\ma(F) & \ma(E)\\
\frac{\mb(F)}{\mm(E)} & \frac{\mb(E)}{\mm(E)}
\end{pmatrix}\]
has determinant $1$, which by B\'ezout's Theorem, is proof that every row and column pair are coprime. In particular, $\HCF\left(\ma(E), \frac{\mb(E)}{\mm(E)}\right) = 1$.

Otherwise, if $E$ is maximal, then $E = E_{\hat X, \infty}$, and so $\mm(E) = \mm(B) = 1$ by \autoref{thm:farey:addition}; thus the result is trivial.

For the final piece, note that if $\ma(E)$ and $\frac{\mb(E)}{\mm(E)}$ are coprime then $\frac{\ma(E)}{{\mb(E)}/{\mm(E)}}$ is indeed the representation of $\frac{\ma(E)\mm(E)}{\mb(E)}$ in lowest terms.
\end{proof}

\begin{ex}
 Suppose that $E$ is such a divisor with $\mm(E) = 6$, $\frac{\ma(E)}{\mb(E)} = \frac 59$. For instance, perhaps $E = \redct_X(\zeta(x^{1/3} - 7x^{1/2}, \abs x ^{5/9}))$. Then in fact $\ma(E) = 10$ and $\mb(E) = 18$ because \[\frac {\mm(E)\ma(E)}{\mb(E)} = \frac {30}{9} = \frac {10}{3}\] and $3 \cdot \mm(E) = 18$. Observe that $\HCF(10, 3) = 1$.
\end{ex}

\begin{ex}\label{ex:farey:multtwo}
 Consider the surface $\P^1 \times \P^1$, with the central fibre $E_0 = \set{x=0} \cong \P^1$ modelled by $\zeta(0, 1)$. After blowing up $(0, 0)$ to obtain $E_1$, and then the intersection $E_0 \cap E_1$ to obtain $E_{\frac 12} = \redct(\zeta(0, \abs x^{\frac 12}))$, we can blow up some free point of $E_{\frac 12}$ to get the exceptional curve $D_1$, associated with $\zeta(cx^{\frac 12}, \abs x)$ for some non-zero $c \in \C$. Finally, blowing up $E_{\frac 12} \cap D_1$ yields a curve $D_{\frac 34} = \redct(\zeta(cx^{\frac 12}, \abs x^{\frac 34}))$. All this can be verified using \autoref{thm:farey:addition}.
 
 Now, the intersecting divisors $E_{\frac 12}$ and $D_{\frac 34}$ are each smooth on the smooth surface. Their Farey parameters $\frac 12, \frac 34$ do not obey a standard adjacency law for fractions, since $3 \cdot 2 - 1 \cdot 4 = 2$. However, $2 = \mm\left(D_{\frac 34}\right) = \HCF\left(\mb\left(E_{\frac 12}\right), \mb\left(D_{\frac 34}\right)\right)$. Indeed, dividing the denominators through by this $2$, yields adjacent Farey fractions.
\end{ex}

The last part of \autoref{thm:farey:coprime} gives us that $d(E, F) \le \frac{1}{\mb(F)}$ with equality if and only if $E$ is free. However, we can find a tighter upper bound on distance which depends on $E$.

\begin{prop}
 Let $X$ be a local model. Suppose that $E$ and $F$ are two components in $X_0$ which intersect at a smooth point $p = E \cap F$ of $X$; WLOG assume $E \prec F$. 
 \begin{enumerate}[label=(\roman*)]
 \item If $F$ is satellite but $\vec v(E)$ is a generic direction at $F$, then $\mm(E) = \mb(F) > \mm(F)$.
 \item Otherwise, $\mm(E) = \mm(F)$.
\end{enumerate}
\end{prop}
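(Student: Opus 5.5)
The plan is to read off $\mm(E)$ from the directional multiplicity structure at $F$ and compare it with $\mm(F)$. Since $E \prec F$, the point $E$ lies in some direction $\bvec v = \vec v_F(E) \ne \vec v(\infty)$ at $F$. By \autoref{thm:farey:coprime}\ref{item:farey:coprime:constmult}, $\mm$ is constant on $[E, F)$, so $\mm(E) = \mm(\xi)$ for all $\xi \in (E, F)$. For points $\xi$ in $\bvec v$ arbitrarily close to $F$ along this interval, the discussion after \autoref{defn:specgendirn}, together with \autoref{prop:multsubtreevalency}, gives $\mm(\xi) = \mm(\bvec v)$. Hence the key identity is
\[\mm(E) = \mm(\vec v_F(E)),\]
and what remains is a case split according to \autoref{cor:multsubtreevalency} and \autoref{rmk:specgendirn}.

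\emph{Case (i).} Suppose $\bvec v$ is generic at $F$. Then $\mm(\bvec v) = \mg(F) = \mb(F)$, using \autoref{thm:galois:parametercorrespondence}. If moreover $F$ is satellite, \autoref{lem:farey:freesat} gives $\mm(F) < \mb(F)$, so $\mm(E) = \mb(F) > \mm(F)$, which is (i).

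\emph{Case (ii), first subcase: $F$ free.} Then $\mm(F) = \mg(F)$, and every direction other than $\vec v(\infty)$ has multiplicity $\mg(F) = \mm(F)$. Since $\bvec v \ne \vec v(\infty)$, we get $\mm(E) = \mm(F)$.

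\emph{Case (ii), second subcase: $F$ satellite and $\bvec v$ special.} Here $\bvec v$ is not $\vec v(\infty)$, so by \autoref{prop:multsubtreevalency}(iii) it must be the direction $\vec v(a)$ with $\mm(\vec v(a)) = \mm(F)$. This again gives $\mm(E) = \mm(F)$.

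The step needing the most care is the key identity $\mm(E) = \mm(\vec v_F(E))$. The concern is that the constancy interval $[E,F)$ from \autoref{thm:farey:coprime} and the interval of constant multiplicity $\mm(\bvec v)$ emanating from $F$ (supplied by \autoref{prop:multsubtreehull}) might not overlap. They do, because both are half-open at $F$ and lie in the same direction $\bvec v$, so they share all points sufficiently close to $F$.

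A cleaner alternative that avoids this issue is to use \autoref{prop:galois:multindirn} directly. It gives $\mm(\bvec v) \mid \mm(E)$. For the reverse inequality, take $\gamma_0$ with $\mm(\gamma_0)=\mm(\bvec v)$ in $\bvec v$ and apply \autoref{prop:multiplicityorder} to the join $E \vee \gamma_0 \in (E, F)$. On $(E,F)$ the multiplicity is constant, equal to $\mm(E)$, so $\mm(E) = \mm(E\vee\gamma_0) \le \mm(\gamma_0) = \mm(\bvec v)$. I would use this second route.
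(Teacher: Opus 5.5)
Your proof is correct, but it follows a different route from the paper's.

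\textbf{The paper's route.} The paper proves the proposition as an addendum to the blowup induction of \autoref{lem:farey:freesat}, by examining how each intersecting pair is created.
For a free blowup $E$ from $F$, one has $\mm(E)=\mb(E)=\mb(F)$. This equals $\mm(F)$ when $F$ is free; when $F$ is satellite, the lemma forces $\vec v(E)$ to be generic.
For a satellite blowup between $F_1\prec F_2$, the pair $F_1\prec E$ falls under (ii), because $\mm(E)=\mm(F_1)$ and $\vec v(F_1)$ is special at $E$. The pair $E\prec F_2$ inherits its status from $F_1\prec F_2$, since $\vec v_{F_2}(E)=\vec v_{F_2}(F_1)$ and $\mm(E)=\mm(F_1)$.

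\textbf{Your route.} You confine all of the induction to a single fact: $\mm$ is constant on $[E,F)$ (\autoref{thm:farey:coprime}\ref{item:farey:coprime:constmult}, which the paper proves by that same induction). From this you deduce the model-independent identity $\mm(E)=\mm(\vec v_F(E))$. You then read off (i) and (ii) from the classification of directions at a Type II point (\autoref{cor:multsubtreevalency}).

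\textbf{What each buys.} Your argument shows the proposition is just that constancy plus the local structure of $\T_n$ at $F$, with no reference to how the pair arose. The paper's argument stays inside the geometric induction and never needs the directional identity.

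\textbf{Two small remarks on your second route.} It is the cleaner of your two arguments for the identity.
\begin{itemize}
\item The join $E\vee\gamma_0$ lies in $[E,F)$ rather than $(E,F)$: it equals $E$ when $\gamma_0\preceq E$. This is harmless, since constancy holds on the half-open interval.
\item The inequality $\mm(\vec v_F(E))\le\mm(E)$ already holds by the definition of $\mm$ of a set as a minimum. So \autoref{prop:galois:multindirn} is not actually needed.
\end{itemize}
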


\begin{proof}
 This is an addendum to the induction completed for \autoref{lem:farey:freesat}. 
 Suppose $E$ is a free blowup from $F$. If $F$ is also free then we have $\mm(E) = \mb(E) = \mb(F) = \mm(F)$. If $F$ is satellite then $\vec v(E)$ must be a generic direction by the lemma, so $\mm(E) = \mm(\vec v(E)) = \mb(F) > \mm(F)$. 
 Suppose $E$ is produced by satellite blowup between $F_1 \prec F_2$. Here, the proposition holds for the pair $F_1 \prec E$ since $\mm(E) = \mm(F_1)$, $E$ is satellite and $\vec v(F_1)$ is special. Considering the pair $E \prec F_2$, we find that because $\mm(E) = \mm(F_1)$ and $\vec v(E) = \vec v(F_1)$ at $F_2$, the various (in)equalities hold if and only if they held for $F_1$ and $F_2$.
\end{proof}

\begin{cor}\label{cor:farey:dist}
 Let $X$ be a model, and suppose $E \prec F$ intersect at a smooth point. Then one has \[d(E, F) \le \frac{1}{\mb(E)},\]
 which is strict if and only if $F$ is satellite and $E$ lies in a special direction at $F$.
\end{cor}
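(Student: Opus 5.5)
The plan is to combine part (d) of \autoref{thm:farey:coprime} with the preceding proposition comparing $\mm(E)$ and $\mm(F)$. By \autoref{thm:farey:coprime}, since $E \prec F$ meet at a smooth point,
\[d(E, F) = \frac{\mm(E)}{\mb(E)\mb(F)}.\]
The inequality $d(E, F) \le \frac{1}{\mb(E)}$ is therefore equivalent to $\mm(E) \le \mb(F)$, and strictness is equivalent to $\mm(E) < \mb(F)$. Both reduce to comparing $\mm(E)$ with $\mb(F) = \mg(F)$ (by \autoref{thm:galois:parametercorrespondence}).

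First I would establish $\mm(E) \le \mb(F)$. The interval $[E, F)$ lies in the direction $\vec v(E)$ at $F$. By \autoref{prop:galois:multindirn}, $\mm(\vec v(E)) \mid \mm(E)$. From \autoref{cor:multsubtreevalency}, $\mm(\vec v(E))$ is either $\mg(F)$ (generic direction) or $\mm(F)$ (the special finite direction). The preceding proposition pins this down. In case (i), $F$ is satellite and $\vec v(E)$ is generic, and then $\mm(E) = \mb(F)$. Otherwise $\mm(E) = \mm(F)$, and $\mm(F) \mid \mb(F)$, so $\mm(E) \le \mb(F)$ in every case.

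For the equality characterisation, I split into the same cases.
\begin{enumerate}
\item If $F$ is free, then $\mm(F) = \mb(F)$ by \autoref{lem:farey:freesat}, so $\mm(E) = \mb(F)$ and equality holds.
\item If $F$ is satellite and $\vec v(E)$ is generic, the proposition gives $\mm(E) = \mb(F)$, so equality again holds.
\item If $F$ is satellite and $\vec v(E)$ is special, then $\mm(E) = \mm(F) < \mb(F)$ by \autoref{lem:farey:freesat}, so the inequality is strict.
\end{enumerate}
Since $E \prec F$, the direction $\vec v(E)$ is not $\vec v(\infty)$, so "special" here means the direction $\vec v(a)$ of \autoref{rmk:specgendirn}. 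This gives exactly the stated criterion.

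The main obstacle is justifying that when $F$ is satellite and $E$ lies in the special finite direction, the proposition's case (ii) really applies, i.e.\ $\mm(E) = \mm(F)$ rather than merely $\mm(F) \mid \mm(E)$. This requires the dichotomy in that proposition to be exhaustive, which I take from its inductive proof alongside \autoref{lem:farey:freesat}.
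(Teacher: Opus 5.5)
Your proposal is correct and follows the paper's route: the corollary is exactly \autoref{thm:farey:coprime}(d) combined with the preceding proposition. That combination gives $d(E,F)=\frac{1}{\mb(E)}$ in case (i) and $d(E,F)=\frac{\mm(F)}{\mb(E)\mb(F)}$ in case (ii), and the latter is strict precisely when $F$ is satellite. The ``main obstacle'' you flag is not one, since case (ii) of that proposition is stated as ``otherwise'' and so already covers a satellite $F$ with $E$ in a special direction; likewise the detour through \autoref{prop:galois:multindirn} is unnecessary.
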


\autoref{thm:farey:puiseuxsection} showed that for a smooth free point, $p$ on $E$ the direction $\vec v(p)$ is generic ($\mm(p) = \mg(\vec v(p)) = \mb(E)$) and so for every resulting $F$ infinitely near $p$, the (generic) multiplicities $\mb(F)$ are \emph{multiples} of $\mb(E)$. The theorem also stated that at a smooth satellite point $p = E_1 \cap E_2$, for any $F$ infinitely near $p$, have $\mb(F) \ge \mb(E_1) + \mb(E_2)$. The next theorem will reprove the former fact with new perspective, and upgrades the latter.

Recall the multiplicity $\mm$ of points on the Berkovich projective line over the Puiseux series. Just as $\mg(\zeta)$ is equivalent to $\mb(\redct(\zeta))$, we will \emph{define} $\mm(E) = \mm(\zeta)$ where $\redct(\zeta) = E$; this is well-defined because $\mm$ is the size of Galois orbit of $\zeta$ in $\P^1_\an(\K)$. %

\subsection{Criteria for Smoothness}\label{sec:fareyalt}

Here we compare smoothness of the model surface to a generalised notion of Farey sequence on $\P^1_\an(\K)$. We have already seen that blowups correspond to Farey addition of the Farey parameter; it will also be enlightening to discuss adjacency conditions.

\autoref{thm:farey:puiseuxsection} says that a smooth point $p \in X$ has a $\Gamma(X)$-domain whose constituent vertices have nice multiplicities, described by the local geometry of $X$ at $p$. The Farey parameters of the dual graphs of a smooth model $X$ clearly exhibit the three equivalent conditions which (general) Farey sequences of rational numbers do. The last two options require us to include points at infinity within an extended dual graph (i.e.\ Puiseux sections) whose Farey parameters are $\pm \frac 10$.

\begin{itemize}
 \item In the $\Gamma$-domain $U$ between every two points $\alpha$ and $\beta$ with Farey parameters $\frac{a_1}{b_1} < \frac{a_2}{b_2}$, we only find points with Farey fractions $\frac ab$ such that $b > \max(b_1, b_2)$. Endpoints of the set are `integral' i.e.\ free.
 \item The sequence can be generated purely by Farey addition on the starting sequence $\frac{-1}{0}, \frac a1, \frac 10$, i.e.\ generated by blowups from a smooth minimal model with a single component $E = \emb(\zeta(\gamma, \abs x ^a))$ where $\gamma \in \k((x))$.
 \item Each neighbouring pair of points $\alpha \succ \beta$ with parameters $\frac{a_1}{b_1/m} < \frac{a_2}{b_2/m}$ have `coprime' parameters: $a_2(b_1/m) - a_1(b_2/m) = 1$, where $m = \mm(\alpha)$ is determined only by $\beta$.
\end{itemize}
We encapsulate the first condition on $\P^1_\an$ in a definition, and then restate this implied corollary of \autoref{cor:smooth:multdesc}.

\begin{defn}\label{defn:smooth:smooth}
 Let $U \subset \P^1_\an$ be an open set bounded by Type II points. We will say that $U$ is \emph{\dsmooth} if and only if either
\begin{enumerate}
 \item $U$ is a disk with boundary point $\zeta$ and $\mm(U) = \mg(\partial U)$, i.e.\ $U$ is a generic direction at $\zeta$; or
 \item $U$ is an annulus with boundary points $\alpha, \beta$ and $\mg(\zeta) > \max(\mg(\alpha), \mg(\beta))$ for all $\zeta \in U$. %
\end{enumerate}

 Let $\Gamma \subset \P^1_\an(\k((x)))$ be a vertex set (a finite set of Type II points).
 We say that $\Gamma$ is \emph{\vsmooth} if and only if each of its $\Gamma$-domains are \dsmooth{}.
 If instead we consider $\Gamma \in \P^1_\an(\K)$, then we also demand that $\Gamma$ is Galois-invariant.
\end{defn}

From the axioms for a \vsmooth{} vertex set $\Gamma$, various observations about vertex sets of smooth models also follow for $\Gamma$. We start with those that align the weak formal definition to the motivational one. For instance \autoref{thm:smooth:multdesc} will give an even stronger result than $\mg(\zeta) \ge \mg(\alpha) + \mg(\beta)$ in a \dsmooth{} annulus.

\begin{prop}
 Suppose that $\Gamma$ is a \vsmooth{} vertex set and $\zeta$ is an endpoint of the induced graph $\Delta(\Gamma)$. Then $\zeta$ is free.
\end{prop}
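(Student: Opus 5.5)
The idea is to work entirely inside $\P^1_\an$ with the axioms of \autoref{defn:smooth:smooth}. Let $\zeta$ be an endpoint of $\Delta(\Gamma)$, i.e.\ a vertex with at most one incident edge. Then at most one direction at $\zeta$ contains points of $\Gamma$. Every other direction $\bvec v \in \Dir\zeta$ contains no vertex, so it is contained in a single $\Gamma$-domain whose only boundary point is $\zeta$. Such a domain is a $\Gamma$-disk, and since it is a connected component of $\P^1_\an \sm \Gamma$ it is exactly the direction $\bvec v$. So all but at most one direction at $\zeta$ are $\Gamma$-disks bounded by $\zeta$, and by \vsmooth{}ness each of these is \dsmooth{}. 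Condition (1) of \autoref{defn:smooth:smooth} then gives $\mm(\bvec v) = \mg(\zeta)$, so each such $\bvec v$ is a generic direction.

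Next we count special directions. By \autoref{cor:multsubtreevalency} and \autoref{rmk:specgendirn}, a satellite point $\zeta$ has exactly two special directions, $\vec v(a)$ and $\vec v(\infty)$, where $\zeta = \zeta(a, \abs x^{p/q})$ with $\mm(a) = \mm(\zeta)$. If $\zeta$ were satellite, both of these would have to lie among the directions at $\zeta$ that are not $\Gamma$-disks. There is at most one such direction, the one carrying the unique edge, so we get a contradiction. Hence $\mm(\zeta) = \mg(\zeta)$, and $\zeta$ is free.

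The only delicate point is the degenerate case where $\Gamma = \set\zeta$ is a singleton, so $\zeta$ has no edge at all. Then every direction is a $\Gamma$-disk, and $\vec v(\infty)$ must be generic, i.e.\ $\mg(\zeta) = \mm(\vec v(\infty)) = 1$. So $\zeta$ is integral and in particular free. We should also check that \vsmooth{}ness is applied to $\Gamma$ viewed in $\P^1_\an(\K)$, which is Galois-invariant. There the same argument applies at each lift of the endpoint, since the Galois action preserves $\mm$, $\mg$ and the edge structure. I expect this bookkeeping, rather than any real obstacle, to be the main care point.
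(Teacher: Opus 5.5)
Your proposal is correct and follows essentially the same route as the paper: at an endpoint, every direction except possibly one is a $\Gamma$-disk, Farey smoothness forces each of these to be generic, and a Type II point with at most one special direction must be free. Two small notes: your singleton case is already covered by the general argument (there are zero non-disk directions), and your Galois-invariance remark is extra care that the paper leaves implicit.
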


\begin{proof}
 meaning $\Gamma \sm \set \zeta$ is contained in a single direction $\bvec v$ at $\zeta$. This means any other direction $\bvec u \ne \bvec v$ at $\zeta$ is a $\Gamma$-disk, which must be generic if $\Gamma$ is \vsmooth{}. Any Type II point with at most one special direction must be free.
\end{proof}

\begin{prop}\label{prop:smooth:ordered}
 If $U$ is a \dsmooth{} $\Gamma$-annulus with boundary points $\alpha, \beta$, then $\alpha \prec \beta$ or vice versa. Moreover, $\frac{\ma(\beta)}{\mg(\beta)/m} < \frac{\ma(\alpha)}{\mg(\alpha)/m}$ are neighbouring Farey fractions, where $m = \mm(\gamma) = \mm(\alpha) \mid \mg(\beta)$.
 \end{prop}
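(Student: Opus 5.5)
The plan is to argue purely on the tree $\P^1_\an(\K)$, combining the definition of a \dsmooth{} annulus with \autoref{cor:multsubtreevalency}, \autoref{prop:multiplicityorder} and the Farey results of \autoref{sec:fareyarithmetic}.

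\textbf{Ordering.} Suppose $\alpha,\beta$ are incomparable, and let $\xi=\alpha\vee\beta\in(\alpha,\beta)\subset U$. Then $\xi$ is Type II with three distinct directions: $\vec v(\alpha)$, $\vec v(\beta)$ and $\vec v(\infty)$. By \autoref{cor:multsubtreevalency}, at most one direction other than $\vec v(\infty)$ is special. Hence one of $\vec v(\alpha),\vec v(\beta)$, say $\vec v(\alpha)$, is generic at $\xi$. Then \autoref{prop:galois:multindirn} gives $\mg(\xi)=\mm(\vec v(\alpha))\mid\mm(\alpha)\mid\mg(\alpha)$, so $\mg(\xi)\le\mg(\alpha)$. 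This contradicts \dsmooth{}ness. So, after relabelling, $\alpha\prec\beta$, and $U$ is the annulus with spine $(\alpha,\beta)$.

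\textbf{Multiplicities.} Write $\alpha=\zeta(\gamma,\abs x^{r})$ and $\beta=\zeta(\gamma,\abs x^{s})$ with $r>s$, choosing $\gamma$ by \autoref{prop:multiplicity} so that $\mm(\gamma)=\mm(\alpha)=:m$ and $\gamma\in\k(x^{1/m})$. By \autoref{prop:multiplicityorder}, $\mm(\beta)\mid m$. Every $\zeta\in[\alpha,\beta]$ has the form $\zeta(\gamma,\abs x^t)$, so $\mm(\zeta)\mid m$ there.

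I will show that $\mm$ is constant equal to $m$ on $[\alpha,\beta)$. Suppose the multiplicity drops at some $\zeta\in(\alpha,\beta)$, and take the first such drop above $\alpha$. That $\zeta$ is a satellite point whose special direction is $\vec v(\alpha)$. Writing $\zeta=\zeta(\gamma',\abs x^{t})$ with $\mm(\gamma')=\mm(\zeta)$, the exponent $t$ then has denominator dividing $m$, so $\mg(\zeta)\mid m$ while $m\mid\mg(\alpha)$. This gives $\mg(\zeta)\le\mg(\alpha)$, again a contradiction.

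Next I must show $m\mid\mg(\beta)$. The generic multiplicities on $(\alpha,\beta)$ are exactly $\LCM(m,\text{denominator of }t)$. Rescaling by $m$ turns the Farey fractions $mt$ into ordinary rationals, in the way \autoref{rmk:specgendirn} and the Remark after \autoref{ex:secondexample} suggest. For $t\in(s,r)$ the generic multiplicity is then $m\cdot q$, where $q$ is the reduced denominator of $mt$. The endpoints have $\mg(\alpha)=m\,b_\alpha$, where $b_\alpha$ is the reduced denominator of $mr=\ma(\alpha)m/\mg(\alpha)$ by the corollary on $\HCF(\ma,\mb/\mm)$. For $\beta$ I expect $\mg(\beta)$ to be either $m b_\beta$ or smaller, the latter when $\mm(\beta)<m$ and $\beta$ is ``integral-like''. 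I will handle this by noting that $\mg(\beta)\ge\mm(\beta)$ and comparing against $m b_\beta$. Showing $m\mid\mg(\beta)$ in all cases is the step I expect to be the main obstacle. My first attempt is this: if $m\nmid\mg(\beta)$, then points just below $\beta$ have $\mg=\LCM(m,\cdot)$, which is small, and exhibiting one with $\mg\le\mg(\beta)$ contradicts \dsmooth{}ness.

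\textbf{Farey adjacency.} Now set $p_1/q_1 = mr = \ma(\alpha)/(\mg(\alpha)/m)$ and $p_2/q_2 = ms = \ma(\beta)/(\mg(\beta)/m)$, with $p_1/q_1 > p_2/q_2$. The \dsmooth{} condition becomes: every rational in $(p_2/q_2,p_1/q_1)$ has denominator $>\max(q_1,q_2)$. By \autoref{prop:fareyaux:iff} this is exactly Farey adjacency, and it gives the stated inequality. One must check that both fractions are in lowest terms. For $\alpha$ this follows from the $\HCF$ corollary. For $\beta$ it follows once $\mg(\beta)/m$ is the reduced denominator of $ms$, which is the same fact as above.
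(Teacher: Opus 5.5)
Your ordering argument is valid, though the paper's is shorter: at the join $\xi=\alpha\vee\beta$ the direction $\vec v_\xi(\infty)$ avoids $\alpha$ and $\beta$, hence lies in $U$, so $\infty\in U$ and $\mg(U)=1$. Likewise, once $m\mid\mg(\beta)$ is known, your reduction to \autoref{prop:fareyaux:iff} is what the paper means by ``by virtue of the definition''; it uses that $\mg(\zeta(\gamma,\abs x^t))=mq$ on the spine, with $q$ the reduced denominator of $mt$. The gap is $m\mid\mg(\beta)$ itself. It is part of the statement, and you also need it for $\ma(\beta)/(\mg(\beta)/m)$ to be a fraction in lowest terms. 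The paper builds it into its setup without comment, so you were right to flag it, but your proposed argument does not work. As $t\downarrow s$, the generic multiplicities $mq$ of the points $\zeta(\gamma,\abs x^t)$ tend to infinity, because only finitely many rationals of bounded denominator lie near $s$. So there is no `small' point just below $\beta$ to exhibit. Moreover, $m\nmid\mg(\beta)$ forces $\mm(\beta)<m$, and $\mm(\beta)<m$ genuinely occurs for Farey annuli. In \autoref{ex:farey:multtwo}, $\alpha=\zeta(cx^{1/2},\abs x^{3/4})$ and $\beta=\zeta(0,\abs x^{1/2})$ bound the annulus of a smooth point, with $m=2$ and $\mm(\beta)=1$. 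What must be shown in that case is $\mg(\beta)=m$. This is a statement about the direction at $\beta$ toward $\alpha$, not about points below $\beta$.

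The missing idea is the one you stated backwards in your constancy step. At the first drop point $\zeta_0=\zeta(\gamma',\abs x^{t_0})$, the direction toward $\alpha$ is generic, not special. Indeed $\gamma'$ is a truncation of $\gamma$ with $\abs{\gamma-\gamma'}=\abs x^{t_0}$, so $\vec v(\gamma)\ne\vec v(\gamma')$. For example, $\gamma=x^{1/2}$ gives $\zeta_0=\zeta(0,\abs x^{1/2})$, where $\vec v(0)$ is special but $\vec v(x^{1/2})$ is generic.

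In general, suppose $\eta\succ\alpha$ and $\mm\equiv m$ on $[\alpha,\eta)$. Then every $\xi\in\vec v_\eta(\alpha)$ satisfies $\xi\preceq\xi\vee\alpha\in[\alpha,\eta)$, so $m\mid\mm(\xi)$ by \autoref{prop:multiplicityorder}, whence $\mm(\vec v_\eta(\alpha))=m$. By \autoref{cor:multsubtreevalency} this direction has multiplicity $\mm(\eta)$ or $\mg(\eta)$. Hence $m\mid\mg(\eta)$, with $\mg(\eta)=m$ whenever $\mm(\eta)<m$.
\begin{itemize}
\item At $\eta=\zeta_0\in(\alpha,\beta)$ this gives $\mg(\zeta_0)=m\le\mg(\alpha)$. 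That is the contradiction you wanted, now without the false claim.
\item At $\eta=\beta$ it gives $m\mid\mg(\beta)$, and moreover $\mg(\beta)=m$ and $ms=\ma(\beta)\in\Z$ when $\mm(\beta)<m$.
\end{itemize}
Lowest terms for $\beta$ then follow, e.g.\ from the corollary following \autoref{thm:farey:coprime}, since $\mg(\beta)/m$ divides $\mg(\beta)/\mm(\beta)$.
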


\begin{proof}
 If not, then have a join $\zeta$ such that $\zeta \succneq \alpha$ and $\zeta \succneq \beta$. But then clearly $\vec v_\zeta(\infty) \subset U$, so $\mg(U) = 1$ which contradicts $\mg(U) > \max(\mg(\alpha), \mg(\beta))$. Now let $\alpha = \zeta(\gamma, \abs x^{\frac{\ma(\alpha)}{\mg(\alpha)/m}})$ and $\beta = \zeta(\gamma, \abs x^{\frac{\ma(\beta)}{\mg(\beta)/m}})$ where $m = \mm(\gamma) = \mm(\alpha) \mid \mg(\beta)$. Hence $\frac{\ma(\beta)}{\mg(\beta)/m} < \frac{\ma(\alpha)}{\mg(\alpha)/m}$ must be neighbouring Farey fractions by virtue of the definition. 
\end{proof}

Consider \autoref{lem:corresp:simplesection} and \autoref{prop:corresp:ordering}. Suppose for instance that $\xi$ is a point (of any type) of generic multiplicity $m$. If $\xi$ lies in a \dsmooth{} $\Gamma$-disk $U$ bounded by $\zeta \in \Gamma$, then $\mg(\zeta) = \mm(U) \mid m$. If $\xi$ lies in a $\Gamma$-annulus $U$ bounded by $\zeta_1, \zeta_2 \in \Gamma$, then \[\max(\mg(\zeta_1), \mg(\zeta_2)) < \mg(\zeta_1) + \mg(\zeta_2)  \le m.\] 
In particular, if $m = 1$ and $\xi$ lies in a \dsmooth{} $\Gamma$-domain $U$, then $U$ is a $\Gamma$-disk whose boundary point is integral. Choosing the Type I point $\xi = \infty$ shows that a \vsmooth{} vertex set $\Gamma$ must contain an integral point $\zeta_\infty$ whose direction $\vec v(\infty)$ is a $\Gamma$-disk. %
Moreover, one can see that the point $\zeta_\infty$ outlined above is the maximum amongst $\Gamma$.

\begin{prop}\label{prop:smooth:basics}
 Let $\Gamma \subset \P^1_\an$ be a vertex set. 
\begin{enumerate}
\item If $\xi \in \P^1_\an \sm \Gamma$ has generic multiplicity $1$ and is contained in a \dsmooth{} $\Gamma$-domain $U$, then $U$ is a $\Gamma$-disk bounded by an integral point.
\item In particular, if $\Gamma$ is \vsmooth{}, then it contains an integral point $\zeta_\infty$, which is a maximum point in $\Gamma$ with respect to $\prec$.
\end{enumerate}
\end{prop}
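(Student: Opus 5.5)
For part (1), I would argue by cases on the shape of the \dsmooth{} $\Gamma$-domain $U$ containing $\xi$. First suppose $U$ is a $\Gamma$-annulus with boundary points $\alpha, \beta$. By \autoref{defn:smooth:smooth}, every $\zeta \in U$ satisfies $\mg(\zeta) > \max(\mg(\alpha), \mg(\beta)) \ge 1$. In particular $\mg(\xi) > 1$, which contradicts $\mg(\xi) = 1$.

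If $\xi$ is not Type II, this step still works, because $\mg(U) = \min_{\zeta \in U}\mg(\zeta)$ is attained by a Type II point or a Type I point. By \autoref{prop:galois:multiplicitysubset}, $\mg(U)$ is bounded by $\mg(\xi) = 1$. So in every case $\mg(U) = 1$, and this is incompatible with the annulus condition. Hence $U$ is a $\Gamma$-disk bounded by some $\zeta \in \Gamma$.

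Still in the disk case, the definition gives $\mm(U) = \mg(\zeta)$, i.e.\ $U$ is a generic direction at $\zeta$. By \autoref{prop:galois:multiplicitysubset}, $\mm(U) = \mg(U)$ for a direction. Since $\xi \in U$ has $\mg(\xi) = 1$, we get $\mg(\zeta) = \mm(U) = \mg(U) \le 1$. Therefore $\mg(\zeta) = 1$, so $\zeta$ is integral.

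For part (2), I would apply (1) to the Type I point $\xi = \infty$. We have $\mg(\infty) = \mm(\infty) = 1$, and $\infty \notin \Gamma$ since $\Gamma$ consists of Type II points. Because $\Gamma$ is \vsmooth{}, the $\Gamma$-domain $U$ containing $\infty$ is \dsmooth{}. By (1), $U$ is a $\Gamma$-disk bounded by an integral point, which I call $\zeta_\infty$. Since $\infty \in U$, $U$ is the direction $\vec v_{\zeta_\infty}(\infty)$ at $\zeta_\infty$.

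Maximality is where some care is needed. Take any $\eta \in \Gamma$ with $\eta \neq \zeta_\infty$. Then $\eta \notin U$, because $U$ contains no points of $\Gamma$. So $\eta$ lies in a direction at $\zeta_\infty$ other than $\vec v(\infty)$, and hence $\eta \prec \zeta_\infty$ by the description of $\prec$ through directions: $\zeta \prec \xi$ for any $\zeta \in \bvec v \neq \vec v(\infty)$ at $\xi$, as used in \autoref{prop:corresp:ordering}.

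It follows that $\zeta_\infty$ is the maximum of $\Gamma$, not merely a maximal element. The step I expect to be most delicate is the justification, in part (1), that the minimum of $\mg$ over $U$ is genuinely controlled by $\mg(\xi)$ for an arbitrary type of $\xi$. I would handle it by invoking \autoref{prop:galois:multiplicitysubset} and the fact that $\mg(U)$ is defined as a minimum over all points of $U$.
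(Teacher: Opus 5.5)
Your proof is correct and follows the paper's argument. You rule out the annulus case straight from the definition, since $\mg(\xi) > \max(\mg(\alpha),\mg(\beta)) \ge 1$; this is lighter than the paper's bound $\mg(\zeta_1)+\mg(\zeta_2)\le \mg(\xi)$. You get $\mg(\zeta)=\mm(U)\le 1$ in the disk case, apply this to $\xi=\infty$, and your direction argument fills in the maximality step the paper leaves as ``one can see'' (the paragraph on non-Type II $\xi$ is unnecessary, since the annulus condition already quantifies over every point of $U$).
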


\begin{rmk}\label{rmk:smooth:semigroup}
 Given $a, b \in \N^+$, let $\N^+\langle a, b\rangle$ denote the commutative semigroup $\set[xa + yb]{x, y \in \N^+}$. Here, $\N^+ = \N \sm \set 0$. Caution: neither $a$ nor $b$ are elements of this set; in fact $\N^+\langle a, b\rangle = \N \langle a, b\rangle \sm (\N \langle a \rangle \cup \N \langle b \rangle)$. Note that $\N^+\langle a, b\rangle$ is closed under addition of its elements and furthermore by adding simply $a$ or $b$.
\end{rmk}

\begin{thm}\label{thm:smooth:multdesc}
\begin{enumerate}[label=(\alph*)]
 \item Suppose $U$ is a \dsmooth{} disk with boundary point $\xi$, then $\mg(\xi) = \mm(U) \mid \mm(\zeta) \mid \mg(\zeta)$ for every $\zeta \in U$. In effect, for every $\zeta \in U$, $\mg(\zeta) \in \N^+\langle \mb(E) \rangle$.
 \item Suppose that $U$ is the \dsmooth{} annulus bounded by $\alpha \prec \beta$, then $\mg(\zeta) \in \N^+\langle\mg(\alpha), \mg(\beta)\rangle$ for every $\zeta \in U$.
\end{enumerate}
\end{thm}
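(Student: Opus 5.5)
Part (a) is nearly formal. A \dsmooth{} disk $U$ with boundary $\xi$ is, by \autoref{defn:smooth:smooth}, a generic direction at $\xi$, so $\mm(U)=\mg(\xi)$. For any $\zeta\in U$, \autoref{prop:galois:multindirn} gives $\mm(U)\mid\mm(\zeta)\mid\mg(\zeta)$. Hence $\mg(\zeta)$ is a positive multiple of $\mg(\xi)$, i.e.\ it lies in $\N^+\langle\mg(\xi)\rangle$ (the $\mb(E)$ in the statement being $\mg(\xi)$ via \autoref{thm:galois:parametercorrespondence}).

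For part (b) I would work inside the annulus and reduce to classical Farey arithmetic. Write $a=\mg(\alpha)$, $b=\mg(\beta)$ and $m=\mm(\alpha)$. By \autoref{prop:smooth:ordered} we have $\alpha\prec\beta$, $m\mid b$, and (rescaling by $m$) the fractions $\frac{\ma(\beta)}{b/m}<\frac{\ma(\alpha)}{a/m}$ are Farey neighbours.

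Let $\zeta\in U$ and set $\zeta'=$ the retraction of $\zeta$ onto the spine $[\alpha,\beta]$, so $\zeta'\in(\alpha,\beta)$ with $\zeta'\preceq$-related to $\zeta$. Two cases follow.

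\textbf{Spine case, $\zeta=\zeta'$.} Write $\zeta=\zeta(\gamma,\abs x^{r})$ with $\mm(\gamma)=m$; the multiplicity is constant on the spine by a \autoref{prop:multsubtreevalency}(i)-type argument. Then $\mg(\zeta)=\LCM(m,q)$ with $r=p/q$, i.e.\ $\mg(\zeta)=m\cdot q'$ where $r=\frac{p'}{q'}$ after writing $r$ with denominators divided by $m$ in lowest terms. Classical Haros (\autoref{prop:fareyaux:haros}), applied to the neighbours above, gives $q'\in\N^+\langle a/m,b/m\rangle$. Multiplying by $m$ gives $\mg(\zeta)\in\N^+\langle a,b\rangle$.

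The delicate point is the constancy of $\mm$ on $(\alpha,\beta)$. I would get it from \dsmooth{}ness: $\mm$ can only drop at a point of generic multiplicity at most $\max(a,b)$, and such points are excluded in $U$.

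\textbf{Off-spine case.} Here $\zeta$ lies in a direction $\bvec v$ at $\zeta'$ other than $\vec v(\alpha),\vec v(\beta)$. Such a direction is generic at $\zeta'$ (the special directions are at most $\vec v(\gamma)=\vec v(\alpha)$ and $\vec v(\infty)=\vec v(\beta)$, by \autoref{rmk:specgendirn}). By \autoref{prop:galois:multindirn}, $\mg(\zeta')=\mm(\bvec v)\mid\mm(\zeta)\mid\mg(\zeta)$. Thus $\mg(\zeta)$ is a positive multiple of an element of $\N^+\langle a,b\rangle$, and this semigroup is closed under positive multiples (\autoref{rmk:smooth:semigroup}).

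The main obstacle is justifying the spine case. This means translating the tree parameters to Farey fractions precisely: I must check that $\mg(\zeta')/m$ equals the reduced denominator of $\frac{m}{\cdot}$-scaled $\log_{\abs x}\diam(\zeta')$. That follows from the corollary after \autoref{thm:farey:coprime} in the model case, but here it must be argued directly from Puiseux expansions, using $\mm(\gamma)=m$ and $\gamma\in\k((x^{1/m}))$.
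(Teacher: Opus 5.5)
Your proposal is correct and matches the paper's proof. Part (a) is \autoref{prop:galois:multindirn}. The spine case of (b) comes from the Farey-neighbour property of \autoref{prop:smooth:ordered} plus Haros, where the paper phrases this step as repeated Farey addition. Off-spine points are handled through a generic direction at a spine point together with (a).

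Your check that $\mm\equiv\mm(\alpha)$ on $[\alpha,\beta)$ is a useful addition: a drop would occur at a point of generic multiplicity $\mm(\alpha)\le\mg(\alpha)$, which the \dsmooth{} condition on $U$ forbids. This makes explicit a step the paper uses silently when it asserts $\mm(\alpha)\mid\mg(\zeta)$.
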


\begin{proof}
(a)\: This is \autoref{prop:galois:multindirn}.

(b)\: First, let $\zeta \in (\alpha, \beta)$ be a Type II point and $\alpha = \zeta(\gamma, \abs x^{\frac{\ma(\alpha)}{\mg(\alpha)}})$; we may set $m = \mm(\gamma) = \mm(\alpha)$. Then $\zeta = \zeta(\gamma, \abs x^\frac ab)$, where $m \mid b = \mg(\zeta)$. So, $\frac{\ma(\beta)}{\mg(\beta)/m} < \frac a{b/m} < \frac{\ma(\alpha)}{\mg(\alpha)/m}$. Therefore, some sequence of Farey additions between $\frac{\ma(\alpha)}{\mg(\alpha)/m}$ and $\frac{\ma(\beta)}{\mg(\beta)/m}$ yield $\frac a{b/m}$. Therefore, $b \in \N^+\langle \mg(\alpha), \mg(\beta) \rangle$. 

Second, suppose $\zeta \in U \sm (\alpha, \beta)$. Then there is a $\xi \in (\alpha, \beta)$ and $\zeta$ lies in a generic direction $\bvec v$ at $\xi$. By the above we have $\mg(\xi) \in \N^+\langle \mg(\alpha), \mg(\beta) \rangle$ and by part (a), we have $\mg(\xi) = \mm(U) \mid \mm(\zeta) \mid \mg(\zeta)$, so certainly $\mg(\zeta) \in \N^+\langle \mg(\alpha), \mg(\beta) \rangle$.
\end{proof}

\begin{rmk}
 There is an alternate geometric proof of \autoref{cor:smooth:multdesc} inducting over smooth models, using \autoref{thm:farey:addition} directly, with \autoref{rmk:smooth:semigroup}. For instance, the Farey addition utilised in part (b) corresponds to a sequence of satellite blowups of smooth points between $E_1, E_2$. Moreover, one can see it implies the theorem.
\end{rmk}

 The next theorem is a restatement of \autoref{thm:farey:puiseuxsection} which shows that smooth points have \dsmooth{} $\Gamma$-domains, along with a series of corollaries.

\begin{thm}\label{thm:smooth:smimpvsm}%
 Let $X$ be a model and let $\Gamma = \Gamma(X) \subset \P^1_\an$ be the corresponding vertex set. If $U$ is a $\Gamma$-domain and $\redct_X(U) = p$ is a smooth point, then $U$ is \dsmooth{}. Hence, if $X$ is smooth at each closed point in $X_0$, then $\Gamma$ is \vsmooth{}. 
\end{thm}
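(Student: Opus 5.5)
The plan is to reduce everything to \autoref{thm:farey:puiseuxsection} and treat the two shapes of $\Gamma$-domain separately. Since $p = \redct_X(U)$ is a smooth point, \autoref{prop:galois:reductionstructure} together with \autoref{prop:corresp:smoothsnc} (applied after desingularising away from $p$, which is harmless because the question is local) shows that $p$ lies on one or two components of $X_0$. Correspondingly, $U$ is either a $\Gamma$-disk bounded by $\zeta = \redct^{-1}(E)$ or a $\Gamma$-annulus bounded by $\alpha = \redct^{-1}(E)$ and $\beta = \redct^{-1}(F)$.

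\emph{Disk case.} Choose any Type I point $\gamma \in U$ of finite multiplicity, i.e.\ a Puiseux multisection through $p$. By \autoref{thm:farey:puiseuxsection}(1), $U = \vec v_E(p)$ and $\mm(p) = \mb(E) = \mm(\vec v_E(p))$. By \autoref{thm:galois:parametercorrespondence}, $\mb(E) = \mg(\zeta)$, so $\mm(U) = \mg(\partial U)$, which is precisely condition (1) of \autoref{defn:smooth:smooth}.

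\emph{Annulus case.} By \autoref{thm:farey:puiseuxsection}(2), $\mg(U) = \mm(p) = \mb(E) + \mb(F) = \mg(\alpha) + \mg(\beta)$, where $\mg(U)$ is the minimum over $U$ (attained by Type I points, by \autoref{prop:galois:multiplicitysubset}). Since both $\mg(\alpha)$ and $\mg(\beta)$ are positive, every $\zeta \in U$ then satisfies $\mg(\zeta) \ge \mg(\alpha) + \mg(\beta) > \max(\mg(\alpha), \mg(\beta))$, which is condition (2).

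The step I expect to need the most care is the equality $\mg(U) = \mb(E) + \mb(F)$, which the proof of \autoref{thm:farey:puiseuxsection} only sketched. However, only the inequality $\mg(U) \ge \mm(p)$ is needed here, and that inequality was proved directly from the chain of ideals $(t)^m \subseteq (t)^{\mm(p)}$ applied to multisections through $p$. It is therefore enough to justify that the minimum of $\mg$ over $U$ is realised by Type I points, which \autoref{prop:galois:multiplicitysubset} provides. If one worries that Type II points in $U$ are not covered, the backup is to use \autoref{prop:galois:multiplicitysubset} (equivalently \autoref{prop:genericmultiplicity}): every Type II point in the open set $U$ has Type I points of multiplicity $\mg(\zeta)$ in its generic directions, and these also lie in $U$.

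For the final sentence of the statement: if $X$ is smooth at every closed point of $X_0$, then every $\Gamma$-domain reduces to a smooth point by \autoref{cor:galois:pointcorrespondence}, so each is \dsmooth{} and $\Gamma$ is \vsmooth{}. The Galois invariance required of a vertex set in $\P^1_\an(\K)$ is automatic, because $\Gamma(X) = (\redct'_X)^{-1}(X_{\text{gen}})$ is a union of $G$-orbits.
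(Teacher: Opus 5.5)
Your proposal is correct and follows the paper's route. The paper treats this theorem as a restatement of \autoref{thm:farey:puiseuxsection}, reading the disk condition off part (1) and the annulus condition off $\mg(U) = \mb(E)+\mb(F)$ in part (2), and your observation that the annulus case needs only the inequality $\mg(U) \ge \mm(p)$ (from the chain of ideals together with \autoref{prop:galois:multiplicitysubset}) is accurate.
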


\begin{rmk}
 \autoref{lem:farey:freesat} provides an inductive alternative proof to \autoref{thm:farey:puiseuxsection}.
\end{rmk}

\begin{cor}\label{cor:smooth:multdesc}
 Let $X$ be a local model and $p \in X_0$ be a smooth point and $U$ its associated $\Gamma(X)$-domain.
\begin{enumerate}[label=(\alph*)]
 \item If $p$ is free on $E$ and $U$ is a $\Gamma(X)$-disk bounded by $E$, then $\mb(E) = \mm(U) \mid \mm(F) \mid \mb(F)$ for every $F \in U$. In other words, for every $F$ infinitely near $p$, $\mb(F) \in \N^+\langle \mb(E) \rangle$.
 \item If $p = E_1 \cap E_2$ is satellite, and $U$ is the $\Gamma(X)$-annulus bounded by $E_1, E_2$, then $\mb(F) \in \N^+\langle\mb(E_1), \mb(E_2)\rangle$ for every $F \in U$ (or in other words, for every $F$ infinitely near $p$).
\end{enumerate}
\end{cor}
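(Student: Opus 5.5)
The plan is to reduce \autoref{cor:smooth:multdesc} to the two results just proved: \autoref{thm:smooth:smimpvsm} says that the $\Gamma(X)$-domain $U$ of a smooth point $p$ is \dsmooth{}, and \autoref{thm:smooth:multdesc} describes the generic multiplicities inside a \dsmooth{} disk or annulus. Two translations connect them. First, the components $F$ infinitely near $p$ are exactly the Type II points of $U = \redct_X^{-1}(p)$, by the corollary following \autoref{prop:galois:vertexextension}. Second, $\mb(F) = \mg(\zeta_F)$ by \autoref{thm:galois:parametercorrespondence}, and we set $\mm(F) = \mm(\zeta_F)$. With these, the statement becomes a claim about Berkovich points in $U$.

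For (a), take $p$ free on $E$. By \autoref{prop:galois:reductionstructure} and \autoref{thm:farey:puiseuxsection}(i), $U = \vec v_E(p)$ is a $\Gamma(X)$-disk bounded by $\zeta_E$. \autoref{thm:smooth:smimpvsm} says $U$ is \dsmooth{}, so $\mm(U) = \mg(\zeta_E) = \mb(E)$; this can also be read off directly from \autoref{thm:farey:puiseuxsection}(i). Then \autoref{thm:smooth:multdesc}(a), which rests on \autoref{prop:galois:multindirn}, gives $\mb(E) = \mm(U) \mid \mm(\zeta_F) \mid \mg(\zeta_F)$ for every $\zeta_F \in U$. Translating back, $\mb(F)$ is a positive multiple of $\mb(E)$, that is, $\mb(F) \in \N^+\langle \mb(E)\rangle$.

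For (b), take $p = E_1 \cap E_2$. Then $U$ is the $\Gamma(X)$-annulus bounded by $\zeta_{E_1}$ and $\zeta_{E_2}$, and it is \dsmooth{} by \autoref{thm:smooth:smimpvsm}. \autoref{prop:smooth:ordered} shows the two boundary points are comparable, so after relabelling we have $\alpha \prec \beta$. \autoref{thm:smooth:multdesc}(b) then gives $\mg(\zeta) \in \N^+\langle \mg(\alpha), \mg(\beta)\rangle$ for every $\zeta \in U$. Rewriting $\mg(\alpha), \mg(\beta)$ as $\mb(E_1), \mb(E_2)$ yields (b).

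The main obstacle is not the derivation above, which is purely formal, but whether \autoref{thm:smooth:smimpvsm} is really established, since it is stated as a restatement of \autoref{thm:farey:puiseuxsection}. The disk case is complete: \autoref{thm:farey:puiseuxsection}(i) explicitly shows the direction is generic with $\mm = \mb(E)$. The annulus case is the delicate one, because the condition $\mg(\zeta) > \max(\mg(\alpha), \mg(\beta))$ throughout $U$ requires more than the inequality $\mg(U) \ge \mb(E_1) + \mb(E_2)$. That bound comes from the multisection argument $(t)^m \subseteq (t)^{\mm(p)}$ applied to Type I points, and it has to be propagated to Type II points. I would propagate it using \autoref{prop:galois:multiplicitysubset}, which says the minimum generic multiplicity over $U$ is attained at Type I points. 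Alternatively, the annulus case can be checked directly by induction over satellite blowups, using \autoref{thm:farey:addition} together with the closure of $\N^+\langle a, b\rangle$ under adding $a$ or $b$ noted in \autoref{rmk:smooth:semigroup}.
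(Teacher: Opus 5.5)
Your proposal is correct and is essentially the paper's proof, which simply combines \autoref{thm:smooth:smimpvsm} with \autoref{thm:smooth:multdesc}; your dictionary $F \leftrightarrow \zeta_F \in U$ with $\mb(F) = \mg(\zeta_F)$ is the translation the paper leaves implicit. Your worry about the annulus case is already settled in the proof of \autoref{thm:farey:puiseuxsection}, which invokes \autoref{prop:galois:multiplicitysubset} exactly as you suggest to pass from Type I points to $\mg(U) \ge \mb(E_1)+\mb(E_2)$. Since $\mg(U)$ is the minimum over all of $U$, this inequality alone already gives $\mg(\zeta) > \max(\mb(E_1),\mb(E_2))$ for every $\zeta \in U$, so nothing further is needed.
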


\begin{proof}
Combine \autoref{thm:smooth:smimpvsm} with \autoref{thm:smooth:multdesc}.
\end{proof}

Now we study the converse direction to \autoref{thm:smooth:smimpvsm}, meaning that every \vsmooth{} vertex set comes from a smooth model. The approach we take is to justify that any desingularisation of a point is unnecessary: meaning the exceptional curves resulting from blowing up the supposed singularity can be blown down again. This, \autoref{lem:smooth}  is also an interesting generalisation to \autoref{prop:corresp:selfint}.

\begin{lem}\label{lem:smooth}
 Let $X$ be a model of $\check X$ over $\bO$ and let $\Gamma = \Gamma(X) \subset \P^1_\an$. Suppose that  $U$ is a \dsmooth{} disk or annulus bounded by points of $\Gamma$ and that $X$ is smooth over the locus of points $\redct_X(U)$. If $U \cap \Gamma \ne \emp$, then there is a vertex $\zeta \in \Gamma \cap U$ which reduces to a smooth rational curve $E = \redct_X(\zeta)$ of self-intersection $E^2 = -1$, i.e.\ a contractible one.
\end{lem}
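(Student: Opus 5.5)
The plan is to mimic the proof of \autoref{prop:corresp:selfint}, replacing the global identity $K\cdot X_0=-2$ by a local identity on the divisors lying in $U$. Let $E_1,\dots,E_n$ be the components $\redct_X(\zeta)$ for $\zeta\in\Gamma\cap U$; these are the curves infinitely near the point $p=\redct_X(U)$ that appear in $X$. Let $B_1,\dots,B_k$ ($k=1$ or $2$) be the boundary components, coming from $\partial U$. First I would pass to the model $Y$ with $\Gamma(Y)=\Gamma\sm U$ (it exists by \autoref{prop:galois:vertexextension}), which is dominated by $X$ via the contraction of the $E_i$. I claim $Y$ is smooth at $p$. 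This is where \dsmooth{}ness enters. Take a minimal resolution over $p$ that only adds curves infinitely near $p$; by \autoref{lem:farey:freesat} and \autoref{thm:farey:coprime} those curves have parameters generated by Farey addition. The real claim, though, is just that some $E_i$ has $E_i^2=-1$ and is rational. So I would argue numerically and avoid the smoothness of $Y$ altogether.

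The numerical route works with the $2$-form $\alpha=\frac{\dd x\wedge\dd y}{x}$, whose divisor is $K_X=-2S_\infty+\sum_E(\ma(E)-1)E$ near $X_0$. The aim is to show that $\sum_i c_i\,K\cdot E_i<0$ for suitable positive weights $c_i$. Choose a fibral divisor $D=\sum_i c_iE_i$ supported in $U$ such that $D\cdot E_i\le 0$ for all $i$, with $D\cdot B_j$ controlled. The natural choice is the total transform of the exceptional part, i.e.\ the pullback of a local uniformiser at $p$. In the disk case one can take the multiplicities $c_i=\mb(E_i)/\mb(B)$, which are integers by \autoref{thm:smooth:multdesc}(a), and these make $(x)$ minus its $B$-part look locally like a fibre. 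In the annulus case one writes $\mb(E_i)=u_i\mb(\alpha)+v_i\mb(\beta)$ with $u_i,v_i\ge1$ by \autoref{thm:smooth:multdesc}(b). I would then compute $K\cdot D$ via $\ma$ and $\mb$, using that $\ma(E_i)$ is obtained from the boundary values by the same Farey combination, $\ma=u\ma(\alpha)+v\ma(\beta)$, by \autoref{prop:smooth:ordered} and the Farey sequence structure. The expected outcome is $K\cdot D=-(\text{positive})$, exactly as a $(-1)$-curve tree blown up from a smooth point would give. Morally, $\ma$ is the log discrepancy, and the Farey condition forces every $E_i$ to have log discrepancy equal to its value for a smooth blowup, so $K_X-\pi^*K_Y$ is effective and supported on the $E_i$.

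Given $K\cdot D<0$, some $E_i$ satisfies $K\cdot E_i<0$. Since $E_i^2<0$ by \autoref{prop:farey:negativity} and adjunction (\autoref{thm:adjunction}) applies on the smooth locus, we get $-E_i^2+2p_a(E_i)<2$. This forces $p_a(E_i)=0$ and $E_i^2=-1$, so $E_i$ is smooth rational, which gives the lemma.

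The main obstacle is the middle step: proving $K\cdot D<0$ for the chosen local divisor purely from the \dsmooth{} hypothesis. This means showing that the intersection numbers $E_i\cdot E_j$ and $E_i\cdot B_j$ on $X$ are governed by the tree structure of $\Gamma\cap\overline U$. I would try the cleaner alternative first. By \autoref{thm:farey:coprime} applied inductively, the \dsmooth{} Farey data inside $U$ coincide with those of the smooth model obtained by blowing up $p$ along the Farey/free steps of \autoref{thm:farey:addition}. That smooth model has the same vertex set as $X$ over $U$, so by uniqueness in \autoref{prop:galois:vertexextension} it is $X$ locally. Its last blown-up curve is then a $(-1)$-curve lying in $U$. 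Making this uniqueness-and-ordering argument rigorous, and ensuring the last blowup lands in $\Gamma\cap U$, is where I expect the real work.
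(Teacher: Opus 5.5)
\paragraph{The gap.} The decisive step, $K\cdot D<0$, is never proved. You call it the ``expected outcome'' and then defer it, and in the annulus case $D$ is not even specified. The ingredients you suggest for it do not work as stated.
\begin{itemize}
\item The formula $\ma(E_i)=u_i\ma(\alpha)+v_i\ma(\beta)$ with $u_i,v_i\ge 1$ is justified only on the segment $(\alpha,\beta)$, and it fails off it. Each free blowup raises $\ma$ by one at fixed $\mb$ (\autoref{thm:farey:addition}). So vertices in generic directions off $(\alpha,\beta)$ can reach $\ma/\mb>\ma(\alpha)/\mb(\alpha)$, which no combination with $u,v\ge1$ can produce.
\item The discrepancy heuristic compares $K_X$ with $\pi^*K_Y$ on the contracted model $Y$, about which nothing is known at $p$. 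For a genuine singularity the discrepancies of a minimal resolution are $\le 0$, so their positivity is exactly what would have to be proved.
\item The ``cleaner alternative'' is circular. Producing a smooth model by blowing up $p$ presupposes that $p$ is a smooth point of $Y$, which is what \autoref{thm:smooth} deduces from this very lemma. Showing that $\Gamma\cap\overline U$ is generated from $\partial U$ by Farey additions and free steps already requires locating a last-created vertex, which is the content of the lemma.
\end{itemize}

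\paragraph{The missing idea.} You need to feed the fibre relation $X_0\cdot E_j=0$, together with the edge data of \autoref{thm:farey:coprime}, into the computation.

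The paper does this at one well-chosen vertex. It takes $\zeta\in\Gamma\cap U$ of maximal $\mg$; among those it is $\prec$-minimal (or $\prec$-maximal if $\infty\in U$), and farthest from $\partial U$ when $U$ is a disk. Genericity of directions and the Farey hypothesis then force $E=\redct_X(\zeta)$ to meet at most two other components. All of these have $\mb\le\mb(E)$, and one is strictly smaller when $E$ is satellite. So $\mb(E)E^2+\sum_F\mb(F)=0$ gives $E^2=-1$, with no canonical class needed.

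Your route can also be completed. Take $D=\sum_i\mb(E_i)E_i$. Then $X_0\cdot E_j=0$ gives $E_j\cdot D=-\sum_B\mb(B)\,B\cdot E_j$, summed over the boundary components $B$. Hence $K\cdot D$ collapses onto the one or two edges joining $\partial U$ to $\Gamma\cap U$. These edges meet at smooth points, so \autoref{thm:farey:coprime}(c) applies to them, giving three cases.
\begin{itemize}
\item Disk with $\infty\notin U$ and neighbour $E_1\prec B$: $K\cdot D=\mb(B)-\mm(E_1)-\mb(E_1)\le-\mb(B)$, by \autoref{thm:smooth:multdesc}(a).
\item Disk with $\infty\in U$: the $-2S_\infty$ term gives $K\cdot D=-\mb(E_1)$.
\item Annulus, with $E_a$ the neighbour of $\alpha$ and $E_b$ the neighbour of $\beta$: $K\cdot D=\mm(\alpha)-\mm(E_b)+\mb(\alpha)+\mb(\beta)-\mb(E_a)-\mb(E_b)<0$, by \autoref{thm:smooth:multdesc}(b) and $\mm(\alpha)\le\mb(\alpha)$.
\end{itemize}
Your adjunction step then finishes the proof. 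It also yields $p_a(E_i)=0$, so $E_i$ is a smooth rational curve, a fact the paper treats as immediate.
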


\begin{thm}\label{thm:smooth}
Let $X$ be a model of $\check X$ over $\bO$ and let $\Gamma = \Gamma(X) \subset \P^1_\an$ be its vertex set (a finite set of Type II points), and $U$ a $\Gamma$-domain. Then $U$ is \dsmooth{} if and only if $\redct_X(U) = p$ is a smooth point of $X$.

In particular, any given vertex set (a finite set of Type II points) $\Gamma \subset \P^1_\an$ is \vsmooth{} if and only if there exists a model $X$ of $\check X$ over $\bO$ such that $\Gamma(X) = \Gamma$ and $X$ is smooth at every $p \in X_0$.
\end{thm}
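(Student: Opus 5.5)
The forward direction is already in hand: if $p = \redct_X(U)$ is a smooth point then $U$ is \dsmooth{} by \autoref{thm:smooth:smimpvsm}, itself a restatement of \autoref{thm:farey:puiseuxsection}. So the work is the converse. Suppose $U$ is a \dsmooth{} $\Gamma$-domain. We must show $p = \redct_X(U)$ is a smooth point of $X$.

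The plan is to resolve and then contract. Take a minimal resolution $\pi : Y \to X$ over $p$, obtained by further blowups, so that $Y$ is smooth over $\redct_X(U)$. Then $\Gamma(Y) = \Gamma \cup \Gamma'$, where $\Gamma' = \Gamma(Y) \cap U$ is finite and consists of the exceptional curves over $p$. By \autoref{prop:galois:vertexextension}, $Y$ dominates $X$. If $\Gamma' = \emp$ then $Y = X$ near $p$ and we are done. Otherwise we apply \autoref{lem:smooth} to $Y$ and the domain $U$. This is legitimate because $U$ is \dsmooth{}, is bounded by points of $\Gamma \subset \Gamma(Y)$, and $Y$ is smooth over $\redct_Y(U)$. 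The lemma produces $\zeta \in \Gamma' = \Gamma(Y) \cap U$ with $E = \redct_Y(\zeta)$ a smooth rational $(-1)$-curve. Castelnuovo (\autoref{thm:castelnuovo}) contracts $E$ to a smooth surface $Y_1$, which is again a model with $\Gamma(Y_1) = \Gamma(Y) \sm \{\zeta\}$. Since $Y_1$ still dominates $X$ and is smooth over $\redct(U)$, we can repeat. By induction on $|\Gamma'|$ we reach a smooth model $Y_k$ with $\Gamma(Y_k) \cap U = \emp$ and $\Gamma(Y_k) \supseteq \Gamma$. Near $p$, the vertex sets of $Y_k$ and $X$ agree (their vertices outside $U$ are unchanged), so by the bijection in \autoref{prop:galois:vertexextension}, applied after localising or after restricting to the relevant piece of the fibre, the map $Y_k \to X$ is an isomorphism over a neighbourhood of $p$. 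Hence $p$ is smooth.

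The main obstacle is \autoref{lem:smooth} itself, stated just above without proof. If it must be proved here, I would proceed as follows. Given smooth $Y$ with vertices $\Gamma'$ inside the \dsmooth{} domain $U$, choose $\zeta \in \Gamma'$ extremal: a leaf of $\Hull(\Gamma(Y)) \cap \overline U$, or in the annulus case the relevant point of the spine. Then compute $E^2$ using \autoref{prop:farey:negativity} together with $X_0 \cdot E = 0$, that is, $\mb(E)E^2 = -\sum_{F \cap E \ne \emp} \mb(F)$. The \dsmooth{} inequalities, namely $\mg(\zeta) > \max(\mg(\alpha),\mg(\beta))$ or genericity of the disk, combined with the Farey relations of \autoref{thm:farey:coprime}, should force some leaf or spine vertex whose neighbours' $\mb$ values sum to exactly $\mb(E)$. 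For example, in a disk a leaf $E$ whose parent $F$ satisfies $\mb(F) = \mb(E)$ by \autoref{thm:farey:addition}, or in an annulus a spine vertex with $\mb(E) = \mb(F_1) + \mb(F_2)$. Such a vertex has $E^2 = -1$, and it is rational since it is a component of a smooth model's fibre. The delicate part is handling non-generic branches hanging off the spine, where one must descend into a subdomain, which is again \dsmooth{} by \autoref{thm:smooth:smimpvsm}, and argue by induction.

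The final clause of the theorem then follows. If $\Gamma$ is \vsmooth{}, take the model $X$ with $\Gamma(X) = \Gamma$ given by \autoref{prop:galois:vertexextension}. Every closed point of $X_0$ is $\redct_X(U)$ for some $\Gamma$-domain $U$ by \autoref{cor:galois:pointcorrespondence}, so every closed point is smooth by the first part. The converse implication is \autoref{thm:smooth:smimpvsm}.
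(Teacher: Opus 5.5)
Your proposal is correct and follows the paper's proof. The forward direction and the final clause are handled identically, and for the converse you, like the paper, resolve over $p$, apply \autoref{lem:smooth} to the resolution and $U$, and contract the resulting $(-1)$-curve by Castelnuovo; the only difference is cosmetic---you iterate contractions until $\Gamma(Y_k)=\Gamma$ and conclude with the bijection of \autoref{prop:galois:vertexextension}, whereas the paper takes a resolution with the fewest exceptional components and reaches a contradiction via the universal property of the blowup---and since \autoref{lem:smooth} is available as stated, your tentative sketch of its proof is not needed.
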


\begin{proof}%
 Let $X, \Gamma$ be as described with $U$ a $\Gamma$ domain and $\redct_X(U) = p$. If $p$ is smooth in $X$, then $U$ is \dsmooth{} by \autoref{thm:smooth:smimpvsm}. Further, if $X$ is a model over $\bO$ which is smooth at every point in the special fibre, then every $\Gamma(X)$-domain is \dsmooth{}, meaning $\Gamma(X)$ overall is \vsmooth{}. The remaining task is to prove that if $U$ is \dsmooth{}, then $p$ is a smooth point of $X$. For contradiction, suppose that $U$ is \dsmooth{} but $p$ is singular, requiring a non-trivial smooth desingularisation $\pi : \hat X \to X$ over $p$; assume $\pi$ is minimal with respect to the number of components in $\pi^{-1}(p)$. Therefore, we may assume that the locus $L = \redct_{\hat X}(U) \subset \hat X$ is smooth. It follows by applying \autoref{lem:smooth} to $\hat X$ and $U$ that there is a (smooth rational) curve $E$ within $\pi^{-1}(p)$ with self intersection $E^2 = -1$. By Castelnuovo's criterion, there is a smooth $X'$ and blowdown $\rho : \hat X \to \hat X'$ which contracts $E$ to a point; see \autoref{thm:castelnuovo}. By the universal property of point blowups, $\pi$ factors as $\pi = \pi' \circ \rho$ where $\pi' : \hat X' \to X$. This contradicts the minimality of $\pi$. Therefore $\pi = \id$, meaning $X$ is smooth at $p$.

 Finally, suppose $\Gamma$ is \vsmooth{}. Using \autoref{thm:corresp:modelexistence}, there exists a model $X$ of $\check X$ over $\bO$ which has vertex set $\Gamma(X) = \Gamma$. 
Now, for any closed point $p$ in $X_0$, $\redct_X^{-1}(p)$ is a \dsmooth{} $\Gamma$-domain, whence $p$ is smooth by the above. Hence $X$ is smooth over every point of its special fibre.
\end{proof}
 
 \begin{proof}[Proof of \autoref{lem:smooth}]
 First note that since every point of $\redct_X(U)$ is smooth, by \autoref{thm:smooth:smimpvsm}, every $\Gamma(X)$-domain contained in $U$ is \dsmooth{}. That $E$ will be smooth and rational is immediate from the construction and hypotheses.

Let $\zeta$ be a $\mg$-maximal and otherwise $\prec$-minimal element of $\Gamma(X) \cap U$. However, if $\infty \in U$, instead require that $\zeta$ is $\prec$-maximal. In either case, if $U$ is a disk then we can also assume that $d_\bH(\zeta, \partial U)$ is maximised. Define $E = \redct_{X}(\zeta)$.

The rest is split into cases, but first we make a few observations.\\
\textbf{Observation I} For any other $\xi \in \Gamma(X)$, either $\vec v(\xi)$ is special or intersects $\partial U$. Indeed, if $\bvec v \ne \vec v(\partial U)$ is generic and $\xi \in \bvec v \cap \Gamma(X)$, then $\mg(\xi) \ge \mm(\bvec v) = \mg(\zeta)$ and $\xi$ would have a greater distance from $\partial U$ than $\zeta$ does. This is contrary to the choice of $\zeta$.\\
\textbf{Observation II} If $E$ is not integral then $\partial U$ and is contained in the special directions at $\zeta$, necessarily intersecting $\vec v(\infty)$. 
Indeed, if $E$ is not integral, then $\vec v(\infty)$ is guaranteed to be a special direction and $\infty \nin U$ by \autoref{prop:smooth:basics}. Therefore, $\vec v(\infty)$ intersects $\partial U$. In particular, if $U$ is a disk then $\partial U \in \vec v(\infty)$. If $U$ is an annulus then $\mg(\zeta) > \max(\mg(\alpha), \mg(\beta))$, so both $\alpha$ and $\beta$ lie in (possibly the same) special directions at $\zeta$.

\textbf{Observation III} In any situation, $\mg(\xi) \le \mg(\zeta)$ for any $\xi \in \overline U$, using either our choice of $\zeta$ with maximal generic multiplicity, or the definition of \dsmooth{}ness of $U$. In particular, if $E$ and another component $F$ intersect, then $\mb(F) \le \mb(E)$.

for any $\xi \in \overline U$, using either our choice of $\zeta$ with maximal generic multiplicity, or the definition of \dsmooth{}ness of $U$. In particular, if $E$ and another component $F$ intersect, then $\mb(F) \le \mb(E)$. 

\textbf{Case: $E$ is integral.}
First note that because $U$ must be a disk bounded by an integral point, as explained by \autoref{prop:smooth:basics}. Since every direction is generic, observation I says that the rest of $\Gamma(X)$ is contained in $\vec v(\partial U)$. Hence, $E$ intersects only one other component of the special fibre, $F$ say, the reduction of $\zeta' \in \Gamma(X) \cap \overline U \cap \vec v(\partial U)$. Now, from \autoref{lem:farey:intersections}, $\mb(E)E^2 + \mb(F) = 0$. Since $\mb(E) = \mb(F) = 1$ by assumption, we have $E^2 = -1$.

\textbf{Case: $E$ is free with $\mb(E) > 1$.} 
In this case, $\zeta$ has a unique special direction $\vec v(\infty)$. By observation II, $\partial U \subset \vec v(\infty)$. 
We find that $E$ intersects only one other component of the special fibre, $F$ say, the reduction of $\zeta' \in \Gamma(X) \cap \overline U \cap \vec v(\partial U)$. So $\mb(E)E^2 + \mb(F) = 0$ and thus  \[0 < -E^2 \le \frac{\mb(F)}{\mb(E)} \le 1,\] where the last inequality comes from observation III.
 One assumes $E^2$ is an integer because $X$ is smooth over $E$; therefore $E^2 = -1$.

\textbf{Case: $E$ is satellite.} Here, $\zeta$ has exactly two special directions and recall that $\vec v(\infty)$ contains at least one boundary point of $U$. If $U$ is an annulus bounded by $\alpha \prec \beta$, then $\alpha$ lies in the lower special direction. 
Again, by our first and second observations, every generic direction is contained in $U$ and these directions contain no points of $\Gamma(X)$. In other words, $\Gamma(X)$ is contained in the two special directions at $\zeta$.
Thus, $E$ may intersect two other components $E_1, E_2 \in \redct_{X}(\overline U)$ of the special fibre $X_0$. We may assume that $E_1 \prec E \prec E_2$ by \autoref{prop:smooth:ordered}. Using observation III, we know $\mb(E_2) \le \mb(E)$ and furthermore that $\mb(E_1) < \mb(E)$, by either our $\prec$-minimal choice of $\zeta \in \Gamma(X) \cap U$ or because $\mg(\alpha) < \mg(\zeta)$ in the situation that $U$ is an annulus and $E_1 = \redct_{X}(\alpha)$. By \autoref{lem:farey:intersections}
\[\mb(E)E^2 + \mb(E_1) + \mb(E_2) = 0\]
whence 
\[0 < -E^2 = \frac{\mb(E_1) + \mb(E_2)}{\mb(E)} < 2,\]
and therefore $E^2 = -1$.

This amounts to the contradiction, proving $p$ is in fact a smooth point of $X$.
\end{proof}

\begin{rmk}
 The theorem admits an inductive proof that avoids any application of desingularisation of surfaces, assuming simply the existence of any smooth birational model (such as $C \times \P^1$). In fact, it is not even required to know a priori that $\Gamma = \Gamma(X)$ for some $X$. The sketch is as follows. First, any smooth model has a component with self intersection $-1$ (\autoref{prop:corresp:selfint}). Blowing down successively reaches a model $Y_{(\zeta)}$ with just one component $E_0 = \redct(\zeta)$ where $\zeta$ is integral. Second, by induction, one can show that for any integral Type II point $\xi$ a surface $Y_{(\xi)}$ can be obtained from $Y_{(\zeta)}$ through a repeated sequence of blowup and blowdown pairs (successively picking integral points along $[\zeta, \xi]$). Next, any \vsmooth{} vertex set $\Gamma$ contains an integral point by \autoref{prop:smooth:basics}.
 
 Henceforth, induct by size and containment of vertex sets which contain a fixed integral point $\xi$. Certainly the base case $Y_{(\xi)}$ is smooth as above. Now, if $\Gamma$ is \vsmooth{}, then the proof above for the lemma defines a candidate $E = \redct(\zeta)$ that `ought to be blown down'. Instead, one can check that $\Gamma' = \Gamma \sm \set \zeta$ is also \vsmooth{}. Again, there are two main cases to check: where $E$ is free or satellite, requiring slightly different analyses. As in the proof of the lemma, we find that $E$ only intersects one or two components, and its generic multiplicity is at least as high. This is the bulk of the work done. Let $U$ be the $\Gamma'$-domain containing $\zeta$. On one hand, $\zeta$ is the unique vertex that can be added to $\Gamma'$ in $U$ which makes $\Gamma' \cup \set \zeta$ \vsmooth{}, and on the other hand, $E = \redct(\zeta)$ is the exceptional curve obtained by the point blowup at $p = \redct(U)$. All these facts are obtained by studying the generic multiplicity and using Farey addition.
\end{rmk}

In closing this section, we summarise the equivalent conditions for a \dsmooth{} $\Gamma$-annulus, which we have by now discovered. This further expounds upon our generalisation of Farey sequences from the rational number line to $\P^1_\an(\K)$, such as the usual adjacency condition for $\frac ab > \frac cd$, $ad-bc =1$. For $E \sim \zeta(\gamma, \abs x^\frac ab)$ and $F \sim \zeta(\gamma, \abs x^\frac cd)$, with $b = \mb(E),\ d = \mb(F)$, a necessary condition was seen in \autoref{thm:farey:coprime} for smooth points as $ad-bc = \mm(\alpha)$. However, this is not quite sufficient for smoothness as the remark demonstrates.

\begin{thm}\label{thm:farey:annulustfae}
 Let $\Gamma \subset \P^1_\an$ be a finite vertex set of Type II points, and $\Delta$ the induced graph. Let $U$ be a $\Gamma$-annulus bounded by $\alpha$ and $\beta$ TFAE
\begin{enumerate}
 \item $U$ is \dsmooth{}: $\mb(U) > \max(\mb(\alpha), \mb(\beta))$.
 \item $\mb(\zeta) > \max(\mb(\alpha), \mb(\beta))$ for every $\zeta \in (\alpha, \beta)$.
 \item $\mg(\zeta) \in \N^+\langle \mb(\alpha), \mb(\beta)\rangle$ for all $\zeta \in U$.
 \item $\alpha \prec \beta$ and $\ma(\alpha)\mb(\beta)-\mb(\alpha)\ma(\beta) = \mm(\alpha) = \HCF(\mb(\alpha), \mb(\beta))$ or vice versa.
\end{enumerate}
\end{thm}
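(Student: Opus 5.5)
We prove the cycle (i)$\Rightarrow$(ii)$\Rightarrow$(iv)$\Rightarrow$(iii)$\Rightarrow$(i). Here $\mb=\mg$ on Type II points, and $\mb(U)$ means $\mg(U)=\min_{\xi\in U}\mg(\xi)$. Two implications are immediate. (i)$\Rightarrow$(ii) holds because $(\alpha,\beta)\subset U$. For (iii)$\Rightarrow$(i), every element of $\N^+\langle\mb(\alpha),\mb(\beta)\rangle$ is at least $\mb(\alpha)+\mb(\beta)>\max(\mb(\alpha),\mb(\beta))$ (see \autoref{rmk:smooth:semigroup}). By \autoref{prop:galois:multiplicitysubset} the minimum $\mg(U)$ is attained, so the bound passes to $\mg(U)$.

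For (ii)$\Rightarrow$(iv), first check that $\alpha,\beta$ are comparable, exactly as in \autoref{prop:smooth:ordered}. Otherwise the join $\zeta=\alpha\vee\beta$ lies in $(\alpha,\beta)$. It is Type II, since it is a branch point of vertices with rational radii. It has $\vec v(\infty)$ as a third direction, and $\alpha,\beta$ lie in two directions that each contain points of multiplicity dividing those of $\alpha,\beta$. The plan is to show $\mg(\zeta)\le\max(\mb(\alpha),\mb(\beta))$, contradicting (ii). Take $\gamma\preceq\alpha$ with $\mm(\gamma)=\mm(\alpha)$. Then $\zeta=\zeta(\gamma,\abs{\gamma-\gamma'})$ with $\gamma'\preceq\beta$, so the radius exponent has denominator dividing $\LCM(\mm(\alpha),\mm(\beta))$. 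I expect to need care here and may instead appeal directly to (ii) along a point near $\zeta$.

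Now assume $\alpha\prec\beta$ and write everything relative to $m=\mm(\alpha)$. By \autoref{prop:multsubtreevalency}(i) together with \autoref{prop:multiplicityorder}, points of $[\alpha,\beta)$ have multiplicity dividing $m$. The key point is that $\mm$ is constant equal to $m$ on $[\alpha,\beta)$. If it dropped, some $\xi\in(\alpha,\beta)$ would be a vertex of $\T_{m'}$ with $m'<m$. Then the point $\zeta(\gamma',|x|^{r})$ with $r$ just below the branching radius has small generic multiplicity, contradicting (ii). Once $\mm\equiv m$ on $[\alpha,\beta)$, every $\zeta\in(\alpha,\beta)$ is $\zeta(\gamma,\abs x^{t})$ with $\mg(\zeta)=m\cdot\mathrm{den}(t m)$ for $\gamma\in\k((x^{1/m}))$. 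By \autoref{thm:galois:diameter} and the corollary after \autoref{thm:farey:coprime}, the Farey fractions $\frac{\ma}{\mb/m}$ of $\alpha,\beta$ are in lowest terms. Condition (ii) therefore says that every rational strictly between the two reduced fractions has a larger denominator. \autoref{prop:fareyaux:iff} then gives adjacency, i.e.\ determinant $1$ after dividing by $m$; multiplying back by $m$ yields $\ma(\alpha)\mb(\beta)-\mb(\alpha)\ma(\beta)=m$. Finally, $\HCF(\mb(\alpha),\mb(\beta))=m$ follows because $\HCF(\mb(\alpha)/m,\mb(\beta)/m)$ divides the unimodular determinant.

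For (iv)$\Rightarrow$(iii), reverse the argument. From $\HCF=\mm(\alpha)=m$ and the determinant condition, the reduced fractions over $m$ are Farey adjacent. We must also check that $\mm\equiv m$ on $[\alpha,\beta)$. Since $m\mid\mb(\beta)$, the point $\beta$ lies in $\T_{m}$, and any point of $(\alpha,\beta)$ has multiplicity dividing $m$. A drop would force $\beta$ to lie off the $\mm=m$ limb, and I would rule this out using the $\HCF$ equality: it forces $\mm(\beta)\mid m$, and the radii of any branching to lower multiplicity would conflict with integrality. This is the step I consider the main obstacle, and it is exactly why neither equality in (iv) can be dropped. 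Granting it, \autoref{prop:fareyaux:haros} gives $\mb(\zeta)\in\N^+\langle\mb(\alpha),\mb(\beta)\rangle$ on $(\alpha,\beta)$. Off the spine, $\zeta$ lies in a generic direction at some $\xi\in(\alpha,\beta)$, and \autoref{prop:galois:multindirn} gives $\mg(\xi)\mid\mg(\zeta)$, exactly as in the proof of \autoref{thm:smooth:multdesc}(b).
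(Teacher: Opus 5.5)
Your cycle is a workable architecture. Once completed, it proves more than the paper does. The paper gives no separate proof of this theorem: it presents it as a summary of \autoref{prop:smooth:ordered}, \autoref{thm:smooth:multdesc}(b) and (via \autoref{thm:smooth}) \autoref{thm:farey:coprime}. Those results supply (i)$\Rightarrow$(ii), (iii), (iv) and the trivial (iii)$\Rightarrow$(i), but nothing argues (ii)$\Rightarrow$(i) or (iv)$\Rightarrow$(i). Your steps (i)$\Rightarrow$(ii), (iii)$\Rightarrow$(i) and the Farey bookkeeping in (ii)$\Rightarrow$(iv) are fine. Three steps, however, do not go through as written.

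(a) For comparability under (ii), knowing that the exponent of $\abs{\gamma-\gamma'}$ has denominator dividing $\LCM(\mm(\alpha),\mm(\beta))$ only yields $\mg(\alpha\vee\beta)\mid\LCM(\mm(\alpha),\mm(\beta))$. That can exceed $\max(\mb(\alpha),\mb(\beta))$, e.g.\ when $\mb(\alpha)=\mm(\alpha)=2$ and $\mb(\beta)=\mm(\beta)=3$.

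(b) For constancy of $\mm$ on $[\alpha,\beta)$ under (ii), your witness is misplaced. A spine point $\zeta(\gamma,\abs x^{r})$ just above the branch point $\xi_t$ has $\mg=\LCM(\mm(\xi_t),\mathrm{den}(r))$, which is large for generic $r$. A point just below $\xi_t$ on the branch through $\gamma'$ is off the spine, so (ii) says nothing about it. Nor is $\xi_t$ in general a vertex of $\T_{m'}$: for $\gamma=x^{1/2}$ the branch point $\zeta(0,\abs x^{1/2})$ is not a vertex of $\T_1$.

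(c) In (iv)$\Rightarrow$(iii) you explicitly grant the constancy of $\mm$, but that is the whole content of the implication; the Haros step afterwards is routine.

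All three are repaired by one observation. Let $\gamma$ be the truncation with $\alpha=\zeta(\gamma,\abs x^{r_\alpha})$ and $\mm(\gamma)=m$. The spine then consists of the points $\xi_t=\zeta(\gamma,\abs x^{t})$, with $\mm(\xi_t)=\mm(\gamma_{<t})$, where $\gamma_{<t}$ keeps the terms of exponent below $t$. If $t$ is an exponent occurring in $\gamma$, then $\mathrm{den}(t)\mid m$ and $\mm(\xi_t)\mid m$, so by \autoref{cor:multsubtreevalency} $\mg(\xi_t)\mid m\le\mb(\alpha)$.

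For (a): the leading exponent of $\gamma-\gamma'$ occurs in $\gamma$ or in $\gamma'$. Hence the join has $\mg$ dividing $\mm(\alpha)$ or $\mm(\beta)$, contradicting (ii).

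For (b): the branch point itself is the witness. Under (ii), no exponent of $\gamma$ lies in $(r_\beta,r_\alpha)$. This gives three things you need: $\mm\equiv m$ on $[\alpha,\beta)$; $\vec v_\xi(\alpha)$ is the special finite direction at every spine point $\xi$ (used in your off-spine step); and $m\mid\mb(\beta)$ (needed even to write $\ma(\beta)/(\mb(\beta)/m)$).

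For (c): such an exponent would make $mt$ an integer strictly between $\frac{\ma(\beta)}{\mb(\beta)/m}$ and $\frac{\ma(\alpha)}{\mb(\alpha)/m}$. The $\HCF$ equality makes these denominators positive integers, and the determinant equality makes the two fractions Farey neighbours. By \autoref{prop:fareyaux:haros}, every fraction strictly between them then has denominator at least $2$, a contradiction. This is exactly where both equalities in (iv) enter, consistent with \autoref{rmk:smooth:adjacency}.

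With these repairs your argument becomes a complete, purely arithmetic proof that never passes through smooth models.
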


\begin{rmk}\label{rmk:smooth:adjacency}
 Weakening the condition to $\ma(\alpha)\mb(\beta)-\mb(\alpha)\ma(\beta) = \HCF(b, d)$ is impossible as $\alpha = \zeta(0, \abs x^\frac 13),\ \beta = \zeta(0, \abs x^\frac 23)$ shows. Nor can the condition be presented as $\ma(\alpha)\mb(\beta)-\mb(\alpha)\ma(\beta) = \mm(\alpha)$. This is more subtle than the former, but the pair $\alpha = \zeta(x^\frac 13, \abs x^\frac 12),\ \beta = \zeta(0, 1)$ provides a non-\dsmooth{} counter-example with $\ma(\alpha) = 3,\ \mb(\alpha) = 6,\ \ma(\beta) = 0,\ \mb(\beta) = 1,\ \gamma = x^{\frac 13}$.
\end{rmk}

\clearpage


\appendix
\section{Proofs with Puiseux Series}\label{sec:puiseuxappx}

\begin{proof}[Proof of \autoref{prop:rootsauto}]\label{appx:prop:rootsauto}
 First we should check that $\omega^*$ does not depend on the writing of the exponent of $x$. Indeed, consider $x^{\frac pq} = x^{\frac{pn}{qn}}$ and recall that $\omega_q^p = \omega_{nq}^{np}$ by the definition of a sequence of roots.
 \[\omega^*\left(x^{\frac pq}\right) = \omega_q^p x^{\frac pq} = \omega_{nq}^{np} x^{\frac{np}{nq}} = \omega^*\left(x^{\frac{np}{nq}}\right).\]
 The additive nature of $\omega^*$ is by construction. Since $|\omega_n| = 1$ for every $n$, it also follows that $\omega^*$ preserves norms. This continuity means we may prove the multiplicative property on finite Puiseux series, whence testing two monomials will do. Let $\alpha = sx^{\frac ab}$ and $\beta = tx^{\frac cd}$.
 \[\omega^*(\alpha\beta) = \omega^*\left(st\,x^{\frac{ad+bc}{bd}}\right) = st\, \omega_{bd}^{ad+bc}\, x^{\frac{ad+bc}{bd}}\]
 \[\omega^*(\alpha)\omega^*(\beta) = \left(s\,\omega_b^a\,x^{\frac ab}\right)\left(t\,\omega_d^c\,x^{\frac cd}\right) = st\, \omega_b^a\omega_d^c\, x^{\frac{ad+bc}{bd}}\]
 Now recall that for a sequence of roots $\omega_{mn}^m = \omega_n$ for every $m, n \in \N$. So 
 \[\omega_b^a\omega_d^c = \omega_{bd}^{ad}\omega_{bd}^{bc} = \omega_{bd}^{ad+bc},\] and this completes the claim that $\omega^*(\alpha\beta) = \omega^*(\alpha)\omega^*(\beta)$.
 It is clear that $(\omega^*)^{-1} = (\omega^{-1})^*$, provides an inverse homomorphism, and so $\omega^*$ is an isomorphism.
 
 Considering that $\omega^*(cx^n) = c\omega_1^nx^n$, then $\omega^*: \k((x)) \mapsto \k((x))$ is fully determined, so it is the unique isomorphism which maps $x$ to $\omega_1 x$. If $\omega$ is a sequence of roots of unity, then $\omega^*|_{\k((x))} = \id$.
 
 Now we prove that any element of the Galois group for $\K$ is indeed an action by some sequence of roots of unity. Let $\Psi \in G$. As usual, we can trivially extend the action of $\Psi$ to $\K[y]$. Consider $y^n - x \in \K[y]$. Then $\Psi(y^n - x) = y^n - x$. On the other hand \[y^n - x = \prod_{j=1}^n \left(y - \xi_n^j x^{\frac 1n}\right)\] where $\xi_n$ is some primitive $n$-th root of unity.
 Therefore \[\Psi\left(\prod_{j=1}^n (y - \xi_n^j x^{\frac 1n})\right) = \prod_{j=1}^n \left(y - \xi_n^j \Psi\left(x^{\frac 1n}\right)\right) = \prod_{j=1}^n \left(y - \xi_n^j x^{\frac 1n}\right).\]
 Since $\K[y]$ is a UFD, this shows that $\Psi\left(x^{\frac 1n}\right) = \xi_n^j x^{\frac 1n}$ for \emph{some} $j$.  Define $\omega_n = \xi_n^j$ and take a power of this equation, then we have shown $\Psi\left(x^{\frac mn}\right) = \omega_n^m x^{\frac mn}$. We may do this for every $n$. Let us now check that $(\omega_n)_{n=1}^\infty$ is a sequence of roots of unity. Indeed, $\Psi(x) = x \implies \omega_1 = 1$; $\Psi$ being multiplicative and well defined ensures that \[\omega_{q} x^{\frac{1}{q}} = \Psi\left(x^{\frac{1}{q}}\right) = \Psi\left(x^{\frac{p}{pq}}\right) = \Psi\left(x^{\frac{1}{pq}}\right)^p = \omega_{pq}^{p} x^{\frac{p}{pq}}.\]
 Finally, recall that `denomenators are bounded' for Puiseux series, therefore for any $\gamma \in \K$ there is an $N \in \N$ such that
 \[\gamma(x) = \sum_{j = 0}^{N-1} x^{\frac{j}{N}}\,g_j(x)\] where $g_j \in \k((x))$ for every $j$. Since $\Psi(g_j) = g_j = \omega^*(g_j)$ for every $j$, we have that \[\Psi(\gamma(x)) = \sum_{j = 0}^{N-1} \Psi\left(x^{\frac{j}{N}}\right)\,g_j(x) = \sum_{j = 0}^{N-1} \omega_N^j\,x^{\frac{j}{N}}\,g_j(x) = \sum_{j = 0}^{N-1} \omega^*\left(x^{\frac{j}{N}}\right)\,g_j(x) = \omega^*(\gamma(x)).\]
 Therefore $\omega^* = \Psi$, and we have shown that any Galois action is an action by a sequence of roots of unity.
\end{proof}

\begin{proof}[Proof of \autoref{prop:primghom}]\label{appx:prop:primghom}
 First we prove the case where $\lambda =1$ can be ignored. Write $g(x) = x^{\frac 1q}(1 + h(x))$ with $h \in (x) \triangleleft \k[[x]]$. Then we can write $g^*(x^r) = x^{\frac rq}(1 + h(x))^r$ where we expand the bracket with the binomial theorem, \autoref{lem:binom}. Note that $(1 + h(x))^r \in \k[[x]]$. Applying this to an element of the form \[\gamma(x) = \sum_{j = 1}^\infty a_j x^{r_j},\] we get
 \[g^*(\gamma(x)) = \sum_{j = 1}^\infty a_j x^{\frac {r_j}q}(1+h(x))^{r_j}.\] This is a new convergent series, since $|x^{\frac {r_j}q}(1+h(x))^{r_j}| = |x|^{\frac {r_j}q}$ and there are only finitely many coefficients of $x^p$ in the infinite sum for any given $p \in \Q$.
 
To finish showing that $g^*$ is well defined, we should check that $g^*$ does not depend on how we write the exponent of $x$. Indeed, \autoref{lem:binom} states that $(1+h(x))^{\frac ab} = (1+h(x))^{\frac{ma}{mb}}$ is unambiguous; thus
 \[g^*\left(x^{\frac ab}\right) = x^{\frac 1q\frac {a}b}(1+h(x))^{\frac ab} = x^{\frac 1q\frac {ma}{mb}}(1+h(x))^{\frac {ma}{mb}} = g^*\left(x^{\frac{ma}{mb}}\right).\]

As in the proof of \autoref{prop:rootsauto}, additivity is trivial and multiplicativity comes down to two monomials.
\[g^*(sx^{\frac ab}) g^*(tx^{\frac cd}) = \left(s\, x^{\frac 1q\frac {a}b}(1+h(x))^{\frac ab}\right)\left(t\,x^{\frac 1q\frac {c}d}(1+h(x))^{\frac cd}\right) \]\[= st\, x^{\frac 1q\frac{(ad+bc)}{bd}}(1+h(x))^{\frac{ad+bc}{bd}} = g^*\left(stx^{\frac{ad+bc}{bd}}\right)\] the second equality is again by \autoref{lem:binom}.

Suppose for the moment that $g\in \k((x))$. To show that $g^*$ is a homomorphism extending the one on $\k((x))$, let $m \in \N$ and observe that
\[g^*(x^m) = x^{mn}(1+h(x))^m = (x^n(1+h(x)))^m = g(x)^m.\]
The fact about the $m$th roots of $g(x)$ is clear once we write $g^*(x^{\frac 1m})^m = g^*((x^{\frac 1m})^m) = g^*(x) = g(x)$.
 
 Now assume that $q = 1$, $g(x) = x(1 + h(x))$ has an inverse (for example by the inverse function theorem), also of the form $x(1 + \tilde h(x)) = \tilde g(x)$. Now observe that
 \[\left(\tilde g^*(g^*(x^{\frac 1m}))\right)^m = \tilde g^*(g^*(x)) = \tilde g^*(g(x)) = g(\tilde g(x)) = x.\]
 Therefore $\tilde g^*(g^*(x^{\frac 1m}))$ is some $m$th root of $x$ in $\hk$. On the other hand
 \[\tilde g^*(g^*(x^{\frac 1m})) = \tilde g^*(x^{\frac 1m}(1+h(x))^{\frac 1m}) = x^{\frac 1m}(1+\tilde h(x))^{\frac 1m})(1+h(\tilde g(x)))^{\frac 1m} = x^{\frac 1m} + \lo(x^{\frac 1m})\] which shows that the coefficient of $x^{\frac 1m}$ is $1$ and so $\tilde g^*(g^*(x^{\frac 1m})) = x^{\frac 1m}$ for any $m$. Therefore $\tilde g^*\circ g^* = \id$; similarly $g^*\circ \tilde g^* = \id$, and so we have found an inverse.
 
 To show the isometry, we only need that $\abs{g^*(\gamma(x))} = \abs{\gamma(x)}\ \forall \gamma \in \hk$. Given an increasing sequence of rationals with $r_n \to \infty$, $a_n \in \k$ and $a_1 \ne 0$, define
\[\gamma(x) = \sum_{n = 1}^\infty a_n x^{r_n},\]
\[\abs{\gamma(x)} = \left |a_1 x^{r_1}\right| = |x|^{r_1}.\] Whereas for \[g^*(\gamma(x)) = \sum_{n = 1}^\infty a_n x^{r_n}(1 + h(x))^{r_n} = a_1 x^{r_1} + \cdots,\]
\[|g^*(a)| = \left |a_1 x^{r_1}\right| = |x|^{r_1}.\]

Finally we show that $g^*\omega^{*n} = \omega^*g^*$ for any sequence of roots of unity $\omega$ when $q = n$ is an integer. It is enough to show on the generators $x^{\frac ab}$. In this case $h(x) \in \k[[x]]$ and recall from \autoref{lem:binom} that $(1+h(x))^r$ is also in $\k[[x]]$. So \[\omega^*((1+h(x))^r) = (1 + h(x))^r.\]
\[\omega^*(g^*(x^{\frac ab})) = \omega^*\left(x^{\frac{na}b}(1+h(x))^{\frac ab}\right) = \omega^*\left(x^{\frac{na}b}\right)\,\omega^*\!\left((1+ h(x))^{\frac ab})\right) = \omega_b^{na}x^{\frac{na}b}(1+h(x))^{\frac ab}\]
\[g^*(\omega^{*n}(x^{\frac ab})) = g^*(\omega_b^{na}x^{\frac ab}) = \omega_b^{na} x^{\frac {na}b}(1+h(x))^{\frac ab}\]

To conclude, suppose $\lambda \ne 1$ and set $g^* = \tilde g^* \circ \lambda_\bullet^*$ where $\tilde g^*$ is the simpler version above. By the above, this is a well defined homomorphism. If $q = 1$ then both $\tilde g^*$ and $\lambda_\bullet^*$ are isometric isomorphisms which commute with $\omega^* \in G$.
\end{proof}

\begin{rmk}[Warning]
 $\lambda_\bullet^*$ and $\tilde g^*$ do not commute.
\end{rmk}

\begin{proof}[Proof of \autoref{prop:nearlyfunctorial}]\label{appx:prop:nearlyfunctorial}
 \[g_2^*\circ g_1^* = \tilde g_2^* \circ \mu_\bullet^* \circ \tilde g_1^* \circ \lambda_\bullet^*\]
 \[g_2^*\circ g_1^*(x^{\frac 1N}) = \tilde g_2^* \circ \mu_\bullet^* \left(\lambda_N x^{\frac mN}(1 + h_1(x))^{\frac1N}\right) = \tilde g_2^*\left(\mu_N^m\lambda_N x^{\frac mN}(1 + h_1(\mu x))^{\frac1N}\right)\]
 \[= \mu_N^m\lambda_N x^{\frac {mn}N}(1+ h_2(x))^{\frac mN}(1 + h_1(g(x)))^{\frac1N}\]
 Whereas $g_1 \circ g_2(x) = \lambda \mu^m x^{mn}(1 + h_3(x))$ and so
 \[(g_1 \circ g_2)^*(x^{\frac 1N}) = (\lambda \mu^m)_N x^{\frac {mn}N} (1 + h_3(x))^{\frac 1N}\] 
 \[2k\pi + \arg(\lambda \mu^m) = \arg(\lambda) + m\arg(\mu)\] and so it follows as it did in \autoref{prop:rootopsprim} that \[\mu_N^m\lambda_N = e^{2k\pi i /N}  (\lambda \mu^m)_N.\]
 So \[g_2^*\circ g_1^*(x^{\frac 1N}) = \prim^{k*}(g_1 \circ g_2)^*(x^{\frac 1N})\]
\end{proof}

 \bibliographystyle{alpha}
\bibliography{calcbib}

\end{document}